\documentclass[oneside,a4paper,english, 11pt]{amsart}

\usepackage{adjustbox}
\usepackage{tikz-cd}
\usepackage[latin9]{inputenc}
\usepackage{hyperref}
\usepackage{lscape}
\usepackage{slashbox}
\usepackage{mathrsfs}  
\usepackage{enumerate} 
\usepackage{bm} 

\makeatletter
 \theoremstyle{plain}
\newtheorem{thm}{Theorem}[subsection]
\theoremstyle{plain}
  \newtheorem{prop}[thm]{Proposition}
\theoremstyle{plain}
 \newtheorem{lemma}[thm]{Lemma}
\theoremstyle{plain}

\theoremstyle{plain}
\newtheorem{cor}[thm]{Corollary}
\theoremstyle{definition}
  \newtheorem{defn}[thm]{Definition}
\theoremstyle{definition}
  
 \theoremstyle{definition}
  
  \newtheorem{exam}[thm]{Example}
\theoremstyle{remark}
\newtheorem{rmk}[thm]{Remark}
\numberwithin{equation}{section}

\usepackage{amsmath}
\usepackage{amssymb}
\usepackage{amscd}
\usepackage{amsthm}
\usepackage{array,multirow}
\usepackage[arrow,matrix,tips,all,cmtip,poly,color]{xy}

\usepackage{stmaryrd}
\usepackage{url}
\usepackage{color}
\usepackage{caption}

\pdfpageheight\paperheight
\pdfpagewidth\paperwidth
\topmargin -.8in
\headheight 0in
\headsep 0in
\textheight 8.5 in
\textwidth 6.5in
\oddsidemargin 0in
\evensidemargin 0in
\headheight 77pt
\headsep 0.25in

\newcommand{\Z}{\mathbb{Z}}

\newcommand{\Q}{\mathbb{Q}}

\newcommand{\Qp}{\mathbb{Q}_p}

\newcommand{\R}{\mathbb{R}}
\newcommand{\C}{\mathbb{C}}
\newcommand{\bG}{\mathbb{G}}

\newcommand{\F}{\mathbb{F}}

\newcommand{\N}{\mathbb{N}}

\newcommand{\A}{\mathbb{A}}

\newcommand{\fF}{\mathfrak{f}}

\newcommand{\fM}{\mathfrak{M}}

\newcommand{\fp}{\mathfrak{p}}

\newcommand{\fS}{\mathfrak{S}}

\newcommand{\fm}{\mathfrak{m}}

\newcommand{\bA}{\mathbb{A}}

\newcommand{\bT}{\mathbb{T}}

\newcommand{\bV}{\mathbb{V}}

\newcommand{\cA}{\mathcal{A}}

\newcommand{\cC}{\mathcal{C}}

\newcommand{\cE}{\mathcal{E}}

\newcommand{\cG}{\mathcal{G}}

\newcommand{\cI}{\mathcal{I}}

\newcommand{\cJ}{\mathcal{J}}

\newcommand{\cM}{\mathcal{M}}

\newcommand{\cO}{\mathcal{O}}
\newcommand{\cP}{\mathcal{P}}

\newcommand{\cR}{\mathcal{R}}

\newcommand{\cU}{\mathcal{U}}

\newcommand{\cX}{\mathcal{X}}

\newcommand{\cZ}{\mathcal{Z}}

\newcommand{\eps}{\varepsilon}

\newcommand{\phz}{\varphi}

\newcommand{\Zp}{\mathbb{Z}_p}

\newcommand{\Id}{\mathrm{id}}
\newcommand{\Gal}{\mathrm{Gal}}
\newcommand{\Hom}{\mathrm{Hom}}
\newcommand{\Ext}{\mathrm{Ext}}
\newcommand{\Res}{\mathrm{Res}}
\newcommand{\Ind}{\mathrm{Ind}}

\newcommand{\End}{\mathrm{End}}

\newcommand{\GL}{\mathrm{GL}}

\newcommand{\red}{\mathrm{red}}

\newcommand{\Spec}{\mathrm{Spec}\ }

\newcommand{\Frob}{\mathrm{Frob}}

\newcommand{\id}{\mathrm{id}}

\newcommand{\Adm}{\mathrm{Adm}}
\newcommand{\speci}{\mathrm{sp}}
\newcommand{\semis}{\mathrm{ss}}

\newcommand{\Fp}{\F_p}

\newcommand{\un}[1]{\underline{#1}}
\renewcommand{\bf}[1]{\mathbf{#1}}
\newcommand{\Rep}{\mathrm{Rep}}
\newcommand{\Diag}{\mathrm{Diag}}

\newcommand{\tld}[1]{\widetilde{#1}}

\newcommand{\JH}{\mathrm{JH}}
\newcommand{\rig}{\mathrm{rig}}

\newcommand{\supp}{\mathrm{Supp}}

\newcommand{\rbar}{\overline{r}}
\newcommand{\rhobar}{{\overline{\rho}}}
\newcommand{\taubar}{\overline{\tau}}

\newcommand{\Spf}{\mathrm{Spf}}

\newcommand{\rG}{\mathrm{G}}

\newcommand{\defeq}{\stackrel{\textrm{\tiny{def}}}{=}}
\newcommand{\s}{^\times}

\newcommand{\ovl}[1]{\overline{#1}}

\newcommand{\obv}{\mathrm{extr}}
\newcommand{\mord}{\mathrm{mord}}

\newcommand{\orient}{\mathrm{or}}

\newif\iffinalrun
\iffinalrun
  \newcommand{\mar}[1]{}
\else
  \newcommand{\mar}[1]{\marginpar{\raggedright\tiny #1}}

\DeclareMathOperator{\Mod}{Mod}
\DeclareMathOperator{\Coh}{Mod}

\DeclareMathOperator{\Lie}{Lie}

\DeclareMathOperator{\Ad}{Ad}

\DeclareMathOperator{\Mat}{Mat}

\DeclareMathOperator{\Gr}{Gr}
\DeclareMathOperator{\Fl}{Fl}

\DeclareMathOperator{\Conv}{Conv}

\DeclareMathOperator{\Iw}{\cI}

\DeclareMathOperator{\univ}{univ}

\DeclareMathOperator{\pr}{pr}

\newcommand{\ra}{\rightarrow}

\newcommand{\iarrow}{\hookrightarrow}
\newcommand{\into}{\hookrightarrow}

\newcommand{\onto}{\twoheadrightarrow}

\newcommand{\risom}{\buildrel\sim\over\rightarrow} 
\title{Extremal weights and a tameness criterion for mod $p$ Galois representations}
\author{Daniel Le}
\address{Department of Mathematics,
Purdue University,
150 N. University Street, 
West Lafayette, IN 47907-2067}
\email{ledt@purdue.edu}

\author{Bao V.~Le Hung}
\address{Department of Mathematics,
Northwestern University, 
2033 Sheridan Road, 
Evanston, Illinois 60208, USA}
\email{lhvietbao@googlemail.com}

\author{Brandon Levin}
\address{Department of Mathematics,
Rice University, 
6100 Main Street,
Houston, TX 77005}
\email{bwlevin@rice.edu}

\author{Stefano Morra}
\address{Laboratoire d'Analyse, G\'eom\'etrie, Alg\`ebre,
Universit\'e Paris 13 et Paris 8,
99 ave. J-B. Cl\'ement,
93430 Villetaneuse,
France }
\email{morra@math.univ-paris13.fr}

\begin{document}

\begin{abstract}
We study the weight part of Serre's conjecture for generic $n$-dimensional mod $p$ Galois representations.   We first generalize Herzig's conjecture to the case where the field is ramified at $p$ and prove the weight elimination direction of our conjecture.  We then introduce a new class of weights associated to $n$-dimensional local mod $p$ representations which we call \emph{extremal weights}.  Using a ``Levi reduction" property of certain potentially crystalline Galois deformation spaces, we prove the modularity of these weights.  As a consequence, we deduce the weight part of Serre's conjecture for unit groups of some division algebras in generic situations. 

\end{abstract}

\maketitle

\tableofcontents

\clearpage{}%
\section{Introduction}

In \cite{serre-duke}, Serre announced his celebrated modularity conjecture for two-dimensional mod $p$ Galois representations (now a theorem of Khare--Wintenberger \cite{KW_Serre_1}). 
Much of that paper is dedicated to formulating a strong version which predicts the minimal level and weight at which one would find a suitable modular form in terms of the restriction of the mod $p$ Galois representation to decomposition groups. 
There has been enormous progress on generalizing Serre's strong version to other number fields and higher dimensions as well as proving that the strong version follows from the weak version. 
This includes rather complete results in dimension two \cite{coleman-voloch,edixhoven,gross,BDJ,schein,gee-inv,newton13,GLS,GLS15} which have led to a much better understanding of the hypothetical mod $p$ Langlands correspondence \cite{BP,BHHMS2}. 
Still, there has been no complete generalization of the weight part of Serre's conjecture in any dimension larger than two, reflecting difficulties in integral $p$-adic Hodge theory. 

Already in Serre's original paper, the most subtle case occurs when the restriction $\rhobar$ to the decomposition group at $p$ is \emph{not} semisimple: the \emph{tr\`es ramifi\'ee} extensions have fewer modular Serre weights than the \emph{peu ramifi\'ee} ones. 
Still in dimension two but for unramified extensions of $\Q_p$, the set of Serre weights attached to a nonsplit extension by \cite{BDJ} depends in a subtle way on the extension class even in \emph{generic} cases (see also \cite{DDR,CEGM}). 
In higher dimensions, \cite{herzig-duke,GHS} define a conjectural weight set for unramified extensions of $\Q_p$ only when $\rhobar$ is generic and \emph{semisimple}. 
This conjecture was proven in \cite{MLM} (for many definite unitary groups) under a somewhat exotic genericity condition---again only for semisimple $\rhobar$. 
Further, \cite[Conjecture 9.1.5]{MLM} extended (still for unramified extensions of $\Q_p$) the weight part of Serre's conjecture to generic non-semisimple Galois representations using stacks defined by Emerton--Gee \cite{EGstack} and their local models. 
It predicts that $\rhobar$ is modular of a Serre weight $\sigma$ if $\rhobar$ lies on a certain cycle on the Emerton--Gee stack attached to $\sigma$ using known cases of the Breuil--M\'ezard conjecture. 
It seems that this geometric perspective is essential because already in dimension three the set of modular weights cannot be described by linear conditions on extension classes \cite{GL3Wild}. 
At present, this conjecture seems difficult to access for several reasons. 
First, proving the conjecture using the Breuil--M\'ezard perspective would require showing that various Taylor--Wiles patched modules have full support which is a notoriously difficult problem. 
(The relevant deformation rings are not always integral domains when $\rhobar$ is not semisimple.) 
Second, the cycle attached to $\sigma$ depends on cycle multiplicities appearing in local models that are not well understood at present. 
Finally, a classification of possible weight sets seems complicated because the formal definition of the Emerton--Gee stack makes it difficult to understand basic geometric questions such as the intersection patterns of its irreducible components. 
The goal of this paper is to establish upper and lower bounds for the set of modular Serre weights which apply to generic non-semisimple Galois representations in arbitrary dimension  over possibly ramified extensions of $\Qp$.
When applicable, our bounds greatly improve the existing ones in the literature (cf.~Remark \ref{rmk:bounds}). 
They arise from our study of (potentially crystalline) Emerton--Gee stacks.
The new symmetries and geometric structures we observe should play a role in organizing further investigations on Serre weight conjectures.
Our results are new even in the semisimple case: we formulate a generalization of Herzig's  conjecture for ramified $p$-adic fields which we prove is always an upper bound.
We give two concrete applications of our bounds.
Under a mild and explicit genericity condition, we establish the weight part of Serre's conjecture for unit groups of certain division algebras and generalize Gross' automorphic tameness criterion.

\subsection{Results}\label{sec:global:prelim}

Let $p$ be a prime and $n \geq 2$ be an integer. Let $F/F^+$ be a CM extension of a totally real field $F^+\neq \Q$.  
Assume for the sake of exposition that there is a single place $v$ of $F^+$ dividing $p$ which splits in $F$. (Our results apply whenever all the places of $F^+$ dividing $p$ split in $F$.)
Let $G$ be a definite unitary group over $F^+$ split over $F$ which is isomorphic to $U(n)$ at each infinite place and split at $v$. 
A (\emph{global$)$ Serre weight} is an irreducible smooth $\overline{\F}_p$-representation $V$ of $G(\cO_{F^+, v})$,
i.e.~the inflation to $G(\cO_{F^+, v})$ of an irreducible $\overline{\F}_p$-representation of $G(k_v)$, where $k_v$ is the residue field of $F^+$ at $v$.
For a mod $p$ Galois representation $\rbar:G_F \rightarrow \GL_n(\overline{\F}_p)$, let $W(\rbar)$ denote the collection of modular Serre weights for $\rbar$. 
That is, $V \in W(\rbar)$ if the Hecke eigensystem attached to $\rbar$ appears in a space of mod $p$ automorphic forms on $G$ of weight $V$ for some prime to $v$ level. 

Fix a place $\tld{v}$ of $F$ dividing $v$ which identifies $G(k_v)$ with $\GL_n(k_v)$. 
Define $\rbar_v := \rbar|_{\Gal(\overline{F}_{\tld{v}}/F_{\tld{v}})}$. 
The goal of the weight part of Serre's conjecture is to predict $W(\rbar)$ in terms of $\rbar_v$ or more precisely, the restriction of $\rbar_v$ to inertia.

Our global (and local) results include genericity conditions on $\rbar_v$ which will be made precise in the body of the paper.  
We stress that our genericity conditions are completely explicit, unlike those of \cite{MLM}.
We note however that for most results the genericity conditions require $p$ to be at least $O(en^2)$ (where $e$ is the absolute ramification index of $F_{\tld{v}}$). 

Let $K/\Qp$ be a finite extension with residue field $k$.  For any tame $n$-dimensional $\overline{\F}_p$-representation $\overline{\tau}$ of $I_K \subset \Gal(\overline{K}/K)$ which extends to $\Gal(\overline{K}/K)$, one associates a Deligne--Lusztig representation $V(\overline{\tau})$ of $\GL_n(k)$ (generalizing \cite[Proposition 9.2.1]{GHS}) which is defined over a finite extension $E/\Qp$.  It is also a representation of $\GL_n(\cO_K)$ by inflation.      Recall also the operator $\mathcal{R}$ (see \cite[\S 9.2]{GHS}) on the set of irreducible $\overline{\F}_p$-representations of $\GL_n(k)$ (i.e.~the set of Serre weights).  

If $K$ is unramified over $\Qp$ and $\rhobar$ is tame and generic, then Herzig defined the collection $W^?(\rhobar) = \{ \mathcal{R}(\sigma) \mid \sigma \in \JH(\overline{V}(\rhobar|_{I_K}))\}$.     
In the ramified setting, we make the following generalization: 

\begin{defn} \label{intro:weightset}
If $\rhobar$ is tame and generic, we define
\[
W^?(\rhobar)\defeq \left\{ \mathcal{R}(\sigma) \mid \sigma  \in \Big(\JH\Big(\overline{V}(\rhobar|_{I_K}) \otimes \overline{W}(0, 1-e, 2(1-e), \ldots, (n-1)(1-e))\Big)\Big) \right\}.
\]
where $W(0, 1-e, 2(1-e), \ldots, (n-1)(1-e))$ is the irreducible algebraic representation of (parallel) highest weight $(0, 1-e, 2(1-e), \ldots, (n-1)(1-e))$.
\end{defn}

\begin{rmk} 
\begin{enumerate}
\item In \cite{MLM} (see Theorem 4.7.6), in the unramified case, we give a geometric interpretation of Herzig's $W^?(\rhobar)$ in terms of torus fixed points on certain subvarieties of the affine flag variety.   Although we don't directly use this description here because of a lack of local model theory in the ramified case, it motivated Definition \ref{intro:weightset}. 
\item When $n = 2$, Schein gave in \cite{schein} an explicit description of a weight set for tamely ramified $\rhobar$. The two sets agree when $\rhobar$ is sufficiently generic, cf.~\S \ref{subsub:schein}.   
\end{enumerate}
\end{rmk}

We prove the weight elimination direction generalizing \cite{LLL}:
\begin{thm}[``Weight elimination'', cf.~Theorem \ref{thm:WE}] 
\label{intro:we}
Suppose that $\rbar:G_F \rightarrow \GL_n(\overline{\F}_p)$ satisfies standard Taylor--Wiles hypotheses and that $\rbar_v$ is tame and sufficiently generic.  
If $\sigma$ is a sufficiently generic Serre weight, then
\[
\sigma \in W(\rbar) \Longrightarrow \sigma \in  W^?(\rbar_v).
\]
\end{thm}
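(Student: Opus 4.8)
The plan is to reduce weight elimination for $\rbar$ to a purely local statement about the Galois deformation rings at $\tld v$, following the strategy of \cite{LLL}. The starting point is the standard observation, via patching, that if $\sigma_v \in W(\rbar)$ then the patched module $M_\infty(\sigma_v)$ is nonzero, hence the potentially crystalline deformation ring $R_{\rbar_v}^{\square, \eta, \tau}$ of $\rbar_v$ with Hodge--Tate weights corresponding to $\sigma_v$ (and some inertial type $\tau$) has a $\overline{\F}_p$-point; in particular it is nonzero. Thus the theorem follows once one shows: for $\rbar_v$ tame and sufficiently generic, if the appropriate potentially crystalline deformation ring is nonzero then $\sigma_v \in W^?(\rbar_v)$. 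First I would make precise the dictionary between Serre weights $\sigma_v$, the lifts of $\rbar_v$ that detect them, and the inertial types $\tau$, using the Deligne--Lusztig representations $V(\taubar)$ recalled in the excerpt and the operator $\mathcal R$; concretely one wants: $\sigma_v$ is a Jordan--H\"older factor of $\overline{V}(\tau)$ (up to the twist by $\overline{W}(0,1-e,\dots,(n-1)(1-e))$ built into Definition \ref{intro:weightset}) exactly when a crystalline lift with that type and those Hodge--Tate weights could exist.

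The technical heart is the local elimination bound. I would argue by contradiction: suppose $R_{\rbar_v}^{\square,\eta,\tau}\neq 0$ but the weight predicted by $\tau$ is not in $W^?(\rbar_v)$. The key input is an upper bound on the set of inertial types $\tau$ for which $R_{\rbar_v}^{\square,\eta,\tau}\neq 0$, coming from $p$-adic Hodge theory --- in the unramified case this is exactly the content of \cite{LLL}, proved via Kisin modules / Breuil--Kisin modules with descent data and a careful analysis of the shape of the matrix of Frobenius modulo $p$. In the ramified case the same mechanism should work but one must keep track of the ramification index $e$: the Hodge--Tate weights in each embedding are spread over an interval of length $\sim en$, which is precisely why the twist by the parallel highest weight $(0,1-e,2(1-e),\dots,(n-1)(1-e))$ appears in $W^?$ --- it records the ``default'' contribution of ramification to the Hodge filtration before any genericity. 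So the plan is: (i) set up Breuil--Kisin modules with tame descent data over the ramified base, (ii) show that $\rbar_v$ tame and generic forces the reduction of any such module to have a very constrained shape (a monomial matrix times a unipotent-type correction, controlled by the genericity hypothesis $p \gtrsim e n^2$), (iii) translate this shape constraint into the assertion that the type $\tau$ must be such that $\mathcal R$ of the corresponding Jordan--H\"older factors lands in the explicitly described set, i.e.\ $\sigma_v\in W^?(\rbar_v)$.

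The main obstacle I anticipate is exactly step (ii)--(iii) in the ramified setting: without the local model theory available in \cite{MLM} in the unramified case, one cannot simply read off the combinatorics from torus fixed points on the affine flag variety, so the shape analysis of Breuil--Kisin modules with descent data must be done by hand, and the bookkeeping of Hodge--Tate weights across the $e$ ``ramified slots'' within each of the $f$ embeddings is genuinely more delicate --- one has to show the genericity of $\rbar_v$ (measured by how far apart the tame inertial weights are, on the order of $e n^2$) is enough to rule out all the spurious types. A secondary point requiring care is the compatibility of the global Taylor--Wiles setup with the ramified local field: one must ensure the patched module is supported on the correct deformation ring and that the genericity hypotheses on $\rbar_v$ suffice to make the Taylor--Wiles primes and the local-global compatibility argument go through, but this is expected to be routine given the hypotheses already in the statement. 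Once the local bound is in place, the implication $\sigma_v\in W(\rbar)\Rightarrow \sigma_v\in W^?(\rbar_v)$ follows formally.
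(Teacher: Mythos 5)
Your overall strategy matches the paper's: patching reduces the global statement to the local nonvanishing of a potentially crystalline deformation ring $R^{\eta,\tau}_{\rbar_v}$, the shape of the mod~$p$ Breuil--Kisin module of $\rbar_v$ with tame descent relative to $\tau$ is constrained to the admissible set $\Adm(e\eta_0)$ (Proposition~\ref{prop:CL:specialfiber}), and this constraint is translated into membership of $\sigma_v$ in $W^?(\rbar_v)$. This is precisely the content of Proposition~\ref{prop:PWE} (the patching axiom) together with Theorem~\ref{thm:WE}, whose proof for tame $\rbar_v$ does not need the full specialization machinery.

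The step you leave vague is the last one, and that is where the actual work is. You frame it as ruling out ``spurious types'' by contradiction, but the argument is constructive and the choice of type is essential: given $\sigma_v\in W(\rbar)$, Proposition~\ref{prop:combWE} picks a \emph{specific} tame type $\tau$ with $\sigma_v\in\JH(\ovl{\sigma}(\tau))$ --- namely the one for which $\sigma_v$ is the outer Jordan--H\"older factor corresponding to the pair $(\tld{w}_\lambda,\tld{w}_h\tld{w}_\lambda)$ in Proposition~\ref{prop:JH} --- and one shows that $\tld{w}(\rbar_v|_{I_K},\tau)=(\tld{w}_h\tld{w}_\lambda)^{-1}w_0\tld{w}$ being in $\Adm(e\eta_0)$ forces $\tld{w}\uparrow t_{(e-1)\eta_0}\tld{w}_\lambda$, which is exactly the parametrization of $W^?(\rbar_v)$ from Proposition~\ref{prop:W?}. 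Admissibility of the shape for an \emph{arbitrary} $\tau$ with $\sigma_v\in\JH(\ovl{\sigma}(\tau))$ does not by itself imply $\sigma_v\in W^?(\rbar_v)$; you need to know which type to aim at before the admissibility constraint becomes a statement about the weight. Once you see this, the $\uparrow$-ordering/$\Adm(e\eta_0)$ combinatorics in \S\ref{sec:comb:weyl}--\ref{sec:combserre} provide the bridge. One secondary calibration: you somewhat overstate the obstruction in the ramified case. The closure relations for shape strata and the admissibility bound (Propositions~\ref{prop:closurerelations}, \ref{prop:CL:specialfiber}) do carry over from \cite{CL} and \cite{MLM} with $v^e$ in place of $v$; what is genuinely harder in the ramified setting is the full local-model/affine-flag-variety geometry, which weight elimination does not require.
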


When $\rhobar$ is not tame, unless $n \leq 3$, we don't have an analogue of $W^?(\rhobar)$.  Historically, certain classes of Serre weights have been identified which are expected to belong to $W^?(\rhobar)$.   For example, Gee--Geraghty proved very generally the modularity of \emph{ordinary} weights, i.e., those weights for which $\rbar_v$ admits ordinary crystalline lifts. 
For tame $\rhobar$ and $K$ unramified, \cite{GHS} introduce a notion of \emph{obvious weight} which roughly speaking are characterized by the property that $\rhobar$ admits an ``obvious" crystalline lift of specified Hodge--Tate weights, namely a sum of inductions of characters.
Building on what we discovered when $n =3$ in \cite{GL3Wild}, we introduce a notion of \emph{extremal} weights $W_{\obv}(\rhobar)$ which encompasses (in generic cases) both these earlier notions and prove the following theorem:

\begin{thm}[``Modularity of extremal weights'', Theorem \ref{thm:globalobv}] \label{intro:thm:globalobv}
Let $\rbar: G_{F} \ra \GL_n(\F)$ be an automorphic representation satisfying standard Taylor--Wiles conditions and such that $\rbar_v$ is sufficiently generic.   If either $\rbar$ is potentially diagonalizably automorphic or $W_{\obv} (\rbar_v) \cap W(\rbar)$ is non-empty, then 
\[
W_{\obv} (\rbar_v) \subset W(\rbar).
\] 
\end{thm}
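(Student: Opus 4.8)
The plan is to reduce the global modularity statement to a purely local statement about potentially crystalline deformation rings, and then to exploit the "Levi reduction" property of these rings together with a patching argument. First I would fix an extremal weight $\sigma_v \in W_{\obv}(\rbar_v)$ and recall its defining feature: it is the reduction mod $p$ of a Deligne--Lusztig representation $V(\tau)$ for a tame type $\tau$ such that the associated potentially crystalline deformation space $R^{\tau}_{\rbar_v}$ (with Hodge--Tate weights determined by $\sigma_v$) has a special structure --- namely, $\rbar_v$ lies on a closed substratum where the deformation problem factors through a proper Levi $\rL \subset \rGL_n$ (this is the content of "Levi reduction", and is where the hypothesis that $\sigma_v$ is extremal, as opposed to merely Jordan--H\"older-theoretically predicted, gets used). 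On that Levi stratum the deformation ring is, up to formally smooth factors, a product of lower-dimensional (or one-dimensional) potentially crystalline deformation rings whose special fibers are well understood; in particular one can show this component of $R^{\tau}_{\rbar_v} \otimes \F$ is nonzero, reduced, and of the expected dimension, so that $\sigma_v$ "is not eliminated" and moreover the mod $p$ fiber is geometrically connected.

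Second I would set up the Taylor--Wiles--Kisin patching machinery with the prescribed inertial type $\tau$ at $\tld v$, using the hypotheses (standard Taylor--Wiles conditions, $p$ large enough relative to $en^2$ from the genericity assumption) to produce a patched module $M_\infty$ over a patched deformation ring $R_\infty$ which is a power series ring over the local deformation ring $R^{\tau}_{\rbar_v}$ (times tamely ramified and unramified pieces at auxiliary places). The key input, as in the Taylor--Wiles--Calegari--Geraghty formalism, is that $M_\infty$ is maximal Cohen--Macaulay over $R_\infty$ and its support is a union of irreducible components of $\operatorname{Spec} R_\infty$. Modularity of $\sigma_v$ for $\rbar$ is then equivalent to the support of $M_\infty/\fm_{\mathrm{coeff}}$ meeting the $\sigma_v$-isotypic part, i.e.\ to $M_\infty$ having nonzero fiber at the point of $\operatorname{Spec} R_\infty[1/p]$ cut out by the Levi-reduced crystalline lifts; equivalently, the Levi-reduced component of $\operatorname{Spec} R^{\tau}_{\rbar_v}$ is in the support of $M_\infty$.

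Third I would run the following connectedness/bootstrapping argument to propagate modularity across that component. In the potentially diagonalizable case, $\rbar$ is assumed potentially diagonalizably automorphic, so by Kisin/BLGGT-type automorphy lifting along the Levi-reduced crystalline locus (where all lifts are potentially diagonalizable by construction, being built from inductions of characters) one directly gets that a Levi-reduced lift is modular, hence $\sigma_v \in W(\rbar)$; running this over all $\sigma_v \in W_{\obv}(\rbar_v)$ gives the inclusion. In the second case, where we only know $W_{\obv}(\rbar_v)\cap W(\rbar)\ne\emptyset$, I would pick a weight $\sigma_v^0$ in that intersection, note that modularity of $\sigma_v^0$ forces $M_\infty$ to be supported on the whole irreducible (hence connected) Levi-reduced component of $\operatorname{Spec} R^{\tau}_{\rbar_v}$ containing the point attached to $\sigma_v^0$ --- using that $M_\infty$'s support is a union of components and the component is irreducible --- and then observe that this single component sees lifts of every Hodge--Tate weight realizing every $\sigma_v \in W_{\obv}(\rbar_v)$ (this is exactly the combinatorial upshot of the Levi reduction: all the extremal weights sharing that tame type $\tau$ appear on one component, or more precisely a chain of components linked by common points, reflecting the reducibility of $\overline{V}(\tau)$). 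Varying $\tau$ and chaining through these common-weight overlaps, weight elimination (Theorem \ref{intro:we}) controls the top of $W(\rbar)$ while the connectedness argument fills it out, yielding $W_{\obv}(\rbar_v)\subset W(\rbar)$.

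The main obstacle I expect is establishing the Levi reduction property itself --- the assertion that the relevant potentially crystalline deformation ring has a component on which the universal deformation is induced from (a deformation valued in) a proper parabolic, together with enough control on that component (reducedness, dimension, connectedness, and an explicit description of which pairs (type, Hodge--Tate weight) it carries) to feed the patching argument. This is a statement in integral $p$-adic Hodge theory / Breuil--Kisin module theory that in the ramified case cannot be proven via the local model methods of \cite{MLM}, so it will require a direct Breuil--Kisin module analysis; all the global content is then relatively formal Taylor--Wiles patching plus the combinatorics of linking extremal weights via shared Jordan--H\"older factors, controlled from above by weight elimination.
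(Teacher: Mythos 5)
Your high-level skeleton — patch with a prescribed tame type, exploit the Levi-reduction structure of the local potentially crystalline deformation ring (Theorem \ref{thm:FSM}), and chain modularity from one extremal weight to another through a sequence of types — is the right skeleton and matches the paper's approach. But there are two substantive gaps in how you run the chain.

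First, your description of ``Levi reduction'' conflates a statement about the universal deformation with a statement about the local model. The paper does not show that the universal deformation factors through a proper parabolic, nor that there is a distinguished ``Levi-reduced component'' of $\Spec R^{\eta,\tau}_{\rbar_v}$. What Theorem \ref{thm:FSM} actually gives is that for the specific shapes $\tld{w}(\rbar_v,\tau)\in w^{-1}W_{a,\alpha}t_{e\eta_0}w$, the whole of $R^{\eta,\tau}_{\rbar_v}$ is formally smooth over a completed local ring of a product of $\GL_2$ Pappas--Zhu local models, hence is a normal domain whose \emph{special} fiber is reduced with at most two geometrically irreducible components. In particular there is no privileged ``Levi component'' on which modularity is concentrated; the generic fiber is irreducible, and the relevant dichotomy lives entirely on the special fiber.

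Second — and this is the real gap — your propagation step does not work as stated. You write that one Levi-reduced component ``sees lifts of every Hodge--Tate weight realizing every $\sigma_v\in W_{\obv}(\rbar_v)$,'' or, in the softened version, that components linked by common points carry the full set $W_{\obv}(\rbar_v)$. Neither is true: a single $R^{\eta,\tau}_{\rbar_v}$ with $\tau$ of the shape in Theorem \ref{thm:FSM} has special fiber carrying at most two predicted weights (Proposition \ref{prop:indint}), so no amount of ``one component is irreducible, hence $M_\infty$ is supported on all of it'' reaches more than a single pair at a time. To go from the extremal weight attached to $w$ to the one attached to $s_\alpha w$, the paper constructs a chain of $2e{+}1$ types $\tau_0,\dots,\tau_{2e}$ and $2e$ weights $\sigma_0,\dots,\sigma_{2e-1}$ (Proposition \ref{prop:indcomb}); the key mechanism is a cycle bookkeeping argument (Lemma \ref{lemma:walk}): for each $\tau_m$, exactness plus multiplicity one gives
$Z_\fp(M_\infty(\ovl{\sigma}(\tau_{m+1})^\circ))=Z_\fp(M_\infty(\sigma_m))+Z_\fp(M_\infty(\sigma_{m+1}))$, and whether the left side is ``balanced'' (a multiple of the sum of the two components of $\ovl{R}_{\rbar_v}^{\tau_{m+1}}$) or ``unbalanced'' determines whether $\sigma_{m+1}$ is modular. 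Starting from an extremal $\sigma_0$ (where $\ovl{R}^{\tau_0}$ is irreducible, so $Z_\fp(M_\infty(\sigma_0))$ is unbalanced), one tracks this invariant along the chain, and crucially uses semicontinuity of shapes (Theorem \ref{thm:semicont}) and the 2-component constraint (Corollary \ref{cor:at_most_two}) to pin down where the chain terminates; this pins down a new \emph{specialization} $\rbar_v^{\prime,\speci}$ and hence the reflected point $\theta_{\rbar_v}(\sigma',\rbar_v^{\prime,\speci})=\theta_{\rbar_v}(\sigma,\rbar_v^{\speci})s_\alpha$ (Corollary \ref{cor:reflect}). Your proposal never engages with the specialization structure ($SP(\rbar_v)$, the map $\theta_{\rbar_v}$, semicontinuity), which is what makes the Weyl group symmetry precise in the wildly ramified case — without it you cannot even identify which $\sigma'$ you have walked to. Finally, for the potentially diagonalizable case you need Theorem \ref{thm:extension} to know that $\rbar_v$ has a potentially diagonalizable lift exhibiting some extremal specialization, so that a potentially diagonalizable patching functor is automatically extremal; this step is implicit but unaddressed in your proposal.
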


\noindent When $\rbar$ is tamely ramified at $v$ and $F^+$ is unramified at $p$, Theorem \ref{intro:thm:globalobv} was proven in \cite{LLL} and played a critical role in the theory of local models in \cite{MLM} and its applications to the weight part of Serre's conjecture and the Breuil--M\'ezard conjecture. 
We expect Theorem \ref{intro:thm:globalobv} to have similar applications when $F^+$ is ramified at $p$ and hope to return to this in future work. 

There are two main ingredients in the proof of Theorem \ref{intro:thm:globalobv}: a geometric one (which will be discussed in the next section) and a combinatorial one.
The combinatorial ingredient is a hidden Weyl group symmetry.
When $\rhobar$ is tame and generic, then $W_{\obv}(\rhobar)$ is naturally a torsor for a product of $[k:\Fp]$-copies of the Weyl group $S_n$ of $\GL_n$, as explained in \cite{GHS}.    
Wildly ramified $\rhobar$ have fewer weights in general and fewer extremal weights (see Proposition \ref{prop:tamecrit}), but it turns out that the symmetry can be restored by enhancing an extremal weight with the data of a \emph{specialization}.

A tame inertial $\overline{\F}_p$-type is a continuous tame representation $I_K \ra \GL_n(\overline{\F}_p)$  which admits an extension to $G_K$.   Tame inertial $\overline{\F}_p$-types admit a combinatorial description in terms of fundamental characters of $G_K$ (see \S \ref{subsubsec:TIT}).  To a generic $\rhobar$, we attach a collection of tame inertial $\overline{\F}_p$-types  which we call (extremal) \emph{specializations} (Definition \ref{defn:spec}).   
This notion is somewhat elaborate, relying on the geometry of the Emerton--Gee stack (see e.g.~\S \ref{sub:extr:loc}).
The semisimplification of $\rhobar$ restricted to $I_K$ is a prototypical example of a specialization but there are always others when $\rhobar$ is not tame.   
It is generally expected that the predicted Serre weights of a wildly ramified $\rhobar$ should be a subset of those of $\rhobar^{\semis}$.   What we discover is the same is true for the other specializations of $\rhobar$ as well.   

\begin{thm} [cf.~Theorem \ref{thm:WE}]
\label{intro:wildewe} Suppose $\rbar:G_F \rightarrow \GL_n(\overline{\F}_p)$ satisfies standard Taylor--Wiles hypotheses and that $\rbar_v$ is sufficiently generic.  
Let $\rbar_v^{\speci}$ be a specialization of $\rbar_v$.   
If $\sigma$ is a sufficiently generic Serre weight, then
\[
\sigma \in W(\rbar) \Longrightarrow \sigma \in  W^?(\rbar^{\speci}_v).
\]
\end{thm}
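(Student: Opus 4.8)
The plan is to run the weight-elimination strategy of \cite{LLL}, with one genuinely new geometric ingredient. First I would use Taylor--Wiles patching to convert the modularity hypothesis into a statement about potentially crystalline lifts of $\rbar_v$; then I would transport that statement to the specialization $\rbar_v^{\speci}$ using the Levi reduction property of potentially crystalline deformation spaces together with the construction of specializations in \S\ref{sub:extr:loc}; finally I would conclude by the combinatorics of $W^?$ for tame inertial types. In detail: suppose $\sigma_v \in W(\rbar)$. Under the standard Taylor--Wiles hypotheses the patching construction of \cite{MLM, LLL} yields a patched module $M_\infty$ over a patched deformation ring $R_\infty$ with $M_\infty(\sigma_v) \neq 0$. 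Fix any tame inertial $\overline{\F}_p$-type $\taubar$ with $\sigma_v \in \JH(\overline{V}(\taubar))$ and choose a $\GL_n(\cO_K)$-stable $\cO$-lattice $V^\circ \subset V(\taubar)$ whose mod $p$ reduction $\overline{V^\circ}$ has $\sigma_v$ in its cosocle. The surjection $\overline{V^\circ} \twoheadrightarrow \sigma_v$ forces $M_\infty(\overline{V^\circ}) \twoheadrightarrow M_\infty(\sigma_v) \neq 0$, hence (Nakayama) $M_\infty(V^\circ) \neq 0$; since this module is supported on the patched potentially crystalline deformation ring of regular Hodge--Tate weight and tame inertial type lifting $\taubar$, that ring is nonzero, and hence so is the corresponding local deformation ring. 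Thus $\rbar_v$ admits a potentially crystalline lift of regular Hodge--Tate weight and tame inertial type lifting $\taubar$, and this holds for \emph{every} such $\taubar$.

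The main work is the transfer step: I must show that the tame representation $\rho^{\speci}$ underlying $\rbar_v^{\speci}$ also admits a potentially crystalline lift of regular Hodge--Tate weight and tame inertial type lifting $\taubar$, for every $\taubar$ with $\sigma_v \in \JH(\overline{V}(\taubar))$. For the prototypical specialization $\rbar_v^{\semis}$ this is soft: $\rbar_v^{\semis}$ lies in the closure of $\{\rbar_v\}$ in the Emerton--Gee stack, and the potentially crystalline loci of fixed inertial type and Hodge--Tate weight are closed substacks. For the other specializations --- which appear precisely because $\rbar_v$ is wildly ramified --- I would instead exploit the Levi reduction property of the potentially crystalline deformation rings of $\rbar_v$: by Definition~\ref{defn:spec} every specialization is extracted, via the geometry of \S\ref{sub:extr:loc}, from the Levi-reduced structure of these rings in such a way that nonvanishing of the deformation ring of $\rbar_v$ of a given type propagates to nonvanishing of that of $\rho^{\speci}$ of the same type. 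I expect this to be the hard part: one has to prove that crystalline-component membership of a wildly ramified $\rbar_v$ is inherited by \emph{all} of its specializations, which is exactly what the elaborate Definition~\ref{defn:spec} must be designed to guarantee; one must also check that the inheritance runs in the direction ``$\rbar_v$ carries at most the weights of $\rbar_v^{\speci}$,'' consistent with wildly ramified representations having fewer modular weights.

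Granting the transfer, the conclusion is purely combinatorial. Since $\rbar_v^{\speci}$ is a tame inertial $\overline{\F}_p$-type, I would apply the local weight-elimination input for tame representations established earlier in the paper (the ramified generalization of the analysis of \cite{GHS, LLL}): for tame $\rho$, a Serre weight $\sigma$ lies in $W^?(\rho)$ provided $\rho$ admits potentially crystalline lifts of regular Hodge--Tate weight and tame inertial type lifting $\taubar$ for all $\taubar$ with $\sigma \in \JH(\overline{V}(\taubar))$. The transfer step verifies this hypothesis for $\rho = \rho^{\speci}$ and $\sigma = \sigma_v$, so $\sigma_v \in W^?(\rbar_v^{\speci})$, as claimed. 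When $\rbar_v$ is itself tame it is its own specialization, so this also recovers the tame weight elimination of Theorem~\ref{intro:we}.
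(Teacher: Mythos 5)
Your first paragraph correctly captures the global-to-local step via patching (this matches Proposition \ref{prop:PWE}), and your closure argument for the semisimplification $\rbar_v^{\semis}$ is sound and is essentially Proposition \ref{prop:SC:I}. But your transfer step for general specializations misidentifies the key mechanism, and this is where the proof would break down as written.

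What the paper actually proves and uses is \emph{semicontinuity of Breuil--Kisin module shapes} (Theorem \ref{thm:semicont}): if $\rhobar$ is $\tau$-admissible and specializes to $\rhobar^{\speci}$, then $\tld{w}(\rhobar^{\speci},\tau)_j \leq \tld{w}(\rhobar,\tau)_j$ in the Bruhat order for each $j$. This is proved by a degeneration argument --- one constructs an explicit map $\mathbb{A}^1_{\F} \to Y^{\leq\eta,\tau}_{\F}$ whose fiber at $x\neq 0$ recovers $\rbar_v$ and whose fiber at $0$ recovers $\rho^{\speci}$, using the eigenbasis normal forms of Proposition \ref{prop:modpform} and the comparison Lemma \ref{lem1} to relate bases across types. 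The Levi reduction property of deformation rings plays no role here; it is the engine of Theorem \ref{thm:FSM}, which is used for the \emph{modularity} of extremal weights, not for weight elimination. Also, Definition \ref{defn:spec} is phrased entirely in terms of shape equalities $\tld{w}(\rhobar,\tau)=\tld{w}(\rhobar^{\speci},\tau)$, not Levi structure. So your stated transfer mechanism is the wrong tool, even though you correctly flag that the transfer is the crux.

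There is also a mismatch of quantifiers that makes your route more burdensome than necessary. You want to propagate potentially crystalline lift existence from $\rbar_v$ to $\rho^{\speci}$ for \emph{every} $\tau$ with $\sigma_v\in\JH(\overline{\sigma}(\tau))$, and then apply a combinatorial lemma quantified over all such $\tau$. The paper is more economical: Proposition \ref{prop:combWE} produces a \emph{single} well-chosen tame type $\tau$ (depending only on $\sigma_v$ and $\rhobar^{\speci}$) such that (a) $\sigma_v\in\JH(\overline{\sigma}(\tau))$, and (b) $\tld{w}(\rhobar^{\speci},\tau)\in\Adm(e\eta_0)$ already implies $\sigma_v\in W^?(\rhobar^{\speci})$. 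After the global input gives $\tau$-admissibility of $\rbar_v$ (hence $\tld{w}(\rbar_v,\tau)\in\Adm(e\eta_0)$), semicontinuity gives $\tld{w}(\rhobar^{\speci},\tau)\in\Adm(e\eta_0)$ and you are done. Crucially, this only uses the \emph{admissibility of the shape} of $\rho^{\speci}$ at this one type; it never needs $\rho^{\speci}$ to have a potentially crystalline lift (which would require checking the monodromy condition, a stronger fact not established here). To repair your proposal you should replace the Levi reduction transfer with Theorem \ref{thm:semicont} and replace the ``for all $\taubar$'' combinatorial lemma with Proposition \ref{prop:combWE}.
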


 The proof follows from a purely local result, showing that if $\rbar_v$ admits a tamely potentially crystalline lift of type $(\tau, (n-1, n-2, \ldots, 0))$ then so does any extension of the specialization of $\rbar_v^{\speci}$ to $G_K$, combined with the same weight elimination combinatorics used in the tame case.
 
\begin{rmk} 
Theorems \ref{intro:thm:globalobv} and \ref{intro:wildewe} interact in an interesting way to constrain $W(\rbar)$.
Weight elimination can be used to produce further extremal Serre weights which in turn can be used to produce further specializations, and hence further weight elimination. 
Each iteration sharpens the bounds on $W(\rbar)$. 
\end{rmk}

\begin{rmk}
\label{rmk:bounds}
\begin{enumerate}
\item
When $F^+_v/\Qp$ is unramified the assumption that $\sigma$ is sufficiently generic can be removed (\cite{LLL}).
A similar argument can remove this assumption even in the ramified case, but we don't pursue this here.
\item
Theorem \ref{intro:thm:globalobv} improves on previous lower bounds for the set of modular weights given by the sets of ordinary weights \cite{gee-geraghty} and Fontaine--Laffaille weights \cite{BLGG}. 
For example, if $\rbar_v$ is completely reducible and generic, then $\rbar_v$ has $n!$ ordinary weights, $n^f$ Fontaine--Laffaille weights, and $(n!)^f$ extremal weights where $f$ denotes $[k_v:\F_p]$. 

\item
The only previously known upper bound for $W(\rbar)$ was $W^?(\rbar_v^{\semis})$ (when $F_v^+/\Q_p$ is unramified) \cite{LLL}. 
If $\rbar_v$ has $(n!)^f$ specializations (which is the generic behavior geometrically on the Emerton--Gee stack), Theorem \ref{intro:wildewe} gives a far better upper bound. 
For instance when $n=3$, $F_v^+/\Q_p$ is unramified, and $\rbar_v$ has $6^f$ specializations, Theorem \ref{intro:wildewe} gives an upper bound with a set of size at most $2^f$ rather than $9^f$. 

\item
When $n=2$ and $F_v^+/\Q_p$ is unramified, the lower and upper bounds are equal and agree with the set defined in \cite{BDJ}. 
When $n=3$ and $F_v^+/\Q_p$ is unramified, the lower and upper bounds are close and play a major role in the resolution of the Serre weight conjecture in generic cases \cite{GL3Wild}. 
\end{enumerate}
\end{rmk}

A byproduct of our methods is an automorphic tameness criterion in the spirit of  \cite{gross}.
When $n =2$ and $F = \Q$,  Gross's tameness criterion says that for generic modular $\rbar$, tameness of $\rbar$ at $p$ is equivalent to $W(\rbar)$ having two distinct Serre weights (as opposed to one).  Here we show a similar criterion in terms of the modularity of two extremal weights.  

\begin{thm}[``Automorphic tameness criterion'', Theorem \ref{thm:aut:tameness}]
\label{intro:tamenesscrit}
Let $\sigma_v,\sigma'_{v}\in W_{\obv}(\rbar_v^{\semis})$ be extremal weights of $\rbar_v^{\semis}$ which differ by the longest element $w_0$ under the Weyl group symmetry.   Suppose that $ \sigma_v\in W(\rbar)$ and that $\rbar_v$ is sufficiently generic.  
Then the following are equivalent:
\begin{enumerate}
\item $\sigma'_{v}\in W(\rbar)$; and
\item $\rbar_v$ is tame. 
\end{enumerate}
\end{thm}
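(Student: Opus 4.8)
The plan is to prove the two implications separately, keeping the standing hypotheses ($\sigma_v\in W(\rbar)$, $\rbar_v$ sufficiently generic) in force throughout.

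For $(2)\Rightarrow(1)$ the argument should be a one-line application of the modularity of extremal weights. When $\rbar_v$ is tame and generic, the extremal weight set $W_{\obv}(\rbar_v)$ is the full $\prod_{[k:\Fp]}S_n$-torsor described in the introduction, and it coincides with $W_{\obv}(\rbar_v^{\semis})$; in particular it contains both $\sigma_v$ and $\sigma'_v$. Since $\sigma_v\in W(\rbar)$, the set $W_{\obv}(\rbar_v)\cap W(\rbar)$ is non-empty, so Theorem \ref{intro:thm:globalobv} applies and gives $W_{\obv}(\rbar_v)\subset W(\rbar)$; hence $\sigma'_v\in W(\rbar)$.

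For $(1)\Rightarrow(2)$ I would argue by contraposition: assuming $\rbar_v$ is not tame, the goal is to show that $\sigma_v$ and $\sigma'_v$ are not simultaneously modular. The global input is weight elimination through specializations (Theorem \ref{intro:wildewe}): any $\sigma\in W(\rbar)$ lies in $W^?(\rbar_v^{\speci})$ for \emph{every} specialization $\rbar_v^{\speci}$ of $\rbar_v$. I would combine this with two local facts coming from the theory of specializations and extremal weights (Definition \ref{defn:spec}, Proposition \ref{prop:tamecrit}). First, running weight elimination over all specializations of $\rbar_v$ should isolate exactly the extremal weights of the wild representation, i.e.\ $W(\rbar)\cap W_{\obv}(\rbar_v^{\semis})\subseteq\bigl(\bigcap_{\rbar_v^{\speci}}W^?(\rbar_v^{\speci})\bigr)\cap W_{\obv}(\rbar_v^{\semis})=W_{\obv}(\rbar_v)$; the last equality is the ``specialization enhancement'' that repairs the hidden Weyl symmetry, identifying the collection of pairs (extremal weight, compatible specialization) with a $\prod_{[k:\Fp]}S_n$-torsor lying over $W_{\obv}(\rbar_v)$. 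Second, by Proposition \ref{prop:tamecrit}, when $\rbar_v$ is not tame the subset $W_{\obv}(\rbar_v)\subseteq W_{\obv}(\rbar_v^{\semis})$ is proper and, crucially, is not stable under $w_0$: it cannot contain a pair of extremal weights of $\rbar_v^{\semis}$ differing by the longest element. Hence if $\sigma_v$ and $\sigma'_v$ both lay in $W(\rbar)$ they would both lie in $W(\rbar)\cap W_{\obv}(\rbar_v^{\semis})\subseteq W_{\obv}(\rbar_v)$, contradicting $\sigma'_v=w_0\cdot\sigma_v$. This is the desired contrapositive.

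I expect the main obstacle to be the pair of local/combinatorial statements invoked in the last paragraph: that weight elimination across the full family of specializations of a wild $\rbar_v$ carves out precisely $W_{\obv}(\rbar_v)$, and that for wild $\rbar_v$ this set is the full $\prod_{[k:\Fp]}S_n$-torsor of weights exactly in the tame case --- equivalently that it omits a member of each $w_0$-antipodal pair otherwise. These are assertions about the relative position of the various torsors $W^?(\rbar_v^{\speci})$ inside $W_{\obv}(\rbar_v^{\semis})$, and they rest on the geometric input (Levi reduction, the Emerton--Gee stack) underpinning the definition of specializations. By contrast, Theorems \ref{intro:thm:globalobv} and \ref{intro:wildewe} and Proposition \ref{prop:tamecrit} should enter as black boxes, the only routine verification being that the genericity hypothesis on $\rbar_v$ is uniform enough to license all of them at once.
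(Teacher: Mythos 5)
Your direction $(2)\Rightarrow(1)$ is correct and is exactly the paper's intended route: Proposition \ref{prop:obvforss} identifies $W_{\obv}(\rbar_v)$ with $W_{\obv}(\rbar_v^{\semis})$ when $\rbar_v$ is tame, so both $\sigma_v$ and $\sigma'_v$ lie in $W_{\obv}(\rbar_v)$, and then Theorem \ref{thm:globalobv} upgrades non-emptiness of $W_{\obv}(\rbar_v)\cap W(\rbar)$ to the inclusion $W_{\obv}(\rbar_v)\subset W(\rbar)$.

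Your direction $(1)\Rightarrow(2)$ has a genuine gap, concentrated at the displayed ``equality'' $\bigl(\bigcap_{\rbar_v^{\speci}}W^?(\rbar_v^{\speci})\bigr)\cap W_{\obv}(\rbar_v^{\semis})=W_{\obv}(\rbar_v)$. This identity is not proved in the paper and is not a consequence of the definitions: $W_{\obv}(\rbar_v)$ is defined via the set of specialization pairs $SP(\rbar_v)$, i.e.\ it records, for each weight, a specific specialization $\rbar_v^{\speci}$ together with a tame type $\tau$ realizing a maximal-translation shape \emph{and} the mod~$p$ monodromy condition; being simultaneously in every $W^?(\rbar_v^{\speci})$ is a much weaker, purely combinatorial, condition. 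Moreover, your appeal to Proposition \ref{prop:tamecrit} mislabels what it says: the proposition is a cardinality statement $\#W_{\obv}(\rhobar)=\#W^{\cJ}\Leftrightarrow\rhobar$ semisimple. It does not assert that $W_{\obv}(\rbar_v)$ omits one member of each $w_0$-antipodal pair when $\rbar_v$ is wild, and in fact the relevant rigidity is \emph{about specialization pairs with the same $\rbar_v^{\speci}$}, not about the bare weight set. Concretely: if $\sigma_v$ and $\sigma'_v$ did both lie in $W_{\obv}(\rbar_v)$ but were witnessed by distinct specializations, the argument in the proof of Proposition \ref{prop:tamecrit} would not apply.

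The step your proposal is missing, and which the paper's cryptic ``Use Theorem \ref{thm:globalobv} and Proposition \ref{prop:tamecrit}'' is implicitly relying on, is to promote $\sigma_v\in W(\rbar)\cap W_{\obv}(\rbar_v^{\semis})$ to the strictly stronger statement $(\sigma_v,\rbar_v^{\semis})\in SP(\rbar_v)$. This goes as follows: pick the tame type $\tau_w$ with $\tld{w}(\rbar_v^{\semis},\tau_w)=t_{w^{-1}(e\eta_0)}$ that exhibits $\sigma_v$ as an extremal weight of $\rbar_v^{\semis}$; modularity of $\sigma_v$ (Proposition \ref{prop:PWE}, via the patching functor) forces $R^{\eta,\tau_w}_{\rbar_v}\neq 0$, i.e.\ a potentially crystalline lift of type $(\tau_w,\eta)$; Corollary \ref{prop:weak:sc} gives $\tld{w}(\rbar_v^{\semis},\tau_w)\leq \tld{w}(\rbar_v,\tau_w)$, which by maximality of the translation forces equality; Remark \ref{rmk:liftsandmodpmono} gives the mod~$p$ monodromy condition from the lift. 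Hence $\tau_w$ exhibits $(\sigma_v,\rbar_v^{\semis})\in SP(\rbar_v)$. The same applies to $\sigma'_v$, so $\rbar_v$ specializes to both $(\sigma_v,\rbar_v^{\semis})$ and $(\sigma'_v,\rbar_v^{\semis})$, and then the claim established \emph{inside} the proof of Proposition \ref{prop:tamecrit} (which is self-contained and does not presuppose $\#W_{\obv}=\#W^{\cJ}$) yields that $\rbar_v$ is semisimple. In short: you need to manufacture two specialization pairs over the \emph{same} tame $\F$-type $\rbar_v^{\semis}$ before the $w_0$-rigidity argument bites; running weight elimination over all specializations does not achieve this.
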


\begin{rmk}
In the case where $W(\rbar)$ contains a lowest alcove weight, our methods also give a refined version of the tameness criterion, showing that automorphic information even detects the stratum of $\rbar_v$ in the moduli of Fontaine--Laffaille representations (with respect to a natural partition).
This idea plays a crucial role in \cite{LGC}.
\end{rmk}

Finally, we discuss our results on the weight part of Serre's conjecture for division algebras. 
When $G$ is an anisotropic mod center inner form of $\GL_n$ locally at $v$, Serre weights lift to characteristic zero, and hence the modularity of a Serre weight can be rephrased in terms of the existence of automorphic lifts of specified types.
By local-global compatibility, a necessary condition for the modularity of a generic Serre weight $\chi_v$ is the existence of a lift which is potentially crystalline at $v$ of type $(\tau(\chi_v), (n-1, n-2, \ldots, 0))$ for a certain tame cuspidal type $\tau(\chi_v)$.
We prove the converse under some hypotheses.

\begin{thm}[Serre weights for division algebras, Theorem \ref{thm:divalg}] \label{thm:intro:divalg}
Suppose that $v$ is unramified in $F^+$, that $G$ is an anisotropic mod center inner form of $\GL_n$ locally at $v$, that $\rbar: G_F \ra \GL_n(\ovl{\F}_p)$ satisfies standard Taylor--Wiles hypotheses and is potentially diagonalizably automorphic, and that $\rhobar_v$ is sufficiently generic.
Then $\chi_v \in W(\rbar)$ if and only if $\rhobar_v$ admits a potentially crystalline lift of type $(\tau(\chi_v), (n-1, n-2, \ldots, 0))$. 
\end{thm}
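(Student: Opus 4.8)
The strategy is to reduce, via Jacquet--Langlands and classical local-global compatibility, to a statement about automorphic lifts, and then to derive the nontrivial (``if'') direction from the modularity of extremal weights (Theorem~\ref{intro:thm:globalobv}) and the local analysis of potentially crystalline deformation rings that underlies it. Since $G$ is an anisotropic inner form of $\GL_n$ at $v$, the group $G(\cO_{F^+,v})$ is the unit group of a maximal order in a division algebra over $F_{\tld{v}}$, so its irreducible smooth $\ovl{\F}_p$-representations are characters, each of which lifts canonically to characteristic zero. Unwinding the definition of $W(\rbar)$, lifting the relevant Hecke eigensystem (Deligne--Serre), transferring via Jacquet--Langlands to the inner form of $G$ split at $v$, and base-changing to $\GL_n$ over $F$, one sees that $\chi_v\in W(\rbar)$ is equivalent to the existence of a RACSDC automorphic representation $\pi$ of $\GL_n$ over $F$ lifting $\rbar$, unramified wherever $\rbar$ is unramified, cohomological of trivial weight (so that $r_\pi$ has Hodge--Tate weights $(n-1,\dots,0)$ at $\tld{v}$), with $\pi_{\tld{v}}$ the depth-zero supercuspidal attached to $\chi_v$. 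The necessary direction --- already recorded before the statement --- is then immediate: by local-global compatibility in the potentially crystalline case, for such a $\pi$ the Galois representation $r_\pi$ has $r_\pi|_{\Gal(\ovl{F}_{\tld{v}}/F_{\tld{v}})}$ potentially crystalline of type $(\tau(\chi_v),(n-1,\dots,0))$ with reduction $\rbar_v$.

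The substance is the converse. First, $\rbar$ is automorphic of some Serre weight and hence, by the genericity hypothesis, admits a crystalline automorphic lift with Hodge--Tate weights in the Fontaine--Laffaille range; such a lift is potentially diagonalizable, so $\rbar$ is potentially diagonalizably automorphic. Now suppose $\rbar_v$ admits a potentially crystalline lift of type $(\tau(\chi_v),(n-1,\dots,0))$, i.e.\ the framed deformation ring $R^{\tau(\chi_v),(n-1,\dots,0),\square}_{\rbar_v}$ is nonzero. The analysis of such deformation rings underlying the proof of Theorem~\ref{intro:thm:globalobv} --- the Levi-reduction property together with the attendant Emerton--Gee stack computations --- shows, under the genericity hypothesis, that this ring is Cohen--Macaulay with geometrically irreducible special fibre, and that its special fibre, viewed as a Breuil--M\'ezard cycle, is supported on the Jordan--H\"older factors $\sigma$ of $\ovl{V}(\tau(\chi_v))$ which occur as extremal weights of some specialization $\rbar_v^{\speci}$ of $\rbar_v$; in particular some such $\sigma$ exists. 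Applying Theorem~\ref{intro:thm:globalobv} through the specialization and Weyl-group symmetry --- using Theorem~\ref{intro:wildewe} to identify the relevant specialization and the fact that $\rbar$ is potentially diagonalizably automorphic --- one concludes $\sigma\in W(\rbar)$. Hence the Taylor--Wiles--Kisin module $M_\infty$ patched against the $\GL_n(\cO_{F_{\tld{v}}})$-representation attached to $\tau(\chi_v)$ is nonzero; geometric irreducibility of the special fibre of $R^{\tau(\chi_v),(n-1,\dots,0),\square}_{\rbar_v}$ then forces $M_\infty$ to have full support over it, so the given local lift lies in the support of $M_\infty$. This produces an automorphic lift of $\rbar$ whose restriction to $\Gal(\ovl{F}_{\tld{v}}/F_{\tld{v}})$ is potentially crystalline of type $(\tau(\chi_v),(n-1,\dots,0))$, and transferring back through Jacquet--Langlands gives $\chi_v\in W(\rbar)$.

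The main obstacle is local and is supplied by the body of the paper: one must control the potentially crystalline deformation ring $R^{\tau(\chi_v),(n-1,\dots,0)}_{\rbar_v}$ for wildly ramified $\rbar_v$ --- both that its special fibre is geometrically irreducible, so that nonvanishing of $M_\infty$ propagates to full support, and that its Breuil--M\'ezard cycle meets only the extremal weights of the specializations of $\rbar_v$, so that Theorem~\ref{intro:thm:globalobv} is applicable. Granting the Levi-reduction results this is settled, and the rest is essentially bookkeeping; the one genuinely delicate step is the identification of the relevant Jordan--H\"older factors of the cuspidal Deligne--Lusztig representation $\ovl{V}(\tau(\chi_v))$ with extremal weights of specializations of $\rbar_v$, which rests on the combinatorial description of specializations in terms of the fundamental characters of $\Gal(\ovl{F}_{\tld{v}}/F_{\tld{v}})$.
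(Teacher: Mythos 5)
Your high-level architecture (Jacquet--Langlands transfer, local-global compatibility, change of type, and then invoking modularity of extremal weights through the patching functor) matches the paper's strategy. But several of the steps you supply are not correct as stated, and they are not peripheral.

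First, the claim that potentially diagonalizable automorphy of $\rbar$ can be derived from its automorphy for the anisotropic group is wrong. For the anisotropic inner form, Serre weights are characters of $\cO_{D_{\tld{v}}}^\times$ and correspond under Lemma \ref{lemma:LGC} to \emph{supercuspidal} tame types, not to crystalline lifts in the Fontaine--Laffaille range. Being automorphic of some weight $\chi'$ for $G$ therefore produces a pcr lift of supercuspidal type, not a Fontaine--Laffaille lift, and it is not known in general that such a lift is potentially diagonalizable. If your claimed step were valid, one would bypass most of the machinery. The paper's full Theorem \ref{thm:divalg} instead \emph{assumes} the existence of a RACSDC $\Pi$ on $\GL_n(\A_F)$ lifting $\rbar$ with $r_{p,\iota}(\Pi)|_{G_{F^+_v}}$ potentially diagonalizable (and supercuspidal at the relevant auxiliary places). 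Your argument needs to take this as a hypothesis, not try to re-derive it.

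Second, the key local claim that the special fibre of $R^{\tau(\chi_v),\eta}_{\rbar_v}$ is ``geometrically irreducible'' is false in general. The Levi-reduction result (Theorem \ref{thm:FSM}) applies to a restricted class of shapes in the $W_{a,\alpha}$-corridors, not to arbitrary (cuspidal) $\tau$; and even where it applies, it can give special fibres with two components. Generically these special fibres have multiple components governed by Breuil--M\'ezard, not one. This incorrect claim is then used to force ``full support'' of $M_\infty$, a step that is both invalid and unnecessary. The paper only needs to show some $\sigma\in W_{\obv}(\rbar_p)\cap\JH(\ovl{\sigma}(\tau))$ exists and is modular, then uses exactness of $S(U,-)_\fm$ to get nonvanishing with coefficients $\sigma^\circ(\tau^\vee)$; there is no appeal to support over the full deformation ring. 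The existence of such a $\sigma$ comes from Proposition \ref{prop:obvintersect}, which rests on the Emerton--Gee stack analysis of the extremal locus (Propositions \ref{prop:obvlocus} and \ref{prop:obvstrat}), not on properties of the special fibre of $R^{\tau,\eta}_{\rbar_v}$ as a Breuil--M\'ezard cycle.

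Third, your change-of-type step must also preserve supercuspidality at the places where $G$ fails to be quasisplit, so that the resulting automorphic representation on $\GL_n(\A_F)$ actually descends back to $G$ via Jacquet--Langlands; this requires Theorem \ref{thm:changetype} with the set $\Delta$ of such places, which your sketch does not track. Finally, a small gap in the ``only if'' direction: your appeal to local-global compatibility to deduce a \emph{potentially crystalline} lift implicitly needs $\tau(\chi_v)$ to be regular (so that $N=0$ is forced). The paper deduces this from genericity via Enns' result (\cite[Proposition 7]{enns}) inside Lemma \ref{lemma:LGC}; some version of that argument is needed.
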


\noindent This generalizes results in \cite{Gee-Savitt} when $n=2$ and \cite{dottoDA} (announced in 2019) when $\rbar$ is tamely ramified at $v$. 
The main difficulty lies in the construction of automorphic lifts. 
One has access to powerful potentially Barsotti--Tate modularity lifting results when $n=2$ \cite{gee-kisin} that are not available in general. 
Via a transfer \`a la Jacquet--Langlands, it suffices to construct automorphic lifts for a group which is quasisplit at places dividing $p$. Generalizing \cite{LLL} in the tame case, we construct the desired automorphic lifts using the modularity of extremal weights. 
In fact, we prove a more general criterion for the existence of automorphic lifts where $\tau(\chi_v)$ is replaced by any sufficiently generic tame inertial type (Corollary \ref{cor:changetype}). 
That the modularity of extremal weights suffices to produce the desired automorphic lifts reduces to combinatorics of the extended affine Weyl group when $\rbar$ is tamely ramified at $v$. 
In general, it requires an analysis of the geometry of local models (\S \ref{sub:extr:loc}). 

\subsection{Geometric methods: Levi reduction for deformation rings}

We now explain how we go about proving the modularity of the extremal weights (Theorem \ref{intro:thm:globalobv}). As mentioned earlier, this notion expands on the notions of ordinary and obvious weights. Gee--Geraghty \cite{gee-geraghty} proved modularity of ordinary weights in considerable generality. Three of the authors proved modularity of obvious weights in the unramified and generic semisimple case (\cite{LLL}). Both of these results rely on producing potentially diagonalizable lifts of some prescribed type. Using these methods, we can access some but not all of the extremal weights. Instead, we adopt the strategy of \cite{LLLM}
and exploit the symmetry of our situation.   

As described above, extremal weights when enhanced with the data of a specialization admit a Weyl group symmetry.  The main point is to show that if two extremal weights $\sigma, \sigma'$ are related by a simple reflection then the modularity of one implies the modularity of the other.  To do this, we show that we can find a sequence of well-chosen tame types $\tau_0, \ldots, \tau_{2e}$ \emph{connecting} $\sigma$ to $\sigma'$ where we can establish good combinatorial behavior of Serre weights and show that the Galois deformation rings are integral domains.     
The following is the main result on deformation rings that we use. 

\begin{thm}[particular case of Theorem \ref{thm:FSM}]
\label{intro:thm:FSM}
Let $\tau_i$ be one of the well-chosen tame inertial types described above (which will be sufficiently generic in our setup). %

Then $R_\rhobar^{(n-1, n-2, \ldots, 0),\tau}$ is either zero or is a normal domain.   Furthermore, if it is nonzero, then either it is formally smooth over $\cO$ or the special fiber is reduced with exactly two irreducible components. 
\end{thm}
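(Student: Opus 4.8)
The plan is to analyze the potentially crystalline deformation ring $R_\rhobar^{(n-1,n-2,\ldots,0),\tau}$ through its associated local model, following the strategy of \cite{LLLM}, \cite{MLM} adapted to the ramified setting. First I would set up the Kisin module / Breuil--Kisin module description of the deformation problem: the deformation ring with Hodge--Tate weights $(n-1,n-2,\ldots,0)$ and tame descent datum $\tau$ is, after an appropriate resolution, governed by a moduli space of Kisin modules with descent datum, which in turn maps to a union of Schubert-type varieties inside an affine flag variety (or a suitable local model thereof). The key structural input is a \emph{Levi reduction} phenomenon: because the $\tau_i$ are chosen to be of a special shape (close to the boundary, so that the genericity forces the shape of the Kisin module to be concentrated), the local model decomposes — its irreducible components are indexed by a small set of admissible elements, and for the well-chosen $\tau_i$ this set has size at most two. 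One then transports geometric properties (normality, reducedness of special fiber, number of components, formal smoothness) from the local model to $R_\rhobar^{(n-1,n-2,\ldots,0),\tau}$ via the (formally smooth) map between them, after checking that the resolution is an isomorphism on the relevant locus or at least does not affect these properties.

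The key steps, in order, would be: (1) Recall/establish that $R_\rhobar^{(n-1,n-2,\ldots,0),\tau}\otimes E$ is geometrically connected and, when nonzero, has the expected dimension — so it suffices to prove the special-fiber statements and normality. (2) Identify the shape(s) $\wdht w$ of the Kisin module: using genericity of $\tau$ and the extremal/boundary position of the Hodge--Tate weights, show that only one or two shapes can occur, and compute them explicitly. (3) For each shape, write down the corresponding piece of the local model explicitly as an explicit affine scheme (a quotient of an explicit matrix equation by a torus, as in the "explicit Kisin module" computations of \cite{LLLM}); verify directly that each such piece is formally smooth or is a normal domain with reduced special fiber. (4) Analyze how the one-or-two pieces glue: when there are two shapes, show the two components meet in the special fiber in the expected codimension-one locus, that the total space is still normal (e.g.\ by Serre's criterion $R_1+S_2$, using that each component is normal and the intersection is nice), and that the generic fiber is irreducible so that the ring is a domain. (5) Conclude by descending along the formally smooth morphism from the local model to $R_\rhobar^{(n-1,n-2,\ldots,0),\tau}$, using that formal smoothness preserves normality, domain-ness, reducedness and number of components.

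The main obstacle I expect is step (2)–(3) in the \emph{ramified} case: the paper repeatedly flags the absence of a developed local model theory when $e>1$, so one cannot simply cite \cite{MLM}. Concretely, the Breuil--Kisin modules now live over $W(k)[[u]]$ with $u^e$ playing the role that $u$ plays in the unramified theory, the Frobenius and descent data interact with all $e$ "embeddings" of the ramified piece, and the matrix equations defining the local model become substantially larger and less symmetric. Proving that for the well-chosen $\tau_i$ the shape set still collapses to size $\le 2$, and that each resulting explicit affine chart is still normal with reduced special fiber, requires a careful hands-on analysis of these enlarged equations — this is where the bulk of Theorem \ref{thm:FSM}'s proof must lie. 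A secondary difficulty is ensuring the chosen resolution (Kisin variety $\to$ deformation ring) is an isomorphism precisely on the locus where we need it, i.e.\ that no spurious components or non-normal behavior is introduced or hidden by the resolution; this typically follows from a dimension count plus the genericity hypotheses, but must be checked in the ramified normalization used here.
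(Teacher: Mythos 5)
Your high-level plan --- resolve via Breuil--Kisin modules, pass to a local model, invoke a ``Levi reduction'', and transport geometric properties back along a formally smooth morphism --- is the right skeleton, and you even name the key phrase. But the concrete mechanism you propose diverges from the paper's in ways that matter, and the bottleneck you flag in your step (3) is real: the paper's proof is built precisely so as to \emph{never} have to do the explicit, shape-by-shape matrix computations you envision.

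There are two essential gaps. First, the ``at most two components'' statement does not arise from enumerating shapes. In Theorem \ref{thm:FSM} the shape $\tld{w}(\rhobar,\tau) = \tld{w}^{-1}t_{e\eta_0}\tld{w}_\alpha\tld{w}$ is a single fixed hypothesis, not something to be enumerated. The Levi reduction (Propositions \ref{prop:parabolic_general} and \ref{prop:Levi_reduction}) shows that the universal matrix $A^{(j)}$ automatically acquires a block lower-triangular structure adapted to $\GL_1^r\times\GL_2\times\GL_1^s$, and that the local-model chart $U(\tld{z},\le\eta)$ is an affine space over the $\GL_2$ Pappas--Zhu local model $M_j(t_{(1,0)})$. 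The geometry of that model is already classical (Pappas--Rapoport): generic fiber $(\PP^1_E)^e$, geometrically reduced special fiber equal to a union of exactly two normal Schubert varieties, the whole thing a normal domain. So the count of irreducible components comes from a single, already-understood $\GL_2$ object, and there is no gluing argument needed at the $\GL_n$ level. Your steps (2)--(4) would not assemble into this argument, and your step (3) is exactly the ramified hands-on computation the reduction is designed to bypass.

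Second, you omit the monodromy condition entirely, and it is indispensable. After the Levi reduction one is not yet looking at the Galois deformation ring but at the finite-height ring $R^{\tau,\ovl{\beta}}_{\ovl{\fM}}$; one must still cut out the $\nabla$-condition to get $R^{\tau,\ovl{\beta},\nabla}_{\ovl{\fM}}$, which is the ring formally smooth over $R^{\le\eta,\tau}_\rhobar$. The analytic heart of the proof (Proposition \ref{prop:monodromy_control}, Lemmas \ref{lem:parabolic_monodromy} and \ref{lem:error_control}, Remark \ref{rmk:error_term}) is that, in the presence of the parabolic structure, the approximate monodromy condition solves for almost all remaining variables in terms of the block-diagonal data modulo a controlled $p$-power error, so that $R^{\tau,\ovl{\beta},\nabla}_{\ovl{\fM}}$ is topologically generated over $\cO^{\wedge}_{M_\cJ(t_{(1,0)}),x}$ by exactly $fn+(\frac{n(n-1)}{2}-1)[K:\Qp]$ elements. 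A dimension count --- the deformation ring, if nonzero, has known Krull dimension --- then forces $R^{\tau,\ovl{\beta},\nabla}_{\ovl{\fM}}$ to be a power series ring over the integral domain $\cO^{\wedge}_{M_\cJ(t_{(1,0)}),x}$, and normality, domain-ness, reducedness of the special fiber, and the component count all transport at once. Your steps (1) and (5) are subsumed into this dimension-count endgame, but absent the monodromy analysis the count has nothing to count.
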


We actually prove a more general result for a larger class of deformation rings (Theorem \ref{thm:FSM}). We approach the deformation spaces using the methods for studying Breuil--Kisin modules developed in \cite{LLLM, LLL, MLM}.   This is the first time these methods have been adapted to the ramified setting.    
The key ingredient in our proof of Theorem \ref{thm:FSM} is the fact that the local models (in the sense of \cite{MLM} adapted to the ramified setting) of these Galois deformation spaces have a Levi reduction property: namely, they are formally smooth over similar local models attached to suitable Levi subgroups of $\GL_n$. This turns out to be a general phenomenon whenever the shape of $\rhobar$ relative to the type $\tau$ is suitably ``decomposable'', which may be of independent interest. In the specific case of Theorem \ref{intro:thm:FSM}, the Levi subgroup we reduce to is $\GL_2\times \GL_1^{n-2}$.   Thus, we are able to show essentially that $R_\rhobar^{(n-1, n-2, \ldots, 0),\tau}$ is smooth over the completed local ring of a ramified local model of Pappas--Rapoport from which we deduce the normality and the description of the special fiber. 
We prove a similar Levi reduction property for the Pappas--Zhu local models, which is a key geometric input (Lemma \ref{lemma:2comps}) into the analysis of Serre weight combinatorics for these tame types. 

\begin{rmk} When $K/\Qp$ is unramified, the relevant local model is a product of the Iwahori local models for $\GL_2$.   Concretely, Theorem \ref{intro:thm:FSM} says that $R_\rhobar^{(n-1, n-2, \ldots, 0),\tau}$ will either be power series ring over $\cO$ or will be formally smooth over $\cO[\![x,y]\!]/(xy-p)$.   This observation in the case of $\GL_3$ in \cite{LLLM} was the starting point for this work. 
\end{rmk}

\subsection{Overview}
In \S \ref{sec:preliminaries}, after preliminaries on the affine Weyl group and admissible sets (\S \ref{sec:comb:weyl}), and recollections on Serre weights (\S \ref{sec:SWC}), we formulate a Serre type conjecture on the weights of tame Galois representations over a possibly ramified field (cf.~Definition \ref{defn:SWC:ram}) and obtain our main results on the combinatorics of Serre weights and tame inertial types for the \emph{shapes} we will be interested in (cf.~Propositions \ref{prop:indint}, \ref{prop:indcomb}).

\S \ref{sec:BKM} introduces the notion of \emph{extremal weights} for Galois representations (\S \ref{subsec:extr:wt}). This requires preliminaries on the semicontinuity of shapes for Kisin modules (\S \ref{subsec:SCI}, \ref{subsec:SCII}, in different degrees of generality), the notion of \emph{specializations} for Galois representations (\S \ref{subsec:SPEC}) and the closely related notion of \emph{specialization pairs} (\S \ref{subsec:SPEC:pairs}).
The non-emptiness of the set of the extremal weights is proved in \S \ref{sec:mord} and \S \ref{sub:extr:loc} with different methods. In particular the geometric interpretation of this set in terms of the Emerton--Gee stack in the unramified case is in \S \ref{sec:EGs}, \ref{sub:extr:loc}.

\S \ref{sec:PCDR} calculates the tamely potentially crystalline deformation rings which appear when studying extremal weights of Galois representations.
We first establish structural results on of Breuil--Kisin modules of certain \emph{parabolic shapes} (\S \ref{sub:par:strct}) and then analyze the monodromy condition on them (\S \ref{sub:analysis:MC}, Lemma \ref{lem:parabolic_monodromy}).

In \S \ref{sec:main:mod}, after a number of preliminaries on patching functors and cycles on potentially crystalline deformation rings (\S \ref{sec:patchfunc}, \ref{sec:cycles}), we prove in \S \ref{subsec:WE:MOD} the modularity of extremal weights in an axiomatic setup (Theorem \ref{thm:obv}). 
\S \ref{sec:TW} contains our global applications to automorphic forms on definite unitary groups.

\subsection{Acknowledgements}
The origin of this work dates back to a stay at the Mathematisches Forschungsinstitut Oberwolfach in winter 2019, which provided excellent working condition, and was further carried out during several visits at the University of Arizona and Northwestern University.
We would like to heartily thank these institutions for the outstanding research conditions they provided, and for their support.

We thank Andrea Dotto for explaining the application of the work \cite{LLL} of the first three authors to the weight part of Serre's conjecture for division algebras and for pointing out Remark \ref{rmk:BM}.

D.L. was supported by the National Science Foundation under agreements Nos.~DMS-1128155 and DMS-1703182, an AMS-Simons travel grant, and a start-up grant from Purdue University. B.LH. acknowledges support from the National Science Foundation under grant Nos.~DMS-1128155, DMS-1802037 and the Alfred P. Sloan Foundation. B.L. was supported by National Science Foundation grant DMS-1952556 and the Alfred P. Sloan Foundation. S.M. was supported by the Institut Universitaire de France and the ANR-18-CE40-0026 (CLap CLap).

\subsection{Notation}

For a field $K$, we denote by $\ovl{K}$ a fixed separable closure of $K$ and let $G_K \defeq \Gal(\ovl{K}/K)$.
If $K$ is defined as a subfield of an algebraically closed field, then we set $\ovl{K}$ to be this field.

If $K$ is a nonarchimedean local field, we let $I_K \subset G_K$ denote the inertial subgroup and $W_K \subset G_K$ denote the Weil group.
We fix a prime $p\in\Z_{>0}$.
Let $E \subset \ovl{\Q}_p$ be a subfield which is finite-dimensional over $\Q_p$.
We write $\cO$ to denote its ring of integers, fix an uniformizer $\varpi\in \cO$ and let $\F$ denote the residue field of $E$.
We will assume throughout that $E$ is sufficiently large.

\subsubsection{Reductive groups}
\label{sec:not:RG}
Let $G$ denote a split connected reductive group (over some ring) together with a Borel $B$, a maximal split torus $T \subset B$, and $Z \subset T$ the center of $G$.  
Let $d = \dim G - \dim B$.  
When $G$ is a product of copies of $\GL_n$, we will take $B$ to be upper triangular Borel and $T$ the diagonal torus. 
Let $\Phi^{+} \subset \Phi$ (resp. $\Phi^{\vee, +} \subset \Phi^{\vee}$) denote the subset of positive roots (resp.~positive coroots) in the set of roots (resp.~coroots) for $(G, B, T)$. 
We use the notation $\alpha > 0$ (resp.~$\alpha < 0$) for a positive (resp.~negative) root $\alpha\in \Phi$. 
Let $\Delta$ (resp.~$\Delta^{\vee}$) be the set of simple roots (resp.~coroots).
Let $X^*(T)$ be the group of characters of $T$, and set $X^0(T)$ to be the subgroup consisting of characters $\lambda\in X^*(T)$ such that $\langle\lambda,\alpha^\vee\rangle=0$ for all $\alpha^\vee\in \Delta^{\vee}$.
Let $\Lambda_R \subset X^*(T)$ denote the root lattice for $G$.
Let  $W(G)$ denote the Weyl group of $(G,T)$.  Let $w_0$ denote the longest element of $W(G)$.
We sometimes write $W$ for $W(G)$ when there is no chance for confusion.
Let $W_a$ (resp.~$\tld{W}$) denote the affine Weyl group and extended affine Weyl group 
\[
W_a = \Lambda_R \rtimes W(G), \quad \tld{W} = X^*(T) \rtimes W(G)
\]
for $G$.
We use $t_{\nu} \in \tld{W}$ to denote the image of $\nu \in X^*(T)$. 

The Weyl groups $W(G)$, $\tld{W}$, and $W_a$ act naturally on $X^*(T)$.
If $A$ is any ring, then the above Weyl groups act naturally on $X^*(T)\otimes_{\Z} A$ by extension of scalars.

Let $M$ be a free $\Z$-module of finite rank (e.g. $M=X^*(T)$). 
The duality pairing between $M$ and its $\Z$-linear dual $M^*$ will be denoted by $\langle \ ,\,\rangle$.
If $A$ is any ring, the pairing $\langle \ ,\,\rangle$ extends by $A$-linearity to a pairing between $M\otimes_{\Z}A$ and $M^*\otimes_{\Z}A$, and by an abuse of notation it will be denoted with the same symbol $\langle \ ,\,\rangle$. 

We write $G^\vee = G^\vee_{/\Z}$ for the split connected reductive group over $\Z$  defined by the root datum $(X_*(T),X^*(T), \Phi^\vee,\Phi)$. 
This defines a maximal split torus $T^\vee\subseteq G^\vee$ such that we have canonical identifications $X^*(T^\vee)\cong X_*(T)$ and $X_*(T^\vee)\cong X^*(T)$.

Let $V\defeq X^*(T)\otimes_{\Z}\R\ \setminus\ \big(\bigcup_{(\alpha,n)}H_{\alpha,n}\big)$.
For $(\alpha,k)\in \Phi \times \Z$, we have the root hyperplane $H_{\alpha,k}\defeq \{x \in V \mid \langle\lambda,\alpha^\vee\rangle=k\}$ and the half-hyperplanes $H^{+}_{\alpha, k} = \{ x \in V \mid \langle x, \alpha^\vee\rangle > k \}$  and $H^{-}_{\alpha, n} =  \{ x \in V \mid \langle x, \alpha^\vee\rangle < k \}.$ 
An alcove %
is a connected component of $V \setminus\ \big(\bigcup_{(\alpha,n)}H_{\alpha,n}\big)$. 

We say that an alcove $A$ is \emph{restricted} if $0<\langle\lambda,\alpha^\vee\rangle<1$ for all $\alpha\in \Delta$ and $\lambda\in A$.
We let $A_0$ denote the (dominant) base alcove, i.e.~the set of $\lambda\in X^*(T)\otimes_{\Z}\R$ such that $0<\langle\lambda,\alpha^\vee\rangle<1$ for all $\alpha\in \Phi^+$. 
Let $\cA$ denote the set of alcoves. 
Recall that $\tld{W}$ acts transitively on the set of alcoves, and $\tld{W}\cong\tld{W}_a\rtimes \Omega$ where $\Omega$ is the stabilizer of $A_0$.
We define
\[\tld{W}^+\defeq\{\tld{w}\in \tld{W}:\tld{w}(A_0) \textrm{ is dominant}\}.\]
and
\[\tld{W}^+_1\defeq\{\tld{w}\in \tld{W}^+:\tld{w}(A_0) \textrm{ is restricted}\}.\]
We fix an element $\eta_0\in X^*(T)$ such that $\langle \eta_0,\alpha^\vee\rangle = 1$ for all positive simple roots $\alpha$ and let $\tld{w}_h$ be $w_0 t_{-\eta_0}\in \tld{W}^+_1$. 

When $G = \GL_n$, we fix an isomorphism $X^*(T) \cong \Z^n$ in the standard way, where the standard $i$-th basis element $(0,\ldots, 1,\ldots, 0)$ (with the $1$ in the $i$-th position) of the right-hand side corresponds to extracting the $i$-th diagonal entry of a diagonal matrix.
When $G$ is a product of copies of $\GL_n$ indexed over a set $\cJ$ we take $\eta_0 \in X^*(T)$ to correspond to the element $(n-1, n-2, \ldots, 0)_{j\in\cJ} \in (\Z^n)^{\cJ}$ in the identification above.
In this case, given $j\in\cJ$ we write $\eta_{0,j}\in $ to denote the element which corresponds to the tuple $(n-1,\dots,1,0)$ at $j$ and to the zero tuple elsewhere.

Let $F^+_p$ be a finite \'etale $\Q_p$-algebra.
Then $F^+_p$ is isomorphic to a product $\prod_{S_p} F^+_{v}$ for some finite set $S_p$ where for each $v\in S_p$, $F_{v}^+$ is finite extension of $\Q_p$.
For each $v\in S_p$, let $\cO_{F^+_{v}} \subset F^+_{v}$ be the ring of integers, $k_{v}$ the residue field, $F^+_{v,0} \subset F^+_{v}$ the maximal unramified subextension, $f_{v}$ the unramified degree $[F^+_{v,0}:\Q_p]$, and $e_{v}$ the ramification degree $[F^+_{v}:F^+_{v,0}]$.
Let $\cO_p$ be the product $\prod_{v\in S_p} \cO_{F^+_{v}}$ and $k_p$ the product $\prod_{v\in S_p} k_{v}$. 

In global applications, $S_p$ will be a finite set of places dividing $p$ of a number field $F^+$.
When working locally, $S_p$ will have cardinality one, in which case we drop the subscripts from $f_{v}$, $e_{v}$, and $k_{v}$ and denote the single extension $F^+_{v}$ of $\Q_p$ by $K$.

If $G$ is a split connected reductive group over $\F_p$, with Borel $B$, maximal split torus $T$, and center $Z$, we let $G_0 \defeq \Res_{k_p/\F_p} G_{/k_p}$ with Borel subgroup $B_0 \defeq  \Res_{k_p/\F_p} B_{/k_p}$, maximal torus $T_0 \defeq \Res_{k_p/\F_p} T_{/k_p}$, and $Z_0 = \Res_{k_p/\F_p} Z_{/k_p}$. 
Assume that $\F$ contains the image of any ring homomorphism $k_p \ra \ovl{\F}_p$ and let $\cJ$ be the set of ring homomorphisms $k_p \ra \F$.
Then $\un{G} \defeq (G_0)_{/\F}$ is naturally identified with the split reductive group $G_{/\F}^{\cJ}$. 
We similarly define $\un{B}, \un{T},$ and $\un{Z}$.  
Corresponding to $(\un{G}, \un{B}, \un{T})$, we have the set of positive roots $\un{\Phi}^+ \subset \un{\Phi}$ and the set of positive coroots $\un{\Phi}^{\vee, +}\subset \un{\Phi}^{\vee}$.
The notations $\un{\Lambda}_R$, $\un{W}$, $\un{W}_a$, $\tld{\un{W}}$, $\tld{\un{W}}^+$, $\tld{\un{W}}^+_1$, $\un{\Omega}$ should be clear as should the natural isomorphisms $X^*(\un{T}) = X^*(T)^{\cJ}$ and the like. 
The absolute Frobenius automorphism $\varphi$ on $k_p$ induces an automorphism $\pi$ of the identified groups $X^*(\un{T})$ and $X_*(\un{T}^\vee)$ by the formula $\pi(\lambda)_\sigma = \lambda_{\sigma \circ \varphi^{-1}}$ for all $\lambda\in X^*(\un{T})$ and $\sigma: k_p \ra \F$.
We assume that, in this case, the element $\eta_0\in X^*(\un{T})$ we fixed is $\pi$-invariant.
We similarly define an automorphism $\pi$ of $\un{W}$ and $\tld{\un{W}}$.

\subsubsection{Galois Theory}
\label{sec:not:GT}
We now assume that $S_p$ has cardinality one.
We write $K\defeq F^+_{v}$ and drop the subscripts from $f_{v}$, $e_{v}$, and $k_{v}$.
Let $W(k)$ be ring of Witt vectors which is also ring of integers $\cO_{K_0}$ of $K_0$.
We denote the arithmetic Frobenius automorphism on $W(k)$ by $\phz$, which acts as raising to $p$-th power on the residue field.  
We fix an embedding $\sigma_0$ of $K_0$ into $E$ (equivalently an embedding $k$ into $\F$) and define $\sigma_j = \sigma_0 \circ \phz^{-j}$, which gives an identification between $\cJ=\Hom(k,\F)$ and $\Z/f\Z$.

We normalize Artin's reciprocity map $\mathrm{Art}_{K}: K\s\ra W_{K}^{\mathrm{ab}}$ in such a way that uniformizers are sent to geometric Frobenius elements.

Given an uniformizer $\pi_K\in \cO_K$ and a sequence $\un{\pi}_K\defeq (\pi_{K,m})_{m\in \N}\in \ovl{K}^{\N}$ satisfying $\pi_{K,m+1}^{p}=\pi_{K,m}$, $\pi_{K,0}\defeq \pi_K$ we let $K_\infty$ be $\underset{m\in\N}{\bigcup}K(\pi_{K,m})$.

Given an element $\pi_1 \defeq (-\pi_K)^{\frac{1}{p^{f}-1}}\in \overline{K}$ we have a corresponding character $\omega_{K}:I_K \ra W(k)^{\times}$ which, using our choice of embedding $\sigma_0$ gives a fundamental character of niveau $f$ 
\[
\omega_{f}:= \sigma_0 \circ \omega_{\pi_1}:I_K \ra \cO^{\times}. 
\]

Let $\rho: G_K\rightarrow \GL_n(E)$ be a $p$-adic, de Rham Galois representation. 
For $\sigma: K\iarrow E$, we define $\mathrm{HT}_\sigma(\rho)$ to be the multiset of $\sigma$-labeled Hodge-Tate weights of $\rho$, i.e.~the set of integers $i$ such that $\dim_E\big(\rho\otimes_{\sigma,K}\C_p(-i)\big)^{G_K}\neq 0$ (with the usual notation for Tate twists).
In particular, the cyclotomic character $\eps$ has Hodge--Tate weights 1 for all embedding $\sigma:K\iarrow E$. 
For $\mu=(\mu_j)_j\in X^*(\un{T})$ we say that $\rho$ has Hodge--Tate weighs $\mu$ if
\[
\mathrm{HT}_{\sigma_j}(\rho)=\{\mu_{1,j},\mu_{2,j},\dots,\mu_{n,j}\}.
\]
The \emph{inertial type} of $\rho$ is the isomorphism class of $\mathrm{WD}(\rho)|_{I_K}$, where $\mathrm{WD}(\rho)$ is the Weil--Deligne representation attached to $\rho$ as in \cite{CDT}, Appendix B.1 (in particular, $\rho\mapsto\mathrm{WD}(\rho)$ is \emph{covariant}).
An inertial type is a morphism $\tau: I_K\ra \GL_n(E)$ with open kernel and which extends to the Weil group $W_K$ of $G_K$.
We say that $\rho$ has type $(\mu,\tau)$ if $\rho$ has Hodge--Tate weights $\mu$ and inertial type given by (the isomorphism class of) $\tau$.

\subsubsection{Miscellaneous}
\label{sec:not:mis}

For any ring $S$, we define $\mathrm{Mat}_n(S)$ to be the set of $n\times n$ matrix with entries in $S$. 
If $M\in \mathrm{Mat}_n(S)$ and $A\in \GL_n(S)$ we write
\begin{equation}
\label{def:adj}
\Ad(A)(M)\defeq A\,M\,A^{-1}.
\end{equation}

If $X$ is an ind-scheme defined over $\cO$, we write $X_E\defeq X\times_{\Spec\cO} \Spec E$ and $X_{\F}\defeq X\times_{\Spec \cO}\Spec \F$ to denote its generic and special fiber, respectively.
\clearpage{}%
\clearpage{}%
\section{Preliminaries}
\label{sec:preliminaries}

\subsection{Extended affine Weyl groups}

In this section, we collect some background material on Weyl groups which will be needed throughout the paper.

Recall from \S \ref{sec:not:RG} that $G$ is a split reductive group with split maximal torus $T$ and Borel $B$. 
Let $W \defeq W(G,T)$ be the Weyl group and $V\defeq X^*(T) \otimes \R \cong X_*(T^{\vee}) \otimes \R$ denote the apartment of $(G, T)$ on which $\tld{W}\defeq X^*(T) \rtimes W$ acts.   Let $\cC_0$ denote the dominant Weyl chamber in $V$.  For any $w \in W(G)$, let $\cC_w = w(\cC_0)$.  In particular, denoting the longest element of $W$ by $w_0$, $\cC_{w_0}$ is the anti-dominant Weyl chamber.

Recall from \S \ref{sec:not:RG} that $\cA$ denotes the set of alcoves of $X^*(T) \otimes \R$ and that $A_0 \in \cA$ denotes the dominant base alcove. 
We let $\uparrow$ denote the upper arrow ordering on alcoves as defined in \cite[\S II.6.5]{RAGS}. 
Since $W_a$ acts simply transitively on the set of alcoves, $\tld{w} \mapsto \tld{w}(A_0)$ induces a bijection $W_a \risom \cA$ and thus an upper arrow ordering $\uparrow$ on $W_a$.
The dominant base alcove $A_0$ also defines a set of simple reflections in $W_a$ and thus a Coxeter length function on $W_a$ denoted $\ell(-)$ and a Bruhat order on $W_a$ denoted by $\leq$. 

If $\Omega \subset \tld{W}$ is the stabilizer of the base alcove, then $\tld{W} = W_a \rtimes \Omega$ and so $\tld{W}$ inherits a Bruhat and upper arrow order in the standard way: For $\tld{w}_1, \tld{w}_2\in W_a$ and $\delta\in \Omega$, $\tld{w}_1\delta\leq \tld{w}_2\delta$ (resp.~$\tld{w}_1\delta\uparrow \tld{w}_2\delta$) if and only if $\tld{w}_1\leq \tld{w}_2$ (resp.~$\tld{w}_1\uparrow \tld{w}_2$), and elements in different right $W_a$-cosets are incomparable. 
We extend $\ell(-)$ to $\tld{W}$ by letting $\ell(\tld{w}\delta)\defeq \ell(\tld{w})$ for any $\tld{w}\in W_a$, $\delta\in \Omega$.

\begin{defn}
If $\tld{w}_1,\dots, \tld{w}_m\in\tld{W}$, we say that $\tld{w}_1 \tld{w}_2\cdots \tld{w}_m$ is a \emph{reduced expression} if the inequality $\ell(\tld{w}_1 \tld{w}_2\cdots \tld{w}_m)\leq \sum\limits_{i=1}^m\ell(\tld{w}_i)$ is an equality. 
\end{defn}

Let $(\tld{W}^\vee,\leq)$ be the following partially ordered group: $\tld{W}^\vee$ is identified with $\tld{W}$ as a group, and $\ell(-)$ and $\leq$ are defined with respect to the \emph{antidominant} base alcove.  

\begin{defn} 
\label{affineadjoint} We define a bijection $\tld{w}\mapsto \tld{w}^*$ between $\tld{W}$ and $\tld{W}^\vee$ as follows: for $\tld{w} = t_{\nu}w \in \tld{W}$, with $w\in W$ and $\nu\in X^*(T) = X_*(T^{\vee})$, then $\tld{w}^*\defeq  w^{-1}t_{\nu} \in \tld{W}^\vee$. 
\end{defn}
\noindent This bijection respects notions of length and Bruhat order (see \cite[Lemma 2.1.3]{LLL}). 

We recall some fundamental notions associated to the geometry of $X^*(T)$ and $\tld{W}$.

\begin{defn} \label{defn:conv} Let $\lambda \in X^*(T)$.  
The convex hull of the set $\{w(\lambda)\mid w\in W\}$ is defined to be
\[
\Conv(\lambda) \defeq  \underset{w\in W}{\bigcap} w(\lambda)+\ovl{\cC}_{w w_0}
\]
where $\ovl{\cC}_{w w_0}$ denotes the closure of the Weyl chamber $\cC_{w w_0}$.
\end{defn}

We recall the definition of the admissible set from \cite{KR}:
\begin{defn} \label{defn:adm} For $\lambda \in X^*(T)$, define 
\[
\Adm(\lambda) \defeq  \left\{ \tld{w} \in \tld{W} \mid \tld{w} \leq t_{w(\lambda)} \text{ for some } w \in W \right\}.
\]
\end{defn}

For a positive integer $e$, define the $e$-critical strips to be strips $H^{(1-e, e)}_{\alpha} = \{ x \in V \mid 1-e < \langle x, \alpha^\vee\rangle < e \}$ where $\alpha \in \Phi^+$.

\begin{defn} \label{defn:regular}%
An alcove $A \in \cA$ is \emph{$e$-regular} if $A$ does not lie in any $e$-critical strip.   For any $\tld{w} \in \tld{W}$, we say $\tld{w}$ is \emph{$e$-regular} if $\tld{w}(A_0)$ is $e$-regular.  Define 
\[
\Adm^{\text{$e$-reg}}(\lambda) = \{ \tld{w} \in \Adm(\lambda)\mid \tld{w} \text{ is $e$-regular} \}. 
\]
\end{defn}

\begin{prop}
\label{prop:can:reg} 
If $\tld{w} \in \tld{W}$ is $e$-regular, then there exist $\tld{w}_1$ and $\tld{w}_2 \in \tld{W}_1^+$ and a dominant weight $\nu \in X^*(T)$ such that $\tld{w} = \tld{w}_2^{-1} w_0 t_{\nu + (e-1) \eta_0} \tld{w}_1$.
Moreover, $\tld{w}_1$, $\tld{w}_2$, and $\nu$ as above are unique up to $X^0(T)$. 
Conversely, if $\tld{w}_1$ and $\tld{w}_2$ are elements of $\tld{W}^+$, then $\tld{w}_2^{-1} w_0 t_{(e-1) \eta_0} \tld{w}_1$ is $e$-regular.
\end{prop}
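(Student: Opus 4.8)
The plan is to exploit the bijection between $\tld{W}$ and the set of alcoves, and to reduce the statement to a concrete combinatorial fact about how an $e$-regular alcove sits relative to the hyperplane arrangement. First I would translate the claimed factorization into a statement about alcoves: writing $\tld{w}(A_0) = A$, the equality $\tld{w} = \tld{w}_2^{-1} w_0 t_{\nu + (e-1)\eta_0} \tld{w}_1$ says that $A$ lies in the $W$-orbit of a box of the form $\tld{w}_1^{-1}$ applied to the restricted region, translated out by $\nu + (e-1)\eta_0$ and reflected by $w_0$. The key observation is that $e$-regularity of $A$ means, for every positive root $\alpha$, the quantity $\langle \lambda, \alpha^\vee\rangle$ for $\lambda \in A$ avoids the interval $(1-e, e)$, hence is either $\leq 1-e$ or $\geq e$ on all of $A$. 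This partitions the set of positive roots according to the sign of $\langle\lambda,\alpha^\vee\rangle - \tfrac{1}{2}$, and that partition is exactly the data of a Weyl chamber wall-crossing; so there is a unique $w \in W$ (equivalently a unique way to write things up to $\tld{W}_1^+$-data) such that $w^{-1}(A)$ has all positive-root coordinates $\geq e$.

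Concretely, the steps I would carry out are: (i) Given $e$-regular $\tld{w}$, let $A = \tld{w}(A_0)$ and consider the set $S = \{\alpha \in \Phi^+ : \langle \lambda, \alpha^\vee\rangle \geq e \text{ on } A\}$; show $S$ is "closed" in the appropriate sense (if $\alpha, \beta \in S$ and $\alpha + \beta \in \Phi^+$ then $\alpha+\beta \in S$, and similarly for the complement), so that after applying a suitable $w_1 \in W$ one may assume $S = \Phi^+$, i.e.\ $A - (e-1)\eta_0$ lies in the dominant cone shifted to be above $A_0$. (ii) Having arranged $A$ to be "far dominant", subtract the translation $t_{(e-1)\eta_0}$ and then note $A' := w_0^{-1}\big(A - (e-1)\eta_0\big)$ lies in the dominant region, so it is $\tld{w}'(A_0)$ for a unique $\tld{w}' \in \tld{W}^+$; decompose $\tld{w}' = t_\nu \tld{w}_2^{-1}$ appropriately — actually one wants to pull out the restricted part, writing the corresponding element in the form $w_0 t_{\nu}\tld{w}_1'$ with $\tld{w}_1' \in \tld{W}_1^+$, using that $\tld{W}^+ $ decomposes via dominant translations and $\tld{W}_1^+$ (the standard fact that every dominant alcove is $t_\mu(B)$ for a restricted alcove $B$ and dominant $\mu$). (iii) Track the element through these manipulations to recover $\tld{w} = \tld{w}_2^{-1} w_0 t_{\nu + (e-1)\eta_0}\tld{w}_1$, reading off $\tld{w}_1, \tld{w}_2 \in \tld{W}_1^+$ and dominant $\nu$. (iv) For uniqueness up to $X^0(T)$: if two such factorizations agree, compare their images on $A_0$ and use simple transitivity of $\tld{W}_a$ on alcoves together with the fact that $X^0(T)$ is exactly the ambiguity in choosing $\nu$ (translations by $X^0(T)$ act trivially on the relevant geometry). (v) For the converse, take $\tld{w}_1, \tld{w}_2 \in \tld{W}^+$; then $\tld{w}_1(A_0)$ is dominant, so all its positive-root coordinates lie in $(0,\infty)$ and in fact $\geq 0$; translating by $(e-1)\eta_0$ pushes all positive-root coordinates up by $e-1$, so they become $> e-1$, hence (being non-integers, as $A_0$ avoids walls) $\geq e$ fails in general — rather one checks $\langle\cdot,\alpha^\vee\rangle > e-1 \geq e-1$, and combined with applying $w_0$ and $\tld{w}_2^{-1}$ one verifies directly that no coordinate can land in $(1-e,e)$; so $\tld{w}_2^{-1}w_0 t_{(e-1)\eta_0}\tld{w}_1$ is $e$-regular.

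The main obstacle I expect is step (i): carefully proving that the sign pattern of $\{\langle\lambda,\alpha^\vee\rangle\}_{\alpha\in\Phi^+}$ on an $e$-regular alcove is realized by a genuine Weyl chamber — i.e.\ that the "far from all $e$-walls" condition forces a convexity/closure property on the set $S$ that exactly matches the combinatorics of positive systems — and then extracting from this the precise $\tld{W}_1^+$-elements rather than just $W$-elements. This is where the role of $\eta_0$ (with $\langle\eta_0,\alpha^\vee\rangle = 1$ for simple $\alpha$) and of the restricted alcove condition becomes essential, and where one must be most careful about the $X^0(T)$-ambiguity. I would expect the cleanest route is to first prove everything modulo the distinction between $\tld{W}^+$ and $\tld{W}_1^+$ (working with chambers, not alcoves), then refine using the standard decomposition of the dominant cone of alcoves into $t_\mu(\text{restricted alcove})$, $\mu$ dominant; the uniqueness assertions then follow from simple transitivity of the affine Weyl group action on alcoves.
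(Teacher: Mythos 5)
The paper states Proposition~\ref{prop:can:reg} without proof, so there is no author argument to compare against; I will assess your plan on its own terms. The overall strategy --- work with alcoves, split $\Phi^+$ by the sign of $\langle\lambda,\alpha^\vee\rangle$ to find the relevant Weyl chamber, move to the dominant chamber, subtract $(e-1)\eta_0$, decompose the resulting dominant alcove as a dominant translation of a restricted one, and read off $\tld w_1,\tld w_2,\nu$ --- is the right idea. But there are several concrete errors, and the genuine subtlety lies elsewhere than where you flag it.

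The central false statement is in step (i): you claim there is a unique $w$ such that $w^{-1}(A)$ has all positive-root coordinates $\geq e$. This is wrong, and the reason is precisely the \emph{asymmetry} of the critical strip $(1-e,e)$. If $\alpha>0$ has $w(\alpha)<0$, then $\langle \mu,\alpha^\vee\rangle=-\langle\lambda,(-w(\alpha))^\vee\rangle$ with $-w(\alpha)\in S^-$, so $\langle\lambda,(-w(\alpha))^\vee\rangle\leq 1-e$ and hence $\langle\mu,\alpha^\vee\rangle\geq e-1$, \emph{not} $\geq e$. Concretely, for $G=\GL_2$, $e=3$, $A=A_0-3\eta_0$ (coordinate in $(-3,-2)$, which is $3$-regular), the unique $w$ making $w^{-1}(A)$ dominant is $s_1$, and then the coordinate of $s_1(A)$ lies in $(2,3)$, not $\geq 3$. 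So $w^{-1}(A)$ is \emph{not} $e$-regular in general, and "far from all $e$-walls is realized by a Weyl chamber" is not the obstacle --- that part is the standard fact that the sign pattern of a generic point determines a unique chamber. The real content is that $w^{-1}(A)-(e-1)\eta_0$ is still dominant \emph{despite} some simple-root coordinates of $w^{-1}(A)$ being only $>e-1$: since $\langle\eta_0,\alpha_i^\vee\rangle=1$ for simple $\alpha_i$, you lose exactly $e-1$, landing just above $0$. Your wording also muddles which alcove is being translated and applies $w_0^{-1}$ at the wrong stage; one should first apply $w^{-1}$ (with $w$ as above), then subtract $(e-1)\eta_0$, and the $w_0$ only appears when you repackage $w$ as $\tld w_2^{-1}w_0$.

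Two further gaps. First, step (iii) entirely glosses over the production of $\tld w_2\in\tld W_1^+$ and the verification that the resulting $\nu$ remains dominant. Having written $\tld w = w\,t_{\nu+(e-1)\eta_0}\tld w_1$, you must set $\tld w_2$ to be the element of $\tld W_1^+$ whose image in $W$ is $w_0 w^{-1}$ and then show the correction $\nu\mapsto\nu-\rho$ (where $\langle\rho,\alpha^\vee\rangle\in\{0,1\}$ for simple $\alpha$, with value $1$ exactly when $w(\alpha^*)>0$) keeps $\nu$ dominant; this requires the refined lower bounds $\langle\mu,\alpha^\vee\rangle > e$ versus $> e-1$ above, so it is where the asymmetry earns its keep. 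Second, your step (v) asserts that translating by $(e-1)\eta_0$ "pushes all positive-root coordinates up by $e-1$", which is only correct for simple roots; for $\alpha$ of height $m_\alpha$ the shift is $(e-1)m_\alpha$. The converse direction actually turns on a dichotomy you do not articulate: writing $\tld w_2 = t_{\mu_2'}u_2$ with $\mu_2'$ the (dominant) translation part, one has $\langle\mu_2',\beta^\vee\rangle\geq 1$ whenever $u_2^{-1}(\beta)<0$, and this extra $+1$ is exactly what pushes the borderline coordinates (those with $\gamma=w_0u_2(\alpha)>0$) from $>e-1$ to $>e$; the remaining coordinates (those with $\gamma<0$) are bounded $<1-e$ directly from the dominance of $\nu$, $\mu_2'$, $\eta_0$ and the restricted property of $\tld w_1(A_0)$. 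Without isolating this mechanism, "one verifies directly that no coordinate can land in $(1-e,e)$" is not a proof.

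In short: the geometry is the right geometry, and your reduction to the decomposition $\tld W^+ = t_{X_+^*(T)}\cdot\tld W_1^+$ is where the existence ultimately comes from, but the assertion "$w^{-1}(A)$ has all coordinates $\geq e$" is false, the verification that $\nu$ ends up dominant is missing, and the converse computation needs the $\mu_2'\geq 1$ dichotomy you did not see.
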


We conclude this section by recalling from \cite[Definition 2.1.10]{MLM} the various notions of genericity for elements of $X^*(T)$.

\begin{defn}
\label{defn:var:gen}
Let $\lambda\in X^*(T)$ be a weight and let $m\geq 0$ be an integer. 
\begin{enumerate}
\item
\label{defn:deep}
We say that $\lambda$ \emph{lies $m$-deep in its $p$-alcove} if for all $\alpha\in \Phi^{+}$, there exist integers $m_\alpha\in \Z$ such that $pm_\alpha+m<\langle \lambda+\eta_0,\alpha^\vee\rangle<p(m_\alpha+1)-m$. 

\item
\label{defn:mugeneric} We say that $\lambda \in X^*(T)$ is \emph{$m$-generic} if 
$m < |\langle \lambda, \alpha^{\vee} \rangle + pk|$
for all $\alpha \in \Phi$ and $k\in \Z$ (or equivalently, $\lambda-\eta_0$ is $m$-deep in its $p$-alcove).
\item
\label{defn:small}
We say that an element $\tld{w}=w t_\nu$ (in either $\tld{W}$ or $\tld{W}^\vee$) is \emph{$m$-small} if $\langle \nu,\alpha^\vee\rangle\leq m$ for all $\alpha\in \Phi$. 
\end{enumerate} 
\end{defn}

\subsection{Combinatorics of the extended affine Weyl group}
\label{sec:comb:weyl}

In this section, we collect a variety of results on the combinatorics of the extended affine Weyl group. 
These will be applied to the analysis of the combinatorics of Serre weights in \S \ref{sec:combserre}. 
The methods are elementary with the exception of a geometric input from Pappas--Zhu local models in the proof of Lemma \ref{lemma:2comps}. 
We begin with results concerning the partial orderings $\leq$ and $\uparrow$. 

\begin{lemma}\label{lemma:minrep}
Suppose that $\tld{x}^+ \in \tld{W}^+$ and $w\in W$. Then $w\tld{x}^+$ is a reduced expression.
\end{lemma}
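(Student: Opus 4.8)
The plan is to argue geometrically with alcoves. Recall that for any $\tld{w}\in\tld{W}$ the length $\ell(\tld{w})$ equals the number $d(A_0,\tld{w}A_0)$ of affine root hyperplanes separating the base alcove $A_0$ from $\tld{w}A_0$ (this is standard; cf.~\cite[\S II.6]{RAGS}, and it persists after extending $\ell$ to $\tld{W}=W_a\rtimes\Omega$ since $\Omega$ stabilizes $A_0$); that the function $d$ on pairs of alcoves is $\tld{W}$-invariant; and that $\ell$ restricted to $W\subset W_a$ is the usual Weyl group length. The elementary input is that for three alcoves $A,B,C$, an affine hyperplane $H$ separates $A$ from $C$ if and only if it separates exactly one of the pairs $(A,B)$, $(B,C)$; hence, writing $X$ (resp.~$Y$) for the set of hyperplanes separating $A_0$ from $wA_0$ (resp.~$wA_0$ from $w\tld{x}^+A_0$), the set of hyperplanes separating $A_0$ from $w\tld{x}^+A_0$ is the symmetric difference of $X$ and $Y$, so
\[
\ell(w\tld{x}^+)=|X|+|Y|-2|X\cap Y|=\ell(w)+\ell(\tld{x}^+)-2|X\cap Y|,
\]
using $|X|=d(A_0,wA_0)=\ell(w)$ and $|Y|=d(wA_0,w\tld{x}^+A_0)=d(A_0,\tld{x}^+A_0)=\ell(\tld{x}^+)$ by $\tld{W}$-invariance. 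Thus the lemma reduces to the disjointness $X\cap Y=\emptyset$.

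To prove this I would show that $X$ consists only of linear hyperplanes $H_{\alpha,0}$ (those through the origin), while $Y$ consists only of hyperplanes lying at nonzero distance from the origin. For $X$: the element $w\in W$ fixes the origin and $0\in\ovl{A_0}$, so both $A_0$ and $wA_0$ have the origin in their closure; any affine hyperplane $H$ not through $0$ has $0$ strictly on one side, and since each of $A_0$, $wA_0$ is open, connected, and disjoint from $H$, it lies entirely on the side containing $0$, so $H$ cannot separate them. For $Y$: since $\tld{x}^+\in\tld{W}^+$, the alcove $\tld{x}^+A_0$ is dominant, hence contained in the open dominant chamber $\cC_0$ (an open alcove contained in $\ovl{\cC_0}$ is automatically contained in $\cC_0$), as is $A_0$; since $\cC_0$ is convex and disjoint from every $H_{\alpha,0}$ with $\alpha\in\Phi^+$, no such hyperplane separates $A_0$ from $\tld{x}^+A_0$, so the hyperplanes separating these two alcoves are among the $H_{\alpha,k}$ with $k\neq 0$. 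Applying $w$, which is a linear isometry fixing $0$ and carrying $H_{\alpha,k}$ to $H_{w\alpha,k}$, we conclude that $Y$ consists of hyperplanes at nonzero distance from the origin. Hence $X\cap Y=\emptyset$, and the displayed identity gives $\ell(w\tld{x}^+)=\ell(w)+\ell(\tld{x}^+)$, i.e.~$w\tld{x}^+$ is reduced.

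I do not expect a serious obstacle; the only care needed is the standard bookkeeping recalled in the first paragraph and the small topological remark that $\ovl{\cC_0}\setminus\cC_0$ lies in a finite union of hyperplanes, none of which meets an alcove. Alternatively, one could phrase the same argument purely in Coxeter-theoretic terms: $W=\langle s_1,\dots,s_{n-1}\rangle$ is a standard parabolic subgroup of $W_a$, the elements of $\tld{W}^+$ lying in $W_a$ are exactly the minimal-length representatives of the cosets $W\backslash W_a$, and then $\ell(wv)=\ell(w)+\ell(v)$ for $w\in W$ and $v$ minimal in $Wv$ is the standard parabolic length formula; one extends to $\tld{W}=W_a\rtimes\Omega$ using that $\Omega$ preserves lengths. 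Either way, the content of the lemma is precisely the disjointness $X\cap Y=\emptyset$ established above.
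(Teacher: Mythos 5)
Your proof is correct, and it is essentially the same geometric argument as the paper's: the paper simply notes that there are monotone (``$\uparrow$-direction'') galleries from $w^{-1}(A_0)$ to $A_0$ and from $A_0$ to $\tld{x}^+(A_0)$, and appeals to the standard fact that a concatenation of two monotone galleries is monotone and hence minimal. Your write-up unpacks exactly why that concatenated gallery is minimal: the hyperplanes crossed on the first leg all pass through $0$ (because $w$ is linear and fixes the origin), while the hyperplanes crossed on the second leg lie at nonzero distance from $0$ (because $A_0$ and $\tld{x}^+(A_0)$ both lie in the open dominant chamber), so the two sets are disjoint and no cancellation occurs in the separating-hyperplane count. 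Your final Coxeter-theoretic remark --- that elements of $W_a\cap\tld{W}^+$ are precisely the minimal-length representatives of $W\backslash W_a$, so the identity is the parabolic length formula --- is a legitimate alternative route, but the geometric argument you give is what the paper has in mind. No gaps.
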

\begin{proof}
There are galleries in the $1$-direction from $w^{-1}(A_0)$ to $A_0$ and from $A_0$ to $\tld{x}^+(A_0)$.
We conclude that $\ell(w\tld{x}^+) = \ell(w)+\ell(\tld{x}^+)$.
\end{proof}

\begin{lemma}\label{lemma:bruhatup}
Suppose that $\tld{x} \in \tld{W}$ and $\tld{w}^+ \in \tld{W}^+$ and $\tld{x} \leq w_0\tld{w}^+$.
Then $w_0\tld{w}^+ \uparrow w\tld{x}$ for any $w\in W$.
\end{lemma}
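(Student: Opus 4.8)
The plan is to move the comparison into the dominant chamber, where the Bruhat order controls the $\uparrow$-order, and then transport it back by left multiplication by $w_0$. I will use three facts about the upper arrow ordering, normalised (as in \cite[\S II.6.5]{RAGS}) so that for $\be\in\Phi^+$ one has $C\uparrow s_{\be,m}C$ exactly when $C\subset H^{-}_{\be,m}$:
\begin{enumerate}
\item if $A$ is a dominant alcove then $wA\uparrow A$ for every $w\in W$ (the dominant representative is the $\uparrow$-maximum of its $W$-orbit);
\item left multiplication by $w_0$ reverses the order: $C\uparrow D$ if and only if $w_0D\uparrow w_0C$, since $w_0$ sends $H^{\pm}_{\be,m}$ to $H^{\mp}_{\ga,-m}$ with $\ga\defeq-w_0\be\in\Phi^+$, turning an elementary up-step into an elementary down-step;
\item if $\tld{a}\leq\tld{b}$ in the Bruhat order and $\tld{b}\in\tld{W}^+$, then $\tld{a}\uparrow\tld{b}$.
\end{enumerate}
Here (1) and (2) are elementary, while (3) is the standard comparison of the two orders (cf.~\cite[\S II.6]{RAGS}; if it is not available verbatim it follows from the gallery description of $\uparrow$, a reduced word for $\tld{b}\in\tld{W}^+$ being a ``dominant'' gallery).

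By (2), and since $w\mapsto w_0w$ is a bijection of $W$, the statement that $w_0\tld{w}^+\uparrow w\tld{x}$ for all $w\in W$ is equivalent to
\[
w'\tld{x}\uparrow\tld{w}^+\quad\text{for all }w'\in W,
\]
so it suffices to prove this. Let $\tld{x}^+\in\tld{W}^+$ be the minimal length representative of the left coset $W\tld{x}$, so $W\tld{x}=W\tld{x}^+$ and $\tld{x}^+\leq\tld{x}$ (a minimal coset representative lies Bruhat below every element of its coset). Together with the hypothesis this gives $\tld{x}^+\leq w_0\tld{w}^+$. As $W$ is a standard parabolic of $W_a$, the projection $\tld{W}\to\tld{W}^+$ onto minimal coset representatives preserves the Bruhat order; applying it to $\tld{x}^+\leq w_0\tld{w}^+$, and noting that $\tld{w}^+\in\tld{W}^+$ is the minimal representative of the coset $W\,w_0\tld{w}^+$ (indeed $w_0\tld{w}^+$ is its longest element, by Lemma~\ref{lemma:minrep}), we obtain $\tld{x}^+\leq\tld{w}^+$. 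Now (3) applied to $\tld{x}^+\leq\tld{w}^+$ gives $\tld{x}^+\uparrow\tld{w}^+$, and (1) applied to the dominant alcove $\tld{x}^+(A_0)$ gives $w'\tld{x}\uparrow\tld{x}^+$ for every $w'\in W$ (since $w'\tld{x}\in W\tld{x}^+$). Concatenating, $w'\tld{x}\uparrow\tld{x}^+\uparrow\tld{w}^+$, which is what we need.

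The step I expect to be most delicate is the interaction of the two partial orders with left multiplication: neither $\leq$ nor $\uparrow$ is stable under left translation by $W$, so the argument genuinely uses that $w_0\tld{w}^+$ is the longest element of $W\tld{w}^+$ and that $w'\tld{x}$ runs over a single $W$-orbit whose dominant representative is Bruhat minimal. Reconciling the normalisations of $\uparrow$ and of ``dominant'' with the conventions of \S\ref{sec:not:RG} is the main bookkeeping, and one should also check that $\tld{x}$, $\tld{x}^+$, $\tld{w}^+$, $w_0\tld{w}^+$ and $w\tld{x}$ all lie in a single $\Omega$-coset, so that the parabolic-coset combinatorics of $W\subset W_a$ applies.
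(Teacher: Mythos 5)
Your proof is correct and essentially identical to the paper's: both reduce to the inequality $\tld{x}^+ \leq \tld{w}^+$ between dominant coset representatives, invoke Wang's theorem to convert this to $\tld{x}^+ \uparrow \tld{w}^+$, and finish with the Jantzen facts that the dominant (resp.~antidominant) alcove is $\uparrow$-extremal in its $W$-orbit and that left multiplication by $w_0$ reverses $\uparrow$. The only cosmetic differences are that you apply the $w_0$-reversal at the outset rather than at the end, and you extract $\tld{x}^+ \leq \tld{w}^+$ via the order-preserving projection onto minimal coset representatives, where the paper uses the subword property of the reduced factorization $w_0\tld{w}^+$ directly.
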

\begin{proof}
Since $w_0\tld{w}^+$ is a reduced factorization by Lemma \ref{lemma:minrep}, $\tld{x} \leq w_0\tld{w}^+$ implies that $\tld{x} = s \tld{x}'$ for $s\in W$ and $\tld{x}' \in \tld{W}$ with $\tld{x}' \leq \tld{w}^+$.
Factoring $\tld{x}'$ as the reduced expression $s'\tld{x}^+$ where $s'\in W$ and $\tld{x}^+\in \tld{W}^+$, we have that $\tld{x}^+ \leq \tld{x}'$. 
Replacing $s$ by $ss'$ and $\tld{x}'$ by $\tld{x}^+$, we can thus assume without loss of generality that $\tld{x}' = \tld{x}^+$ is in $\tld{W}^+$.
Wang's theorem (\cite[Theorem 4.3]{Wang} or \cite[Theorem 4.1.1]{LLL}) implies that $\tld{x}^+\uparrow \tld{w}^+$.
Then we have that $w_0\tld{w}^+ \uparrow w_0 \tld{x}^+ \uparrow ws\tld{x}^+ = w\tld{x}$ for any $w\in W$ by \cite[II 6.5(5)]{RAGS}.
\end{proof}

\begin{lemma}\label{lemma:bruhatup1}
If $\tld{x}$ and $\tld{y} \in \tld{W}$ and $\tld{x}\leq \tld{y}$, then $\tld{x}^+ \uparrow \tld{y}^+$ where $\tld{x}^+$ and $\tld{y}^+$ are the unique elements in $W\tld{x} \cap \tld{W}^+$ and $W\tld{y} \cap \tld{W}^+$, respectively.
In particular, we have $\tld{x} \uparrow \tld{y}^+$.
\end{lemma}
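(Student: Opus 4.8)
This is a purely combinatorial statement in $\tld{W}$, and my plan is to run an argument parallel to (and slightly streamlining) the proof of Lemma~\ref{lemma:bruhatup}: use the subword property of the Bruhat order to replace $\tld{x}\leq\tld{y}$ by a Bruhat inequality $\tld{x}^+\leq\tld{y}^+$ between the dominant representatives, and then invoke Wang's theorem, which converts Bruhat inequalities between dominant elements into $\uparrow$-inequalities.

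Concretely, I would first use Lemma~\ref{lemma:minrep} to write $\tld{y}=t\,\tld{y}^+$ as a reduced expression with $t\in W$ and $\tld{y}^+\in\tld{W}^+$. Given $\tld{x}\leq\tld{y}=t\,\tld{y}^+$, the subword property of the Bruhat order (the same input used in the proof of Lemma~\ref{lemma:bruhatup}) yields a reduced factorization $\tld{x}=\tld{x}_1\,\tld{x}_2$ with $\tld{x}_1\in W$, $\tld{x}_1\leq t$, and $\tld{x}_2\leq\tld{y}^+$. Factoring $\tld{x}_2=s'\,\tld{x}_2^+$ as a reduced expression with $s'\in W$ and $\tld{x}_2^+\in\tld{W}^+$ the dominant representative (reduced by Lemma~\ref{lemma:minrep} again), we get $\tld{x}_2^+\leq\tld{x}_2\leq\tld{y}^+$; since $W\tld{x}=W\tld{x}_2^+$ and $\tld{x}_2^+\in\tld{W}^+$, uniqueness of the dominant representative forces $\tld{x}^+=\tld{x}_2^+$, and hence $\tld{x}^+\leq\tld{y}^+$.

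Now $\tld{x}^+$ and $\tld{y}^+$ both lie in $\tld{W}^+$, so Wang's theorem (\cite[Theorem 4.3]{Wang}, \cite[Theorem 4.1.1]{LLL}) upgrades $\tld{x}^+\leq\tld{y}^+$ to $\tld{x}^+\uparrow\tld{y}^+$, which is the main assertion. For the ``in particular'' clause, the factorization above exhibits $\tld{x}=w\,\tld{x}^+$ with $w\defeq\tld{x}_1 s'\in W$; by the properties of $\uparrow$ used in the proof of Lemma~\ref{lemma:bruhatup} (\cite[II 6.5(5)]{RAGS}) one has $w\,\tld{x}^+\uparrow\tld{x}^+$, and composing with $\tld{x}^+\uparrow\tld{y}^+$ via transitivity of $\uparrow$ gives $\tld{x}\uparrow\tld{y}^+$. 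I do not expect a genuine obstacle here: the only delicate point is the bookkeeping around the subword property---namely arranging the factorization $\tld{x}=\tld{x}_1\tld{x}_2$ compatibly with the reduced expression $\tld{y}=t\,\tld{y}^+$ so that $\tld{x}_1$ lands in the finite group $W$, and then observing that passing from $\tld{x}_2$ to its dominant representative can only decrease length. An alternative to the first two paragraphs is to quote directly that the minimal-length representative map $W\backslash\tld{W}\to\tld{W}^+$ is order-preserving for the Bruhat order, and then apply Wang's theorem verbatim.
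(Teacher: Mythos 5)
Your proof is correct and takes a recognizably different route from the paper's, though from the same ingredients. Both rest on the subword/length-additivity mechanism for reduced factorizations through $W\cdot\tld{W}^+$ (Lemma~\ref{lemma:minrep}) and on Wang's theorem, and both finish the last claim by citing \cite[II 6.5(5)]{RAGS}. You run the subword argument directly on $\tld{y}=t\,\tld{y}^+$ to extract $\tld{x}^+\le\tld{y}^+$ (via uniqueness of the dominant representative in $W\tld{x}$) and then apply Wang's theorem once --- in effect isolating the clean intermediate fact that the map $\tld{x}\mapsto\tld{x}^+$ is Bruhat-order-preserving, as you observe at the end. The paper instead conjugates by $w_0$: from $\tld{x}^+\le\tld{x}\le\tld{y}$ it produces $w_0w\tld{x}^+\le w_0\tld{y}^+$ by iterated lifting (after verifying that both $w_0w\tld{x}^+$ and $w_0w\tld{y}$ are reduced products), then reuses Lemma~\ref{lemma:bruhatup} --- which already bundles the subword step, Wang's theorem, and a $w_0$-reflection --- to get $w_0\tld{y}^+\uparrow w_0\tld{x}^+$, and untwists. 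Your version is a bit more transparent about the intermediate Bruhat inequality; the paper's is a bit more economical because it reuses Lemma~\ref{lemma:bruhatup} rather than repeating the subword argument.
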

\begin{proof}
Let $\tld{y}^+$ be $w\tld{y}$ with $w\in W$.
Since $w_0=(w_0w)w^{-1}$ and $w_0(w\tld{y})$ are reduced expressions (the latter by Lemma \ref{lemma:minrep}, the former by e.g.~\cite[\S 1.8]{humphreys-coxeter}), so is $(w_0w)w^{-1}(w\tld{y})$ and therefore so is $(w_0 w)\tld{y}$.
Since $\tld{x}^+ \leq \tld{x}$ (by Lemma \ref{lemma:minrep}) and $\tld{x} \leq \tld{y}$, 
$w_0w\tld{x}^+ \leq w_0w\tld{y} = w_0 \tld{y}^+$.
Lemma \ref{lemma:bruhatup} implies that $w_0\tld{y}^+ \uparrow w_0\tld{x}^+$ so that $\tld{x}^+ \uparrow \tld{y}^+$.
The last claim follows from \cite[II 6.5(5)]{RAGS}.
\end{proof}

\begin{lemma}\label{lemma:doubleclosure}
If $\tld{w},\tld{w}' \in \tld{W}^+_1$, $\lambda,\nu\in X^*(T)$ with $\lambda$ dominant, then $t_\nu w_0t_\lambda\tld{w} \leq w_0t_\lambda\tld{w}'$ and $t_{-\nu}w_0t_\lambda\tld{w}' \leq w_0t_\lambda\tld{w}$ imply that $\nu \in X^0(T)$ and $\tld{w}' = t_\nu \tld{w}$.
\end{lemma}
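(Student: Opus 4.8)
The plan is to play the two Bruhat inequalities against each other using the $\uparrow$-ordering and the fact that $\tld{w},\tld{w}'$ lie in the restricted alcove range $\tld{W}^+_1$. First I would apply Lemma \ref{lemma:bruhatup1} to each hypothesis: from $t_\nu w_0 t_\lambda \tld{w} \leq w_0 t_\lambda \tld{w}'$ we get $\tld{x}^+ \uparrow (w_0 t_\lambda \tld{w}')^+$ where $\tld{x}^+$ is the dominant representative of $W t_\nu w_0 t_\lambda \tld{w}$, and symmetrically from the reversed inequality with $t_{-\nu}$. The point is that $w_0 t_\lambda \tld{w}$ and $w_0 t_\lambda \tld{w}'$ are already of a shape whose dominant representative is computable: since $\tld{w},\tld{w}' \in \tld{W}^+_1$ and $\lambda$ is dominant, $t_\lambda \tld{w}$ has $t_\lambda \tld{w}(A_0)$ dominant (it lies in the dominant region because $\lambda$ shifts the restricted alcove up), so $(w_0 t_\lambda \tld{w})^+ = t_\lambda \tld{w}$ and likewise for $\tld{w}'$; meanwhile left-multiplying by $t_\nu$ changes the $W$-coset representative in a controlled way, so $(t_\nu w_0 t_\lambda \tld{w})^+$ differs from $t_\lambda \tld{w}$ by a translation depending on $\nu$.

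The key step is then to translate the pair of $\uparrow$ relations into a statement about alcoves. Using \cite[\S II.6.5]{RAGS}, $\uparrow$ is generated by moves $A \uparrow s_{\alpha,k} A$ that strictly increase the value of $\langle -, \alpha^\vee\rangle$ on the relevant linkage class; in particular $A \uparrow B$ together with $B \uparrow A$ forces $A = B$, and more to the point, a chain $A \uparrow \cdots \uparrow B$ where $B$ is obtained from $A$ by the translation $t_\nu$ composed with staying in the same linkage orbit is only possible if that translation is "upward" in every relevant direction. Running the argument in both directions ($\nu$ and $-\nu$) then pins down $\langle \nu, \alpha^\vee \rangle = 0$ for all simple $\alpha$, i.e.\ $\nu \in X^0(T)$. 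Once $\nu \in X^0(T)$, the central translation $t_\nu$ acts trivially on alcoves, the two hypotheses collapse to $t_\lambda \tld{w} \leq t_\lambda \tld{w}'$ and $t_\lambda \tld{w}' \leq t_\lambda \tld{w}$ (up to $X^0(T)$), hence $t_\lambda \tld{w} = t_\lambda \tld{w}'$ in $\tld{W}/X^0(T)$, giving $\tld{w}' = t_\nu \tld{w}$ as claimed.

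The main obstacle I anticipate is making the reduction $(w_0 t_\lambda \tld{w})^+ = t_\lambda \tld{w}$ and its $t_\nu$-twisted analogue fully rigorous, including bookkeeping the $X^0(T)$ ambiguity and confirming that left multiplication by $t_\nu$ and passage to the dominant $W$-representative interact the way I want (this is essentially a computation with Proposition \ref{prop:can:reg}-style normal forms, exploiting that $\tld{w}\in\tld{W}^+_1$ so $\tld{w}(A_0)$ is restricted and $\lambda$ dominant pushes it into the dominant chamber). A clean way to organize this is: set $\tld{z} := t_\lambda \tld{w}$, $\tld{z}' := t_\lambda \tld{w}'$, both of which are in $\tld{W}^+$ (dominant base alcove image), observe $\tld{z}, \tld{z}'$ lie in the same $\tld{W}_a$-coset iff $\tld{w},\tld{w}'$ do, and then apply Lemma \ref{lemma:bruhatup1} with $\tld{x} = t_{\pm\nu} \tld{z}$ (resp.\ $t_{\mp\nu}\tld{z}'$) and $\tld{y} = \tld{z}'$ (resp.\ $\tld{z}$) to get $\tld{x}^+ \uparrow \tld{z}'$ and $\tld{x}'^{+} \uparrow \tld{z}$; combining with the observation that $\tld{z}, \tld{z}'$ are themselves dominant (so equal to their own $(-)^+$), the length/ordering compatibility of $(-)^+$ forces the translation parts to cancel. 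The final extraction of $\nu \in X^0(T)$ should then follow from comparing $\pi$-invariant translation lengths, or directly from the fact that $\uparrow$ never decreases $\langle-,\alpha^\vee\rangle$ within a linkage class while a genuine dominant-to-dominant translation by a non-$X^0(T)$ element would have to.
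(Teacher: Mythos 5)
Your proposal has a genuine gap: you miss the essential \emph{reversal} that drives the paper's argument, and you also misstate the hypothesis when you try to set it up.

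The paper's proof applies Lemma~\ref{lemma:bruhatup} (not Lemma~\ref{lemma:bruhatup1}) to each hypothesis. Lemma~\ref{lemma:bruhatup} is the crucial one because it \emph{reverses} the direction: from $\tld{x}\leq w_0\tld{w}^+$ one concludes $w_0\tld{w}^+\uparrow w\tld{x}$ for any $w\in W$. Applying this to the two hypotheses gives $w_0t_\lambda\tld{w}'\uparrow t_\nu w_0t_\lambda\tld{w}$ and $w_0t_\lambda\tld{w}\uparrow t_{-\nu}w_0t_\lambda\tld{w}'$; translating the second by $t_\nu$ (using \cite[II 6.5(5)]{RAGS}) chains these into the cycle $w_0t_\lambda\tld{w}'\uparrow t_\nu w_0t_\lambda\tld{w}\uparrow w_0t_\lambda\tld{w}'$, which immediately forces the equality $w_0t_\lambda\tld{w}'=t_\nu w_0t_\lambda\tld{w}$, i.e.\ $\tld{w}'=t_{w_0\nu}\tld{w}$. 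The finish is then a one-line computation using that both $\tld{w},\tld{w}'$ lie in $\tld{W}^+_1$. Your route via Lemma~\ref{lemma:bruhatup1} keeps both $\uparrow$ relations pointing the same way (bottoms $\tld{x}^+$, tops $t_\lambda\tld{w}$ and $t_\lambda\tld{w}'$), so there is no sandwich to close; you never get an equality, only two independent upward chains.

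A second, more mechanical issue: when you write ``apply Lemma~\ref{lemma:bruhatup1} with $\tld{x}=t_{\pm\nu}\tld{z}$'' you have silently dropped the $w_0$. The hypothesis is $t_\nu w_0 t_\lambda\tld{w}\leq w_0 t_\lambda\tld{w}'$, and $t_\nu w_0\tld{z}\neq t_\nu\tld{z}$. Once the $w_0$ is reinstated, your claim that $(t_\nu w_0 t_\lambda\tld{w})^+$ ``differs from $t_\lambda\tld{w}$ by a translation depending on $\nu$'' is not true in general: $W t_\nu w_0 t_\lambda\tld{w}=W t_{w_0\nu+\lambda}\tld{w}$, and $t_{w_0\nu+\lambda}\tld{w}$ need not be dominant, so its dominant representative carries an uncontrolled extra Weyl factor $\sigma\in W$ in front. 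Your concluding heuristic that ``a dominant-to-dominant translation by a non-$X^0(T)$ element would have to be upward'' is not a theorem in the form you need it (the $\uparrow$ order mixes translations and reflections across affine hyperplanes, and the two chains you have do not live inside a single linkage class relation that would let you compare $\langle-,\alpha^\vee\rangle$ monotonically). To make your plan work you would effectively need to rediscover the reversal trick, at which point you are back to the paper's argument via Lemma~\ref{lemma:bruhatup}.
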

\begin{proof}
Suppose that $t_\nu w_0t_\lambda\tld{w} \leq w_0t_\lambda\tld{w}'$ and $t_{-\nu}w_0t_\lambda\tld{w}' \leq w_0t_\lambda\tld{w}$.
Lemma \ref{lemma:bruhatup} implies that $w_0t_\lambda\tld{w}' \uparrow t_\nu w_0t_\lambda\tld{w}$ and $w_0t_\lambda\tld{w} \uparrow t_{-\nu} w_0t_\lambda\tld{w}'$.
Combining these, we have that $w_0t_\lambda\tld{w}' \uparrow t_\nu w_0t_\lambda\tld{w} \uparrow w_0 t_\lambda\tld{w}'$ which implies that $w_0t_\lambda\tld{w}' = t_\nu w_0 t_\lambda\tld{w}$ or equivalently that $\tld{w}' = t_{w_0\nu}\tld{w}$.
This implies that $\tld{w}$ and $\tld{w}'$ have the same image in $W$.
Using that $\tld{w}$ and $\tld{w}'$ are both in $\tld{W}_1^+$, we find that $t_{w_0\nu} = \tld{w}'\tld{w}^{-1} \in X^0(T)$ and in particular $w_0\nu = \nu$.
\end{proof}

We now begin our analysis of certain elements of the admissible set which play an important role in our modularity results. 
For a simple root $\alpha$, let $W_{a,\alpha}$ be the subgroup of $W_a$ generated by $s_\alpha$ and $t_\alpha$.

\begin{lemma}\label{lemma:simpleup}
Let $\alpha$ be a simple root.
Suppose that $\tld{w}_\alpha \tld{w}_1 \uparrow \tld{w}_2 \uparrow \tld{w}_1$ for some $\tld{w}_\alpha \in W_{a,\alpha}$.
Then $\tld{w}_2 \in W_{a,\alpha} \tld{w}_1$.
\end{lemma}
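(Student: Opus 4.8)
The plan is to work entirely inside the rank-one affine Weyl group $W_{a,\alpha}$, which is an infinite dihedral group, and to use the dichotomy on galleries that underlies the upper arrow order. First I would record the basic structure: $W_{a,\alpha}$ acts simply transitively on the set of alcoves lying in a fixed strip between two consecutive walls $H_{\alpha,k}$ when we quotient out the hyperplanes orthogonal to $\alpha$; concretely, for any $\tld{w}\in\tld{W}$ the coset $W_{a,\alpha}\tld{w}$ consists exactly of those $\tld{w}'$ whose alcove $\tld{w}'(A_0)$ agrees with $\tld{w}(A_0)$ on every wall not of the form $H_{\alpha,k}$. Equivalently, $W_{a,\alpha}\tld{w}$ is the set of alcoves obtained from $\tld{w}(A_0)$ by crossing only $\alpha$-walls. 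The content of the lemma is that if $\tld{w}_2$ is squeezed in the $\uparrow$-order between $\tld{w}_1$ and an $\alpha$-translate $\tld{w}_\alpha\tld{w}_1$, then $\tld{w}_2$ is again such an $\alpha$-translate.

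The key step is to analyze what $\tld{w}_\alpha\tld{w}_1\uparrow\tld{w}_1$ forces. By definition of $\uparrow$ (see \cite[\S II.6.5]{RAGS}), there is a sequence of ``up'' reflections in affine root hyperplanes taking $\tld{w}_1(A_0)$ to $\tld{w}_\alpha\tld{w}_1(A_0)$; since both alcoves lie in the same $W_{a,\alpha}$-coset, I would argue that one may take all the intermediate alcoves in that coset as well --- this is the crucial reduction. The point is that the $\uparrow$-relation, restricted to a single $W_{a,\alpha}$-coset, coincides with the restriction of the $\uparrow$-relation of the rank-one group $W_{a,\alpha}$ itself, which is a total order (the alcoves in a coset are linearly arranged along the $\alpha^\vee$-axis). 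Granting this, $\tld{w}_\alpha\tld{w}_1\uparrow\tld{w}_2\uparrow\tld{w}_1$ with the two outer terms in $W_{a,\alpha}\tld{w}_1$ and $\uparrow$ being a total order on that coset immediately squeezes $\tld{w}_2$ into $W_{a,\alpha}\tld{w}_1$.

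To make the reduction rigorous I would proceed as follows. Write $\tld{w}_1 = t_\nu w$ and note that conjugating/translating we may normalize $\tld{w}_1$; then $W_{a,\alpha}\tld{w}_1$ is a set of alcoves all of whose faces on hyperplanes $H_{\beta,k}$ with $\beta\notin\{\pm\alpha\}$ lie in the same half-space. I would invoke the description of $\uparrow$ via ``wall-crossing toward the dominant chamber direction'' to show that if $\tld{x}\uparrow\tld{y}$ and $\tld{x},\tld{y}$ lie in the same $W_{a,\alpha}$-coset, then any alcove $\tld{z}$ with $\tld{x}\uparrow\tld{z}\uparrow\tld{y}$ must also lie in that coset: crossing a non-$\alpha$ wall would move the alcove out of the half-space common to $\tld{x}$ and $\tld{y}$ on the other side of the strip, and by the combinatorics of $\uparrow$ (which never ``returns'' across such a wall within a chain bounded on both ends by the coset) this is impossible. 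This is the standard fact that $W_{a,\alpha}$-cosets are $\uparrow$-convex intervals; once stated this way it can be cited or proved by a short gallery argument using \cite[II 6.5(5)]{RAGS} and the transitivity of $\uparrow$.

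The main obstacle I anticipate is the convexity claim: that a $W_{a,\alpha}$-coset is an interval for the $\uparrow$-order, i.e.~if both endpoints of an $\uparrow$-chain lie in the coset then so does everything in between. Proving this cleanly requires either (i) identifying the restriction of $\uparrow$ to the coset with the dihedral-group order of $W_{a,\alpha}$ and using that this restriction is ``full'' (no shortcuts through other cosets that come back), or (ii) a direct alcove-geometry argument tracking which $\alpha$-wall and which non-$\alpha$-walls separate $\tld{w}_1(A_0)$ from $\tld{w}_\alpha\tld{w}_1(A_0)$ and showing the bounding alcoves pin down the non-$\alpha$ coordinates. I expect approach (i) to be the slickest: one shows $\tld{x}\uparrow\tld{y}$ with $\tld{x},\tld{y}\in W_{a,\alpha}\tld{w}_1$ implies $\tld{x}\leq\tld{y}$ in the dihedral Bruhat order on $W_{a,\alpha}$ (after translating to the identity coset), and conversely, so that $\uparrow$ and $\leq$ agree there and both are total; then convexity is automatic and the lemma follows immediately by sandwiching.
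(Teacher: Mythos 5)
Your intuition is pointed in the right direction, and you correctly identify the crux of the argument: one must show that a $W_{a,\alpha}$-coset is an ``$\uparrow$-interval,'' i.e.~that an $\uparrow$-chain with both endpoints in the coset cannot leave it. But both of the routes you propose to establish this have a genuine gap.

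Approach (i) does not deliver what you claim. Even granting that $\uparrow$ restricted to $W_{a,\alpha}\tld{w}_1$ coincides with the (total) dihedral Bruhat order, this says nothing about whether an intermediate term $\tld{w}_2$ of an $\uparrow$-chain between two coset elements must itself lie in the coset; it only compares elements that are already in the coset. The convexity is emphatically \emph{not} automatic --- it is precisely the content of the lemma. Approach (ii) (direct alcove geometry, ``never re-crosses a non-$\alpha$ wall'') is the right idea, but as written it is a heuristic: the steps of an $\uparrow$-chain are reflections in \emph{arbitrary} affine hyperplanes, not wall-crossings of a gallery, so one cannot simply track which walls separate the alcoves without a quantitative argument.

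The paper's proof fills exactly this gap, and does so more directly than either of your routes. Fix $x\in A_0$. The key fact is that $\tld{y}\uparrow\tld{x}$ implies $\tld{x}(x)-\tld{y}(x)$ is a nonnegative $\R$-combination of positive simple roots. Applying this to $\tld{w}_\alpha\tld{w}_1\uparrow\tld{w}_2\uparrow\tld{w}_1$, both $\tld{w}_1(x)-\tld{w}_2(x)$ and $\tld{w}_2(x)-\tld{w}_\alpha\tld{w}_1(x)$ are nonnegative combinations of simple roots, and they sum to $\tld{w}_1(x)-\tld{w}_\alpha\tld{w}_1(x)$, which is a nonnegative multiple of $\alpha$ (since $\tld{w}_\alpha\in W_{a,\alpha}$). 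Because $\alpha$ is simple, each summand is itself a nonnegative multiple of $\alpha$; in particular $\tld{w}_1(x)-\tld{w}_2(x)$ is. Now take an $\uparrow$-chain from $\tld{w}_2$ to $\tld{w}_1$ by reflections $s_1,\ldots,s_m$ with associated positive roots $\alpha_1,\ldots,\alpha_m$; then $\tld{w}_1(x)-\tld{w}_2(x)$ is a strictly positive combination of the $\alpha_i$. Since positive roots have nonnegative coefficients on each simple root and $\alpha$ is simple, the only way this sum can be a multiple of $\alpha$ is if every $\alpha_i=\alpha$, i.e.~every $s_i$ lies in $W_{a,\alpha}$. Hence $\tld{w}_2\in W_{a,\alpha}\tld{w}_1$. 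Note that this same coefficient-counting argument is what you would need to make approach (ii) rigorous; without it the ``no return'' claim remains unproved.
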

\begin{figure}[h]
    \centering
     \includegraphics[scale=.35]{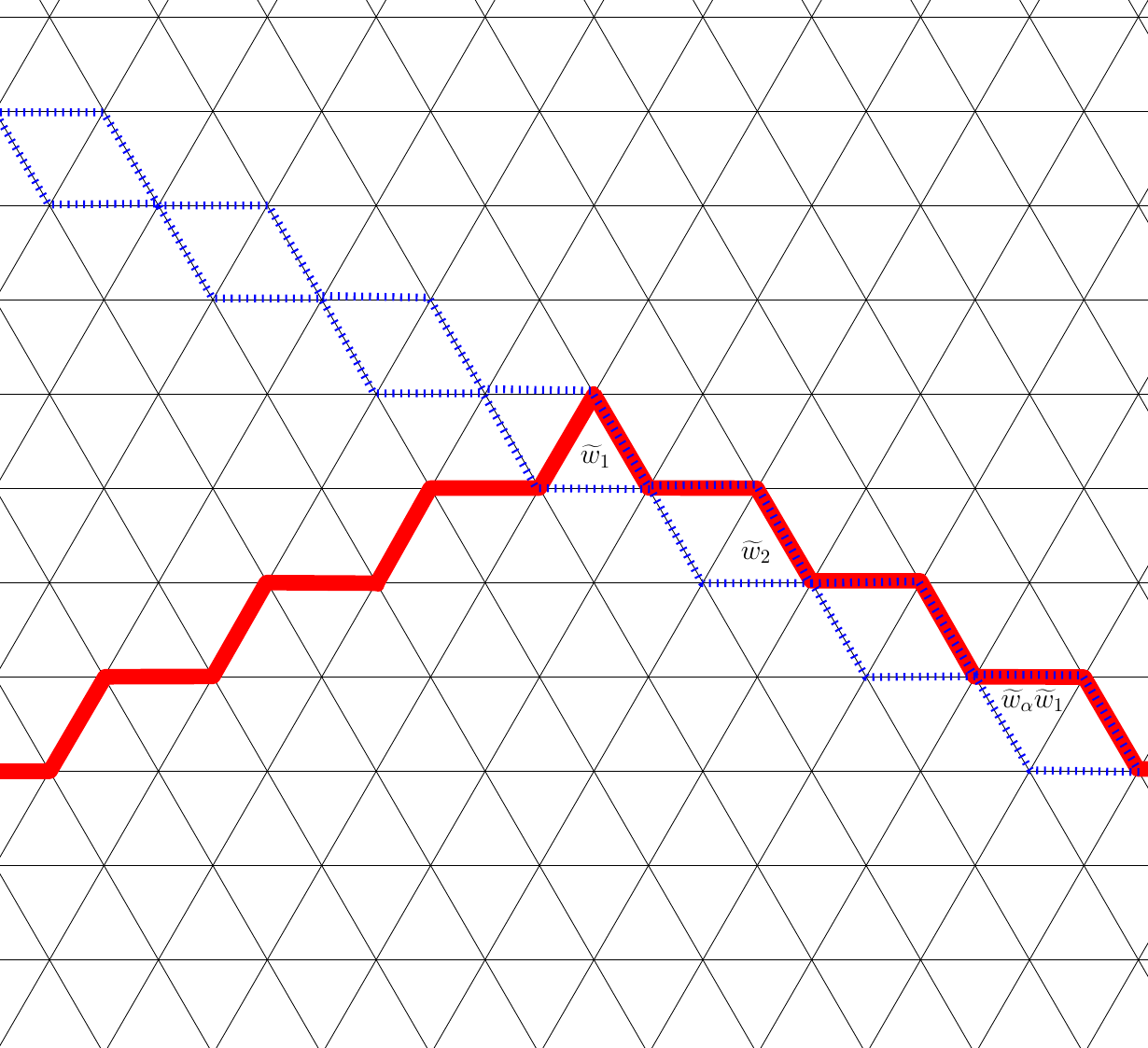} 
        \caption{\small{The alcoves which are below $\tld{w}_1(A_0)$ in the $\uparrow$ order are those below the thickened red lines.
       The alcoves corresponding to $W_{a,\alpha}\tld{w}_1$ are drawn in dotted blue lines.}}
 \label{pic:sandwich}
\end{figure} 
\begin{proof}
Let $x\in A_0$.
Then $\tld{w}_1(x) - \tld{w}_2(x)$ and $\tld{w}_2(x) - \tld{w}_\alpha \tld{w}_1(x)$ are nonnegative linear combinations of positive simple roots.
On the other hand, $\tld{w}_1(x) - \tld{w}_\alpha \tld{w}_1(x)$ is a nonnegative multiple of $\alpha$.
This implies that so is $\tld{w}_1(x) - \tld{w}_2(x)$.

There is a series of hyperplane reflections $(s_i)_{i=1}^m$ such that 
\[
\tld{w}_2 \uparrow s_1 \tld{w}_2 \uparrow s_2 s_1 \tld{w}_2 \uparrow \cdots \uparrow s_m \cdots s_2 s_1 \tld{w}_2 = \tld{w}_1.
\]
If the corresponding positive roots are $(\alpha_i)_{i=1}^m$, then $\tld{w}_1(x) - \tld{w}_2(x)$ is a positive linear combination of the roots in $\{\alpha_i\}_{i=1}^m$.
The above paragraph implies that $\alpha_i = \alpha$ for all $i$.
\end{proof}

Let $e$ be a positive integer.
Recall that the $e\eta_0$-admissible set $\Adm(e\eta_0) \subset \tld{W}$ is the subset of elements $\tld{w}$ such that $\tld{w} \leq t_{w(e\eta_0)}$ for some $w\in W$.

\begin{prop}\label{prop:listshapes}
The set $w^{-1} W_{a,\alpha} t_{e\eta_0} w \cap \Adm(e\eta_0)$ consists of elements
\[
t_{w^{-1}(e\eta_0-k\alpha)} \qquad \textrm{for }0\leq k \leq e
\]
and
\[
\tld{w}^{-1}s_\alpha t_{e\eta_0-(k+1)\alpha}\tld{w} \qquad \textrm{for }0\leq k \leq e-1,
\]
where $\tld{w}\in \tld{W}^+_1$ is an element (unique up to $X^0(T)$) with image $w$ in $W$.
\end{prop}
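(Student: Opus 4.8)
The plan is to translate the statement into a combinatorial question about a single coset in $\tld{W}$ and then to cut that coset down to $\Adm(e\eta_0)$ using the convexity description of the admissible set together with the Bruhat-order facts recalled above. First I would parametrise the coset: using $\langle\eta_0,\alpha^\vee\rangle=1$ and the identities $s_\alpha t_\lambda=t_{s_\alpha\lambda}s_\alpha$, $t_\lambda s_\alpha=s_\alpha t_{s_\alpha\lambda}$ in $\tld{W}$, one computes directly that $W_{a,\alpha}t_{e\eta_0}=\{t_{e\eta_0-m\alpha}:m\in\Z\}\sqcup\{s_\alpha t_{e\eta_0-m\alpha}:m\in\Z\}$. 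If $\tld{w}=t_\xi w\in\tld{W}^+_1$ has image $w$ in $W$, conjugation by the translation $t_\xi$ preserves $W_{a,\alpha}$ (it only carries $s_\alpha$ to another affine reflection in $W_{a,\alpha}$, as $\langle\xi,\alpha^\vee\rangle\in\Z$) and fixes all translations, so $\tld{w}^{-1}(W_{a,\alpha}t_{e\eta_0})\tld{w}=w^{-1}(W_{a,\alpha}t_{e\eta_0})w$ is the disjoint union of $\{t_{w^{-1}(e\eta_0-m\alpha)}\}_{m\in\Z}$ and $\{\tld{w}^{-1}s_\alpha t_{e\eta_0-m\alpha}\tld{w}\}_{m\in\Z}$. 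After reindexing, the proposition becomes the assertion that the members of $\Adm(e\eta_0)$ in this coset are exactly $t_{w^{-1}(e\eta_0-k\alpha)}$ for $0\le k\le e$ together with $\tld{r}_m:=\tld{w}^{-1}s_\alpha t_{e\eta_0-m\alpha}\tld{w}$ for $1\le m\le e$.

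For the translations this is quick. Since $\alpha$ is simple and $e\eta_0$ is regular dominant, $[e\eta_0,s_\alpha(e\eta_0)]$ is an edge of the polytope $\Conv(e\eta_0)$, so the line $\{e\eta_0+t\alpha:t\in\R\}$ meets $\Conv(e\eta_0)$ exactly in $\{e\eta_0-m\alpha:0\le m\le e\}$. As $w^{-1}$ preserves $\Conv(e\eta_0)$ and the coset $e\eta_0+\Lambda_R$, and as $t_\nu\in\Adm(e\eta_0)\iff\nu\in\Conv(e\eta_0)\cap(e\eta_0+\Lambda_R)$ (a standard fact, the coset condition here being automatic), the first family is precisely as asserted.

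For the reflection-type elements I would exploit the two presentations $\tld{r}_m=s\cdot t_{w^{-1}(e\eta_0-m\alpha)}=t_{w^{-1}(e\eta_0-(e-m)\alpha)}\cdot s$, obtained from $s_\alpha t_{e\eta_0-m\alpha}=t_{e\eta_0-(e-m)\alpha}s_\alpha$, where $s:=\tld{w}^{-1}s_\alpha\tld{w}$ is an affine reflection. Since $\Adm(e\eta_0)=\bigcup_{v\in W}\{\tld{x}\le t_{v(e\eta_0)}\}$ is downward closed for the Bruhat order and, by the previous step, both translations $t_{w^{-1}(e\eta_0-m\alpha)}$ and $t_{w^{-1}(e\eta_0-(e-m)\alpha)}$ lie in $\Adm(e\eta_0)$ for $1\le m\le e$, it is enough to produce for each such $m$ a $v$ with $\tld{r}_m\le t_{v(e\eta_0)}$. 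When $\tld{r}_m$ lies below one of these two adjacent translations this is immediate; in the complementary ``middle'' range of $m$ I would take $v=w^{-1}s_\alpha$ — which puts $t_{v(e\eta_0)}$ on the side of the reflecting hyperplane of $s$ for which $s\,t_{v(e\eta_0)}<t_{v(e\eta_0)}$ — and deduce $\tld{r}_m=s\cdot t_{w^{-1}(e\eta_0-m\alpha)}\le t_{v(e\eta_0)}$ from the inequality $t_{w^{-1}(e\eta_0-m\alpha)}\le t_{w^{-1}s_\alpha(e\eta_0)}=t_{v(e\eta_0)}$ by the lifting property of the Bruhat order applied to $s$ (when $s$ is not a simple reflection, one instead works in the infinite dihedral subgroup $\langle s,t_{w^{-1}\alpha}\rangle\subset\tld{W}$, in which $s$ is a standard generator and the target translation has a well-controlled reduced expression). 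Finally, elements with $m$ outside the stated ranges are non-admissible: translations because $e\eta_0-m\alpha\notin\Conv(e\eta_0)$, and a reflection-type $\tld{r}_m$ with $m\le 0$ or $m\ge e+1$ because — using Lemmas \ref{lemma:bruhatup} and \ref{lemma:bruhatup1} together with the $\uparrow$-order, or the inclusion of $\Adm(e\eta_0)$ in the permissible set — one checks it lies strictly above every $t_{v(e\eta_0)}$ in the relevant order.

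I expect the crux to be precisely the reflection-type elements in the middle range of $m$, where $\tld{r}_m$ dominates both neighbouring translations in the Bruhat order, so that downward-closure alone is useless: one must exhibit an explicit vertex $t_{v(e\eta_0)}$ above $\tld{r}_m$, and the delicate point is that $s=\tld{w}^{-1}s_\alpha\tld{w}$ need not be a simple reflection of $\tld{W}$, so the naive lifting property is unavailable and one is forced either to descend to the rank-one dihedral subgroup attached to $\alpha$ or to carry out the explicit Bruhat comparisons among the translations by the weights lying on the edge $[e\eta_0,s_\alpha(e\eta_0)]$.
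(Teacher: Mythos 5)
Your parametrization of the coset and your treatment of the translation elements are correct: the characterization $t_\nu\in\Adm(e\eta_0)\iff\nu\in\Conv(e\eta_0)$ does give exactly the translations $t_{w^{-1}(e\eta_0-k\alpha)}$ for $0\le k\le e$, since the line through $e\eta_0$ in direction $\alpha$ meets the polytope in a single edge. But the argument for the reflection-type elements has a genuine gap, and it is exactly the one you flag. The ``lifting property'' of Bruhat order you want to invoke requires $s$ to be a simple reflection. Here $s=\tld{w}^{-1}s_\alpha\tld{w}$ is a reflection in the wall $\tld{w}^{-1}(H_{\alpha,0})$, which is a wall of the alcove $\tld{w}^{-1}(A_0)$, not of the base alcove $A_0$; so $s$ is not simple unless $\tld{w}$ is trivial, and lifting simply does not apply. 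Your proposed fallback of working inside $\langle s,t_{w^{-1}\alpha}\rangle$ does not rescue the argument: neither $\tld{r}_m$ nor $t_{v(e\eta_0)}$ lies in that subgroup (both have a nontrivial $t_{e\eta_0}$-component transverse to the $\alpha$-line), so there is no Bruhat comparison to be made ``inside'' it; and in any case a Bruhat comparison in a reflection subgroup agrees with the ambient one only when both elements lie in the subgroup and one uses the canonical Coxeter generators of the subgroup. Your sketch for the reverse inclusion is also too vague to evaluate.

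The paper sidesteps the issue entirely. For the forward inclusion it writes each candidate element in the factored form $(\tld{w}_h\tld{w})^{-1}\,w_0'\,(\tld{z}\,\tld{w})$ with $w_0'\in\{w_0,\,w_0s_\alpha\}$ and $\tld{z}\in\{t_{(e-1)\eta_0-k\alpha},\,t_{(e-1)\eta_0-(e-1-k)\alpha}s_\alpha\}$, observing that both choices of $\tld{z}$ satisfy $\tld{z}\uparrow t_{(e-1)\eta_0}$ and that for $0\le k\le e-1$ at least one of them is dominant; Wang's theorem then converts the $\uparrow$ relation into a genuine Bruhat inequality, and compatibility of these factorizations (reduced by the type of computation in [LLL, Lemma 4.1.9]) yields the desired bound by $t_{w^{-1}(e\eta_0)}$ or $t_{(s_\alpha w)^{-1}(e\eta_0)}$. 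For the reverse inclusion it introduces the auxiliary ``length-like'' function $d(\tld{w}_1)=m_\alpha(\tld{w}_1)+\sum_{\beta\neq\alpha}n_\beta(\tld{w}_1)$, shows that $d-m_\alpha$ is constant on the coset $w^{-1}W_{a,\alpha}t_{e\eta_0}w$ by a fractional-part pairing argument, and deduces that $\ell(\tld{w}_1)\le\ell(t_{w^{-1}(e\eta_0)})$ forces $0\le k\le 2e$ where $k$ is the coset coordinate. Both steps of the paper's proof are thus uniform and do not require any Bruhat-order statement about non-simple reflections. If you want to complete a proof along your lines, the missing ingredient is a substitute for the lifting step in the middle range; the factorization-plus-Wang device is essentially the minimal workaround, and the explicit reverse-inclusion computation is not optional.
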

\begin{proof}
It is easy to check that the listed elements lie in $w^{-1} W_{a,\alpha} t_{e\eta_0} w$.
Furthermore, they are all less than or equal to either $t_{w^{-1}(e\eta_0)}$ or $t_{(s_\alpha w)^{-1}(e\eta_0)}$.
Indeed, set $\tld{z}_k\defeq t_{(e-1)\eta_0 - k\alpha}$ and $\tld{z}'_k\defeq t_{(e-1)\eta_0 - (e-1-k)\alpha}s_\alpha$.
Then
\begin{align}
\label{eq:el1}
t_{w^{-1}(e\eta_0-k\alpha)} &= (\tld{w}_h\tld{w})^{-1}w_0 (\tld{z}_k \tld{w}) \quad\ \ = (\tld{w}_h\tld{w})^{-1}(w_0 s_\alpha) (\tld{z}'_k \tld{w})&\\
\label{eq:el2}
\tld{w}^{-1}s_\alpha t_{e\eta_0-(k+1)\alpha}\tld{w} &= (\tld{w}_h\tld{w})^{-1}(w_0s_\alpha) (\tld{z}_k \tld{w})= (\tld{w}_h\tld{w})^{-1}w_0 (\tld{z}'_k \tld{w})&
\end{align}
where $0\leq k\leq e$ for the elements in (\ref{eq:el1}) and $0\leq k\leq e-1$ for the elements in (\ref{eq:el2}).
Both $\tld{z}_k,\, \tld{z}'_k \uparrow t_{(e-1)\eta_0}$ and, if $k\neq e$, one is them is in $\tld{W}^+$.
Wang's theorem implies that, for $0\leq k\leq e-1$, one among $\tld{z}_k$, $\tld{z}'_k$ is less than or equal to $t_{(e-1)\eta_0}$.
This implies that for $0\leq k\leq e-1$ the elements (\ref{eq:el1}), (\ref{eq:el2}), with the exception of $t_{(s_\alpha w)^{-1}(e\eta_0)}$, are less than or equal to $t_{w^{-1}(e\eta_0)}$ in the Bruhat ordering.
The exceptional element is less than or equal to itself.

We claim that any element in $w^{-1} W_{a,\alpha} t_{e\eta_0} w$ of length at most that of $t_{w^{-1}(e\eta_0)}$ is one of the listed elements.
This would provide the reverse inclusion.
For each positive root $\beta$ and $\tld{w}\in \tld{W}$, let 
\[
n_\beta(\tld{w})=
\begin{cases}
\lfloor \langle \tld{w}(x),w^{-1}(\beta^\vee) \rangle \rfloor \qquad & \textrm{ if } w(\beta) > 0 \\ 
\lfloor \langle \tld{w}(x),w^{-1}(\beta^\vee) \rangle \rfloor + 1 \qquad & \textrm{ if } w(\beta) < 0 
\end{cases}
\] 
for any $x\in A_0$. 
Let $m_\beta(\tld{w})$ be $|n_\beta(\tld{w})|$.
Then $\ell(\tld{w})$ is the sum $\sum_{\beta>0} m_\beta(\tld{w})$
(\cite[Proposition 1.23]{iwahori-matsumoto}, see also \cite[\S 1.3]{he-nie}).
Let $d(\tld{w})$ be the sum $m_\alpha(\tld{w}) + \sum_{\beta>0,\, \beta\neq \alpha} n_\beta(\tld{w})$.
The function $d(-)$ has three favorable properties: $d(\tld{w}_1) \leq \ell(\tld{w}_1)$ for all $\tld{w}_1 \in \tld{W}$, $\ell(\tld{w}_1) = d(\tld{w}_1)$ if $w\tld{w}_1 \in \tld{W}^+$ (in particular for $\tld{w}_1 = t_{w^{-1}(e\eta_0)}$), and as we shall see next, $d(\tld{w}_1)-m_\alpha(\tld{w}_1) = \sum_{\beta>0,\, \beta\neq \alpha} n_\beta(\tld{w}_1)$ is the same for all $\tld{w}_1 \in w^{-1} W_{a,\alpha} t_{e\eta_0} w$. 

Fix $x$ as above such that $\langle x,w^{-1}(\alpha^\vee)\rangle = \pm\frac{1}{2}$.
Then for each $\tld{w}_1 \in w^{-1} W_{a,\alpha} t_{e\eta_0} w$, $\tld{w}_1(x) = x+w^{-1}(e\eta_0-\frac{k}{2}\alpha)$ for a some $k \in \Z$.
Moreover, the map $\tld{w}_1\mapsto k$ defines a bijection $w^{-1} W_{a,\alpha} t_{e\eta_0} w \ra \Z$.
We claim that 
\begin{equation} \label{eqn:notalpha}
\sum_{\beta > 0,\, \beta \neq \alpha} \lfloor \langle t_{w^{-1}(e\eta_0)}(x),w^{-1}(\beta^\vee) \rangle \rfloor = \sum_{\beta > 0,\, \beta \neq \alpha}\lfloor \langle \tld{w}_1(x),w^{-1}(\beta^\vee) \rangle \rfloor.
\end{equation}
Assuming (\ref{eqn:notalpha}) for the moment, we obtain 
\begin{align*}
\ell(t_{w^{-1}(e\eta_0)}) - d(\tld{w}_1) &= d(t_{w^{-1}(e\eta_0)}) - d(\tld{w}_1) \\
&= m_\alpha(t_{w^{-1}(e\eta_0)}) - m_\alpha(\tld{w}_1) + \sum_{\beta > 0,\, \beta \neq \alpha} (\lfloor \langle t_{w^{-1}(e\eta_0)}(x),w^{-1}(\beta^\vee) \rangle \rfloor - \lfloor \langle \tld{w}_1(x),w^{-1}(\beta^\vee) \rangle \rfloor) \\
&= m_\alpha(t_{w^{-1}(e\eta_0)}) - m_\alpha(\tld{w}_1)\\
&= |e|-|e-k|.
\end{align*}
If $\ell(\tld{w}_1) \leq \ell(t_{w^{-1}(e\eta_0)})$, then since $d(\tld{w}_1) \leq \ell(\tld{w}_1)$, $|e|\geq |e-k|$ so that $0 \leq k \leq 2e$.
These $2e+1$ values for $k$ correspond to the $2e+1$ listed elements.
(See Figure \ref{pic:strip} for the case of $\GL_3$ and $e=3$.)

It suffices to justify (\ref{eqn:notalpha}).
We need to show that 
\[
\sum_{\beta > 0,\, \beta \neq \alpha}\lfloor \langle x,w^{-1}(\beta^\vee)\rangle \rfloor = \sum_{\beta > 0,\, \beta \neq \alpha} \lfloor \langle x-\frac{k}{2}w^{-1}(\alpha),w^{-1}(\beta^\vee)\rangle \rfloor,
\]
or equivalently, letting $y = w(x)$, that
\[
\sum_{\beta > 0,\, \beta \neq \alpha}\lfloor \langle y,\beta^\vee\rangle \rfloor = \sum_{\beta > 0,\, \beta \neq \alpha} \lfloor \langle y-\frac{k}{2}\alpha,\beta^\vee\rangle \rfloor.
\]
We can ignore roots $\beta$ such that $\langle \alpha,\beta^\vee \rangle = 0$.
The remaining positive roots come in pairs $(\beta_-,\beta_+)$ where $\langle \alpha,\beta_-^\vee \rangle < 0$ and $\beta_+ = s_\alpha(\beta_-)$.
Fix such a pair.
The fact that $\langle \alpha, \beta_-^\vee\rangle + \langle \alpha, \beta_+^\vee\rangle = 0$ implies that 
\[
\langle y,\beta_+^\vee\rangle + \langle y,\beta_-^\vee\rangle =  \langle y-\frac{k}{2}\alpha,\beta_+^\vee\rangle +\langle y-\frac{k}{2}\alpha,\beta_-^\vee\rangle.
\]
It suffices to show that 
\begin{equation}\label{eqn:fracpart}
\{\langle y,\beta_+^\vee\rangle \} + \{\langle y,\beta_-^\vee\rangle\} = \{\langle y-\frac{k}{2}\alpha,\beta_+^\vee\rangle \}+\{\langle y-\frac{k}{2}\alpha,\beta_-^\vee\rangle\}
\end{equation}
where $\{ r \}$ denotes the fractional part $r - \lfloor r \rfloor$ of $r\in \R$. 
If $\langle \alpha,\beta^\vee_+\rangle$, and therefore $\langle \alpha,\beta^\vee_-\rangle$, is even, then (\ref{eqn:fracpart}) is clear. 
Now suppose that $\langle \alpha,\beta^\vee_+\rangle$ is odd.
Recall that we chose $x$ so that $\langle y,\alpha^\vee\rangle = \pm\frac{1}{2}$.
Then $\beta_+ = \beta_-+\langle \beta_+,\alpha^\vee\rangle \alpha$ implies that $\{\langle y, \beta_-^\vee \rangle\} = \{\langle y, \beta_+^\vee\rangle+\frac{1}{2}\}$.
We see that the terms of each side of (\ref{eqn:fracpart}) are the same (resp.~permuted) when $k$ is even (resp.~odd).
\end{proof}

\begin{figure}[h]
    \centering
     \includegraphics[scale=.55]{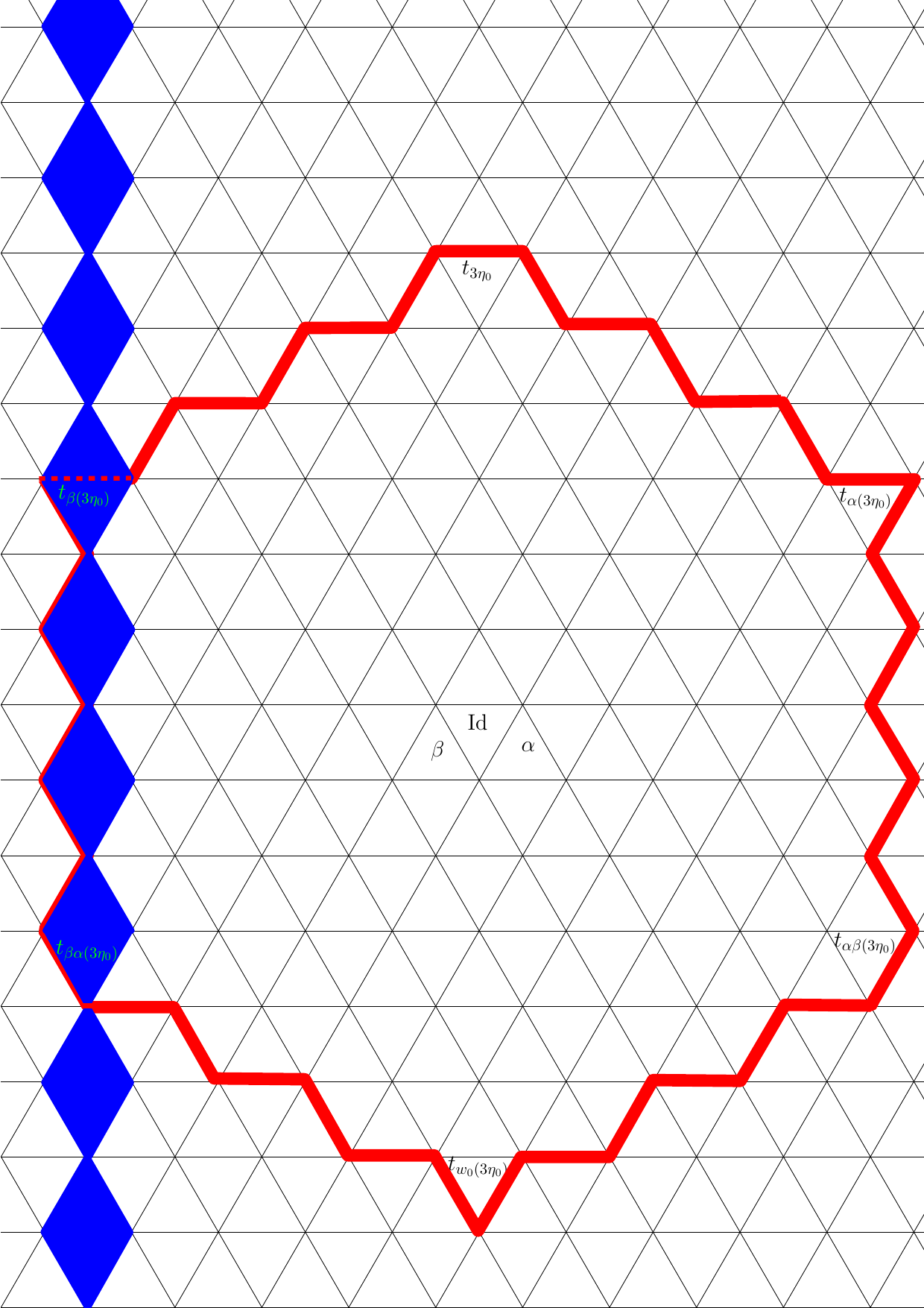} 
        \caption{\small{The $G=\GL_3$ $3\eta_0$-admissible set is in red.
        The set $\beta^{-1} W_{a,\alpha}t_{e\eta_0}\beta$ is in blue.}}
 \label{pic:strip}
\end{figure}

\begin{lemma}\label{lemma:2comps}
Let $\alpha$ be a simple root and $w$ be an element of $W$.
If $\tld{w} \in w^{-1} W_{a,\alpha} t_{e\eta_0} w \cap \Adm(e\eta_0)$ and $\tld{w} \leq t_{\sigma^{-1}(e\eta_0)}$ for some $\sigma \in W$, then $\sigma \in \{w,s_\alpha w\}$.
\end{lemma}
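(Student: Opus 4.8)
The plan is to combine the explicit enumeration of Proposition \ref{prop:listshapes} with the Levi reduction for local models that underlies Theorem \ref{thm:FSM}. First, by Proposition \ref{prop:listshapes}, $\tld w$ is one of the $2e+1$ elements displayed there, and the identities \eqref{eq:el1}--\eqref{eq:el2} in its proof already show that each such element is $\leq t_{w^{-1}(e\eta_0)}$, with the sole exception of the extremal element $t_{w^{-1}(e\eta_0-e\alpha)} = t_{(s_\alpha w)^{-1}(e\eta_0)}$, which is $\leq t_{(s_\alpha w)^{-1}(e\eta_0)}$; so $\sigma \in \{w, s_\alpha w\}$ are indeed valid, and the content is to exclude all other $\sigma$. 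The length count in that same proof moreover shows that $t_{w^{-1}(e\eta_0)}$ and $t_{(s_\alpha w)^{-1}(e\eta_0)}$ are the unique two elements of the list of maximal length $\ell(t_{e\eta_0})$.

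I would deduce the exclusion from the geometry of the Iwahori-level Pappas--Zhu local model $M$ for the cocharacter $e\eta_0$ of $\GL_n$: its special fiber $M_{\F}$ is reduced, is the union of the Schubert varieties $\overline{\cO_{\tld v}}$ for $\tld v \in \Adm(e\eta_0)$ (with $\cO_{\tld v} \subseteq \overline{\cO_{\tld v'}}$ iff $\tld v \leq \tld v'$), and has pairwise distinct irreducible components $M_\sigma \defeq \overline{\cO_{t_{\sigma^{-1}(e\eta_0)}}}$, $\sigma \in W$ (distinctness because $e\eta_0$ is regular). Under this dictionary the hypothesis $\tld w \leq t_{\sigma^{-1}(e\eta_0)}$ reads $\cO_{\tld w} \subseteq M_\sigma$. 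The key input is the Levi reduction in the $\alpha$-direction, the local-model counterpart of Theorem \ref{thm:FSM}: the shapes that are ``decomposable'' with respect to the relevant type — which are exactly the elements of the list of Proposition \ref{prop:listshapes} — form a locus along which $M_{\F}$ is formally smooth over the analogous $\GL_2 \times \GL_1^{n-2}$ local model, whose $\GL_2$ factor is the $(e,0)$-local model, reduced with special fiber having exactly two irreducible components (open strata $\cO_{t_{(e,0)}}$ and $\cO_{t_{(0,e)}}$). Tracing the identification: since it preserves shapes, the two components of that $\GL_2$ factor correspond to the components of $M_{\F}$ whose generic points have shapes $t_{w^{-1}(e\eta_0)}$ and $t_{(s_\alpha w)^{-1}(e\eta_0)}$ — i.e. to $M_w$ and $M_{s_\alpha w}$ — and no other $M_\sigma$ passes through a point of this locus. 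As every point of $\cO_{\tld w}$ lies in the decomposable locus, $\cO_{\tld w} \subseteq M_\sigma$ forces $M_\sigma \in \{M_w, M_{s_\alpha w}\}$, hence $\sigma \in \{w, s_\alpha w\}$ by distinctness.

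The main obstacle is exactly this Levi reduction for the (ramified) Pappas--Zhu local models, together with the bookkeeping that matches its two $\GL_2$-components with $M_w$ and $M_{s_\alpha w}$ rather than some other pair of the $M_\sigma$: the combinatorial half of the matching is furnished by \eqref{eq:el1}--\eqref{eq:el2} and the length computation in the proof of Proposition \ref{prop:listshapes} (pinning down the two top-dimensional strata of the decomposable locus), while the geometric half relies on the explicit local model / Breuil--Kisin module analysis carried out in \S \ref{sub:extr:loc} and \S \ref{sec:PCDR}. Everything else — the enumeration, the translation of the Bruhat inequality into a statement about irreducible components, and the closing step using freeness of the $W$-action on the regular orbit $W(e\eta_0)$ — is elementary.
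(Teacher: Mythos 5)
Your proposal is correct and takes the same route as the paper: the geometric input is exactly Corollary \ref{cor:at_most_two} (the Levi reduction of Proposition \ref{prop:Levi_reduction} shows that the special fiber of the local neighborhood $U(\tld{w}^*,\leq\eta)$ has at most two irreducible components, hence at most two $\sigma$ can satisfy $\tld{w}\leq t_{\sigma^{-1}(e\eta_0)}$), and the combinatorial input is the first part of the proof of Proposition \ref{prop:listshapes} (the identities \eqref{eq:el1}--\eqref{eq:el2}, showing that $w$ and $s_\alpha w$ both work when $\tld{w}$ is not an extremal translation). The paper sidesteps your ``tracing the identification'' step by simply counting (at most two) and exhibiting (two) rather than explicitly matching the two $\GL_2$-components with $M_w$ and $M_{s_\alpha w}$, and disposes of the two extremal translations separately by a one-line length comparison.
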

\begin{proof}
Suppose that $\tld{w}$ is as in the statement.
If $\tld{w} = t_{w^{-1}(e\eta_0)}$ or $t_{(s_\alpha w)^{-1}(e\eta_0)}$ and $\tld{w} \leq t_{\sigma^{-1}(e\eta_0)}$, then $\tld{w} = t_{\sigma^{-1}(e\eta_0)}$ since $\ell(\tld{w}) = \ell(t_{\sigma^{-1}(e\eta_0)})$ and the conclusion follows. 
Otherwise, $\tld{w} \leq t_{w^{-1}(e\eta_0)},t_{(s_\alpha w)^{-1}(e\eta_0)}$ by Proposition \ref{prop:listshapes} and the first part of its proof (applying Proposition \ref{prop:listshapes} with $w$ both taken to be $w$ and $s_\alpha w$ here). 
But by Corollary \ref{cor:at_most_two}, there are at most two $\sigma \in W$ with $\tld{w} \leq t_{\sigma^{-1}(e\eta_0)}$ (the reader can check that the proof of Corollary \ref{cor:at_most_two} only involves geometric properties of Pappas--Zhu local models, and does not make use of any of the results of this section). 
The conclusion follows. 
\end{proof}

\begin{prop}\label{prop:ends}
Let $\alpha$ be a simple root and $w$ be an element of $W$.
Suppose that 
\begin{enumerate}
\item \label{item:corridor} $\tld{x} \in w^{-1} W_{a,\alpha} t_{e\eta_0} w \cap \Adm(e\eta_0)$; 
\item \label{item:up} $\tld{w}_2\in \tld{W}^+$ and $\tld{w}_\lambda\in \tld{W}^+_1$ such that $\tld{w}_2 \uparrow \tld{w}_h\tld{w}_\lambda$; and 
\item \label{item:weightineq} $\tld{w}_2\tld{x} \leq w_0 t_{(e-1)\eta_0} \tld{w}_\lambda$.
\end{enumerate}
Then $\tld{w}_2$ equals $\tld{w}_h\tld{w}_\lambda$ and the image of $\tld{w}_\lambda$ in $W$ is in the set $\{w,s_\alpha w\}$.
Moreover, we can take $\tld{w}_\lambda$ as above to have image $w \in W$ \emph{(}resp.~$s_\alpha w \in W$\emph{)} if and only if $\tld{x} \neq t_{(s_\alpha w)^{-1}(e\eta_0)}$ \emph{(}resp.~$\tld{x} \neq t_{w^{-1}(e\eta_0)}$\emph{)}.
\end{prop}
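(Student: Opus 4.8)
The plan is to deduce everything from the explicit list in Proposition \ref{prop:listshapes}, together with the ``corridor'' lemmas \ref{lemma:simpleup} and \ref{lemma:doubleclosure}, Lemma \ref{lemma:bruhatup}, and Lemma \ref{lemma:2comps}. Two preliminary observations organize the argument. First, $\tld{w}_h$ preserves the restricted dominant box, so $\tld{w}_h\tld{w}_\lambda\in\tld{W}^+_1\subseteq\tld{W}^+$; hence by Wang's theorem (as used in Lemma \ref{lemma:bruhatup}) the arrow hypothesis $\tld{w}_2\uparrow\tld{w}_h\tld{w}_\lambda$ is equivalent to the Bruhat relation $\tld{w}_2\leq\tld{w}_h\tld{w}_\lambda$. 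Second, since $\tld{w}_h=w_0 t_{-\eta_0}$ we have the identity $w_0 t_{(e-1)\eta_0}\tld{w}_\lambda=\tld{w}_h\tld{w}_\lambda\,t_{\overline{\tld{w}}_\lambda^{-1}(e\eta_0)}$, where $\overline{\tld{w}}_\lambda\in W$ is the image of $\tld{w}_\lambda$, and $w_0 t_{(e-1)\eta_0}\tld{w}_\lambda=w_0\cdot\big(t_{(e-1)\eta_0}\tld{w}_\lambda\big)$ is a reduced factorization by Lemma \ref{lemma:minrep} since $t_{(e-1)\eta_0}\tld{w}_\lambda\in\tld{W}^+$.

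The crux is to extract from the arrow and Bruhat hypotheses the single inequality $\tld{x}\leq t_{\overline{\tld{w}}_\lambda^{-1}(e\eta_0)}$. Applying Lemma \ref{lemma:bruhatup} to the Bruhat hypothesis places $\tld{w}_2\tld{x}$ above $w_0 t_{(e-1)\eta_0}\tld{w}_\lambda$ in the $\uparrow$-order; combining this with $\tld{w}_2\leq\tld{w}_h\tld{w}_\lambda$ and the fact that $\tld{x}$ moves $\tld{w}_2$ within a single $W_{a,\alpha}$-corridor, Lemma \ref{lemma:simpleup} sandwiches $\tld{w}_2\tld{x}$, and cancelling the reduced prefix $\tld{w}_h\tld{w}_\lambda$ by the subword characterization of the Bruhat order yields the claimed inequality. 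Lemma \ref{lemma:2comps}, applied with $\sigma=\overline{\tld{w}}_\lambda$ and using the corridor hypothesis, then gives $\overline{\tld{w}}_\lambda\in\{w,s_\alpha w\}$ at once. With this in hand, $w^{-1}W_{a,\alpha}t_{e\eta_0}w=\overline{\tld{w}}_\lambda^{-1}W_{a,\alpha}t_{e\eta_0}\overline{\tld{w}}_\lambda$, so Proposition \ref{prop:listshapes}, applied with $\overline{\tld{w}}_\lambda$ in place of $w$, lets us write $\tld{x}=(\tld{w}_h\tld{w}_\lambda)^{-1}w_0\,(\tld{z}\,\tld{w}_\lambda)$ with $\tld{z}\uparrow t_{(e-1)\eta_0}$ one of the explicit elements $\tld{z}_k,\tld{z}'_k$ of \eqref{eq:el1}--\eqref{eq:el2}; substituting this into the Bruhat hypothesis, and pairing it with the analogous expression coming from the other member $s_\alpha\overline{\tld{w}}_\lambda$ of the corridor, produces a two-sided inequality to which Lemma \ref{lemma:doubleclosure} (with its $\lambda$ taken to be $(e-1)\eta_0$) applies; it forces the relevant translation into $X^0(T)$, hence $\tld{w}_2=\tld{w}_h\tld{w}_\lambda$.

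For the ``moreover'' clause, given $\tld{x}$ as in the corridor hypothesis and a target image $\mu\in\{w,s_\alpha w\}$, pick $\tld{w}_\mu\in\tld{W}^+_1$ with image $\mu$ and write $\tld{x}=(\tld{w}_h\tld{w}_\mu)^{-1}w_0\,(\tld{z}\,\tld{w}_\mu)$ via Proposition \ref{prop:listshapes}. Inspecting the explicit translations $\tld{z}_k,\tld{z}'_k$ and invoking Wang's theorem exactly as in the proof of Proposition \ref{prop:listshapes} shows that $\tld{z}\leq t_{(e-1)\eta_0}$ precisely when $\tld{x}$ is not the extreme element $t_{\mu'^{-1}(e\eta_0)}$, where $\{\mu'\}=\{w,s_\alpha w\}\setminus\{\mu\}$. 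In that case, setting $\tld{w}_\lambda:=\tld{w}_\mu$ and $\tld{w}_2:=\tld{w}_h\tld{w}_\mu\in\tld{W}^+$ satisfies all three hypotheses: the first is given, the second is trivial, and the third reads $w_0\tld{z}\,\tld{w}_\mu\leq w_0 t_{(e-1)\eta_0}\tld{w}_\mu$, which follows from $\tld{z}\leq t_{(e-1)\eta_0}$ by the subword argument applied to the reduced factorization $w_0\cdot\big(t_{(e-1)\eta_0}\tld{w}_\mu\big)$. Conversely, if $\tld{x}=t_{\mu'^{-1}(e\eta_0)}$, then $\tld{x}\leq t_{\overline{\tld{w}}_\lambda^{-1}(e\eta_0)}$ from the second paragraph together with the maximality of the length $\ell(t_{e\eta_0})$ among admissible elements forces an equality $\tld{x}=t_{\overline{\tld{w}}_\lambda^{-1}(e\eta_0)}$, whence $\overline{\tld{w}}_\lambda=\mu'\neq\mu$ by regularity of $e\eta_0$, so the image $\mu$ cannot occur. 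The main obstacle is the crux step of the second paragraph: producing the clean one-sided inequality $\tld{x}\leq t_{\overline{\tld{w}}_\lambda^{-1}(e\eta_0)}$, and then the two-sided sandwich needed for Lemma \ref{lemma:doubleclosure}, out of the $\uparrow$/$\leq$ hypotheses and the corridor geometry; the remaining steps are bookkeeping with the list of Proposition \ref{prop:listshapes}.
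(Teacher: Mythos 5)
Your plan has a genuine gap precisely at what you flag as the ``main obstacle,'' and I think the obstacle is real: the one-sided inequality $\tld{x}\leq t_{\overline{\tld{w}}_\lambda^{-1}(e\eta_0)}$ does not follow from the hypotheses by the route you sketch. To ``cancel'' $\tld{w}_h\tld{w}_\lambda$ from $\tld{w}_2\tld{x}\leq w_0t_{(e-1)\eta_0}\tld{w}_\lambda = (\tld{w}_h\tld{w}_\lambda)\,t_{\overline{\tld{w}}_\lambda^{-1}(e\eta_0)}$ you would need to know $\tld{w}_2 = \tld{w}_h\tld{w}_\lambda$, which is exactly the conclusion you are trying to prove. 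Likewise, the appeal to Lemma \ref{lemma:simpleup} is premature: the claim that ``$\tld{x}$ moves $\tld{w}_2$ within a single $W_{a,\alpha}$-corridor'' presupposes a relation between $\tld{w}_2$, $w$, and $\alpha$ that is only available \emph{after} the conclusion is known — $\tld{x}\in w^{-1}W_{a,\alpha}t_{e\eta_0}w$ gives no control on $\tld{w}_2\tld{x}$ relative to $W_{a,\alpha}\tld{w}_2$ in general. Finally, you invoke Lemma \ref{lemma:doubleclosure} at the end, which requires a two-sided inequality, but only one side would come out of the preceding steps; the other inequality is never produced.

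The paper avoids all this by first establishing a different, weaker inequality $\tld{x}\leq t_{(w_0w_2)^{-1}(e\eta_0)}$, where $w_2$ is the image of $\tld{w}_2$ (not $\tld{w}_\lambda$) in $W$. This uses only hypotheses (\ref{item:up}) and (\ref{item:weightineq}): factor $\tld{w}_2 = t_\omega\cdot(t_{-\omega}\tld{w}_2)$ with $t_{-\omega}\tld{w}_2\in\tld{W}^+_1$, push the $\uparrow$-relation through $\tld{w}_h^{-1}$ and the translation (via \cite[Proposition 4.1.2]{LLL}), convert to $\leq$ by Wang's theorem, and then propagate through the reduced factorizations supplied by \cite[Lemma 4.1.9]{LLL}. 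Only after Lemma \ref{lemma:2comps} has been applied to pin $w_0w_2\in\{w,s_\alpha w\}$ does hypothesis (\ref{item:corridor}) enter: writing $\tld{w}_2 = t_\omega\tld{w}_h\tld{w}$ with $\tld{w}\in\tld{W}^+_1$ of image $w_0w_2$, the corridor description (Proposition \ref{prop:listshapes}) lets one rewrite (\ref{item:weightineq}) in a form to which Lemma \ref{lemma:bruhatup1}, then Lemma \ref{lemma:simpleup}, apply; the resulting sandwich $\tld{w}_\alpha'\tld{w}_h^{-1}\tld{w}_2\uparrow\tld{w}_\lambda\uparrow\tld{w}_h^{-1}\tld{w}_2$ places $\tld{w}_\lambda$ in the same $W_{a,\alpha}$-coset as $\tld{w}_h^{-1}\tld{w}_2$, and a short dominance/restrictedness check forces $\tld{w}_2 = \tld{w}_h\tld{w}_\lambda$, after which the image of $\tld{w}_\lambda$ is $w_0w_2$ for free. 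The order is the point: you cannot read off $\overline{\tld{w}}_\lambda$ from the hypotheses directly, but you can read off $w_0w_2$, and the corridor machinery then transfers the information from $\tld{w}_2$ to $\tld{w}_\lambda$. Your preliminary observations (that $\tld{w}_h\tld{w}_\lambda\in\tld{W}^+_1$, the reduced factorization $w_0\cdot(t_{(e-1)\eta_0}\tld{w}_\lambda)$, and the identity $w_0t_{(e-1)\eta_0}\tld{w}_\lambda = \tld{w}_h\tld{w}_\lambda\,t_{\overline{\tld{w}}_\lambda^{-1}(e\eta_0)}$) are all correct and useful, and the handling of the ``moreover'' clause would be fine once the first paragraph is repaired, but as written the crux step does not go through.
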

\begin{proof}
There exists a dominant weight $\omega$ (unique up to $X^0(T)$) such that $t_{-\omega}\tld{w}_2\in \tld{W}^+_1$.
Then $t_{-w_0(\omega)}\tld{w}_\lambda \in \tld{W}^+$ and item (\ref{item:up}) and \cite[Proposition 4.1.2]{LLL} give us $t_{-w_0(\omega)}\tld{w}_\lambda \uparrow t_{-w_0(\omega)}\tld{w}_h^{-1} \tld{w}_2 = \tld{w}_h^{-1} t_{-\omega}\tld{w}_2$.
Then Wang's theorem implies that $t_{-w_0(\omega)}\tld{w}_\lambda \leq \tld{w}_h^{-1} t_{-\omega}\tld{w}_2$, and so by \cite[Lemma 4.1.9]{LLL} we have
\begin{align*}
\tld{x}\leq \tld{w}_2^{-1}w_0 t_{(e-1)\eta_0} \tld{w}_\lambda &= (t_{-\omega}\tld{w}_2)^{-1} w_0 t_{(e-1)\eta_0} t_{-w_0(\omega)}\tld{w}_\lambda \\ 
&\leq (t_{-\omega}\tld{w}_2)^{-1} w_0 t_{(e-1)\eta_0} \tld{w}_h^{-1} t_{-\omega}\tld{w}_2 = t_{(w_0w_2)^{-1}(e\eta_0)}
\end{align*}
where $w_2\in W$ is the image of $\tld{w}_2$.
Lemma \ref{lemma:2comps} implies that $w_0w_2 \in \{w,s_\alpha w\}$.

Suppose without loss of generality that $w_0w_2 = w$.
Let $\tld{w}_2$ be $t_\omega \tld{w}_h \tld{w}$ where $\tld{w}\in \tld{W}_1^+$ has image $w\in W$ and $\omega \in X^*(T)$ is dominant ($\omega$ in the last paragraph can be chosen to coincide with $\omega$ here).
By (\ref{item:corridor}), we let $\tld{x}$ be $(\tld{w}_h \tld{w})^{-1}w_0 \tld{w}_\alpha t_{(e-1)\eta_0} \tld{w}$ for some $\tld{w}_\alpha \in W_{a,\alpha}$.
Then (\ref{item:weightineq}) becomes $t_\omega w_0 \tld{w}_\alpha t_{(e-1)\eta_0} \tld{w} \leq w_0 t_{(e-1)\eta_0}\tld{w}_\lambda$ which implies by Lemma \ref{lemma:bruhatup1} that 
\[
t_{w_0(\omega)}\tld{w}_\alpha t_{(e-1)\eta_0} \tld{w} \uparrow t_{(e-1)\eta_0}\tld{w}_\lambda,
\]
which upon multiplying by $t_{-(e-1)\eta_0}$ and using item (\ref{item:up}) and \cite[Proposition 4.1.2]{LLL} gives
\[
\tld{w}_\alpha' \tld{w}_h^{-1}\tld{w}_2 \uparrow \tld{w}_\lambda \uparrow \tld{w}_h^{-1}\tld{w}_2
\]
for some $\tld{w}'_\alpha \in W_{a,\alpha}$ (using that $W_{a,\alpha}$ is stable under conjugation by $X^*(\un{T})$).
Then $\tld{w}_\lambda \in W_{a,\alpha} \tld{w}_h^{-1}\tld{w}_2$ by Lemma \ref{lemma:simpleup}, or equivalently $\tld{w}_2 \in W_{a,-w_0(\alpha)} \tld{w}_h\tld{w}_\lambda$. 

That $\tld{w}_2\in \tld{W}^+$, $\tld{w}_2 \uparrow \tld{w}_h \tld{w}_\lambda$, and $\tld{w}_h\tld{w}_\lambda \in \tld{W}^+_1$ imply respectively that 
\[
0 \leq \lfloor\langle \tld{w}_2(x),-w_0(\alpha^\vee)\rangle\rfloor \leq \lfloor\langle \tld{w}_h\tld{w}_\lambda(x),-w_0(\alpha^\vee) \rangle \rfloor = 0
\]
for any $x\in A_0$, which forces equalities throughout.
Combined with the fact that $\tld{w}_2 \in W_{a,-w_0(\alpha)} \tld{w}_h\tld{w}_\lambda$, we see that $\tld{w}_2 = \tld{w}_h \tld{w}_\lambda$.
In particular, the image of $\tld{w}_\lambda$ in $W$ is $w_0w_2$.

The final part follows from the first part of the proof of Proposition \ref{prop:listshapes}.
\end{proof}

\begin{prop}\label{prop:upset}
Let $\alpha$ be a simple root and $w$ be an element of $W$.
If $\tld{w}_1 \in w^{-1} W_{a,\alpha} t_{e\eta_0} w$ and $\tld{w}_1\leq \tld{w}_2 \leq t_{w^{-1}(e\eta_0)}$, then $\tld{w}_2 \in w^{-1} W_{a,\alpha} t_{e\eta_0} w$.
\end{prop}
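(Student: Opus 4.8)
The plan is to reduce the statement to a combinatorial fact about the set $w^{-1}W_{a,\alpha}t_{e\eta_0}w$ using the explicit description of its admissible members given in Proposition~\ref{prop:listshapes}. First I would observe that the element $t_{w^{-1}(e\eta_0)}$ is one of the two ``extreme'' elements listed in Proposition~\ref{prop:listshapes} (corresponding to $k=0$ in \eqref{eq:el1}), so by the first part of the proof of that proposition every element $\tld{w}_1 \in w^{-1}W_{a,\alpha}t_{e\eta_0}w$ with $\tld{w}_1 \leq t_{w^{-1}(e\eta_0)}$ is one of the explicitly enumerated elements, i.e.\ lies in $w^{-1}W_{a,\alpha}t_{e\eta_0}w \cap \Adm(e\eta_0)$. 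Thus the hypothesis places us entirely inside the finite explicit list, and the content of the proposition is that the ``interval'' $[\tld{w}_1, t_{w^{-1}(e\eta_0)}]$ in the Bruhat order stays inside the coset $w^{-1}W_{a,\alpha}t_{e\eta_0}w$.

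Next I would exploit the factorizations \eqref{eq:el1}, \eqref{eq:el2}: conjugating by $(\tld{w}_h\tld{w})$ and applying the length- and order-preserving structure there (together with Definition~\ref{affineadjoint} and \cite[Lemma 2.1.3]{LLL} if it is convenient to pass to $\tld{W}^\vee$), the chain $\tld{w}_1 \leq \tld{w}_2 \leq t_{w^{-1}(e\eta_0)}$ translates into a chain $\tld{z}_1 \leq \tld{z}_2 \leq t_{(e-1)\eta_0}$ (or its $s_\alpha$-twisted analogue) with $\tld{z}_1$ of the form $\tld{z}_k$ or $\tld{z}'_k$ as in the proof of Proposition~\ref{prop:listshapes}, all sitting inside $W_{a,\alpha}t_{(e-1)\eta_0}$ (up to the $X^0(T)$-ambiguity, which is harmless). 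The key point is then that in the rank-one affine Weyl group $W_{a,\alpha}$, the Bruhat interval below $t_{(e-1)\eta_0}$ is totally contained in the coset $W_{a,\alpha}t_{(e-1)\eta_0}$: this is immediate because $W_{a,\alpha}$-cosets in $\tld{W}$ are convex-ish for the Bruhat order in the relevant range, and more concretely because the elements $\tld{z}_k,\tld{z}'_k$ exhaust all of $W_{a,\alpha}t_{(e-1)\eta_0} \cap \Adm((e-1)\eta_0)$ and, by the length computation via $n_\beta(-)$ in the proof of Proposition~\ref{prop:listshapes} (the function $d(-)$ and the identity \eqref{eqn:notalpha}), any element below $t_{(e-1)\eta_0}$ in the full $\tld{W}$ which lies in a strip bounded by the $\alpha$-hyperplanes must already lie in this coset.

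Concretely, the cleanest route is probably to argue directly with the hyperplane combinatorics used in Proposition~\ref{prop:listshapes}: for $\tld{w}_2$ with $\tld{w}_1 \leq \tld{w}_2 \leq t_{w^{-1}(e\eta_0)}$ and $x \in A_0$, one has that $\tld{w}_2(x)$ lies on the segment between $\tld{w}_1(x)$ and $t_{w^{-1}(e\eta_0)}(x)$ in each coordinate direction orthogonal to $\alpha$ — more precisely, $\lfloor\langle \tld{w}_2(x), w^{-1}(\beta^\vee)\rangle\rfloor$ agrees with the common value $\lfloor\langle t_{w^{-1}(e\eta_0)}(x), w^{-1}(\beta^\vee)\rangle\rfloor$ for all $\beta>0$ with $\beta\neq\alpha$, by the same pairing-of-roots computation \eqref{eqn:fracpart} that establishes \eqref{eqn:notalpha} (this is where one uses $\tld{w}_1 \leq \tld{w}_2 \leq t_{w^{-1}(e\eta_0)}$ to sandwich the Bruhat length and hence all the $n_\beta$ for $\beta\neq\alpha$). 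This forces $\tld{w}_2(x) - t_{w^{-1}(e\eta_0)}(x)$ to be a multiple of $w^{-1}(\alpha)$, which is exactly the condition that $\tld{w}_2 \in w^{-1}W_{a,\alpha}t_{e\eta_0}w$.

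I expect the main obstacle to be bookkeeping rather than a genuine difficulty: one must carefully track the $X^0(T)$-ambiguities in the choice of $\tld{w}$ lifting $w$, and one must make sure that the Bruhat-order sandwiching genuinely pins down \emph{all} the coordinates $n_\beta(\tld{w}_2)$ for $\beta \neq \alpha$ (not just their sum), which is where the pairing trick $\beta_+ = s_\alpha(\beta_-)$ from the proof of Proposition~\ref{prop:listshapes} has to be re-run in the presence of the intermediate element $\tld{w}_2$. An alternative, possibly shorter, approach would be to invoke Lemma~\ref{lemma:simpleup} after translating everything by $t_{-e\eta_0}$ and conjugating into $W_{a,\alpha}$-form, deriving a sandwich $\tld{w}_\alpha\tld{w}_1' \uparrow \tld{w}_2' \uparrow \tld{w}_1'$ from the Bruhat inequalities via Lemma~\ref{lemma:bruhatup}; but setting up the arrows in the right direction for Lemma~\ref{lemma:simpleup} requires $\tld{w}_1'$ on the ``large'' side, so one would apply it to the pair $(\tld{w}_1, t_{w^{-1}(e\eta_0)})$ rather than to $\tld{w}_2$ directly, and then conclude $\tld{w}_2 \in W_{a,\alpha}\cdot(\text{base point})$.
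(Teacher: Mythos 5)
Your alternative route (Lemmas~\ref{lemma:bruhatup1} and~\ref{lemma:simpleup}) is indeed the one the paper takes, but both of your sketches stop short of the step that carries the real content. After passing to dominant representatives and applying Lemma~\ref{lemma:simpleup}, what you obtain is $s\tld{w}_2 \in W_{a,\alpha}\, w t_{w^{-1}(e\eta_0)}$, where $s\in W$ is the unique element with $s\tld{w}_2\in\tld{W}^+$; equivalently $\tld{w}_2\in s^{-1}W_{a,\alpha}t_{e\eta_0}w$. Since $W_{a,\alpha}\cap W=\{1,s_\alpha\}$, this coset agrees with the desired $w^{-1}W_{a,\alpha}t_{e\eta_0}w$ only once one knows $s\in\{w,s_\alpha w\}$. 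That is exactly where Lemma~\ref{lemma:2comps} enters the paper's argument: combining $\tld{w}_1\leq\tld{w}_2\leq t_{s^{-1}(e\eta_0)}$ (the last inequality via \cite[Corollary 4.4]{HH}) with $\tld{w}_1\in w^{-1}W_{a,\alpha}t_{e\eta_0}w\cap\Adm(e\eta_0)$ forces $s\in\{w,s_\alpha w\}$. And Lemma~\ref{lemma:2comps} is not an elementary fact: it relies on Corollary~\ref{cor:at_most_two}, i.e.\ on the Levi reduction and normality of Pappas--Zhu local models, which the paper explicitly flags as the one non-combinatorial input of \S\ref{sec:comb:weyl}. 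Neither version of your argument addresses this bound on $s$, so the proposal is missing the central ingredient.

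Your primary (hyperplane-counting) route also has a gap independent of the point above. Membership in $w^{-1}W_{a,\alpha}t_{e\eta_0}w$ is not characterized by $\lfloor\langle\tld{w}_2(x),w^{-1}(\beta^\vee)\rangle\rfloor$ agreeing with the value at $t_{w^{-1}(e\eta_0)}$ for every $\beta\neq\alpha$. Equation~\eqref{eqn:notalpha} only establishes that the \emph{sum} $\sum_{\beta\neq\alpha}n_\beta$ is constant along the coset; within a pair $(\beta_-,\beta_+)$ with $\langle\alpha,\beta_+^\vee\rangle$ odd, \eqref{eqn:fracpart} holds because the fractional parts are \emph{swapped}, so the individual floors genuinely change as $k$ varies, and there is no ``common value'' to match. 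Moreover a Bruhat sandwich $\tld{w}_1\leq\tld{w}_2\leq t_{w^{-1}(e\eta_0)}$ constrains lengths (sums of $|n_\beta|$), not individual hyperplane counts, so it cannot yield the termwise control you need. You already sensed that this was the weak point (``one must make sure \ldots\ not just their sum''), and indeed that is where the argument breaks; the fix is the geometric input of Lemma~\ref{lemma:2comps}, not a finer combinatorial bookkeeping.
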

\begin{proof}
By Lemma \ref{lemma:bruhatup1}, the inequalities $\tld{w}_1\leq \tld{w}_2 \leq t_{w^{-1}(e\eta_0)}$ imply that $w\tld{w}_1 \uparrow s\tld{w}_2 \uparrow wt_{w^{-1}(e\eta_0)}$ where $s \in W$ is the unique element such that $s\tld{w}_2\in \tld{W}^+$.
Since $w\tld{w}_1 \in W_{a,\alpha} wt_{w^{-1}(e\eta_0)}$, we deduce from Lemma \ref{lemma:simpleup} that $s\tld{w}_2 \in W_{a,\alpha} wt_{w^{-1}(e\eta_0)}$ or equivalently, that $\tld{w}_2 \in s^{-1}W_{a,\alpha} wt_{w^{-1}(e\eta_0)}$.

We now narrow the possibilities of $s$.
Since $\tld{w}_1 \leq \tld{w}_2 \leq t_{s^{-1}(e\eta_0)}$, where the final inequality follows from \cite[Corollary 4.4]{HH}, $s \in \{w,s_\alpha w\}$ by Lemma \ref{lemma:2comps}.
Combining with the above paragraph, $\tld{w}_ 2 \in w^{-1}W_{a,\alpha}t_{e\eta_0} w$.
\end{proof}

\subsection{The weight part of a Serre-type conjecture for tame representations}
\label{sec:SWC}

The aim of this section, and of the following one, is to recollect the necessary notions to formulate the weight part for Serre conjectures, and to pursue a combinatorial study of the set of conjectural modular weights in terms of the geometry of the affine Weyl group.

\subsubsection{Serre weights}
\label{subsub:SW}
Recall from \ref{sec:not:RG} that $G$ is a split group defined over $\Fp$, $k_p$ is a finite \'etale $\F_p$-algebra, $G_0 = \Res_{k_p/\F_p} G_{/k_p}$ and $\F$ contains the image of any ring homomorphism $k_p \ra \ovl{\F}_p$ so that $\un{G} \defeq (G_0)_{/\F}\cong G_{/\F}^{\Hom(k_p,\F)}$. 
Let $\rG$ be $G_0(\Fp)$. 
A \emph{Serre weight} (of $\rG$) is an absolutely irreducible $\F$-representation of $\rG$.

Let $\lambda\in X^*(\un{T})$ be a dominant character.
We write $W(\lambda)_{/\F}$ for the $\un{G}$-module $\Ind_{\un{B}}^{\un{G}} w_0 \lambda$.
Let $F(\lambda)$ denote the (irreducible) socle of the $\rG$-restriction of $W(\lambda)_{/\F}(\F)$. 

We define:
\[
X_1(\un{T}) \defeq \left\{\lambda\in X^*(\un{T}),0\leq \langle \lambda,\alpha^\vee\rangle\leq p-1\text{ for all }\alpha\in \un{\Delta}\right\}
\]
which we call the set of $p$-restricted weights. 
Then the map $\lambda \mapsto F(\lambda)$ defines a bijection  from $X_1(\un{T})/(p-\pi)X^0(\un{T})$ to the set of isomorphism classes of Serre weights of $\rG$ (see \cite[Lemma 9.2.4]{GHS}).
We say that $\lambda\in X_1(\un{T})$ is \emph{regular $p$-restricted} if $\langle \lambda,\alpha^\vee\rangle < p-1$ for all $\alpha\in \un{\Delta}$ and say a Serre weight $F(\lambda)$ is \emph{regular} if $\lambda$ is. 
Similarly we say that $F(\lambda)$ is $m$-deep if $\lambda$ is $m$-deep.

To handle the combinatorics of Serre weights it is convenient to introduce the notion of $p$-alcoves and the $p$-dot action on them.
A $p$-alcove is a connected component of $X^*(\un{T})\otimes_{\Z}\R\ \setminus\ \big(\bigcup_{(\alpha,pn)}(H_{\alpha,pn}-\eta_0)\big)$ and we say that a $p$-alcove $\un{C}$ is
\emph{dominant} (resp.~\emph{$p$-restricted}) if $0 <  \langle\lambda + \eta_0,\alpha^\vee\rangle$ (resp.~if $0 <  \langle\lambda + \eta_0,\alpha^\vee\rangle<p$) for all $\alpha\in \un{\Delta}$ and $\lambda\in \un{C}$.
We write $\un{C}_0$ for the dominant base $p$-alcove, i.e.~the alcove characterized by $\lambda\in \un{C}_0$ if and only if $0 <\langle\lambda + \eta_0,\alpha^\vee\rangle<p$ for all $\alpha\in \un{\Phi}^+$.
The $p$-\emph{dot action} of $\tld{\un{W}}$ on $X^*(\un{T})\otimes_{\Z}\R$ is defined by $\tld{w}\cdot \lambda\defeq w(\lambda+\eta_0+p\nu)-\eta_0$ for $\tld{w}=wt_\nu\in \tld{\un{W}}$ and $\lambda\in X^*(\un{T})\otimes_{\Z}\R$.
Then we have
\[\tld{\un{W}}^+=\{\tld{w}\in \tld{\un{W}}:\tld{w}\cdot \un{C}_0 \textrm{ is dominant}\}\]
and
\[\tld{\un{W}}^+_1=\{\tld{w}\in \tld{\un{W}}^+:\tld{w}\cdot \un{C}_0 \textrm{ is } p\textrm{-restricted}\}\]
and $\un{\Omega}$ is the stabilizer of $\un{C}_0$ for the dot action.

We have an equivalence relation on $\tld{\un{W}} \times X^*(\un{T})$  defined by $(\tld{w},\omega) \sim (t_\nu \tld{w},\omega-\nu)$ for all $\nu \in X^0(\un{T})$ (\cite[\S 2.2]{MLM}). 
For $(\tld{w}_1,\omega-\eta_0)\in \tld{\un{W}}^+_1\times (X^*(\un{T})\cap \un{C}_0)/\sim$, we define the Serre weight $F_{(\tld{w}_1,\omega)}\defeq F(\pi^{-1}(\tld{w}_1)\cdot (\omega-\eta_0))$ (this only depends on the equivalence class of $(\tld{w}_1,\omega)$).
We call the equivalence class of $(\tld{w}_1,\omega)$ a \emph{lowest alcove presentation} for the Serre weight $F_{(\tld{w}_1,\omega)}$ and note that $F_{(\tld{w}_1,\omega)}$ is $m$-deep if and only if $\omega-\eta_0$ is $m$-deep in alcove $\un{C}_0$.
(We often implicitly choose a representative for a lowest alcove presentation to make \emph{a priori} sense of an expression, though it is \emph{a posteriori} independent of this choice.)

\subsubsection{Deligne--Lusztig representations}
To a \emph{good} pair $(s,\mu)\in \un{W}\times X^*(\un{T})$ we attach a Deligne--Lusztig representation $R_s(\mu)$ of $\rG$ defined over $E$  (see \cite[\S 2.2]{LLL} and \cite[Proposition 9.2.1, 9.2.2]{GHS}, where the representation $R_s(\mu)$ is there denoted $R(s,\mu)$). 
We call $(s,\mu-\eta_0)$ a \emph{lowest alcove presentation} for $R_s(\mu)$ and  say that $R_s(\mu)$ is \emph{$N$-generic} if $\mu-\eta_0$ is $N$-deep in alcove $\un{C}_0$ for $N\geq 0$
If $\mu-\eta_0$ is $1$-deep in $\un{C}_0$ then $R_s(\mu)$ is an irreducible representation.
We say that a Deligne--Lusztig representation $R$ is $N$-generic if there exists an isomorphism $R\cong R_s(\mu)$ where $R_s(\mu)$ is \emph{$N$-generic}.

\subsubsection{Tame inertial types}
\label{subsubsec:TIT}
An inertial type (for $K$, over $E$) is the $\GL_n(E)$-conjugacy class of an homomorphism  $\tau:I_{K}\ra\GL_n(E)$ with open kernel and which extends to the Weil group of $G_K$.
An inertial type is \emph{tame} if one (equivalently, any) homomorphism in the conjugacy class factors through the tame quotient of $I_{K}$. 

Let $s\in \un{W}$ and $\mu \in  X^*(\un{T})\cap \un{C}_0$.
Associated to this data we have an integer $r$ (the order of the element $s_0s_1\cdots s_{f-2}s_{f-1}\in W$), $n$-tuples $\bf{a}^{\prime (j')}\in\Z^n$ for $0\leq j'\leq fr-1$, and a tame inertial type $\tau(s,\mu+\eta_0)\defeq \sum_{i=1}^{n}(\omega_{fr})^{\mathbf{a}_i^{\prime(0)}}$. 
(See \cite[Example 2.4.1, equations (5.2), (5.1)]{MLM} for the explicit construction of the $n$-tuples $\bf{a}^{\prime (j')}\in\Z^n$.)
We say that $\tau(s,\mu+\eta_0)$ is a \emph{principal series type} if $r=1$.

If $N \geq 0$ and $\mu$ is $N$-deep in alcove $\un{C}_0$, the pair $(s,\mu)$ is said to be \emph{an $N$-generic lowest alcove presentation} for the tame inertial type $\tau(s,\mu+\eta_0)$. 
We say that a tame inertial type is $N$-generic if it admits an $N$-generic lowest alcove presentation. 
(Different pairs $(s,\mu)$ can give rise to isomorphic tame inertial types, see \cite[Proposition 2.2.15]{LLL}.) 

If $(s,\mu)$ is a lowest alcove presentation of $\tau$, let $\tld{w}(\tau) \defeq t_{\mu+\eta_0}s\in\tld{\un{W}}$. (In particular, when writing $\tld{w}(\tau)$ we use an implicit lowest alcove presentation for $\tau$).

Inertial $\F$-types are defined similarly with $E$ replaced by $\F$. 
Tame inertial $\F$-types have analogous notions of lowest alcove presentations and genericity.
If $\taubar$ is a tame inertial $\F$-type we write $[\taubar]$ to denote the tame inertial type over $E$ obtained from $\taubar$ using the Teichm\"uller section $\F^\times \into \cO^\times$.

Assume that $\mu$ is $1$-deep in $\un{C}_0$.
For each $0\leq j'\leq fr-1$ we define $s'_{\orient,j'}$ to be the (necessarily unique) element of $W$ such that $(s'_{\orient,j'})^{-1}(\bf{a}^{\prime\,(j')})\in\Z^n$ is dominant.
(In the terminology of \cite{LLLM}, cf.~Definition 2.6 of \emph{loc.~cit.}, the $fr$-tuple $(s'_{\orient,j'})_{0\leq j'\leq fr-1}$ is the \emph{orientation} of $(\bf{a}^{\prime\,(j')})_{0\leq j'\leq fr-1}$.)
We will need the observation that $(s'_{\mathrm{or},  j})^{-1} (\bf{a}^{\prime\, (j)})$ equals $s_{j}^{-1}\big(\mu_{j} + \eta_{0,j}\big)$ modulo $p$ for all $0\leq j\leq f-1$.

\subsubsection{Inertial local Langlands}
\label{subsub:ILL}
Let $K/\Qp$ be a finite extension with ring of integers $\cO_K$ and residue field $k$. 
Let $\tau$ be a tame inertial type for $K$. 
By \cite[Theorem 3.7]{CEGGPS} there exists an irreducible smooth representation $\sigma(\tau)$ of $\GL_n(\cO_K)$ over $E$ satisfying results towards the inertial local Langlands correspondence.

Let $k_p = k$, so that $G_0(\Fp)\cong\GL_n(k)$. 
When $\tau\cong\tau(s,\mu)$ with $\mu-\eta\in \un{C}_0$ a $1$-deep character, the representation $\sigma(\tau)$ can and will be taken to be (the inflation to $\GL_n(\cO_K)$ of) $R_s(\mu)$ (see \cite[Theorem 2.5.3]{MLM}, \cite[Corollary 2.3.5]{LLL}).

The following definition will play a key role in our generalization of Herzig's Serre weight conjecture.
\begin{defn}
\label{defn:SWC:ram}
Let $\cR$ denote the bijection on regular Serre weights given by $F(\lambda)\mapsto F(\tld{w}_h \cdot \lambda)$.
If $\taubar:I_K\ra\GL_n(\F)$ is a $1$-generic tame inertial $\F$-type for $I_K$ we define
\[
W^?(\taubar)\defeq\cR\Big(\JH\Big(\ovl{\sigma([\taubar])}\otimes W((1-e)w_0\eta_0)\Big)\Big).
\]
\end{defn}

\subsubsection{Compatibilities of lowest alcove presentations}
Recall the canonical isomorphism $\tld{\un{W}}/\un{W}_a\stackrel{\sim}{\ra}X^*(\un{Z})$.
Let $\zeta\in X^*(\un{Z})$.
We say that an element $\tld{w}\in\tld{\un{W}}$ is $\zeta$-compatible if it corresponds to $\zeta$ via the isomorphism $\tld{\un{W}}/\un{W}_a\stackrel{\sim}{\ra}X^*(\un{Z})$.

A lowest alcove presentation $(s,\mu)$ for a tame inertial type $\tau$ for $K$ over $E$ or a Deligne--Lusztig representation $R$ is $\zeta$-compatible if $t_{\mu+\eta_0}s\in\tld{\un{W}}$ is $\zeta$-compatible.
A lowest alcove presentation $(s,\mu)$ of a tame inertial $\F$-type is $\zeta$-compatible if $t_{\mu-(e-1)\eta_0}s\in\tld{\un{W}}$ is $\zeta$-compatible.
(If $\tau$ is a tame inertial type and $\taubar$ is the tame inertial $\F$-type obtained by reduction, the same lowest alcove presentation of $\tau$ and $\taubar$ are compatible with elements of $X^*(\un{Z})$ that differ by $\eta_0|_{\un{Z}}$.)
A lowest alcove presentation $(\tld{w}_1,\omega)$ for Serre weight is $\zeta$-compatible if the element $t_{\omega-\eta_0}\tld{w}_1\in \tld{\un{W}}$ is $\zeta$-compatible.
Finally, lowest alcove presentations (of possibly different types of objects) are compatible if they are all $\zeta$-compatible for some $\zeta \in X^*(\un{Z})$. 

\subsubsection{$L$-parameters}
\label{subsub:Lp}
Recall from \S \ref{sec:not:RG} the finite \'etale $\Qp$-algebra $F_p^+$.
We adapt the constructions of tame inertial types and the inertial local Langlands above to arbitrary $S_p$.
We assume that $E$ contains the image of any homomorphism $F_p^+\rightarrow \ovl{\Q}_p$.
Let 
\[
\un{G}^\vee\defeq\prod_{F_p^+\ra E}G^{\vee}_{/\cO}
\]
be the dual group of $\Res_{F^+_p/\Qp}(G_{/F^+_p})$ and ${}^L\un{G}\defeq \un{G}^\vee\rtimes\Gal(E/\Qp)$  the Langlands dual group of $\Res_{F^+_p/\Qp}(G_{/F^+_p})$ (where $\Gal(E/\Qp)$ acts on the set $\{F_p^+\ra E\}$ by post-composition).
An $L$-parameter (over $E$) is a $\un{G}^\vee(E)$-conjugacy class of an $L$-homomorphism, i.e.~of a continuous homomorphism $\rho:G_{\Qp}\ra{}^L\un{G}(E)$ which is compatible with the projection to $\Gal(E/\Qp)$.
An inertial $L$-parameter is a $\un{G}^\vee(E)$-conjugacy class of an homomorphism $\tau:I_{\Qp}\ra\un{G}^\vee(E)$ with open kernel, and which admits an extension to an $L$-homomorphism $G_{\Qp}\ra{}^L\un{G}(E)$.
An inertial $L$-parameter is \emph{tame} if some (equivalently, any) representative in its equivalence class factors through the tame quotient of $I_{\Qp}$.

The argument of \cite[Lemmas 9.4.1, 9.4.5]{GHS} carries over in our setting and we have a bijection between $L$-parameters (resp.~tame inertial $L$-parameters) and collections of the form $(\rho_v)_{v\in S_p}$ (resp.~of the form $(\tau_v)_{v\in S_p}$) where for all $v\in S_p$ the element $\rho_v:G_{F^+_v}\ra\GL_n(E)$ is a continuous Galois representation (resp.~the element $\tau_v:I_{F^+_v}\ra\GL_n(E)$ is a tame inertial type for $F^+_v$).
(This bijection depends on a choice of isomorphisms $\ovl{F^+_v}\stackrel{\sim}{\ra}\ovl{\Q}_p$ for all $v\in S_p$.)
We have similar notions for $L$-parameters (resp.~inertial $L$-parameters) over $\F$.

In this setting, given a tame inertial $L$-parameter $\tau$ corresponding to the collection of tame inertial types $(\tau_v)_{v\in S_p}$, we let $\sigma(\tau)$ be the irreducible smooth $E$-valued representation of $\GL_n(\cO_p)$ given by $\otimes_{v\in S_p} \sigma(\tau_v)$, where $\sigma(\tau_v)$ is the smooth representation corresponding to $\tau_v$ via the inertial local Langlands correspondence appearing in \S \ref{subsub:ILL}.

\subsection{Combinatorics of Serre weights}
\label{sec:combserre}
In this section, we apply the results of \S \ref{sec:comb:weyl} on extended affine Weyl groups to analyze the combinatorics of the Serre weight sets defined in \S \ref{sec:SWC}. 
We assume for simplicity that $F_p^+=K$, but the results herein do not require this.
Given tame inertial types $\tau$ and $\rhobar^{\speci}$ over $E$ and $\F$, respectively, with fixed compatible lowest alcove presentations, we define $\tld{w}(\rhobar^{\speci},\tau)$ to be $\tld{w}(\tau)^{-1}\tld{w}(\rhobar^{\speci}) \in t_{e\eta_0}\un{W}_a$. 

The following two results follow readily from \cite[Proposition 2.3.7]{MLM}.

\begin{prop}\label{prop:JH}
Suppose we fix a $2h_{\eta_0}$-generic lowest alcove presentation of a tame inertial type $\tau$. 
The map 
\begin{equation}
\label{eqn:JH}
(\tld{w}_\lambda,\tld{w}_2) \longmapsto F_{(\tld{w}_\lambda, \tld{w}(\tau)\tld{w}_2^{-1}(0))}
\end{equation}
induces a bijection between 
\begin{itemize}
\item the set of pairs $(\tld{w}_\lambda,\tld{w}_2)$, modulo the diagonal action of $X^0(T)$, with $\tld{w}_\lambda \in \tld{W}^+_1$ and $\tld{w}_2\in \tld{W}^+$ such that $\tld{w}_\lambda \uparrow \tld{w}_h^{-1} \tld{w}_2$; and
\item elements of $\JH(\ovl{\sigma}(\tau))$.
\end{itemize}
Moreover, these lowest alcove presentations of Serre weights are compatible with that of $\tau$.
Finally, the weight corresponding to $(\tld{w}_\lambda,\tld{w}_h\tld{w}_\lambda)$ appears as a Jordan--H\"older factor with multiplicity one.
\end{prop}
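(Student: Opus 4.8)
The plan is to deduce everything from the description of Jordan--Hölder factors of reductions of Deligne--Lusztig representations in \cite[Proposition 2.3.7]{MLM}. Recall that for a sufficiently generic tame inertial type $\tau$ with lowest alcove presentation $(s,\mu)$, so that $\tld{w}(\tau) = t_{\mu+\eta_0}s$, that proposition parametrizes $\JH(\ovl{\sigma}(\tau))$ by a subset of $\tld{\un{W}}^+_1$ (or of restricted alcoves) via an explicit formula for the highest weight. First I would recall that with $\sigma(\tau)$ identified (via the genericity hypothesis and \S\ref{subsub:ILL}) with $R_s(\mu)$, the set $\JH(\ovl{R_s(\mu)})$ consists of the Serre weights $F(\pi^{-1}(\tld{w}_\lambda)\cdot(\omega - \eta_0))$ where $\omega$ runs over an appropriate orbit; the content of \cite[Prop.~2.3.7]{MLM} is that this orbit is naturally indexed by elements $\tld{w}_2 \in \tld{W}^+$ satisfying the "alcove inequality" $\tld{w}_\lambda \uparrow \tld{w}_h^{-1}\tld{w}_2$, with the Serre weight being $F_{(\tld{w}_\lambda,\, \tld{w}(\tau)\tld{w}_2^{-1}(0))}$. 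So the first step is simply to unwind notation and match the indexing in \emph{loc.~cit.} with the pair notation $(\tld{w}_\lambda, \tld{w}_2)$ used here; the $2h_{\eta_0}$-genericity is exactly what is needed for \emph{loc.~cit.} to apply and for the lowest alcove presentations to make sense.

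Next I would check that the map is well-defined on equivalence classes modulo the diagonal $X^0(T)$-action. Since $F_{(\tld{w}_1,\omega)}$ depends only on the class of $(\tld{w}_1,\omega)$ under $(\tld{w}_1,\omega)\sim(t_\nu\tld{w}_1,\omega-\nu)$ for $\nu\in X^0(T)$, and since replacing $(\tld{w}_\lambda,\tld{w}_2)$ by $(t_\nu\tld{w}_\lambda, t_\nu\tld{w}_2)$ changes $\tld{w}(\tau)\tld{w}_2^{-1}(0)$ by $\nu$ (as $t_\nu$ commutes appropriately and $t_\nu^{-1}(0) = -\nu$, tracking signs carefully) while changing $\tld{w}_\lambda$ to $t_\nu\tld{w}_\lambda$, one sees the pair $(\tld{w}_\lambda,\,\tld{w}(\tau)\tld{w}_2^{-1}(0))$ moves within its $\sim$-class. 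One also needs that the condition $\tld{w}_\lambda\uparrow\tld{w}_h^{-1}\tld{w}_2$ is invariant under this diagonal action, which is immediate since $\uparrow$ is a partial order on $\tld{W}$ and left translation by $t_\nu$... actually one should use that the condition is really about alcoves and the diagonal $t_\nu$ preserves it; this is standard. For injectivity and surjectivity: surjectivity is exactly the statement of \cite[Prop.~2.3.7]{MLM} that every Jordan--Hölder factor arises this way, and injectivity follows from the uniqueness in that proposition combined with the fact that the pair $(\tld{w}_\lambda,\omega)$ (mod $\sim$) determines the Serre weight and conversely --- here I would invoke that $\tld{w}_\lambda$ is recovered as the restricted-alcove datum and then $\tld{w}_2$ is determined by $\omega$ via $\tld{w}_2^{-1}(0) = \tld{w}(\tau)^{-1}(\omega)$ together with $\tld{w}_2\in\tld{W}^+$. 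The compatibility of lowest alcove presentations is a direct bookkeeping check using the $\zeta$-compatibility conventions of \S\ref{subsub:Lp} and the fact that $\tld{w}(\tau)$ and $\tld{w}_2$ lie in the appropriate cosets.

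For the final multiplicity-one claim, the point is that the weight attached to $(\tld{w}_\lambda, \tld{w}_h\tld{w}_\lambda)$ is the "extremal" or "obvious" constituent of $\ovl{\sigma}(\tau)$ corresponding to taking $\tld{w}_2 = \tld{w}_h\tld{w}_\lambda$, for which the inequality $\tld{w}_\lambda\uparrow\tld{w}_h^{-1}\tld{w}_2 = \tld{w}_\lambda$ is an equality; this is the "top" or "bottom" alcove and the corresponding constituent appears with multiplicity equal to the multiplicity of the trivial/Steinberg-type factor, which is one --- again this is part of \cite[Prop.~2.3.7]{MLM} or follows from the known decomposition numbers for $\ovl{R_s(\mu)}$ in the generic range (the relevant constituent is the one whose Deligne--Lusztig-theoretic multiplicity is governed by a single Weyl group element). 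I expect the main obstacle is not any deep input but rather the sign and coset bookkeeping: correctly tracking how $\tld{w}(\tau)\tld{w}_2^{-1}(0)$ transforms under the diagonal $X^0(T)$-action and under the genericity-dependent identifications, and verifying that the "$\uparrow$" condition as phrased here (on $\tld{W}^+$ and $\tld{W}^+_1$) matches the alcove condition in \cite[Prop.~2.3.7]{MLM} after applying the $\tld{w}_h$ shift --- this is where an off-by-one in alcoves or a confusion between the dot action and linear action could creep in, so I would be careful to pin down conventions against Lemmas \ref{lemma:bruhatup1} and \ref{lemma:doubleclosure}.
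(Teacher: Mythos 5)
Your approach matches the paper's exactly: the paper proves Proposition \ref{prop:JH} simply by noting that it "follows readily from \cite[Proposition 2.3.7]{MLM}," which is precisely the reference you unwind. The bookkeeping you flag (diagonal $X^0(T)$-action, matching the $\uparrow$ condition through the $\tld{w}_h$ shift, multiplicity-one for the extremal constituent) is indeed all that is involved, and your account of it is correct.
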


\begin{prop}\label{prop:W?}
Suppose we fix an $(\max\{2,e\}h_{\eta_0})$-generic lowest alcove presentation of a tame inertial type $\rhobar^\speci$ over $\F$.
The map 
\[
(\tld{w}_\lambda,\tld{w}_2)\longmapsto 
F_{(\tld{w}_\lambda, \tld{w}^*(\rhobar^\speci)\tld{w}_2^{-1}(0))}.
\]
induces a bijection between
\begin{itemize}
\item pairs $(\tld{w}_\lambda,\tld{w}_2)$ with $\tld{w}_\lambda \in \tld{W}^+_1$ and $\tld{w}_2\in \tld{W}^+$, up to the diagonal $X^0(\un{T})$-action, such that $\tld{w}_2 \uparrow t_{(e-1)\eta_0}\tld{w}_\lambda$; and
\item elements of $W^?(\rhobar^\speci)$.
\end{itemize}
Moreover, these lowest alcove presentations of Serre weights are compatible with that of $\rhobar^\speci$.
\end{prop}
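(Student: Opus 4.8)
The plan is to deduce this from \cite[Proposition 2.3.7]{MLM} along the lines of Proposition \ref{prop:JH}, together with the order-theoretic combinatorics of \S\ref{sec:comb:weyl}, by unwinding Definition \ref{defn:SWC:ram}. Write $[\rhobar^{\speci}]$ for the tame inertial type over $E$ attached to $\rhobar^{\speci}$ by the Teichm\"uller section, equipped with the lowest alcove presentation induced by the chosen one of $\rhobar^{\speci}$; this presentation is by construction $(\max\{2,e\}h_{\eta_0})$-generic, hence a fortiori $2h_{\eta_0}$-generic, so Proposition \ref{prop:JH} applies to it. There are then three ingredients to combine: the Jordan--H\"older description of $\ovl{\sigma}(\tau)$ for a tame type $\tau$ over $E$; the effect on Jordan--H\"older factors of tensoring by the algebraic representation $W((1-e)w_0\eta_0)$; and the effect of applying $\cR$.

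First I would apply Proposition \ref{prop:JH} to $\tau=[\rhobar^{\speci}]$, expressing $\JH(\ovl{\sigma}([\rhobar^{\speci}]))$ as the set of $F_{(\tld{w}_\lambda,\tld{w}([\rhobar^{\speci}])\tld{w}_2^{-1}(0))}$ with $\tld{w}_\lambda\in\tld{W}^+_1$, $\tld{w}_2\in\tld{W}^+$ and $\tld{w}_\lambda\uparrow\tld{w}_h^{-1}\tld{w}_2$. Next I would account for the factor $W((1-e)w_0\eta_0)$: in the present genericity range, $(1-e)w_0\eta_0+\eta_0$ lies in the lowest dominant $p$-alcove — the pairings with the positive coroots being the multiples $(j-i)e$, all strictly between $0$ and $p$ — so $W((1-e)w_0\eta_0)$ is an irreducible algebraic representation, and the Jordan--H\"older factors of $\ovl{\sigma}([\rhobar^{\speci}])\otimes W((1-e)w_0\eta_0)$ are governed by \cite[Proposition 2.3.7]{MLM} applied to the twisted representation exactly as in the proof of Proposition \ref{prop:JH} (concretely, through the weights of $W((1-e)w_0\eta_0)$ and Deligne--Lusztig induction). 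Combined with the $\uparrow$-lemmas of \S\ref{sec:comb:weyl}, this replaces the constraint $\tld{w}_\lambda\uparrow\tld{w}_h^{-1}\tld{w}_2$ by its $t_{(e-1)\eta_0}$-twisted version, the shift by $(e-1)\eta_0$ being precisely the one already built into the compatibility convention for lowest alcove presentations of tame inertial $\F$-types, and is what forces the strengthened genericity $\max\{2,e\}h_{\eta_0}$.

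Finally I would apply $\cR$. Using the explicit formula for $\cR$ on lowest alcove presentations (coming from the $\pi$-invariance of $\tld{w}_h$ and $\eta_0$), the $\tld{w}_h$-twist together with the passage to the antidominant base alcove transports the data $(\tld{w}_\lambda,\tld{w}_2)$ to new parameters, with $\tld{w}([\rhobar^{\speci}])$ replaced by its image $\tld{w}^*(\rhobar^{\speci})$ under the bijection of Definition \ref{affineadjoint}; since that bijection reverses both the Bruhat and the upper-arrow orders (\cite[Lemma 2.1.3]{LLL}), the constraint from the previous step flips to $\tld{w}_2\uparrow t_{(e-1)\eta_0}\tld{w}_\lambda$, which is the asserted one. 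Injectivity of the resulting parametrization, and the compatibility of the lowest alcove presentations of the Serre weights with that of $\rhobar^{\speci}$, are inherited from the multiplicity-one statement in Proposition \ref{prop:JH} once the genericity is large enough to rule out collisions.

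The step I expect to be the main obstacle is the second one: showing that tensoring by $W((1-e)w_0\eta_0)$ has \emph{exactly} the effect of the $t_{(e-1)\eta_0}$-twist on the combinatorial data — no spurious and no missing Jordan--H\"older factors — and that the set of admissible pairs is cut out precisely by $\tld{w}_2\uparrow t_{(e-1)\eta_0}\tld{w}_\lambda$. This is where the real content lies and where the genericity hypothesis is consumed; the remaining work is a careful but routine reconciliation of normalization conventions (the $e$-shifts in the $\F$-type presentations, the $\pi$-equivariance of $\tld{w}_h$, and the interplay of $\cR$ with the $*$-duality of Definition \ref{affineadjoint}).
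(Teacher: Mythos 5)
Your overall framework — apply Proposition \ref{prop:JH} to $[\rhobar^{\speci}]$, analyze the effect of tensoring with $W((1-e)w_0\eta_0)$, then apply $\cR$ — is consistent with the paper, which asserts (without detailed proof) that both Propositions \ref{prop:JH} and \ref{prop:W?} ``follow readily from [MLM, Proposition 2.3.7].'' Your observation that $(1-e)w_0\eta_0$ lies in the closure of the dominant bottom alcove (pairings $e(j-i)$ with positive coroots) is correct and is the entry point. But there are two places where your argument goes off track.

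First, the explanation you give for the flip of the $\uparrow$-constraint and for the appearance of the $*$ is not right. You attribute both to the bijection of Definition \ref{affineadjoint}, claiming it ``reverses both the Bruhat and the upper-arrow orders.'' This contradicts what the paper says explicitly about that bijection — it ``respects notions of length and Bruhat order (see [LLL, Lemma 2.1.3]).'' More to the point, the $*$-bijection never acts on the constraint variables $(\tld{w}_\lambda,\tld{w}_2)$ or on $\tld{w}(\rhobar^{\speci})$ in this derivation. What actually produces the flip is $\cR$ itself: on lowest alcove presentations one has $\cR(F_{(\tld{w}_1,\omega)}) = F_{(\tld{w}_h\tld{w}_1,\omega)}$ (using the $\pi$-invariance of $\tld{w}_h$ and the fact that $\tld{w}_h$ is an involution modulo $X^0(\un{T})$), so $\tld{w}_\lambda$ is replaced by $\tld{w}_h\tld{w}_\lambda$ while $\omega$ is unchanged; the constraint $\tld{w}'_\lambda \uparrow \tld{w}_h^{-1}\tld{w}'_2$ then becomes $\tld{w}_2 \uparrow t_{(e-1)\eta_0}\tld{w}_\lambda$ in the new coordinates by [LLL, Proposition 4.1.2], the ``$\tld{w}_h$-reversal'' lemma for $\uparrow$ that is invoked repeatedly elsewhere in this section (see the proofs of Proposition \ref{prop:ends} and Corollary \ref{cor:adm}). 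The $*$-duality of Definition \ref{affineadjoint} plays no role. (Relatedly: the $\tld{w}^*(\rhobar^{\speci})$ appearing in the statement is almost certainly a misprint for $\tld{w}(\rhobar^{\speci})$ — every other invocation of this proposition, including the proof of Proposition \ref{prop:intersect} and Definition \ref{defn:obv}, uses $\tld{w}(\rhobar^{\speci})$ with no star. You should not try to justify the $*$; it is an error to explain away, not a feature to derive.)

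Second, as you yourself flag, the middle step — that tensoring by $W((1-e)w_0\eta_0)$ converts the constraint exactly to the $t_{(e-1)\eta_0}$-shifted one, with no spurious or missing constituents — is where all the real content lies, and your proposal does not actually supply it. The cited [MLM, Proposition 2.3.7] concerns $\JH(\ovl{R}_s(\mu))$ of a bare Deligne--Lusztig reduction; to handle $\JH(\ovl{R}_s(\mu)\otimes W((1-e)w_0\eta_0))$ you need either a version of that result allowing a bottom-alcove tensor factor, or a translation-functor/Brauer-character argument showing that in the generic range $[\ovl{R}_s(\mu)\otimes W(\lambda_0)] = \sum_\nu m_\nu(\lambda_0)[\ovl{R}_s(\mu+s(\nu))]$ in the Grothendieck group, and then a combinatorial bookkeeping that collapses the extra sum over weights $\nu$ of $W((1-e)w_0\eta_0)$ into precisely the enlarged $\uparrow$-range $\tld{w}_2 \uparrow t_{(e-1)\eta_0}\tld{w}_\lambda$. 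Neither of these is routine enough to leave implicit; this is the step the genericity hypothesis $\max\{2,e\}h_{\eta_0}$ is purchasing, and it must be proved, not asserted.
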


The following definition is central to this paper. 

\begin{defn}[Extremal weights]
\label{defn:obv}
Suppose we fix a $(\max\{2,e\}h_{\eta_0})$-generic lowest alcove presentation for a tame inertial type $\rhobar^\speci$ over $\F$.
Let $w$ be an element of $W$ and $\tld{w} \in \tld{W}_1^+$ be an element (unique up to $X^0(\un{T})$) whose image in $W$ is $w$.
The weight
\[
F_{(\tld{w},\tld{w}(\rhobar^\speci)\tld{w}^{-1}(-(e-1)\eta_0))}\in W^?(\rhobar^\speci)
\]
is called the \emph{extremal weight of $\rhobar^\speci$ corresponding to} $w$. 
Let $W_\obv(\rhobar^\speci)$ be the set of all extremal weights of $\rhobar^\speci$. 
(While the extremal weight corresponding to $w$ depends on the choice of lowest alcove presentation, the set of all extremal weights does not.) 
\end{defn}

\begin{rmk}
If $\rhobar: G_K \ra \GL_n(\F)$ is semisimple and $2h_{\eta_0}$-generic, and $K$ is unramified, the notion of obvious weight for $\rhobar$ corresponding to $w$ (\cite[Definition 2.6.3]{MLM}) and of extremal weight for $\rhobar$ corresponding to $w$ coincide, and the set $W_\obv(\rhobar|_I)$ is the set defined in \cite[Definition 7.1.3]{GHS}.
\end{rmk}

The following combinatorial result relates the set $W^?(\rhobar^\speci)$ and the admissible set and is key to weight elimination. 

\begin{prop} \label{prop:combWE}
Suppose we fix an $(\max\{2,e\}h_{\eta_0})$-generic lowest alcove presentation for $\tld{w}(\rhobar^\speci)$.
Let $(\tld{w}_\lambda,\omega)$ be a compatible lowest alcove presentation of a $3h_{\eta_0}$-deep Serre weight $\sigma$. 
Then $\omega = \tld{w}(\rhobar) \tld{w}^{-1}(0)$ for a unique $\tld{w} \in \tld{W}^+$. 
Let $\tau$ be the tame inertial type over $E$ with $\tld{w}(\rhobar^\speci,\tau) = (\tld{w}_h\tld{w}_\lambda)^{-1} w_0 \tld{w}$ for some (necessarily compatible) lowest alcove presentation. 
Then 
\begin{enumerate}
\item $\sigma \in \JH(\ovl{\sigma}(\tau))$; and
\item $(\tld{w}_h\tld{w}_\lambda)^{-1} w_0 \tld{w} \in \Adm(e\eta_0)$ implies that $\sigma \in W^?(\rhobar^\speci)$.
\end{enumerate}
\end{prop}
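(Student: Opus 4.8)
The plan is to deduce both assertions from the combinatorial dictionaries established in Propositions \ref{prop:JH} and \ref{prop:W?}, translating the hypothesis $(\tld{w}_h\tld{w}_\lambda)^{-1} w_0 \tld{w} \in \Adm(e\eta_0)$ into statements about the $\uparrow$-order between the relevant elements of $\tld{W}^+$. First I would make the bookkeeping precise: the genericity hypotheses on $\tld{w}(\rhobar^\speci)$ and the $3h_{\eta_0}$-depth of $\sigma$ ensure that the lowest alcove presentation $(\tld{w}_\lambda,\omega)$ of $\sigma$ forces $\omega = \tld{w}(\rhobar^\speci)\tld{w}^{-1}(0)$ for a \emph{unique} $\tld{w}\in\tld{W}^+$ (this uniqueness is exactly the injectivity built into the maps in those two propositions once one normalizes by $X^0(\un{T})$), and that the $\tau$ with $\tld{w}(\rhobar^\speci,\tau) = (\tld{w}_h\tld{w}_\lambda)^{-1} w_0 \tld{w}$ is $2h_{\eta_0}$-generic so that Proposition \ref{prop:JH} applies to it. I would also record the identity $\tld{w}(\tau)\tld{w}_2^{-1}(0) = \tld{w}(\rhobar^\speci)\tld{w}^{-1}(0)$ that results from choosing $\tld{w}_2$ so that $\tld{w}(\tau)\tld{w}_2^{-1} = \tld{w}(\rhobar^\speci)\tld{w}^{-1}$, i.e.\ $\tld{w}_2 = \tld{w}(\tau)^{-1}\tld{w}(\rhobar^\speci)\tld{w}^{-1} = \tld{w}(\rhobar^\speci,\tau)^{-1}\tld{w}$; using the hypothesis on $\tld{w}(\rhobar^\speci,\tau)$ this is $\tld{w}^{-1} w_0 (\tld{w}_h\tld{w}_\lambda)\cdot\tld{w}$-type rearrangement, and the key point is that $\tld{w}_2$ defined this way lies in $\tld{W}^+$.

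For part (1), the content is that $\sigma = F_{(\tld{w}_\lambda,\omega)}$ is a Jordan--H\"older factor of $\ovl{\sigma}(\tau)$. By Proposition \ref{prop:JH} this amounts to exhibiting $\tld{w}_2\in\tld{W}^+$ with $\tld{w}_\lambda \uparrow \tld{w}_h^{-1}\tld{w}_2$ and $F_{(\tld{w}_\lambda,\tld{w}(\tau)\tld{w}_2^{-1}(0))} = \sigma$. The candidate is the $\tld{w}_2$ from the previous paragraph; the equality of weights is then automatic from the identity of the translation parts. So the real task is to verify $\tld{w}_2\in\tld{W}^+$ and $\tld{w}_\lambda\uparrow \tld{w}_h^{-1}\tld{w}_2$. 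I expect the cleanest route is to pass through the antidominant picture: since $\tld{w}(\tau)^* = \tld{w}^*(\rhobar^\speci)(\tld{w}_h\tld{w}_\lambda)^{-1}w_0$ or an equivalent manipulation via Definition \ref{affineadjoint}, and $\uparrow$-relations are preserved under $\tld{w}\mapsto\tld{w}^*$ and under the length/Bruhat bijections of Lemmas \ref{lemma:bruhatup}, \ref{lemma:bruhatup1}, one translates the problem into an $\uparrow$ comparison that $\tld{w}_\lambda\in\tld{W}^+_1$ already guarantees. Concretely I would argue that $\tld{w}_h^{-1}\tld{w}_2 = \tld{w}_h^{-1}\tld{w}(\rhobar^\speci,\tau)^{-1}\tld{w}$, that $\tld{w}(\rhobar^\speci,\tau)^{-1}\in t_{-e\eta_0}\un{W}_a$, and that $\tld{w}_\lambda\uparrow\tld{w}_h^{-1}\tld{w}_2$ follows because $\tld{w}_\lambda$ is the unique element of $W\tld{w}_\lambda\cap\tld{W}^+_1$ and Wang's theorem (as invoked in the proofs of Lemmas \ref{lemma:bruhatup}, \ref{lemma:bruhatup1}) forces any dominant-alcove element in that coset to dominate it in $\uparrow$.

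For part (2), the hypothesis $(\tld{w}_h\tld{w}_\lambda)^{-1}w_0\tld{w}\in\Adm(e\eta_0)$ is used to promote membership in $\JH(\ovl\sigma(\tau))$ to membership in $W^?(\rhobar^\speci)$. By Proposition \ref{prop:W?}, $\sigma\in W^?(\rhobar^\speci)$ iff there is $\tld{w}_2'\in\tld{W}^+$ with $\tld{w}_2'\uparrow t_{(e-1)\eta_0}\tld{w}_\lambda$ and $F_{(\tld{w}_\lambda,\tld{w}^*(\rhobar^\speci)(\tld{w}_2')^{-1}(0))}=\sigma$. The natural candidate for $\tld{w}_2'$ is $w_0\tld{w}$ (or a mild modification), and matching the translation parts again reduces to showing $w_0\tld{w}\in\tld{W}^+$ — which it need not be literally, so more precisely one takes $\tld{w}_2'$ to be the dominant representative and tracks the sign. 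The crucial inequality $\tld{w}_2'\uparrow t_{(e-1)\eta_0}\tld{w}_\lambda$ is where admissibility enters: unwinding, it is equivalent to $(\tld{w}_h\tld{w}_\lambda)^{-1}w_0\tld{w}\leq t_{w(e\eta_0)}$ for some $w\in W$ (this is the very definition of $\Adm(e\eta_0)$, Definition \ref{defn:adm}), after applying Lemma \ref{lemma:bruhatup1} to convert the Bruhat inequality into the needed $\uparrow$-relation and Proposition \ref{prop:can:reg}/Definition \ref{affineadjoint} to move between the element $(\tld{w}_h\tld{w}_\lambda)^{-1}w_0\tld{w}$ and its $(-)^*$-dual. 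I expect \textbf{part (2), specifically identifying the correct $\tld{w}_2'$ and checking it lies in $\tld{W}^+$ with the admissibility inequality translating to the $\uparrow$-relation}, to be the main obstacle: the translations back and forth between $\tld{W}$ and $\tld{W}^\vee$, between $\le$ and $\uparrow$, and the normalization by $X^0(\un{T})$ are all routine individually but must be assembled carefully, and one must be vigilant that the lowest alcove presentation of $\tau$ chosen to satisfy $\tld{w}(\rhobar^\speci,\tau) = (\tld{w}_h\tld{w}_\lambda)^{-1}w_0\tld{w}$ is compatible (in the $\zeta$-sense of \S\ref{subsubsec:TIT}) with the given one for $\rhobar^\speci$, so that the weight identities produced by Propositions \ref{prop:JH} and \ref{prop:W?} genuinely refer to the same $\sigma$.
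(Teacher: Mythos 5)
Your proposal gestures at the right two propositions (\ref{prop:JH} and \ref{prop:W?}) but misidentifies the correct $\tld{w}_2$ in both parts, and as a consequence the argument you sketch for (1) is unnecessary while the one you sketch for (2) is not yet correct.

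For part (1), you try to solve $\tld{w}(\tau)\tld{w}_2^{-1} = \tld{w}(\rhobar^\speci)\tld{w}^{-1}$ as a group-element identity; besides the algebraic slips in your solution, this is the wrong normalization: what is needed is only $\tld{w}(\tau)\tld{w}_2^{-1}(0) = \omega$. The decisive observation is that $w_0(0) = 0$, so from $\tld{w}(\rhobar^\speci,\tau) = (\tld{w}_h\tld{w}_\lambda)^{-1}w_0\tld{w}$ one gets
\[
\omega = \tld{w}(\rhobar^\speci)\tld{w}^{-1}(0) = \tld{w}(\tau)(\tld{w}_h\tld{w}_\lambda)^{-1}w_0(0) = \tld{w}(\tau)(\tld{w}_h\tld{w}_\lambda)^{-1}(0),
\]
which identifies $\tld{w}_2 = \tld{w}_h\tld{w}_\lambda$. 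With this choice $\tld{w}_2\in\tld{W}^+$ is immediate and $\tld{w}_\lambda\uparrow\tld{w}_h^{-1}\tld{w}_2 = \tld{w}_\lambda$ is trivial, so there is no need to invoke Wang's theorem or pass to the antidominant picture. Your candidate $\tld{w}_2 = \tld{w}(\rhobar^\speci,\tau)^{-1}\tld{w}$ is a different element that need not lie in $\tld{W}^+$.

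For part (2), the pair fed into Proposition \ref{prop:W?} should be $(\tld{w}_\lambda,\tld{w})$, not $(\tld{w}_\lambda, w_0\tld{w})$; so the task is to show $\tld{w}\uparrow t_{(e-1)\eta_0}\tld{w}_\lambda$. This is not ``by the very definition of $\Adm(e\eta_0)$.'' The definition only gives $(\tld{w}_h\tld{w}_\lambda)^{-1}w_0\tld{w}\leq t_{w(e\eta_0)}$ for \emph{some} $w\in W$, whereas what one needs is the inequality with $w = w_\lambda$ (the image of $\tld{w}_\lambda$). The missing step is \cite[Corollary 4.4]{HH}: because $w_\lambda(\tld{w}_h\tld{w}_\lambda)^{-1}w_0\tld{w}\in\tld{W}^+$ (which holds since $\tld{w}_\lambda\in\tld{W}^+_1$ forces $w_0 w_\lambda(\tld{w}_h\tld{w}_\lambda)^{-1}$ to be an antidominant translation), admissibility upgrades to $(\tld{w}_h\tld{w}_\lambda)^{-1}w_0\tld{w}\leq t_{w_\lambda^{-1}(e\eta_0)}$. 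Then one uses the factorization $t_{w_\lambda^{-1}(e\eta_0)} = (\tld{w}_h\tld{w}_\lambda)^{-1}w_0 t_{(e-1)\eta_0}\tld{w}_\lambda$ together with the reducedness of both $(\tld{w}_h\tld{w}_\lambda)^{-1}w_0\tld{w}$ and $(\tld{w}_h\tld{w}_\lambda)^{-1}w_0 t_{(e-1)\eta_0}\tld{w}_\lambda$ (\cite[Lemma 4.9]{LLL}) to cancel the common prefix $(\tld{w}_h\tld{w}_\lambda)^{-1}w_0$ and conclude $\tld{w}\leq t_{(e-1)\eta_0}\tld{w}_\lambda$, hence $\tld{w}\uparrow t_{(e-1)\eta_0}\tld{w}_\lambda$. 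Without the \cite{HH} input pinning down $w_\lambda$ and without the reduced-expression cancellation, your sketch does not close.
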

\begin{proof}
By definition of $\tau$, we have that $\tld{w}(\tau)(\tld{w}_h\tld{w}_\lambda)^{-1}(0) = \tld{w}(\rhobar^\speci) \tld{w}^{-1}(0)$.
Note that the lowest alcove presentation of $\tau$ is $2h_{\eta_0}$-generic by the depth assumption on $\sigma$.
Then $\sigma$ corresponds to the pair $(\tld{w}_\lambda,\tld{w}_h\tld{w}_\lambda)$ in \eqref{eqn:JH}.

Suppose that $(\tld{w}_h\tld{w}_\lambda)^{-1} w_0  \tld{w} \in \Adm(e\eta_0)$. 
If we let $w_\lambda \in W$ be the image of $\tld{w}_\lambda$, then we claim that $w_\lambda (\tld{w}_h\tld{w}_\lambda)^{-1} w_0 \tld{w} \in \tld{W}^+$.
Indeed since $\tld{w}_\lambda \in \tld{W}^+_1$, $w_0 w_\lambda (\tld{w}_h\tld{w}_\lambda)^{-1}$ is an antidominant translation so that $w_0 w_\lambda (\tld{w}_h\tld{w}_\lambda)^{-1} w_0 \tld{w}\cdot C_0$ is in the antidominant Weyl chamber.
By \cite[Corollary 4.4]{HH}, we have that 
\[
(\tld{w}_h\tld{w}_\lambda)^{-1} w_0 \tld{w} \leq t_{w_\lambda^{-1} (e\eta_0)} = (\tld{w}_h\tld{w}_\lambda)^{-1} w_0 t_{(e-1)\eta_0} \tld{w}_\lambda.
\]
Since these expressions are reduced by \cite[Lemma 4.9]{LLL}, we conclude that $\tld{w} \leq t_{(e-1)\eta_0} \tld{w}_\lambda$ which implies that $\tld{w}\uparrow t_{(e-1)\eta_0} \tld{w}_\lambda$.
We conclude from Proposition \ref{prop:W?} that $\sigma \in W^?(\rhobar^\speci)$.
\end{proof}

Denote by $W^?(\rhobar^\speci,\tau)$ the intersection $W^?(\rhobar^\speci) \cap \JH(\ovl{\sigma}(\tau))$.  

\begin{prop}\label{prop:intersect}
Suppose we fix compatible $(\max\{2,e\}h_{\eta_0})$-generic and $2h_{\eta_0}$-generic lowest alcove presentations of tame inertial types $\rhobar^{\speci}$ and $\tau$ over $\F$ and $E$, respectively.
Then the set $W^?(\rhobar^\speci,\tau)$ is exactly the set of weights in \eqref{eqn:JH} such that $\tld{w}_2 \tld{w}(\rhobar^\speci,\tau) \leq w_0 t_{(e-1)\eta_0} \tld{w}_\lambda$.
\end{prop}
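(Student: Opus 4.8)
The plan is to combine the two bijective parametrizations already at our disposal: Proposition~\ref{prop:JH} for $\JH(\ovl{\sigma}(\tau))$ and Proposition~\ref{prop:W?} for $W^?(\rhobar^\speci)$. Fix a weight $\sigma$ appearing in \eqref{eqn:JH}, so that $\sigma = F_{(\tld{w}_\lambda, \tld{w}(\tau)\tld{w}_2^{-1}(0))}$ for a pair $(\tld{w}_\lambda,\tld{w}_2)$ with $\tld{w}_\lambda\in\tld{W}^+_1$, $\tld{w}_2\in\tld{W}^+$, and $\tld{w}_\lambda\uparrow\tld{w}_h^{-1}\tld{w}_2$. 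By definition $W^?(\rhobar^\speci,\tau) = W^?(\rhobar^\speci)\cap\JH(\ovl{\sigma}(\tau))$, so $\sigma$ lies in this intersection exactly when, in addition, $\sigma$ is in the image of the map of Proposition~\ref{prop:W?}, i.e.\ there exists a pair $(\tld{w}'_\lambda,\tld{w}'_2)$ with $\tld{w}'_\lambda\in\tld{W}^+_1$, $\tld{w}'_2\in\tld{W}^+$, $\tld{w}'_2\uparrow t_{(e-1)\eta_0}\tld{w}'_\lambda$, and $F_{(\tld{w}'_\lambda,\tld{w}^*(\rhobar^\speci)(\tld{w}'_2)^{-1}(0))} = \sigma$.

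First I would compare the two lowest alcove presentations of $\sigma$. Since both presentations are compatible with that of the relevant tame inertial type (and hence with each other after adjusting $\zeta$), and since a lowest alcove presentation of a Serre weight is unique up to the diagonal $X^0(\un{T})$-action on pairs $(\tld{w}_1,\omega)$ with $\tld{w}_1\in\tld{W}^+_1$, genericity forces $\tld{w}'_\lambda = \tld{w}_\lambda$ (up to $X^0(\un{T})$) and forces the two characters to agree: $\tld{w}^*(\rhobar^\speci)(\tld{w}'_2)^{-1}(0) = \tld{w}(\tau)\tld{w}_2^{-1}(0)$ modulo $(p-\pi)X^0(\un{T})$. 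The next step is to unwind this equality of characters into an equality of affine Weyl elements. Using the relation $\tld{w}(\rhobar^\speci,\tau) = \tld{w}(\tau)^{-1}\tld{w}(\rhobar^\speci)$ together with the star-duality of Definition~\ref{affineadjoint} (and the identity $\tld{w}^* = w^{-1}t_\nu$ for $\tld{w}=t_\nu w$, which intertwines the Bruhat/upper-arrow orders by \cite[Lemma 2.1.3]{LLL}), the character equality should translate, after the usual genericity bookkeeping that upgrades equality-modulo-$X^0(\un{T})$ to genuine equality in $\tld{W}$, into $\tld{w}'_2 = \tld{w}_2\tld{w}(\rhobar^\speci,\tau)$ (or its star; I would track which side of the duality is needed so that the target lands in the right copy $\tld{W}$ versus $\tld{W}^\vee$). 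Granting this, the constraint $\tld{w}'_2\uparrow t_{(e-1)\eta_0}\tld{w}'_\lambda$ from Proposition~\ref{prop:W?} becomes $\tld{w}_2\tld{w}(\rhobar^\speci,\tau)\uparrow t_{(e-1)\eta_0}\tld{w}_\lambda$, and one still needs $\tld{w}_2\tld{w}(\rhobar^\speci,\tau)\in\tld{W}^+$ for it to be a legitimate pair; I expect this dominance to follow automatically, or else the inequality $\tld{w}_2\tld{w}(\rhobar^\speci,\tau)\le w_0 t_{(e-1)\eta_0}\tld{w}_\lambda$ to be the cleaner formulation that encodes both. Indeed, since $w_0 t_{(e-1)\eta_0}\tld{w}_\lambda$ is a reduced factorization (Lemma~\ref{lemma:minrep} applied to $w_0$ and $t_{(e-1)\eta_0}\tld{w}_\lambda\in\tld{W}^+$, using $t_{(e-1)\eta_0}\tld{w}_\lambda\in\tld{W}^+$ since $\tld{w}_\lambda\in\tld{W}^+_1$ and $e\ge 1$), Lemma~\ref{lemma:bruhatup} shows that $\tld{x}\le w_0 t_{(e-1)\eta_0}\tld{w}_\lambda$ is equivalent to $w_0 t_{(e-1)\eta_0}\tld{w}_\lambda\uparrow w\tld{x}$ for all $w$, which is exactly the kind of upper-arrow statement produced by Proposition~\ref{prop:W?}; conversely Lemma~\ref{lemma:bruhatup1} recovers a Bruhat inequality from an upper-arrow one. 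So the two conditions should be literally equivalent.

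The main obstacle I anticipate is the careful translation in the middle step: matching the character $\tld{w}(\tau)\tld{w}_2^{-1}(0)$ with $\tld{w}^*(\rhobar^\speci)(\tld{w}'_2)^{-1}(0)$ requires knowing precisely how the two "$\tld{w}(-)$" conventions (the one attached to $\tau$ via $t_{\mu+\eta_0}s$ and the one attached to $\rhobar^\speci$, which is a tame inertial $\F$-type and so uses the shifted convention $t_{\mu-(e-1)\eta_0}s$) interact under the star-duality, and tracking the $(e-1)\eta_0$ shifts and the $w_0$'s so nothing is off by a translation or a Weyl-group factor. This is exactly the bookkeeping underlying the relation $\tld{w}(\rhobar^\speci,\tau)\in t_{e\eta_0}\un{W}_a$ recorded at the start of \S\ref{sec:combserre}, so I would lean on that normalization throughout. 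Once the dictionary is pinned down, the proof is a short chain of equivalences: $\sigma\in W^?(\rhobar^\speci,\tau)$ $\iff$ $\sigma$ is simultaneously in the images of the maps in Propositions~\ref{prop:JH} and \ref{prop:W?} with the \emph{same} $\tld{w}_\lambda$ $\iff$ $\tld{w}_2\tld{w}(\rhobar^\speci,\tau)\uparrow t_{(e-1)\eta_0}\tld{w}_\lambda$ $\iff$ $\tld{w}_2\tld{w}(\rhobar^\speci,\tau)\le w_0 t_{(e-1)\eta_0}\tld{w}_\lambda$, the last step by Lemmas~\ref{lemma:bruhatup} and \ref{lemma:bruhatup1} together with the reducedness of $w_0 t_{(e-1)\eta_0}\tld{w}_\lambda$.
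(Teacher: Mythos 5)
Your overall strategy---pair the bijective parametrization of $\JH(\ovl{\sigma}(\tau))$ from Proposition~\ref{prop:JH} with that of $W^?(\rhobar^\speci)$ from Proposition~\ref{prop:W?}, use uniqueness of compatible lowest alcove presentations to force $\tld{w}'_\lambda=\tld{w}_\lambda$ and match the characters, then translate into a Bruhat inequality---is exactly the paper's approach, and you identify the correct auxiliary lemmas. But there is a genuine gap in the middle step where you pass from the character equality to the claimed identity $\tld{w}'_2 = \tld{w}_2\tld{w}(\rhobar^\speci,\tau)$.

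The character equality $\tld{w}(\tau)\tld{w}_2^{-1}(0)=\tld{w}(\rhobar^\speci)(\tld{w}'_2)^{-1}(0)$ rearranges to $\tld{w}_2^{-1}(0)=\tld{w}(\rhobar^\speci,\tau)(\tld{w}'_2)^{-1}(0)$. Since the stabilizer of $0$ in $\tld{W}$ is the finite Weyl group $W$, this only says $\tld{w}_2\tld{w}(\rhobar^\speci,\tau)\in W\tld{w}'_2$, i.e.\ $\tld{w}'_2$ is the unique element of $W\tld{w}_2\tld{w}(\rhobar^\speci,\tau)\cap\tld{W}^+$. It does \emph{not} say $\tld{w}'_2=\tld{w}_2\tld{w}(\rhobar^\speci,\tau)$, and in fact $\tld{w}_2\tld{w}(\rhobar^\speci,\tau)$ is typically \emph{not} dominant (if it always were, one would not need Lemma~\ref{lemma:bruhatup1} at all, nor would the Bruhat inequality in the statement carry any content beyond the arrow order). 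Your hope ``I expect this dominance to follow automatically'' is where the argument breaks.

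The paper resolves this precisely by working with the coset condition. The correct reformulation of $\sigma\in W^?(\rhobar^\speci)$ is: there exists $\tld{s}_2\in\tld{W}^+$ with $\tld{s}_2\uparrow t_{(e-1)\eta_0}\tld{w}_\lambda$ and $\tld{w}_2\tld{w}(\rhobar^\speci,\tau)\in W\tld{s}_2$. One then shows this is equivalent to $\tld{w}_2\tld{w}(\rhobar^\speci,\tau)\leq w_0 t_{(e-1)\eta_0}\tld{w}_\lambda$ by a two-directional argument: in the forward direction, Wang's theorem upgrades $\tld{s}_2\uparrow t_{(e-1)\eta_0}\tld{w}_\lambda$ to $\tld{s}_2\leq t_{(e-1)\eta_0}\tld{w}_\lambda$ (both in $\tld{W}^+$), and then for any $w\in W$ we have $w\tld{s}_2\leq w_0\tld{s}_2\leq w_0 t_{(e-1)\eta_0}\tld{w}_\lambda$, using that $w_0\tld{s}_2$ is a reduced factorization (Lemma~\ref{lemma:minrep}); in the backward direction, reducedness of $w_0\cdot(t_{(e-1)\eta_0}\tld{w}_\lambda)$ lets one factor $\tld{w}_2\tld{w}(\rhobar^\speci,\tau)=w\tld{s}_2$ with $w\in W$ and $\tld{s}_2\in\tld{W}^+$ satisfying $\tld{s}_2\leq t_{(e-1)\eta_0}\tld{w}_\lambda$, and Wang's theorem converts this back to the required $\uparrow$ condition. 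You cite all the right tools but never actually run this decomposition; instead you assert the too-strong equality and defer the dominance problem to an unspecified ``cleaner formulation.'' That deferral is the missing argument.
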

\begin{proof}
Consider an element $\sigma\in \JH(\ovl{\sigma}(\tau))$.
Let $\tld{w}_\lambda$ and $\tld{w}_2$ be as in Proposition \ref{prop:JH}.
By Proposition \ref{prop:W?} and uniqueness of compatible lowest alcove presentations (see \cite[Lemma 2.2.3]{MLM}), $\sigma \in W^?(\rhobar^\speci)$ if and only if there exist $\tld{s}_2 \in \tld{W}^+$ with $\tld{s}_2\uparrow t_{(e-1)\eta_0}\tld{w}_\lambda$ such that
\[
\tld{w}(\tau) \tld{w}_2^{-1}(0) = \tld{w}(\rhobar^\speci)\tld{s}_2^{-1}(0),
\]
or equivalently, $\tld{w}_2\tld{w}(\rhobar^\speci,\tau)\in W \tld{s}_2$. 

We now show that there exists $\tld{s}_2 \in \tld{W}^+$ with $\tld{s}_2\uparrow t_{(e-1)\eta_0}\tld{w}_\lambda$ such that $\tld{w}_2\tld{w}(\rhobar^\speci,\tau)\in W \tld{s}_2$ if and only if $\tld{w}_2\tld{w}(\rhobar^\speci,\tau) \leq w_0 t_{(e-1)\eta_0} \tld{w}_\lambda$.
First suppose that such an $\tld{s}_2$ exists.
This implies that 
\[
\tld{w}_2\tld{w}(\rhobar^\speci,\tau)\leq w_0 \tld{s}_2 \leq w_0 t_{(e-1)\eta_0} \tld{w}_\lambda,
\]
where the second inequality follows from the fact that $\tld{s}_2\leq t_{(e-1)\eta_0}\tld{w}_\lambda$ by Wang's theorem.
Conversely, if $\tld{w}_2\tld{w}(\rhobar^\speci,\tau)\leq w_0 t_{(e-1)\eta_0} \tld{w}_\lambda$, then using that $w_0 (t_{(e-1)\eta_0} \tld{w}_\lambda)$ is a reduced factorization, $\tld{w}_2\tld{w}(\rhobar^\speci,\tau) = w\tld{s}_2$ for some $\tld{s}_2 \in \tld{W}^+$ with $\tld{s}_2\leq t_{(e-1)\eta_0}\tld{w}_\lambda$ (or equivalently $\tld{s}_2\uparrow t_{(e-1)\eta_0}\tld{w}_\lambda$ by Wang's theorem) and $w\in W$.
\end{proof}

\begin{cor}\label{cor:adm}
Suppose that tame inertial types $\rhobar^{\speci}$ and $\tau$ over $\F$ and $E$ have compatible $(\max\{2,e\}h_{\eta_0})$-generic and $2h_{\eta_0}$-generic lowest alcove presentations, respectively.
If $W^?(\rhobar^\speci,\tau)$ is nonempty, then $\tld{w}(\rhobar^\speci,\tau) \in \Adm(e\eta_0)$.
\end{cor}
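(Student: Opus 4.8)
The plan is to deduce Corollary \ref{cor:adm} directly from Proposition \ref{prop:intersect}. Suppose $W^?(\rhobar^\speci,\tau)$ is nonempty, and pick a Serre weight $\sigma$ in it. By Proposition \ref{prop:JH}, $\sigma$ corresponds to a pair $(\tld{w}_\lambda,\tld{w}_2)$ with $\tld{w}_\lambda\in\tld{W}^+_1$ and $\tld{w}_2\in\tld{W}^+$ satisfying $\tld{w}_\lambda \uparrow \tld{w}_h^{-1}\tld{w}_2$. Proposition \ref{prop:intersect} then tells us that the defining inequality $\tld{w}_2\tld{w}(\rhobar^\speci,\tau)\leq w_0 t_{(e-1)\eta_0}\tld{w}_\lambda$ holds.

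The idea now is to rewrite this Bruhat inequality so that the admissible set appears. First I would multiply on the left by $w_2\in W$, the image of $\tld{w}_2$ in $W$: since $w_2\tld{w}_2$ is a reduced factorization (Lemma \ref{lemma:minrep}, as $\tld{w}_2\in\tld{W}^+$), multiplying a Bruhat inequality of the form $\tld{w}_2\tld{w}(\rhobar^\speci,\tau)\leq w_0 t_{(e-1)\eta_0}\tld{w}_\lambda$ by $w_2$ on the left is governed by the standard lifting/ascent behavior — concretely, I expect to extract from $\tld{w}_2\tld{w}(\rhobar^\speci,\tau)\leq w_0 t_{(e-1)\eta_0}\tld{w}_\lambda$ that $\tld{w}(\rhobar^\speci,\tau)\leq \tld{w}_2^{-1}w_0 t_{(e-1)\eta_0}\tld{w}_\lambda$. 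Indeed this last step is exactly the type of manipulation appearing in the proof of Proposition \ref{prop:combWE} (via \cite[Lemma 4.9]{LLL} and reducedness of $w_0(t_{(e-1)\eta_0}\tld{w}_\lambda)$).

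The final step is to recognize $\tld{w}_2^{-1}w_0 t_{(e-1)\eta_0}\tld{w}_\lambda$ as of the form $t_{w'(e\eta_0)}$ for some $w'\in W$, up to $X^0(T)$. This is precisely the content of Proposition \ref{prop:can:reg}: writing $\tld{w}_2^{-1}w_0 t_{(e-1)\eta_0}\tld{w}_\lambda$ with $\tld{w}_2,\tld{w}_\lambda\in\tld{W}^+$ (indeed $\tld{W}^+_1$ in the second slot), the proposition produces (the trivial case $\nu\in X^0(T)$) that this is $e$-regular and in fact Bruhat-conjugate to $t_{w'(e\eta_0)}$ for some $w'\in W$. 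Hence $\tld{w}(\rhobar^\speci,\tau)\leq t_{w'(e\eta_0)}$, which is exactly the defining condition for membership in $\Adm(e\eta_0)$ (Definition \ref{defn:adm}). That $\tld{w}(\rhobar^\speci,\tau)\in t_{e\eta_0}\un{W}_a$ by construction ensures we are in the right coset for this to make sense.

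The main obstacle I anticipate is bookkeeping the passage from $\tld{w}_2\tld{w}(\rhobar^\speci,\tau)\leq w_0 t_{(e-1)\eta_0}\tld{w}_\lambda$ to $\tld{w}(\rhobar^\speci,\tau)\leq t_{w'(e\eta_0)}$ cleanly — i.e.\ correctly identifying which element $w'\in W$ arises (it should be the image of $w_0\tld{w}_2$, matching the pattern in the proof of Proposition \ref{prop:ends}) and verifying that the factorizations involved are reduced so that left-multiplication by Weyl group elements behaves as expected. All the necessary inputs (Lemma \ref{lemma:minrep}, Lemma \ref{lemma:bruhatup1}, Wang's theorem, \cite[Corollary 4.4]{HH}, \cite[Lemma 4.9]{LLL}, Proposition \ref{prop:can:reg}) are already available, so this should be a short argument assembling pieces that have appeared in the proofs of Propositions \ref{prop:combWE} and \ref{prop:ends}.
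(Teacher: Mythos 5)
There is a genuine gap in your final step. After correctly extracting from Proposition~\ref{prop:intersect} the inequality $\tld{w}_2\tld{w}(\rhobar^\speci,\tau)\leq w_0 t_{(e-1)\eta_0}\tld{w}_\lambda$ and passing to $\tld{w}(\rhobar^\speci,\tau)\leq \tld{w}_2^{-1}w_0 t_{(e-1)\eta_0}\tld{w}_\lambda$, you claim that this upper bound is already of the form $t_{w'(e\eta_0)}$ (up to $X^0(T)$), citing Proposition~\ref{prop:can:reg}. That is not what Proposition~\ref{prop:can:reg} says: it asserts only that $\tld{w}_2^{-1}w_0 t_{(e-1)\eta_0}\tld{w}_\lambda$ is \emph{$e$-regular} and admits a unique canonical factorization---it is a pure translation only in the extremal case $\tld{w}_2 = \tld{w}_h\tld{w}_\lambda$. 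For a concrete counterexample, take $\tld{w}_2 = \tld{w}_\lambda = e$ and $e=1$: then $\tld{w}_2^{-1}w_0 t_{(e-1)\eta_0}\tld{w}_\lambda = w_0$, which is certainly not a translate $t_{w'(\eta_0)}$. So the argument as written stops short of showing $\tld{w}(\rhobar^\speci,\tau)\leq t_{w'(e\eta_0)}$.

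The missing ingredient is the constraint $\tld{w}_\lambda\uparrow\tld{w}_h^{-1}\tld{w}_2$ from Proposition~\ref{prop:JH}, which you record but never use. The paper's proof uses exactly this: by \cite[Proposition~4.1.2]{LLL} it is equivalent to $\tld{w}_2\uparrow\tld{w}_h\tld{w}_\lambda$, hence $\tld{w}_2\leq\tld{w}_h\tld{w}_\lambda$ by Wang's theorem. Then, using that both $\tld{w}_2^{-1}(w_0 t_{(e-1)\eta_0}\tld{w}_\lambda)$ and $(\tld{w}_h\tld{w}_\lambda)^{-1}(w_0 t_{(e-1)\eta_0}\tld{w}_\lambda)$ are reduced factorizations (\cite[Lemma~4.1.9]{LLL}), one enlarges the upper bound to
\[
\tld{w}(\rhobar^\speci,\tau)\ \leq\ \tld{w}_2^{-1}w_0 t_{(e-1)\eta_0}\tld{w}_\lambda\ \leq\ (\tld{w}_h\tld{w}_\lambda)^{-1}w_0 t_{(e-1)\eta_0}\tld{w}_\lambda\ =\ t_{w_\lambda^{-1}(e\eta_0)},
\]
and it is this \emph{larger} element that is a pure translate (a direct computation using $\tld{w}_h^{-1}=t_{\eta_0}w_0$). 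This yields membership in $\Adm(e\eta_0)$. Your guess about which $w'\in W$ arises (the image of $w_0\tld{w}_2$) is also off the mark in general; the translation that actually appears is indexed by $w_\lambda^{-1}$, the inverse of the image of $\tld{w}_\lambda$ in $W$.
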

\begin{proof}
As in the statement of the corollary, we fix compatible lowest alcove presentations for $\rhobar^\speci$ and $\tau$, respectively. 
If $W^?(\rhobar^\speci,\tau)$ is nonempty, by Proposition \ref{prop:intersect} we have that $\tld{w}_2 \tld{w}(\rhobar^\speci,\tau) \leq w_0 t_{(e-1)\eta_0} \tld{w}_\lambda$ for some $\tld{w}_2\in \tld{W}^+$ and $\tld{w}_\lambda \in \tld{W}^+_1$ with $\tld{w}_\lambda \uparrow \tld{w}_h^{-1}\tld{w}_2$.
Then $\tld{w}_2 \uparrow \tld{w}_h \tld{w}_\lambda$ by \cite[Proposition 4.1.2]{LLL} so that $\tld{w}_2 \leq \tld{w}_h \tld{w}_\lambda$.
Since $\tld{w}_2^{-1}w_0 t_{(e-1)\eta_0}\tld{w}_\lambda$ and $(\tld{w}_h \tld{w}_\lambda)^{-1}w_0 t_{(e-1)\eta_0}\tld{w}_\lambda$ are reduced expressions by \cite[Lemma 4.1.9]{LLL}, we have that 
\[
\tld{w}(\rhobar^\speci,\tau) \leq \tld{w}_2^{-1}w_0 t_{(e-1)\eta_0}\tld{w}_\lambda \leq \tld{w}_\lambda^{-1}\tld{w}_h^{-1}w_0 t_{(e-1)\eta_0}\tld{w}_\lambda = t_{w_\lambda^{-1}(e\eta_0)}.
\]
\end{proof}

We now establish some results which will be used to prove modularity of certain Serre weights. 

\begin{prop}\label{prop:indint}
Suppose that $\tld{w}(\rhobar^\speci,\tau) \in w^{-1} W_{a,\alpha} t_{e\eta_0} w \cap \Adm(e\eta_0)$ for compatible $(\max\{2,e\}h_{\eta_0})$-generic and $2h_{\eta_0}$-generic lowest alcove presentations for tame inertial types $\rhobar^{\speci}$ and $\tau$ over $\F$ and $E$, respectively.

Then $W^?(\rhobar^\speci,\tau)$ equals
\[
\begin{cases}
\{F_{(\tld{w},\tld{w}(\tau)\tld{w}^{-1}\tld{w}_h^{-1}(0))}\} & \mbox{if } \tld{w}(\rhobar^\speci,\tau) = t_{w^{-1}(e\eta_0)}\\
\{F_{(\tld{s_\alpha w},\tld{w}(\tau)\tld{s_\alpha w}^{-1}\tld{w}_h^{-1}(0))}\} & \mbox{if }\tld{w}(\rhobar^\speci,\tau) = t_{(s_\alpha w)^{-1}(e\eta_0)}\\
\{F_{(\tld{w},\tld{w}(\tau)\tld{w}^{-1}\tld{w}_h^{-1}(0))}, F_{(\tld{s_\alpha w},\tld{w}(\tau)\tld{s_\alpha w}^{-1}\tld{w}_h^{-1}(0))}\} & \mbox{otherwise}. 
\end{cases}
\]
Moreover, each weight appears as a Jordan--H\"older factor of $\ovl{\sigma}(\tau)$ with multiplicity one.
\end{prop}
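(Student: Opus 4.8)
The plan is to compute $W^?(\rhobar^\speci,\tau)$ by combining the explicit description of $\Adm(e\eta_0) \cap w^{-1}W_{a,\alpha}t_{e\eta_0}w$ from Proposition \ref{prop:listshapes} with the characterization of $W^?(\rhobar^\speci,\tau)$ as a subset of $\JH(\ovl{\sigma}(\tau))$ given in Proposition \ref{prop:intersect}. By Proposition \ref{prop:JH}, an element of $\JH(\ovl{\sigma}(\tau))$ is determined by a pair $(\tld{w}_\lambda,\tld{w}_2)$ with $\tld{w}_\lambda\in\tld{W}^+_1$, $\tld{w}_2\in\tld{W}^+$, and $\tld{w}_\lambda\uparrow\tld{w}_h^{-1}\tld{w}_2$; and such a weight lies in $W^?(\rhobar^\speci,\tau)$ precisely when $\tld{w}_2\tld{w}(\rhobar^\speci,\tau)\leq w_0t_{(e-1)\eta_0}\tld{w}_\lambda$. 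So the task reduces to identifying exactly which pairs $(\tld{w}_\lambda,\tld{w}_2)$ satisfy this Bruhat inequality when $\tld{x}\defeq\tld{w}(\rhobar^\speci,\tau)$ lies in the prescribed ``corridor'' $w^{-1}W_{a,\alpha}t_{e\eta_0}w \cap \Adm(e\eta_0)$.

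The key input is Proposition \ref{prop:ends}, which is tailored exactly to this situation: given $\tld{x}$ in the corridor, $\tld{w}_2\in\tld{W}^+$ with $\tld{w}_2\uparrow\tld{w}_h\tld{w}_\lambda$ (equivalently $\tld{w}_\lambda\uparrow\tld{w}_h^{-1}\tld{w}_2$, via \cite[Proposition 4.1.2]{LLL}), and the inequality $\tld{w}_2\tld{x}\leq w_0t_{(e-1)\eta_0}\tld{w}_\lambda$, it concludes that $\tld{w}_2 = \tld{w}_h\tld{w}_\lambda$ and that the image of $\tld{w}_\lambda$ in $W$ is either $w$ or $s_\alpha w$. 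Moreover its final clause says the image can be taken to be $w$ (resp.\ $s_\alpha w$) if and only if $\tld{x}\neq t_{(s_\alpha w)^{-1}(e\eta_0)}$ (resp.\ $\tld{x}\neq t_{w^{-1}(e\eta_0)}$). First I would verify that the normalization conventions match: the pairs in Proposition \ref{prop:ends} use $\tld{w}_2\tld{x}$ while Proposition \ref{prop:intersect} uses $\tld{w}_2\tld{w}(\rhobar^\speci,\tau)$ with the \emph{same} $\tld{w}_2$, so one applies Proposition \ref{prop:ends} directly with $\tld{x}=\tld{w}(\rhobar^\speci,\tau)$. This immediately forces $\tld{w}_2=\tld{w}_h\tld{w}_\lambda$, so the weight in \eqref{eqn:JH} becomes $F_{(\tld{w}_\lambda,\tld{w}(\tau)(\tld{w}_h\tld{w}_\lambda)^{-1}(0))}=F_{(\tld{w}_\lambda,\tld{w}(\tau)\tld{w}_\lambda^{-1}\tld{w}_h^{-1}(0))}$, and the image of $\tld{w}_\lambda$ ranges over $\{w\}$, $\{s_\alpha w\}$, or $\{w,s_\alpha w\}$ according to the three cases in the statement. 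Choosing $\tld{w}_\lambda$ to be the canonical lift $\tld{w}$ (resp.\ $\tld{s_\alpha w}$) in $\tld{W}^+_1$ of $w$ (resp.\ $s_\alpha w$) then yields exactly the listed weights.

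One also has to check the converse direction in the middle/generic case — namely that when $\tld{x}$ is neither of the two ``endpoint'' translations, both candidate pairs $(\tld{w},\tld{w}_h\tld{w})$ and $(\tld{s_\alpha w},\tld{w}_h\tld{s_\alpha w})$ do satisfy both $\tld{w}_\lambda\uparrow\tld{w}_h^{-1}\tld{w}_2$ (trivially, since $\tld{w}_2=\tld{w}_h\tld{w}_\lambda$) and the inequality $\tld{w}_h\tld{w}_\lambda\cdot\tld{x}\leq w_0t_{(e-1)\eta_0}\tld{w}_\lambda$, i.e.\ $\tld{x}\leq t_{w_\lambda^{-1}(e\eta_0)}$; this is exactly the content of the first part of the proof of Proposition \ref{prop:listshapes}, which shows every element of the corridor is $\leq t_{w^{-1}(e\eta_0)}$ and $\leq t_{(s_\alpha w)^{-1}(e\eta_0)}$ except for the two endpoints, each of which fails only one of these. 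Finally, the multiplicity-one statement is immediate from the last sentence of Proposition \ref{prop:JH}, since in all cases the relevant pair has the form $(\tld{w}_\lambda,\tld{w}_h\tld{w}_\lambda)$. I expect the main (minor) obstacle to be the bookkeeping of lowest-alcove-presentation compatibility and the precise translation between the $\uparrow$-conditions of Propositions \ref{prop:JH}, \ref{prop:W?} and the Bruhat $\leq$-conditions used in Proposition \ref{prop:ends}; everything substantive has already been isolated into Propositions \ref{prop:listshapes} and \ref{prop:ends}.
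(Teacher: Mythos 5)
Your proposal is correct and follows essentially the same route as the paper: Proposition \ref{prop:intersect} to convert membership in $W^?(\rhobar^\speci,\tau)$ to a Bruhat inequality, Proposition \ref{prop:ends} for the forward inclusion (forcing $\tld{w}_2 = \tld{w}_h\tld{w}_\lambda$ and $\tld{w}_\lambda\in\{\tld{w},\tld{s_\alpha w}\}$), the first part of the proof of Proposition \ref{prop:listshapes} for the reverse inclusion, and the final sentence of Proposition \ref{prop:JH} for multiplicity one. One small remark: your ``i.e.'' between $\tld{w}_h\tld{w}_\lambda\tld{x}\leq w_0t_{(e-1)\eta_0}\tld{w}_\lambda$ and $\tld{x}\leq t_{w_\lambda^{-1}(e\eta_0)}$ should be justified by noting that $w_0 t_{(e-1)\eta_0}\tld{w}_\lambda = (\tld{w}_h\tld{w}_\lambda)\cdot t_{w_\lambda^{-1}(e\eta_0)}$ is a reduced factorization; the paper instead directly computes $\tld{w}_h\tld{w}\tld{w}(\rhobar^\speci,\tau)$ via the explicit factorizations \eqref{eqn:factor1}--\eqref{eqn:factor2} and compares it to $w_0 t_{(e-1)\eta_0}\tld{w}$, but the content is the same.
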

\begin{proof}
Suppose that a weight $\sigma$ of the form (\ref{eqn:JH}) is in $\JH(\ovl{\sigma}(\tau))$.
Then by Proposition \ref{prop:intersect}, $\sigma \in W^?(\rhobar^\speci)$ if and only if $\tld{w}_2 \tld{w}(\rhobar^\speci,\tau) \leq w_0 t_{(e-1)\eta_0} \tld{w}_\lambda$.
By Proposition \ref{prop:ends}, $\tld{w}_h^{-1}\tld{w}_2$ and $\tld{w}_\lambda$ are both either $\tld{w}$ or $\tld{s_\alpha w}$.
The last part of Proposition \ref{prop:ends} implies the inclusion of $W^?(\rhobar^\speci,\tau)$ in the casewise defined sets.

On the other hand, using Proposition \ref{prop:listshapes} and
\begin{equation}\label{eqn:factor1}
t_{w^{-1}(e\eta_0 - k\alpha)} = \tld{w}^{-1}\tld{w}_h^{-1} w_0 t_{(e-1)\eta_0-k\alpha}\tld{w} = \tld{s_\alpha w}^{-1}\tld{w}_h^{-1} w_0 t_{(e-1)\eta_0 - (e-k)\alpha} \tld{s_\alpha w}
\end{equation}
and
\begin{equation}\label{eqn:factor2}
\tld{w}^{-1} s_\alpha t_{e\eta_0 - (k+1)\alpha} \tld{w} = \tld{w}^{-1}\tld{w}_h^{-1} w_0 s_\alpha t_{(e-1)\eta_0-k\alpha}\tld{w} = \tld{s_\alpha w}^{-1}\tld{w}_h^{-1} w_0 s_\alpha t_{(e-1)\eta_0 - (e-k-1)\alpha} \tld{s_\alpha w}.
\end{equation}
we have that $\tld{w}_h \tld{w}\tld{w}(\rhobar^\speci,\tau)$ is either $w_0 t_{(e-1)\eta_0 - k\alpha}\tld{w}$ or $w_0s_\alpha t_{(e-1)\eta_0-k\alpha} \tld{w}$, which is less than or equal to $w_0 t_{(e-1)\eta_0} \tld{w}$ if $k \neq e$.
This implies that $F_{(\tld{w},\tld{w}(\tau)\tld{w}^{-1}\tld{w}_h^{-1}(0))} \in W^?(\rhobar^\speci,\tau)$ if $\tld{w}(\rhobar^\speci,\tau) \neq t_{(s_\alpha w)^{-1}(e\eta_0)}$.
Similarly, $\tld{w}_h \tld{s_\alpha w}\tld{w}(\rhobar^\speci,\tau)$ is either $w_0 t_{(e-1)\eta_0 - (e-k)\alpha}\tld{s_\alpha w}$ or $w_0s_\alpha t_{(e-1)\eta_0-(e-k-1)\alpha} \tld{s_\alpha w}$, so that 
$F_{(\tld{s_\alpha w},\tld{w}(\tau)\tld{s_\alpha w}^{-1}\tld{w}_h^{-1}(0))} \in W^?(\rhobar^\speci,\tau)$ if $\tld{w}(\rhobar^\speci,\tau) \neq t_{w^{-1}(e\eta_0)}$.
This gives the reverse inclusion.
The multiplicity statement follows from that of Proposition \ref{prop:JH}.
\end{proof}

\begin{prop}\label{prop:indcomb}
Suppose we fix an $(e+2)h_{\eta_0}$-generic lowest alcove presentation of a tame inertial type $\rhobar^\speci$ over $\F$.
For $0\leq k \leq e$, let $\tau_{2k}$ be the tame inertial type over $E$ with compatible lowest alcove presentation such that $\tld{w}(\tau) = \tld{w}(\rhobar^\speci) t_{w^{-1}(k\alpha-e\eta_0)}$.
For $0\leq k\leq e-1$, let $\tau_{2k+1}$ be the tame inertial type over $E$ with compatible lowest alcove presentation such that $\tld{w}(\tau) = \tld{w}(\rhobar^\speci) \tld{w}^{-1} t_{k\alpha-e\eta_0}s_\alpha\tld{w}$.

For $0\leq k \leq e-1$, let $\sigma_{2k}$ be 
\[
F_{(\tld{w},\tld{w}(\rhobar^\speci)\tld{w}^{-1}(k\alpha-(e-1)\eta_0))},
\]
and let $\sigma_{2k+1}$ be
\[
F_{(\tld{s_\alpha w},\tld{w}(\rhobar^\speci)\tld{s_\alpha w}^{-1}((e-k-1)\alpha-(e-1)\eta_0))}.
\]
Then for $0\leq m \leq 2e$, $W^?(\rhobar^\speci,\tau_m) = \{\sigma_{m-1},\sigma_m\}$ (where $\sigma_\ell$ should be omitted for $\ell = -1$ or $2e$).
Moreover, $\sigma_{m-1}$ and $\sigma_m$ appear as Jordan--H\"older factors of $\ovl{\sigma}(\tau_m)$ with multiplicity one.
\end{prop}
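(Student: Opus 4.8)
The plan is to deduce this from Propositions \ref{prop:listshapes} and \ref{prop:indint} applied to each of the types $\tau_m$: once $\tld{w}(\rhobar^\speci,\tau_m)$ is identified with one of the explicit elements enumerated in Proposition \ref{prop:listshapes}, Proposition \ref{prop:indint} computes $W^?(\rhobar^\speci,\tau_m)$ together with the Jordan--H\"older multiplicities, and it then only remains to match the resulting weights with the $\sigma_m$.

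First I would compute $\tld{w}(\rhobar^\speci,\tau_m) = \tld{w}(\tau_m)^{-1}\tld{w}(\rhobar^\speci)$ directly from the given formulas for $\tld{w}(\tau_m)$. Using $s_\alpha t_\nu = t_{s_\alpha(\nu)}s_\alpha$ one finds $\tld{w}(\rhobar^\speci,\tau_{2k}) = t_{w^{-1}(e\eta_0-k\alpha)}$ for $0\le k\le e$, and that the $\tld{w}(\rhobar^\speci,\tau_{2k+1})$ are the ``reflection'' elements $\tld{w}^{-1}s_\alpha t_{e\eta_0-(k'+1)\alpha}\tld{w}$ of Proposition \ref{prop:listshapes}. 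In particular every $\tld{w}(\rhobar^\speci,\tau_m)$ lies in $w^{-1}W_{a,\alpha}t_{e\eta_0}w\cap\Adm(e\eta_0)$, the two endpoints $t_{w^{-1}(e\eta_0)}$ and $t_{w^{-1}(e\eta_0-e\alpha)} = t_{(s_\alpha w)^{-1}(e\eta_0)}$ occurring exactly for $m\in\{0,2e\}$, and all other $\tld{w}(\rhobar^\speci,\tau_m)$ equal to neither endpoint. Since $\tld{w}(\rhobar^\speci,\tau_m)\in\Adm(e\eta_0)$ and $\rhobar^\speci$ is $(e+2)h_{\eta_0}$-generic, each $\tau_m$ admits a compatible $2h_{\eta_0}$-generic lowest alcove presentation (while $\rhobar^\speci$ is \emph{a fortiori} $(\max\{2,e\}h_{\eta_0})$-generic), so the hypotheses of Proposition \ref{prop:indint} are met for every $\tau_m$.

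Applying Proposition \ref{prop:indint} then gives $W^?(\rhobar^\speci,\tau_0) = \{F_{(\tld{w},\,\tld{w}(\tau_0)\tld{w}^{-1}\tld{w}_h^{-1}(0))}\}$, $W^?(\rhobar^\speci,\tau_{2e}) = \{F_{(\tld{s_\alpha w},\,\tld{w}(\tau_{2e})\tld{s_\alpha w}^{-1}\tld{w}_h^{-1}(0))}\}$, and for $1\le m\le 2e-1$
\[
W^?(\rhobar^\speci,\tau_m) = \{F_{(\tld{w},\,\tld{w}(\tau_m)\tld{w}^{-1}\tld{w}_h^{-1}(0))},\ F_{(\tld{s_\alpha w},\,\tld{w}(\tau_m)\tld{s_\alpha w}^{-1}\tld{w}_h^{-1}(0))}\},
\]
each weight occurring in $\JH(\ovl{\sigma}(\tau_m))$ with multiplicity one; this last assertion is precisely the ``moreover'' part of the statement, so only the identification of these weights with the $\sigma_m$ remains.

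Finally, I would carry out that identification by substituting $\tld{w}(\tau_m) = \tld{w}(\rhobar^\speci)\,\tld{w}(\rhobar^\speci,\tau_m)^{-1}$ and simplifying using $\tld{w}_h^{-1}(0) = \eta_0$, the commutation $t_{w^{-1}(\nu)}\tld{w}^{-1} = \tld{w}^{-1}t_\nu$, and the factorizations \eqref{eqn:factor1}, \eqref{eqn:factor2} from the proof of Proposition \ref{prop:indint}: for the even types one gets $\tld{w}(\tau_{2k})\tld{w}^{-1}\tld{w}_h^{-1}(0) = \tld{w}(\rhobar^\speci)\tld{w}^{-1}(k\alpha-(e-1)\eta_0)$, so the $\tld{w}$-weight is $\sigma_{2k}$, while \eqref{eqn:factor1} identifies the $\tld{s_\alpha w}$-weight with $\sigma_{2k-1}$; the odd types are handled the same way via \eqref{eqn:factor2} and yield $\{\sigma_{2k},\sigma_{2k+1}\}$. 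Running through all $m$ recovers $W^?(\rhobar^\speci,\tau_m) = \{\sigma_{m-1},\sigma_m\}$, with $\sigma_{-1}$ and $\sigma_{2e}$ omitted. The substantive content is entirely carried by Propositions \ref{prop:listshapes} and \ref{prop:indint}; the only step needing care is the bookkeeping — aligning the enumeration of the corridor $w^{-1}W_{a,\alpha}t_{e\eta_0}w\cap\Adm(e\eta_0)$ (its alternating translation and reflection elements, normalized so that the endpoints are $t_{w^{-1}(e\eta_0)}$ and $t_{(s_\alpha w)^{-1}(e\eta_0)}$) with the indexing of the $\tau_m$ and $\sigma_m$, and checking that at $m=0$ (resp.\ $m=2e$) one of the two weights of Proposition \ref{prop:indint} is the omitted $\sigma_{-1}$ (resp.\ $\sigma_{2e}$). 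That, together with the routine depth-counting behind the genericity hypotheses, is the only delicate point; there is no genuine obstacle.
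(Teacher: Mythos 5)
Your proof is correct and follows exactly the paper's approach: the paper's own proof is a one-line citation of Propositions \ref{prop:listshapes} and \ref{prop:indint} together with the factorizations \eqref{eqn:factor1} and \eqref{eqn:factor2}, and your write-up just fleshes that out. One bookkeeping remark worth flagging: carrying out the computation literally, one finds $\tld{w}(\rhobar^\speci,\tau_{2k+1}) = \tld{w}^{-1}s_\alpha t_{e\eta_0-k\alpha}\tld{w}$, and matching this against the reflection elements $\tld{w}^{-1}s_\alpha t_{e\eta_0-(k'+1)\alpha}\tld{w}$ (with $0\le k'\le e-1$) listed in Proposition \ref{prop:listshapes} forces $k'=k-1$, which falls out of range at $k=0$; the shape of $\tau_1$ as stated is $\tld{w}^{-1}s_\alpha t_{e\eta_0}\tld{w}$, which is \emph{not} in $\Adm(e\eta_0)$ (e.g.\ already for $\GL_2$, $e=1$, $w=\id$ it is $t_{(0,1)}s_\alpha$, of length $2$). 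This indicates that the statement's formula for $\tau_{2k+1}$ should read $t_{(k+1)\alpha-e\eta_0}s_\alpha$ rather than $t_{k\alpha-e\eta_0}s_\alpha$; with that correction the $\tau_{2k+1}$-shape becomes $\tld{w}^{-1}s_\alpha t_{e\eta_0-(k+1)\alpha}\tld{w}$, everything in your argument runs as written, and one checks via \eqref{eqn:factor1}, \eqref{eqn:factor2} that $W^?(\rhobar^\speci,\tau_m)=\{\sigma_{m-1},\sigma_m\}$. So your approach is the right one; you should just make sure to execute the ``routine'' index matching explicitly, since it catches this off-by-one.
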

\begin{proof}
This follows from Propositions \ref{prop:listshapes} and \ref{prop:indint} using (\ref{eqn:factor1}) and (\ref{eqn:factor2}) noting that $\tau_m$ is $2h_{\eta_0}$-generic for all $m$. 
\end{proof}

\subsubsection{The case of $\GL_2$: a comparison with Schein's recipe}
\label{subsub:schein}

\cite{schein} explicitly describes a set of Serre weights for a semisimple $\rhobar: G_K\rightarrow \GL_2(\F)$ with $K$ possibly ramified over $\Qp$ in terms of a ``reflection operation'' $\cR^\delta$ similar to $\cR$ above. 
We compare this description in generic cases with the set $W^?(\rhobar)$ defined in \S \ref{sec:SWC}.

Assume $e\leq p-1$ and let $\rhobar: G_K\rightarrow \GL_2(\F)$ be semisimple.
In \cite[Conjecture 1]{schein}, a set of Serre weights is associated to $\rhobar$, in terms of a reflection operation denoted as $\cR^\delta_{\fp}$ in \emph{loc.~cit}.
The superscript $\delta$ is an element in $\{0,\dots,e-1\}^\cJ$ and leads to the notion of $\delta$-regular weight:
\begin{defn}[\cite{schein}]
A Serre weight $F(\lambda)$ is \emph{$\delta$-regular} if $p-1-\langle\lambda,\alpha_j^\vee\rangle\in \{1,\dots,p\}+(2\delta_j-e+1)$ for all $j\in \cJ$.
(Note that this definition does not depend of the lift of $\lambda\in X_1(\un{T})/(p-\pi)X_0(\un{T})$.)
\end{defn}
A direct computation shows that if $\lambda\in \un{C}_0$ is $(e-1)$-deep then $\lambda$ is $\delta$-regular for any $\delta\in\{0,\dots,e-1\}^{\cJ}$, and moreover $\lambda+\nu\in\un{C}_0$ for any weight $\nu$ appearing in $\big(W((1-e)w_0(\eta_0))_{/\F}\big)|_{\un{T}}$.

Let now $\lambda\in\un{C}_0$ be $(e-1)$-deep.
A direct computation using the definition of $\cR^\delta_{\fp}$ for $\delta$-regular weights yields:
\[
\cR^\delta_{\fp}(F(\lambda))=F\big(\tld{w}_h\cdot (\lambda-(e-1)\eta_0+\sum_{j\in \cJ}\delta_j\alpha_j)\big)
\]
 and hence 
\[
\bigcup_{\delta\in \{0,\dots,e-1\}^{\cJ}}\cR^\delta_{\fp}(F(\lambda))=\cR\big(\JH(F(\lambda)\otimes W((1-e)w_0\eta_0))\big)
\]
by the translation principle (cf.~ \cite[Proposition 3.3]{LMS}).
From \cite[Proposition 2.15]{DL} (or Proposition \ref{prop:JH} above when $e\geq2$), noting that for an $e$-generic Deligne--Lusztig representation $R$ all the Serre weights $F\in \JH(R)$ are $e-1$-deep, we deduce
\begin{prop}
Let $\rhobar:G_K\rightarrow \GL_2(\F)$ be semisimple and $e$-generic.
Then the set of weights $W^?_{\fp}(\rhobar)$ defined in \cite[Conjecture 1(1)]{schein} coincides with the set $W^?(\rhobar)$  of Definition \ref{defn:SWC:ram} above.
\end{prop}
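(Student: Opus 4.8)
This is essentially a matter of assembling the explicit computations recorded just before the statement. Write $\taubar$ for $\rhobar|_{I_K}$, an $e$-generic tame inertial $\F$-type by hypothesis, and recall from Definition \ref{defn:SWC:ram} that $W^?(\rhobar)=\cR\big(\JH(\ovl{\sigma}([\taubar])\otimes W((1-e)w_0\eta_0))\big)$. Since $\rhobar$ is $e$-generic the Deligne--Lusztig representation $\sigma([\taubar])$ is $e$-generic, so by \cite[Proposition 2.15]{DL} (or by Proposition \ref{prop:JH} when $e\geq2$) every Jordan--H\"older constituent $F(\lambda)$ of $\ovl{\sigma}([\taubar])$ is $(e-1)$-deep. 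Since $\JH\big(A\otimes W((1-e)w_0\eta_0)\big)$ is the union of the sets $\JH\big(S\otimes W((1-e)w_0\eta_0)\big)$ over the constituents $S$ of $A$, and $\cR$ is a bijection on regular weights, it is enough to prove, for each such $F(\lambda)$, that
\[
\bigcup_{\delta\in\{0,\dots,e-1\}^{\cJ}}\cR^\delta_{\fp}(F(\lambda)) = \cR\big(\JH(F(\lambda)\otimes W((1-e)w_0\eta_0))\big),
\]
since the union of the left-hand sides over $F(\lambda)\in\JH(\ovl{\sigma}([\taubar]))$ is $W^?_{\fp}(\rhobar)$ by the recipe of \cite[Conjecture 1(1)]{schein} (in the form of \cite[Proposition 2.15]{DL}), while the union of the right-hand sides is $\cR\big(\JH(\ovl{\sigma}([\taubar])\otimes W((1-e)w_0\eta_0))\big)=W^?(\rhobar)$.

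For the displayed identity: a $(e-1)$-deep $F(\lambda)$ is $\delta$-regular for every $\delta\in\{0,\dots,e-1\}^{\cJ}$, so each $\cR^\delta_{\fp}(F(\lambda))$ is defined and given by the formula recalled before the statement, namely $\cR^\delta_{\fp}(F(\lambda))=F\big(\tld{w}_h\cdot(\lambda-(e-1)\eta_0+\sum_{j\in\cJ}\delta_j\alpha_j)\big)$ when $\lambda\in\un{C}_0$, and by the analogous formula for the constituents lying in an upper alcove at some embeddings. Moreover $\lambda+\nu$ remains in the dominant regular alcove of $\lambda$ for every weight $\nu$ of $W((1-e)w_0\eta_0)|_{\un{T}}$, and as $\delta$ runs over $\{0,\dots,e-1\}^{\cJ}$ the weight $-(e-1)\eta_0+\sum_{j\in\cJ}\delta_j\alpha_j$ runs over exactly the weights of $W((1-e)w_0\eta_0)$. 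Hence the translation principle \cite[Proposition 3.3]{LMS} gives $\{F(\lambda-(e-1)\eta_0+\sum_{j\in\cJ}\delta_j\alpha_j)\}_{\delta}=\JH(F(\lambda)\otimes W((1-e)w_0\eta_0))$, and applying $\cR$ (which sends $F(\mu)$ to $F(\tld{w}_h\cdot\mu)$) finishes the argument.

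The step that takes work — the ``direct computation'' flagged before the statement — is the identification of $\cR^\delta_{\fp}$ on $\delta$-regular weights with the translation-principle shift: Schein's $\delta$-regularity condition and reflection operator are given componentwise in terms of the $\alpha_j$ and the offsets $2\delta_j-e+1$, and one must check that these match the weights of $W((1-e)w_0\eta_0)$, paying attention to the sign conventions at the embeddings where the relevant Jordan--H\"older constituent of $\ovl{\sigma}([\taubar])$ lies in an upper alcove. Once this finite computation is carried out, everything else is formal manipulation of Jordan--H\"older sets.
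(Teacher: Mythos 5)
Your argument is correct and is essentially the paper's proof: you assemble the explicit formula for $\cR^\delta_{\fp}$ on $(e-1)$-deep weights, the translation principle from \cite[Proposition 3.3]{LMS}, and the description of $\JH(\ovl{\sigma}([\taubar]))$ from \cite[Proposition 2.15]{DL} in the same order the paper does, merely spelling out the reduction to the per-constituent identity more explicitly. One minor remark: the hedge about constituents ``lying in an upper alcove at some embeddings'' is vacuous for $\GL_2$, since $\tld{\un{W}}_1^+$ is trivial modulo $X^0(\un{T})$ and hence every regular $p$-restricted weight already lies componentwise in $\un{C}_0$, so the formula you quote for $\lambda\in\un{C}_0$ covers every constituent of $\ovl{\sigma}([\taubar])$ without any further case analysis.
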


\clearpage{}%
\clearpage{}%
\section{Breuil--Kisin modules}
\label{sec:BKM}

\subsection{Moduli of Breuil--Kisin modules and local models}

In this section, we introduce background on Breuil--Kisin modules with tame descent.   We closely follow  \cite[\S 5]{MLM} making the necessary modifications to allow $K/\Qp$ to be ramified.  We will generally admit proofs as the generalizations are straightforward.   Throughout this section, we take $G=\GL_n$.   

Let  $K/\Qp$ be finite.
We let $K_0$ be the maximal unramified subextension of $K$, with $f \defeq [K_0:\mathbb{Q}_p]$ and $e \defeq [K:K_0]$. 
Let $k$ denote the residue field of $K$, of cardinality $p^f$ and which coincides with the residue field of $K_0$. 
Let $W(k)$ be ring of Witt vectors of $k$, which is also the ring of integers of $K_0$.   
We denote the arithmetic Frobenius automorphism on $W(k)$ by $\phz$, which acts as raising to $p$-th power on the residue field.
 
We fix a uniformizer $\pi_K\in \ovl{K}$ of $K$. 
Let $E(v) \in W(k)[v]$ be the minimal polynomial for $\pi_K$ over $K_0$, of degree $e$.

Let $\cJ_K = \Hom(K, E)$ and $\cJ = \Hom(k, \F) = \Hom(K_0, E)$.  
Recall that we have fixed an embedding $\sigma_0:K_0\into E$, hence an identification $\cJ\stackrel{\sim}{\rightarrow} \Z/f\Z$ given by $\sigma_j\defeq \sigma_0\circ\phz^{-j}\mapsto j$.

Let $\tau$ be a tame inertial type having a $1$-generic lowest alcove presentation $(s,\mu)\in W^\cJ\times X^*(T)^\cJ$, which we now fix throughout this section.
Recall from \cite[Example 2.4.1]{MLM} that we have a combinatorial data attached to $(s,\mu)$, in particular the element $s_\tau\in W$ (when $K=\Qp$, this is the \emph{niveau} of $\tau$).

Let $r$ be the order of $s_\tau$. 
We write $K'$ for the subfield of $\ovl{K}$ which is unramified of degree $r$ over $K$, $k'$ for its residue field, and $K'_0$ denote maximal unramified subextension of $K'$.  Set $\cJ_{K'} = \Hom(K', E)$, $\cJ'\defeq \Hom(k', \F)$.  
Let $f'\defeq fr$, $e' \defeq p^{f'}-1$ and fix an embedding $\sigma'_0:K'_0 \iarrow E$ which extends $\sigma_0:K_0 \iarrow E$, so that the identification $\cJ'\cong \Z/f' \Z$ given by $\sigma_{j'} \defeq \sigma'_0\circ \phz^{-j'} \mapsto j'$ induces the natural surjection $\Z/f' \Z \onto \Z/f \Z$ when considering the restriction of embedding from $K_0'$ to $K_0$.

We fix an $e'$-root $\pi_{K'}\in \ovl{K}$ of $\pi_K$ and set $L' \defeq K'(\pi_{K'})$.
Let $\Delta' \defeq \Gal(L'/K') \subset \Delta \defeq \Gal(L'/K)$. 
We set $\omega_{K'}(g) \defeq   \frac{g(\pi_{K'})}{\pi_{K'}}$ for $g \in \Delta'$; then $\omega_{K'}$ does not depend on the choice of $\pi'$.  
Composing with $\sigma'_j\in \cJ'$, we get a corresponding character $\omega_{K', \sigma'_j} :\Delta'\ra\cO^\times$ which will also be seen as a character of $I_{K'}=I_K$.
For $j'=0$ we set $\omega_{f'}\defeq \omega_{K', \sigma'_j}$.

Let $R$ be an $\cO$-algebra. Let $\fS_{L'} \defeq W(k')[\![u']\!]$ and $\fS_{L', R} \defeq (W(k') \otimes_{\Zp} R)[\![u']\!]$. 
As usual, $\varphi:\fS_{L', R} \ra \fS_{L', R}$ acts as Frobenius on $W(k')$, trivially on $R$, and sends $u'$ to $(u')^{p}$.  Note that for any $\fS_{L', R}$-module $\fM$, we have the standard $R[\![u']\!]$-linear decomposition $\fM\cong\bigoplus_{j' \in \cJ'}\fM^{(j')}$, induced by the maps $W(k')\otimes_{\Zp}R\rightarrow R$ defined by $x\otimes r\mapsto \sigma_{j'}(x)r$ for $j' \in \cJ'$.

We endow $\fS_{L', R}$ with an action of $\Delta$ as follows: for any $g$ in $\Delta'$, $g(u') = \omega_{K'}(g) u'$ and $g$ acts trivially on the coefficients; if $\sigma^f \in\Gal(L'/K)$ is the lift of the $p^f$-Frobenius on $W(k')$ which fixes $\pi_{K'}$, then $\sigma^f$ is a generator for $\Gal(K'/K)$, acting in natural way on $W(k')$ and trivially on both $u'$ and $R$. 
Set $v = (u')^{e'}$, and define
$
\fS_R \defeq (\fS_{L', R})^{\Delta = 1} = (W(k) \otimes_{\Zp} R)[\![v]\!].
$
Note that $E(v) = E((u')^{e'})$ is the minimal polynomial for $\pi_{K'}$ over $K_0$.    

We will make use of the following group schemes over $\cO$. 
For $j\in\cJ$ and for any $\cO$-algebra $R$, define  
\begin{align*}
L \cG^{(j)}(R)&\defeq \{A \in \GL_n(R[v]^{\wedge_{E_j}}[\frac{1}{E_j}])\};\\
L^+ \cG^{(j)}(R) &\defeq \{ A \in \GL_n(R[v]^{\wedge_{E_j}}), \text{ is upper triangular modulo $v$} \}
\end{align*}
where $E_j=\sigma_j(E(v))\in \cO[v]$, and $\wedge_{E_j}$ stands for the $E_j$-adic completion. In particular if $R$ is $p$-adically complete, this is the same as the $v$-adic completion of $R[v]$.

\subsubsection{Breuil--Kisin modules with tame descent} 
Let $R$ be a $p$-adically complete Noetherian $\cO$-algebra.
For any positive integer $h$, let $Y^{[0, h], \tau}(R)$ denote the groupoid of Breuil--Kisin modules of rank $n$ over $\fS_{L', R}$, height in $[0,h]$ and descent data of type $\tau$ (cf.~\cite[\S 3]{CL}, \cite[Definitions 5.1.1 and 5.1.3]{MLM}):
\begin{defn}
\label{defn:KM}
An object of $Y^{[0, h], \tau}(R)$ consists of 
\begin{itemize}
\item a finitely generated projective $\fS_{L', R}$-module $\fM$ which is locally free of rank $n$;
\item an injective $\fS_{L', R}$-linear map $\phi_\fM:\phz^*(\fM)\ra\fM$ whose cokernel is killed by $E(v)^h$;
\item a semilinear action of  $\Delta$ on $\fM$ which commutes with $\phi_{\fM}$, and such that Zariski locally on $R$, for each $j' \in \cJ'$, 
\[
\fM^{(j')} \mod u' \cong \tau^{\vee} \otimes_{\cO} R 
\]  
as $\Delta'$-representations. 
\end{itemize}
Morphisms are $\fS_{L', R}$-linear maps respecting all the above structures.  
\end{defn}

We will often omit the additional data and just write $\fM \in Y^{[0, h], \tau}(R)$ in what follows.
It is known that $ Y^{[0, h], \tau}$ is a $p$-adic formal algebraic stack over $\Spf \, \cO$ (see, for example, \cite[Theorem 4.7]{CL}).

Recall that an eigenbasis of $\fM \in Y^{[0,h], \tau}(R)$ is a collection of bases $\beta^{(j')}$ for each $\fM^{(j')}$ for $j' \in \cJ'$ compatible with the descent datum (see \cite[Definition 5.1.6]{MLM} for details).  
Given the lowest alcove presentation $(s, \mu)$ of $\tau$, and element $\fM \in Y^{[0, h], \tau}(R)$ and an eigenbasis $\beta$ of $\fM$, equation (5.4) in \cite{MLM} defines the matrix $A^{(j')}_{\fM,\beta} \in \Mat_n(\fS_R)$ for each $j' \in \cJ'$. 
We refer the reader to \emph{loc. cit.} for details rather than recall the excessive notation needed to make a precise definition.   We will recall the properties we need as we go along. 

First, the matrix  $A^{(j')}_{\fM, \beta}$ only depends on $j' \mod f$. Abusing notation, we occasionally write $A^{(j)}_{\fM, \beta}$ for $j \in \cJ$ with the obvious meaning.  Because $\tau$ is $1$-generic, the height condition is equivalent to $A^{(j)}_{\fM,\beta}$ and $(E_j)^h (A^{(j)}_{\fM, \beta})^{-1}$ both lying in $\Mat_n(R[\![v]\!])$ and being upper triangular modulo $v$, for all $j \in \cJ$. 

\begin{defn} \label{defn:smuconj}  \begin{enumerate}
\item  For integers $a \leq b$, define 
\[
L^{[a,b]}\cG^{(j)}(R) := \{ A \in L\cG^{(j)}(R) \mid E_j^{-a} A, E_j^b A^{-1} \in \Mat_n(R[\![v]\!]) \text{ and upper triangular mod } v\}.
\]
\item Given a pair $(s,\mu)\in W^\cJ\times X^*(T)^\cJ$, we define the $(s,\mu)$-twisted $\phz$-conjugation action of $\prod_{j \in \cJ} L^+\cG^{(j)}(R) $ on $\prod_{j \in \cJ} L^{[a,b]}\cG^{(j)}(R)$ by 
\begin{equation}
\label{eq:twisted:cnj}
(I^{(j)}) \cdot (A^{(j)}) =I^{(j)}A^{(j)}\big(\Ad(s^{-1}_jv^{\mu_j+\eta_{0,j}})\big(\phz(I^{(j-1)})^{-1}\big)\big).
\end{equation}
\end{enumerate}
\end{defn}
\begin{rmk}
\begin{enumerate}
\item
\label{rmk:cng:basis}
The change of basis formula in \cite[Proposition 5.1.8]{MLM} can be summarized as follows.  
For the fixed lowest alcove presentation $(s, \mu)\in W^\cJ\times X^*(T)^\cJ$ of $\tau$,
 the set of eigenbases of $\fM$ is a torsor for $\prod_{j \in \cJ} L^+\cG^{(j)}(R)$,
  and given two eigenbases $\beta$ and $\beta'$ differing by $(I^{(j)})_{j\in\cJ} \in \prod_{j \in \cJ} L^+\cG^{(j)}(R)$, the collections $(A^{(j)}_{\fM, \beta})$ and $(A^{(j)}_{\fM, \beta'})$ differ by $(s, \mu)$-twisted $\phz$-conjugation by $(I^{(j)})_{j\in\cJ}$. 
\item
Since eigenbases exist locally, we have the presentation 
\[
Y^{[0,h],\tau}_{\F}\cong \left[\prod_{j \in \cJ} L^{[0,h]}\cG^{(j)}/_{(s,\mu),\phz}\prod_{j \in \cJ}L^+\cG^{(j)}\right]
\] 
where the quotient is with respect to the twisted $\phz$-conjugation \eqref{eq:twisted:cnj}.
\item
 Let $\tld{w}^*(\tau) = s^{-1} t_{\mu + \eta}$.  A key observation which we use frequently is that $(s,\mu)$-twisted conjugation $\prod_{j \in \cJ} L^{[a,b]}\cG^{(j)}$  is the same as usual $\phz$-conjugation on the right translation  $\prod_{j \in \cJ} L^{[a,b]}\cG^{(j)} \tld{w}^*(\tau)$.    
\end{enumerate}
\end{rmk}

We now recall some useful results mod $p$.   
We write $\cI\defeq L^+\cG^{(j)}_{\F}$, which is the usual {(upper triangular)} Iwahori group scheme over Noetherian $\F$-algebras, in particular it is independent of the choice $j\in\cJ$. We also write $\cI_1 \subset \cI$ for pro-$v$ Iwahori consisting of upper unipotent matrices mod $v$.  
\begin{rmk}
In what follows, many of our proofs refer to results in \cite{MLM}, which is in the setting where $K$ is unramified.
However, just as in \emph{loc.~cit}., the proofs proceed by reduction to statements about characteristic $p$ loop groups, to which the results apply as written, thanks to the fact that $E_j \equiv v^{e} \mod p$ so that $L^{[a,b]}\cG^{(j)}_{\F}=L^{[ea,eb]}\GL_{n,\F}$.
For this reason, the requisite genericities in the ramified setting are usually scaled by $e$.
\end{rmk}

\begin{lemma}[Lemma 5.2.2 \cite{MLM}] \label{lem:Istraight} %
 \label{lem:iotawelldef}Let $R$ be an $\F$-algebra and $(A^{(j)}_1)_{j \in \cJ},  (A^{(j)}_2)_{j \in \cJ} \in  L^{[a,b]}\cG^{(j)}(R)$.  Let  $\tld{z} =s^{-1}t_{\mu+\eta}\in \tld{W}^{\vee,\cJ}$ where $\mu$ is $(e(b-a)+1)$-deep in $\un{C}_0$ and $s \in W^\cJ$.  Then, there is a bijection between the following:
\begin{enumerate}
\item
\label{lem:eq:mat:1}
Tuples $(I^{(j)})_{j \in \cJ} \in \cI_1(R)^{\cJ}$ such $A^{(j)}_2 \tld{z}_j = I^{(j)} A^{(j)}_1 \tld{z}_j \phz(I^{(j-1)})^{-1}$ for all $j \in \cJ$;
\item
\label{lem:eq:mat:2}
Tuples $(X_j)_{j \in \cJ} \in \cI_1(R)^{\cJ}$ such that 
$A^{(j)}_2=X_jA^{(j)}_1$ for all $j \in \cJ$.  
\end{enumerate} 
\end{lemma}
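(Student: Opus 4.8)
The plan is to turn condition~(\ref{lem:eq:mat:1}) into a fixed-point problem for an operator on $\cI_1(R)^{\cJ}$ that is contracting for the $v$-adic topology, the contraction being forced by the depth hypothesis on $\mu$. Write $\nu_j\defeq\mu_j+\eta_{0,j}$, so that each $\nu_j$ is dominant with $m<\langle\nu_j,\alpha^\vee\rangle<p-m$ for $m\defeq e(b-a)+1$, every $j\in\cJ$, and every positive root $\alpha$ of $\GL_n$. Since $R$ is an $\F$-algebra and $E_j\equiv v^e\pmod p$, membership $A\in L^{[a,b]}\cG^{(j)}(R)$ says exactly that every entry of $A$ (resp.\ of $A^{-1}$) is divisible by $v^{ea}$ (resp.\ by $v^{-eb}$), so that for any matrix $M$ over $R[\![v]\!][\tfrac1v]$ the conjugate $\Ad(A^{(j)}_1)(M)$ has minimal entry-valuation at least that of $M$ minus $e(b-a)$. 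Multiplying the identity in (\ref{lem:eq:mat:1}) on the right by $\tld{z}_j^{-1}(A^{(j)}_1)^{-1}$ turns it into the recursion
\[
I^{(j)}=X_j\cdot\Ad\big(A^{(j)}_1\tld{z}_j\big)\big(\phz(I^{(j-1)})\big),\qquad X_j\defeq A^{(j)}_2(A^{(j)}_1)^{-1}\qquad(j\in\cJ);
\]
thus (\ref{lem:eq:mat:1}) is exactly the set of $(I^{(j)})_j\in\cI_1(R)^{\cJ}$ solving this recursion (together with the side condition $X_j\in\cI_1(R)$, which I will see is automatic), while (\ref{lem:eq:mat:2}) is the set of such tuples $(X_j)_j$ with all $X_j\in\cI_1(R)$.

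The technical core will be the estimate: for $g\in\cI_1(R)$ one has $\Ad(A^{(j)}_1\tld{z}_j)(\phz(g))\in\mathrm{Id}+v^2\Mat_n(R[\![v]\!])\subseteq\cI_1(R)$, and more generally for $g\in\mathrm{Id}+v^N\Mat_n(R[\![v]\!])$ with $N\geq 1$ one has $\Ad(A^{(j)}_1\tld{z}_j)(\phz(g))\in\mathrm{Id}+v^{pN-(p-2)}\Mat_n(R[\![v]\!])$. To prove it I will use that $\Ad(\tld{z}_j)=\Ad(s_j^{-1}v^{\nu_j})$ multiplies the $(i,k)$-entry of a matrix by $v^{\nu_{j,s_j(i)}-\nu_{j,s_j(k)}}$, and that (by dominance and $m$-depth of $\nu_j$) this exponent is $0$ when $i=k$, is $\geq m+1$ when $s_j(i)<s_j(k)$, and lies in $[-(p-m-1),-(m+1)]$ otherwise; and that for $g\in\cI_1(R)$ the matrix $\phz(g)-\mathrm{Id}$ has strictly-upper-triangular constant term and all remaining entries divisible by $v^p$. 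Running through the three cases then gives $\Ad(\tld{z}_j)(\phz(g))\in\mathrm{Id}+v^{m+1}\Mat_n$ (the below-diagonal and diagonal entries gain at least $p-(p-m-1)=m+1$, the strictly-upper ones at least $m+1$), and applying $\Ad(A^{(j)}_1)$ costs at most $e(b-a)=m-1$, leaving valuation $\geq 2$. The $\mathrm{Id}+v^N\Mat_n$ case is the identical count, now keeping only the lower bound $-(p-m-1)$ on the exponents.

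With the estimate in hand both directions are quick. Any $(I^{(j)})$ solving the recursion has $X_j=I^{(j)}\Ad(A^{(j)}_1\tld{z}_j)(\phz(I^{(j-1)}))^{-1}\in\cI_1(R)\cdot\cI_1(R)=\cI_1(R)$, so the side condition is free and $(I^{(j)})\mapsto(X_j)$ sends (\ref{lem:eq:mat:1}) into (\ref{lem:eq:mat:2}); since a tuple in (\ref{lem:eq:mat:2}) must equal $(A^{(j)}_2(A^{(j)}_1)^{-1})_j$, that set has at most one element. Conversely, given $(X_j)$ in (\ref{lem:eq:mat:2}), I would solve the recursion by iterating the operator $T(I^{(j)})\defeq\big(X_j\Ad(A^{(j)}_1\tld{z}_j)(\phz(I^{(j-1)}))\big)_j$ starting from $(\mathrm{Id})_j$: the estimate shows $T$ lands in $\cI_1(R)^{\cJ}$ and that consecutive iterates agree modulo $v^{d_r}$ with $d_1\geq 2$ and $d_{r+1}\geq pd_r-(p-2)$, so $d_r\to\infty$ and the sequence converges $v$-adically to a fixed point. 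Uniqueness of the fixed point follows by feeding the ratio $E^{(j)}\defeq I^{(j)}(\tld{I}^{(j)})^{-1}\in\cI_1(R)$ of two solutions into the relation $E^{(j)}=X_j\Ad(A^{(j)}_1\tld{z}_j)(\phz(E^{(j-1)}))X_j^{-1}$: the estimate gives $E^{(j)}\in\mathrm{Id}+v^{N_j}\Mat_n$ with $N_j\geq 2$ and $N_j\geq pN_{j-1}-(p-2)$, and running this around the length-$f$ cycle $\cJ\cong\Z/f\Z$ forces $N_0\leq\tfrac{p-2}{p-1}<2$ unless $E^{(0)}=\mathrm{Id}$, whence all $E^{(j)}=\mathrm{Id}$. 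Therefore (\ref{lem:eq:mat:1}) and (\ref{lem:eq:mat:2}) have the same cardinality ($0$ or $1$), and the map $(I^{(j)})\mapsto(A^{(j)}_2(A^{(j)}_1)^{-1})$ is the asserted bijection.

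The main obstacle is the valuation bookkeeping in the estimate: $\Ad(\tld{z}_j)$ genuinely lowers $v$-valuations along the anti-dominant root directions, by as much as $p-m-1$, and this has to be outweighed by the factor $p$ gained from $\phz$ together with the bounded distortion $\pm e(b-a)$ coming from $\Ad(A^{(j)}_1)$. The hypothesis that $\mu$ is exactly $(e(b-a)+1)$-deep in $\un{C}_0$ is precisely the balance point that makes the net effect a strict gain in valuation, and it is this single clean inequality that simultaneously powers the Cauchy argument for existence and the cyclic collapse for uniqueness; I expect the rest of the write-up to be routine once that bound is established.
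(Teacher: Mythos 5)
Your proof is correct and follows the same successive-approximation strategy as the cited [MLM, Lemma 5.2.2] (adapted to the ramified setting, where $E_j\equiv v^e$ in an $\F$-algebra): one rewrites the twisted $\phz$-conjugation relation as the fixed-point equation $I^{(j)}=X_j\,\Ad(A^{(j)}_1\tld{z}_j)(\phz(I^{(j-1)}))$ and checks that the operator is a strict $v$-adic contraction, the gain of at least $2$ in valuation being exactly what the $(e(b-a)+1)$-depth hypothesis buys after paying the cost $e(b-a)$ of conjugating by $A^{(j)}_1$. Your valuation bookkeeping (upper entries gain $\geq m+1$ from $\Ad(\tld{z}_j)$, lower and diagonal entries gain $\geq p-(p-m-1)=m+1$ after $\phz$), the Cauchy iteration from $(\mathrm{Id})_j$, and the cyclic collapse argument for uniqueness all match the intended proof.
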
 

\begin{rmk} \label{rmk:quotient} As in \cite[Corollary 5.2.3]{MLM}, Lemma \ref{lem:Istraight} gives a presentation of $Y^{[a,b], \tau}_{\F}$ as quotient of $\prod_{j \in \cJ} \cI_1 \backslash  L^{[a,b]}\cG^{(j)}_{\F}$ by $(s, \mu)$-twisted conjugation by the torus $T^{\vee, \cJ}_{\F}$ when $\mu$ is $(e(a-b) +1)$-deep.   
\end{rmk}

\begin{defn}
\label{defn:shape} Let $\F'/\F$ be finite extension. The \emph{shape} of a Breuil--Kisin module $\fM \in Y^{[0,h], \tau}(\F')$  with respect to $\tau$ is the element $\tld{z} = (\widetilde{z}_j)_{j \in \cJ} \in \widetilde{W}^{\vee, \cJ}$ such that for any eigenbasis $\beta$ and any $j\in\cJ$, the matrix $A^{(j)}_{\fM,\beta}$ lies in $\Iw(\F') \widetilde{z}_j \Iw(\F')$.
\end{defn}

\begin{prop} \label{prop:closurerelations}
For each $\tld{z} \in  \widetilde{W}^{\vee, \cJ}$ such that $\tld{z}_j \in L^{[0,h]}\cG^{(j)}(\F)$ for $j \in \cJ$, there is a locally closed substack  $Y^{[0,h], \tau}_{\F, \tld{z}} \defeq [ \prod_{j \in \cJ}\cI\tld{z}_j \cI /_{(s,\mu), \phz} \prod_{j \in \cJ}\cI] \subset Y^{[0,h], \tau}_{\F}$ whose $\F'$-points are the Breuil--Kisin modules of shape $\tld{z}$. %
 The closure of $Y^{[0,h], \tau}_{\F, \tld{z}}$ is contained in the union of the strata $Y^{[0,h], \tau}_{\F, \tld{z}'}$ such that $\tld{z}' \leq \tld{z}$ in the Bruhat order.  
\end{prop}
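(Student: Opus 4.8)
The plan is to pull back the Iwahori--Bruhat stratification of the loop group along the quotient presentation $Y^{[0,h],\tau}_\F\cong\big[\prod_{j\in\cJ}L^{[0,h]}\cG^{(j)}/_{(s,\mu),\phz}\prod_{j\in\cJ}L^+\cG^{(j)}\big]$ recorded after Definition~\ref{defn:smuconj} (everything here is topological, so one works on the special fibre). For fixed $j\in\cJ$ one has the Iwahori--Bruhat decomposition $L\cG^{(j)}_\F=\bigsqcup_{\tld w\in\tld W^{\vee}}\cI\tld w\cI$, each stratum locally closed with $\overline{\cI\tld w\cI}=\bigsqcup_{\tld w'\le\tld w}\cI\tld w'\cI$ for the Bruhat order of \S\ref{sec:comb:weyl} --- equivalently one argues in the affine flag variety $L\cG^{(j)}_\F/\cI$ and pulls back along the $\cI$-torsor. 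Since left and right multiplication by $\cI$ preserves the conditions defining $L^{[0,h]}\cG^{(j)}$ (integrality of $A$ and of $E_j^hA^{-1}$, upper triangularity modulo $v$), the subscheme $L^{[0,h]}\cG^{(j)}$ is itself a union of Iwahori double cosets, namely of the $\cI\tld z_j\cI$ with $\tld z_j\in L^{[0,h]}\cG^{(j)}(\F)$; it therefore inherits a locally closed stratification, with $\overline{\cI\tld z_j\cI}$ equal to the union of the $\cI\tld z_j'\cI$ for $\tld z_j'\le\tld z_j$, $\tld z_j'\in L^{[0,h]}\cG^{(j)}(\F)$. Taking products over $j\in\cJ$ --- the Bruhat order on $\tld W^{\vee,\cJ}$ being the product order --- $\prod_j\cI\tld z_j\cI$ is locally closed in $\prod_jL^{[0,h]}\cG^{(j)}$ with closure contained in $\bigcup_{\tld z'\le\tld z}\prod_j\cI\tld z_j'\cI$.

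Next I would descend this stratification to the quotient stack. The key is that each $\prod_j\cI\tld z_j\cI$ is stable under $(s,\mu)$-twisted $\phz$-conjugation by $\prod_jL^+\cG^{(j)}$; this is precisely the well-definedness of the shape, for by the change-of-eigenbasis formula after Definition~\ref{defn:smuconj} two eigenbases of an $\fM\in Y^{[0,h],\tau}(\F')$ differ by an element of $\prod_jL^+\cG^{(j)}(\F')$ acting on the tuple $(A^{(j)}_{\fM,\beta})_j$ by $(s,\mu)$-twisted $\phz$-conjugation, and Definition~\ref{defn:shape} records that the tuple of Iwahori double cosets is then unchanged. Granting stability, the presentation realises $\prod_jL^{[0,h]}\cG^{(j)}\to Y^{[0,h],\tau}_\F$ as a torsor under $\prod_jL^+\cG^{(j)}$, and along a torsor the formation of closures commutes with pullback of invariant closed substacks; hence the $\prod_j\cI\tld z_j\cI$ descend to locally closed substacks $Y^{[0,h],\tau}_{\F,\tld z}=[\prod_j\cI\tld z_j\cI/_{(s,\mu),\phz}\prod_j\cI]$, whose $\F'$-points are the Breuil--Kisin modules $\fM$ with $A^{(j)}_{\fM,\beta}\in\cI\tld z_j\cI(\F')$ for some (equivalently any) eigenbasis and all $j$ --- that is, those of shape $\tld z$ --- and the closure computation of the previous paragraph descends to $\overline{Y^{[0,h],\tau}_{\F,\tld z}}\subseteq\bigcup_{\tld z'\le\tld z}Y^{[0,h],\tau}_{\F,\tld z'}$.

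The one step that is not formal is the twisted-conjugation stability used above. Naively it fails, because twisted conjugation multiplies $A^{(j)}$ on the right by $\Ad(\tld{w}^*(\tau)_j)(\phz(I^{(j-1)})^{-1})$, which in general does not lie in $\cI$ once $\tld{w}^*(\tau)_j$ has positive length; one must instead appeal to the well-definedness of the shape (Definition~\ref{defn:shape}). When $\mu$ is sufficiently deep this is transparent via the refined presentation of Remark~\ref{rmk:quotient}, in which $\cI_1$ has been absorbed by left translation and only twisted conjugation by the torus $T^{\vee,\cJ}$ remains --- and that manifestly preserves Iwahori double cosets, since $\tld{w}^*(\tau)_j$ normalises $T^\vee$; in general one invokes the direct matrix computation of \cite[\S5.2]{MLM}, which carries over to the ramified setting with $v^e$ replaced by $E(v)$ and no new input. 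Everything else is a routine transcription of \emph{loc.\ cit.}
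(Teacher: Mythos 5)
Your approach is in substance the same as the paper's: both arguments descend the Iwahori--Bruhat stratification of the loop group to the stack $Y^{[0,h],\tau}_{\F}$, using the fact that the quotient presentation (equivalently, the local model diagram) is compatible with the Iwahori double-coset decomposition. The paper compresses this to a one-line citation of the diagram
\[
Y^{[0,h],\tau}_{\F}\longrightarrow\textstyle\prod_{j}\bigl[\cI\backslash L^{[0,h]}\cG^{(j)}_{\F}/\cI\bigr]\longleftarrow\textstyle\prod_{j}L^{[0,h]}\cG^{(j)}_{\F}/\cI
\]
from \cite[Proposition 5.4]{CL}, while you reconstruct the content of that diagram by hand. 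Either packaging is fine; the mathematical content is the same.

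Where you go slightly astray is in the last paragraph, in identifying why twisted conjugation preserves Iwahori double cosets and what hypothesis is needed. Two remarks. First, the ``once $\tld{w}^*(\tau)_j$ has positive length'' framing is a red herring: $\tld{w}^*(\tau)_j$ \emph{always} has large length (it is a translation by a regular weight), yet the stability holds. The operative mechanism is that $\phz$ sends $v\mapsto v^p$, so $\phz(I^{(j-1)})^{-1}$ lies in the ``depth-$p$'' Iwahori (upper-triangular mod $v^p$), while conjugation by $s_j^{-1}v^{\mu_j+\eta_{0,j}}$ shifts the valuation of the $(i,k)$-entry by $\langle\mu_j+\eta_{0,j},\alpha^\vee\rangle$ for the corresponding root $\alpha$. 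The $1$-genericity of the fixed lowest alcove presentation (which is the paper's running hypothesis, cf.\ the sentence just above Definition~\ref{defn:smuconj}) gives $1<\langle\mu_j+\eta_{0,j},\alpha^\vee\rangle<p-1$ for all $\alpha\in\Phi^+$, and a short case-check on the signs of $i-k$ and $s_j(i)-s_j(k)$ then yields $\Ad(\tld{w}^*(\tau)_j)\bigl(\phz(\cI)\bigr)\subseteq\cI$ directly. That is all one needs; the stability is genuinely a direct computation, not a consequence of the finer statements you cite. Second, and relatedly, Remark~\ref{rmk:quotient} and Lemma~\ref{lem:Istraight} (i.e.\ \cite[\S 5.2]{MLM}) require $\mu$ to be $(eh+1)$-deep, which is strictly stronger than the $1$-generic running hypothesis; so as written your ``in general'' fallback does not actually cover the general case the proposition asserts. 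Replacing both invocations with the elementary $v^p$-vs-$(p-1)$ valuation count above closes the gap and gives the stability under exactly the stated hypothesis, after which the rest of your descent argument is correct.
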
 
\begin{proof}
We have maps
\[
\xymatrix{
Y^{[0,h],\tau}_{\F}\ar[rd] & &  \prod_{j \in \cJ} L^{[0,h]}\cG^{(j)}_{\F} / \cI \ar[ld] \\
& \prod_{j \in \cJ} [  \cI \backslash  L^{[0,h]}\cG^{(j)}_{\F} / \cI] &
}
\]
where the right arrow is an $\cI^{\cJ}$-torsor (cf.~\cite[Proposition 5.4]{CL}).
The substack $Y^{[0,h],\tau}_{\F,\tld{z}}$ is the preimage of $\prod_{j \in \cJ} [  \cI \backslash \cI \tld{z}_j \cI / \cI] $, so the stated closure relation follows from those on $\prod_{j \in \cJ} [  \cI \backslash  L\cG^{(j)}_{\F} / \cI] $, which in turn follow from those of $\prod_{j\in\cJ}\cI$-orbits in $\prod_{j \in \cJ} L\cG^{(j)}_{\F} / \cI$.
\end{proof}

We define the cocharacter $\eta\defeq((n-1,\dots,1,0),\cdots,(n-1,\dots,1,0))\in X_*(T^{\vee})^{\Hom(K, E)}$.
There is a closed $p$-adic formal algebraic stack $Y^{\leq \eta, \tau} \subset Y^{[0,n-1], \tau}$ defined in \cite[Theorem 5.3]{CL} \cite[\S 5.3]{MLM}. 
We recall the following result: 
\begin{prop}
\label{prop:CL:specialfiber} 
The special fiber of $Y^{\leq \eta, \tau}_{\F}$  satisfies
\[
Y^{\leq \eta, \tau}_{\F, \mathrm{red}} \subset \bigcup_{\tld{z} \in \Adm^{\vee}(e \eta_0)} Y^{\leq \eta, \tau}_{\F, \tld{z}}.
\]
\end{prop}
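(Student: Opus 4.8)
The plan is to reduce everything mod $\varpi$ to a statement about which shapes occur for $\F'$-points, to reduce to a principal series type by an unramified base change, and then to read off the admissible set from the Kottwitz--Rapoport description of the special fibre of the local model.

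First I would observe that, by Proposition \ref{prop:closurerelations}, $Y^{\leq \eta, \tau}_{\F, \mathrm{red}}$ is the union of the locally closed substacks $Y^{\leq \eta, \tau}_{\F, \tld{z}}$ that it meets; hence it suffices to prove that for every finite $\F'/\F$ and every $\fM \in Y^{\leq \eta, \tau}(\F')$, the shape $\tld{z} = (\tld{z}_j)_{j \in \cJ}$ of $\fM$ satisfies $\tld{z}_j \in \Adm^{\vee}(e\eta_0)$ for all $j \in \cJ$.

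Next I would reduce to the case where $\tau$ is a principal series type. Letting $r$ be the order of $s_\tau$ and $K'/K$ the unramified extension of degree $r$, the type $\tau|_{I_{K'}}$ is a direct sum of characters, so $\tau$ is principal series for $K'$, while the ramification index $e$ is unchanged. Base change along $K'/K$ carries $Y^{\leq \eta, \tau}$ for $K$ into the analogous stack for $K'$, compatibly with the shape stratifications and with the embedding $\tld{W}^{\vee, \cJ} \hookrightarrow \tld{W}^{\vee, \cJ'}$ of shapes, under which membership in $\Adm^{\vee}(e\eta_0)$ is preserved; this is \cite[Theorem 2.15]{CL}. (When $\tau$ is $n$-generic and $K/\Qp$ is unramified one can instead simply quote \cite[Corollary 5.2.3 and Proposition 5.4.7]{MLM} and there is nothing more to do.) For principal series $\tau$ I would use \cite[Theorem 5.3]{CL}, which identifies $Y^{\leq \eta, \tau} \subset Y^{[0, n-1], \tau}$ via a Pappas--Rapoport type height condition imposed embedding-by-embedding through the factorization $E(v) = \prod_{\psi \colon K \hookrightarrow E}(v - \psi(\pi_K))$. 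Reducing this condition mod $\varpi$, where $E_j \equiv v^e$ for each $j \in \cJ$ so that all $e$ embeddings above $j$ degenerate to $v = 0$, forces, for any eigenbasis $\beta$, the matrix $A^{(j)}_{\fM, \beta} \in \GL_n(\F'(\!(v)\!))$ to lie in the affine Grassmannian Schubert variety $\Gr^{\leq e\eta_0}$ for $\GL_n$ over $\F$ (the $e$ copies of $\eta_0$ at the embeddings above $j$ adding up to $e\eta_0$). Since by Definition \ref{defn:shape} $A^{(j)}_{\fM, \beta} \in \Iw(\F')\tld{z}_j\Iw(\F')$, the Iwahori double coset of $\tld{z}_j$ meets the preimage of $\Gr^{\leq e\eta_0}$ under $\Fl \to \Gr$; this preimage is Iwahori-stable, hence a union of Iwahori double cosets, and by the theorem of Kottwitz--Rapoport identifying the $\mu$-admissible locus of the affine flag variety (cf.~\cite{KR} and Definition \ref{defn:adm}) these are precisely the cosets indexed by $\Adm(e\eta_0)$. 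Thus $\tld{z}_j \in \Adm^{\vee}(e\eta_0)$ for all $j$, as required.

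The hard part is the last step: matching the $\leq \eta$ height condition, which over $\cO$ lives at the $e$ separate embeddings $K \hookrightarrow E$ and involves the factorization of $E(v)$, with the single Schubert condition $\Gr^{\leq e\eta_0}$ over $\F$, where that factorization collapses; this, together with the compatibility of the shape stratification under the unramified base change in the second step, is exactly what is packaged in \cite[Theorems 2.15 and 5.3]{CL} (or, in the unramified $n$-generic case, in the cited results of \cite{MLM}), so in the write-up this reduces to carefully invoking those results.
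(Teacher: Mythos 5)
Your overall strategy matches the paper's: the result is obtained by reducing to the principal series case via unramified base change (\cite[Theorem 2.15]{CL}) and then invoking \cite[Theorem 5.3]{CL}, with the $n$-generic unramified case available directly from \cite{MLM}. The paper in fact states the proposition without a proof, just with this citation, so your high-level outline is fine. Your opening reduction via Proposition \ref{prop:closurerelations} to checking shapes of $\F'$-points is also exactly right.

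However, there is a genuine error in the step where you try to unpack \cite[Theorem 5.3]{CL}. You reduce the $\leq\eta$ height condition mod $\varpi$ to the constraint that $A^{(j)}_{\fM,\beta}$ has image in $\Gr^{\leq e\eta_0}$, and then assert that the preimage of $\Gr^{\leq e\eta_0}$ under $\Fl\to\Gr$ ``is precisely the cosets indexed by $\Adm(e\eta_0)$,'' citing Kottwitz--Rapoport. This is false: the preimage of $\Gr^{\leq e\eta_0}$ in $\Fl$ consists of all Iwahori double cosets $\Iw\,\tld{z}\,\Iw$ whose translation part has dominant representative $\leq e\eta_0$, and this is strictly larger than $\Adm(e\eta_0)$. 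For instance with $n=2$, $e=1$: the element $s_\alpha t_{(1,0)}$ has translation part $(1,0)\leq(1,0)$ so its cell lies over $\Gr^{\leq(1,0)}$, but it has length $2 > 1 = \ell(t_{(1,0)})$ and so is not in $\Adm(\eta_0)$. Dimension counting makes the gap visible as well: for $\GL_2$ the preimage of $\Gr^{\leq(1,0)}$ in $\Fl$ is a $\bP^1$-bundle over $\bP^1$ (dimension $2$), while $\Adm(\eta_0)$ is the union of two Schubert curves (dimension $1$).

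The correct input is not a purely combinatorial elementary-divisor bound but the geometry of the Pappas--Zhu local model: $Y^{\leq\eta,\tau}$ is the ($\cO$-flat) closure of the generic-fibre condition, so its special fibre is \emph{a priori} smaller than the naive mod-$p$ elementary-divisor locus, and the identification of that special fibre with the admissible locus $\Adm(e\eta_0)$ is the content of the Kottwitz--Rapoport \emph{coherence conjecture}, proved by Pappas--Zhu and Zhu. That theorem (rather than the definition of $\Adm$ in \cite{KR}) is what \cite[Theorem 5.3]{CL} is built on, and the flat-closure aspect is why one cannot obtain the bound by a pointwise mod-$p$ computation. Your write-up should simply lean on the CL citation for this step without attempting to rederive it from an elementary-divisor bound, or else explicitly invoke topological flatness of the Pappas--Zhu local model.
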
  
\begin{proof} In the principal series case, this follows from \cite[Theorems 2.15 and 5.3]{CL}.  When $\tau$ is $n$-generic and $K/\Qp$ is unramified, this follows from \cite[Corollary 5.2.3 and Proposition 5.4.7]{MLM}. In general, we can reduce to the principal series case following the strategy in the proof of \cite[Theorem 3.2.20]{LLL}:  Let $\tau' = \tau|_{I_{K'}}$ where $K'$ is an unramified extension such that $\tau'$ is principal series.  Let $\cJ' = \Hom(k' , \F)$.   There is a natural map  $Y^{\leq \eta, \tau}_{\F, \mathrm{red}} \rightarrow Y^{\leq \eta, \tau'}_{\F, \mathrm{red}}$ and any strata $Y^{[0,h],\tau}_{\F,\tld{z}} \subset Y^{\leq \eta, \tau}_{\F, \mathrm{red}}$ clearly maps to $Y^{[0,h],\tau}_{\F,\tld{z}'}$ where $\tld{z}'_{j'} = \tld{z}_j$ where $j'$ restricts to $j$.  Thus, $\tld{z} \in \Adm^{\vee}(e \eta_0)$ by the principal series case. 
\end{proof}
\begin{rmk} In fact, the special fiber of $Y^{\leq \eta, \tau}$ is reduced and the inclusion in Proposition \ref{prop:CL:specialfiber} is an equality.  This is shown in the principal series case in \cite{CL}. 
(See also Remark \ref{rmk:twisted:LM} and the discussion preceding it.)
\end{rmk}

\subsubsection{\'Etale $\phz$-modules}

Let  $\cO_{\cE,K}$ (resp.~$\cO_{\cE,L'}$) be the $p$-adic completion of $(W(k)[\![v]\!])[1/v]$ (resp.~of $(W(k')[\![u']\!])[1/u']$). 
It is endowed with a continuous Frobenius morphism $\phz$ extending the Frobenius on $W(k)$ (resp.~on $W(k')$) and such that $\phz(v)=v^p$ (resp.~$\phz(u')=(u')^p$).
Let $R$ be a $p$-adically complete Noetherian $\cO$-algebra.
We then have the groupoid
$\Phi\text{-}\Mod^{\text{\'et}, n}_K(R)$ (resp.~ $\Phi\text{-}\Mod^{\text{\'et}, n}_{dd,L'}(R)$) of \'etale $(\phz,\cO_{\cE,K}\widehat{\otimes}_{\Zp}R)$-modules (resp.~\'etale $(\phz,\cO_{\cE,L'}\widehat{\otimes}_{\Zp}R)$-modules with descent data from $L'$ to $K$).
Its objects are rank $n$ projective modules $\cM$ over $\cO_{\cE,K}\widehat{\otimes}_{\Zp}R$ (resp.~$\cO_{\cE,L'}\widehat{\otimes}_{\Zp}R)$), endowed with a Frobenius semilinear endomorphism $\phi_{\cM}:\cM\ra\cM$ (resp.~a Frobenius semilinear endomorphism $\phi_{\cM}:\cM\ra\cM$ commuting with the descent data) inducing an isomorphism on the pull-back: $\Id\otimes_{\phz}\phi_{\cM}:\phz^*(\cM)\stackrel{\sim}{\longrightarrow}\cM$. It is known that $\Phi\text{-}\Mod^{\text{\'et}, n}_K(R)$ and  $\Phi\text{-}\Mod^{\text{\'et}, n}_{dd,L'}(R)$
form fppf stacks over $\Spf\,\cO$
(see \cite[\S 3.1]{EGstack}, \cite[\S 5.2]{EGschemetheoretic}, \cite[\S 3.1]{CEGS} where they are denoted $\mathcal{R}_{n},  \mathcal{R}^{dd}_{n, L'}$ respectively). 
We use $\Phi\text{-}\Mod^{\text{\'et}}_K(R)$ (resp.~$\Phi\text{-}\Mod^{\text{\'et}}_{dd,L'}(R)$) to denote the category of \'etale $(\phz,\cO_{\cE,K}\widehat{\otimes}_{\Zp}R)$-modules (resp.~$\cO_{\cE,L'}\widehat{\otimes}_{\Zp}R)$-modules with descent from $L'$ to $K$) of arbitrary finite rank.  

Given $\fM\in Y^{[0,h],\tau}(R)$, $\fM \otimes_{\fS_{L',R}} (\cO_{\cE,L'}\widehat{\otimes}_{\Zp}R)$
is naturally an object of $\Phi\text{-}\Mod^{\text{\'et},n}_{dd,L'}(R)$, and we define an \'etale $\phz$-module $\cM \in \Phi\text{-}\Mod^{\text{\'et},n}_{K}(R)$ by 
\[
\cM \defeq (\fM \otimes_{\fS_{L',R}} (\cO_{\cE,L'}\widehat{\otimes}_{\Zp}R))^{\Delta=1}
\]
with the induced Frobenius.  
This construction defines a morphism of stacks $\eps_\tau: Y^{[0,h],\tau}\ra\Phi\text{-}\Mod^{\text{\'et},n}_{K}$ which is representable by algebraic spaces, proper, and of finite presentation (see \cite[Proposition 5.4.1]{MLM}, which carries through in our ramified setting). 
Note that $\eps_\tau$ is independent of any presentation of $\tau$.

For any $(\cM,\phi_\cM)\in \Phi\text{-}\Mod^{\text{\'et}}_{K}(R)$, we decompose $\cM=\oplus_{j \in \cJ} \cM^{(j)}$ over the embeddings $\sigma_j: W(k)\ra\cO$, with induced maps $\phi_\cM^{(j)}:\cM^{(j-1)}\ra\cM^{(j)}$. 
We can define the map $\eps_{\tau}$ explicitly in some cases: 
\begin{prop}$($\cite[Proposition 5.4.2]{MLM}$)$ 
\label{prop:expeps}  
Let $\fM \in Y^{[0, h], \tau}(R)$ and set $\cM = \eps_{\tau}(\fM)$.  
Let $(s, \mu)$ be the fixed lowest alcove presentation of $\tau$. 
 If $\beta$ is an eigenbasis of $\fM$, then there exists a basis $\fF$ \emph{(}constructed from $\beta$\emph{)} for $\cM$ such that the matrix of $\phi_{\cM}^{(j)}$ with respect to $\fF$ is given by 
\[
A^{(j)}_{\fM, \beta} s^{-1}_j v^{\mu_j + \eta_{0,j}}.
\]
{(Note that this is $A^{(j)}_{\fM, \beta} \tld{w}^*(\tau)_j$.)}
\end{prop}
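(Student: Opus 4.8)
The plan is to adapt the proof of \cite[Proposition 5.4.2]{MLM} essentially verbatim: the only change in the ramified setting is that $e'=p^{fr}-1$ and $v=(u')^{e'}$, while the Eisenstein polynomial $E(v)$ --- the one place where ramification of $K$ enters --- does not appear in the statement.

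First I would assemble the three inputs. An eigenbasis $\beta=(\beta^{(j')})_{j'\in\cJ'}$ of $\fM$ (see \cite[Definition 5.1.6]{MLM}) consists, for each $j'$, of a basis $\beta^{(j')}=(f^{(j')}_i)_{1\le i\le n}$ of $\fM^{(j')}$ whose members are $\Delta'$-eigenvectors, $f^{(j')}_i$ transforming through $\omega_{K',\sigma'_{j'}}$ raised to the $i$-th entry of the tuple $\bf{a}^{\prime (j')}$ attached to $(s,\mu)$ in \cite[Example 2.4.1]{MLM}, with the $\beta^{(j')}$ compatible under a generator $\sigma^f$ of $\Gal(L'/K)$; this last compatibility is what forces $A^{(j')}_{\fM,\beta}$ to depend only on $j'$ mod $f$. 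By \cite[equation (5.4)]{MLM}, the matrix $C^{(j)}$ of $\phi_\fM^{(j)}$ in the basis $\beta$ is the product of $A^{(j)}_{\fM,\beta}$, the permutation matrix $s^{-1}_j$, and explicit diagonal matrices in powers of $u'$ read off from $\bf{a}^{\prime (j)}$ and $p\,\bf{a}^{\prime (j-1)}$. Finally, $\cM=(\fM\otimes_{\fS_{L',R}}(\cO_{\cE,L'}\widehat{\otimes}_{\Zp}R))^{\Delta=1}$, where $\Delta$-invariants may be taken in two stages: $\Delta'$ acts on $u'$ via $g(u')=\omega_{K'}(g)u'$ and trivially on $W(k')$, so its invariants in $\cO_{\cE,L'}$ are the $p$-adic completion of $(W(k')[\![v]\!])[1/v]$ since $(u')^{e'}=v$, and the residual $\Gal(K'/K)$-invariants of the latter are $\cO_{\cE,K}$.

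Next I would construct $\fF$ and perform the change of basis. For each $j\in\cJ$ set $D^{(j)}=\mathrm{diag}\big((u')^{b^{(j)}_i}\big)_{1\le i\le n}$, with $b^{(j)}_i$ the representative of $-(\bf{a}^{\prime (j)})_i$ modulo $e'$ for which $(u')^{b^{(j)}_i}f^{(j)}_i$ is $\Delta'$-invariant, and let $\fF^{(j)}$ be the image in $\cM^{(j)}$ of $\beta^{(j)}D^{(j)}$. The $\sigma^f$-compatibility of $\beta$ ensures these vectors descend to $\Delta$-invariant elements, hence lie in $\cM$, and the invariant-ring computation above --- carried out one embedding at a time --- shows that $\fF^{(j)}$ is a basis of $\cM^{(j)}$ over the relevant component of $\cO_{\cE,K}\widehat{\otimes}_{\Zp}R$; thus $\fF=(\fF^{(j)})_j$ is a basis of $\cM$ depending only on $\beta$. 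Since $\phi_\cM$ is the restriction of $\phi_\fM$, the matrix of $\phi_\cM^{(j)}$ relative to $\fF$ is $(D^{(j)})^{-1}C^{(j)}\phz(D^{(j-1)})$; substituting the expression for $C^{(j)}$ and using $\phz(u')=(u')^{p}$, each remaining power of $u'$ is, by the congruence $\bf{a}^{\prime (j)}\equiv s^{-1}_j(\mu_j+\eta_{0,j})+p\,\bf{a}^{\prime (j-1)}\pmod{e'}$ of \cite[Example 2.4.1]{MLM}, a nonnegative multiple of $e'$, namely $e'$ times the corresponding entry of $\mu_j+\eta_{0,j}$. As $(u')^{e'}=v$, the matrix collapses to $A^{(j)}_{\fM,\beta}\,s^{-1}_j v^{\mu_j+\eta_{0,j}}=A^{(j)}_{\fM,\beta}\,\tld{w}^*(\tau)_j$, which is what we want.

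The step I expect to require the most care is the index bookkeeping at the end: keeping track of how $s_j$ permutes the entries of the tuples $\bf{a}^{\prime (j)}$ and checking that after substitution every leftover exponent of $u'$ lands in $e'\Z_{\ge0}$ with precisely the entries of $\mu_j+\eta_{0,j}$. This is purely combinatorial, however, and is already carried out in \cite[Example 2.4.1]{MLM} and in the proof of \cite[Proposition 5.4.2]{MLM}; no genuinely new input is needed in the ramified case beyond the value $e'=p^{fr}-1$.
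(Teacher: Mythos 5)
Your proposal is correct and follows exactly the approach of the cited result \cite[Proposition 5.4.2]{MLM}, which the paper invokes without reproducing the proof. The three inputs you assemble (the $\Delta'$-eigenvector structure of the eigenbasis, the relation between $A^{(j)}_{\fM,\beta}$ and the Frobenius matrix in the eigenbasis from \cite[eq.~(5.4)]{MLM}, and the two-stage computation of $\Delta$-invariants via $v=(u')^{e'}$) are the right ones, the change-of-basis formula $(D^{(j)})^{-1}C^{(j)}\phz(D^{(j-1)})$ is correct for a $\phz$-semilinear map, and your observation that $E(v)$ never appears --- so that the ramification index only enters through $e'=p^{fr}-1$ --- is precisely why the argument carries over verbatim from the unramified case. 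Two small imprecisions worth fixing but which do not affect the argument: $\sigma^f$ generates the quotient $\Gal(K'/K)$ rather than $\Gal(L'/K)$ itself, and you should specify which representative of $-(\bf{a}^{\prime(j)})_i$ mod $e'$ you take for $b^{(j)}_i$ (the choice is pinned down by requiring the final exponents of $u'$ to be the nonnegative multiples of $e'$ you describe, and by the compatibility with \cite[eq.~(5.4)]{MLM}).
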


Finally, we recall that in generic situations the map $\eps_{\tau}$ does not lose information:
\begin{prop} $($\cite[Proposition 5.4.3]{MLM}$)$ \label{prop:BK_to_phi_mono} Assume $\tau$ is $(eh+1)$-generic. Then the map $\eps_\tau: Y^{[0,h],\tau}\to \Phi\text{-}\Mod^{\text{\emph{\'et}},n}_{K}$ is a closed immersion.
\end{prop}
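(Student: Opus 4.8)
\emph{Proof sketch.}
The plan is to deduce that $\eps_\tau$ is a closed immersion from the fact that it is a monomorphism, using that it is already known to be representable by algebraic spaces, proper, and of finite presentation (the ramified analogue of \cite[Proposition 5.4.1]{MLM}, recalled in the discussion preceding Proposition \ref{prop:expeps}); a proper monomorphism of locally Noetherian algebraic stacks is a closed immersion. To check that $\eps_\tau$ is a monomorphism it is enough to show that $\eps_\tau(R)\colon Y^{[0,h],\tau}(R)\to\Phi\text{-}\Mod^{\text{\'et},n}_{K}(R)$ is fully faithful for every $p$-adically complete Noetherian $\cO$-algebra $R$; and — testing universal injectivity and unramifiedness, and using that $Y^{[0,h],\tau}$ and $\Phi\text{-}\Mod^{\text{\'et},n}_{K}$ are $p$-adic formal algebraic stacks over $\Spf\,\cO$ — one reduces further to the case of finite-dimensional local $\F$-algebras $R$, which is the setting in which the rigidity input Lemma \ref{lem:Istraight} applies.

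For faithfulness, fix $\fM,\fM'\in Y^{[0,h],\tau}(R)$ and set $\cM_{L'}\defeq\fM\otimes_{\fS_{L',R}}(\cO_{\cE,L'}\widehat{\otimes}_{\Zp}R)$; by construction $\cM_{L'}$ (with its descent datum) is recovered from $\cM\defeq\eps_\tau(\fM)$ by base change along $\cO_{\cE,K}\hookrightarrow\cO_{\cE,L'}$, and similarly for $\fM'$. The natural map $\fM\to\cM_{L'}$ is injective and its image generates $\cM_{L'}$ over $\cO_{\cE,L'}\widehat{\otimes}_{\Zp}R$, so any morphism $\fM\to\fM'$ of Breuil--Kisin modules is the restriction of the induced map $\cM_{L'}\to\cM'_{L'}$, which in turn is the base change of the induced map $\cM\to\cM'$. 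Hence such a morphism is determined by its image under $\eps_\tau$.

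For fullness, let $g\colon\cM\to\cM'$ be a morphism of \'etale $\phz$-modules and $g_{L'}\colon\cM_{L'}\to\cM'_{L'}$ the induced map; it suffices to show $g_{L'}(\fM)\subseteq\fM'$, since then $g_{L'}|_{\fM}$ is the required morphism. Choose eigenbases $\beta,\beta'$ of $\fM,\fM'$ and let $\fF,\fF'$ be the corresponding bases of $\cM,\cM'$ provided by Proposition \ref{prop:expeps}, in which the matrices of $\phi_{\cM}^{(j)}$ and $\phi_{\cM'}^{(j)}$ are $A^{(j)}_{\fM,\beta}\tld{w}^*(\tau)_j$ and $A^{(j)}_{\fM',\beta'}\tld{w}^*(\tau)_j$ with $A^{(j)}_{\fM,\beta},A^{(j)}_{\fM',\beta'}\in L^{[0,h]}\cG^{(j)}(R)$. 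Writing $C^{(j)}$ for the matrix of $g$ in these bases, $\phz$-equivariance translates into
\[
C^{(j)}A^{(j)}_{\fM,\beta}=A^{(j)}_{\fM',\beta'}\cdot\Ad\big(s_j^{-1}v^{\mu_j+\eta_{0,j}}\big)\big(\phz(C^{(j-1)})\big)\qquad(j\in\cJ),
\]
and, the $u'$-twists implicit in Proposition \ref{prop:expeps} being common to $\fM$ and $\fM'$, the inclusion $g_{L'}(\fM)\subseteq\fM'$ is equivalent to $C^{(j)}\in\Mat_n(R[\![v]\!])$ for all $j\in\cJ$. This last assertion is of exactly the same nature as Lemma \ref{lem:Istraight}: the height bound forces $E_j^h(A^{(j)}_{\fM,\beta})^{-1}\in\Mat_n(R[\![v]\!])$, so (as $E_j\equiv v^e\bmod p$) the $v$-adic denominators of $(A^{(j)}_{\fM,\beta})^{-1}$ are controlled by $eh$, while the hypothesis that $(s,\mu)$ is $(eh+1)$-generic makes the twist $\Ad(s_j^{-1}v^{\mu_j+\eta_{0,j}})$ dominate these denominators; running the resulting estimate around the Frobenius cycle $j\mapsto j-1$ shows that no negative power of $v$ can occur in any $C^{(j)}$, exactly as in the proof of \cite[Lemma 5.2.2]{MLM}. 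Thus $\eps_\tau(R)$ is fully faithful, $\eps_\tau$ is a monomorphism, and hence a closed immersion. The only substantive step is this final $v$-adic estimate: the single new feature of the ramified case is the factor $e$ entering through $E_j\equiv v^e\bmod p$, and the point is that it is exactly absorbed by strengthening the genericity hypothesis from $(h+1)$-generic (the unramified case) to $(eh+1)$-generic, after which the book-keeping of \cite[Lemma 5.2.2]{MLM} goes through unchanged; everything else is formal.
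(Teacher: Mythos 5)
Your overall strategy is the right one and matches what the cited result \cite[Proposition 5.4.3]{MLM} does: use that $\eps_\tau$ is already known to be representable, proper, and of finite presentation, show it is a monomorphism, and conclude. The reduction to finite local $\F$-algebras is sound in spirit (properness plus Nakayama's lemma reduce a closed immersion mod $p$ to a closed immersion over $\Spf\cO$, because a finite module that is cyclic mod the maximal ideal is cyclic), though you treat this step rather briskly. Faithfulness via flatness of $\fS_{L',R}\to\cO_{\cE,L'}\widehat\otimes R$ is fine, and the shape of the fullness argument — isolate the matrix $C^{(j)}$, control its $v$-adic poles using the height bound on $(A^{(j)}_{\fM,\beta})^{-1}$ and the genericity of $\mu$, and run the estimate around the Frobenius cycle — is exactly the needed idea.

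The one genuine problem is the citation. You repeatedly invoke Lemma~\ref{lem:Istraight} (i.e.~\cite[Lemma 5.2.2]{MLM}) as the rigidity input, but that lemma takes as \emph{hypothesis} that the tuples $(I^{(j)})$ lie in $\cI_1(R)^{\cJ}$; it gives a bijection between two descriptions of such tuples, and does not produce integrality of an a priori meromorphic connecting matrix. What you actually need here is a ``no poles'' statement, in this paper stated as Lemma~\ref{lem1} (generalizing \cite[Theorem~3.2]{LLLM} and \cite[Lemma~5.4.5]{MLM}): starting from $C_j\in\GL_n(R(\!(v)\!))$ satisfying a $\phz$-twisted intertwining relation and using the deepness of $\mu$ together with the bound $E_j^h(A^{(j)}_{\fM,\beta})^{-1}\in\Mat_n(R[\![v]\!])$, one shows $C_j\in\Iw(R)$. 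The argument you sketch is essentially the proof of that lemma, not of Lemma~\ref{lem:Istraight}, so you should point there (or to \cite[Lemma~5.4.5]{MLM}) instead; as written the reader cannot literally apply the lemma you cite.
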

\begin{proof}
As in the proof of \cite[Proposition 5.4.3]{MLM} this reduces to prove that whenever we have a relation
\[
I^{(j)}A_1^{(j)}=A_2^{(j)}\Ad(s_j^{-1}v^{\mu_j+\eta_j})(\phz(I^{(j-1)}))
\]
where $A^{(j)}_1,\, A^{(j)}_2\in L^{[0,h]}\cG^{(j)}(R)$, and $I^{(j)}\in L\cG^{(j)}(R)$,
then $I^{(j)}$ are in $L^+\cG^{(j)}(R)$.
Just as in \emph{loc.~cit}., this reduces to checking the statement and its infinitesimal version over $\F$.
In turn, these two statements are exactly \cite[Lemmas 5.4.4, 5.4.5]{MLM}, with the adjustment that $L^{[a,b]}\cG^{(j)}_{\F}=L^{[ea,eb]}\GL_{n,\F}$.
\end{proof}

We briefly recall the relations between Breuil--Kisin modules and Galois representations.
Recall from \ref{sec:not:GT} the extension $K_{\infty}/K$, and let $G_{K_{\infty}} \subset G_K$ denote the absolute Galois group of $K_{\infty}$.  
{Let $R$ be a complete Noetherian local $\cO$-algebra with residue field $\F$.}
We have an anti-equivalence of categories established by the exact functor %
\begin{align*}
\bV^*_K:\Phi\text{-}\Mod^{\text{\'et},n}_{K}(R)&\ra\Rep^n_R(G_{K_\infty})
\end{align*}
defined through the theory of fields of norms (cf.~\cite[\S 2.3 and \S 6.1]{LLLM} for details) and therefore a functor $T^*_{dd}: Y^{[0,h],\tau}(R)\rightarrow \Rep^n_R(G_{K_\infty})$ defined as the composite of $\eps_{\tau}$ followed by $\bV^*_K$.

We finally recall from \cite[\S 5.5]{MLM} the notion of \emph{shape} of an $n$-dimensional $\F$-representation of $G_K$ (or $G_{K_{\infty}}$) with respect to $\tau$.
\begin{defn} \label{defn:shaperhobar} Assume that $\tau$ is $(e(n-1)+1)$-generic. %
Let $\rhobar$ be an $n$-dimensional $\F$-representation of $G_K$ or $G_{K_{\infty}}$.  
If there exists $\ovl{\fM} \in Y_\F^{\leq\eta, \tau}(\F)
$ such that $T^*_{dd}(\overline{\fM}) \cong \rhobar|_{G_{K_{\infty}}}$ then we say that $\rhobar$ is \emph{$\tau$-admissible}, and we define $\tld{w}(\rhobar, \tau)\in \Adm(e \eta_0)$ to be the shape of $\overline{\fM}$ with respect to $\tau$ (Definition \ref{defn:shape}).  
This is well-defined by {Propositions \ref{prop:CL:specialfiber} and} \ref{prop:BK_to_phi_mono}.
\end{defn}

\begin{prop} $($\cite[Proposition 5.5.7]{MLM}$)$ \label{prop:sskisin}  
Assume that the fixed lowest alcove presentation $(s,\mu)$ of $\tau$ is $(e(n-1)+1)$-generic.
Let $\rhobar$ be a semisimple representation of $G_{K}$ over $\F$.  
Then $\rhobar$ is $\tau$-admissible if and only if $\rhobar|_{I_K}$ admits a lowest alcove presentation $(w, \nu)$ compatible with the lowest alcove presentation of $\tau$ such that $s^{-1} t_{\nu - \mu} w  \in \Adm(e\eta_0)$.   
Furthermore, if $\rhobar$ is $\tau$-admissible then $\tld{w}(\rhobar, \tau) = s^{-1} t_{\nu - \mu} w $.   
\end{prop}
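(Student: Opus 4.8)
The plan is to deduce the statement from an explicit computation with \emph{monomial} Breuil--Kisin modules carrying descent data, along the lines of \cite[\S 5.5]{MLM}; the genericity hypothesis on $(s,\mu)$ is what makes the relevant constructions rigid, and the only genuinely new point compared with \emph{loc.\ cit.}\ is the combinatorial bookkeeping caused by allowing $e>1$.

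\emph{Construction of a monomial model and its Galois representation.} Given a lowest alcove presentation $(w,\nu)$ of $\rhobar|_{I_K}$ compatible with the fixed presentation $(s,\mu)$ of $\tau$, I set $\tld z\defeq s^{-1}t_{\nu-\mu}w\in\tld W^{\vee,\cJ}$. Assuming $\tld z_j\in L^{[0,n-1]}\cG^{(j)}(\F)$ for all $j$, I would construct $\ovl{\fM}_{(w,\nu)}\in Y^{[0,n-1],\tau}(\F)$ equipped with an eigenbasis $\beta$ for which each $A^{(j)}_{\fM,\beta}$ is the standard monomial representative of $\tld z_j$ (a permutation matrix times a diagonal matrix of powers of $v$); this is legitimate since such a matrix lies in $L^{[0,n-1]}\cG^{(j)}$ and, by the eigenbasis/change-of-basis formalism recalled in Remark \ref{rmk:cng:basis}, prescribing it produces an object whose descent datum is genuinely of type $\tau$. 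By Proposition \ref{prop:expeps} the Frobenius matrix of $\cM\defeq\eps_\tau(\ovl{\fM}_{(w,\nu)})$ in the induced basis is $A^{(j)}_{\fM,\beta}\tld w^*(\tau)_j$, again monomial, with exponents recording $t_{\nu+\eta_0}$ conjugated by $w$ and $s$. Applying $\bV^*_K$ and the theory of fields of norms (cf.~\cite[\S 6.1]{LLLM}), a monomial \'etale $\phz$-module corresponds to a direct sum of inductions of tamely ramified characters of $G_{K_\infty}$, and reading off the exponents identifies the restriction of $T^*_{dd}(\ovl{\fM}_{(w,\nu)})$ to $I_{K_\infty}$, hence to $I_K$, with the tame inertial $\F$-type having lowest alcove presentation $(w,\nu)$. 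Since a semisimple tame representation of $G_K$ is determined by its restriction to $G_{K_\infty}$ (the map $\omega_f$ stays surjective on $I_{K_\infty}$ because $I_K/I_{K_\infty}$ is pro-$p$), this yields $T^*_{dd}(\ovl{\fM}_{(w,\nu)})\cong\rhobar|_{G_{K_\infty}}$.

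\emph{Height versus admissibility, and conclusion.} I would then check that $\ovl{\fM}_{(w,\nu)}$ lies in $Y^{\leq\eta,\tau}_\F(\F)$ if and only if $\tld z\in\Adm^\vee(e\eta_0)$, equivalently $s^{-1}t_{\nu-\mu}w\in\Adm(e\eta_0)$ by the Bruhat-order-preserving bijection $\tld w\mapsto\tld w^*$ of Definition \ref{affineadjoint}. One direction is Proposition \ref{prop:CL:specialfiber}, which confines the shape of every $\F$-point of $Y^{\leq\eta,\tau}_{\F,\mathrm{red}}$ to $\Adm^\vee(e\eta_0)$; for the other I would verify by a direct matrix computation that the monomial representative of an element $\tld z_j\leq t_{w'(e\eta_0)}$ has elementary divisors bounded by the cocharacter $\eta$, i.e.\ that $\ovl{\fM}_{(w,\nu)}$ satisfies the defining height-$\leq\eta$ condition and so lies in $Y^{\leq\eta,\tau}_\F$ by \cite[Theorem 5.3]{CL}. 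Granting this, the proposition follows. If $\rhobar|_{I_K}$ has a compatible presentation $(w,\nu)$ with $s^{-1}t_{\nu-\mu}w\in\Adm(e\eta_0)$, then $\ovl{\fM}_{(w,\nu)}\in Y^{\leq\eta,\tau}_\F(\F)$ realizes $\rhobar|_{G_{K_\infty}}$, so $\rhobar$ is $\tau$-admissible. Conversely, if $\ovl{\fM}\in Y^{\leq\eta,\tau}_\F(\F)$ realizes $\rhobar|_{G_{K_\infty}}$, its shape $\tld z$ lies in $\Adm^\vee(e\eta_0)$ by Proposition \ref{prop:CL:specialfiber}, and since $(s,\mu)$ is $(e(n-1)+1)$-generic the morphism $\eps_\tau$ is a closed immersion (Proposition \ref{prop:BK_to_phi_mono}), so $\ovl{\fM}$ is determined by $\cM=\eps_\tau(\ovl{\fM})$; as $\rhobar|_{G_{K_\infty}}$ is semisimple, $\cM$ is itself semisimple, hence monomial, and running the previous paragraph's computation in reverse identifies it as the monomial module attached to some compatible presentation $(w,\nu)$ of $\rhobar|_{I_K}$ with $\tld z=s^{-1}t_{\nu-\mu}w$. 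Hence $\tld w(\rhobar,\tau)=\tld z^*=s^{-1}t_{\nu-\mu}w$, well-definedness being Proposition \ref{prop:BK_to_phi_mono} as in Definition \ref{defn:shaperhobar}.

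\emph{Main obstacle.} The delicate part is keeping the conventions consistent: one must simultaneously track the duality between $\tld W$ and $\tld W^\vee$, the $\tau\mapsto\tau^\vee$ twist built into the descent datum, and the contravariance of $\bV^*_K$, and verify that the explicit fields-of-norms computation from the unramified case of \cite{MLM} survives the substitution $v=(u')^{e'}$ with $E(v)$ in place of $v^e$ throughout. Once the monomial dictionary between a shape $\tld z$, the twisted Frobenius matrix $A^{(j)}_{\fM,\beta}\tld w^*(\tau)_j$, and the associated tame inertial $\F$-type is pinned down, the remaining steps are essentially formal.
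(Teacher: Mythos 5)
Your overall strategy — build a monomial Breuil--Kisin module from a compatible lowest alcove presentation $(w,\nu)$, compute its étale $\phz$-module and Galois representation via Proposition~\ref{prop:expeps} and fields of norms, bound shapes via Proposition~\ref{prop:CL:specialfiber}, and use the closed-immersion property of $\eps_\tau$ (Proposition~\ref{prop:BK_to_phi_mono}) to get uniqueness — is the same as that of \cite[\S 5.5]{MLM}, which the paper cites. So the approach is the right one.

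There is, however, a concrete slip in exactly the place you flag as delicate, and it propagates through the argument. You set $\tld z \defeq s^{-1}t_{\nu-\mu}w$ and take $A^{(j)}_{\fM,\beta}$ to be the monomial representative of $\tld z_j$. By Proposition~\ref{prop:expeps} the Frobenius of $\eps_\tau(\ovl{\fM}_{(w,\nu)})$ is then $\tld z_j\,\tld w^*(\tau)_j = s_j^{-1}v^{\nu_j-\mu_j}w_j\cdot s_j^{-1}v^{\mu_j+\eta_{0,j}}$, which does \emph{not} simplify to the monomial Frobenius $w_j^{-1}v^{\nu_j+\eta_{0,j}}$ of the tame inertial $\F$-type with presentation $(w,\nu)$ unless $w_j=s_j$. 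The correct shape of the monomial Kisin module is $\tld z_j = w_j^{-1}t_{\nu_j-\mu_j}s_j = (s^{-1}t_{\nu-\mu}w)^*_j$, so that the product with $\tld w^*(\tau)_j$ collapses to $w_j^{-1}v^{\nu_j+\eta_{0,j}}$; correspondingly $\tld{w}(\rhobar,\tau)$, which is the preimage of the shape under $\ast$, equals $s^{-1}t_{\nu-\mu}w$ as in the statement. Your displayed equality $\tld w(\rhobar,\tau)=\tld z^*=s^{-1}t_{\nu-\mu}w$ is in fact internally inconsistent with your own definition $\tld z\defeq s^{-1}t_{\nu-\mu}w$, since $(s^{-1}t_{\nu-\mu}w)^* = w^{-1}t_{\nu-\mu}s \ne s^{-1}t_{\nu-\mu}w$ in general.

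Beyond this bookkeeping error, the converse direction is under-argued. You assert that since $\cM$ is semisimple ``hence monomial,'' one can ``run the previous paragraph's computation in reverse'' to identify the shape with some compatible $(w,\nu)$. What is actually needed is: (a) that there exists a compatible lowest alcove presentation $(w,\nu)$ of $\rhobar|_{I_K}$ for which $(s^{-1}t_{\nu-\mu}w)^*$ lies in $L^{[0,n-1]}\cG^{(j)}$ (so the monomial Kisin module can even be written down), and (b) that the resulting monomial Kisin module realizes the same $\cM$, so that the closed immersion identifies it with $\ovl{\fM}$. Both of these are where the genericity hypothesis and the combinatorial structure of compatible presentations enter (as in \cite[Propositions 5.5.2--5.5.7]{MLM}, where the Kisin module of a semisimple tame $\rhobar$ is shown to decompose as a sum of rank-one pieces, from which the shape can be read off). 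As it stands, your ``running in reverse'' step assumes the conclusion. Fixing the $\tld z$ versus $\tld z^*$ mismatch and supplying these two steps would bring your proposal in line with the cited argument.
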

\begin{prop} \label{prop:modpform}  Assume that the fixed lowest alcove presentation $(s,\mu)$ of $\tau$ is $(eh+1)$-generic.   Let $\fM \in Y^{[0,h], \tau}(\F)$ with shape $\tld{z} \in \tld{W}^{\cJ}$.  
Then, there exists an eigenbasis $\beta$ for $\fM$, unique up to scaling by $T(\F)^{\cJ}$, such that 
\[
A^{(j)}_{\fM, \beta} \in T(\F) \tld{z}_j N_{\tld{z}_j} (\F)
\]
where $N_{\tld{z}_j}$ is unipotent subgroup scheme of $\cI$ defined in \cite[Definition 4.2.9]{MLM}.  
\end{prop}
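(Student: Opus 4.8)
The plan is to normalize an arbitrary eigenbasis step by step, using the change-of-basis formula (Remark \ref{rmk:cng:basis}, i.e. \cite[Proposition 5.1.8]{MLM}) and the genericity hypothesis. Starting from any eigenbasis $\beta_0$, the matrices $A^{(j)}_{\fM,\beta_0}$ lie in $\cI(\F)\tld{z}_j\cI(\F)$ by the definition of the shape (Definition \ref{defn:shape}). Using the Iwahori factorization of the double coset and the remark that $(s,\mu)$-twisted $\phz$-conjugation on $\prod_j L^{[0,h]}\cG^{(j)}$ is ordinary $\phz$-conjugation on the right translates by $\tld{w}^*(\tau)$, one reduces to the following normal-form problem: given $(B^{(j)})_j \in \prod_j \cI(\F)\tld{z}_j\cI(\F)$, find $(I^{(j)})_j \in \cI(\F)^{\cJ}$ (well-defined up to the torus $T(\F)^{\cJ}$) such that the $\phz$-conjugate of $(B^{(j)})_j$ by $(I^{(j)})_j$ has each entry in $\cI(\F)\tld{z}_j\cI(\F)$ with the $\cI$ on the right replaced by $N_{\tld{z}_j}(\F)$ and the $\cI$ on the left by $T(\F)$. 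This is the same kind of statement as \cite[Proposition 4.2.10 or 4.2.12]{MLM}, and should be deducible from it by the same argument, now in the ramified setting.

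The key steps, in order, are: (i) fix any eigenbasis $\beta_0$ and write $A^{(j)}_{\fM,\beta_0} = L^{(j)}\tld{z}_j R^{(j)}$ with $L^{(j)}, R^{(j)}\in\cI(\F)$, splitting $\cI = \cI_1 T$ appropriately; (ii) use the freedom to right-multiply the eigenbasis by $(I^{(j)})_j\in\cI_1(\F)^{\cJ}$ (the pro-$v$ Iwahori) to successively cancel all the ``unwanted'' coordinates of $R^{(j)}$, namely those in the complement of $N_{\tld{z}_j}$; here the genericity assumption $(eh+1)$-generic guarantees, via Lemma \ref{lem:Istraight} (Lemma 5.2.2 of \cite{MLM}) and the structure of $\cI$ relative to $\tld{z}_j$, that the relevant linear-algebra/Frobenius-descent equations are solvable and that the solution is unique once we quotient out by $T(\F)^{\cJ}$; (iii) similarly absorb the left Iwahori factor $L^{(j)}$ into $T(\F)$, using the upper-triangular-mod-$v$ conditions; (iv) finally observe that the residual ambiguity is exactly scaling by $T(\F)^{\cJ}$, since any further normalization would have to preserve the factorization $A^{(j)}_{\fM,\beta}\in T(\F)\tld{z}_j N_{\tld{z}_j}(\F)$ and the only such change-of-basis elements lie in $T(\F)^{\cJ}$ (this again uses genericity to rule out nontrivial elements of $N_{\tld{z}_j}$ or $\cI_1$ stabilizing the form). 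Each of these reductions is a direct transcription of the unramified argument in \cite[\S 4.2, \S 5]{MLM}; the only substantive change is bookkeeping with $E_j \equiv v^e \bmod p$ in place of $E_j\equiv v\bmod p$, and tracking the deeper genericity (depth $eh+1$ rather than $h+1$) that this forces.

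The main obstacle I expect is step (ii)–(iii): verifying that the system of equations governing the successive cancellations remains triangular (hence solvable) in the ramified case, i.e. that multiplying by elements supported on progressively larger $v$-adic orders does not re-introduce lower-order error terms beyond what the induction already controls. This is exactly where the hypothesis that $(s,\mu)$ is $(eh+1)$-generic enters: it ensures $\mu$ is deep enough in $\un{C}_0$ that the relevant ``straightening'' (in the spirit of Lemma \ref{lem:Istraight}, where $\mu$ is required $(e(b-a)+1)$-deep with $b-a = h$) is available and that the matrix coefficients one needs to be units are in fact units. Once that triangularity is in place, solving for the $I^{(j)}$ and checking uniqueness up to $T(\F)^{\cJ}$ is routine. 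I would therefore organize the write-up as: recall the precise statement of the normal form for $N_{\tld{z}_j}$ from \cite[Definition 4.2.9, Proposition 4.2.10]{MLM}; note it is purely a statement about $\cI\tld{z}_j\cI$ double cosets together with $\phz$-conjugation and so is insensitive to whether $K/\Qp$ is ramified, modulo the genericity bound; then invoke Lemma \ref{lem:Istraight} and Remark \ref{rmk:quotient} to transport it to $Y^{[0,h],\tau}_\F$ and conclude.
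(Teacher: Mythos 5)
Your approach matches the paper's: the paper proves this in one line by citing \cite[Proposition 4.2.13 and Corollary 5.2.3]{MLM} — precisely the normal form $\tld{z}_j N_{\tld{z}_j}$ for the open Schubert cell and the quotient presentation of $Y^{[0,h],\tau}_{\F}$ modulo $T(\F)^{\cJ}$ furnished by Lemma \ref{lem:Istraight} (Remark \ref{rmk:quotient}), which are exactly the ingredients you identify. Your worry in steps (ii)–(iii) is unnecessary, since the cited results are already stated and used in the ramified setting (Lemma \ref{lem:Istraight} here is $\cite[Lemma 5.2.2]{MLM}$ with the depth bound $(e(b-a)+1)$), so no re-derivation of the triangular solvability is needed.
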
%
\begin{proof}
This follows from \cite[Proposition 4.2.13 and Corollary 5.2.3]{MLM}.   
\end{proof}

\subsection{Mod $p$ monodromy}  
\label{sec:Modp:mon}

Let $\Fl := \cI \backslash L\cG_{\F}$ denote the affine flag variety over $\F$ where $L \cG_{\F} = L \cG^{(j)}_{\F}$ for any $j$ denotes the usual loop group.  
Given $\tld{w}\in \tld{W}^\vee$, we write $S^\circ_\F(\tld{w})$ for the affine open Schubert cell associated to $\tld{w}$.
Let $\bf{a} \in (\cO^n)^{\cJ}$.
{We have a} closed subfunctor $L \cG_{\F}^{\nabla _{\bf{a}}}\subset L \cG_{\F}^{\cJ}$ {defined on an $\F$-algebra $R$} by 
 \begin{equation} \label{eq:nablaa}
L \cG_{\F}^{\nabla _{\bf{a}}}(R)\defeq \left\{ (A^{(j)})\in L \cG_{\F}^{\cJ} (R) \mid v \frac{dA^{(j)}}{dv} (A^{(j)})^{-1}  + A^{(j)} \Diag(\bf{a}_j) (A^{(j)})^{-1} \in \frac{1}{v^e} \Lie \Iw (R) \text{ for all } j \in \cJ \right\}.
\end{equation}
This condition defines a closed sub-ind-scheme $\Fl^{\nabla_{\bf{a}}}_{\cJ} \subset \Fl^{\cJ}$.  
For any subset $S \subset L \cG_{\F}^{\cJ}(R)$, 
we set $S^{\nabla_{\bf{a}}} := S \cap L \cG_{\F}^{\nabla _{\bf{a}}}(R)$; similarly for any subscheme $X \subset \Fl^{\cJ}$, set $X^{\nabla_{\bf{a}}} \defeq X \cap \Fl^{\nabla_{\bf{a}}}_{\cJ}$.   

Following \cite[Definition 4.2.2]{MLM}, given an integer $m\geq 0$, we say that an element $(a_1,\dots,a_n)\in R^n$ is  \emph{$m$-generic} if $a_i-a_k-\ell\in R^\times$ for all $\ell\in\{-m,-m+1,\dots,m-1,m\}$ for all $i\neq k$.

\begin{prop} \label{prop:monodromySchubert} Let $h$ be a positive integer.   Let $\tld{w} \in \tld{W}^{\cJ}$ and $\bf{a}=(\bf{a}_j)_{j\in \cJ} \in (\cO^n)^{\cJ}$.  Assume that $\tld{w}$ is $e$-regular and $h$-small $($see Definitions \ref{defn:regular} and \ref{defn:var:gen}$($\ref{defn:small}$))$ and that $\bf{a}_j \mod \varpi \in \F^n$ is $h$-generic for all $j \in \cJ$. Then the intersection $S^\circ_{\F}(\tld{w}^*) \cap \Fl^{\nabla_{\bf{a}}}$ is an affine space of dimension  $[K:\Qp] \dim (B \backslash \GL_n)_{\F}$.  
\end{prop}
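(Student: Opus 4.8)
The plan is to use the fact that the monodromy condition \eqref{eq:nablaa} defining $LG^{\nabla_{\bf{a}}}$ is imposed on each component $A^{(j)}$ separately, so that $S^\circ_{\F}(\tld{w}^*)\cap\Fl^{\nabla_{\bf{a}}}$ is the product over $j\in\cJ$ of the single-embedding loci $S^\circ_{\F}(\tld{w}^*_j)\cap\Fl^{\nabla_{\bf{a}_j}}$. Since $\lvert\cJ\rvert=f$ and $[K:\Qp]=ef$, it suffices to fix one $j$, drop it from the notation (writing $\tld{z}\defeq\tld{w}^*$ and $\bf{a}\defeq\bf{a}_j$), and show that $S^\circ_{\F}(\tld{z})\cap\Fl^{\nabla_{\bf{a}}}$ is an affine space of dimension $e\dim(B\backslash\GL_n)_{\F}$.

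First I would parametrize the open Schubert cell $S^\circ_{\F}(\tld{z})$. Using that $\tld{w}$ is $e$-regular, Proposition \ref{prop:can:reg} factors $\tld{w}=\tld{w}_2^{-1}w_0 t_{\nu+(e-1)\eta_0}\tld{w}_1$ with $\tld{w}_1,\tld{w}_2\in\tld{W}^+_1$ and $\nu$ dominant; combined with the adjoint operation $\tld{w}\mapsto\tld{w}^*$ of Definition \ref{affineadjoint} and the mod $p$ normal forms of \cite[\S 4.2]{MLM} (the unipotent subgroups $N_{\tld{z}}$ of \cite[Definition 4.2.9]{MLM}, cf.~Proposition \ref{prop:modpform}), this yields an explicit isomorphism $S^\circ_{\F}(\tld{z})\cong\bA^{\ell(\tld{w})}$ under which a point is a ``universal matrix'' $U=U(v)$ whose $(i,k)$-entry is a polynomial in $v$ with lowest and highest degrees prescribed by $\tld{z}$, the coefficients of these polynomials being the $\ell(\tld{w})$ affine coordinates; the $h$-smallness of $\tld{w}$ bounds the $v$-degrees that occur (in terms of $e$ and $h$).

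Next I would impose the monodromy condition. The left-hand side $\Psi(U)\defeq v\frac{dU}{dv}U^{-1}+U\Diag(\bf{a})U^{-1}$ of \eqref{eq:nablaa} is an $n\times n$ matrix of Laurent polynomials in $v$, and $U\in\Fl^{\nabla_{\bf{a}}}$ is the condition that finitely many of its coefficients vanish (all coefficients in $v$-degree $<-e$, together with the degree-$(-e)$ coefficient strictly below the diagonal). Expanded in the affine coordinates, this is a polynomial system, and the heart of the argument is a triangularity statement, established exactly as in the unramified case of \cite{MLM} (where the relevant pole has order $1$ rather than $e$): one orders the coordinates (by $v$-degree, then matrix position) so that each equation solves for one so-far-unused coordinate in terms of the remaining ones, in a triangular fashion, with ``pivot'' coefficient of the form $(a_i-a_k)-m$ for an integer $m$ with $\lvert m\rvert\le h$ --- this bound being precisely what the $h$-smallness of $\tld{w}$ ensures --- hence a unit in every $\F$-algebra by the $h$-genericity of $\bf{a}$. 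Back-substitution then presents $S^\circ_{\F}(\tld{z})\cap\Fl^{\nabla_{\bf{a}}}$ as the graph of a morphism out of the span of the remaining coordinates, hence as an affine space; its dimension is the number of those coordinates, which a count identical to the unramified bookkeeping of \cite{MLM} (passing from a pole of order $1$ to one of order $e$ multiplies the number of forbidden coefficients by $e$) evaluates to $e\dim(B\backslash\GL_n)_{\F}$. Multiplying over $\cJ$ recovers the asserted dimension $[K:\Qp]\dim(B\backslash\GL_n)_{\F}$.

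The hard part will be this last combinatorial core: exhibiting a linear order on the coordinates for which the monodromy equations are strictly triangular, verifying that each equation's pivot is one of the quantities $(a_i-a_k)-m$ with $\lvert m\rvert\le h$ --- kept in range by the interaction of $e$-regularity and $h$-smallness of $\tld{w}$ --- and is used only once, and confirming that the residual non-pivotal equations hold automatically on the graph, so that the locus is cut out by exactly $\ell(\tld{w})-e\dim(B\backslash\GL_n)_{\F}$ independent equations. Everything else should be a routine transcription of \cite[\S 4.2]{MLM} from an order-$1$ pole to an order-$e$ pole.
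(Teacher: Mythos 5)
Your proposal is correct and takes essentially the same route as the paper's proof: reduce to a single $j\in\cJ$, parametrize $S^\circ_{\F}(\tld{w}^*)$ as $\tld{w}^*N_{\tld{w}^*}$ via \cite[Proposition 4.2.13]{MLM}, and observe that the monodromy equations are triangular in the coefficients with pivots of the form $a_i-a_k+m$ (units by $h$-genericity), leaving the top $e$ coefficients of each $f_\alpha$ free thanks to $e$-regularity, exactly as in \cite[Theorem 4.2.4]{MLM}. The only cosmetic difference is that you invoke Proposition \ref{prop:can:reg} to exploit $e$-regularity, whereas the paper uses it directly via the degree bound $\deg f_\alpha \geq e-1$ from \cite[Corollary 4.2.5]{MLM}; this does not change the substance of the argument.
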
 
\begin{proof}
This is a direct generalization of  \cite[Theorem 4.2.4]{MLM} to the ramified setting.  We only briefly outline the proof.  It suffices to consider the case when $\# \cJ = 1$.    By \cite[Proposition 4.2.13]{MLM}, there is an isomorphism $\tld{w}^* N_{\tld{w}^*} \cong S^\circ(\tld{w}^*)$ where $N_{\tld{w}^*}$ is a unipotent subgroup scheme of $\Iw$ isomorphic to an affine space of dimension $\ell(\tld{w})$.    
As $\tld{w}$ is $e$-regular, for each $\alpha$ in the support of $N_{\tld{w}^*}$, we have $N_{\tld{w}^*, \alpha} = v^{\delta_{\alpha < 0}} f_{\alpha}$ where $f_{\alpha}$ is a polynomial of degree at least $e -1$ (cf.~\cite[Corollary 4.2.5]{MLM}, and note that, more precisely, the degree is $\lfloor \langle \tld{w}(x),-\alpha^\vee\rangle\rfloor-\delta_{\alpha<0}$, which is at least  $e -1$ by the $e$-regularity condition). 
Condition (\ref{eq:nablaa}) does not impose any constraint on the coefficients of degree $\deg(f_\alpha),\deg(f_\alpha)-1,\dots,\deg(f_\alpha)-(e-1)$ of $f_\alpha$, while the coefficients of degree strictly smaller than $\deg(f_\alpha)-(e-1)$ are solved in terms of the coefficients of the polynomials $f_{\alpha'}$ with $\alpha'\!<_{\cC}\!\alpha$ for a partial order $<_{\cC}$ on $\Phi$ determined by $\tld{w}$ (cf.~equation (4.6) in \emph{loc.~cit.}~).
Hence, $(\tld{w}^* N_{\tld{w}^*})^{\nabla_{\bf{a}}}$ is an affine space of dimension $e \dim (B \backslash \GL_n)_{\F}$.
\end{proof}

 Let $\tld{z} = s^{-1} t_{\mu} \in \tld{W}^{\vee, \cJ}$ acting by right translation on $\Fl^{\cJ}$.   
Let $\bf{a} \in (\Z^n)^{\cJ}$ and assume that $\bf{a}_j \equiv s^{-1}_j(\mu_j)$ mod $p$ for all $j \in \cJ$.
An easy calculation shows that:
\begin{equation*} \label{eq:translation}
L \cG_{\F}^{\cJ}  \tld{z} \cap L \cG_{\F}^{\nabla _0} = L \cG_{\F}^{\nabla _{\bf{a}}} \tld{z},   \quad \Fl^{\cJ} \tld{z} \cap \Fl^{\nabla_0}_{\cJ} =   \Fl^{\nabla_{\bf{a}}}_{\cJ} \tld{z} 
\end{equation*}

We can now state the main result of the section which is the ramified analogue of \cite[Proposition 4.3.4]{MLM}:
\begin{prop} \label{prop:nosematch}  Let $\tld{w}, \tld{w}' \in \tld{W}^{\cJ}$ be $h$-small, $e$-regular elements such that $\tld{w}' \leq \tld{w}$.  Write $\tld{w}^* = (\tld{w}')^* \tld{z}'$ and assume this is a reduced expression for $\tld{w}^*$.    Let $\tld{z} \in \tld{W}^{\vee, \cJ}$ be $2h$-generic.   Then
$$
(\cI(\F) (\tld{w}'_j)^* \cI(\F) \tld{z}'_j \tld{z}_j)^{\nabla_0} = (\cI(\F) \tld{w}^*_j \cI(\F)  \tld{z}_j)^{\nabla_0}
$$
 for all $j \in \cJ$. 
\end{prop}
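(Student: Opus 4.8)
The plan is to follow the argument of \cite[Proposition~4.3.4]{MLM}, making the (essentially bookkeeping) modifications forced by ramification: the monodromy operator in \eqref{eq:nablaa} now takes values in $\tfrac{1}{v^e}\Lie\cI$ rather than in $\Lie\cI$, so all the degree counts acquire an $e$-dependence, which is absorbed by the hypotheses that $\tld w,\tld w'$ are $e$-regular and $h$-small. Since the conditions \eqref{eq:nablaa} are imposed at each embedding separately and the assertion is pointwise in $j\in\cJ$, I fix such a $j$ and drop it from the notation. A direct check (just as for $\nabla_0$) shows that $LG^{\nabla_{\bf{a}}}$ is stable under left multiplication by $\cI$ for every $\bf{a}$: under $A\mapsto iA$ with $i\in\cI(R)$ the monodromy expression changes by the addition of $v\tfrac{di}{dv}i^{-1}\in v\Lie\cI\subseteq\tfrac1{v^e}\Lie\cI$ together with conjugation by $i$, and $\tfrac1{v^e}\Lie\cI$ is stable under $\cI$-conjugation. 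Hence the translation identity recorded before the statement applies. Writing $\tld z=s^{-1}t_\mu$ and picking $\bf{a}\in\Z^n$ with $\bf{a}\equiv s^{-1}(\mu)\bmod p$, we have $\cI\tld w^*\cI\tld z\subseteq LG\tld z$ and $\cI(\tld w')^*\cI\tld z'\tld z=\big(\cI(\tld w')^*\cI\tld z'\big)\tld z\subseteq LG\tld z$, so that
\[
(\cI\tld w^*\cI\tld z)^{\nabla_0}=(\cI\tld w^*\cI)^{\nabla_{\bf{a}}}\,\tld z,\qquad (\cI(\tld w')^*\cI\tld z'\tld z)^{\nabla_0}=(\cI(\tld w')^*\cI\tld z')^{\nabla_{\bf{a}}}\,\tld z
\]
for the \emph{same} $\bf{a}$; by $2h$-genericity of $\tld z$ the reduction $\bf{a}\bmod\varpi$ is $h$-generic (with room to spare). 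Thus it suffices to prove $(\cI(\tld w')^*\cI\tld z')^{\nabla_{\bf{a}}}=(\cI\tld w^*\cI)^{\nabla_{\bf{a}}}$.

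The inclusion $\subseteq$ is automatic: since $\tld w^*=(\tld w')^*\tld z'$ is reduced, $\cI(\tld w')^*\cI\tld z'\subseteq\cI(\tld w')^*\cI\cdot\cI\tld z'\cI=\cI\tld w^*\cI$, and one intersects with $LG^{\nabla_{\bf{a}}}$. For $\supseteq$ I would induct on $\ell(\tld z')$. Given a reduced factorization $\tld z'=s_\beta\tld z''$ with $s_\beta$ an affine simple reflection of $\tld W^\vee$, one applies the length-one case to pass from $\cI\tld w^*\cI$ down to $\cI\big((\tld w')^*s_\beta\big)\cI\,\tld z''$ and then the inductive hypothesis, after checking that the intermediate element $(\tld w')^*s_\beta$ is again $h$-small and $e$-regular: the former is immediate, and for the latter one may either invoke the fact that in the situations where the proposition is applied the relevant elements lying between $\tld w'$ and $\tld w$ remain $e$-regular (cf.\ the corridor analysis of \S\ref{sec:comb:weyl}), or observe that only the endpoints' regularity is used to guarantee the canonical forms below. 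It therefore suffices to treat $\tld z'=s_\beta$, so $\ell(\tld w^*)=\ell((\tld w')^*)+1$.

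For this base case I would work in the affine flag variety $\Fl=\cI\backslash LG$. By the ramified analogue of \cite[Proposition~4.2.13]{MLM} underlying Proposition \ref{prop:monodromySchubert} and the $e$-regularity of $\tld w$, the Schubert cell $S^\circ_\F(\tld w^*)=\cI\backslash\cI\tld w^*\cI$ is parametrized by $N_{\tld w^*}$ via $n\mapsto[\tld w^*n]$; likewise $S^\circ_\F((\tld w')^*)$ is parametrized by $N_{(\tld w')^*}$. Transporting $\cI\backslash\cI(\tld w')^*\cI s_\beta$ into $S^\circ_\F(\tld w^*)$ through the reduced factorization exhibits it as the graph of a morphism $N_{(\tld w')^*}\to N_{\tld w^*}$ whose composite with the projection onto the $N_{(\tld w')^*}$-coordinates is an isomorphism; i.e.\ after splitting $N_{\tld w^*}\cong N_{(\tld w')^*}\times\mathbb{A}^1$ along the one root $\beta$ by which $\tld w^*$ exceeds $(\tld w')^*$, this sub-locus is $\{(n,g(n))\}$ for an explicit $g$. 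One then reads off the $\nabla_{\bf{a}}$-equations in the coordinates on $N_{\tld w^*}$ exactly as in the proof of Proposition \ref{prop:monodromySchubert}: for each root the top $e$ Laurent coefficients of the corresponding polynomial are unconstrained, the remaining ones being solved in terms of strictly smaller roots for the order attached to $\tld w^*$. Tracing this through, the equation attached to the extra root $\beta$ determines the $\mathbb{A}^1$-coordinate, and its leading coefficient is a unit precisely because $\bf{a}\bmod\varpi$ is $h$-generic and $\tld w$ is $h$-small and $e$-regular; the value it is forced to take is exactly $g(n)$. Hence every point of $(\cI\tld w^*\cI)^{\nabla_{\bf{a}}}$ lies in $\cI(\tld w')^*\cI s_\beta$, which gives $\supseteq$ and completes the proof.

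The main obstacle is the base-case computation: identifying, inside the canonical coordinates $N_{\tld w^*}$, exactly which coordinate records ``$s_\beta$ has been moved to the left of $\tld z'$'', and verifying that the single $\nabla_{\bf{a}}$-equation attached to $\beta$ pins it down with a unit coefficient. This is where the ramified features ($E_j\equiv v^e\bmod p$, the target $\tfrac1{v^e}\Lie\cI$, and the $e$-regular/$h$-small hypotheses replacing the $1$-regularity used in the unramified case) have to be tracked carefully; the degree bookkeeping is exactly that in the proof of Proposition \ref{prop:monodromySchubert}, and no input beyond the stated $2h$-genericity of $\tld z$ is needed. A secondary point to nail down is the compatibility of the $N_{(\tld w')^*}$- and $N_{\tld w^*}$-parametrizations under the reduced factorization $\tld w^*=(\tld w')^*\tld z'$ (a Bott--Samelson-type statement, which in particular supplies the coordinate splitting $N_{\tld w^*}\cong N_{(\tld w')^*}\times\mathbb{A}^1$ used above), together with the verification that the hypotheses propagate along the induction.
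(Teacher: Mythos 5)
Your reduction to a statement about $\nabla_{\bf a}$-loci (via the translation identity) and the direction $\subseteq$ (via reducedness) match the paper, but from there the paper's proof and yours diverge, and your route has a gap. The paper descends to $\Fl$ and observes that
\[
(S^{\circ}((\tld{w}')^*)\,\tld{z}'\tld{z})^{\nabla_0}\ \subseteq\ (S^{\circ}(\tld{w}^*)\,\tld{z})^{\nabla_0}
\]
and then applies Proposition \ref{prop:monodromySchubert} \emph{twice} — once with the pair $(\tld{w}',\tld{z}'\tld{z})$ and once with $(\tld{w},\tld{z})$, noting that both $\tld{z}$ and $\tld{z}'\tld{z}$ are $h$-generic by $2h$-genericity of $\tld z$ and $h$-smallness of $\tld w'$. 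Since the dimension produced by Proposition \ref{prop:monodromySchubert} depends only on $[K:\Qp]$ and not on the choice of $\tld w$, both sides are irreducible affine spaces of \emph{equal} dimension, so the inclusion is automatically an equality. No induction and no coordinate computation beyond what is already done in Proposition \ref{prop:monodromySchubert} is needed, and, crucially, the hypotheses only involve the two endpoints $\tld w$ and $\tld w'$.

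Your proposed induction on $\ell(\tld z')$ does not obviously terminate inside the hypotheses of the proposition. At each step you would need the intermediate element $\tld w_{\mathrm{int}}$ with $\tld w_{\mathrm{int}}^* = (\tld w')^* s_{\beta_1}\cdots s_{\beta_k}$ to again be $e$-regular so that the $N_{\tld w_{\mathrm{int}}^*}$-parametrization (Proposition \ref{prop:monodromySchubert}) is available; but intermediate elements of a Bruhat interval between two $e$-regular elements need not be $e$-regular, and Proposition \ref{prop:nosematch} is stated for arbitrary $h$-small $e$-regular $\tld w,\tld w'$, not only in the ``corridor'' situations of \S\ref{sec:comb:weyl}. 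Your two suggested escapes — appealing to the corridor analysis, or asserting that only the endpoints' regularity matters — are not substantiated (the latter is in fact false for your base-case argument, which uses $e$-regularity of the element parametrizing the ambient cell). The paper's dimension-count argument is precisely what makes these issues evaporate: it never needs to parametrize any cell other than the two associated to $\tld w$ and $\tld w'$. If you want to keep the inductive structure, you would have to either prove a regularity-propagation lemma or restate the proposition to build it into the hypotheses; the cleaner fix is to replace the induction with the equal-dimension argument.
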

\begin{proof}
Again, the proof is very similar to the proof of \cite[Proposition 4.3.4]{MLM}, and we refer the reader to \emph{loc.~cit}.~for further detail.

Since   $(\tld{w}')^* \tld{z} = \tld{w}^*$ is a reduced expression, there is an inclusion of the left side in the right side.    Since both sides are invariant under $\cI(\F)$, we can descend to $\Fl^{\nabla_0}_{\cJ}$ and reduce to showing 
\[
(S^{\circ} (\tld{w}')^*  \tld{z}' \tld{z})^{\nabla_0}  = (S^{\circ}(\tld{w}^*)  \tld{z})^{\nabla_0}.
\]   By the assumptions, both $\tld{z}$ and $\tld{z}' \tld{z}$ are $h$-generic and so by Proposition \ref{prop:monodromySchubert}, both sides are affine spaces of the same dimension and so inclusion implies equality. 
\end{proof}

\begin{defn}  \label{defn:modpmono} Assume that the lowest alcove presentation $(s,\mu)$ of $\tau$ is $(eh +1)$-generic. 
We say that $\fM \in Y^{[0, h], \tau}(\F)$ satisfies the \emph{mod $p$ monodromy condition}
 if for any choice of eigenbasis $\beta$ of $\fM$, the collection $(A^{(j)}_{\fM, \beta} \tld{w}^*(\tau)_j)$ is in $ L \cG_{\F}^{\nabla _0}(\F)$.  %
\end{defn}

\subsection{Semicontinuity I}
\label{subsec:SCI}

We fix a tame inertial type $\tau$ with a $1$-generic lowest alcove presentation $(s, \mu)$, as defined in \S \ref{sec:SWC}.  In this section, we show a semicontinuity result for the shape of a mod $p$ Kisin module of type $\tau$ with respect to the shape of its semisimplification.   
This is preliminary to a more general semicontinuity result (Theorem \ref{thm:semicont} in section \ref{subsec:SCII}).

\begin{prop}    
\label{prop:SC:I}
Let $\fM \in Y^{[0, h], \tau}_{\F} (\F')$ and set $\rhobar := T^*_{dd}(\fM)$ for any finite extension $\F'/\F$.    There exists $\fM_0 \in  Y^{[0, h], \tau}_{\F}(\F')$ such that 
\[
T^*_{dd}(\fM_0) = \rhobar^{\mathrm{ss}}.
\]
Furthermore, the shape of $\fM_0$ with respect to $\tau$ is less than or equal to the shape of $\fM$ with respect to $\tau$ in the Bruhat order on $\tld{W}^{\vee, \cJ}$. 
\end{prop}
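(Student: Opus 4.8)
The plan is to produce $\fM_0$ by passing to a Jordan--Hölder filtration on the Galois side, pulling it back to a filtration of $\fM$ by Breuil--Kisin submodules, and taking the associated graded; the shape inequality will then come from realizing $\fM_0$ as the $t=0$ fibre of a Rees-type degeneration of $\fM$ inside $Y^{[0,h],\tau}_\F$ and invoking the closure relations of Proposition \ref{prop:closurerelations}.

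First, set $\cM\defeq\eps_\tau(\fM)$, so that $\bV^*_K(\cM)=\rhobar$ (a $G_{K_\infty}$-representation, by the normalization of $\eps_\tau$). Since $\bV^*_K$ is an exact anti-equivalence, a Jordan--Hölder filtration $0=\cM_0\subsetneq\cM_1\subsetneq\cdots\subsetneq\cM_m=\cM$ with irreducible graded pieces corresponds to a Jordan--Hölder cofiltration of $\rhobar$, and $\bigoplus_i\cM_i/\cM_{i-1}$ maps under $\bV^*_K$ to $\rhobar^{\mathrm{ss}}$. Using the identification $\fM\otimes_{\fS_{L',\F'}}(\cO_{\cE,L'}\widehat{\otimes}_{\Zp}\F')=\cM\otimes_{\cO_{\cE,K}}(\cO_{\cE,L'}\widehat{\otimes}_{\Zp}\F')$, define $\fM_i\defeq\fM\cap\big(\cM_i\otimes_{\cO_{\cE,K}}(\cO_{\cE,L'}\widehat{\otimes}_{\Zp}\F')\big)$, a saturated $\fS_{L',\F'}$-submodule of $\fM$ stable under $\phi_\fM$ and the $\Delta$-action, with $\eps_\tau(\fM_i)=\cM_i$. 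A direct check resting on the identity $\phi_\fM(\phz^*\fM)\cap\fM_i=\phi_\fM(\phz^*\fM_i)$ (forced by saturation) shows each $\fM_i$ and each subquotient $\fM_i/\fM_{i-1}$ has height in $[0,h]$. Since the dual type $\tau^\vee$ is a direct sum of characters of $\Delta'$ (recall $|\Delta'|=p^{fr}-1$ is prime to $p$), each $\fM_i^{(j')}\bmod u'$ is a sum of characters, so the descent data of the graded pieces have types $\tau_i$ with $\bigoplus_i\tau_i\cong\tau$. Hence $\fM_0\defeq\bigoplus_i\fM_i/\fM_{i-1}$, with its induced Frobenius and descent datum, lies in $Y^{[0,h],\tau}_\F(\F')$; and by exactness of $\eps_\tau$ (localization followed by Galois descent along $L'/K$) and of $\bV^*_K$ we get $T^*_{dd}(\fM_0)=\bigoplus_i\bV^*_K(\cM_i/\cM_{i-1})=\rhobar^{\mathrm{ss}}$.

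To compare shapes, form the Rees module $\tld{\fM}\defeq\sum_i\fM_i\,t^i\subset\fM\otimes_{\F'}\F'[t]$ over $\fS_{L',\F'[t]}$: it is locally free of rank $n$, the Frobenius and descent datum of $\fM$ act on it, $\tld{\fM}/(t)\cong\fM_0$, and $\tld{\fM}[1/t]\cong\fM\otimes_{\F'}\F'[t,1/t]$. The same verifications as above (saturation for the height bound, $\bigoplus_i\tau_i\cong\tau$ for the constancy of the type, which uses that $\tau^\vee$ is semisimple over $\F'$) show $\tld{\fM}\in Y^{[0,h],\tau}_\F(\F'[t])$, i.e. a morphism $\Spec\F'[t]\to Y^{[0,h],\tau}_\F$. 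Over the dense open $\Spec\F'[t,1/t]$ this morphism factors through the locally closed substack $Y^{[0,h],\tau}_{\F,\tld{z}}$ of Proposition \ref{prop:closurerelations}, where $\tld{z}$ is the shape of $\fM$; hence $\Spec\F'[t]$ factors through the closure of $Y^{[0,h],\tau}_{\F,\tld{z}}$, which by Proposition \ref{prop:closurerelations} lies in $\bigcup_{\tld{z}'\leq\tld{z}}Y^{[0,h],\tau}_{\F,\tld{z}'}$. Specializing at $t=0$ shows that the shape of $\fM_0$ is $\leq\tld{z}$ in the Bruhat order on $\tld{W}^{\vee,\cJ}$, as required.

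The conceptual skeleton is short; the real work, and the step most likely to need care, is the bookkeeping in the last two paragraphs: that a saturated $\phi$- and $\Delta$-stable submodule of a height-$\leq h$, type-$\tau$ Breuil--Kisin module over $\F'$ is again of height $\leq h$ with descent type a direct summand of $\tau$ (the crucial point being the prime-to-$p$ order of $\Delta'$, which makes $\Delta'$-representations over $\F'$ semisimple and the type insensitive to semisimplification), and that the Rees construction yields a genuine $\F'[t]$-flat object of $Y^{[0,h],\tau}_\F$. Once these are in place, the shape bound is immediate from the closure relations, and one should also confirm that $\rhobar=T^*_{dd}(\fM)$ is treated throughout as a $G_{K_\infty}$-representation, so that $\rhobar^{\mathrm{ss}}$ in the statement is its semisimplification as such and no compatibility of semisimplification with restriction to $G_{K_\infty}$ is needed.
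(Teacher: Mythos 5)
Your proof is correct and follows essentially the same strategy as the paper's: degenerate $\fM$ over $\bA^1_\F$ to the Breuil--Kisin module attached to the associated graded of a $\phi$- and $\Delta$-stable saturated filtration lifting a Jordan--H\"older filtration of the \'etale $\phi$-module, and then invoke the closure relations of Proposition \ref{prop:closurerelations}. The only cosmetic difference is that the paper builds the $\bA^1$-family by fixing an eigenbasis adapted to the filtration and conjugating the Frobenius matrix by a dominant cocharacter $\lambda(x)$ (whose centralizer is the relevant Levi), whereas you use the Rees module $\bigoplus_{i\geq 0} t^i\fM_i$; these are the same degeneration in different coordinates, and your version has the mild advantage of making the verifications (saturation $\Rightarrow$ correct height bounds for $\fM_i$ and its graded pieces, prime-to-$p$ order of $\Delta'$ $\Rightarrow$ the descent type is preserved on the associated graded) fully explicit rather than deferring them to the reference.

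Two small points worth making airtight if you write this up: the Rees sum $\sum_i t^i\fM_i$ should be understood with $\fM_i\defeq\fM$ for $i\geq m$, so that it is an $\F'[t]$-submodule of $\fM\otimes_{\F'}\F'[t]$ (otherwise $t$-multiplication is not closed); and the type condition over $\F'[t]$ uses that the $\chi$-isotypic components of $\tld{\fM}^{(j')}\bmod u'$ are projective (hence free) over $\F'[t]$ of constant rank, which follows from the prime-to-$p$ order of $\Delta'$ together with comparison at $t=1$.
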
 
\begin{proof}
By the closure relations for the stratum of the stack $Y_\F^{[0,h], \tau}$ (Proposition \ref{prop:closurerelations}), it suffices to construct a map $\mathbb{A}^1_{\F} \ra  Y_\F^{[0,h],\tau}$, sending $x$ to $\fM_{x}$, 
such that
\begin{enumerate}
	\item\label{secondo:sc1} $T^*_{dd}(\fM_{0})\cong \rhobar^{\semis}$; and
	\item\label{primo:sc1} for all $x \in \overline{\F}, x \neq 0$,  $T^*_{dd}(\fM_{x})\cong \rhobar$.
\end{enumerate}

The construction of the map proceeds as in the proof of \cite[Proposition 5.5.9]{MLM}.
Let $\alpha$ be the eigenbasis for $\fM$ constructed in \emph{loc. cit.} adapted to the filtration $(\cM_i)$ on the \'etale $\varphi$-module $\fM[1/u']$.  
Define the matrix $C^{(j)}\in G(\F'(\!(u')\!))$ by the condition
\[
\phi_{\fM}^{(j)}(\phz^*(\alpha^{(j)})) = \alpha^{(j+1)} C^{(j)}.
\]
By construction, $C^{(j)}$ lies in a parabolic subgroup $P(\F'(\!(u')\!)) \subset G(\F'(\!(u')\!))$ corresponding to the filtration $(\cM_i)$.   
Let $L$ denote the corresponding Levi subgroup which contains the diagonal torus $T$.   
Choose a dominant cocharacter $\lambda$ such that $L$ is the centralizer of $\lambda$.  

For $x \neq 0$, define $\fM_x$ to be the free Breuil--Kisin module of rank $n$ with basis $\alpha_x$ such that $\Delta$ acts on $\alpha_x$ in the same way it acts on $\alpha$ and such that the Frobenius acts by $C^{(j)}_x = \lambda(x) C^{(j)} \lambda(x)^{-1}$ (with respect to $\alpha_x$).  
Observe that the limit of $C^{(j)}_x$ as $x \ra 0$ exists and lies in the Levi subgroup $L(\F'(\!(u')\!))$.  
Thus, we can extend this to a family over $\bA^1_{\F}$.   It is easy to check property (\ref{secondo:sc1}).   For property (\ref{primo:sc1}), we note that for any $x \neq 0$,  $C^{(j)}_x$ is  the matrix for Frobenius with respect to the basis $(\alpha^{(j)} \cdot \lambda(x))$ and so $\fM_x$ is isomorphic to $\fM_1$.  
\end{proof}

\begin{cor}
\label{prop:weak:sc}
Assume that $\tau$ is $(e(n-1)+1)$-generic.   If $\rhobar$ is $\tau$-admissible, then $\rhobar^{\mathrm{ss}}$ is $\tau$-admissible and for all $j \in \cJ$, 
\[
\tld{w}(\rhobar^{\semis},\tau)_j \leq \tld{w}(\rhobar, \tau)_j.
\]
\end{cor}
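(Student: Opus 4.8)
The plan is to deduce this directly from Proposition \ref{prop:SC:I}, applied with $h = n-1$. Since $\rhobar$ is $\tau$-admissible and $\tau$ is $(e(n-1)+1)$-generic, Definition \ref{defn:shaperhobar} furnishes a Breuil--Kisin module $\ovl{\fM} \in Y_\F^{\leq \eta, \tau}(\F')$ with $T^*_{dd}(\ovl{\fM}) \cong \rhobar|_{G_{K_\infty}}$; moreover $\ovl{\fM}$ is unique up to isomorphism by Proposition \ref{prop:BK_to_phi_mono}, so that its shape with respect to $\tau$ is precisely $\tld{w}(\rhobar, \tau)$.

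First I would invoke Proposition \ref{prop:SC:I} to produce $\fM_0 \in Y_\F^{[0,n-1], \tau}(\F')$ with $T^*_{dd}(\fM_0) \cong \rhobar^{\semis}|_{G_{K_\infty}}$ and whose shape with respect to $\tau$ is $\leq$ that of $\ovl{\fM}$ in the Bruhat order on $\tld{W}^{\vee, \cJ}$. The one point that needs care is that $\fM_0$ actually lies in the closed substack $Y^{\leq \eta, \tau}_\F \subset Y^{[0,n-1], \tau}_\F$, rather than merely in the ambient $Y^{[0,n-1], \tau}_\F$. This holds because the family $\mathbb{A}^1_\F \ra Y^{[0,n-1], \tau}_\F$, $x \mapsto \fM_x$, constructed in the proof of Proposition \ref{prop:SC:I} satisfies $\fM_x \cong \ovl{\fM}$ for every $x \neq 0$; hence its restriction to $\mathbb{G}_{m,\F}$ factors through the closed substack $Y^{\leq \eta, \tau}_\F$, and since that substack is closed, the whole family, in particular the fiber $\fM_0$ at $x=0$, factors through it.

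With $\fM_0 \in Y_\F^{\leq \eta, \tau}(\F')$ in hand, it witnesses the $\tau$-admissibility of $\rhobar^{\semis}$, and by Definition \ref{defn:shaperhobar} (using the $(e(n-1)+1)$-genericity of $\tau$ once more, both for well-definedness of the shape and to place it in $\Adm(e\eta_0)$) the shape of $\fM_0$ with respect to $\tau$ equals $\tld{w}(\rhobar^{\semis}, \tau)$. The desired inequality $\tld{w}(\rhobar^{\semis}, \tau)_j \leq \tld{w}(\rhobar, \tau)_j$ for all $j \in \cJ$ is then exactly the shape inequality provided by Proposition \ref{prop:SC:I}, transported if necessary through the bijection $\tld{w} \mapsto \tld{w}^*$ of Definition \ref{affineadjoint}, which respects length and the Bruhat order. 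There is no substantial obstacle: the content of the statement is entirely carried by Proposition \ref{prop:SC:I}, and the only things to check are the two bookkeeping points above, namely that the degenerate member $\fM_0$ remains in $Y^{\leq\eta,\tau}_\F$ (closedness, already recorded in the excerpt) and the matching of the shapes of $\ovl{\fM}$ and $\fM_0$ with $\tld{w}(\rhobar,\tau)$ and $\tld{w}(\rhobar^{\semis},\tau)$.
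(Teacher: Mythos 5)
Your proof is correct and is essentially the argument the paper intends: the corollary is stated with no proof, immediately after Proposition \ref{prop:SC:I}, so it is meant to be a direct consequence. The one non-trivial point, which you correctly isolate and handle, is that Proposition \ref{prop:SC:I} only produces $\fM_0$ in $Y^{[0,n-1],\tau}_\F$, whereas $\tau$-admissibility of $\rhobar^{\semis}$ per Definition \ref{defn:shaperhobar} requires a witness inside the closed substack $Y^{\leq\eta,\tau}_\F$. Your argument via the constructed $\bA^1$-family is clean: over $\bG_{m}$ the fibers $\fM_x$ are all isomorphic to $\ovl\fM \in Y^{\leq\eta,\tau}(\F')$, so the preimage of the closed substack $Y^{\leq\eta,\tau}_\F$ in $\bA^1$ is a closed subscheme containing the dense open $\bG_m$, hence all of $\bA^1$; thus $\fM_0 \in Y^{\leq\eta,\tau}(\F')$. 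The remaining identifications you record (uniqueness of the Breuil--Kisin witness from Proposition \ref{prop:BK_to_phi_mono}, so that the shapes of $\ovl\fM$ and $\fM_0$ really are $\tld w(\rhobar,\tau)$ and $\tld w(\rhobar^{\semis},\tau)$, and the compatibility of the Bruhat orders under $\tld w \mapsto \tld w^*$) are all as needed. This matches the paper's intent and supplies a detail the paper leaves to the reader.
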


\subsection{Specializations} 
\label{subsec:SPEC}

Throughout this section we consider a continuous Galois representation $\rhobar: G_K\ra\GL_n(\F)$.
We say that $\rhobar$ is $N$-generic if the tame inertial $\F$-type $\rhobar^{\semis}|_{I_K}$ is $N$-generic (see \S \ref{subsubsec:TIT}). \emph{All lowest alcove presentations for tame inertial types (over $\F$ or over $E$) will always be compatible with a given lowest alcove presentation for $\rhobar^{\semis}$.}

If $\rhobar^{\speci}$ is a tame inertial $\F$-type for $K$ and $\tau$ is an inertial type over $E$ with compatible lowest alcove presentation, then recall the combinatorially defined shape $\tld{w}(\rhobar^{\speci}, \tau)  =  \tld{w}(\tau)^{-1}\tld{w}(\rhobar^{\speci}) \in \tld{W}^{\cJ}$ defined in \S \ref{sec:combserre}.

\begin{defn}\label{defn:spec}
Let $\rhobar:G_K \ra \GL_n(\F)$ be a continuous Galois representation.   
\begin{enumerate}
\item A tame inertial $\F$-type $\rhobar^\speci$ for $K$ over $\F$ is a \emph{specialization} of $\rhobar$ if there exists an $(e(n-1)+1)$-generic tame inertial type $\tau$ such that $\rhobar$ is $\tau$-admissible and $\tld{w}(\rhobar,\tau) = \tld{w}(\rhobar^\speci,\tau)$.
We say that $\tau$ \emph{exhibits} the specialization.
\item 
A specialization $\rhobar^\speci$ is called an \emph{extremal specialization} of $\rhobar$ if there exists a $\tau$ exhibiting the specialization such that $\tld{w}(\rhobar^\speci,\tau) = t_{w^{-1}(e\eta_0)}$ for some $w \in W^{\cJ}$ and if the unique $\fM \in Y^{\leq \eta, \tau}(\F)$ such that $T^*_{dd}(\fM) \cong \rhobar|_{G_{K_{\infty}}}$ satisfies the mod $p$ monodromy condition (Definition \ref{defn:modpmono}).
\end{enumerate}
\end{defn}

\begin{rmk} \label{rmk:liftsandmodpmono}  By a version of \cite[Proposition 7.4.1]{MLM} in the ramified setting (based on the analysis of the monodromy condition in characteristic 0, cf.~Proposition \ref{prop:monodromy_control} below), if $\tau$ is $(e+1)(n-1)$-generic 
and $\rhobar$ admits a potentially crystalline lift of type $(\tau, \eta)$ then the unique $\fM \in Y^{\leq \eta, \tau}(\F)$ such that $T^*_{dd}(\fM) \cong \rhobar|_{G_{K_{\infty}}}$ satisfies the mod $p$ monodromy condition.  Thus, the technical condition in Definition \ref{defn:spec}(2) could be replaced by the existence of a potentially crystalline lift.
\end{rmk}

\begin{rmk}  Using the methods of \cite{MLM}, it can be shown under suitable genericity conditions that all specialization are extremal when $K/\Qp$ is unramified.    It is likely that the same is true in the ramified case but we do not attempt to prove it here. 
\end{rmk}

Let $S(\rhobar)$ (resp.~$S_{\mathrm{ext}}(\rhobar)$) denote  be the set of specializations (resp.~extremal specializations) of $\rhobar$.  

\begin{rmk} \label{lem:Sfinite}
The sets $S(\rhobar)$ and $S_{\mathrm{ext}}(\rhobar)$ are finite because the set of $(e(n-1)+1)$-generic types $\tau$ for which $\rhobar^{\semis}$ is $\tau$-admissible is finite by Proposition \ref{prop:sskisin} and the set of types for which $\rhobar$ is $\tau$-admissible is a subset of this set by Corollary \ref{prop:weak:sc}. In Theorem \ref{thm:extension}, we show that $\rhobar^{\semis}|_{I_K}$ is an extremal specialization of $\rhobar$ so $S_{\mathrm{ext}}(\rhobar)$ is also non-empty.
\end{rmk}

\begin{exam} 
We have the following examples when $K=\Qp$ and $n=2,3$.
\begin{enumerate}
\item Let  $1 < a < p-1$ and assume that $\rhobar_{I_{\Qp}}$ is of the form $\begin{pmatrix}
\omega^a & \ast \\
0 & 1
\end{pmatrix}$, where $\ast\neq 0$.
Then we have two specializations, given by $\omega^{a}\oplus 1$ and $\omega_2^{a}\oplus\omega_2^{pa}$.
A type which exhibits the specialization $\omega_2^{a}\oplus\omega_2^{pa}$ is $(\omega_2^{a+1} \oplus \omega_2^{p(a+1)})\otimes \omega^{-1}$.
\item (\cite[Theorem 4.2.5]{GL3Wild}) Assume that $(a+1,b+1,c+1)\in \Z^3$ is $6$-deep in alcove $C_0$ and that $\rhobar|_{I_K}$ is of the form 
\[
\begin{pmatrix}
\omega^{a} & *_1 & * \\
0 & \omega^b & *_2\\
0 & 0 & \omega^c 
\end{pmatrix}
\]
where $\ast_1$, $\ast_2$ denote non-split extensions.
Then $\rhobar$ has up to $6$ specializations, namely $\omega^{a}\oplus\omega^{b}\oplus\omega^{c}$, $\omega_2^{a+pb}\oplus\omega_2^{b+pa}\oplus\omega^{c}$, $\omega_2^{a+pc}\oplus\omega^{b}\oplus\omega_2^{c+pa}$, $\omega^{a}\oplus\omega_2^{b+pc}\oplus\omega_2^{c+pb}$, $\omega_3^{a+pb+p^2c}\oplus \omega_3^{b+pc+p^2a}\oplus \omega_3^{c+pa+p^2b}$ and $\omega_3^{a+pc+p^2b}\oplus \omega_3^{b+pa+p^2c}\oplus \omega_3^{c+pb+p^2a}$.
A type which exhibits the specialization $\rhobar^{\semis}$ is $\omega_2^{(a-1)+p(c-1)}\oplus\omega^{b-1}\oplus\omega_2^{(c-1)+p(a-1)}$.
We have $4$ specializations precisely when $\rhobar$ has either a potentially crystalline lift of type $\omega_{a-1}\oplus\omega_2^{b+p(c-2)}\oplus \omega_2^{(c-2)+pb}$ (in which case the specializations $\omega_2^{a+pc}\oplus\omega^{b}\oplus\omega_2^{c+pa}$ and $\omega_3^{a+pc+p^2b}\oplus \omega_3^{b+pa+p^2c}\oplus \omega_3^{c+pb+p^2a}$ do not appear) or of type $\omega_2^{a+p(b-2)}\oplus \omega_2^{(b-2)+pa}\oplus\omega^{c-1}$ (in which case the specializations $\omega_2^{a+pc}\oplus\omega^{b}\oplus\omega_2^{c+pa}$ and $\omega_3^{a+pb+p^2c}\oplus \omega_3^{b+pc+p^2a}\oplus \omega_3^{c+pa+p^2b}$ do not appear).
\end{enumerate}
\end{exam}

\subsection{Semicontinuity II} \label{subsec:SCII}

The following theorem generalizes Proposition \ref{prop:SC:I}.

\begin{thm}
\label{semi:cnt} \label{thm:semicont}
Let $\rhobar:G_K\ra\GL_n(\F)$ be a $3e(n-1)$-generic continuous Galois representation.
Assume that $\rhobar$ specializes to a tame inertial $\F$-type $\rhobar^{\speci}$ for $K$ and that $\rhobar$ is $\tau$-admissible.  
For each $j\in \cJ$, we have the inequality
\[
\tld{w}(\rhobar^{\speci},\tau)_j  \leq \tld{w}(\rhobar, \tau)_j .
\]
\end{thm}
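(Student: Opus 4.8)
\textbf{Proof plan for Theorem \ref{thm:semicont}.}

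The strategy is to reduce this statement, which is about the actual representation $\rhobar$ and an arbitrary specialization $\rhobar^{\speci}$, to the already-established semicontinuity comparison with the semisimplification, namely Corollary \ref{prop:weak:sc}. The key observation is that a specialization $\rhobar^{\speci}$ is by definition \emph{exhibited} by some auxiliary $(e(n-1)+1)$-generic tame inertial type, say $\tau'$, meaning that $\rhobar$ is $\tau'$-admissible and $\tld{w}(\rhobar,\tau') = \tld{w}(\rhobar^{\speci},\tau')$. So the content to prove is: given two types $\tau$ and $\tau'$ with compatible lowest alcove presentations (both compatible with a fixed lowest alcove presentation of $\rhobar^{\semis}|_{I_K}$, as in the conventions of \S\ref{subsec:SPEC}), for which $\rhobar$ is both $\tau$- and $\tau'$-admissible, the shapes are related by the ``change of type'' rule that $\tld{w}(\rhobar,\tau) = \tld{w}(\rhobar^{\speci},\tau')^{-1}\cdot(\text{something depending only on }\tau,\tau')$ composed appropriately. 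Concretely, since $\tld{w}(\rhobar^{\speci},\tau) = \tld{w}(\tau)^{-1}\tld{w}(\rhobar^{\speci})$ and $\tld{w}(\rhobar^{\speci},\tau') = \tld{w}(\tau')^{-1}\tld{w}(\rhobar^{\speci})$ are purely combinatorial, the passage from one to the other is right multiplication by the fixed element $\tld{w}(\tau')^{-1}\tld{w}(\tau)$; and the same should hold for $\tld{w}(\rhobar,\tau)$ versus $\tld{w}(\rhobar,\tau')$ if we can show that the Breuil--Kisin module shape transforms under change of descent type in exactly the same way.

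First I would make precise this ``change of type'' compatibility at the level of the \'etale $\varphi$-module: by Proposition \ref{prop:expeps}, if $\fM \in Y^{[0,h],\tau}(\F')$ has eigenbasis $\beta$ then the Frobenius matrix of $\eps_\tau(\fM)$ in the induced basis is $A^{(j)}_{\fM,\beta}\tld{w}^*(\tau)_j$, and $\rhobar = T^*_{dd}(\fM)$ depends only on this \'etale $\varphi$-module (via $\bV^*_K$). Thus two Breuil--Kisin modules $\fM \in Y^{\leq \eta,\tau}(\F')$ and $\fM' \in Y^{\leq\eta,\tau'}(\F')$ giving rise to the same $\rhobar|_{G_{K_\infty}}$ produce the same \'etale $\varphi$-module up to change of basis, and hence their Frobenius matrices $A^{(j)}_{\fM,\beta}\tld{w}^*(\tau)_j$ and $A^{(j)}_{\fM',\beta'}\tld{w}^*(\tau')_j$ differ by left and right translation by $L^+$-terms (equivalently by $\Iw(\F')$-elements after passing to the affine flag variety, using Proposition \ref{prop:BK_to_phi_mono} and the identification in Remark \ref{rmk:quotient}). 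This identifies the double cosets: $\Iw(\F')\tld{z}_j\tld{z}'^*(\tau)_j\Iw(\F') = \Iw(\F')\tld{z}'_j\tld{z}'^*(\tau')_j\Iw(\F')$ where $\tld{z}, \tld{z}'$ are the shapes of $\fM,\fM'$ with respect to $\tau, \tau'$. Since $\tld{w}^*(\tau)$ and $\tld{w}^*(\tau')$ differ by the fixed combinatorial element, and Bruhat order is preserved under translation by a fixed group element, this yields $\tld{w}(\rhobar,\tau)_j$ expressed as $\tld{w}(\rhobar,\tau')_j$ modified by the same fixed translation that relates $\tld{w}(\rhobar^{\speci},\tau)$ to $\tld{w}(\rhobar^{\speci},\tau')$.

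With this compatibility in hand the proof concludes quickly: apply Proposition \ref{prop:SC:I} (and its corollary \ref{prop:weak:sc}) to the type $\tau'$ exhibiting the specialization, obtaining $\fM_0 \in Y^{\leq\eta,\tau'}(\F')$ with $T^*_{dd}(\fM_0) \cong \rhobar^{\semis}$ and shape $\leq \tld{w}(\rhobar,\tau')_j = \tld{w}(\rhobar^{\speci},\tau')_j$ for each $j$. Then the above change-of-type dictionary (applied both to $\rhobar$ and to $\rhobar^{\speci}$, whose shape with respect to $\tau'$ literally equals $\tld{w}(\rhobar^{\speci},\tau')$ by Proposition \ref{prop:sskisin} since $\rhobar^{\speci}$ is a semisimple tame type) transfers this inequality to the pair $\tau$: we get $\tld{w}(\rhobar^{\speci},\tau)_j \leq \tld{w}(\rhobar,\tau)_j$, using that the closure relations of Proposition \ref{prop:closurerelations} and the Bruhat order are invariant under the translation relating $\tld{w}^*(\tau)$ and $\tld{w}^*(\tau')$. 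The genericity hypothesis $3e(n-1)$-generic is what guarantees all the auxiliary types appearing (both $\tau$ and the exhibiting $\tau'$) are sufficiently generic --- at least $(e(n-1)+1)$-generic --- for Propositions \ref{prop:BK_to_phi_mono}, \ref{prop:sskisin}, and \ref{prop:weak:sc} to apply simultaneously, and for the shapes to be well-defined in $\Adm(e\eta_0)$.

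\textbf{Main obstacle.} The delicate point is the change-of-type compatibility: I must check that passing between descent data of type $\tau$ and type $\tau'$ --- which a priori live over different fields $L'$ depending on the niveau $r$ of each type --- really does correspond on the nose to right translation by a single element of $\tld{W}^{\vee,\cJ}$ on the affine flag variety, with no correction terms. When $\tau$ and $\tau'$ have different niveaux this requires comparing the two presentations of the same \'etale $\varphi$-module over $\cO_{\cE,K}$ obtained after taking $\Delta$-invariants from the respective $L'$-level modules; one expects this to be a formal consequence of the fact that $\bV^*_K\circ\eps_\tau$ is independent of the presentation of $\tau$ (noted after Proposition \ref{prop:expeps}), but it needs to be spelled out carefully, presumably by a base-change-to-common-$L''$ argument analogous to the manipulations in \cite[\S 5.4--5.5]{MLM}, and verifying that the relevant Iwahori double cosets match up in the way claimed is where the real work lies.
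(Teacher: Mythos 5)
The proposal is \textbf{incorrect}: the ``change-of-type dictionary'' on which your argument rests is false for wildly ramified $\rhobar$, and, worse, if it were true it would prove too much, contradicting the paper's own definitions.

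\textbf{The central gap.} You claim that if $\fM \in Y^{\leq\eta,\tau}(\F)$ and $\fM'\in Y^{\leq\eta,\tau'}(\F)$ give rise to the same \'etale $\varphi$-module, then the Iwahori double cosets $\cI\,\tld{z}_j\,\tld{w}^*(\tau)_j\,\cI$ and $\cI\,\tld{z}'_j\,\tld{w}^*(\tau')_j\,\cI$ coincide, where $\tld{z}_j,\tld{z}'_j$ are the respective shapes. This does not follow. Corollary \ref{cor:isom} gives an \emph{equality of matrices} $A^{(j)}_{\fM,\beta}\tld{w}^*(\tau)_j = A^{(j)}_{\fM',\beta'}\tld{w}^*(\tau')_j$, hence equality of the double coset $\cI\,A^{(j)}_{\fM,\beta}\tld{w}^*(\tau)_j\,\cI$ with $\cI\,A^{(j)}_{\fM',\beta'}\tld{w}^*(\tau')_j\,\cI$; but the Iwahori double coset of $A^{(j)}_{\fM,\beta}\tld{w}^*(\tau)_j$ is \emph{not} $\cI\,\tld{z}_j\,\tld{w}^*(\tau)_j\,\cI$ in general, because right multiplication by $\tld{w}^*(\tau)_j$ does not descend to a map on $\cI\backslash L\cG/\cI$. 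Indeed $A^{(j)}_{\fM,\beta} = D_jU_j\tld{z}_j$ (Proposition \ref{prop:modpform}) has a nontrivial unipotent part $U_j$ precisely when $\rhobar$ is wildly ramified, and the cell of the product $D_jU_j\tld{z}_j\tld{w}^*(\tau)_j$ depends on $U_j$, not merely on $\tld{z}_j$. Your claim happens to hold when $U_j$ is trivial, which is why the combinatorial rule is correct for the shape of the semisimple tame type $\rhobar^{\speci}$ and for $\rhobar^{\semis}$ — but this is precisely the case your argument cannot be allowed to extrapolate from.

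\textbf{The claim would prove too much.} Had the change-of-type rule been correct, combining it with $\tld{w}(\rhobar,\tau')=\tld{w}(\rhobar^\speci,\tau')$ and with the (true) combinatorial relation $\tld{w}(\rhobar^\speci,\tau)^*\tld{w}^*(\tau)=\tld{w}^*(\rhobar^\speci)$ would yield the \emph{equality} $\tld{w}(\rhobar,\tau)=\tld{w}(\rhobar^\speci,\tau)$ for every $\tau$ for which $\rhobar$ is admissible, i.e.\ every specialization would be exhibited by every type. That is absurd, and in particular contradicts the existence of more than one specialization (cf.\ the examples in \S\ref{subsec:SPEC}). The theorem asserts a strict inequality for good reason: for wildly ramified $\rhobar$ the geometric shape $\tld{w}(\rhobar,\tau)$ is generically \emph{larger} than the combinatorial shape $\tld{w}(\rhobar^\speci,\tau)$ of any specialization other than the one $\tau$ exhibits.

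\textbf{What the paper does instead.} The proof in the paper is a degeneration argument, not a change-of-type reduction. Starting from $\fM'\in Y^{\leq\eta,\tau'}(\F)$ with $T^*_{dd}(\fM')\cong\rhobar|_{G_{K_\infty}}$ and the decorated presentation $A^{(j)}_{\fM',\beta'}=D_j(\tld{w}'_j)^*U_j$, one contracts $U_j$ to the identity via a $\bG_m$-action, producing a family $\tld{\fM}'$ over $\bA^1_\F$ whose generic fiber has constant shape $\tld{w}(\rhobar,\tau')$ and whose special fiber (at $x=0$) is the diagonal Kisin module of $\rhobar^\speci$. This family is then transferred to type $\tau$ using Lemma \ref{lem1} (which needs the genericity bound of Lemma \ref{lem:gap} to force the change-of-basis matrices into $\cI(\F)$) together with properness of $\eps_\tau$. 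Over $\bG_m$ the shape with respect to $\tau$ is constant and equals $\tld{w}(\rhobar,\tau)$; the special fiber then lies in a smaller stratum by the closure relations of Proposition \ref{prop:closurerelations}. The geometric content is exactly the specialization of the point $A^{(j)}_{\fM',\beta'}\tld{w}^*(\tau')_j\cdot\tld{w}^*(\tau)_j^{-1}$ along the $\bG_m$-orbit, and it is that geometry, not a formal translation of Weyl group elements, that produces the Bruhat-order inequality. (Your Lemma \ref{prop:SC:I} already contains the main idea of degeneration; what Theorem \ref{thm:semicont} adds is the ability to degenerate towards an arbitrary specialization rather than only the semisimplification, and for this the $\bG_m$-contraction and the transfer-of-families across types via Lemma \ref{lem1} are essential.) Finally, your ``Main obstacle'' about niveaux is not the real difficulty — the different descent fields $L'$ cause no problem since the associated \'etale $\varphi$-module over $K$ is canonical — the obstruction is the failure of shapes to transform simply.
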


We begin by stating two combinatorial lemmas which will be needed in the proof of Theorem \ref{thm:semicont}.

\begin{lemma} \label{lem:gap} Let $\tau$ and $\tau'$ be $(e(n-1)+1)$-generic tame inertial types over $E$.  Assume there exists a $\rhobar$ which is both $\tau$ and $\tau'$-admissible.  Then, for any choice of lowest alcove presentation of $\rhobar^{\semis}$, $\tau$ and $\tau'$ admit lowest alcove presentations $(s, \mu)$ and $(s', \mu')$, compatible with that of $\rhobar^{\semis}$,
such that
\[
|\mu_{j,i}-\mu'_{j,i}|\leq e(n-1).
\]
\end{lemma}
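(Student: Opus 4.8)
The plan is to reduce to the semisimple case, translate $\tau$- and $\tau'$-admissibility into membership of a combinatorial shape element in $\Adm(e\eta_0)$, and then extract the bound from the fact that translation parts of elements of $\Adm(e\eta_0)$ lie in $\Conv(e\eta_0)$.

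First, by Corollary \ref{prop:weak:sc} the semisimplification $\rhobar^{\semis}$ is itself both $\tau$-admissible and $\tau'$-admissible. Now fix a lowest alcove presentation of $\rhobar^{\semis}|_{I_K}$, say with translation parameter $\nu$. Using that $\tau$ and $\tau'$ are $(e(n-1)+1)$-generic and that $\rhobar^{\semis}$ is admissible for both, together with the uniqueness of compatible lowest alcove presentations (\cite[Lemma 2.2.3]{MLM}), this determines lowest alcove presentations $(s,\mu)$ of $\tau$ and $(s',\mu')$ of $\tau'$, both compatible with the fixed presentation of $\rhobar^{\semis}|_{I_K}$, and we may arrange these to be $(e(n-1)+1)$-generic. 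The essential point already here is that the \emph{same} parameter $\nu$ is used in both compatibilities.

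Next apply Proposition \ref{prop:sskisin} to the semisimple representation $\rhobar^{\semis}$: it yields $\tld{w}(\rhobar^{\semis},\tau) = s^{-1}t_{\nu-\mu}w \in \Adm(e\eta_0)$ and, analogously, $\tld{w}(\rhobar^{\semis},\tau') = (s')^{-1}t_{\nu-\mu'}w' \in \Adm(e\eta_0)$ for suitable $w,w'\in W^{\cJ}$. The translation part of any element of $\Adm(e\eta_0)$ lies in $\Conv(e\eta_0)$ (a standard property of the admissible set, cf.~\cite{KR}), so $s^{-1}(\nu-\mu)\in \Conv(e\eta_0)$, and since $\Conv(e\eta_0)$ is $W$-stable, $\nu-\mu\in \Conv(e\eta_0)$; likewise $\nu-\mu'\in \Conv(e\eta_0)$. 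For $G=\GL_n$ with $e\eta_0=(e(n-1),e(n-2),\ldots,0)$, majorization shows every coordinate of every vector in $\Conv(e\eta_0)$ lies in the interval $[0,e(n-1)]$. Hence $0\le \nu_{j,i}-\mu_{j,i}\le e(n-1)$ and $0\le \nu_{j,i}-\mu'_{j,i}\le e(n-1)$ for all $j\in\cJ$ and all $i$, and subtracting gives $|\mu_{j,i}-\mu'_{j,i}|=|(\nu_{j,i}-\mu'_{j,i})-(\nu_{j,i}-\mu_{j,i})|\le e(n-1)$, as required.

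The main obstacle is the bookkeeping in the first paragraph: one must check that a single fixed lowest alcove presentation of $\rhobar^{\semis}|_{I_K}$ is simultaneously compatible (in the $\zeta$-graded sense of \S \ref{sec:SWC}) with lowest alcove presentations of both $\tau$ and $\tau'$, and that these can be taken $(e(n-1)+1)$-generic so that Proposition \ref{prop:sskisin} and Corollary \ref{prop:weak:sc} apply. This uses that $\rhobar$ being $\tau$-admissible forces $\det\rhobar^{\semis}|_{I_K}$ and $\det\tau|_{I_K}$ to agree up to the expected cyclotomic twist, so that the relevant class $\zeta\in X^*(\un{Z})$ is pinned down (up to the harmless $\eta_0$-shift between the $\F$-type and $E$-type normalizations), and that once the presentation of $\rhobar^{\semis}$ is chosen sufficiently generic the constraint $\nu-\mu\in\Conv(e\eta_0)$ automatically forces $(s,\mu)$ (and likewise $(s',\mu')$) to be $(e(n-1)+1)$-generic. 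Everything else is formal.
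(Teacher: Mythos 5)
Your proof is correct and follows essentially the same route as the paper's: reduce to $\rhobar^{\semis}$ via Corollary~\ref{prop:weak:sc}, invoke Proposition~\ref{prop:sskisin} to realize both shapes in $\Adm(e\eta_0)$, and bound coordinates using that translation parts of $e\eta_0$-admissible elements lie in $\Conv(e\eta_0)$, whose coordinates sit in $[0,e(n-1)]$. The only cosmetic difference is that you fix a single translation parameter $\nu$ for the lowest alcove presentation of $\rhobar^{\semis}|_{I_K}$ and estimate $\nu-\mu$ and $\nu-\mu'$ directly, whereas the paper names the two shape translation parts $\nu,\nu'$ separately and equates $\mu+s(\nu)=\mu'+s'(\nu')$ before bounding; these are the same algebra in different variables.
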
 
\begin{proof}
Since $\rhobar$ is both $\tau$ and $\tau'$-admissible, the same is true for $\rhobar^{\semis}$ by Proposition \ref{prop:weak:sc}. Fixing a lowest alcove presentation of $\rhobar^{\semis}$, $\tau$ and $\tau'$ admit compatible presentation $(s, \mu)$ and $(s', \mu')$ respectively
and we have $e\eta_0$-admissible elements $\tld{w}(\rhobar^{\semis}, \tau) = t_\nu w$ and $\tld{w}(\rhobar^{\semis}, \tau') =t_{\nu'} w'$. Since $\nu$ and $\nu'$ are in the convex hull of $\un{W}e\eta_0$ (cf.~\cite[Theorem 3.3]{HC}), 
\begin{equation}\label{eqn:nu}
0 \leq \nu_{j,i}, \nu'_{j, i} \leq e(n-1).
\end{equation}

By Proposition \ref{prop:sskisin}, $\rhobar^{\semis}|_{I_K}$ has lowest alcove presentation $(sw, \mu + s(\nu)) = (s'w', \mu' + s'(\nu'))$. %
The result now follows from this equation and (\ref{eqn:nu}).
\end{proof}

\begin{lemma} \label{lem1}  
Let $\mu, \mu'$ be dominant cocharacters which are $2e(n-1)$-deep in alcove $\un{C}_0$, and assume that for all $j \in \cJ$, $1\leq i\leq n$, 
\[
|\mu_{j,i}-\mu'_{j,i}|\leq e(n-1).
\]
Let $(B_j), (B_j') \in \Mat_n(\F[\![v]\!])^{\cJ}$ such that for all $j$, $v^{e(n-1)} B_j^{-1} \in \Mat_n(\F[\![v]\!])$. Assume that for all $j \in \cJ$ there exists $C_j \in \GL_n(\F(\!(v)\!))$ such that 
\begin{equation}
\label{eq:comparison}
C_{j} B_j' v^{\mu'_j + \eta_{0,j}}= B_j v^{\mu_j + \eta_{0,j}} \phz(C_{j-1}).
\end{equation}
Then $C_j \in \Iw(\F)$ for all $j \in \cJ$.    
\end{lemma}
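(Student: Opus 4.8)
The plan is to show that each $C_j$ has entries in $\F[\![v]\!]$ and that $C_j^{-1}$ does too, and then upgrade this to membership in the Iwahori $\Iw(\F)$. First I would rewrite the comparison relation \eqref{eq:comparison} as
\[
C_j = B_j\, v^{\mu_j+\eta_{0,j}}\,\phz(C_{j-1})\, v^{-\mu'_j-\eta_{0,j}}\, (B_j')^{-1},
\]
so that $C_j$ is expressed in terms of $\phz(C_{j-1})$, conjugated by the cocharacter difference, and multiplied by the bounded-denominator matrices $B_j$ and $(B_j')^{-1}$. The key numerical input is that $v^{\mu_j+\eta_{0,j}}\, X\, v^{-\mu'_j-\eta_{0,j}}$ scales the $(i,k)$-entry of $X$ by $v^{(\mu_{j,i}+\eta_{0,j,i})-(\mu'_{j,k}+\eta_{0,j,k})}$, and the hypothesis $|\mu_{j,i}-\mu'_{j,i}|\le e(n-1)$ together with the bound on $\eta_0$ gives control on how negative these exponents can be. Combined with $v^{e(n-1)}(B_j')^{-1}\in\Mat_n(\F[\![v]\!])$ and $B_j\in\Mat_n(\F[\![v]\!])$, one gets that if $\phz(C_{j-1})$ has a pole of order at most $N$ then $C_j$ has a pole of order at most something like $N + O(e(n-1)) - $ (gain from the height of $B_j$ on the relevant columns). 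The deepness hypothesis ($\mu,\mu'$ are $2e(n-1)$-deep) is what makes the bookkeeping close up.

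The main step is a valuation/fixed-point argument: let $N_j$ be the order of the pole of $C_j$ (i.e.\ the smallest $N$ with $v^N C_j\in\Mat_n(\F[\![v]\!])$), and similarly track the pole order of $C_j^{-1}$ by applying the same analysis to the inverted relation $C_j^{-1} = B_j'\, v^{\mu'_j+\eta_{0,j}}\,\phz(C_{j-1}^{-1})\, v^{-\mu_j-\eta_{0,j}}\,(B_j)^{-1}$. Since $\cJ$ is finite and $\phz$ is a bijection on the set of indices, iterating the recursion around the cycle $j\mapsto j-1\mapsto\cdots$ forces the pole orders to satisfy a system of inequalities of the form $N_j \le (N_{j-1}/p) + c$ (the $1/p$ coming from $\phz$ acting as $v\mapsto v^p$), whose only nonnegative solution with the needed consistency is $N_j = 0$ for all $j$; the same works for $C_j^{-1}$. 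Hence $C_j, C_j^{-1}\in\Mat_n(\F[\![v]\!])$, i.e.\ $C_j\in\GL_n(\F[\![v]\!])$.

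It then remains to check that $C_j$ is upper triangular modulo $v$, which is the difference between $\GL_n(\F[\![v]\!])$ and $\Iw(\F)$. For this I would reduce the relation \eqref{eq:comparison} modulo $v$: after multiplying out, the terms with strictly positive exponent in $v$ in $v^{\mu_j+\eta_{0,j}}\phz(C_{j-1})v^{-\mu'_j-\eta_{0,j}}$ vanish mod $v$, and one is left with an equation among the torus-conjugates that, using genericity (the strict inequalities coming from $\mu,\mu'$ being deep, which make all the off-diagonal exponents in the relevant directions strictly positive or forces the matching entries to be $0$), pins down $C_j \bmod v$ to lie in the Borel. The expected main obstacle is precisely the careful exponent bookkeeping in this last step and in the pole-order recursion: one must verify that the deepness hypothesis $2e(n-1)$ is exactly enough so that no "accidental" low-degree monomials survive and so that the recursion for $N_j$ has the claimed unique solution. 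Once those inequalities are laid out explicitly (they are elementary but must be done per root), the conclusion $C_j\in\Iw(\F)$ follows.
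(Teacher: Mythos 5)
Your overall strategy — bound the pole orders via the recursion, close the finite cycle over $\cJ$, then upgrade $\GL_n(\F[\![v]\!])$ to $\Iw(\F)$ using the deepness of $\mu,\mu'$ — is the same as the paper's. However, there is a genuine gap in your first step. You isolate $C_j$ by writing
\[
C_j = B_j\, v^{\mu_j+\eta_{0,j}}\,\phz(C_{j-1})\, v^{-\mu'_j-\eta_{0,j}}\, (B_j')^{-1}
\]
and invoke ``$v^{e(n-1)}(B_j')^{-1}\in\Mat_n(\F[\![v]\!])$'', but the hypothesis controls only $B_j^{-1}$, not $(B_j')^{-1}$: the $B_j'$ are merely integral, with no bound on their inverses. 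So this decomposition gives no bound on the pole of $C_j$. The paper instead isolates $\phz(C_{j-1})$, rewriting \eqref{eq:comparison} as
\[
\Ad\big(v^{\mu_j+\eta_{0,j}}\big)\big(\phz(C_{j-1})\big) = B_j^{-1}\,C_j\, B_j'\, v^{\mu'_j-\mu_j},
\]
which uses only $B_j^{-1}$ (bounded denominators) and $B_j'$ (integral). Writing $C_j=v^{-k_j}C_j^+$ with $C_j^+$ integral and $\not\equiv 0\pmod v$, and noting that $\phz$ \emph{multiplies} pole orders by $p$ (not divides by $p$ — your attribution of the $1/p$ to $\phz$ is backwards; the $1/p$ appears only after solving for $k_{j-1}$ in terms of $k_j$), one obtains $p\,k_{j-1}\leq k_j+c$ with $c<p-1$ from the deepness hypothesis. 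Taking $k=\max_j k_j$ then gives $(p-1)k< p-1$, i.e.\ $k\leq 0$, with no iteration around the cycle needed.

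The Iwahori step in the paper is also a pole comparison rather than a literal reduction mod $v$ (which would not make sense before integrality is established, and in any case the conjugation $\Ad(v^{\mu_j+\eta_{0,j}})$ kills too much information on one side mod $v$): once $k_j=0$ for all $j$, if some $C_{j-1}$ had a unit entry at a negative root $\alpha$, the corresponding entry of $\Ad(v^{\mu_j+\eta_{0,j}})\phz(C_{j-1})$ would have valuation exactly $\langle\mu_j+\eta_{0,j},\alpha^\vee\rangle < -2e(n-1)$, whereas the right-hand side $B_j^{-1}C_j B_j' v^{\mu'_j-\mu_j}$ has entries of valuation $\geq -e(n-1)-\max_i|\mu'_{j,i}-\mu_{j,i}|\geq -2e(n-1)$, a contradiction. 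Finally, the paper passes from $C_j\in\Mat_n(\F[\![v]\!])$ to $C_j\in\GL_n(\F[\![v]\!])$ by a determinant comparison, which is cleaner than running a second pole-order argument for $C_j^{-1}$ (which would hit the same $(B_j')^{-1}$ problem anyway). Your plan can be repaired by switching to the decomposition isolating $\phz(C_{j-1})$; as stated, the recursion you rely on is not available.
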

\begin{proof} 
The technique is similar to the proof of \cite[Theorem 3.2]{LLLM} and \cite[Lemma 5.4.5]{MLM}. 
We first show that for all $j\in\cJ$ we have $C_j \in \Mat_n(\F[\![v]\!])$. 
For all $j\in\cJ$, write $C_j =v^{-k_{j}}C^{+}_j$ with  $k_{j}\in\Z$, $C^{+}_j \in\Mat_n(\F[\![v]\!])$ and $C^{+}_j \not\equiv 0$ modulo $v$.
Rearranging equation (\ref{eq:comparison}), we can write:
\begin{equation}
\label{iso-phi:1}
v^{-pk_{j-1}}\Ad(v^{\mu_j + \eta_{0,j}})\phz(C^{+}_{j-1})= v^{-k_{j}} B_j^{-1} C_{j}^+ B_j'v^{\mu'_j-\mu_j}.
\end{equation}
Since the RHS of (\ref{iso-phi:1}) becomes integral after multiplying by $v^{k_{j} +e(n-1)+ \max_{1\leq i\leq n}|\mu'_{j,i}-\mu_{j,i}|}$, we get that 
\[
k_{j}+p-1>k_{j}+  e(n-1)+\max_{1\leq i\leq n}|\mu'_{j,i}-\mu_{j,i}|+\max_{\alpha\in \Phi}| \langle\mu_j,\alpha^\vee \rangle |\geq pk_{j-1}. 
\]
This shows that if $k=\max_{1\leq j \leq f} k_j$, then $(p-1)k<p-1$, hence $k_j\leq 0$ for all $j\in\cJ$. Thus $C_j\in  \Mat_n(\F[\![v]\!])$, and comparing determinants we see that $C_j\in \GL_n(\F[\![v]\!])$, for all $j\in\cJ$.
(In particular $k_j=0$ for all $j\in\cJ$.)
Finally, we show that $C_j\in \Iw(\F)$. If this were not the case, then for some $\alpha\in \Phi^{-}$ the entry corresponding to $\alpha$ in $\Ad(v^{\mu_j+\eta_{0,j}})\phz(C^{+}_{j-1})$ will have $v$-adic valuation $\langle \mu_j+\eta_{0,j},\alpha^\vee \rangle$.
Comparing the $\alpha$ entry in the equation
\[\Ad(v^{\mu_j + \eta_{0, j}})\phz(C^{+}_{j-1})= B_j^{-1} C_{j}^+ B_j'v^{\mu'_j-\mu_j}\]
then shows that 
\[\langle \mu_j+\eta_{0,j},\alpha^\vee \rangle\geq -e(n-1)- \max_{1\leq i\leq n}|\mu'_{j,i}-\mu_{j,i}|\]
which contradicts the deepness assumption on $\mu$.
\end{proof}

\begin{proof}[Proof of Theorem \ref{semi:cnt}]
Let $\tau'$ be a type which exhibits the specialization to $\rhobar^{\speci}$.   Let $(s, \mu)$ and $(s', \mu')$ be lowest alcove presentations of $\tau$ and $\tau'$ respectively compatible with a fixed choice of $3e(n-1)$-generic lowest alcove presentation of $\rhobar^{\semis}$.  Note that $\mu$ and $\mu'$ are $2e(n-1)$-deep and satisfy the conclusion of Lemma \ref{lem:gap}.
Let $\tld{w} = \tld{w}(\rhobar, \tau)$ and $\tld{w}' = \tld{w}(\rhobar, \tau')$.  

The strategy is similar to the proof of Proposition \ref{prop:weak:sc}. We will construct a morphism
\begin{align}
\label{eq:family:2}
&\bA^1_{\F} \ra  Y_\F^{\leq\eta,\tau}\\
&x\mapsto \fM_{x} \nonumber
\end{align}
which satisfies the following properties:
\begin{enumerate}
	\item\label{it:sc:1} for all $x \neq 0$, the Breuil--Kisin module $\fM_x$ has shape $\tld{w}(\rhobar, \tau)$; 
	\item\label{it:sc:2} $T^*_{dd}(\fM_1)\cong \rhobar|_{G_{K_\infty}}$; and
	\item\label{it:sc:3} $T^*_{dd}(\fM_0)|_{I_{K_\infty}}\cong \rhobar^{\speci}$.
\end{enumerate}

Let $\fM' \in Y_\F^{\leq\eta,\tau'}(\F)$ be the unique Breuil--Kisin module satisfying $T^*_{dd}(\fM')\cong \rhobar|_{G_{K_\infty}}$.
By Proposition \ref{prop:modpform}, there is an eigenbasis $\beta'$ for $\fM'$ such that
\[
A_{\fM',\beta'}^{(j)} = D_j (\tld{w}'_j)^* U_j
\]
where $D_j \in T(\F)$ and $U_j \in N_{(\tld{w}'_j)^*}(\F) \subset \Iw(\F)$ is defined in \cite[Definition 4.2.9]{MLM}. 
Since $U_j$ is unipotent (\cite[Corollary 4.2.16]{MLM}),  there exists $s_j\in W$ such that 
\begin{equation}
\Ad(s_j(\eta_{0,j})(x))\cdot U_j \in 1+x\Mat_n(\F[x][\![v]\!]) \label{eq3}.
\end{equation}

We define a map $\kappa':(\mathbb{G}_m)_{\F} \ra  Y_\F^{\leq \eta,\tau'}$ by specifying  Breuil--Kisin module $\tld{\fM}'$ over $\F[x^{\pm 1}]$ of type $\tau'$ and eigenbasis $\tld{\beta}'$ such that 
\[
A^{(j)}_{\tld{\fM}', \tld{\beta}'} = D_j (\tld{w}'_j)^* \Ad(s_j(\eta_{0,j})(x))\cdot U_j
\]
for all $x \neq 0$. By \eqref{eq3}, this map extends to a map $\kappa':\bA^1_{\F} \ra  Y_\F^{\leq \eta,\tau'}$.

The map $\kappa'$ gives rise to a family $\tld{\cM} \defeq \eps_{\tau'} (\tld{\fM}')$ of \'etale $\phi$-modules over $K$ parametrized by $\bA^1_{\F}$.  %
Over $\mathbb{G}_m$, by Proposition \ref{prop:expeps}, $\tld{\cM}$ admits a basis $\fF$ such that
\[
Q'_{j} \defeq \Mat_{\fF}(\phi^{(j)}_{\tld{\cM}})= D_j (\tld{w}'_j)^*\,\big(\Ad(s_j(\eta_{0,j})(x))\cdot U_j \big)\, (s'_j)^{-1} v^{\mu'_j + \eta_{0,j}}.
\]
For $x\in \mathbb{G}_m$, we write $Q'_{j,x}=\Mat_{\fF_x}(\phi^{(j)}_{\tld{\cM}_x})$ in what follows. 
By construction, $\bV^*_K(\tld{\cM}_1)\cong\rhobar|_{G_{K_\infty}}$ and $\bV^*_K(\tld{\cM}_0)|_{I_{K_{\infty}}} \cong \rhobar^{\speci}$.

By assumption, $\cM_1$ is the \'etale $\phz$-module over $K$ associated to the unique $\fM \in Y_\F^{\leq\eta, \tau}(\F)$ satisfying $T^*_{dd}(\fM)\cong\rhobar|_{G_{K_\infty}}$. 
Choose an eigenbasis $\beta$ for $\fM$.  
By Proposition \ref{prop:expeps},  there exists $(C_j) \in  \GL_n(\F(\!(v)\!))^{\cJ}$ such that for all $j \in \cJ$
\begin{equation}
\label{iso-phi}
 C^{(j+1)} Q'_{j, 1} = A^{(j)}_{\fM, \beta} s^{-1}_j v^{\mu_j + \eta_{0,j}}  \phz(C^{(j)}).
\end{equation}
Applying Lemma \ref{lem1} with $B'_{j} = Q'_{j,1} v^{-\mu'_j - \eta_{0,j}}$ and $B_j = A^{(j)}_{\fM, \beta} s_{j}^{-1}$, we conclude that $C^{(j)} \in \Iw(\F)$ for all $j \in \cJ$.  Hence, by changing the eigenbasis of $\fM$ if necessary, we can arrange that $Q'_{j, 1} =  A^{(j)}_{\fM, \beta} s^{-1}_j v^{\mu_j + \eta_{0,j}}$.  

We now construct a map $\kappa:(\mathbb{G}_m)_{\F} \to Y_\F^{\leq\eta,\tau}$ by specifying a Breuil--Kisin module $\tld{\fM}$ over $\F[x^{\pm 1}]$ of type $\tau$ with eigenbasis $\tld{\beta}$ such that 
\[
A_{\tld{\fM}, \tld{\beta}}^{(j)} = Q'_j   v^{-\mu_j - \eta_{0,j}} s_j  =  D_j (\tld{w}'_j)^*\,\big(\Ad(s_j(\eta_{0,j})(x))\cdot U_j \big)\, (s'_j)^{-1} v^{\mu'_j - \mu_j} s_j .
\]

To see that $\kappa$ is well-defined, observe that 
\[
A^{(j)}_{\tld{\fM}_x, \tld{\beta}_x} = t_1 A^{(j)}_{\tld{\fM}_1, \tld{\beta}_1} t_2 = t_1 A^{(j)}_{\fM, \beta} t_2
\]
for suitable (constant) diagonal matrices $t_1, t_2\in T(\F')$ depending on $x\in(\F')^\times$.  
This also shows that $\kappa$ satisfies property (\ref{it:sc:1}). 
The map $\kappa$ satisfies property (\ref{it:sc:2}) by construction

The construction of $\kappa$ shows that the fiber $\tld{\cM}_x$ of the family $\tld{\cM}$ of \'etale $\phi$-modules over $\bA^1_{\F}$ comes from a point of $Y_\F^{\leq\eta,\tau}$ for each $x\neq 0$. 
Since this is a closed condition and the map $Y_\F^{\leq\eta,\tau}\ra \Phi\text{-}\Mod^{\text{\'et},n}_{K}$ is proper (Proposition \ref{prop:BK_to_phi_mono}), it follows that $\kappa$ extends to a map $\kappa:\bA^1_{\F} \to Y_\F^{\leq\eta,\tau}$, and property (\ref{it:sc:3}) holds for this extension.

\end{proof}

The proof of Theorem \ref{thm:semicont} has the following useful consequence. 
\begin{cor}\label{cor:isom}
Suppose that $\tau$ and $\tau'$ are $2e(n-1)$ generic tame types with compatible lowest alcove presentations.
Assume that $\eps_\tau(\fM)\cong\eps_{\tau'}(\fM')$ for objects $\fM\in Y^{\leq \eta,\tau}(\F)$, $\fM'\in Y^{\leq \eta,\tau'}(\F)$.   Then $\fM$ and $\fM'$ admit eigenbases $\beta$ and $\beta'$ respectively such that  %
\[
 A_{\fM,\beta}^{(j)} \tld{w}^*(\tau) =  A_{\fM',\beta'}^{(j)} \tld{w}^*(\tau') 
\]
for all $j\in\cJ$.
\end{cor}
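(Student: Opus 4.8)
The plan is to extract the statement of Corollary~\ref{cor:isom} from the proof of Theorem~\ref{thm:semicont}, or rather to reverse-engineer it from the key technical step there. First I would observe that the hypothesis $\eps_\tau(\fM) \cong \eps_{\tau'}(\fM')$ means the two \'etale $\phz$-modules over $K$ are isomorphic; choosing eigenbases $\beta$ for $\fM$ and $\beta'$ for $\fM'$, Proposition~\ref{prop:expeps} produces bases $\fF$ for $\cM = \eps_\tau(\fM)$ and $\fF'$ for $\cM' = \eps_{\tau'}(\fM')$ in which the Frobenius matrices are $A^{(j)}_{\fM,\beta}\tld{w}^*(\tau)_j$ and $A^{(j)}_{\fM',\beta'}\tld{w}^*(\tau')_j$ respectively. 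The isomorphism of \'etale $\phz$-modules then provides matrices $(C^{(j)}) \in \GL_n(\F(\!(v)\!))^{\cJ}$ intertwining the two Frobenius actions, i.e.\ satisfying an equation of the shape \eqref{iso-phi} (equivalently \eqref{eq:comparison}) with $B_j = A^{(j)}_{\fM,\beta} s_j^{-1}$, $B'_j = A^{(j)}_{\fM',\beta'}(s'_j)^{-1}$, up to the bookkeeping of writing $\tld{w}^*(\tau)_j = s_j^{-1} v^{\mu_j + \eta_{0,j}}$ and similarly for $\tau'$.

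Next I would invoke Lemma~\ref{lem1} directly. Its hypotheses are met: $\tau$ and $\tau'$ are $2e(n-1)$-generic with compatible lowest alcove presentations, so $\mu, \mu'$ are $2e(n-1)$-deep in $\un{C}_0$; since both $\fM \in Y^{\leq\eta,\tau}(\F)$ and $\fM' \in Y^{\leq\eta,\tau'}(\F)$, the height-$\le \eta$ condition gives $v^{e(n-1)} B_j^{-1} \in \Mat_n(\F[\![v]\!])$ for both; and the gap bound $|\mu_{j,i} - \mu'_{j,i}| \le e(n-1)$ on the differences of the lowest alcove presentations follows because both $\fM$ and $\fM'$ (hence, via $T^*_{dd}$, the same Galois representation $\rhobar \defeq \bV^*_K(\eps_\tau(\fM))$) are $\tau$- and $\tau'$-admissible, so Lemma~\ref{lem:gap} applies. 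Hence Lemma~\ref{lem1} yields $C^{(j)} \in \Iw(\F)$ for all $j \in \cJ$.

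Finally, I would translate "the intertwiner is Iwahori-valued" into a change of eigenbasis. By the change of basis formula (Remark~\ref{rmk:cng:basis}), modifying the eigenbasis $\beta$ of $\fM$ by $(C^{(j)}) \in \prod_j L^+\cG^{(j)}(\F)$ (note $\Iw = L^+\cG^{(j)}_\F$) changes $A^{(j)}_{\fM,\beta}$ by $(s,\mu)$-twisted $\phz$-conjugation, which by Remark part~(3) of the same block is ordinary $\phz$-conjugation on the right translate $A^{(j)}_{\fM,\beta}\tld{w}^*(\tau)_j$. Comparing with the intertwining relation, after this base change we get exactly $A^{(j)}_{\fM,\beta}\tld{w}^*(\tau)_j = A^{(j)}_{\fM',\beta'}\tld{w}^*(\tau')_j$ for all $j$, as desired.

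The main obstacle, and the place requiring the most care, is the verification that the gap hypothesis $|\mu_{j,i} - \mu'_{j,i}| \le e(n-1)$ of Lemma~\ref{lem1} actually holds in this situation: one must argue that the common Galois representation $\rhobar$ attached to $\fM$ and $\fM'$ is both $\tau$- and $\tau'$-admissible (using Proposition~\ref{prop:BK_to_phi_mono} to identify $\rhobar$ with $T^*_{dd}(\fM) = T^*_{dd}(\fM')$ and Definition~\ref{defn:shaperhobar}), so that Lemma~\ref{lem:gap} becomes applicable — and then track carefully that the lowest alcove presentations supplied by Lemma~\ref{lem:gap} are the same ones used to form $\tld{w}^*(\tau)$ and $\tld{w}^*(\tau')$, since the statement fixes "compatible lowest alcove presentations" at the outset. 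Everything else is a direct citation of Proposition~\ref{prop:expeps}, Lemma~\ref{lem1}, and the change-of-basis formalism.
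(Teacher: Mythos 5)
Your proposal is correct and follows essentially the same route the paper intends: the paper explicitly presents Corollary \ref{cor:isom} as a consequence of the proof of Theorem \ref{thm:semicont}, and your three steps — translate via Proposition \ref{prop:expeps}, invoke Lemma \ref{lem1} to force the intertwiner into $\Iw(\F)^{\cJ}$, then absorb it by a change of eigenbasis via Remark \ref{rmk:cng:basis} — are exactly the relevant steps extracted from that proof. You correctly identify the one genuine subtlety, namely that Lemma \ref{lem1} needs $|\mu_{j,i}-\mu'_{j,i}|\le e(n-1)$ for the \emph{given} compatible presentations: this does go through, because $T^*_{dd}(\fM)\cong T^*_{dd}(\fM')$ makes the common $\rhobar$ both $\tau$- and $\tau'$-admissible, hence $\rhobar^{\semis}$ is as well by Corollary \ref{prop:weak:sc}, and Lemma \ref{lem:gap} combined with the uniqueness of $\zeta$-compatible lowest alcove presentations (\cite[Lemma 2.2.3]{MLM}, cited in the proof of Proposition \ref{prop:intersect}) forces the given presentations to be precisely those satisfying the gap bound. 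Two minor bookkeeping points: Lemma \ref{lem1} only requires the integrality $v^{e(n-1)}B_j^{-1}\in \Mat_n(\F[\![v]\!])$ on the $B_j$ side, so ``for both'' is more than you need; and the $\tld{w}^*(\tau)$ in the displayed equation of the corollary's statement should carry the subscript $j$, which you correctly supplied.
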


\subsection{Specialization pairs}
\label{subsec:SPEC:pairs}

In this subsection, we enhance the notion of specialization of $\rhobar$ to a pair of specialization and a Serre weight.  The pairs exhibit a nice combinatorial structure indexed by the Weyl group (see Definition \ref{defn:thetarhobar}).

\begin{lemma}\label{lemma:import}
Suppose that $\rhobar^\speci$ is an extremal specialization of $\rhobar$ and that $\tau$ is a tame inertial type exhibiting this specialization and such that $\tld{w}(\rhobar,\tau) = t_{w^{-1}(e\eta_0)}$ for some $w\in W^{\cJ}$. %
Let $\tld{w} \in \tld{W}_1^{+,\cJ}$ be an element (unique up to $X^0(\un{T})$) whose image in $W^{\cJ}$ is $w$.
Let $\tau_g$ be the tame inertial type with lowest alcove presentation compatible with $\tau$ such that $\tld{w}(\rhobar^\speci,\tau_g)$ is the unique element in $\Omega^{\cJ} w_0 t_{(e-1)\eta_0}\tld{w}\cap t_{e\eta_0}W^{\cJ}_a$. 
Assume that $\tau_g$ is $(e(n-1)+1)$-generic.
Then $\rhobar$ is $\tau_g$-admissible and $\tld{w}(\rhobar,\tau_g) = \tld{w}(\rhobar^\speci,\tau_g).$ %
\end{lemma}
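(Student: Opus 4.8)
The plan is to transfer the Breuil--Kisin module over $\F$ realizing $\rhobar|_{G_{K_\infty}}$ from descent datum of type $\tau$ to one of type $\tau_g$, using as the key input the mod $p$ monodromy condition which is part of the notion of extremal specialization.

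First I would fix notation. Since $\rhobar^\speci$ is an extremal specialization exhibited by $\tau$ with $\tld{w}(\rhobar,\tau) = t_{w^{-1}(e\eta_0)}$, Definition \ref{defn:spec}(2) supplies the unique $\fM \in Y^{\leq \eta, \tau}(\F)$ with $T^*_{dd}(\fM) \cong \rhobar|_{G_{K_\infty}}$; it has shape $t_{w^{-1}(e\eta_0)}$ with respect to $\tau$ and satisfies the mod $p$ monodromy condition (Definition \ref{defn:modpmono}). Set $\cM := \eps_\tau(\fM)$, the \'etale $\phi$-module of $\rhobar|_{G_{K_\infty}}$; by Definition \ref{defn:shaperhobar} and the anti-equivalence $\bV^*_K$, the representation $\rhobar$ is $\tau_g$-admissible exactly when $\cM \cong \eps_{\tau_g}(\fM_g)$ for some $\fM_g \in Y^{\leq \eta, \tau_g}(\F)$. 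Choosing an eigenbasis $\beta$ of $\fM$ as in Proposition \ref{prop:modpform}, Proposition \ref{prop:expeps} shows that the induced basis of $\cM$ has Frobenius matrices $\Phi_j := A^{(j)}_{\fM,\beta}\,\tld{w}^*(\tau)_j$, and, using $(t_{w_j^{-1}(e\eta_0)})^* = t_{w_j^{-1}(e\eta_0)}$ and the monodromy condition,
\[
\Phi_j \in \bigl(\cI(\F)\, t_{w_j^{-1}(e\eta_0)}\, \cI(\F)\, \tld{w}^*(\tau)_j\bigr)^{\nabla_0} \qquad \text{for all } j \in \cJ.
\]

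The heart of the argument is to identify this $\nabla_0$-locus with the corresponding one for $\tau_g$. Since $\tld{w}(\rhobar,\tau) = \tld{w}(\rhobar^\speci,\tau)$ and $\tld{x} \mapsto \tld{x}^*$ is an anti-automorphism, the identities $\tld{w}(\tau) = \tld{w}(\rhobar^\speci)\, t_{-w^{-1}(e\eta_0)}$ and $\tld{w}(\tau_g) = \tld{w}(\rhobar^\speci)\, \tld{w}(\rhobar^\speci,\tau_g)^{-1}$ give
\[
\tld{w}(\rhobar^\speci,\tau_g)_j^{*}\, \tld{w}^*(\tau_g)_j = t_{w_j^{-1}(e\eta_0)}\, \tld{w}^*(\tau)_j.
\]
Exploiting the identity $w_0 t_{(e-1)\eta_0}\tld{w} = (\tld{w}_h\tld{w})\, t_{w^{-1}(e\eta_0)}$ underlying the definition of $\tld{w}(\rhobar^\speci,\tau_g)$, together with the reduced-expression combinatorics of \S\ref{sec:comb:weyl} (Lemma \ref{lemma:minrep}, Proposition \ref{prop:listshapes}, \cite[Lemma 4.1.9]{LLL}), I would check that $t_{w_j^{-1}(e\eta_0)} = \tld{w}(\rhobar^\speci,\tau_g)_j^{*}\, \tld{z}'_j$ is a reduced factorization for $\tld{z}'_j := \tld{w}^*(\tau_g)_j\, \tld{w}^*(\tau)_j^{-1}$, and that both $t_{w^{-1}(e\eta_0)}$ and $\tld{w}(\rhobar^\speci,\tau_g)$ lie in $\Adm(e\eta_0)$, are $e$-regular, and are $e(n-1)$-small. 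Proposition \ref{prop:nosematch}, applied with $\tld{z} = \tld{w}^*(\tau)_j$ (sufficiently generic under our hypotheses), then yields
\[
\bigl(\cI(\F)\, t_{w_j^{-1}(e\eta_0)}\, \cI(\F)\, \tld{w}^*(\tau)_j\bigr)^{\nabla_0} = \bigl(\cI(\F)\, \tld{w}(\rhobar^\speci,\tau_g)_j^{*}\, \cI(\F)\, \tld{w}^*(\tau_g)_j\bigr)^{\nabla_0}.
\]
Because $\tld{w}(\rhobar^\speci,\tau_g) \in \Adm(e\eta_0)$, the right-hand side is precisely the set of Frobenius matrices, in a suitable basis, of objects of $Y^{\leq \eta, \tau_g}(\F)$ of shape $\tld{w}(\rhobar^\speci,\tau_g)$ satisfying the mod $p$ monodromy condition: one uses the matrix presentation of $Y^{[0,n-1],\tau_g}_{\F}$ to pass from a tuple of matrices to an object, and Proposition \ref{prop:CL:specialfiber} together with the remark following it to see that a height-$[0,n-1]$ object of shape in $\Adm^\vee(e\eta_0)$ has height $\leq \eta$. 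Thus $\Phi_j$ is realized by some $\fM_g \in Y^{\leq \eta, \tau_g}(\F)$ of shape $\tld{w}(\rhobar^\speci,\tau_g)$ with $\eps_{\tau_g}(\fM_g) \cong \cM$.

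Finally I would conclude: as $\tau_g$ is $(e(n-1)+1)$-generic, $\eps_{\tau_g}$ is a closed immersion (Proposition \ref{prop:BK_to_phi_mono}), so $T^*_{dd}(\fM_g) = \bV^*_K(\eps_{\tau_g}(\fM_g)) = \bV^*_K(\cM) = T^*_{dd}(\fM) \cong \rhobar|_{G_{K_\infty}}$, giving the $\tau_g$-admissibility of $\rhobar$, and $\tld{w}(\rhobar,\tau_g)$ is by definition the shape of $\fM_g$, namely $\tld{w}(\rhobar^\speci,\tau_g)$. The main obstacle is the middle step: producing the reduced factorization $t_{w_j^{-1}(e\eta_0)} = \tld{w}(\rhobar^\speci,\tau_g)_j^{*}\,\tld{z}'_j$ and carrying out the $e$-regularity, smallness, and admissibility bookkeeping that makes Proposition \ref{prop:nosematch} applicable — in other words, showing that the monodromy condition attached to the corner translation shape $t_{w^{-1}(e\eta_0)}$ of type $\tau$ propagates to the shape $\tld{w}(\rhobar^\speci,\tau_g)$ of type $\tau_g$.
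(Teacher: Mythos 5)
Your proposal is correct and takes essentially the same approach as the paper: fix the extremal Breuil--Kisin module $\fM$, use Proposition \ref{prop:expeps} and the mod $p$ monodromy condition to place the partial Frobenii in $(\cI(\F)\,t_{w^{-1}(e\eta_0)}\,\cI(\F)\,\tld{w}^*(\tau))^{\nabla_0}$, transfer across types via Proposition \ref{prop:nosematch}, and conclude from the closed immersion property of $\eps_{\tau_g}$. The combinatorial bookkeeping you flag as the main obstacle (the reduced factorization of $t_{w^{-1}(e\eta_0)}$, together with the $e$-regularity and smallness bounds needed to apply Proposition \ref{prop:nosematch}) is precisely what the paper verifies by writing $t_{w^{-1}(e\eta_0)} = \tld{w}_2^{-1} w_0 t_{(e-1)\eta_0}\tld{w}$ and invoking Proposition \ref{prop:can:reg}.
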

\begin{proof}
Note that $t_{w^{-1}(e \eta_0)} =\tld{w}(\tau)^{-1} \tld{w}(\rhobar^\speci, \tau) = \tld{w}(\rhobar^{\speci}) $ and we can write $t_{w^{-1}(e \eta_0)} = \tld{w}_2^{-1} w_0 t_{(e-1) \eta_0} \tld{w}$ where $\tld{w}_2 \in \tld{W}^{+, \cJ}_1$.   Let $\delta \in \Omega^{\cJ}$ such that $\delta \tld{w}_2 \in W^{\cJ}_a$.    We define $\tau_g$ to be the unique tame inertial type such that $\tld{w}(\tau_g) = \tld{w}(\tau) (\delta \tld{w}_2)^{-1}$.  
By definition $\tau_g$ is endowed with a compatible lowest alcove presentation which is $(e(n-1) + 1)$-generic and $\tld{w}(\rhobar^\speci,\tau_g)$ is as desired.   
It remains to show that $\tld{w}(\rhobar,\tau_g) = \tld{w}(\rhobar^\speci,\tau_g).$

Let $\fM \in Y^{\eta, \tau}(\F)$ be unique Breuil--Kisin module such that $T^*_{dd}(\fM) \cong \rhobar|_{G_{K_{\infty}}}$.    By assumption, $\fM$ has shape $t_{w^{-1}(e \eta_0)}$ and satisfies the mod $p$ monodromy condition (Definition \ref{defn:modpmono}).    Hence, for any choice of eigenbasis $\beta$, we have that $\iota_{\tau}(\fM)$ is the \'etale $\phi$-module with partial Frobenii given by
$
A^{(j)}_{\fM, \beta} \tld{w}^{*}(\tau)_j
$
for $j \in \cJ$, where 
$$
A^{(j)}_{\fM, \beta} \tld{w}^{*}(\tau)_j\in (\cI(\F)   t_{w^{-1}(e\eta_{0,j})} \cI(\F)  \tld{w}^{*}(\tau)_j)^{\nabla_0}.
$$
Applying Proposition \ref{prop:nosematch} with $\tld{w} = t_{w^{-1}(e\eta_{0,j})},  \tld{w}' = \delta w_0 t_{(e-1) \eta_{0,j}} \tld{w}_{1,j}$ and $\tld{z} = \tld{w}^*(\tau)_j$, we have for all $j \in \cJ$: 
$$
A^{(j)}_{\fM, \beta} \tld{w}^{*}(\tau)_j \in \cI(\F) (\delta w_0 t_{(e-1) \eta_{0,j}} \tld{w}_1)^* \cI(\F) \tld{w}^*(\tau_g)_j
$$
(note that $\tld{w} = t_{w^{-1}(e\eta_{0,j})},  \tld{w}' = \delta w_0 t_{(e-1) \eta_{0,j}} \tld{w}_{1,j}$ are $e$-regular and $n-1$-small by Proposition \ref{prop:can:reg}).
Hence there exists $\fM' \in  Y^{\eta, \tau_g}(\F)$ such that $\eps_{\tau_g}(\fM') \cong \eps_{\tau}(\fM)$ and such that $\fM'$ has shape $\delta w_0 t_{(e-1) \eta_0} \tld{w}_1 = \tld{w}(\rhobar^\speci,\tau_g)$.   
\end{proof}

\begin{defn}[Specialization pairs]
Suppose that $\rhobar$, $\rhobar^\speci$, $\tau$, and $w$ are as in Lemma \ref{lemma:import}.
Let $\tld{w}\in (\tld{W}^+_1)^{\cJ}$ be the unique (up to $X^0(\un{T})$) element whose projection in $W^{\cJ}$ is $w$.
Let $\sigma$ be the Serre weight 
\begin{equation} \label{eq:weight}
F(\pi^{-1}(\tld{w})\cdot(\tld{w}(\tau)\tld{w}^{-1}\tld{w}_h^{-1}(0)-\eta_0))) = F_{(\tld{w}, \tld{w}(\tau)\tld{w}^{-1}\tld{w}_h^{-1}(0))}.
\end{equation} 
Then we say that $\rhobar$ \emph{specializes to the pair} $(\sigma,\rhobar^\speci)$ and that $\tau$ exhibits this specialization.
Let $SP(\rhobar)$ be the set of pairs to which $\rhobar$ specializes. %
\end{defn}
Note that if $\rhobar^\speci$ is $\max\{2,e\}(n-1)$-generic and $\tau$ is $2(n-1)$-generic then $\sigma$ is the unique element in $W^?(\rhobar^\speci,\tau)$ by Proposition \ref{prop:indint}, and is the extremal weight corresponding to $w$ (see Definition \ref{defn:obv}).

If $\rhobar$ is $2e(n-1)+1$-generic, we have a natural map $SP(\rhobar) \ra S_{\mathrm{ext}}(\rhobar)$ which is surjective and hence the set $SP(\rhobar)$ is finite.
If $\zeta\in X^*(\un{T})$ and some $\rhobar^\speci \in S_{\mathrm{ext}}(\rhobar)$ has a $\zeta$-compatible lowest alcove presentation, then every element of $S_{\mathrm{ext}}(\rhobar)$ has a $\zeta$-compatible lowest alcove presentation.

\begin{defn} \label{defn:thetarhobar}
Assume that $\rhobar$ is $(2e(n-1)+1)$-generic.
Let $\zeta \in X^*(\un{T})$ and suppose that some (equivalently any) element of $S_{\mathrm{ext}}(\rhobar)$ has a $\zeta$-compatible lowest alcove presentation. 
We define a map $$\theta_\rhobar^\zeta: SP(\rhobar) \ra W^{\cJ}$$ as follows:
If $(\sigma,\rhobar^\speci)$ is in $SP(\rhobar)$, $(w_{\rhobar^\speci},\mu_{\rhobar^\speci})$ is a $\zeta$-compatible lowest alcove presentation of $\rhobar^\speci$, and $\sigma$ is the extremal weight corresponding to $w$, %
we set $\theta_\rhobar^\zeta(\sigma,\rhobar^\speci) = w_{\rhobar^{\speci}} w^{-1}$. 
\end{defn}

\begin{prop}\label{prop:inj}
Assume that $\rhobar$ is $3e(n-1)$-generic.
The map $\theta_\rhobar^\zeta$ is injective.  $($Later in \S \ref{subsec:WE:MOD}, we show that the map is bijective.$)$ 
\end{prop}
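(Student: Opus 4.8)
The plan is to show injectivity of $\theta_\rhobar^\zeta$ by unwinding the definitions and using the semicontinuity and weight elimination combinatorics already developed. Suppose $(\sigma,\rhobar^\speci)$ and $(\sigma',\rhobar^{\speci\prime})$ are two specialization pairs with $\theta_\rhobar^\zeta(\sigma,\rhobar^\speci) = \theta_\rhobar^\zeta(\sigma',\rhobar^{\speci\prime}) = g \in W^{\cJ}$. Pick $\zeta$-compatible lowest alcove presentations $(w_{\rhobar^\speci},\mu_{\rhobar^\speci})$ and $(w_{\rhobar^{\speci\prime}},\mu_{\rhobar^{\speci\prime}})$ of the two extremal specializations, so that if $\sigma$ (resp. $\sigma'$) is the extremal weight corresponding to $w$ (resp. $w'$), then $w_{\rhobar^\speci}w^{-1} = w_{\rhobar^{\speci\prime}}w'^{-1} = g$. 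The goal is to conclude $\rhobar^\speci = \rhobar^{\speci\prime}$ (as tame inertial $\F$-types) and $\sigma = \sigma'$, equivalently $w = w'$ after matching presentations.

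First I would fix a single type $\tau$ exhibiting the first specialization with $\tld{w}(\rhobar,\tau) = t_{w^{-1}(e\eta_0)}$, and likewise $\tau'$ for the second with $\tld{w}(\rhobar,\tau') = t_{w'^{-1}(e\eta_0)}$. By Lemma \ref{lem:gap}, after adjusting the compatible lowest alcove presentations, the ``$\mu$-parts'' of $\tau$ and $\tau'$ differ coordinatewise by at most $e(n-1)$; combined with the $3e(n-1)$-genericity of $\rhobar$ this puts us in the range where Theorem \ref{thm:semicont} and Corollary \ref{cor:isom} apply. The key point is that both $\tau$ and $\tau'$ realize the \emph{same} Breuil--Kisin module: the unique $\fM \in Y^{\leq\eta,\tau}(\F)$ (resp. $\fM' \in Y^{\leq\eta,\tau'}(\F)$) with $T^*_{dd} \cong \rhobar|_{G_{K_\infty}}$ give $\eps_\tau(\fM) \cong \rhobar|_{G_{K_\infty}}$-associated $\phz$-module $\cong \eps_{\tau'}(\fM')$. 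Then Corollary \ref{cor:isom} produces eigenbases with $A^{(j)}_{\fM,\beta}\tld{w}^*(\tau) = A^{(j)}_{\fM',\beta'}\tld{w}^*(\tau')$ for all $j$. Reading off shapes, $t_{w^{-1}(e\eta_0)}\tld{w}(\tau)$ and $t_{w'^{-1}(e\eta_0)}\tld{w}(\tau')$ are in the same $\cI$-double coset, and since $\tld{w}(\rhobar^\speci) = t_{w^{-1}(e\eta_0)}\tld{w}(\tau) \cdot \tld{w}(\tau)^{-1} \cdot \ldots$—more carefully, $\tld{w}(\rhobar^{\speci}) = \tld{w}(\tau)\tld{w}(\rhobar^\speci,\tau) = \tld{w}(\tau)t_{w^{-1}(e\eta_0)}$ and similarly for $\rhobar^{\speci\prime}$—the identification of partial Frobenii forces $\tld{w}(\rhobar^\speci) = \tld{w}(\rhobar^{\speci\prime})$ in $\tld{\un{W}}$, i.e. the two tame inertial $\F$-types have lowest alcove presentations giving equal elements of $\tld{\un{W}}$, hence are isomorphic (using uniqueness of compatible lowest alcove presentations, \cite[Lemma 2.2.3]{MLM}). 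Thus $\rhobar^\speci = \rhobar^{\speci\prime}$, so we may take $w_{\rhobar^\speci} = w_{\rhobar^{\speci\prime}}$, and then $w_{\rhobar^\speci}w^{-1} = w_{\rhobar^\speci}w'^{-1}$ gives $w = w'$, hence $\sigma = \sigma'$ by the explicit formula \eqref{eq:weight}.

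The main obstacle I expect is the bookkeeping around lowest alcove presentations and $\zeta$-compatibility: the element $g = w_{\rhobar^\speci}w^{-1}$ a priori depends on choices, and one must check that equality of $\theta_\rhobar^\zeta$-values is genuinely choice-independent before the above argument even makes sense. Concretely, changing the $\zeta$-compatible presentation of $\rhobar^\speci$ changes both $w_{\rhobar^\speci}$ and the labelling ``$\sigma$ is the extremal weight corresponding to $w$'' in a compensating way, so $w_{\rhobar^\speci}w^{-1}$ is well defined; this needs the genericity hypotheses to rule out coincidences among distinct lowest alcove presentations (cf. \cite[Proposition 2.2.15]{LLL}). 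Once that is pinned down, the substantive input is entirely Theorem \ref{thm:semicont}/Corollary \ref{cor:isom} applied to the common étale $\phz$-module $\rhobar|_{G_{K_\infty}}$, together with the closure-relation and shape-uniqueness statements (Propositions \ref{prop:closurerelations}, \ref{prop:modpform}) to extract the equality of shapes, and the rest is formal. I would also remark that surjectivity is \emph{not} claimed here—it is deferred to \S\ref{subsec:WE:MOD}—so the proof can stay purely local and combinatorial.
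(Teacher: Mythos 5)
Your overall strategy — fix types $\tau,\tau'$ exhibiting the two extremal specializations, use Corollary \ref{cor:isom} to relate the two Breuil--Kisin modules, and conclude — is the same opening move as the paper, but the middle of your argument contains a genuine gap that makes the proof fail.

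The problem is the sentence ``the identification of partial Frobenii forces $\tld{w}(\rhobar^\speci) = \tld{w}(\rhobar^{\speci\prime})$ in $\tld{\un{W}}$.'' Corollary \ref{cor:isom} gives you an equality of matrices $A^{(j)}_{\fM,\beta}\tld{w}^*(\tau) = A^{(j)}_{\fM',\beta'}\tld{w}^*(\tau')$, i.e., a nonempty intersection $\cI\,\tld{z}\,\cI\,\tld{w}^*(\tau) \cap \cI\,\tld{z}'\,\cI\,\tld{w}^*(\tau') \neq \emptyset$ where $\tld{z},\tld{z}'$ are the respective shapes. But $\cI\,\tld{z}\,\cI\,\tld{w}^*(\tau)$ is not the double coset of $\tld{z}\tld{w}^*(\tau) = \tld{w}^*(\rhobar^\speci)$, since $\tld{w}^*(\tau)$ does not normalize $\cI$; these are large subsets that can intersect without the defining extended affine Weyl group elements coinciding. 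Indeed, if your claim were correct it would show that every $\rhobar$ has a \emph{unique} specialization, which is false — the paper exhibits wildly ramified $\rhobar$ with up to $n!$ distinct specializations (cf.~the examples in \S\ref{subsec:SPEC}). You also never actually use the hypothesis $\theta_\rhobar^\zeta(\sigma,\rhobar^\speci) = \theta_\rhobar^\zeta(\sigma',\rhobar^{\speci\prime})$ until \emph{after} the false claim, which is a red flag: the hypothesis must enter the argument substantively, not just as a trivial check at the end.

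The paper's proof resolves this by passing to the auxiliary types $\tau_g,\tau'_g$ from Lemma \ref{lemma:import} (whose shapes $\delta w_0 t_{(e-1)\eta_0}\tld{w}$ sit conveniently at the ``top'' of $\Adm(e\eta_0)$) and exploiting the inclusion $\tld{w}^*(\tau_g')\phz(\cI)\tld{w}^*(\tau_g')^{-1}\subset\cI$ to turn the Corollary \ref{cor:isom} data into a genuine statement about intersections of opposite-Iwahori Schubert cells. The hypothesis on $\theta_\rhobar^\zeta$ then gives $s=s'$ (the $W$-components of $\tld{w}(\tau_g)\delta$ and $\tld{w}(\tau_g')\delta'$), which reduces the intersection to one of the form $t_\nu\cI^{\op} w_0t_{(e-1)\eta_0}\tld{w}\,\cI^{\op}\cap\cI^{\op}w_0t_{(e-1)\eta_0}\tld{w}'\,\cI^{\op}\neq\emptyset$; a $\bG_m$-contraction argument then produces two Bruhat-order inequalities, and Lemma \ref{lemma:doubleclosure} forces $\tld{w}'=t_\nu\tld{w}$ with $\nu\in X^0(\un{T})$. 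Only at this point do $w=w'$, $\tau=\tau'$, $\sigma\cong\sigma'$, $\rhobar^\speci=\rhobar^{\prime,\speci}$ follow. Your proof is missing exactly this combinatorial core.
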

\begin{proof}
Suppose that $\theta_\rhobar^\zeta(\sigma,\rhobar^\speci) = \theta_\rhobar^\zeta(\sigma',\rhobar^{\prime,\speci})$.
Let $\tau$ and $\tau'$ be as in Lemma \ref{lemma:import} exhibiting these specializations with $\tld{w}(\rhobar,\tau) = t_{w^{-1}(e\eta_0)}$ and $\tld{w}(\rhobar,\tau) = t_{(w')^{-1}(e\eta_0)}$, and let $\tau_g$ and $\tau'_g$ also be as in Lemma \ref{lemma:import}.
Then $\sigma$ and $\sigma'$ are the extremal weights of $\rhobar^\speci$ and $\rhobar^{\prime,\speci}$ corresponding to $w$ and $w' \in W$, respectively. 
Let $\tld{w}$ and $\tld{w}' \in \tld{W}_1^+$ be elements with images $w$ and $w' \in W$, respectively.
Then by Lemma \ref{lemma:import}, there exist $\delta, \delta' \in \Omega$ such that $\tld{w}(\rhobar,\tau_g) = \delta w_0  t_{(e-1)\eta_0} \tld{w}$ and $\tld{w}(\rhobar,\tau_g')  = \delta' w_0  t_{(e-1)\eta_0} \tld{w}'$. 
By Corollary \ref{cor:isom} and the fact that $\tld{w}^*(\tau_g') \phz(\cI) \tld{w}^*(\tau_g')^{-1} \subset \cI$, 
\[
\cI  \tld{w}(\rhobar, \tau_g)^* \cI  \tld{w}^*(\tau_g)\cap \cI   \tld{w}(\rhobar, \tau'_g)^* \cI \tld{w}^*(\tau_g') \neq \emptyset,
\]
or equivalently by taking transposes,
\begin{equation}\label{eqn:cellint}
\tld{w}(\tau_g)\delta  \cI^{\mathrm{op}} w_0 t_{(e-1)\eta_0}\tld{w} \cI^{\mathrm{op}}/ \cI^{\mathrm{op}} \cap \tld{w}(\tau_g')\delta' \cI^{\mathrm{op}} w_0 t_{(e-1)\eta_0}\tld{w}' \cI^{\mathrm{op}}/ \cI^{\mathrm{op}} \neq \emptyset,
\end{equation}
where $\cI^{\mathrm{op}}$ is the opposite Iwahori group scheme. %

To simplify notation, let $\tld{s}$ and $\tld{s}'$ be $\tld{w}(\tau_g)\delta$ and $\tld{w}(\tau_g')\delta'$, respectively.
Then we have $\tld{w}(\rhobar^\speci) = \tld{s}w_0 t_{(e-1)\eta_0} \tld{w}$ and $\tld{w}(\rhobar^{\prime,\speci}) = \tld{s}'w_0 t_{(e-1)\eta_0} \tld{w}'$.
Let $s$, $s'$, $w(\rhobar^\speci)$, and $w(\rhobar^{\prime,\speci}) \in W$ be the images of $\tld{s}$, $\tld{s}'$, $\tld{w}(\rhobar)$, and $\tld{w}(\rhobar^{\prime,\speci})$, respectively.
The equality $w(\rhobar^\speci)w^{-1} = \theta_\rhobar^\zeta(\sigma,\rhobar^\speci) = \theta_\rhobar^\zeta(\sigma',\rhobar^{\prime,\speci}) = w(\rhobar^{\prime,\speci}) (w')^{-1}$ implies that $s = s'$.

The previous paragraph and (\ref{eqn:cellint}) and imply that there exists $\nu\in X^*(T)$ such that
\[
t_\nu \cI^{\mathrm{op}} w_0 t_{(e-1)\eta_0}\tld{w} \cI^{\mathrm{op}}/ \cI^{\mathrm{op}} \cap \cI^{\mathrm{op}}w_0 t_{(e-1)\eta_0}\tld{w}' \cI^{\mathrm{op}}/ \cI^{\mathrm{op}} \neq \emptyset.
\]
Both $t_\nu  \cI^{\mathrm{op}} w_0 t_{(e-1)\eta_0}\tld{w}  \cI^{\mathrm{op}}/  \cI^{\mathrm{op}}$ and $ \cI^{\mathrm{op}} w_0 t_{(e-1)\eta_0}\tld{w}'  \cI^{\mathrm{op}}/ \cI^{\mathrm{op}}$ are stable under the left action of $T$.
There is a $\bG_m$-subgroup which contracts $t_\nu  \cI^{\mathrm{op}} w_0 t_{(e-1)\eta_0}\tld{w} \cI^{\mathrm{op}}/  \cI^{\mathrm{op}}$ to $t_\nu w_0 t_{(e-1)\eta_0}\tld{w}$. %
So $t_\nu w_0 t_{(e-1)\eta_0}\tld{w}$ is in the closure of $  \cI^{\mathrm{op}}w_0 t_{(e-1)\eta_0}\tld{w}'  \cI^{\mathrm{op}}/ \cI^{\mathrm{op}}$, or equivalently $t_\nu w_0 t_{(e-1)\eta_0}\tld{w} \leq w_0 t_{(e-1)\eta_0}\tld{w}'$.
Symmetrically, $t_{-\nu}w_0t_{(e-1)\eta_0}\tld{w}' \leq w_0t_{(e-1)\eta_0}\tld{w}$.
Lemma \ref{lemma:doubleclosure} implies that $\tld{w}' = t_\nu \tld{w}$ and that $\nu\in X^0(T)$.
In particular, we have $w = w'$ so that $\tau = \tau'$, $\sigma \cong \sigma'$, and $\rhobar^\speci = \rhobar^{\prime,\speci}$.
\end{proof}

\subsection{Extremal weights} 
\label{subsec:extr:wt}
In this section, we define extremal weights and use them to give a tameness criterion for Galois representations.

\begin{defn}[Extremal weights] Let $\rhobar$ be a $(2 e(n-1) + 1)$-generic  representation of $G_K$.  Define $W_\obv(\rhobar)$ to be the set of Serre weights $\sigma$ such that there exists some $\rhobar^\speci$ so that $(\sigma,\rhobar^\speci) \in SP(\rhobar)$.
\end{defn}

\begin{prop} \label{prop:obvforss}
If $\rhobar$ is semisimple and $(2e(n-1)+1)$-generic, then $W_\obv(\rhobar)$ agrees with the set $W_\obv(\rhobar|_I)$ from Definition \ref{defn:obv}.
\end{prop}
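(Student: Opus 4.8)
The plan is to establish the two inclusions separately, using that for semisimple $\rhobar$ one has $\rhobar^{\semis}=\rhobar$, so that $\rhobar^\speci_0 \defeq \rhobar^{\semis}|_{I_K}=\rhobar|_{I_K}$ is itself a tame inertial $\F$-type, and $W_\obv(\rhobar|_I)$ is by definition the set of extremal weights of $\rhobar^\speci_0$ (Definition \ref{defn:obv}).

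The first step is to show that $S_{\mathrm{ext}}(\rhobar)=\{\rhobar^\speci_0\}$. Nonemptiness is Theorem \ref{thm:extension}. Conversely, if $\rhobar^\speci\in S(\rhobar)$ is exhibited by an $(e(n-1)+1)$-generic type $\tau$, then fixing a compatible lowest alcove presentation of $\rhobar|_{I_K}$ as in Proposition \ref{prop:sskisin} gives $\tld{w}(\rhobar,\tau)=\tld{w}(\tau)^{-1}\tld{w}(\rhobar^\speci_0)$, while the definition of a specialization gives $\tld{w}(\rhobar,\tau)=\tld{w}(\rhobar^\speci,\tau)=\tld{w}(\tau)^{-1}\tld{w}(\rhobar^\speci)$; hence $\tld{w}(\rhobar^\speci)=\tld{w}(\rhobar^\speci_0)$, and uniqueness of compatible lowest alcove presentations (\cite[Lemma 2.2.3]{MLM}) forces $\rhobar^\speci=\rhobar^\speci_0$. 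Granting this, the inclusion $W_\obv(\rhobar)\subseteq W_\obv(\rhobar|_I)$ is formal: any $(\sigma,\rhobar^\speci)\in SP(\rhobar)$ has $\rhobar^\speci=\rhobar^\speci_0$, and by the remark following the definition of specialization pairs (the required genericity being supplied by the hypothesis on $\rhobar$), $\sigma$ is the extremal weight of $\rhobar^\speci_0$ attached to the corresponding $w\in W^\cJ$, i.e.\ $\sigma\in W_\obv(\rhobar|_I)$.

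For the reverse inclusion I would realize every extremal weight of $\rhobar^\speci_0$ as coming from a specialization pair. Given $w\in W^\cJ$, let $(w_\rho,\nu)$ be a lowest alcove presentation of $\rhobar|_{I_K}$ and take $\tau$ to be the type with compatible lowest alcove presentation $(w_\rho,\nu-w_\rho w^{-1}(e\eta_0))$; this is sufficiently generic since $\nu$ is and the entries of $w^{-1}(e\eta_0)$ lie in $[0,e(n-1)]$, and Proposition \ref{prop:sskisin} then shows $\rhobar$ is $\tau$-admissible with $\tld{w}(\rhobar,\tau)=t_{w^{-1}(e\eta_0)}$. It remains to verify that the unique $\fM\in Y^{\leq\eta,\tau}(\F)$ with $T^*_{dd}(\fM)\cong\rhobar|_{G_{K_{\infty}}}$ (unique by Proposition \ref{prop:BK_to_phi_mono}) satisfies the mod $p$ monodromy condition. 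For this I would use that, $\rhobar$ being semisimple, $\fM$ is split (a direct sum of rank-one Breuil--Kisin modules), so in a suitable eigenbasis every $A^{(j)}_{\fM,\beta}\tld{w}^*(\tau)_j$ is a monomial matrix $M$, and then $v\,\tfrac{dM}{dv}M^{-1}+M\,\Diag(\mathbf{a}_j)M^{-1}$ is diagonal, hence lies in $\tfrac{1}{v^e}\Lie\Iw$; alternatively one may invoke the ramified analogue of \cite[Proposition 7.4.1]{MLM} together with the existence of an obvious potentially crystalline lift of $\rhobar$ of type $(\tau,\eta)$ (cf.\ Remark \ref{rmk:liftsandmodpmono}). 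Thus $\rhobar^\speci_0$ is an extremal specialization exhibited by $\tau$ with $\tld{w}(\rhobar,\tau)=t_{w^{-1}(e\eta_0)}$, so $(\sigma,\rhobar^\speci_0)\in SP(\rhobar)$ with $\sigma$ the weight of \eqref{eq:weight}; by the remark after the definition of specialization pairs this $\sigma$ is exactly the extremal weight of $\rhobar^\speci_0$ attached to $w$. As $w$ ranges over $W^\cJ$ this gives $W_\obv(\rhobar|_I)\subseteq W_\obv(\rhobar)$, and combining with the previous paragraph finishes the proof.

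The main obstacle will be this last step: producing for each $w$ a type $\tau$ that genuinely exhibits $\rhobar^\speci_0$ as an \emph{extremal} specialization — that is, checking the mod $p$ monodromy condition for the Breuil--Kisin module of the semisimple $\rhobar$ against $\tau$, and keeping the genericity bounds compatible with Propositions \ref{prop:sskisin} and \ref{prop:BK_to_phi_mono} and with the combinatorial identification of the resulting Serre weight. The opposite inclusion, by contrast, is bookkeeping once one knows $S_{\mathrm{ext}}(\rhobar)=\{\rhobar^\speci_0\}$.
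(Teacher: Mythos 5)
Your proof is correct and follows the same strategy as the paper's: first pin down $S(\rhobar)$ (hence $S_{\mathrm{ext}}(\rhobar)$) to $\{\rhobar|_{I_K}\}$ via Proposition~\ref{prop:sskisin}, then for each $w\in W^{\cJ}$ produce the unique type $\tau$ with $\tld{w}(\rhobar,\tau)=t_{w^{-1}(e\eta_0)}$ and identify the associated Serre weight with the extremal weight of $\rhobar|_{I_K}$ at $w$. The only difference is one of explicitness: the paper's proof simply asserts "this type realizes the specialization to the pair," whereas you spell out the verification of the mod~$p$ monodromy condition for the (necessarily split, monomial) Breuil--Kisin module of the semisimple $\rhobar$ — a point the paper treats as implicit from Proposition~\ref{prop:sskisin}.
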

\begin{proof}  We first note that if $\rhobar$ is semisimple, then it only specializes to $\rhobar|_{I_K}$ by Proposition \ref{prop:sskisin}.  

Fix now a $(2e(n-1)+1)$-generic lowest alcove presentation of $\rhobar|_{I_K}$.   
For each $w \in W^{\cJ}$, there is a unique type $\tau$ (with compatible lowest alcove presentation) such that $\tld{w}(\rhobar|_{I_K}, \tau) = \tld{w}(\rhobar, \tau) = t_{w{-1}(e \eta_0)}$.   
 Let $\tld{w}\in \tld{W}^{+,\cJ}_1$ be the unique element whose projection in $W^{\cJ}$ is $w$.
 This type realizes the specialization to the pair $\rhobar|_{I_K}$ and  $\sigma =  F_{(\tld{w}, \tld{w}(\tau)\tld{w}^{-1}\tld{w}_h^{-1}(0))}$ (see \eqref{eq:weight}).  
Using that $\tld{w}(\tau) = \tld{w}(\rhobar|_{I_K}) t_{e \eta_0}$, we see that $\sigma$ is the extremal weight of $\rhobar|_{I_K}$ corresponding ot $w$. 
\end{proof}

\begin{prop}\label{prop:tamecrit}
Assume that $\rhobar$ is $3e(n-1)$-generic.
The following are equivalent:
\begin{enumerate}
\item 
\label{it:tamecrit:1}
$\rhobar$ is semisimple; and
\item 
\label{it:tamecrit:2}
$\# W_\obv(\rhobar) = \# W^{\cJ}$.
\end{enumerate}
\end{prop}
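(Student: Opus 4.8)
The implication \eqref{it:tamecrit:1}$\Rightarrow$\eqref{it:tamecrit:2} is essentially Proposition \ref{prop:obvforss} together with Definition \ref{defn:obv}: when $\rhobar$ is semisimple, $W_\obv(\rhobar)$ coincides with $W_\obv(\rhobar|_{I_K})$, and since $\rhobar|_{I_K}$ is tame the set $W_\obv(\rhobar|_{I_K})$ is a torsor under $W^{\cJ}$ — this is the content of the remark after Definition \ref{defn:obv} in the unramified case, but the argument is purely combinatorial and uses only that the extremal weight of $\rhobar^{\semis}|_{I_K}$ corresponding to $w$ is well-defined for each $w\in W^{\cJ}$. Concretely, one checks that the map $w\mapsto F_{(\tld{w},\tld{w}(\rhobar^{\semis}|_{I_K})\tld{w}^{-1}(-(e-1)\eta_0))}$ is injective on $W^{\cJ}$; this follows from Lemma \ref{lemma:doubleclosure} applied to the defining lowest alcove presentations (distinct $w,w'$ give Serre weights with distinct lowest alcove presentations, hence distinct weights, by uniqueness of compatible lowest alcove presentations, \cite[Lemma 2.2.3]{MLM}). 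So $\#W_\obv(\rhobar) = \#W^{\cJ}$.

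For the converse \eqref{it:tamecrit:2}$\Rightarrow$\eqref{it:tamecrit:1}, the strategy is to compare $W_\obv(\rhobar)$ with $W_\obv(\rhobar^{\semis})$ and use the semicontinuity results. By Remark \ref{lem:Sfinite} (citing the yet-to-be-proved Theorem \ref{thm:extension}), $\rhobar^{\semis}|_{I_K}$ is an extremal specialization of $\rhobar$, so every extremal weight of $\rhobar$ attached to the specialization $\rhobar^{\semis}|_{I_K}$ is an extremal weight of $\rhobar^{\semis}$, giving $W_\obv(\rhobar)\supseteq$ (the weights of $SP(\rhobar)$ lying over $\rhobar^{\semis}|_{I_K}$). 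The key point is the reverse containment on the nose: I claim that if $\rhobar$ is not semisimple, then for at least one $w\in W^{\cJ}$ the extremal weight of $\rhobar^{\semis}$ corresponding to $w$ is \emph{not} in $W_\obv(\rhobar)$, forcing $\#W_\obv(\rhobar) < \#W^{\cJ}$. To see this, one argues as follows: suppose $\sigma$ is the extremal weight of $\rhobar^{\semis}|_{I_K}$ corresponding to $w$, exhibited by a type $\tau$ with $\tld{w}(\rhobar^{\semis},\tau) = t_{w^{-1}(e\eta_0)}$. If $\sigma\in W_\obv(\rhobar)$, then $\rhobar$ specializes to a pair $(\sigma,\rhobar^{\speci})$ for some extremal specialization $\rhobar^{\speci}$, hence (Definition \ref{defn:spec}(2)) $\rhobar$ is $\tau'$-admissible for a suitable type $\tau'$ with $\tld{w}(\rhobar,\tau') = t_{(w')^{-1}(e\eta_0)}$ for the $w'$ giving $\sigma$ as extremal weight of $\rhobar^{\speci}$; by uniqueness of compatible lowest alcove presentations this forces the shape of $\rhobar$ with respect to $\tau$ itself to be a \emph{translation element} $t_{w^{-1}(e\eta_0)}$. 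But by Theorem \ref{thm:semicont}, $\tld{w}(\rhobar^{\semis},\tau)_j \le \tld{w}(\rhobar,\tau)_j$ for all $j$; combined with the fact that translation elements $t_{w^{-1}(e\eta_0)}$ are \emph{maximal} in the Bruhat order on the admissible set $\Adm(e\eta_0)$ among elements of length $\ell(t_{w^{-1}(e\eta_0)})$, one deduces $\tld{w}(\rhobar,\tau)_j = t_{w^{-1}(e\eta_0)}_j = \tld{w}(\rhobar^{\semis},\tau)_j$ for all $j$. Running this over all $w\in W^{\cJ}$ (equivalently, over all types $\tau$ exhibiting the various extremal specializations of $\rhobar^{\semis}$), one gets that $\rhobar$ and $\rhobar^{\semis}$ have the same shape with respect to a rich enough collection of types.

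The final step is to promote "same shapes with respect to enough types" to "$\rhobar\cong\rhobar^{\semis}$". For this I would use Proposition \ref{prop:BK_to_phi_mono}: if the Breuil--Kisin module $\fM$ of $\rhobar$ and the Breuil--Kisin module $\fM_0$ of $\rhobar^{\semis}$ (for a fixed type $\tau$ exhibiting $\rhobar^{\semis}|_{I_K}$, e.g.\ with $\rhobar$ of shape a translation element — which forces $\fM$ to lie in the closed torus-fixed stratum, hence to be \emph{split} in the sense of Proposition \ref{prop:modpform}, so $A^{(j)}_{\fM,\beta}\in T(\F)\tld{z}_jN_{\tld{z}_j}(\F)$ with $\tld{z}_j$ a translation and $N_{\tld{z}_j}$ trivial since translations have minimal length in their $W$-double coset) then $A^{(j)}_{\fM,\beta}\in T(\F)\tld{z}_j$, so $\fM$ is a direct sum of rank-one Breuil--Kisin modules and $T^*_{dd}(\fM) = \rhobar$ is a direct sum of characters, i.e.\ $\rhobar$ is semisimple. \textbf{The main obstacle} I anticipate is the shape-maximality assertion — that $\tld{w}(\rhobar,\tau)_j \ge \tld{w}(\rhobar^{\semis},\tau)_j$ together with $\tld{w}(\rhobar^{\semis},\tau)_j$ being a translation element of $\Adm(e\eta_0)$ forces equality — and tracking the genericity bookkeeping so that Theorem \ref{thm:semicont} (which needs $3e(n-1)$-genericity, matching the hypothesis) applies to \emph{every} relevant type $\tau$; this requires knowing that translation elements are at the top of the admissible poset among elements of maximal length, which should follow from the length formula $\ell(t_{w^{-1}(e\eta_0)}) = \langle e\eta_0, 2\rho^\vee\rangle$ being constant in $w$ and from $\tld{w}\le t_{\sigma^{-1}(e\eta_0)}$ with equal lengths forcing $\tld{w} = t_{\sigma^{-1}(e\eta_0)}$, exactly as used in the proof of Lemma \ref{lemma:2comps}.
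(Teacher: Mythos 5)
Your forward direction $\eqref{it:tamecrit:1}\Rightarrow\eqref{it:tamecrit:2}$ is essentially the paper's argument via Proposition~\ref{prop:obvforss}. The converse, however, contains a fatal error at the end together with some earlier gaps.

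The fatal error is in the last step. You assert that a Breuil--Kisin module $\fM$ of shape $\tld{z}_j = t_{w^{-1}(e\eta_0)}$ must be split because ``translations have minimal length in their $W$-double coset,'' so $N_{\tld{z}_j}$ is trivial. This is exactly backwards: the unipotent subgroup $N_{\tld{z}}$ in Proposition~\ref{prop:modpform} is an affine space of dimension $\ell(\tld{z})$, and extreme translation elements $t_{w^{-1}(e\eta_0)}$ have \emph{maximal} length in $\Adm(e\eta_0)$ (you correctly use this maximality elsewhere to deduce shape equalities via Theorem~\ref{thm:semicont}). Thus $N_{\tld{z}_j}$ is as large as possible, and $A^{(j)}_{\fM,\beta}\in T(\F)\tld{z}_j N_{\tld{z}_j}(\F)$ carries a nontrivial unipotent part $U_j$; a single translation shape does \emph{not} force $\fM$ to be split. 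This is precisely why the paper's proof cannot stop with one type. Instead, it uses the cardinality hypothesis (via injectivity of $\theta_\rhobar$, Proposition~\ref{prop:inj}) to see that $\rhobar$ has a unique extremal specialization $\rhobar^\speci$ and that $\rhobar$ specializes to the pairs $(\sigma,\rhobar^\speci)$ and $(\sigma',\rhobar^\speci)$ for \emph{both} $w$ and $ww_0$. This yields Breuil--Kisin modules $\fM$, $\fM'$ with \emph{opposite} translation shapes $t_{w^{-1}(e\eta_0)}$ and $t_{w_0w^{-1}(e\eta_0)}$. Applying Corollary~\ref{cor:isom} and Lemma~\ref{lem:Istraight}, one shows $U_j=U_j'$, and since $\tld{z}_j^{-1}U_j\tld{z}_j$ and $(\tld{z}'_j)^{-1}U'_j\tld{z}'_j$ land in opposite unipotents (\cite[Corollary 4.2.15]{MLM}), both are the identity, giving splitness.

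There are two further issues in the middle of your argument. First, your opening move---``if $\rhobar$ is not semisimple then some extremal weight of $\rhobar^\semis$ fails to lie in $W_\obv(\rhobar)$, forcing $\#W_\obv(\rhobar)<\#W^\cJ$''---presupposes $W_\obv(\rhobar)\subset W_\obv(\rhobar^\semis)$, which is not established: $W_\obv(\rhobar)$ consists of extremal weights of \emph{all} extremal specializations of $\rhobar$, not just of $\rhobar^\semis|_{I_K}$. Second, the appeal to uniqueness of compatible lowest alcove presentations to get $\tau=\tau'$ is not justified; the two types exhibiting $\sigma$ as an extremal weight of $\rhobar^\semis|_{I_K}$ and of $\rhobar^\speci$ respectively are in general different, so you have no a priori information about the shape $\tld{w}(\rhobar,\tau)$ (indeed, $\rhobar$ need not be $\tau$-admissible at all). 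You will need the cardinality hypothesis to feed into Proposition~\ref{prop:inj}, as the paper does, rather than arguing through $\rhobar^\semis$ alone.
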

\begin{proof}
Proposition \ref{prop:obvforss} gives \eqref{it:tamecrit:1} implies \eqref{it:tamecrit:2}.   
Next, assume that $\# W_\obv(\rhobar) = \# W^{\cJ}$.  By the injectivity of $\theta_{\rhobar}$ (Proposition \ref{prop:inj}), $\rhobar$ has a unique extremal specialization, call it $\rhobar^{\speci}$, and furthermore, $W_\obv(\rhobar) = W_\obv(\rhobar^{\speci})$.  

Let $w \in W^{\cJ}$.   Let $\sigma$ (resp.~$\sigma'$) be the extremal weight associated to $w$ (resp.~$w w_0$).   We show that if $\rhobar$ specializes to both $(\sigma, \rhobar^{\speci})$ and $(\sigma', \rhobar^{\speci})$   then $\rhobar$ is semisimple and $\rhobar|_{I_K} \cong \rhobar^{\speci}$.    Let $\tau$ and $\tau'$ be the types realizing these specialization in shape $\tld{z} = t_{w^{-1} (e \eta_0)}$ and $\tld{z}' = t_{w_0 w^{-1} (e \eta_0)}$ with corresponding Breuil--Kisin module $\fM$ and $\fM'$.   

By Proposition \ref{prop:modpform}, there exists eigenbases $\beta$ and $\beta'$ respectively such that 
\[
A^{(j)}_{\fM, \beta} = D_j U_j \tld{z}_j,  \quad A^{(j)}_{\fM', \beta'} = D'_j U'_j \tld{z}'_j
\]
where $D_j, D'_j \in T(\F)$, $U_{j} \in \tld{z}_j N_{\tld{z}_j}(\F) \tld{z}_j^{-1}$, and $U'_{j}  =  \tld{z}'_j N'_{\tld{z}'_j}(\F) (\tld{z}'_j)^{-1}$.  By definition of $N_{\tld{z}}$ (\cite[Definition 4.2.9]{MLM}), we have 
\[
U_j, U'_j \in L^{--} \cG^{(j)}_{\F}(\F)
\]
where $L^{--} \cG^{(j)}_{\F}$ denotes the negative loop group for $L\cG^{(j)}_{\F}$ (in particular, its $\F$ points consist of matrices $A\in\GL_n(\F[1/v])$ which are lower unipotent modulo $1/v$).

By Corollary \ref{cor:isom}, there exists $(I^{(j)}) \in \cI(\F)^{\cJ}$ such that
\begin{equation} \label{eq:a1}
 D_j U_j \tld{z}_j \tld{w}^*(\tau) = I^{(j)} D'_j U'_j \tld{z}'_j \tld{w}^*(\tau') (\phz(I^{(j-1)}))^{-1}
\end{equation}
By scaling $\beta'$ by an element of $T(\F)^{\cJ}$ if necessary, we can arrange that $(I^{(j)}) \in \cI_1(\F)^{\cJ}$.    Since both $\tau$ and $\tau'$ realize the same specialization, $\tld{z}_j \tld{w}^*(\tau) = \tld{z}'_j \tld{w}^*(\tau') = \tld{w}^*(\rhobar^{\speci})$ and so \eqref{eq:a1} becomes
 \begin{equation} \label{eq:a2}
 D_j U_j \tld{w}^*(\rhobar^{\speci}) = I^{(j)} D'_j U'_j \tld{w}^*(\rhobar^{\speci}) (\phz(I^{(j-1)}))^{-1}.
\end{equation}

By Lemma \ref{lem:Istraight},
there exists $(X_j) \in \cI_1(\F)^{\cJ}$ such that $ D_j U_j = X_j D'_j U'_j$ for all $j \in \cJ$.  Thus, $U_j (U'_j)^{-1} \in \Iw(\F) \cap L^{--} \cG^{(j)}_{\F}(\F)$  and so $U_j = U'_j$.   Finally, since $t_{w^{-1}(e(\eta_0))} (A_0) $ and $t_{w_0 w^{-1}(e(\eta_0))} (A_0)$ are in opposite Weyl chambers, $\tld{z}_j^{-1} U_j \tld{z}_j$ and $(\tld{z}'_j)^{-1} U'_j \tld{z}'_j$ are in opposite unipotents by \cite[Corollary 4.2.15]{MLM}.
Thus,  $U_j$ and $U'_j$ are the identity ($\tld{z}_j, \tld{z}'_j$ are both translations).    Since   $A^{(j)}_{\fM, \beta} = D_j \tld{z}_j$ for all $j \in \cJ$, it follows that $T^*_{dd}(\fM)$ is semisimple (see \cite[Proposition 5.5.2]{MLM} for example).     
\end{proof}

\subsection{Maximally ordinary weights}\label{sec:mord}

In this section, we show that the set $W_\obv(\rhobar)$ contains the set of \emph{maximally ordinary weights}.
We further show that the set of maximally ordinary weights is nonempty, so that in particular, the set $W_\obv(\rhobar)$ is nonempty.
When $\rhobar$ is an iterated extension of characters, the set of maximally ordinary weights is the set of \emph{ordinary weights}. %

\begin{lemma}\label{lemma:extension}
Suppose that $\rhobar: G_K \ra \GL_n(\F)$ is $(e(n-1)+2)$-generic and an extension of $\rhobar_2$ by $\rhobar_1$.
For $i = 1$ and $2$, let $n_i$ be the dimension of $\rhobar_i$.
Suppose that $\rhobar_i$ has a potentially crystalline lift $\rho_i : G_K \ra \GL_n(\cO_E)$ of tame inertial type $\tau_i$ and parallel Hodge--Tate weights $(n_1+n_2-1,\ldots,n_2)$ (resp.~$(n_2-1,\ldots,0)$) if $i=1$ (resp.~$i=2$).
Then $\rhobar$ has a lift $\rho$ which is an extension of $\rho_2$ by $\rho_1$ and is potentially crystalline of type $(\tau,\eta)$ where $\tau = \tau_1\oplus \tau_2$.
\end{lemma}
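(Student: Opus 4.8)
The statement is a standard ``gluing'' of potentially crystalline lifts along an extension, and the natural approach is via Breuil--Kisin modules together with the theory of the functor $T^*_{dd}$. First I would choose a $G_{K_\infty}$-stable $\F$-line (more precisely an $n_1$-dimensional $G_{K_\infty}$-stable subspace) realizing the filtration $\rhobar_1 \subset \rhobar$; by the anti-equivalence of $\bV^*_K$ this corresponds to a short exact sequence of \'etale $\phz$-modules $0 \to \cN_2 \to \cN \to \cN_1 \to 0$ with $\bV^*_K(\cN_i) \cong \rhobar_i|_{G_{K_\infty}}$. The lifts $\rho_i$ of type $(\tau_i, \eta)$ give Breuil--Kisin modules $\fM_i \in Y^{\leq \eta_i, \tau_i}(\cO_E)$ (using that the Hodge--Tate weights are the relevant shifts of $\eta$) with $\eps_{\tau_i}(\fM_i) \otimes E$ recovering $\cN_i[1/p]$. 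The key point is to produce a Breuil--Kisin module $\fM$ over $\cO_E$ with descent data of type $\tau = \tau_1 \oplus \tau_2$, sitting in an exact sequence $0 \to \fM_1 \to \fM \to \fM_2 \to 0$ of $\fS_{L',\cO_E}$-modules compatibly with $\phi$ and $\Delta$, whose height is $\leq \eta$ and whose generic fiber gives the desired extension $\rho$.

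The construction of $\fM$ reduces to a lifting problem: the extension class of $\rhobar$ as an extension of $\rhobar_2$ by $\rhobar_1$ lives in $\Ext^1_{G_{K_\infty}}(\rhobar_2, \rhobar_1)$ (here one uses that a Breuil--Kisin-module extension is detected on $G_{K_\infty}$, by faithfulness of $T^*_{dd}$ in the relevant range, which is why the genericity hypothesis $(e(n-1)+2)$-generic is imposed), and I want to lift it to an extension class in $\Ext^1_{\phi\text{-mod}/\fS_{L',\cO_E}}(\fM_2, \fM_1)$ computing extensions of Breuil--Kisin modules with descent data. Concretely, choosing eigenbases for $\fM_1, \fM_2$, an extension is given by a matrix of Frobenius of block form $\begin{pmatrix} A_1 & B \\ 0 & A_2 \end{pmatrix}$ where the off-diagonal block $B$ must satisfy a single (inhomogeneous $\phz$-linear) equation modulo the appropriate ideal, and respect the descent data; the space of such $B$ modulo coboundaries surjects onto $\Ext^1_{G_{K_\infty}}$ after inverting $p$, and the integral extension reducing mod $\varpi$ to the given $\rhobar$-class is then produced by a successive approximation / deformation argument, using that $\Ext^1$ of the $\cO_E$-lattices is $\varpi$-adically separated and the reduction map is surjective onto the $\F$-coefficient $\Ext^1$. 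The height and type of the resulting $\fM$ are automatic from the block-triangular shape (the cokernel of $\phi_\fM$ is an extension of $\coker\phi_{\fM_2}$ by $\coker\phi_{\fM_1}$, hence killed by $E(v)^{\max} = E(v)^{n-1}$, and the residual $\Delta$-action is $\tau_1^\vee \oplus \tau_2^\vee = \tau^\vee$), and the fact that $\rho$ lands in $\GL_n(\cO_E)$ and is potentially crystalline of type $(\tau, \eta)$ follows by applying $T^*_{dd}$ (respectively the fully faithful comparison between such Breuil--Kisin modules and lattices in potentially crystalline representations of type $(\tau,\eta)$, e.g.\ via Kisin's theory as recalled in the references).

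\textbf{Main obstacle.} The delicate step is the \emph{integral} lifting of the extension class: one must know that the natural map $\Ext^1_{\fS\text{-BK}/\cO_E}(\fM_2,\fM_1) \to \Ext^1_{G_{K_\infty}}(\rhobar_2,\rhobar_1)$ (via reduction mod $\varpi$ followed by $T^*_{dd}$, or passing through \'etale $\phz$-modules) hits the class cutting out the given $\rhobar$. Over $E$ there is no issue --- both $\Ext^1$'s of the rational objects agree by the equivalence of categories --- but one needs a lattice-level statement: that the extension of \'etale $\phz$-modules with the prescribed reduction can be taken to come from an extension of the given Breuil--Kisin \emph{lattices} $\fM_i$ of the stated height. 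I would handle this by writing the obstruction-theoretic exact sequence relating $\Ext^1$ over $\cO_E$, over $E$, and over $\F$, noting that the relevant $\Ext^2$'s (or the cokernel terms) vanish because the objects are free over $\fS_{L',\cO_E}$ and the Frobenius structure imposes only a codimension-one condition; alternatively, one can argue directly with the explicit matrix equation for $B$ as above, solving it $\varpi$-adically. The genericity hypothesis is used precisely to ensure (i) the shapes/heights stay in the controlled range so that $Y^{\leq\eta_i,\tau_i}$ behaves well and $\eps_{\tau_i}$ is a closed immersion (Proposition \ref{prop:BK_to_phi_mono}), and (ii) the extension is pinned down by its restriction to $G_{K_\infty}$.
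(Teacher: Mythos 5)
Your route is genuinely different from the paper's, and it has a real gap. The paper does not touch Breuil--Kisin modules at all here: it lifts the extension class directly in Galois cohomology, using that $\Ext^2_{G_K}(\rho_2,\rho_1)=0$ (by genericity and local Tate duality) to surject $\Ext^1_{G_K}(\rho_2,\rho_1)\twoheadrightarrow \Ext^1_{G_K}(\rhobar_2,\rhobar_1)$; then it observes that \emph{every} lattice-level extension of $\rho_2$ by $\rho_1$ is automatically de Rham because $H^1_g(G_K,\rho_{2,E}^\vee\otimes\rho_{1,E})=H^1(G_K,\rho_{2,E}^\vee\otimes\rho_{1,E})$ — a pure dimension count via the local Euler characteristic formula and the fact that the Hodge--Tate weights of $\rho_1$ are uniformly larger than those of $\rho_2$. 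The inertial type is read off from an exact sequence of $D_{\mathrm{pst}}$'s, and crystallinity (as opposed to mere semistability) then comes from $\Hom_{I_K}(\tau_2,\tau_1(-1))=0$, again by genericity. So the whole thing you flagged as your ``main obstacle'' — the integral lifting of the extension class at the Breuil--Kisin level — simply does not arise in the paper's argument; the lifting happens at the level of $G_K$-representations where $\Ext^2=0$ makes it trivial.

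The actual gap in your proposal is the claim that the constructed block-triangular $\fM$ yields a \emph{potentially crystalline} lift ``by the fully faithful comparison between such Breuil--Kisin modules and lattices in potentially crystalline representations of type $(\tau,\eta)$.'' There is no such comparison: Breuil--Kisin modules of bounded height with tame descent data correspond (after the $G_{K_\infty}\to G_K$ descent, which itself requires care) to lattices in potentially \emph{semistable} representations with bounded Hodge--Tate weights, not to crystalline ones. Picking out the crystalline locus requires imposing the monodromy condition on the Breuil--Kisin module (this is precisely the extra closed condition cutting $R^{\eta,\tau}_{\rhobar}$ out of the finite-height deformation space in \S 4 of the paper, the passage from $R^{\tau,\ovl\beta}_{\ovl\fM}$ to $R^{\tau,\ovl\beta,\nabla}_{\ovl\fM}$). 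Your block-triangular construction gives you bounded height and the right descent data essentially for free, but it gives you no control over the monodromy operator, so you cannot conclude crystallinity without a separate argument — and at that point you would effectively be redoing the paper's $H^1_g=H^1$ computation in a much more cumbersome language. The paper's route is shorter exactly because it lives on the side where de Rham-ness/crystallinity can be checked by a dimension count, rather than on the integral semilinear-algebra side where those conditions are opaque.
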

\begin{proof}
Note that by genericity, both $\rhobar$ and $\tau$ are at least $2$-generic, in particular are cyclotomic-free (\cite[Lemma 7.2.9]{MLM}).
By genericity, $\Ext^2_{G_K}(\rho_2,\rho_1)$ is zero.
So the natural reduction map $\Ext^1_{G_K}(\rho_2,\rho_1) \ra \Ext^1_{G_K}(\rhobar_2,\rhobar_1)$ is surjective.
We conclude that there exists a lift $\rho: G_K \ra \GL_n(\cO_E)$ of $\rhobar$ which is an extension of $\rho_2$ by $\rho_1$.

Let $\rho_{i,E}$ be $\rho_i \otimes_{\cO_E} E$.
Then the containment $H^1_g(G_K,\rho_{2,E}^\vee \otimes_E \rho_{1,E}) \subset H^1(G_K,\rho_{2,E}^\vee \otimes_E \rho_{1,E})$ is an equality for dimension reasons.
Indeed, by the local Euler characteristic formula and Tate duality, we have that $h^1(G_K,\rho_{2,E}^\vee \otimes_E \rho_{1,E}) = \dim_E \rho_{2,E}^\vee \otimes_E \rho_{1,E}$.
On the other hand, $h^1_g(G_K,\rho_{2,E}^\vee \otimes_E \rho_{1,E}) = \dim_E D_{\mathrm{dR}}(\rho_{2,E}^\vee \otimes_E \rho_{1,E})/D_{\mathrm{dR}}(\rho_{2,E}^\vee \otimes_E \rho_{1,E})^+$.
Since the Hodge--Tate weights of $\rho_2$ are strictly less than those of $\rho_1$, this latter expression is $\dim_E \rho_{2,E}^\vee \otimes_E \rho_{1,E}$ as well.
We conclude that $\rho$ is an $\cO_E$-lattice in a potentially semistable representation.
Moreover, $\rho$ has parallel Hodge--Tate weights $\eta$.

There is an exact sequence of smooth $I_K$-representations 
\[
0 \ra D_{\mathrm{pst}}(\rho_{1,E}) \ra D_{\mathrm{pst}}(\rho_E) \ra D_{\mathrm{pst}}(\rho_{2,E}) \ra 0.
\]
We conclude that $D_{\mathrm{pst}}(\rho_E) \cong \tau = \tau_1 \oplus \tau_2$.
Moreover, by genericity, $\Hom_{I_K}(\tau_2,\tau_1(-1)) = 0$ and so $\rho$ must be potentially crystalline.
\end{proof}

Let $P^\vee\subset \GL_n$ be a parabolic subgroup with Levi quotient $M^\vee$.
Then $M^\vee \cong \prod_{i=1}^k M^\vee_i$ where $M^\vee_i \cong \GL_{n_i}$ and $\sum_{i=1}^k n_i = n$.
Let $N_i$ be $\sum_{j=i+1}^k n_j$.
We index these dimensions so that for all $1 \leq i \leq k$, $P^\vee$ has a quotient $P^\vee_i$ which is isomorphic to a parabolic subgroup of $\GL_{N_i}$ with Levi quotient $\prod_{j=i+1}^k M^\vee_j$.
In other words, if $P$ is block upper diagonal, then starting from the top left, the $i$-th block has size $n_i$.

\begin{thm}\label{thm:extension}
Let $\rhobar: G_K \ra \GL_n(\F)$ be $(2e(n-1)+1)$-generic.
Suppose that $\rhobar$ factors through $P^\vee(\F)$ for a parabolic subgroup $P^\vee \subset \GL_n$ as above.
Let $M^\vee$, $M^\vee_i$, and $N_i$ be as above.
Suppose that the associated representations $\rhobar_i: G_K \ra M^\vee_i(\F)$ are semisimple. 
For each $i$, let $\rho_i$ be a potentially crystalline lift of type $\tau_i$ and parallel Hodge--Tate weights $(n_i+N_i -1,\ldots,N_i)$ where $\tld{w}(\rhobar_i(-N_i),\tau_i)$ is extremal (here, $\rhobar_i(-N_i)$ means twist of $\rhobar_i$ by the $(-N_i)$-th power of the cyclotomic character).
Then $\rhobar$ has a potentially diagonalizable lift $\rho$ (in the sense of \cite[\S 1.4]{BLGGT}) of type $(\tau,\eta)$ where $\tau = \oplus_{i=1}^k \tau_i$.
The corresponding specialization is $\oplus_{i=1}^k \rhobar_i$.
In particular, the semisimplification $\oplus_{i=1}^k \rhobar_i$ is an extremal specialization of $\rhobar$.
\end{thm}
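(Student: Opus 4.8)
The plan is to build the lift $\rho$ by iterating the two-step extension construction of Lemma \ref{lemma:extension} along the flag corresponding to $P^\vee$, and then to upgrade the resulting potentially semistable lift to a potentially diagonalizable one, after which the identification of the specialization is a bookkeeping exercise using Lemma \ref{lemma:import} together with the semicontinuity results of \S \ref{subsec:SCI}, \ref{subsec:SCII}. Throughout one keeps track of the parabolic quotients $P_i^\vee$ of $P^\vee$, whose Levi is $\prod_{j>i} M_j^\vee$, and the induced representations $\rhobar^{(i)}: G_K \to P_i^\vee(\F)$ obtained from $\rhobar$; note $\rhobar^{(i)}$ is an extension of $\rhobar^{(i+1)}$ by $\rhobar_{i+1}$ and $\rhobar^{(k-1)} = \rhobar_k$ (up to twist).

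\textbf{Step 1: Construct the lift by induction on $k$.} Proceed by downward induction to lift $\rhobar^{(i)}$. The base case is $\rhobar_k$, which is semisimple and tame, so it has the prescribed potentially crystalline lift $\rho_k$ of type $(\tau_k, (n_k-1,\ldots,0))$ by hypothesis. For the inductive step, suppose $\rho^{(i+1)}$ is a potentially crystalline lift of $\rhobar^{(i+1)}$ of type $\left(\bigoplus_{j>i+1}\tau_j,\, \eta\right)$ with parallel Hodge--Tate weights $(N_{i+1}+ \dim \rhobar^{(i+1)} - 1, \ldots, 0)$ — i.e.\ the appropriate shift of $\eta$; then apply Lemma \ref{lemma:extension} with $\rhobar_1 := \rhobar_{i+1}$ (lifted by the twist of $\rho_{i+1}$ to parallel Hodge--Tate weights $(\dim\rhobar^{(i+1)} + n_{i+1} - 1,\ldots,\dim\rhobar^{(i+1)})$) and $\rhobar_2 := \rhobar^{(i+1)}$, lifted by $\rho^{(i+1)}$. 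The genericity hypothesis ($\rhobar$ is $(2e(n-1)+1)$-generic, hence so are all sub-quotients relevant here, being at least $(e(n-1)+2)$-generic after the shifts) guarantees the vanishing of the relevant $\mathrm{Ext}^2$ and $H^1/H^1_g$ comparisons in Lemma \ref{lemma:extension}, so we obtain a potentially crystalline lift $\rho^{(i)}$ of $\rhobar^{(i)}$ of type $\left(\tau_{i+1}\oplus\bigoplus_{j>i+1}\tau_j,\,\eta\right)$, which is an extension of (the twist of) $\rho_{i+1}$ by $\rho^{(i+1)}$. After $k$ steps this produces $\rho := \rho^{(0)}$, potentially crystalline of type $(\tau,\eta)$ with $\tau = \bigoplus_{i=1}^k \tau_i$, and by construction $\rho$ is a successive extension (in the order dictated by $P^\vee$) of the twists of the $\rho_i$.

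\textbf{Step 2: Potential diagonalizability.} By construction $\rho$ lies on the ``ordinary-type'' component built from block-upper-triangular extensions of the $\rho_i$; since each $\rho_i$ is a potentially crystalline lift of a \emph{tame} $\rhobar_i$ with extremal shape, by the argument of \cite[\S 1.4]{BLGGT} (and the by-now-standard fact that successive extensions of potentially diagonalizable representations whose Hodge--Tate weights are ``separated'' in each block remain potentially diagonalizable — the Hodge--Tate weights of $\rho_{i+1}$ are strictly above those of $\rho^{(i+1)}$, exactly the condition exploited in Lemma \ref{lemma:extension}) one concludes $\rho$ is potentially diagonalizable. Concretely, each $\rho_i$, being a potentially crystalline lift of a tame semisimple representation of extremal shape, is potentially diagonalizable (it can be conjugated into a sum of inductions of crystalline characters, or deformed within its deformation ring to such); and the block-triangular structure with separated weights lets one run the standard deformation-theoretic argument showing the whole of $\rho$ connects to the sum $\bigoplus_i \rho_i^{\mathrm{diag}}$ inside the relevant potentially crystalline deformation space.

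\textbf{Step 3: Identify the specialization.} It remains to check that the specialization of $\rhobar$ exhibited by $\tau$ is $\bigoplus_{i=1}^k\rhobar_i$ and that this is an extremal specialization. Reduce the Breuil--Kisin module of $\rho$ mod $p$: since $\rho$ is block-upper-triangular with the diagonal blocks the twists of the $\rho_i$, its associated Kisin module $\fM \in Y^{\leq\eta,\tau}(\F)$ is, by the Frobenius-matrix computation and an argument as in the proof of Proposition \ref{prop:SC:I}, isomorphic (after deformation of the off-diagonal entries, as in the $\bA^1$-degeneration there) to one coming from $\bigoplus_i \rhobar_i$; hence $\tld{w}(\rhobar,\tau) = \tld{w}\!\left(\bigoplus_i \rhobar_i,\tau\right)$, so $\bigoplus_i\rhobar_i$ is a specialization. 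Because each $\tld{w}(\rhobar_i(-N_i),\tau_i)$ is extremal (i.e.\ of the form $t_{w_i^{-1}(n_i\eta_{0})}$ for the appropriate $\GL_{n_i}$), assembling these via the product/parabolic structure shows $\tld{w}(\rhobar,\tau) = t_{w^{-1}(e\eta_0)}$ for a suitable $w \in W^{\cJ}$; and since $\rho$ is a potentially crystalline lift of type $(\tau,\eta)$, Remark \ref{rmk:liftsandmodpmono} (the ramified analogue of \cite[Proposition 7.4.1]{MLM}) guarantees that the corresponding $\fM$ satisfies the mod $p$ monodromy condition, provided $\tau$ is sufficiently generic — which follows from the $(2e(n-1)+1)$-genericity of $\rhobar$ after possibly strengthening the genericity hypothesis as needed. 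Therefore $\bigoplus_i\rhobar_i$ is an extremal specialization of $\rhobar$, as claimed.

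\textbf{Main obstacle.} The delicate point is Step 2 together with the genericity bookkeeping: one must ensure that the genericity bounds are preserved under all the Hodge--Tate-weight shifts occurring in the inductive construction (the inner blocks get shifted by the $N_i$, which are $O(n)$, so $(e(n-1))$-type bounds are consumed), and one must carefully justify potential diagonalizability of a block-triangular representation whose blocks are themselves only potentially diagonalizable rather than crystalline — this is where the hypothesis that the $\rhobar_i$ are semisimple with extremal shape, and that the Hodge--Tate weights of successive blocks are separated, is essential, and where one leans most heavily on \cite{BLGGT}.
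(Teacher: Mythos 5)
Your Steps 1 and 3 match the paper's argument: the lift is built by iterating Lemma~\ref{lemma:extension} along the flag of $P^\vee$, and the identification of the specialization runs via an inequality of shapes (Proposition~\ref{prop:weak:sc}/semicontinuity) combined with the fact that the shape of the semisimplification $\oplus_i\rhobar_i$ is already extremal, hence maximal in $\Adm(e\eta_0)$, forcing equality. The appeal to Remark~\ref{rmk:liftsandmodpmono} for the mod $p$ monodromy condition is also exactly what is needed (and indeed the paper's proof leaves this implicit).

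The gap is in Step~2. You assert that each $\rho_i$ ``can be conjugated into a sum of inductions of crystalline characters, or deformed within its deformation ring to such'' and then invoke a ``by-now-standard fact that successive extensions of potentially diagonalizable representations whose Hodge--Tate weights are `separated' in each block remain potentially diagonalizable.'' Neither assertion is self-supporting as written. The first claim is precisely where the technical content lives: that a potentially crystalline lift of a tame semisimple $\rhobar_i$ of \emph{extremal} shape has Breuil--Kisin module that splits into rank~$1$ pieces after base change is the conclusion of the gauge-basis argument, and the paper explicitly cites the argument of \cite[Corollary 3.4.11]{LLL} adapted via Proposition~\ref{prop:gauge:basis}. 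You state the conclusion but do not identify the mechanism. As for the second claim, the paper does \emph{not} invoke a general ``extension of PD pieces is PD'' principle; instead it pushes all the way down to the character level, showing that $\rho^{\semis}|_{G_{K'}}$ is a direct sum of crystalline characters for some finite extension $K'/K$, so that $\rho|_{G_{K'}}$ is a successive extension of characters with strictly decreasing Hodge--Tate weights (i.e.\ ordinary), and then applies \cite[Lemma~1.4.3(1)]{BLGGT}. That lemma is an ordinary-implies-PD statement, not a general statement about block-triangular extensions of PD blocks; the reduction to the ordinary case is the correct (and cleanest) route, and it requires the Kisin-module input you have elided. Supplying the reference to the gauge-basis argument and replacing the extension-of-PD heuristic with the ordinary criterion would close the gap.
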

\begin{proof}
By iterated application of Lemma \ref{lemma:extension}, we obtain a potentially crystalline lift $\rho$ of type $(\tau,\eta)$, which is an iterated extension of potentially crystalline lifts (of type $\tau_i$ and parallel Hodge--Tate weights $(n_i+N_i -1,\ldots,N_i)$) of the representations $\rhobar_i$.
In particular, the semisimplification of $\rho$ is $\oplus_{i=1}^k \rho_i$.
Then by the argument of proof of \cite[Corollary 3.4.11]{LLL} (replacing the reference to Proposition 3.4.8 in \emph{loc.~cit}.~with Proposition \ref{prop:gauge:basis} below, and noting that the semisimple Kisin module produced as in \emph{loc.~cit}.~has Hodge--Tate weights exactly $\eta$), after restriction to a finite index subgroup the semisimplification of $\rho$ is a direct sum of characters.
By \cite[Lemma 1.4.3(1)]{BLGGT}, $\rho$ is potentially diagonalizable.

Since $\tld{w}(\rhobar_i(-N_i),\tau_i)$ is extremal for all $i = 1,\ldots, k$, so is $\tld{w}(\oplus_{i=1}^k \rhobar_i,\tau)$ by an easy computation. 
Since the semisimplification of $\rhobar$ is $\oplus_{i=1}^k \rhobar_i$, we deduce that $\tld{w}(\rhobar,\tau)$ is this same shape by Proposition \ref{prop:weak:sc}.
Thus $\tau$ exhibits the specialization $\oplus_{i=1}^k \rhobar_i$ of $\rho$.
\end{proof}

Suppose that $\rhobar$ is as in Theorem \ref{thm:extension} with $P^\vee$, $M^\vee$, and $N_i$ as before.
Let $P\subset \GL_n$ be the dual parabolic subgroup. 
Let $U$ be the unipotent radical of $P$.
For each $i$ let $\sigma_i \in W_\obv(\rhobar_i)$.
Let $\sigma$ be the unique Serre weight such that 
\[
\sigma^U \cong \boxtimes_{i=1}^k \sigma_i (-N_i).
\]
We call a Serre weight constructed in this way \emph{maximally ordinary}.
Let $W_{\mord}(\rhobar)$ be the set of maximally ordinary Serre weights.
Since we can always find $P^\vee$ as in Theorem \ref{thm:extension}, $W_{\mord}(\rhobar)$ is nonempty. 
If $\rhobar$ is semisimple, then we can take $P^\vee$ to be $\GL_n$ so that $W_{\mord}(\rhobar) = W_\obv(\rhobar)$.
Taking $P^\vee$ to be a minimal parabolic when $\rhobar^\semis$ is a direct sum of characters, we see that ordinary weights are maximally ordinary.

\begin{prop}\label{prop:obvord}
There is an inclusion $W_{\mord}(\rhobar) \subset W_\obv(\rhobar)$.
\end{prop}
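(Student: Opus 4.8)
The plan is to reduce $W_{\mord}(\rhobar) \subset W_\obv(\rhobar)$ to the statement that each maximally ordinary weight is one of the extremal weights in a specialization pair, by exhibiting the potentially diagonalizable lift provided by Theorem \ref{thm:extension} together with the right tame type. First I would fix $\rhobar$ factoring through $P^\vee(\F)$ with Levi $M^\vee = \prod_{i=1}^k M^\vee_i$ as in the setup, fix semisimple $\rhobar_i$, and fix $\sigma_i \in W_\obv(\rhobar_i)$. By Proposition \ref{prop:obvforss}, each $\sigma_i$ is the extremal weight of $\rhobar_i|_{I_K}$ corresponding to some $w_i \in W^{\cJ}_{n_i}$ (the Weyl group of $\GL_{n_i}$), so there is a unique tame inertial type $\tau_i$ (with compatible lowest alcove presentation, chosen to have parallel Hodge--Tate weights $(n_i + N_i - 1, \ldots, N_i)$ after the twist by $N_i$) with $\tld{w}(\rhobar_i(-N_i),\tau_i) = t_{w_i^{-1}(e\eta_0)}$, which is extremal. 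I would then invoke Theorem \ref{thm:extension} with these choices to get a potentially diagonalizable lift $\rho$ of $\rhobar$ of type $(\tau,\eta)$ with $\tau = \oplus_i \tau_i$, exhibiting the specialization $\rhobar^{\speci} := \oplus_i \rhobar_i^{\semis} = \oplus_i \rhobar_i$, with $\tld{w}(\rhobar,\tau) = t_{w^{-1}(e\eta_0)}$ where $w \in W^{\cJ}$ is the block-diagonal element with blocks $w_i$ (the "easy computation" in the proof of Theorem \ref{thm:extension}).

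Next I would check that this $\tau$ actually exhibits a \emph{specialization pair} $(\sigma, \rhobar^{\speci})$ in the sense of Definition 3.6.2, i.e. that the mod $p$ monodromy condition (Definition \ref{defn:modpmono}) holds for the unique $\fM \in Y^{\leq \eta,\tau}(\F)$ with $T^*_{dd}(\fM) \cong \rhobar|_{G_{K_\infty}}$. This is exactly where Remark \ref{rmk:liftsandmodpmono} applies: since $\rho$ is a potentially crystalline lift of type $(\tau,\eta)$ and $\tau$ is sufficiently generic (guaranteed by the $(2e(n-1)+1)$-genericity of $\rhobar$, strengthening to whatever precise bound Remark \ref{rmk:liftsandmodpmono} requires — one may need to assume $\rhobar$ is $(e+1)(n-1)$-generic, which is implied under the running hypotheses), the mod $p$ monodromy condition holds automatically. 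Hence $\rhobar^{\speci}$ is an extremal specialization and $\rhobar$ specializes to the pair $(\sigma, \rhobar^{\speci})$ where $\sigma = F_{(\tld{w}, \tld{w}(\tau)\tld{w}^{-1}\tld{w}_h^{-1}(0))}$ with $\tld{w} \in \tld{W}_1^{+,\cJ}$ lifting $w$; by the remark following Definition 3.6.2, $\sigma$ is the extremal weight of $\rhobar^{\speci}$ corresponding to $w$, hence lies in $W_\obv(\rhobar)$.

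The remaining point is to identify this $\sigma$ with the maximally ordinary weight built from $\sigma_1, \ldots, \sigma_k$: one must verify that $\sigma^U \cong \boxtimes_{i=1}^k \sigma_i(-N_i)$, where $U$ is the unipotent radical of $P$. The cleanest route is a direct comparison of highest weights: the extremal weight of $\rhobar^{\speci} = \oplus_i \rhobar_i$ corresponding to the block-diagonal $w = (w_i)_i$ has a lowest alcove presentation whose Weyl-group and translation data decompose along the blocks, precisely matching the product of the lowest alcove presentations of the extremal weights of the $\rhobar_i(-N_i)$ corresponding to $w_i$, shifted by $N_i$; taking $P$-invariants of an irreducible $\GL_n(k)$-representation with such a highest weight recovers the external tensor product of the $\GL_{n_i}(k)$-irreducibles with the corresponding highest weights, which is the definition of the maximally ordinary weight. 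This is a bookkeeping computation with lowest alcove presentations and the combinatorics of $\eta_0$, $\eta_{0,j}$, and the $w$-twists; the main obstacle is purely notational — keeping the twists by $N_i$, the shifts between $\tld{w}_h$-conjugates and dot actions, and the block decomposition of $\tld{w}(\tau)$ consistent — rather than conceptual, since no new geometric input is needed beyond Theorem \ref{thm:extension} and Remark \ref{rmk:liftsandmodpmono}.
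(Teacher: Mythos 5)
Your proposal is correct and follows essentially the same route as the paper: fix the Levi decomposition, choose the $\tau_i$ so that $\tld{w}(\rhobar_i(-N_i),\tau_i)$ is extremal, set $\tau=\oplus_i\tau_i$, and invoke Theorem \ref{thm:extension} to get that $\rhobar^\semis$ is an extremal specialization exhibited by $\tau$, reducing the claim to the combinatorial identification of $\sigma$ as the extremal weight in $W^?(\rhobar^\semis,\tau)$. The only difference is that you unpack two steps the paper leaves implicit (verification of the mod $p$ monodromy condition via Remark \ref{rmk:liftsandmodpmono}, and the explicit block-wise matching of lowest alcove presentations behind the assertion $W^?(\rhobar^\semis,\tau)=\{\sigma\}$), which is a matter of exposition rather than substance.
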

\begin{proof}
Let $\rhobar$ and $\rhobar_i$ be as in Theorem \ref{thm:extension}. 
Suppose that $\sigma \in W_{\mord}(\rhobar)$.
For each $i$, let $\tau_i$ be the tame type such that $W^?(\rhobar_i,\tau_i) = \{\sigma_i\}$.
Then if we let $\tau$ be $\oplus_{i=1}^k \tau_i$, then $\tau$ exhibits a specialization of $\rhobar$ to $\rhobar^\semis$.
Moreover, one can check that $W^?(\rhobar^\semis,\tau) = \{\sigma\}$ so that $(\sigma,\rhobar^\semis) \in SP(\rhobar)$.
This shows that $\sigma \in W_\obv(\rhobar)$.
\end{proof}

\subsection{Connections to Emerton--Gee stacks}
\label{sec:EGs}

This section is a series of remarks explaining how the notions of extremal weight and specialization can be interpreted geometrically on the stack of mod $p$ Galois representations $\cX_n$ introduced by Emerton--Gee \cite{EGstack}.  When $K/\Qp$ is unramified, everything can be proved using the techniques of \cite{MLM}.  The ramified case requires extending \cite{MLM} which will be the subject of future work.

First, we briefly recall what we need from \cite{EGstack}.    
In \cite[Theorem 6.5.1]{EGstack}, Emerton and Gee describe a parametrization of the irreducible components of the underlying reduced stack $\cX_{n,\red}$ of the moduli of $(\phz,\Gamma)$-modules $\cX_n$ by Serre weights of $\GL_n(\cO_K)$.  Let $\sigma=F(\kappa)$ be a Serre weight of $\GL_n(\cO_K)$ with $\kappa = (\kappa_j) \in X_1(T)^{\cJ}$.   We use the normalization as in \cite{MLM} where $\cC_{\sigma}\defeq \cX^{\sigma^\vee\otimes \det^{n-1}}_{EG,n,\red}$.

If $\kappa$ is $1$-deep, then $\cC_{\sigma}$ is uniquely characterized by the fact that has a Zariski open subset consisting of $\rhobar$ of the form  
\[ \rhobar \cong \begin{pmatrix}  \chi_1 &* &\cdots & * \\
0&\chi_2& \cdots& *\\
\vdots &&\ddots&\vdots\\
0&\cdots &0 & \chi_n
\end{pmatrix}
\]
where $\chi_{i}|_{I_K}= \overline{\eps}^i \prod_{j\in \cJ} \ovl{\omega}_{K,\sigma_j}^{(\kappa_j)_i}$ and $\rhobar$ admits a unique $G_K$-stable flag.

\begin{rmk} \label{rmk:geomspec}
\begin{enumerate}
\item Let $\rhobar^{\speci}$ be a sufficiently generic tame $\F$-type.  If $\sigma$ is an extremal weight of $\rhobar^{\speci}$ as in Definition \ref{defn:obv}, then there is a Zariski open subset $\cC^{\rhobar^{\speci}}_{\sigma} \subset \cC_{\sigma}$ such that $\rhobar$ specializes to the pair $(\sigma, \rhobar^{\speci})$ if and only if $\rhobar \in   \cC^{\rhobar^{\speci}}_{\sigma}$.   The Zariski open can be constructed via the generalization of the diagram in \cite[Theorem 7.4.2]{MLM} to the ramified setting.
\item Let $\sigma$ be a sufficiently generic Serre weight.   Then there are $(n!)^{\cJ}$ sufficiently generic tame $\F$-types which have $\sigma$ as an extremal for $\rhobar$ corresponding to $w$  weight.    
Thus, the union $\cC^{\obv}_{\sigma} = \bigcup \cC^{\rhobar^{\speci}}_{\sigma}$ where $\rhobar^{\speci}$ ranges over all such types is a Zariski open subset of $\cC_{\sigma}$ consisting exactly of the $\rhobar$ which have $\sigma$ as an extremal weight.     
(One can check that $\cC^{\obv}_{\sigma} = \cC_{\sigma}$ only when $K/\Qp$ is unramified and $\sigma$ is Fontaine--Laffaille.)
\end{enumerate}
\end{rmk}

\begin{rmk}
As has been introduced in other settings (\cite{GL3Wild}), there is a natural set of Serre weights that can be associated to an arbitrary $\rhobar:G_K \rightarrow \GL_n(\F)$, the \emph{geometric} weights, 
\[
W^{g}(\rhobar) = \{ \sigma \mid \rhobar \in \cC_{\sigma}(\F) \}. 
\]
Remark \ref{rmk:geomspec} says that $W_{\obv}(\rhobar) \subset W^{g}(\rhobar)$.    Generally speaking the set of geometric weights will be larger.
\end{rmk}

\subsection{The extremal locus}
\label{sub:extr:loc}

In this section, we discuss the relationship between $W_{\obv}(\rhobar)$ and $W^{g}(\rhobar)$ when $K/\Q_p$ is unramified. 
This gives, in this setting, an alternative to the proof of the existence of extremal weights in \S \ref{sec:mord}. 
The main result of this section will also be used to construct global lifts in \S \ref{sec:TW}.%

Let $K/\Q_p$ be unramified. 
Let $(\tld{w}_1,\omega)$ be a lowest alcove presentation for a Serre weight $\sigma$ compatible with $\zeta \in X^*(\un{Z})$. 
Recall from \cite[Definition 4.6.1]{MLM} that $C^\zeta_\sigma$ is the closure of 
\[
(\cI\backslash \cI (w_0\tld{w}_1)^* \cI (t_\omega)^*)^{\nabla_0} 
\] 
inside $\Fl^{\nabla_0}_{\cJ}$ (see also \S \ref{sec:Modp:mon}).
We define $C^\zeta_{\sigma,\obv}$ to be the (Zariski) open subset 
\[
\cup_{w\in \un{W}} (\cI\backslash \cI (w_0\tld{w}_1)^* \cI (t_\omega w)^*)^{\nabla_0} \subset C^\zeta_\sigma. 
\]

Assume that $\sigma$ is $(3n-1)$-deep. 
Then by \cite[Remark 7.4.3(2)]{MLM}, we have a local model diagram for $\cC_\sigma$ (the irreducible component of $\cX_{n,\red}$ corresponding to $\sigma$, cf.~\S \ref{sec:EGs}) and $C^\zeta_\sigma$.
We then let $\cC_{\sigma,\obv} \subset \cC_\sigma$ be the Zariski open set of $\cC_\sigma$ corresponding to $C^\zeta_{\sigma,\obv}\subset C^\zeta_{\sigma}$.
(The definition of $\cC_{\sigma,\obv}$ does not depend on the lowest alcove presentation of $\sigma$.) 

\begin{prop}\label{prop:obvlocus}
Let $\rhobar$ be $(2(n-1)+1)$-generic.
If $\sigma$ is $(3n-1)$-deep and $\rhobar\in \cC_\sigma$, then $\rhobar \in \cC_{\sigma,\obv}$ if and only $\sigma \in W_{\obv}(\rhobar)$. 
\end{prop}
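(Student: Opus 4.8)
The strategy is to reduce the statement to the combinatorics on the local model $C^\zeta_\sigma$ via the local model diagram of \cite[Remark 7.4.3(2)]{MLM}, and then identify the open subset $C^\zeta_{\sigma,\obv}$ with the locus of points whose associated Kisin module satisfies the shape and monodromy conditions defining specialization pairs. First I would recall that $\rhobar \in \cC_\sigma$ exactly means there is a Kisin module $\fM$ (of some tame type $\tau$ with $\sigma \in \JH(\ovl{\sigma}(\tau))$) with $T^*_{dd}(\fM)\cong\rhobar|_{G_{K_\infty}}$, and by the local model diagram this corresponds to a point of $C^\zeta_\sigma$. The key point is that, unwinding \eqref{eqn:JH} and Definition \ref{defn:obv}, the weight $\sigma$ is the extremal weight of a tame inertial $\F$-type $\rhobar^\speci$ corresponding to $w$ precisely when the relevant shape $\tld{w}(\rhobar^\speci,\tau)$ equals $t_{w^{-1}(e\eta_0)}$ (here $e=1$ since $K/\Qp$ is unramified, so this is $t_{w^{-1}\eta_0}$); and this in turn, via Proposition \ref{prop:nosematch} and the analysis of the mod $p$ monodromy condition, translates into $\rhobar$ lying in the sub-stratum $(\cI\backslash\cI(w_0\tld{w}_1)^*\cI(t_\omega w)^*)^{\nabla_0}$ of $C^\zeta_\sigma$.

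Concretely, I would argue as follows. Suppose $\rhobar\in\cC_\sigma$ with $\sigma$ $(3n-1)$-deep; let $\fM$ be the Kisin module produced by the local model diagram, lying over a point $x\in C^\zeta_\sigma$. By definition of $C^\zeta_\sigma$ as the closure of $(\cI\backslash\cI(w_0\tld{w}_1)^*\cI(t_\omega)^*)^{\nabla_0}$, the point $x$ lies in some stratum indexed by an element of $\cI(w_0\tld{w}_1)^*\cI$ times a translation; the open set $C^\zeta_{\sigma,\obv}$ is the union of the translation-maximal strata $(\cI\backslash\cI(w_0\tld{w}_1)^*\cI(t_\omega w)^*)^{\nabla_0}$ over $w\in\un W$. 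I claim that $x\in C^\zeta_{\sigma,\obv}$ if and only if $\tld{w}(\rhobar,\tau)$ (for the type $\tau$ with $W^?(\rhobar^\speci,\tau)$ containing $\sigma$, as in Proposition \ref{prop:indint}) can be taken of the form $t_{w^{-1}\eta_0}$ for some tame $\F$-type $\rhobar^\speci$ and some $w$, with the monodromy condition automatically holding on the open stratum. The ``only if'' direction then follows from Lemma \ref{lemma:import} and the definition of $SP(\rhobar)$: the point being on the maximal stratum forces the shape to be a translation $t_{w^{-1}\eta_0}$, and since we are on the $\nabla_0$-locus the mod $p$ monodromy condition (Definition \ref{defn:modpmono}) is satisfied by construction of $C^\zeta_\sigma\subset\Fl^{\nabla_0}_{\cJ}$, so $\rhobar$ specializes to the pair $(\sigma,\rhobar^\speci)$ with $\rhobar^\speci$ the extremal specialization read off from the shape; hence $\sigma\in W_\obv(\rhobar)$. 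For the ``if'' direction, if $\sigma\in W_\obv(\rhobar)$ then by definition there is $\rhobar^\speci$ with $(\sigma,\rhobar^\speci)\in SP(\rhobar)$, so some type $\tau$ exhibits this with $\tld{w}(\rhobar,\tau)=t_{w^{-1}\eta_0}$ and the unique Kisin module satisfies the mod $p$ monodromy condition; pushing this through the local model diagram and Proposition \ref{prop:nosematch} (comparing the cell of $A^{(j)}_{\fM,\beta}\tld{w}^*(\tau)_j$ in $\Fl^{\nabla_0}$) identifies $x$ with a point of the maximal stratum $(\cI\backslash\cI(w_0\tld{w}_1)^*\cI(t_\omega w)^*)^{\nabla_0}$, i.e.~$\rhobar\in\cC_{\sigma,\obv}$.

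\textbf{The main obstacle.} The delicate step is matching up the Kisin-module-theoretic description of $SP(\rhobar)$ (shape $t_{w^{-1}\eta_0}$ plus mod $p$ monodromy, Definition \ref{defn:spec}(2) and the definition of specialization pairs) with the local-model-theoretic description of $C^\zeta_{\sigma,\obv}$ as a union of $\nabla_0$-strata indexed by translates $t_\omega w$. This requires carefully tracking how the change-of-basis by $\tld{w}^*(\tau)$ (Remark after Definition \ref{defn:smuconj}, part (3)) intertwines the $(s,\mu)$-twisted $\phz$-conjugation with ordinary $\phz$-conjugation, and then invoking Proposition \ref{prop:nosematch} in exactly the form $\tld{w}=t_{w^{-1}\eta_0}$, $\tld{w}'=\delta w_0 t_{(e-1)\eta_0}\tld{w}_1$ (as in the proof of Lemma \ref{lemma:import}) to see that the corresponding affine Schubert cell intersected with $\Fl^{\nabla_0}$ lands in the expected stratum of $C^\zeta_\sigma$. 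One must also check the genericity bookkeeping: the hypothesis that $\rhobar$ is $(2(n-1)+1)$-generic and $\sigma$ is $(3n-1)$-deep ensures both that the local model diagram of \cite[Remark 7.4.3(2)]{MLM} applies and that all the types $\tau,\tau_g$ appearing are sufficiently generic for Lemma \ref{lemma:import} and Proposition \ref{prop:nosematch} to be invoked. Everything else is a matter of assembling results already in the paper.
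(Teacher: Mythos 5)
Your plan is correct and matches the paper's proof: both reduce to the local model $C^\zeta_\sigma$ via the diagram of \cite[Theorem 7.4.2]{MLM} and match the shape-plus-monodromy description of $SP(\rhobar)$ with the strata of $C^\zeta_{\sigma,\obv}$ by means of the $\nabla_0$-preserving Schubert cell identity, which is exactly your invocation of Proposition \ref{prop:nosematch}. The only minor difference is that the paper reads the exhibiting type $\tau$ directly off the stratum index $w$ (setting $\tld{w}(\tau)=t_\omega w\tld{w}_h\tld{w}_1$), so the detour through Lemma \ref{lemma:import} that you mention in the ``only if'' direction is unnecessary.
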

\begin{proof}
We fix a lowest alcove presentation $(\tld{w}_1,\omega)$ for $\sigma$ compatible with $\zeta$.
Let $x\in C_\sigma^\zeta$ correspond to $\rhobar\in \cC_\sigma$ in the local model diagram \cite[Theorem 7.4.2]{MLM}. 
If $\sigma \in W_{\obv}(\rhobar)$, then let $\tau$ be a tame inertial type exhibiting the extremal weight $\sigma$. 
Then 
\begin{align*}
x \in & (\cI\backslash \cI t_{(w_1)^{-1}(\eta)}^* \cI \tld{w}(\tau)^*)^{\nabla_0} \\
=& (\cI\backslash \cI (w_0\tld{w}_1)^*\cI (\tld{w}(\tau)(\tld{w}_h \tld{w}_1)^{-1})^*)^{\nabla_0} \\
=& (\cI\backslash \cI (w_0\tld{w}_1)^*\cI (t_\omega w(\tau)(w_0 w_1)^{-1})^*)^{\nabla_0} \\
\subset & C^\zeta_{\sigma,\obv}
\end{align*}
where $\tld{w}(\tau)$ is defined with respect to the lowest alcove presentation of $\tau$ compatible with $\zeta$. 

Conversely, suppose that $\rhobar \in \cC_{\sigma,\obv}$. 
Let $w \in \un{W}$ be such that $x\in (\cI\backslash \cI (w_0\tld{w}_1)^* \cI (t_\omega w)^*)^{\nabla_0}$. 
Then we let $\tau$ be such that $\tld{w}(\tau) = t_\omega w \tld{w}_h \tld{w}_1$. 
The above calculation shows that $\tau$ exhibits $\sigma$ as an extremal weight of $\rhobar$. 
\end{proof}

\begin{prop}\label{prop:obvstrat}
Assume that $\sigma$ is $(4n-2)$-deep.
There is an inclusion
\[
\cC_\sigma \subset \underset{\sigma \textrm{ covers } \sigma'}{\cup} \cC_{\sigma',\obv}.
\]
\end{prop}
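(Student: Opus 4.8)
The plan is to work on the side of the local models in the affine flag variety and deduce the statement by translating through the local model diagram. Since $\sigma$ is $(4n-2)$-deep, every Serre weight $\sigma'$ covered by $\sigma$ is $(3n-1)$-deep, so by Proposition \ref{prop:obvlocus} the Zariski opens $\cC_{\sigma',\obv}\subset \cC_{\sigma'}$ are defined and characterized by the extremal weight condition; moreover the corresponding opens $C^\zeta_{\sigma',\obv}\subset C^\zeta_{\sigma'}$ live inside $\Fl^{\nabla_0}_{\cJ}$. First I would fix a point $x\in C^\zeta_\sigma$ corresponding to a given $\rhobar\in\cC_\sigma(\F)$ via the local model diagram of \cite[Theorem 7.4.2]{MLM}, and (by the usual spreading-out / generic-point argument, as in the proof of \cite[Theorem 7.4.2]{MLM}) reduce to showing that $x$ lies in one of the $C^\zeta_{\sigma',\obv}$ for some $\sigma'$ covered by $\sigma$. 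Writing a lowest alcove presentation $(\tld{w}_1,\omega)$ for $\sigma$ compatible with $\zeta$, the point $x$ lies in $C^\zeta_\sigma = \overline{(\cI\backslash \cI (w_0\tld{w}_1)^*\cI(t_\omega)^*)^{\nabla_0}}$, hence in some stratum $(\cI\backslash \cI \tld{u}^* \cI (t_\omega)^*)^{\nabla_0}$ with $\tld{u}^* \leq (w_0\tld{w}_1)^*$.

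The combinatorial heart of the argument is to show that any such stratum is contained in $C^\zeta_{\sigma',\obv}$ for a suitable $\sigma'$ covered by $\sigma$. Concretely, I would use Proposition \ref{prop:nosematch} to rewrite the relevant Schubert cell: since $\tld{u}^* \leq (w_0\tld{w}_1)^*$ one can find a reduced factorization $(w_0\tld{w}_1)^* = \tld{u}^* \tld{z}'$ and then $(\cI \tld{u}^* \cI (t_\omega)^*)^{\nabla_0}$ with the correct $\nabla_0$-structure matches $(\cI (w_0\tld{w}_1)^* \cI (\tld{z}'^{-1} t_\omega)^*)^{\nabla_0}$ after the appropriate bookkeeping with the translation factor. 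The point is that $t_\omega$ can be absorbed: one rewrites $\tld{u}^* (t_\omega)^* = (w_0 \tld{w}'_1)^*(t_{\omega'} w)^*$ for some $\tld{w}'_1\in\tld{W}^{+}_1$, $w\in\un{W}$, and a weight $\omega'$ which (by the depth hypothesis on $\sigma$ and the combinatorics of the dot action on restricted alcoves, i.e. the covering relations used in \S\ref{sec:SWC}) corresponds to a Serre weight $\sigma' = F_{(\tld{w}'_1,\omega')}$ covered by $\sigma$. Then by definition $x \in (\cI\backslash \cI (w_0\tld{w}'_1)^*\cI (t_{\omega'} w)^*)^{\nabla_0}\subset C^\zeta_{\sigma',\obv}$.

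The main obstacle I anticipate is precisely this last bookkeeping step: matching up which shape $\tld{u}$ below $w_0\tld{w}_1$ corresponds to which covered weight $\sigma'$, and verifying that the translation part always lands in the ``$W$-enlarged'' piece $\cup_{w} (\cI (w_0\tld{w}'_1)^* \cI (t_{\omega'} w)^*)^{\nabla_0}$ rather than missing it. This is where the genericity hypothesis $(4n-2)$-deep is used: it guarantees that all the weights $\sigma'$ arising this way are $(3n-1)$-deep so that the machinery of Proposition \ref{prop:obvlocus} applies, and it ensures the relevant alcove combinatorics (e.g. that $\tld{u}^*(A_0)$ lies in a restricted alcove after the correct $\Omega$-twist) behaves as in the tame picture. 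I would also need to check compatibility of the $\nabla_0$ (monodromy) conditions under these rewritings, which follows from Proposition \ref{prop:nosematch} since the translations involved are small relative to the genericity.

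\textbf{Alternative.} If the affine-flag-variety computation proves unwieldy, one can instead argue Galois-theoretically: given $\rhobar\in\cC_\sigma(\F)$, pass to the generic point of $\cC_\sigma$, which by the description recalled in \S\ref{sec:EGs} is (up to twist) a generic extension realizing a full flag with specified characters; such a $\rhobar$ admits, for a suitable block decomposition, potentially crystalline lifts to which Theorem \ref{thm:extension} and Proposition \ref{prop:obvord} apply, exhibiting a maximally ordinary — hence extremal — weight $\sigma'$ which one checks is covered by $\sigma$; then Proposition \ref{prop:obvlocus} shows the generic point lies in $\cC_{\sigma',\obv}$, and irreducibility of $\cC_\sigma$ propagates this to a Zariski dense open, whence to all of $\cC_\sigma$ after running over the finitely many strata. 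The obstacle in this approach is controlling exactly which $\sigma'$ appears at non-generic points, which again reduces to the same covering-relation combinatorics.
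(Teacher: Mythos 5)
Your high-level strategy — stratify $C^\zeta_\sigma$ by Schubert cells in the affine flag variety and show that each stratum lands in some $C^\zeta_{\sigma',\obv}$ — is indeed what the paper does, but the specific steps you propose contain a gap and misidentify the key mechanism.

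First, the claim that $\tld{u}^* \leq (w_0\tld{w}_1)^*$ lets you write a \emph{reduced} factorization $(w_0\tld{w}_1)^* = \tld{u}^*\tld{z}'$ is false: Bruhat comparability only gives that a reduced word for $\tld{u}^*$ is a subword of one for $(w_0\tld{w}_1)^*$, not a prefix. The paper instead uses the structure result of Lemma \ref{lemma:bruhatup}: an element $\leq w_0\tld{w}_1$ has the form $s\tld{w}$ with $s\in \un{W}$ and $\tld{w}\in\tld{\un{W}}^+$ satisfying $\tld{w}\uparrow\tld{w}_1$, and then further decomposes $\tld{w}=t_\nu\tld{w}'_1$ with $\nu$ dominant and $\tld{w}'_1\in\tld{\un{W}}^+_1$. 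This is the decomposition you need, and it is not what your proposed reduced factorization gives.

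Second, the paper does \emph{not} invoke Proposition \ref{prop:nosematch} (which requires $h$-small, $e$-regular shapes and a generic twist $\tld{z}$ — hypotheses not available for an arbitrary stratum here). Instead the key step is an irreducibility-and-dimension argument: one shows, via two right-multiplications by double cosets $\cI(w_0s^{-1})^*\cI$ and the reduced factorization $w_0\tld{w}=t_{w_0(\nu)}w_0\tld{w}'_1$, a pair of inclusions
\[
(\cI\backslash\cI(w_0\tld{w}'_1)^*\cI(t_{\omega+s(\nu)}sw_0^{-1})^*)^{\nabla_0}\ \subset\ (\cI\backslash\cI(w_0\tld{w})^*\cI(t_\omega sw_0^{-1})^*)^{\nabla_0}\ \supset\ (\cI\backslash\cI(s\tld{w})^*\cI t_\omega^*)^{\nabla_0},
\]
and then concludes that the first inclusion is an equality because by \cite[Theorem 4.2.4]{MLM} both sides are irreducible of the same dimension. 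This replaces what you were hoping to get from Proposition \ref{prop:nosematch}. Finally, the bookkeeping you flag as the ``main obstacle'' is resolved cleanly: taking $\sigma' = F_{(\tld{w}'_1,\omega+s(\nu))}$, the covering relation $\sigma$ covers $\sigma'$ is exactly $t_{\un{W}(\nu)}\tld{w}'_1\uparrow\tld{w}_1$ by \cite[Proposition 2.3.12(ii)]{MLM}, which follows from $t_{\un{W}(\nu)}\tld{w}'_1\uparrow\tld{w}\uparrow\tld{w}_1$ by \cite[II.6.5(3)]{RAGS}; no deeper combinatorics of the dot action is needed. The $(4n-2)$-deep hypothesis enters only to ensure $\sigma'$ is $(3n-1)$-deep so $\cC_{\sigma',\obv}$ is defined, and to keep the dimension formula valid — exactly as you anticipated.

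Your ``alternative'' Galois-theoretic route also has a gap: openness of $\cC_{\sigma',\obv}$ inside $\cC_{\sigma'}$ (not inside $\cC_\sigma$) means the generic point of $\cC_\sigma$ lying in $\cC_{\sigma',\obv}$ does not by itself propagate to all of $\cC_\sigma$; you would still have to run a stratum-by-stratum argument, which is what the affine-flag proof does directly.
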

\begin{proof}
We choose a $(4n-2)$-deep lowest alcove presentation $(\tld{w}_1,\omega)$ of $\sigma$ and will show that 
\[
C_\sigma^\zeta \subset \underset{\sigma \textrm{ covers } \sigma'}{\cup} C_{\sigma',\obv}^\zeta. 
\]
Since the elements of $\tld{\un{W}}$ less than or equal to $w_0\tld{w}_1$ are exactly those of the form $s \tld{w}$ for some $s\in \un{W}$ and $\tld{w} \in \tld{W}^+$ with $\tld{w} \uparrow \tld{w}_1$ (see the proof of Lemma \ref{lemma:bruhatup}), \cite[Proposition 2.8]{iwahori-matsumoto} gives
\[
C_\sigma^\zeta \subset \overline{\cI\backslash \cI (w_0\tld{w}_1)^* \cI t_\omega^*}^{\nabla_0} = \underset{s \in \un{W}}{\cup} \underset{\substack{\tld{w} \in \tld{\un{W}}\\ \tld{w} \uparrow \tld{w}_1}}{\cup} (\cI\backslash \cI (s\tld{w})^* \cI t_\omega^*)^{\nabla_0}.
\]
We will show that $(\cI\backslash \cI (s\tld{w})^* \cI t_\omega^*)^{\nabla_0} \subset C_{\sigma',\obv}^\zeta$ for some $\sigma'$ which $\sigma$ covers. 

Since $(w_0 s^{-1})s\tld{w}$ is a reduced factorization by Lemma \ref{lemma:minrep}, 
\begin{align*}
(\cI\backslash \cI (s\tld{w})^* \cI t_\omega^*)^{\nabla_0} 
&\subset (\cI\backslash \cI (s\tld{w})^* \cI (w_0 s^{-1})^* \cI ((w_0 s^{-1})^{-1})^* t_\omega^*)^{\nabla_0} \\
& = (\cI\backslash \cI (w_0\tld{w})^* \cI (t_\omega s w_0^{-1})^*)^{\nabla_0}.
\end{align*}
To further analyze this, let $\tld{w} = t_\nu \tld{w}_1'$ where $\nu \in X^*(\un{T})$ is dominant and $\tld{w}_1' \in \tld{\un{W}}^+_1$. 
Then $t_{w_0 (\nu)} w_0 \tld{w}_1'$ is a reduced expression for $w_0\tld{w}$ by \cite[Lemma 4.1.9]{LLL}, from which we deduce as before that 
\begin{align*}
(\cI\backslash \cI (w_0\tld{w}_1')^* \cI (t_{\omega+s(\nu)} s w_0^{-1})^*)^{\nabla_0} &= (\cI\backslash \cI (w_0\tld{w}_1')^* \cI (t_\omega s w_0^{-1}t_{w_0(\nu)})^*)^{\nabla_0} \\
&\subset (\cI\backslash \cI (w_0\tld{w})^* \cI (t_\omega s w_0^{-1})^*)^{\nabla_0}. 
\end{align*}
On the other hand, these are irreducible varieties of the same dimension by \cite[Theorem 4.2.4]{MLM} and thus must be equal. 
Letting $\sigma'$ be the Serre weight with lowest alcove presentation $(\tld{w}_1',\omega+s(\nu))$, we have $(\cI\backslash \cI (w_0\tld{w}_1')^* \cI (t_{\omega+s(\nu)} s w_0^{-1})^*)^{\nabla_0} \subset C_{\sigma',\obv}^\zeta$. 
(Note that $\sigma'$ is $(3n-1)$-deep, hence $\mathcal{C}_{\sigma',\obv}$ is defined.)
It suffices to show that $\sigma$ covers $\sigma'$, or by \cite[Proposition 2.3.12(ii)]{MLM} that $t_{\un{W}(\nu)}\tld{w}_1' \uparrow \tld{w}_1$. 
However, we have $t_{\un{W}(\nu)}\tld{w}_1' \uparrow \tld{w} \uparrow \tld{w}_1$ where the first inequality follows from \cite[II.6.5(3)]{RAGS}.
\end{proof}

\begin{prop}\label{prop:obvintersect}
Let $K/\Q_p$ be a finite unramified extension and $\rhobar: G_K \ra \GL_n(\F)$ be a Galois representation. 
Let $\tau$ be a $(5n-1)$-generic tame inertial $L$-parameter. 
Then the following are equivalent. 
\begin{enumerate}
\item \label{item:nonzerodefring} $R_{\rhobar}^\tau$ is nonzero;
\item \label{item:taugeometric}  $\rhobar$ is $4n$-generic and $W^g(\rhobar) \cap \JH(\ovl{\sigma}(\tau)) \neq \emptyset$; and 
\item \label{item:tauobv} $\rhobar$ is $4n$-generic and $W_{\obv}(\rhobar) \cap \JH(\ovl{\sigma}(\tau)) \neq \emptyset$. 
\end{enumerate}
\end{prop}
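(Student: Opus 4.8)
The statement is a three-way equivalence relating nonvanishing of a potentially crystalline deformation ring $R_\rhobar^\tau$ to geometric membership in $W^g(\rhobar)$ and to the extremal weight set $W_\obv(\rhobar)$, all under suitable genericity hypotheses on $\tau$ and $\rhobar$. The natural route is to prove a cycle of implications, but with the help of the unramified local model theory of \cite{MLM} (and Proposition \ref{prop:obvstrat}, Proposition \ref{prop:obvlocus} above) the cleanest structure is: (\ref{item:tauobv})$\Rightarrow$(\ref{item:taugeometric}) is immediate, (\ref{item:taugeometric})$\Rightarrow$(\ref{item:nonzerodefring}) follows from the existence of lifts once a weight is geometric, and (\ref{item:nonzerodefring})$\Rightarrow$(\ref{item:tauobv}) is the content requiring work, via the geometry of $\cX_n$ and the stratification of the special fiber of $R_\rhobar^\tau$.

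\textbf{Step 1: (\ref{item:tauobv})$\Rightarrow$(\ref{item:taugeometric}).} This is formal: by the Remark following Remark \ref{rmk:geomspec}, $W_\obv(\rhobar)\subset W^g(\rhobar)$, so a common Jordan--H\"older factor of $W_\obv(\rhobar)$ and $\JH(\ovl\sigma(\tau))$ is automatically one for $W^g(\rhobar)$. The genericity hypothesis on $\rhobar$ is the same in both statements, so nothing further is needed.

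\textbf{Step 2: (\ref{item:taugeometric})$\Rightarrow$(\ref{item:nonzerodefring}).} Here one uses that $R_\rhobar^\tau$ is the (framed) potentially crystalline deformation ring of type $(\tau,\eta)$, and that its special fiber is cut out on the Emerton--Gee stack by the components $\cC_\sigma$ with $\sigma\in\JH(\ovl\sigma(\tau))$ — this is the unramified special case of the picture recalled in \S\ref{sec:EGs} (using \cite[Theorem 7.4.2]{MLM} and its consequences, available since $K/\Q_p$ is unramified). Concretely, if $\sigma\in W^g(\rhobar)\cap\JH(\ovl\sigma(\tau))$ then $\rhobar\in\cC_\sigma(\F)$, and $\cC_\sigma$ is a component of the special fiber of the map $\Spf R_\rhobar^\tau\to\cX_n$; hence $\rhobar$ lies in the image of the special fiber of $\Spf R_\rhobar^\tau$, which forces $R_\rhobar^\tau/\varpi\neq 0$, so $R_\rhobar^\tau\neq 0$. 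One must be slightly careful: the cleanest way is to invoke the identification of $\Spec(R_\rhobar^\tau/\varpi)_\red$ with (a union over $\tld z\in\Adm^\vee(\eta)$ of closures of) the shape strata and match these with the $\cC_\sigma$'s; the genericity bounds ($\tau$ being $(5n-1)$-generic, $\rhobar$ being $4n$-generic) are exactly what is needed to apply the Breuil--Kisin module results (Proposition \ref{prop:sskisin}, Proposition \ref{prop:BK_to_phi_mono}, Proposition \ref{prop:CL:specialfiber}) and the local model diagram without degeneracies.

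\textbf{Step 3: (\ref{item:nonzerodefring})$\Rightarrow$(\ref{item:tauobv}).} This is the main obstacle. Suppose $R_\rhobar^\tau\neq 0$. First I would deduce that $\rhobar$ is $\tau$-admissible and in fact $4n$-generic: nonvanishing of the deformation ring forces $\rhobar$ to admit a potentially crystalline lift of type $(\tau,\eta)$, hence (by Proposition \ref{prop:BK_to_phi_mono} and the theory of \S\ref{sec:BKM}) there is $\ovl\fM\in Y^{\leq\eta,\tau}(\F)$ with $T^*_{dd}(\ovl\fM)\cong\rhobar|_{G_{K_\infty}}$, and one reads off genericity of $\rhobar^\semis|_{I_K}$ from that of $\tau$ via Proposition \ref{prop:sskisin} and the bound $\tld w(\rhobar,\tau)\in\Adm(e\eta_0)$. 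Next, $\rhobar\in\cC_\sigma$ for some $\sigma\in\JH(\ovl\sigma(\tau))$ — this is the geometric interpretation of the special fiber of $R_\rhobar^\tau$ as above, giving (\ref{item:taugeometric}) along the way. Now the key input is Proposition \ref{prop:obvstrat}: $\cC_\sigma\subset\bigcup_{\sigma\text{ covers }\sigma'}\cC_{\sigma',\obv}$, provided $\sigma$ is $(4n-2)$-deep, which holds by the genericity of $\tau$. Therefore $\rhobar\in\cC_{\sigma',\obv}$ for some $\sigma'$ covered by $\sigma$, and Proposition \ref{prop:obvlocus} gives $\sigma'\in W_\obv(\rhobar)$. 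It remains to check $\sigma'\in\JH(\ovl\sigma(\tau))$: this is where the combinatorics of \S\ref{sec:combserre} enters — since $\sigma\in\JH(\ovl\sigma(\tau))$ and $\sigma'$ is covered by $\sigma$ (i.e. $\sigma'\uparrow\sigma$ in the appropriate sense, cf. \cite[Proposition 2.3.12]{MLM}), and $\ovl\sigma(\tau)$ is a sufficiently generic Deligne--Lusztig representation, the "lower" weight $\sigma'$ is again a Jordan--H\"older constituent; this follows from Proposition \ref{prop:JH} together with the description of $\JH(\ovl\sigma(\tau))$ as an up-set closed under the covering relation in the relevant range. I expect the genericity bookkeeping here (tracking that $\sigma$ being $(4n-2)$-deep, inherited from $\tau$ being $(5n-1)$-generic, suffices simultaneously for Proposition \ref{prop:obvstrat}, Proposition \ref{prop:obvlocus}, and the Jordan--H\"older closure) to be the only genuinely delicate point; the geometric steps are all black-boxed from the cited results. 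Finally, one assembles the three implications into the stated equivalence, noting that the genericity hypothesis "$\rhobar$ is $4n$-generic" appears in (\ref{item:taugeometric}) and (\ref{item:tauobv}) and is automatically available in the presence of (\ref{item:nonzerodefring}) by the argument at the start of Step 3.
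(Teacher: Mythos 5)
Your proof is correct and takes essentially the same route as the paper: (3)$\Rightarrow$(2) via $W_\obv\subset W^g$, (1)$\Leftrightarrow$(2) via \cite[Theorem 7.4.2(1)]{MLM}, and (2)$\Rightarrow$(3) via Propositions \ref{prop:obvstrat} and \ref{prop:obvlocus} together with the fact that covering preserves membership in $\JH(\ovl\sigma(\tau))$. Your reorganization into a cycle of implications and the extra remarks on genericity bookkeeping are cosmetic; the key observation (that Proposition \ref{prop:obvstrat} reduces to the extremal locus of a covered weight) and the justification "$\sigma'\in\JH(\ovl\sigma(\tau))$ because $\sigma$ covers $\sigma'$" match the paper's argument exactly (the paper dispatches the latter with "by the definition of covering," which is what your up-set closure argument amounts to).
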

\begin{proof}
\eqref{item:nonzerodefring} and \eqref{item:taugeometric} are equivalent by \cite[Theorem 7.4.2(1)]{MLM}. 
Since $W_{\obv}(\rhobar) \subset W^g(\rhobar)$, \eqref{item:tauobv} implies \eqref{item:taugeometric}. 
For the converse, suppose that $\rhobar \in \cC_\sigma$ for some $\sigma \in \JH(\ovl{\sigma}(\tau))$. 
Proposition \ref{prop:obvstrat} implies that $\rhobar \in \cC_{\sigma',\obv}$ for some $\sigma'$ which $\sigma$ covers. 
Then $\sigma' \in W_{\obv}(\rhobar)$ by Proposition \ref{prop:obvlocus} and $\sigma' \in \JH(\ovl{\sigma}(\tau))$ by the definition of covering. 
(Note that Propositions \ref{prop:obvlocus}, \ref{prop:obvstrat} apply by the genericity assumption on $\tau$.)
\end{proof}

\clearpage{}%
\clearpage{}%
\section{Some potentially crystalline deformation rings}
\label{sec:PCDR}

The aim of this section is to compute  potentially crystalline deformation rings for a certain class of shapes, namely those related to the subgroup $W_{a,\alpha}\subseteq \tld{\un{W}}$ defined in \ref{sec:comb:weyl}.
We follow the general procedure appearing in \cite{LLL}, improved in \cite{MLM}.

\subsection{The main result on Galois deformation rings}
For a mod $p$ Galois representation $\rhobar$, we write $R^{\eta,\tau}_{\rhobar}$ (resp.~$R^{\leq\eta,\tau}_{\rhobar}$) for the framed universal deformation ring of $\rhobar$ of tame inertial type $\tau$ for $I_K$ over $E$ and parallel Hodge--Tate weights $\eta$ (resp.~$\leq \eta$).
The main result is the following:

\begin{thm}\label{thm:FSM}
Let $\tau$ be a $\max\{(3n-7)e- (n-2), (2n-3)e\}$-generic tame inertial type. 
Suppose that $\tld{w}(\rhobar,\tau)$ is $\tld{w}^{-1} t_{e\eta_0} \tld{w}_\alpha \tld{w}$ for some $\tld{w} \in \un{\tld{W}}^{+}_1$, some $\alpha\in \Delta^\cJ$, and $\tld{w}_{\alpha_j} \in W_{a,\alpha_j}$.

Then $R_\rhobar^{ \eta,\tau}=R_\rhobar^{\leq \eta,\tau}$ is either zero or is a normal domain. 
Furthermore:
\begin{itemize}
\item If $\tld{w}_{\alpha_j}$ is $\id$ or $t_{-e\alpha_j}$ for each $j$, $R^{\eta,\tau}_\rhobar$ is formally smooth over $\cO$.
\item In general, $\Spec \ovl{R}^{\leq \eta,\tau}_\rhobar$ is reduced with $2^{m}$ geometrically irreducible components of the same dimension, where $m=\#\{j\in \cJ \mid \tld{w}_{\alpha_j}\neq \id, t_{-e\alpha_j}\}$.
\end{itemize} 
\end{thm}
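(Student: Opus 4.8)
The plan is to transport the statement to the moduli of Breuil--Kisin modules of type $\tau$ and thence to the local model computations of \S\ref{sec:PCDR}. First, by the genericity hypothesis the map $\eps_\tau\colon Y^{\leq\eta,\tau}\to\Phi\text{-}\Mod^{\text{\'et},n}_K$ is a closed immersion (Proposition \ref{prop:BK_to_phi_mono}), so $\Spec R^{\leq\eta,\tau}_\rhobar$ is a versal deformation space for the Breuil--Kisin lattice $\fM$ with $T^*_{dd}(\fM)\cong\rhobar|_{G_{K_\infty}}$ and shape $\tld{z}\defeq\tld{w}(\rhobar,\tau)$. Using the gauge-basis formalism (Propositions \ref{prop:modpform}, \ref{prop:expeps}, and the normalizations of \S\ref{subsec:SCI}), I would present $R^{\leq\eta,\tau}_\rhobar$ as formally smooth over the completed local ring at $\tld{z}$ of an explicit affine model $U^{\leq\eta}(\tld{z})$ cut out by the height/shape equations, intersected with the characteristic-zero monodromy condition of \S\ref{sub:analysis:MC} (whose reduction mod $p$ is the condition of \S\ref{sec:Modp:mon}); the same holds for $R^{\eta,\tau}_\rhobar$ with $\leq\eta$ replaced by $\eta$. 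The equality $R^{\eta,\tau}_\rhobar=R^{\leq\eta,\tau}_\rhobar$ then follows as in \cite{MLM}: for a Breuil--Kisin module whose shape lies in $\Adm(e\eta_0)$ the $\leq\eta$ bound already forces the elementary divisors of Frobenius to be exactly $\eta$, so the two deformation problems coincide.

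The crucial step — and the one I expect to be the main obstacle — is the Levi reduction. Since $\tld{z}=\tld{w}^{-1}t_{e\eta_0}\tld{w}_\alpha\tld{w}$ with $\tld{w}\in\un{\tld{W}}^{+}_1$, $\alpha\in\Delta^\cJ$ and $\tld{w}_{\alpha_j}\in W_{a,\alpha_j}$, conjugating the presentation by $\tld{w}$ puts the universal matrix $A^{(j)}$ into a block-parabolic form adapted to the parabolic $P\subset\GL_n$ whose Levi is $M=\GL_2\times\GL_1^{n-2}$, the $\GL_2$ block occupying the two coordinates moved by $\alpha_j$. I would then use the structure theory of Breuil--Kisin modules of parabolic shape (\S\ref{sub:par:strct}) to see that the height/shape equations decouple into those of the $\GL_2$ block (with shape $t_{e\eta_0}\tld{w}_{\alpha_j}$), the (empty) equations of the $\GL_1$ blocks, and free parameters in the unipotent off-diagonal directions, and Lemma \ref{lem:parabolic_monodromy} to see that the monodromy condition likewise decomposes, imposing only relations internal to the $\GL_2$ block while leaving the remaining variables as power series. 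Proposition \ref{prop:nosematch} (matching of $\nabla_0$-strata under reduced factorizations) is the mod $p$ input that makes this bookkeeping go through. The upshot is that $R^{\leq\eta,\tau}_\rhobar$ is formally smooth over $\widehat{\bigotimes}_{j\in\cJ,\,\cO}A_j$, where $A_j$ is the completed local ring at $t_{e\eta_0}\tld{w}_{\alpha_j}$ of the monodromy-constrained $\GL_2$ local model.

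Next I would compute each $A_j$. By Proposition \ref{prop:listshapes}, the $e\eta_0$-admissible elements of $W_{a,\alpha_j}t_{e\eta_0}$ have two ``corners'', namely $\tld{w}_{\alpha_j}\in\{\id,t_{-e\alpha_j}\}$, i.e.\ the pure translations $t_{(e,0)},t_{(0,e)}$; for these the point is a smooth point of the $\GL_2$ local model, so $A_j$ is formally smooth over $\cO$. For the remaining shapes the $\GL_2$ local model is, after a formally smooth base change, the completed local ring of the ramified local model of Pappas--Rapoport attached to $(\GL_2,\eta_0)$ with the relevant Iwahori-type level and tame descent; I would invoke its known geometry — normal, Cohen--Macaulay, with reduced and equidimensional special fibre having exactly two irreducible components through the point in question — or verify this directly by the explicit matrix computation of \cite{LLLM} and its ramified refinement, where $A_j$ comes out formally smooth over a ring of the form $\cO[\![x,y]\!]/(xy-\varpi_0)$ (with $\varpi_0$ a uniformizer; $\varpi_0=p$ in the unramified case) or a Pappas--Rapoport analogue thereof. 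In all cases $A_j$ is a normal domain, flat over $\cO$, with geometrically integral generic fibre and reduced special fibre, having $1$ or $2$ irreducible components according to whether $\tld{w}_{\alpha_j}\in\{\id,t_{-e\alpha_j}\}$ or not.

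Finally I would assemble. A completed tensor product over $\cO$ of Cohen--Macaulay normal domains that are flat with geometrically integral generic fibre and reduced special fibre is again of this kind: flatness and the domain property pass to the tensor product because its generic fibre is a tensor product of geometrically integral $E$-algebras, its special fibre is a tensor product of reduced algebras over the perfect field $\F$ hence reduced, and one concludes normality from Serre's $(R_1)+(S_2)$ criterion using Cohen--Macaulayness together with the fact that $R/\varpi$ reduced forces regularity in codimension $1$ along $(\varpi)$. Hence $R^{\eta,\tau}_\rhobar=R^{\leq\eta,\tau}_\rhobar$ is zero or a normal domain. Its special fibre is $\widehat{\bigotimes}_{j\in\cJ,\,\F}(A_j\otimes_\cO\F)$ up to formally smooth factors, so it is reduced; its irreducible components are the products of those of the $A_j\otimes_\cO\F$, hence there are $2^m$ of them with $m=\#\{j:\tld{w}_{\alpha_j}\neq\id,t_{-e\alpha_j}\}$, all of the same dimension since each $A_j\otimes_\cO\F$ is equidimensional; and when $m=0$ every factor is formally smooth over $\cO$, so $R^{\eta,\tau}_\rhobar$ is too. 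As indicated, the real work is concentrated in the monodromy bookkeeping underlying the Levi reduction and in pinning down the ramified Pappas--Rapoport local model in the $\GL_2$ step; the rest is formal.
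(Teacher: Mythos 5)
Your overall plan is the paper's plan: Levi reduction of the local model via the parabolic structure of Propositions \ref{prop:parabolic_general}/\ref{prop:Levi_reduction}, reduction of the monodromy analysis via Lemma \ref{lem:parabolic_monodromy}, and then the known geometry of the $\GL_2$ Pappas--Zhu local model (Corollary \ref{cor:GL_2model}). But there is a real gap in the way you close the argument.

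You assert that after the Levi reduction the monodromy condition ``decomposes, imposing only relations internal to the $\GL_2$ block while leaving the remaining variables as power series,'' and that this directly yields formal smoothness of $R^{\leq\eta,\tau}_{\rhobar}$ over $\widehat{\bigotimes}_j A_j$. This is not what Lemma \ref{lem:parabolic_monodromy} gives. The monodromy condition is analyzed only \emph{approximately} (Proposition \ref{prop:monodromy_control}, Lemma \ref{lem:error_control}, Remark \ref{rmk:error_term}), and what one extracts is an \emph{upper bound on the number of topological generators} of $R^{\tau,\ovl{\beta},\nabla}_{\ovl{\fM}}$ over $\cO^{\wedge}_{M_{\cJ}(t_{(1,0)}),x}$ — namely at most $fn + (\tfrac{n(n-1)}{2}-1)[K:\Qp]$. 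One cannot see by hand that these generators are free; the monodromy equations genuinely couple the off-diagonal blocks to the Levi data and are only solved up to a $p$-adic error term. The indispensable closing step, which your proposal omits, is a \emph{dimension count}: from the general theory of potentially crystalline deformation rings one knows $\dim R^{\tau,\ovl{\beta},\nabla}_{\ovl{\fM}} = 1 + fn + \tfrac{n(n-1)}{2}[K:\Qp]$ whenever $R^{\leq\eta,\tau}_{\rhobar}\neq 0$, and this matches $\dim \cO^{\wedge}_{M_{\cJ}(t_{(1,0)}),x} + fn + (\tfrac{n(n-1)}{2}-1)[K:\Qp]$ on the nose. Because $\cO^{\wedge}_{M_{\cJ}(t_{(1,0)}),x}$ is a domain, the generation bound together with the equality of dimensions \emph{forces} $R^{\tau,\ovl{\beta},\nabla}_{\ovl{\fM}}$ to be a power series ring over it. Without this dimension count you do not get formal smoothness, only a surjection from a power series ring.

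Two smaller points. First, you cite Proposition \ref{prop:nosematch} as ``the mod $p$ input that makes this bookkeeping go through,'' but that proposition plays no role in the proof of Theorem \ref{thm:FSM}; it enters elsewhere (Lemma \ref{lemma:import}). The genuinely relevant inputs here are Proposition \ref{prop:monodromy_control}, Lemma \ref{lem:error_control}, Lemma \ref{lem:parabolic_monodromy} and Remark \ref{rmk:error_term}. Second, the ring $A_j$ you describe as the ``monodromy-constrained $\GL_2$ local model'' is in fact the \emph{un}constrained $\GL_2$ Pappas--Zhu local model $M_j(t_{(1,0)})$; since $(1,0)$ is minuscule the monodromy condition is vacuous for the $\GL_2$ block, so this does not derail the argument, but the framing is off.
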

\begin{rmk} A key ingredient in our proof of Theorem \ref{thm:FSM} is the fact that the local model (in the sense of \cite{MLM}) of our Galois deformation ring has a Levi reduction property: namely, it is formally smooth over a similar local model attached to a Levi subgroup of $\GL_n$. This turns out to be a general phenomenon whenever the shape $\tld{w}(\rhobar,\tau)^*$ is suitably ``decomposable'', which may be of independent interest. In the specific case of Theorem \ref{thm:FSM}, the Levi subgroup we can reduce to is $\GL_2\times \GL_1^{n-2}$, which is why we have very precise control on the relevant local models, and hence the Galois deformation rings. 
\end{rmk}

\subsection{Gauge bases and parabolic structures}
\label{sub:par:strct}
For each $j\in \cJ$, we set $E_j=\sigma_j(E(v))\in \cO[v]$. Let $R$ be an $\cO$-algebra. We have the usual notion of degrees on $R[v]$, which is submultiplicative $\deg(ab)\leq \deg(a) + \deg(b)$, with equality if either $a$ or $b$ are monic (but not in general). The notion of degree and being monic extends to elements of $R[v,E_j^{-1}]$. The set of elements of degree $\leq 0$ form a subring of $R[v,E_j^{-1}]_{\leq 0}$. This subring contains the set of elements $R[v,E_j^{-1}]_{\leq -1}=R[v,E_j^{-1}]_{< 0}$ of degree $<0$ as an ideal, and another ideal given by $vR[v,E_j^{-1}]_{<0}$. More generally, the set $R[v,E_j^{-1}]_{\leq d}$ of elements of degree $\leq d$ form an $R[v,E_j^{-1}]_{\leq 0}$-module.

Concretely, the elements of $R[v,E_j^{-1}]_{\leq 0}$ are exactly those of the form $\frac{P}{E_j^m}$ with $P\in R[v]$ such that $\deg P\leq me$, with the extra condition $v\mid P$ (for some choice of fractions with $m$ sufficiently large) for elements of $vR[v,E_j^{-1}]_{<0}$, and the extra condition $\deg P< me$ for elements of $R[v,E_j^{-1}]_{< 0}$.
Finally, note that for an element $a$ represented by $\frac{P}{E_j^m}$ with $P(v)\in R[v]$, the $\cO$-algebra generated by the coefficients of $P$ is independent of the choice of representing fraction. 

Let $R$ be a Noetherian $\cO$-algebra. We define
\begin{align*}
 L \cG^{(j)}(R)&\defeq \{A \in \GL_n(R[v]^{\wedge_{E_j}}[\frac{1}{E_j}]), A \textrm{ is upper triangular mod } v\};\\
 L^+ \cM^{(j)}(R) & \defeq \{A \in \mathrm{Mat}_n(R[v]^{\wedge_{E_j}}), A \textrm{ is upper triangular mod } v\};
\end{align*}
For $\tld{z}=z t_\nu\in \tld{W}^\vee$ such that $e\mid ||\nu||$, define $\cU(\tld{z})^{\det,\leq h}(R)$ to be the collection of $  A\in   L \cG^{(j)}(R) $ such that 
\begin{itemize}
\item For $1\leq i, k\leq n$, 
\[A_{ik}=v^{\delta_{i>k}}\frac{P}{E_j^h}\]
with $P\in R[v]$ such that  $\deg P \leq he+\nu_k-\delta_{i>k}-\delta_{i<z(k)}$. Furthermore, this is an equality when $i=z(k)$, in which case $P$ is monic. In particular, $A\in \frac{1}{E_j^h}L^+\cM^{(j)}(R)$.
\item $\det A=\det(z) E_j^{\frac{||\nu||}{e}}$.
\end{itemize}

If $R$ is furthermore $\cO$-flat, then for such $A$ we have 
\[A_{ik}^{-1}=v^{\delta_{i>k}}\frac{Q}{E_j^H}\]
with $Q\in R[v]$ and $H$ sufficiently large, such that $\deg Q \leq He-\nu_i-\delta_{i>k}-\delta_{z(i)<k}$
(The condition that $R$ is $\cO$-flat is used to show that divisibility by $v$ in $R[v,E_j^{-1}]$ is equivalent to evaluating to $0$ at $v=0$, and hence the numerators of all representing fractions have $0$ constant terms).

For each $j\in \cJ$, we define $U^{[a,b]}(\tld{z}_j)\subset \cU^{\det, \leq -a}(\tld{z}_j)$ to be the subfunctor consisting of $A$ such that $E_j^b A^{-1} \in L^+\cM^{(j)}(R)\cap L\cG^{(j)}(R)$ and $E_j^{-a}A\in L^+\cM^{(j)}(R)\cap L\cG^{(j)}(R)$. This is clearly representable by a finite type affine $\cO$-scheme, with a set of generators given by the coefficients of the entries of $A^{(j)}$.  
Note that this depends on $j$, a choice that is implicit in the symbol $\tld{z}_j$.

If $\tld{z}=(\tld{z}_j)_j\in \tld{W}^{\vee,\cJ}$, we set $U^{[a,b]}(\tld{z})=\prod U^{[a,b]}(\tld{z}_j)$.
We have the following definition:
\begin{defn}
Let $(R,\fm)$ be a complete local Noetherian $\cO$-algebra and assume that $\fM\in Y^{[0,n-1],\tau}(R)$ such that $\fM\otimes_{R}R/\fm$ has shape $\tld{z}$ with respect to $\tau$.
An eigenbasis $\beta$ for $\fM$ is said to be a \emph{gauge basis} if $A^{(j)}_{\fM,\beta}\in T^\vee(R)U^{[0,n-1]}(\tld{z}_j)(R)$ for all $j\in \cJ$.
\end{defn}

\begin{prop}
\label{prop:gauge:basis}
Assume that $\tau$ admits a $(e(n-1)+1)$-deep lowest alcove presentation. Suppose $R$ is a complete local Noetherian $\cO$-algebra and let $\fM\in Y^{[0,n-1],\tau}(R)$ such that $\ovl{\fM}\in Y^{[0,n-1],\tau}(\F)$ has shape $\tld{z}$ with respect to $\tau$.
Then $\fM$ has a gauge basis.
Moreover the set of gauge basis for $\fM$ is a torsor for the natural action of $T^{\vee,\cJ}(R)$.
\end{prop}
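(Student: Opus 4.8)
The plan is to follow the strategy of the unramified case in \cite[Proposition 4.2.13]{MLM} (and the analogous result \cite[Proposition 3.4.8]{LLL}), adapting the successive-approximation argument to the ramified setting where the role of $v^e$ is played by the Eisenstein polynomials $E_j$. First I would reduce to the case $\#\cJ = 1$ is \emph{not} available, since the twisted $\phz$-conjugation relation (\ref{eq:twisted:cnj}) genuinely couples the different $j\in\cJ$ via $\phz(I^{(j-1)})$; so the argument must be run on the whole tuple $(A^{(j)}_{\fM,\beta})_{j\in\cJ}$ at once. The starting point is that $\fM$ admits \emph{some} eigenbasis $\beta_0$ (eigenbases exist after completing, by the descent-data formalism of \cite[Proposition 5.1.8]{MLM}), and by Remark \ref{rmk:cng:basis}(\ref{rmk:cng:basis}) any other eigenbasis $\beta$ is obtained from $\beta_0$ by $(s,\mu)$-twisted $\phz$-conjugation by a tuple $(I^{(j)})\in \prod_j L^+\cG^{(j)}(R)$. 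So the problem is to show: the $\prod_j L^+\cG^{(j)}(R)$-orbit of $(A^{(j)}_{\fM,\beta_0})$ under twisted $\phz$-conjugation meets $\prod_j T^\vee(R) U^{[0,n-1]}(\tld{z}_j)(R)$, and the stabilizer of a point in this subset is exactly $T^{\vee,\cJ}(R)$.

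For existence of the gauge basis, I would argue by successive approximation modulo powers of $\fm$ (or, if $R$ is flat, also by handling $p$-adic/$E_j$-adic truncations). The height condition forces $A^{(j)}_{\fM,\beta_0}$ and $E_j^{n-1}(A^{(j)}_{\fM,\beta_0})^{-1}$ to lie in $\Mat_n(R[\![v]\!])$ and to be upper triangular mod $v$, and the mod-$\fm$ reduction has shape $\tld{z}$, i.e.\ lies in $\Iw(\F)\tld{z}_j\Iw(\F)$. Over $\F$ one can put $\ovl{A}^{(j)}_{\fM,\beta_0}$ into the form $T^\vee U^{[0,n-1]}(\tld{z}_j)$ using the Iwahori (Bruhat/Birkhoff) factorization together with the normalization choices encoded in the functor $\cU(\tld{z})^{\det,\le h}$ — this is where the combinatorial bookkeeping on degrees ($\deg P\le he+\nu_k-\delta_{i>k}-\delta_{i<z(k)}$, equality and monic when $i=z(k)$) is used, and where $(e(n-1)+1)$-deepness of $\tau$ guarantees that the relevant degree constraints are compatible (no collisions among the $\nu_k$'s mod $e$). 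Then one lifts the correcting element $(I^{(j)})$ step by step: given a lift over $R/\fm^N$ already in gauge form, one solves for a correction in $1 + \fm^N\Mat_n$ killing the next-order error. The linearized equation at each step is, up to twisted $\phz$-conjugation, of the shape ``error $= X^{(j)} - \Ad(s_j^{-1}v^{\mu_j+\eta_{0,j}})(\phz(X^{(j-1)}))$ modulo the allowed degrees'', and solvability of this is exactly the ramified analogue of the linear-algebra lemma underlying Lemma \ref{lem:Istraight} / \cite[Lemma 5.2.2]{MLM}: the deepness of $\mu$ makes the map $X\mapsto X - \Ad(\ldots)(\phz(\cdot))$ triangular with respect to an ordering on root entries and $v$-adic degree, hence invertible on the relevant finite-rank $R/\fm$-modules. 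I would invoke this (or re-derive it by the same $v$-adic valuation/triangularity argument as in the proof of Proposition \ref{prop:monodromySchubert}) to produce the correction, and conclude existence by completeness of $R$.

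For the torsor statement, I would first check that $T^{\vee,\cJ}(R)$ acts on gauge bases: scaling an eigenbasis $\beta$ by $t = (t_j)\in T^\vee(R)^\cJ$ replaces $A^{(j)}_{\fM,\beta}$ by $t_j A^{(j)}_{\fM,\beta}\,\Ad(s_j^{-1}v^{\mu_j+\eta_{0,j}})(\phz(t_{j-1})^{-1})$, and since $T^\vee$ is $\phz$-stable and commutes with the twist up to $T^\vee$-conjugation, this again lies in $T^\vee(R) U^{[0,n-1]}(\tld{z}_j)(R)$ (the monic leading-coefficient normalization in $\cU(\tld{z})^{\det,\le h}$ is preserved because $\det A^{(j)} = \det(z_j) E_j^{\|\nu_j\|/e}$ is unchanged under such scaling). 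Conversely, suppose $\beta$ and $\beta'$ are two gauge bases related by $(I^{(j)})\in\prod_j L^+\cG^{(j)}(R)$ via twisted $\phz$-conjugation. Reducing mod $\fm$ and using that both $\ovl{A}^{(j)}$'s have shape $\tld{z}$ in the normalized form $T^\vee U^{[0,n-1]}(\tld{z}_j)$, the rigidity of this normalized form (again from the degree/monic conditions, and deepness of $\tau$) forces $\ovl{I}^{(j)}\in T^\vee(\F)$; then a successive-approximation argument as above, now tracking the difference $I^{(j)} - (\text{its }T^\vee\text{ part})$, shows $I^{(j)}\in T^\vee(R)$ for all $j$. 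The main obstacle, and the part that requires genuine care rather than citation, is checking that the degree-bound bookkeeping defining $U^{[0,n-1]}(\tld{z}_j)$ interacts correctly with $E_j$-adic (rather than $v$-adic) completions and with the Eisenstein polynomial $E_j$ being only congruent to $v^e$ mod $p$: one must verify that the allowed-degrees ideal is stable under the linearized twisted-conjugation operator and that the triangularity/invertibility argument still goes through with $E_j$ in place of $v^e$. This is essentially the content of the ``straightforward modifications'' alluded to in the opening of \S\ref{sec:BKM}, and I expect it to be the bulk of the honest work; everything else is a faithful transcription of \cite[Proposition 4.2.13, \S 5.1--5.2]{MLM}.
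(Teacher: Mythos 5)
Your proposal matches the paper's own proof, which simply cites \cite[Proposition 5.2.7]{MLM} (that is the gauge-basis proposition you want, not Proposition 4.2.13 there, which is the open-cell/unipotent-group statement) with the change-of-basis reference swapped for Remark \ref{rmk:cng:basis} and a check that \cite[Lemma 5.1.10]{MLM} still holds. Your outline---existence by successive approximation with solvability at each step coming from invertibility of the linearized twisted-conjugation operator (forced by the depth hypothesis on $\mu$), and the torsor statement via rigidity of the normalized $T^\vee U^{[0,n-1]}(\tld{z})$-form---is a faithful unpacking of that citation, and your caution about the $E_j$-versus-$v^e$ bookkeeping is where the honest ramified work lives.
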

\begin{proof}
The proof of \cite[Proposition 5.2.7]{MLM} generalizes verbatim by replacing the reference to Proposition 5.1.8 in \emph{loc.~cit.}~by Remark \ref{rmk:cng:basis} above, and noting that the statement of Lemma 5.1.10 in \emph{loc.~cit}.~holds true in our setting. Note the proof in \emph{loc.~cit}.~in fact proves a more general statement where $R$ is only assumed to be merely $p$-adically complete.
\end{proof}

Suppose we are given a gauge basis $\ovl{\beta}$ for $\ovl{\fM}\in Y^{[0,n-1],\tau}(\F)$ with shape $\tld{z}$ and write
\[
A^{(j)}_{\ovl{\fM},\ovl{\beta}}=\ovl{D}^{(j)}\ovl{U}^{(j)}
\]
where $\ovl{D}^{(j)}\in T^\vee(\F)$,  $\ovl{U}^{(j)}\in U^{[0,n-1]}(\tld{z}_j)(\F)$.

If $R$ is a complete local Noetherian $\cO$-algebra, and $\fM\in Y^{[0,n-1],\tau}(R)$ is such that $\fM\otimes_R\F\cong \ovl{\fM}$, then the set of gauge basis for $\fM$ lifting $\ovl{\beta}$ is a torsor under the natural action of $\ker\big(T^{\vee,\cJ}(R)\onto T^{\vee,\cJ}(\F)\big)$.
Thus, the functor representing deformations $(\fM,\beta)$ of the pair $(\ovl{\fM},\ovl{\beta})$ is representable by the completion of $T^{\vee}U^{[0,n-1]}(\tld{z}_j)$ at the point corresponding to $(\ovl{D}^{(j)}\ovl{U}^{(j)})$, and it is formally smooth over the completion of $Y^{[0,n-1],\tau}$ at $\ovl{\fM}$. The subfunctor classifying deformations $(\fM,\beta)$ such that $\fM$ furthermore belongs to $Y^{\leq \eta,\tau}$ correspond to the completion of the closed subscheme $T^\vee U(\tld{z},\leq \eta)$ of $T^{\vee} U^{[0,n-1]}(\tld{z})$ characterized by:
\begin{itemize}
\item $T^\vee U(\tld{z},\leq \eta)$ is $\cO$-flat and reduced.
\item The elementary divisors of $(A^{(j)})\in T^\vee U(\tld{z},\leq \eta)(R)\subset \prod L\cG^{(j)}(R)$ are bounded by $E_j^{(n-1,\cdots, 0)}$, i.e.~for each $1\leq k\leq n$, each $k\times k$ minors of $A^{(j)}$ (which belong to $R[v]$) are divisible by $E_j^{\frac{(k-1)k}{2}}$ (in $R[v]$). 
\end{itemize}

\begin{rmk}
\label{rmk:twisted:LM}
Let 
\[ L \cG^{+,(j)}(R) \defeq \{A \in \GL_n(R[v]^{\wedge_{E_j}}), A \textrm{ is upper triangular mod } v\}\]
a twisted positive loop group. Then $\Gr_{\cG}^{(j)}=L \cG^{+,(j)}\backslash L \cG^{(j)}$ is a twisted affine Grassmannian. Then the generic fiber $\Gr_{\cG,E}^{(j)}\cong (\Gr_{\GL_n,E})^e$ identifies with the product of $e$ copies of the affine Grassmannian for the split group $\GL_n$, while the special fiber $\Gr_{\cG,\F}^{(j)}\cong \Fl$ identifies with the affine flag variety. The Pappas--Zhu local model $M_{j}(\leq\mkern-4mu\eta)$ for $\Res_{\cO_K \otimes_{W(k), \sigma_j} \cO/\cO} \GL_n$ as defined in \cite{LevinLM} is the Zariski closure of the open Schubert variety for the cocharacter $(n-1,n-2,\dots,1,0)$ for each copy of $\Gr_{\GL_n,E}$.  %
In this setup, the scheme $U(\tld{z}_j,\leq\mkern-4mu\eta)$ identifies with an (possibly empty) open affine subscheme of $M_j(\leq\mkern-4mu\eta)$, cf.~the discussion preceding \cite[Theorem 5.3.3]{MLM}. In particular, if non-empty, $U(\tld{z}_j,\leq\mkern-4mu\eta)$ has dimension $e\sum_{\beta>0} \langle \eta_0, \beta^{\vee} \rangle=e\frac{(n-1)n(n+1)}{6}$.
\end{rmk}
The following Proposition shows that in certain cases, any element of $T^\vee U(\tld{z},\leq \eta)$ automatically acquires a parabolic structure.  In Propositions \ref{prop:parabolic_general} and \ref{prop:Levi_reduction}, we work with fixed $j \in \cJ$  and drop the subscript for notational ease. 
\begin{prop}\label{prop:parabolic_general} Let $w\in W^{\vee}$, $r+s=n$ and $\tld{z}=zt_{\nu}=\begin{pmatrix} \tld{z}_t & 0 \\ 0 &\tld{z}_b \end{pmatrix}\in \tld{W}^{\vee}$ with block sizes $r,s$. 

Let $w=w_Mw^M$ be the factorization so that $w^M$ has minimal length and $w_M= (w_t,w_b)\in W(M)=W(\GL_r)\times W(\GL_s)$ (where $M$ is the standard Levi for the partition $r+s=n$). Assume that
\begin{itemize}
\item $w_{b}^{-1}\tld{z}_{b} w_{b}$ has elementary divisors bounded by $v^{e(s-1,\cdots 0)}$.
\item $v^{-es}w_{t}^{-1}\tld{z}_{t} w_{t}$ has elementary divisors bounded by $v^{e(r-1,\cdots 0)}$. 
\end{itemize}
Suppose $R$ is a $\cO$-flat algebra and $A\in T^{\vee}U(w^{-1}\tld{z}w,\leq \eta)(R)$. Then $A=Dw^{-1}Pw$ with $D\in T^{\vee}(R)$ and 
\[P=\begin{pmatrix} M_{t} & 0 \\ X & M_{b} \end{pmatrix}\]
is parabolic with diagonal block sizes $r,s$, and furthermore:
\begin{enumerate}
\item $M_{t}\in E_j^sw_{t}U(t_{-se(1,\cdots,1)}w_{t}^{-1}\tld{z}_{t} w_{t})w_{t}^{-1}$ and has elementary divisors bounded by $E_j^{(n-1,\cdots s)}$.
\item $M_{b}\in w_{b}U(w_{b}^{-1}\tld{z}_{b} w_{b})w_{b}^{-1}$ and has elementary divisors bounded by $E_j^{(r-1,\cdots 0)}$.
\item $(XM_t^{-1})_{ik} \in v^{\delta_{w^{-1}(i)>w^{-1}(k)}}R[v,E_j^{-1}]\cap R[v,E_j^{-1}]_{\leq -\delta_{w^{-1}(i)<w^{-1}(k)}}$.
\item \label{item:fh_integrality}$(M_b^{-1}X)_{ik}\in v^{\delta_{w^{-1}(i)>w^{-1}(k)}}R[v]\cap R[v,E_j^{-1}]_{\leq \nu_k-\nu_i-\delta_{w^{-1}z(i)<w^{-1}z(k)}}$.
(In the last two items, we interpret the indices to run over the rows and columns of $X$ as a submatrix of $P$, i.e.~$r+1 \leq i\leq n$, $1\leq k\leq r$.) 
\end{enumerate}
\end{prop}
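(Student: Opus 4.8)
The plan is to prove Proposition \ref{prop:parabolic_general} by a direct analysis of the shape of $A$, exploiting that the shape $\tld{z}$ is block-diagonal and the ``elementary divisor'' constraints force the conjugate matrix $P = w A w^{-1} D^{-1}$ (suitably interpreted) to be block-lower-triangular. First I would reduce to the case $D = \id$ by absorbing the diagonal part, since $D$ commutes with neither $w$ nor $\tld{z}$ but the conditions on $A$ and on $P$ are compatible with this twist (the point is that $A \in T^\vee U(w^{-1}\tld{z}w, \leq\eta)(R)$ means $A = D A'$ with $A' \in U(w^{-1}\tld{z}w)(R)$, and one checks the claimed form of $P$ is equivalent for $A$ and $A'$). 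Then, writing $B \defeq w A' w^{-1}$, the membership $A' \in \cU(w^{-1}\tld{z}w)^{\det,\leq -0}$ translates into explicit degree bounds on the entries $B_{ik}$ indexed via $w$: namely $B_{ik} = v^{\delta_{w^{-1}(i)>w^{-1}(k)}} P_{ik}/E_j^{n-1}$ with $\deg P_{ik} \le (n-1)e + \nu_{w^{-1}(k)} - \delta_{\cdots} - \delta_{\cdots}$, with equality and monicity in the ``pivot'' position $w^{-1}z(k) = w^{-1}(i)$. The first real step is therefore bookkeeping: transporting the definition of $\cU(\tld{z})^{\det,\leq h}$ through conjugation by $w$ to get these entrywise statements for $B$.

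Next I would establish the block-triangularity, i.e.~that the top-right $r\times s$ block of $P$ vanishes. The key observation is that the elementary-divisor bound of $A$ by $E_j^{(n-1,\dots,0)}$ says each $k\times k$ minor of $A$ is divisible by $E_j^{\binom{k}{2}}$; combined with the block-diagonal shape hypotheses on $\tld{z}_t, \tld{z}_b$ — precisely, that $w_b^{-1}\tld{z}_b w_b$ is bounded by $v^{e(s-1,\dots,0)}$ and $v^{-es}w_t^{-1}\tld{z}_t w_t$ is bounded by $v^{e(r-1,\dots,0)}$, so the ``total degree'' of the permitted $P$-entries on the diagonal blocks exactly saturates the minor bound — a degree/valuation count shows there is no room for nonzero entries in the upper-right block. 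Concretely: the determinant of $A$ is $\det(z) E_j^{||\nu||/e}$, and $||\nu|| = es \cdot r + (\text{contributions from } \tld{z}_t, \tld{z}_b)$; comparing $\deg \det P$ computed blockwise against the degree forced by the saturated pivots in the upper-left and lower-right blocks leaves a deficit that can only be absorbed if the upper-right block is zero. This is the same mechanism as in the unramified ``parabolic shape'' arguments of \cite{MLM}, just with $v$ replaced by $E_j$ and degrees scaled by $e$.

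Once $P$ is parabolic with blocks $M_t, X, M_b$, items (1) and (2) follow by reading off the entrywise degree bounds restricted to the diagonal blocks and matching them against the definitions of $U(t_{-se(1,\dots,1)}w_t^{-1}\tld{z}_t w_t)$ and $U(w_b^{-1}\tld{z}_b w_b)$ — the shift by $E_j^s$ in (1) comes from the fact that $\nu_k$ for $k$ in the top block is larger by $es$ than the corresponding Levi cocharacter after removing the overall $v^{es}$-twist. For (3) and (4), I would use $P^{-1} = \begin{pmatrix} M_t^{-1} & 0 \\ -M_b^{-1} X M_t^{-1} & M_b^{-1}\end{pmatrix}$: the bound on $(XM_t^{-1})_{ik}$ is extracted from the integrality of $A^{-1}$ (which holds since $R$ is $\cO$-flat, as noted after the definition of $\cU(\tld{z})^{\det,\leq h}$), reading the lower-left block of $w A^{-1} w^{-1}$, while the bound on $(M_b^{-1}X)_{ik}$ comes from the lower-left block of $w A w^{-1}$ together with the already-established bounds on $M_b$ (to invert it without losing integrality one uses that $M_b$ has unit leading term in the appropriate sense, i.e.~$M_b \in w_b U(\cdots) w_b^{-1}$ has entries that are polynomials with the pivot entries monic). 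The divisibility-by-$v$ refinements $v^{\delta_{w^{-1}(i)>w^{-1}(k)}}$ propagate automatically from the same refinements built into the definition of $\cU$.

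The main obstacle I anticipate is the degree-counting argument for block-triangularity: one must be careful that the degree bounds on the $P$-entries, which involve the signs $\delta_{i>k}$, $\delta_{i<z(k)}$ relabeled through $w$, really do saturate the minor-divisibility constraint with no slack, and that the hypotheses on $w_t^{-1}\tld{z}_t w_t$ and $w_b^{-1}\tld{z}_b w_b$ are exactly what is needed for this saturation rather than merely sufficient. A clean way to organize this is to first handle the ``generic'' situation where $A$ lies in the open cell $\cU(w^{-1}\tld{z}w)^{\det}$ itself (so the pivots are in predictable positions) and then check the upper-right block of a $k\times k$ minor straddling the two blocks has strictly smaller $E_j$-valuation than required unless that block is zero; the factorization $w = w_M w^M$ with $w^M$ of minimal length is what guarantees that conjugation by $w$ does not scramble the block structure, reducing everything to conjugation by the block-diagonal $w_M$. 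Handling the boundary (non-generic) strata, where some pivots degenerate, should then follow by a flatness/closure argument since $T^\vee U(w^{-1}\tld{z}w, \leq\eta)$ is $\cO$-flat and reduced and the parabolic condition is closed.
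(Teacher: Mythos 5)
Your overall framework (absorb $D$, transport the degree bounds through conjugation by $w$, then use degree/valuation bookkeeping plus the minor constraints) is the right one and matches the paper's strategy. But the core step --- showing the upper-right block $Y$ of $P$ vanishes --- is where your "concretely" paragraph substitutes a weaker argument that does not actually close. Comparing $\deg\det P$ "computed blockwise" against the determinant constraint $\det A = \det(z)E_j^{\lVert\nu\rVert/e}$ cannot rule out $Y\neq 0$: the determinant of a block matrix $\begin{pmatrix}M_t & Y\\ X & M_b\end{pmatrix}$ is not sensitive to $Y$ in general (e.g.\ if $X=0$ it is $\det M_t\det M_b$ regardless of $Y$), so no degree deficit is created by a nonzero $Y$. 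The determinant gives you at most information about the pivot/leading coefficients, not entrywise vanishing.

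What the paper actually does is use the full elementary-divisor constraint on $s\times s$ minors of $A$, not just the $n\times n$ one. Specifically: the degree bounds force $\det M_b = \det z_b\, E_j^{(s-1)s/2}$ exactly; the elementary divisor bound applied to the $s\times s$ minor obtained by replacing one row of $M_b$ with a row of $Y$ shows that minor lies in $E_j^{(s-1)s/2}R$; Cramer's rule (cofactor expansion over $\det M_b$) then shows the entries of $YM_b^{-1}$ lie in $R$; and the degree/$v$-divisibility bounds on $YM_b^{-1}$ coming from the degree bounds on $Y$ and $M_b^{-1}$ show those same entries lie in $v^{\delta_{w^{-1}(i)>w^{-1}(k)}}R[v,E_j^{-1}]\cap R[v,E_j^{-1}]_{\leq -\delta_{w^{-1}(i)<w^{-1}(k)}}$, which meets $R$ only in $0$ (here $\cO$-flatness is used). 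You gesture at the minor condition in the preceding sentence, but the mechanism you then describe is the weaker determinant count, and that is a genuine gap.

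Two further, smaller issues. First, your claim that one can "invert [$M_b$] without losing integrality" because its pivot entries are monic is incorrect --- $M_b^{-1}$ has denominators $\det M_b = \det z_b\,E_j^{(s-1)s/2}$, and the $R[v]$-integrality in item (4) comes from the same minor/Cramer mechanism, not from $M_b$ alone; likewise the bound in item (3) does not just fall out of "reading the lower-left block of $wA^{-1}w^{-1}$" but requires the mixed degree/divisibility bookkeeping. Second, the closing remarks about first treating an "open cell" case and then handling degenerating pivots by a flatness/closure argument, and about $w^M$ being of minimal length so that conjugation "does not scramble the block structure," are not needed: the degree bounds defining $\cU^{\det,\leq h}$ already force the pivots to be monic of fixed degree (they do not degenerate), and the factorization $w=w_Mw^M$ plays no role in showing $Y=0$ --- it is only used at the end to recognize the diagonal blocks as lying in $w_t U(\cdots)w_t^{-1}$ and $w_b U(\cdots)w_b^{-1}$.
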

\begin{proof} We write $A=Dw^{-1}Pw$ so that $P\in wU(w^{-1}\tld{z}w,\leq \eta)w^{-1}$. This means that $P$ has entries in $R[v]$, with the degree bounds
\[P_{ik}\in v^{\delta_{w^{-1}(i)>w^{-1}(k)}}R[v]\cap R[v,E_j^{-1}]_{\leq\nu_k-\delta_{w^{-1}(i)<w^{-1}z(k)}}.\]
and that the leading coefficient of $P_{iz(k)}$ are $1$. We call the corresponding entry the pivot entries.

Write $P=\begin{pmatrix}M_t & Y \\ X & M_b \end{pmatrix}$.
We first show that $Y=0$. The degree bounds on $P$ imply that when expanding $\det M_b$, there is a unique maximal degree term, which is given by the product of the top degree terms in the pivot entries in $M_t$
(one can see this by noting that this is a combinatorial statement on the degree bounds which can be checked over rings $S$ where $p=0$, where it reduces to the fact that $M_bv^{-\nu_b}z_b^{-1}$ is conjugate to a matrix in with coefficients in $S[v^{-1}]$ which is upper triangular unipotent mod $v^{-1}S[v^{-1}]$).
This shows $\det M_b=\det z_{b}E_j^{\frac{(s-1)s}{2}}$.
Now 
\[(YM^{-1}_b)_{ik}=\sum_l Y_{il}(M^{-1}_b)_{lk}.  \]
We observe
\begin{itemize}
\item $Y_{il}\in v^{\delta_{w^{-1}(i)>w^{-1}(l)}}$, $(M^{-1}_b)_{lk}\in v^{\delta_{w^{-1}(l)>w^{-1}(k)}}$. Hence $(YM^{-1}_b)_{ik}$ is divisible by $v^{\delta_{w^{-1}(i)>w^{-1}(k)}}$ in $R[v,E_j^{-1}]$.
\item $Y_{il}\in R[v,E_j^{-1}]_{\leq \nu_l-\delta_{w^{-1}(i)<w^{-1}z(l)}}$, $(M^{-1}_b)_{lk}\in R[v,E_j^{-1}]_{\leq -\nu_l-\delta_{w^{-1}z(l)<w^{-1}(k)}}$. Hence $(YM^{-1}_b)_{ik}\in R[v,E_j^{-1}]_{\leq -\delta_{w^{-1}(i)<w^{-1}(k)}}$.
\end{itemize}
However, the elementary divisor conditions together with the degree bounds imply that the minor formed by replacing one row of $M_b$ with one row of $Y$ belongs to $E_j^{\frac{(s-1)s}{2}}R$, hence Cramer's rule shows that the entries of $YM^{-1}_b$ are in $R$. Since by the above, these entries also belong to $v^{\delta_{w^{-1}(i)<w^{-1}(k)}}R[v,E_j^{-1}]\cap R[v,E_j^{-1}]_{\leq -\delta_{w^{-1}(i)<w^{-1}(k)}}$, they must be all $0$.

Thus, we see that $P$ has the desired parabolic structure. The first two items immediately follow from the degree bounds on $P$ and the elementary divisor conditions. The third and fourth items follow from the same argument used above in showing $Y=0$. 
\end{proof}
By applying Proposition \ref{prop:parabolic_general} to the universal case, we get%
\begin{prop}\label{prop:Levi_reduction} Assume the setting of Proposition \ref{prop:parabolic_general}. Let $R^{\univ}=\cO(U(w^{-1}\tld{z}w,\leq \eta))$, so that the universal $A^{\univ}\in U(\tld{z},\leq \eta))$ factors as
\[A^{\univ}=D^{\univ}w^{-1}\begin{pmatrix} M_t^{\univ}  &0 \\ X^{\univ} & M_b^{\univ} \end{pmatrix} w.\]
Then the map $A^{\univ}\mapsto (\frac{1}{E_j^s}w_t^{-1}M_t^{\univ}w_t,w_b^{-1}M_bw_b)$ exhibits
$U(w^{-1}\tld{z}w,\leq \eta)$ as an affine space over $U(t_{-se(1,\cdots ,1)}w^{-1}_t\tld{z}_tw_t,\leq (r-1,\cdots 0))\times U(w^{-1}_b\tld{z}_bw_b,\leq (s-1,\cdots 0))$, whose coordinates are the coefficients of the entries of $(M_b^{\univ})^{-1}X^{\univ}$ (which are subject to the degree bounds dictated by Proposition \ref{prop:parabolic_general}). 
\end{prop}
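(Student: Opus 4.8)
The plan is to bootstrap from Proposition \ref{prop:parabolic_general} applied to the universal case. Set $R^{\univ} = \cO(U(w^{-1}\tld{z}w,\leq \eta))$ and let $A^{\univ}$ be the tautological matrix; since $U(w^{-1}\tld{z}w,\leq\eta)$ is $\cO$-flat and reduced, Proposition \ref{prop:parabolic_general} applies and produces the parabolic factorization $A^{\univ} = D^{\univ}w^{-1}\begin{pmatrix} M_t^{\univ} & 0 \\ X^{\univ} & M_b^{\univ}\end{pmatrix}w$. I would first observe that the assignment $A^{\univ}\mapsto (\tfrac{1}{E_j^s}w_t^{-1}M_t^{\univ}w_t,\, w_b^{-1}M_b^{\univ}w_b)$ is well-defined into $U(t_{-se(1,\dots,1)}w_t^{-1}\tld{z}_tw_t,\leq (r-1,\dots,0))\times U(w_b^{-1}\tld{z}_bw_b,\leq (s-1,\dots,0))$: parts (1) and (2) of Proposition \ref{prop:parabolic_general} give the correct shape, the correct twisted loop-group cell membership, and the elementary-divisor bounds, while the determinant and the pivot-entry (monic leading coefficient) conditions transport along conjugation by $w_t$ (resp.\ $w_b$) because these are Weyl elements of the relevant Levi and the blocks of $\tld{z}$ split accordingly. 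I would also need the converse direction: the block $M_b^{\univ}$ is invertible over $R^{\univ}$ (its determinant is $\det(z_b)E_j^{(s-1)s/2}$, a nonzerodivisor after the $\cO$-flat, $E_j$-torsion-free reduction), so the entries of the off-diagonal matrix $(M_b^{\univ})^{-1}X^{\univ}$ make sense in $R^{\univ}$.

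The core of the argument is then a representability/smoothness computation. The key point is that given an arbitrary $\cO$-flat algebra $R$ together with a pair $(M_t, M_b)$ in $U(t_{-se(1,\dots,1)}w_t^{-1}\tld{z}_tw_t,\leq(r-1,\dots,0))(R)\times U(w_b^{-1}\tld{z}_bw_b,\leq(s-1,\dots,0))(R)$ and a matrix $Z$ whose entries satisfy the degree bounds from part (4) of Proposition \ref{prop:parabolic_general} (i.e.\ $Z_{ik}\in v^{\delta_{w^{-1}(i)>w^{-1}(k)}}R[v]\cap R[v,E_j^{-1}]_{\leq \nu_k-\nu_i-\delta_{w^{-1}z(i)<w^{-1}z(k)}}$), one can reconstruct a unique point of $U(w^{-1}\tld{z}w,\leq\eta)(R)$ with $X = M_b Z$, $M_t$ scaled back by $E_j^s$ and conjugated by $w_t$, etc. Here I would verify: (a) the reconstructed matrix $A = Dw^{-1}\begin{pmatrix}M_t' & 0\\ X & M_b'\end{pmatrix}w$ (with $M_t' = E_j^s w_t M_t w_t^{-1}$, $M_b' = w_b M_b w_b^{-1}$, $X = M_b' w_b Z w_t^{-1}$ or the analogous bookkeeping) has all entries meeting the degree bounds defining $wU(w^{-1}\tld{z}w,\leq\eta)w^{-1}$ — this is where the somewhat delicate combinatorial juggling of $\delta$-exponents happens, but it is exactly the inverse of the estimates carried out in the proof of Proposition \ref{prop:parabolic_general}; (b) the elementary divisor condition $\leq \eta$ for $A$ follows from those for $M_t$ and $M_b$ together with the block-triangular shape, since the $k\times k$ minors of a block-triangular matrix are governed by the minors of the diagonal blocks; (c) the determinant and pivot conditions for $A$ follow from those of $M_t,M_b$. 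This shows the two functors are isomorphic via $Z$ and its "uncoordinatized" counterpart, and that the fiber of the projection over a fixed $(M_t,M_b)$ is the affine space $\mathbb{A}^N$ whose coordinates are the coefficients of the entries of $Z = (M_b^{\univ})^{-1}X^{\univ}$, the number $N$ being the total number of coefficients allowed by the degree bounds in part (4).

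Finally I would assemble these into the statement: the morphism of the Proposition is the projection, each fiber is an affine space of the stated coordinates, and the map is smooth (in fact Zariski-locally, hence globally after checking the coordinates patch, it is literally $\mathrm{Spec}$ of a polynomial ring over the base) because the reconstruction above is functorial in $R$ and works for every $\cO$-flat $R$ — and since everything in sight is $\cO$-flat and of finite type, testing on such $R$ suffices. I expect the main obstacle to be bookkeeping step (a): confirming that multiplying $Z$ on the left by $M_b'$ (whose entries carry their own $v^{\delta}$-divisibility and degree data) produces an off-diagonal block $X$ whose entries land exactly in $v^{\delta_{w^{-1}(i)>w^{-1}(k)}}R[v]\cap R[v,E_j^{-1}]_{\leq \nu_k-\delta_{w^{-1}(i)<w^{-1}z(k)}}$ as required by the cell $wU(w^{-1}\tld{z}w,\leq\eta)w^{-1}$, with no slack and no overflow. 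This is the same estimate that appears, run in reverse, in the proof of Proposition \ref{prop:parabolic_general} when showing $Y=0$ and controlling $(M_b^{-1}X)_{ik}$, so it should go through, but it requires care with which $\delta$-terms are tight. Once that is in hand, the smoothness and the identification of the coordinates are formal.
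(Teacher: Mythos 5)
Your proposal takes essentially the same route as the paper: apply Proposition~\ref{prop:parabolic_general} to the universal object to get the parabolic factorization (hence a closed immersion into the candidate affine space), and invert it by reconstructing a valid point of $U(w^{-1}\tld{z}w,\leq\eta)$ from an arbitrary $(M_t,M_b,Z)$ with $Z$ subject to the degree bounds of Proposition~\ref{prop:parabolic_general}(\ref{item:fh_integrality}); the paper's inverse construction is exactly your $X = M_b Z$, exhibited via the factorization $\begin{pmatrix}M_t&0\\M_bZ&M_b\end{pmatrix}=\begin{pmatrix}M_t&0\\0&M_b\end{pmatrix}\begin{pmatrix}1&0\\Z&1\end{pmatrix}$.

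One correction to your step~(b): the claim that ``the $k\times k$ minors of a block-triangular matrix are governed by the minors of the diagonal blocks'' is not true as stated — the off-diagonal block does contribute to the minors. What actually makes the elementary-divisor bound work is the factorization itself: since $Z_{ik}\in v^{\delta_{w^{-1}(i)>w^{-1}(k)}}R[v]$, the unipotent factor $w^{-1}\begin{pmatrix}1&0\\Z&1\end{pmatrix}w$ lies in $L^+\cG^{(j)}(R)$, and right multiplication by an element of $L^+\cG^{(j)}(R)$ leaves the bound on elementary divisors unchanged. The condition therefore reduces to the block-diagonal factor, where it splits by blocks. You already have the right factorization in hand, so this is a one-line fix. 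Your step~(a), the $\delta$-exponent and degree bookkeeping run in reverse, is precisely what the paper implicitly does and is correct as outlined.
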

\begin{proof} The fact that we get a map follows from Proposition \ref{prop:parabolic_general}, which clearly induces a closed immersion from $U(w^{-1}\tld{z}w,\leq \eta)$ into the appropriate affine space over $U(t_{-se(1,\cdots ,1)}w^{-1}_t\tld{z}_tw_t,\leq (r-1,\cdots 0))\times U(w^{-1}_b\tld{z}_bw_b,\leq (s-1,\cdots 0))$. To see this injection is an isomorphism, observe that if we set $Z$ to be a matrix subject to the degree bounds of Proposition \ref{prop:parabolic_general}(\ref{item:fh_integrality}) and whose coefficients are free variables, then
\[w^{-1}\begin{pmatrix} M_t^{\univ} & 0 \\ M_b^{\univ}Z & M^{\univ}_b\end{pmatrix}w=w^{-1}\begin{pmatrix} M^{\univ}_t & 0 \\ 0 & M^{\univ}_b \end{pmatrix}\begin{pmatrix} 1 & 0 \\ Z &0 \end{pmatrix}w\]
satisfies the necessary elementary divisors and degree bounds characterizing $U(w^{-1}\tld{z}w,\leq \eta)$. 
\end{proof}

\subsection{Interlude: $\GL_2$ Pappas--Zhu models}
We specialize the previous section to $n=2$. Thus, $M_j(t_{(1,0)})$ is a Pappas--Zhu local model for the Weil restricted group $\Res_{\cO_K \otimes_{W(k), \sigma_j} \cO/\cO} \GL_2$, the (minuscule) cocharacter $(t_{(1,0)},\cdots t_{(1,0)})\in (\Z^2)^{e}$, and Iwahori level structure. 
The following summarizes the known geometric properties of $M_j(t_{(1,0)})$  (see Theorem A in \cite{PR2} or Theorem 2.3.3 and 2.3.5 in \cite{LevinLM}): %
\begin{prop} 
\begin{enumerate}
\item $M_j(t_{(1,0)})_E\cong (\mathbb{P}^1_E)^e$.
\item $M_j(t_{(1,0)})_\F$ is (geometrically) reduced, and identifies with the reduced union of $S(t_{(e,0)})\cup S(t_{(0,e)})$ of $\Fl=\Gr^{(j)}_\F$. Each of its irreducible components are (geometrically) normal.
\end{enumerate}
In particular, $M_j(t_{(1,0)})$ is a normal domain, whose special fiber has two irreducible components. Further more any $x\in S^0(t_{(e,0)})\cup S^0(t_{(0,e)})$ belongs to the regular locus of $M_j(t_{(1,0)})$.
\end{prop}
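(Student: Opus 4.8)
The claim collects four classical facts about the Pappas--Zhu local model $M_j(t_{(1,0)})$ attached to the Weil-restricted group $\Res_{\cO_K\otimes_{W(k),\sigma_j}\cO/\cO}\GL_2$ with Iwahori level structure and minuscule cocharacter $(t_{(1,0)},\dots,t_{(1,0)})\in(\Z^2)^e$. The plan is to deduce each from the literature, since $\GL_2$ is exactly the case where everything is known explicitly. First I would recall, from \cite[Theorem A]{PR2} (equivalently \cite[Theorem 2.3.3]{LevinLM}), that the generic fibre of the local model for $\GL_2$ with Iwahori structure and cocharacter $(1,0)$ is $\mathbb{P}^1$; since we have taken the $e$-fold Weil restriction, the generic fibre $\Gr_{\cG,E}^{(j)}\cong(\Gr_{\GL_2,E})^e$ carries the product Schubert variety, which for the minuscule coweight is $(\mathbb{P}^1_E)^e$. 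This gives item (1).

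For item (2), I would invoke the flatness and reducedness of the Pappas--Zhu local model (\cite[Theorem A]{PR2}, \cite[Theorem 2.3.5]{LevinLM}): $M_j(t_{(1,0)})$ is flat over $\cO$ with geometrically reduced special fibre, and the special fibre is the union of the affine Schubert varieties indexed by the $\mu$-admissible set $\Adm(t_{(1,0)})$ inside the affine flag variety $\Fl=\Gr_{\cG,\F}^{(j)}$. For $\GL_2$ and the coweight $(1,0)$ (after the $e$-fold scaling implicit in passing from $\Gr$ to the twisted affine Grassmannian, which turns the minuscule translation $t_{(1,0)}$ into $t_{(e,0)}$ on the special fibre), the admissible set consists precisely of the two translation elements $t_{(e,0)}$ and $t_{(0,e)}$ together with the unique affine reflection between them, so the special fibre is the reduced union $S(t_{(e,0)})\cup S(t_{(0,e)})$; the intersection being the lower-dimensional Schubert cell. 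Geometric normality of each component is part of the same theorem (the closures of these Schubert varieties in the $\GL_2$ affine flag variety are normal, indeed they are themselves smooth away from a point, being one-dimensional in the relevant direction). The ``in particular'' clauses then follow formally: a flat $\cO$-scheme whose special fibre is reduced and whose components have normal, hence domain, function fields, with irreducible generic fibre $(\mathbb{P}^1_E)^e$, is itself a normal domain by Serre's criterion ($R_1+S_2$) — $S_2$ from Cohen--Macaulayness of local models and flatness over $\cO$, $R_1$ from the fact that the generic fibre is smooth and the singular locus in the special fibre is the codimension-one stratum which is however avoided generically — so the two irreducible components of the special fibre, being the vanishing loci of the two pullbacks of the uniformizer on the two factors, witness the count. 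Finally, the open Schubert cells $S^\circ(t_{(e,0)})$ and $S^\circ(t_{(0,e)})$ are exactly the loci where $M_j(t_{(1,0)})$ meets only one of its two components transversally, and there the local ring is regular: this is immediate once one knows the local equation of $M_j(t_{(1,0)})$ near such a point is, after the Levi reduction of Remark \ref{rmk:twisted:LM} to a product of $\GL_2$-Iwahori models, of the form $\prod_i(x_iy_i-p)$ in suitable coordinates, which is regular at points where all $x_i$ or all $y_i$ are units.

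The only step requiring genuine care, rather than citation, is pinning down that the $e$-fold Weil restriction really does produce the product structure on the generic fibre and the product-of-$\Fl$-strata structure on the special fibre in the way stated — i.e.\ that the twisted affine Grassmannian $\Gr_{\cG}^{(j)}$ for $\Res_{\cO_K\otimes\cO/\cO}\GL_2$ degenerates from $(\Gr_{\GL_2})^e$ to $\Fl$ compatibly with Schubert stratifications — but this is precisely \cite[\S2.3]{LevinLM}, to which I would refer, and in any case the paper already asserts the relevant identifications $\Gr_{\cG,E}^{(j)}\cong(\Gr_{\GL_n,E})^e$ and $\Gr_{\cG,\F}^{(j)}\cong\Fl$ in Remark \ref{rmk:twisted:LM}. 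So the main obstacle is bookkeeping with the admissible set and the normalization conventions (notably the factor of $e$ in the translation lengths), not any substantive geometry; the proof should be short, consisting essentially of assembling \cite[Theorem A]{PR2} and \cite[Theorems 2.3.3, 2.3.5]{LevinLM} and applying Serre's normality criterion.
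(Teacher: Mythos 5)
Your overall approach — treating this as an assemblage of known facts from Pappas–Rapoport and Levin, plus Serre's $R_1+S_2$ criterion for the ``in particular'' clauses — matches the paper exactly; the paper's ``proof'' is literally the citation of \cite[Theorem A]{PR2} and \cite[Theorems 2.3.3, 2.3.5]{LevinLM}, with the consequences left implicit. However, a few of the details you supply are inaccurate, and it is worth being careful about them even though none of them invalidate the conclusion.

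First, the $t_{(e,0)}$-admissible set for $\GL_2$ does not consist of just the two translations and one reflection: as the paper itself notes a few lines later, it consists of $t_{(e-k,k)}$ for $0\leq k\leq e$ and $t_{(e-k,k)}s_\alpha$ for $0< k\leq e$, i.e.\ $2e+1$ elements. Your count is only correct for $e=1$. What saves the description of the special fiber is that the union of \emph{closed} Schubert varieties $\bigcup_{\tld{w}\in\Adm} S(\tld{w})$ reduces to the union over the maximal elements $t_{(e,0)}$ and $t_{(0,e)}$; the remaining admissible elements are strictly smaller in Bruhat order and contribute nothing new. Second, the Schubert varieties $S(t_{(e,0)})$ and $S(t_{(0,e)})$ are $e$-dimensional, not one-dimensional. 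Third, the claim that the local equation near a point of $S^\circ(t_{(e,0)})$ is ``$\prod_i(x_iy_i-p)$'' does not square with the dimension count (a single hypersurface equation in $2e$ variables would give a scheme of relative dimension $2e-1$, not $e$). A cleaner argument for the final regularity claim avoids guessing local equations: any $x\in S^\circ(t_{(e,0)})$ lies in exactly one irreducible component of $M_j(t_{(1,0)})_\F$ (the open cell $S^\circ(t_{(e,0)})$ is disjoint from $S(t_{(0,e)})$ since $t_{(e,0)}$ and $t_{(0,e)}$ are incomparable of equal length), and that cell is an affine space, so $\cO_{M_j,\F,x}$ is regular; then $\cO$-flatness makes $p$ a regular element, so $\cO_{M_j,x}$ is regular.
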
  
Note that the reducedness of the special fiber and the geometric normality of its irreducible component are preserved under taking products. 

We note that the $e(1,0)$-admissible elements are exactly $t_{(e-k,k)}$, $0\leq k \leq e$ and $t_{(e-k,k)}s_\alpha$ with $0<k\leq e$.
\begin{cor}\label{cor:GL_2model} Let $\tld{z}_j$ be $(e,0)$-admissible. Then $U(\tld{z}_j,\leq \eta)$ is a normal domain, and it is formally smooth over $\cO$ if $\tld{z}_j\in \{t_{(e,0)},t_{(0,e)}\}$.
Otherwise, its special fiber has two (geometrically) normal irreducible components. 
\end{cor}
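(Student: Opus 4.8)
\textbf{Proof proposal for Corollary \ref{cor:GL_2model}.}

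The plan is to deduce the statement directly from the Pappas--Zhu model statement recalled just above (the $n=2$ case of the discussion in Remark \ref{rmk:twisted:LM}), together with the list of $e(1,0)$-admissible elements. First I would recall that, by Remark \ref{rmk:twisted:LM} in the case $n=2$, whenever $U(\tld{z}_j,\leq\mkern-4mu\eta)$ is non-empty it is an open affine subscheme of the Pappas--Zhu local model $M_j(t_{(1,0)})$, which is a normal domain whose special fiber is the reduced union $S(t_{(e,0)})\cup S(t_{(0,e)})$ with two geometrically normal irreducible components. Since openness, normality, reducedness of the special fiber, and the number of irreducible components meeting a given open are all local properties, it suffices to understand which Schubert cells $S^\circ(\tld{z}_j)$ the open $U(\tld{z}_j,\leq\mkern-4mu\eta)$ meets.

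The key point is the dichotomy among the $e(1,0)$-admissible $\tld{z}_j$. When $\tld{z}_j\in\{t_{(e,0)},t_{(0,e)}\}$, the point corresponding to the shape lies in the open Schubert cell $S^\circ(t_{(e,0)})$ or $S^\circ(t_{(0,e)})$, which by the Proposition lies in the regular (in fact smooth) locus of $M_j(t_{(1,0)})$; moreover such a cell meets only one irreducible component of the special fiber, and in the top-dimensional stratum one checks via Proposition \ref{prop:monodromySchubert}-type cell computations (or directly, since in this extreme case the relevant $U(\tld{z}_j,\leq\mkern-4mu\eta)$ is cut out inside an affine space by no further equations) that $U(\tld{z}_j,\leq\mkern-4mu\eta)$ is formally smooth over $\cO$. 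When $\tld{z}_j$ is one of the remaining admissible elements $t_{(e-k,k)}$ with $0<k<e$ or $t_{(e-k,k)}s_\alpha$ with $0<k\leq e$, the associated point lies on a lower stratum in the closure of both $S^\circ(t_{(e,0)})$ and $S^\circ(t_{(0,e)})$, so the open $U(\tld{z}_j,\leq\mkern-4mu\eta)$ meets both irreducible components of $M_j(t_{(1,0)})_\F$; hence its special fiber is reduced with exactly two irreducible components, and each of these is (geometrically) normal, being an open in a geometrically normal variety. Normality of $U(\tld{z}_j,\leq\mkern-4mu\eta)$ itself (and irreducibility, hence domain) follows since it is an open in the normal domain $M_j(t_{(1,0)})$.

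Concretely I would organize the proof as: (i) cite the Pappas--Zhu proposition and Remark \ref{rmk:twisted:LM} to reduce to an open-subscheme statement; (ii) list the $e(1,0)$-admissible elements and identify, for each, which open Schubert cells of $\Fl$ have the corresponding point in their closure, using $S^\circ(t_{(e,0)}),S^\circ(t_{(0,e)})$ as the two big cells; (iii) in the two extreme cases invoke the regular locus statement plus a direct coordinate computation to get formal smoothness over $\cO$; (iv) in the remaining cases conclude reducedness, two components, and geometric normality of the components by restriction of the corresponding properties of $M_j(t_{(1,0)})$, noting these are stable under passing to opens. The main obstacle I anticipate is bookkeeping in step (ii)--(iii): one must verify precisely that $U(\tld{z}_j,\leq\mkern-4mu\eta)$ is non-empty exactly for the admissible $\tld{z}_j$ and that in the extreme cases the open really lands in the smooth locus (as opposed to merely the regular locus of the flat model, which could in principle fail to be formally smooth over $\cO$ if the special fiber were singular there) --- this is where one needs the explicit identification of $M_j(t_{(1,0)})_\F$ with $S(t_{(e,0)})\cup S(t_{(0,e)})$ and the fact that $S^\circ(t_{(e,0)})$ and $S^\circ(t_{(0,e)})$ are disjoint smooth loci of the special fiber.
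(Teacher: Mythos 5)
Your proposal is correct and follows the approach the paper intends (the corollary is stated after the Pappas--Zhu proposition precisely so that one deduces it in the way you describe, via the open-subscheme identification from Remark \ref{rmk:twisted:LM}). The one place where you could tighten the wording is the formal-smoothness step in the extreme case: the cleanest way to see it is to note that for $\tld{z}_j = t_{(e,0)}$ (or $t_{(0,e)}$) the special fiber $U(\tld{z}_j,\leq\mkern-4mu\eta)_\F$ lies entirely inside the \emph{open} Schubert cell $S^\circ(t_{(e,0)})$ (because $t_{(e,0)}$ is maximal in the admissible set, so the big cell at $\tld{z}_j$ meets no other admissible stratum), and $S^\circ(t_{(e,0)})$ is open in $M_j(t_{(1,0)})_\F$ (being disjoint from $S(t_{(0,e)})$) and isomorphic to an affine space; then $\cO$-flatness plus smoothness of the special fiber gives smoothness over $\cO$, which is slightly cleaner than invoking the regular locus directly, since ``regular'' and ``smooth over $\cO$'' do not coincide without this fiberwise argument.
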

We deduce the following combinatorial property about the admissible set from our geometric considerations:
\begin{cor}\label{cor:at_most_two} Let $\tld{w}_j= w_j^{-1} W_{a,\alpha_j} t_{e\eta_0} w_j \cap \Adm(e\eta_0)$ for some simple root $\alpha_j$ and $w_j\in W$. Then there are at most two $\sigma\in W$ such that $\tld{w}_j\leq t_{\sigma^{-1}(e\eta_0)}$.
\end{cor}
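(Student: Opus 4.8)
\textbf{Proof proposal for Corollary \ref{cor:at_most_two}.}

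The plan is to deduce this purely combinatorial statement from the geometry of the $\GL_2$ Pappas--Zhu models, exactly as flagged in the parenthetical remark of Lemma \ref{lemma:2comps}. First I would reduce to the case $\#\cJ = 1$: since $\Adm(e\eta_0)$ for a product of copies of $\GL_n$ is a product of admissible sets, and $t_{\sigma^{-1}(e\eta_0)}$ decomposes componentwise, the set of $\sigma$ with $\tld{w} \le t_{\sigma^{-1}(e\eta_0)}$ is a product over $\cJ$; it suffices to bound each factor by $2$, so I take a single simple root $\alpha$ and $w \in W$ with $\tld{w} \in w^{-1}W_{a,\alpha}t_{e\eta_0}w \cap \Adm(e\eta_0)$.

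The key step is to transport the question into the affine flag variety via the Levi reduction of \S \ref{sub:par:strct}. Using Proposition \ref{prop:listshapes} I would first observe that $\tld{w}$ has a ``parabolic shape'' relative to the simple root $\alpha$: conjugating by $w$, the element $w\tld{w}w^{-1} \in W_{a,\alpha}t_{e\eta_0}$ involves only the rank-one affine Weyl group attached to $\alpha$, so after choosing the block decomposition for the standard Levi $M = \GL_2 \times \GL_1^{n-2}$ adapted to $\alpha$, the shape lands in the $\GL_2$ factor (with the remaining $\GL_1$ factors contributing translations $t_{(e)}$ in the appropriate coordinates). Then I would invoke Proposition \ref{prop:parabolic_general} and Proposition \ref{prop:Levi_reduction}: the relevant local model $U(\tld{w},\le\eta)$, or rather the stratum-closure question, is formally smooth over the product of a $\GL_2$ Pappas--Zhu local model $U(\tld{z}_\alpha, \le\eta)$ (with $\tld{z}_\alpha$ being $(e,0)$-admissible, by the shape computation in Proposition \ref{prop:listshapes}) and formally-smooth $\GL_1$-factors. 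By Corollary \ref{cor:GL_2model}, the special fiber of the $\GL_2$ model $M_j(t_{(1,0)})_\F$ has at most two irreducible components (one if $\tld{z}_\alpha \in \{t_{(e,0)}, t_{(0,e)}\}$, two otherwise), and these are precisely the closures of the cells $S(t_{(e,0)})$ and $S(t_{(0,e)})$.

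The final step is to translate ``there are at most two $\sigma$ with $\tld{w} \le t_{\sigma^{-1}(e\eta_0)}$'' into the statement that the point of $\Fl^{\cJ}$ corresponding to $\tld{w}$ lies on at most two of the top-dimensional Schubert varieties $\ovl{S(t_{\sigma^{-1}(e\eta_0)})}$ indexed by $\sigma \in W$ (the extreme points of $\Adm(e\eta_0)$), and these are exactly the irreducible components of the closed subvariety cut out by the local model $M(\le\eta)$. Since $M(\le\eta) \cong M_j(t_{(1,0)}) \times (\text{affine space})$ after the Levi reduction and the $\GL_2$ factor has at most two components, any point lies on at most two components, giving the bound. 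The main obstacle I anticipate is making the bookkeeping of the Levi reduction precise: one must check that the conjugation by $w$ indeed places the shape into a single $\GL_2$-block in the sense required by Proposition \ref{prop:parabolic_general} (verifying the two elementary-divisor hypotheses there for $\tld{z}_t$ and $\tld{z}_b$), and that the extreme elements $t_{\sigma^{-1}(e\eta_0)}$ above $\tld{w}$ are forced to have the same Levi-block structure so that the comparison of components happens entirely inside the $\GL_2$ model rather than in the ambient $\GL_n$ flag variety. This is where Proposition \ref{prop:listshapes} and the reducedness of $M_j(t_{(1,0)})_\F$ do the real work.
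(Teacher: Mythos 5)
Your proposal is correct and follows essentially the same route as the paper's proof: dualize to $\tld{z}_j = \tld{w}_j^*$, use Propositions \ref{prop:parabolic_general} and \ref{prop:Levi_reduction} to exhibit $U(\tld{z}_j,\le\eta)$ as an affine space over a $\GL_2\times\GL_1^{n-2}$ local model, invoke Corollary \ref{cor:GL_2model} to bound the number of irreducible components of its special fiber by two, and then read off the bound on $\{\sigma : \tld{w}_j \le t_{\sigma^{-1}(e\eta_0)}\}$ from the identification of $U(\tld{z}_j,\le\eta)_\F$ with an open neighborhood of $\tld{z}_j$ in $M_j(\le\eta)_\F = \bigcup_{\sigma\in W}\ovl{S(t_{\sigma(e\eta_0)})}$. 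The only cosmetic discrepancy is that the local model should be attached to $\tld{w}_j^*\in\tld{W}^\vee$ rather than to $\tld{w}_j$ itself (the $*$ is needed to match the conventions of $U(-,\le\eta)$), and the reduction to $\#\cJ=1$ is unnecessary since the statement is already per-$j$.
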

\begin{proof} Set $\tld{z}_j=\tld{w}_j^*$. By Proposition \ref{prop:Levi_reduction}, $U(\tld{z}_j,\leq \eta)$ is an affine space over an affine scheme of the form as in Corollary \ref{cor:GL_2model}. In particular, $U(\tld{z}_j,\leq \eta)_\F$ has at most two irreducible components. On the other hand, this is an open neighborhood of $\tld{z}_j$ in the special fiber $M_j(\leq \eta)_\F$ of a Pappas--Zhu model. We conclude from the fact that $M_j(\leq \eta)_\F=\cup_{\sigma\in W} S(t_{\sigma(e\eta_0)})$.
\end{proof}
\subsection{Analysis of the monodromy condition}
\label{sub:analysis:MC}
Suppose $\rhobar$ admits a Breuil--Kisin module $\ovl{\fM}\in Y^{[0,n-1],\tau}(\F)$ of type $\tau$, with shape $\tld{z}$ and a gauge basis $\ovl{\beta}$.
To analyze the potentially crystalline deformation ring $R^{\leq \eta,\tau}_{\rhobar}$, we need to recall its relationship with the finite height deformation ring $R^{\tau,\ovl{\beta}}_{\ovl{\fM}}$, as in \cite[\S 3,4]{LLL} and \cite[\S 7.1]{MLM}. One has a diagram (cf \cite[Diagram (3.16)]{LLL}, \cite[Proposition 7.2.3]{MLM})
\begin{equation} 
\label{diag:main}
\xymatrix{  &  \mathrm{Spf} R^{\tau,\ovl{\beta},\Box,\nabla}_{\ovl{\fM}} \ar[r]^{f.s.} \ar@{^{(}->}[d]&  \mathrm{Spf} R^{\tau,\ovl{\beta},\nabla}_{\ovl{\fM}}\ar@{^{(}->}[d] \\
 \mathrm{Spf} R^{\tau,\ovl{\beta},\Box}_{\ovl{\fM},\rhobar}\ar[d]^{f.s.} \ar@{^{(}->}[r]\ar[ur]^{\cong} &\mathrm{Spf} R^{\tau,\ovl{\beta},\Box}_{\ovl{\fM}}\ar[r]^{f.s.} &\mathrm{Spf} R^{\tau,\ovl{\beta}}_{\ovl{\fM}} \\
 \mathrm{Spf} R^{\leq \eta,\tau}_\rhobar \\&&  }
\end{equation}
where
\begin{itemize}
\item $R^{\leq \eta,\tau}_\rhobar$ is the framed potentially crystalline deformation ring representing Galois deformations $\rho$ with Hodge-Tate weights $\leq \eta$ and inertial type $\tau$. Note that it is either zero, or is $\cO$-flat, reduced and of Krull dimension $n^2+1+\frac{n(n-1)}{2}[K:\Q_p]$.%
\item $R^{\tau,\ovl{\beta}}_{\ovl{\fM}}$  represents deformations $(\fM,\beta)$ of $(\ovl{\fM},\ovl{\beta})$ where $\fM$ belongs to $Y^{\leq \eta,\tau}$ and $\beta$ is a gauge basis of $\fM$. 
\item $R^{\tau,\ovl{\beta},\Box}_{\ovl{\fM},\rhobar}$ represents potentially crystalline Galois deformations $\rho$ of type $(\leq \eta,\tau)$, together with a gauge basis $\beta$ of its (unique) Breuil--Kisin module $\fM$ in $Y^{\leq \eta,\tau}$. It is formally smooth over $R^{\leq \eta,\tau}$ of relative dimension $nf$.
\item $R^{\tau,\ovl{\beta},\Box}_{\ovl{\fM}}$ represents a deformation $(\fM,\beta)$ of $(\ovl{\fM},\ovl{\beta})$ as above together with a framing basis of the $G_{K_\infty}$-representation associated to $\fM$. This is formally smooth over $R^{\tau,\ovl{\beta}}_{\ovl{\fM}}$ of relative dimension $n^2$.
\item $R^{\tau,\ovl{\beta},\nabla}_{\ovl{\fM}}$ (resp. $R^{\tau,\ovl{\beta},\Box, \nabla}_{\ovl{\fM}}$) is the $\cO$-flat reduced quotient of $R^{\tau,\ovl{\beta}}_{\ovl{\fM}}$ (resp. $R^{\tau,\ovl{\beta},\Box}_{\ovl{\fM}}$) cut out by imposing the monodromy condition on the universal Breuil--Kisin module after inverting $p$.
\end{itemize}

We elaborate on the monodromy condition on the universal Breuil--Kisin module on $R^{\tau,\ovl{\beta}}_{\ovl{\fM}}$. Recall that $E(v)$ is the Eisenstein polynomial of a chosen uniformizer of $K$ over $K_0$, and that $e'=p^{f'}-1=p^{fr}-1$.
Recall from \cite[\S 7.1]{MLM} the ring $\cO^{\rig}\defeq \cO^{\rig}_{K',R^{\tau,\ovl{\beta}}_{\ovl{\fM}}}$, endowed with a canonical derivation $N_\nabla=-u'\lambda\frac{d}{du'}$ (where $\lambda=\prod_{i=0}^\infty \frac{\phz^i(E((u')^{e'}))}{E(0)}$ is constructed out of $E(v)=E((u')^{e'})$ instead of $v+p$), and the module $\fM^{\univ, \rig}\defeq \fM^{\univ}\otimes_{R^{\tau,\ovl{\beta}}_{\ovl{\fM}}} \cO^{\rig}$, such that $\fM^{\univ, \rig}[1/\lambda]$ is endowed with a canonical derivation $N_{\fM^{\univ, \rig}}$ over $N_\nabla$ (cf.~\cite[Proposition 7.1.3(1)]{MLM}). Then the \emph{monodromy condition} alluded to above is the condition that $N_{\fM^{\univ, \rig}}$ preserves $\fM^{\univ, \rig}$.

We now choose a lowest alcove presentation $\tau\cong \tau(s,\mu+\eta_0)$.
Recall from \S \ref{subsubsec:TIT} that attached to $(s,\mu)$ we have the data $s'_{\mathrm{or}, j'}\in W$, $\mathbf{a}^{\prime \, (j')}\in \Z^{n}$. We write $A^{(j')}$ for the matrices constructed out of the universal Breuil--Kisin module and its universal gauge basis over $R^{\tau,\ovl{\beta}}_{\ovl{\fM}}$ or $R^{\tau,\ovl{\beta},\Box}_{\ovl{\fM}}$.
We get the following control of the monodromy condition:
\begin{prop}\label{prop:monodromy_control} Assume $\tau(s,\mu+\eta_0)$ is an $m$-deep lowest alcove presentation of $\tau$. If the monodromy condition holds, the for each $j'$, $0\leq t< n-2$ and $\pi$ a root of $E_j$, the result of the operator $(\frac{d}{dv})^t|_{v=\pi}$ acting on
\[\left(e'v \frac{d}{dv} A^{(j')} + [A^{(j')},\mathrm{Diag}((s'_{\mathrm{or}, j'})^{-1}(\mathbf{a}^{\prime \, (j')}))] \right) (A^{(j')})^{-1}E_j^{n-1}\]
belongs to $p^{\frac{m+1-(n-2)e-t}{e}}R$.

\end{prop}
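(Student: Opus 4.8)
The plan is to follow the strategy of \cite[\S 7.1]{MLM} and \cite[\S 4]{LLL}, transporting the monodromy operator $N_{\fM^{\univ,\rig}}$ through the eigenbasis $\ovl{\beta}$ to an explicit matrix condition on the $A^{(j')}$'s, and then tracking $p$-divisibilities. First I would recall from \cite[Proposition 7.1.3]{MLM} (in the ramified setting) that, with respect to the universal gauge basis, the operator $N_{\fM^{\univ,\rig}}$ is given by a matrix $N^{(j')} \in \tfrac{1}{\lambda}\Mat_n(\cO^{\rig})$ satisfying a Frobenius-compatibility recursion of the form
\[
N^{(j'+1)} A^{(j')} + \varphi(N^{(j')})\cdot (\text{twist}) = e'v\frac{d}{dv}A^{(j')} + [\,A^{(j')},\,\Diag((s'_{\mathrm{or},j'})^{-1}(\mathbf a^{\prime\,(j')}))\,] \cdot (\text{something integral}),
\]
where the twist comes from the $(s,\mu)$-twisted $\varphi$-conjugation \eqref{eq:twisted:cnj} and the appearance of $\mathbf a^{\prime\,(j')}$ encodes the descent datum on $\ovl{\fM}$ (this is exactly the content of the construction in \cite[\S 7.1]{MLM}, whose genericity hypotheses are met by the $m$-deepness assumption on $\tau$). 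The monodromy condition is the assertion that $N^{(j')} \in \Mat_n(\cO^{\rig})$, i.e.~$\lambda N^{(j')}$ has no pole; since $\lambda$ has simple zeros at the $\varphi^i$-translates of the roots of $E(v)$, integrality of $N^{(j')}$ is detected by the vanishing, to appropriate order, of the principal parts of the naive expression at those roots.

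The key step is then to isolate the ``error matrix''
\[
\Xi^{(j')} \defeq \left(e'v\frac{d}{dv}A^{(j')} + [A^{(j')},\Diag((s'_{\mathrm{or},j'})^{-1}(\mathbf a^{\prime\,(j')}))]\right)(A^{(j')})^{-1}E_j^{n-1}
\]
and to argue that, modulo $p^{k}$ for suitable $k$, its restriction to the roots of $E_j$ together with the first $n-3$ derivatives must vanish. The mechanism: because $A^{(j')}$ and $E_j^{n-1}(A^{(j')})^{-1}$ both lie in $\Mat_n(R[\![v]\!])$ (the finite-height/gauge-basis bounds from \S\ref{sub:par:strct} and the $(eh+1)$-genericity), the matrix $\Xi^{(j')}$ lies in $E_j^{-(n-2)}\Mat_n(R[\![v]\!])$ — one factor of $E_j$ absorbs the $v\frac{d}{dv}$ term's worst pole, and the commutator contributes at most $E_j^{-(n-2)}$. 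Meanwhile, rewriting the recursion for $N^{(j')}$, integrality of $N^{(j'+1)}$ forces $\lambda^{-1}$ times $\Xi^{(j')}$ (up to conjugation and the integral twist, which do not affect $p$-valuations of coefficients of Laurent tails) to be integral; unwinding, this is precisely the statement that the principal part of $\Xi^{(j')}$ at each root $\pi$ of $E_j$ — that is, $(\tfrac{d}{dv})^t|_{v=\pi}\Xi^{(j')}$ for $0\le t \le n-3$ — is killed after multiplying by the relevant power of $p$ coming from $E(0)=\pm p$ (up to units) in the formula $\lambda = \prod_i \varphi^i(E_j)/E(0)$. Quantitatively, the power $E_j^{n-1}$ we multiplied by contributes $(n-1)e$ to a naive pole order, the genericity of $\tau$ contributes a lower bound of $m+1$ on certain $v$-adic valuations forced by the gauge-basis shape, and the $t$-th derivative and the $(n-2)e$ pole order of $\Xi^{(j')}$ eat into this; combining them as in \cite[Lemma 7.1.5, Proposition 7.1.8]{MLM} yields divisibility by $p^{(m+1-(n-2)e-t)/e}$ exactly.

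The main obstacle will be bookkeeping the $p$-powers precisely in the ramified setting, where $E_j$ is a genuine degree-$e$ Eisenstein polynomial rather than $v+p$, so that ``one root of $E_j$'' corresponds to evaluating at $\pi$ and taking $e$-many derivatives to see the full principal part, and the normalization $\lambda = \prod_{i\ge 0}\varphi^i(E((u')^{e'}))/E(0)$ must be matched against the change of variables $v = (u')^{e'}$. I would handle this by working on $\fS_R = (W(k)\otimes R)[\![v]\!]$ directly (descending from $\fS_{L',R}$ via $\Delta$-invariants, as in \S\ref{sub:par:strct}), expressing everything in terms of $v$ and $E(v)$, and using that $N_\nabla = -u'\lambda\frac{d}{du'}$ corresponds to $e'v\lambda_v\frac{d}{dv}$ with $\lambda_v = \prod_i \varphi^i(E(v))/E(0)$; the factor $e' = p^{f'}-1$ is a unit and disappears. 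Then the estimate is a matrix version of the scalar estimate: for a Laurent element $P/E_j^{n-2}$ with $P\in R[\![v]\!]$, integrality of $\lambda_v^{-1}$ times it modulo $p^c$ is equivalent to $E_j^{n-2} \mid P$ modulo $p^{c'}$ with $c' = c - $ (contribution of $E(0)$), and $E_j^{n-2}\mid P \bmod p^{c'}$ is in turn equivalent to the vanishing of $(\tfrac{d}{dv})^t|_{v=\pi}(P)$ modulo $p^{c'}$ for $0\le t < (n-2)e$ and all roots $\pi$ — restricting to $0\le t < n-2$ (one derivative per root, then using all $e$ roots, or a coarser such split) is what appears in the statement, with the loss of a factor $e$ in the denominator accounting for the passage from ``vanishing order in $E_j$'' to ``vanishing order at a single root''. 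Once this scalar dictionary is set up, applying it entrywise to $\Xi^{(j')}$ and plugging $m$ in for the genericity-forced valuation gives the claim; the genericity hypothesis $m \gg (n-2)e$ in applications guarantees the exponent $(m+1-(n-2)e-t)/e$ is positive so the conclusion is non-vacuous.
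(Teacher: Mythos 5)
Your proposal has the right overall shape — transport the monodromy operator through the gauge basis to the matrices $A^{(j')}$, as in \cite[\S 7.1]{MLM} — but it misstates the key step and consequently does not actually produce the exponent $\tfrac{m+1-(n-2)e-t}{e}$.

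The central claim you make, that integrality of $N^{(j'+1)}$ forces ``$\lambda^{-1}$ times $\Xi^{(j')}$ to be integral'' and that the $p$-divisibility then drops out of a scalar dictionary converting vanishing order in $E_j$ to vanishing at a single root, is not the mechanism of \cite[Proposition 7.1.10]{MLM}. What the Frobenius recursion actually yields is that the expression
\[\phz(\lambda)^{n-1}\Xi^{(j')} + \mathrm{Err},\qquad
\mathrm{Err}=\sum_{i\geq 1}\phz^{i+1}(\lambda)^{n-1}Z_i^{(j')},
\]
vanishes to order $n-2$ along the roots of $E_j$, where $Z_i^{(j')}\in \tfrac{1}{p^{i(n-2)}}v^{1+m\frac{p^i-1}{p-1}}\Mat_n(R[\![v]\!])$ arises from the tail of the convergent $\varphi$-iteration. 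Applying $(\tfrac{d}{dv})^t|_{v=\pi}$ for $t<n-2$ therefore annihilates this combined expression \emph{exactly}, and the nontrivial quantitative content of the proposition comes entirely from estimating $(\tfrac{d}{dv})^t|_{v=\pi}\mathrm{Err}$ and the higher derivatives of $\phz^k(\lambda)$ at $\pi$, then solving for $(\tfrac{d}{dv})^t|_{v=\pi}\Xi^{(j')}$ (note $\phz(\lambda)(\pi)$ is a unit). Your proposal does not identify or bound the error term at all, and this is precisely what produces the exponent: the worst ($i=1$) term in $\mathrm{Err}$ contributes $p^{-(n-2)}\cdot p^{\frac{1+m-t}{e}}=p^{\frac{m+1-(n-2)e-t}{e}}$.

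Two further points. First, the denominator $e$ in the exponent does not come from ``passing from vanishing order in $E_j$ to vanishing order at a single root''; it comes from the elementary valuation estimate that if $F\in v^M\Mat_n(R[\![v]\!])$ then $(\tfrac{d}{dv})^t|_{v=\pi}F\in p^{\frac{M-t}{e}}\Mat_n(R)$, since $|\pi|=p^{1/e}$. Second, your assertion that $\Xi^{(j')}\in E_j^{-(n-2)}\Mat_n(R[\![v]\!])$ is off: $\Xi^{(j')}$ is a product of two matrices that are already in $\Mat_n(R[\![v]\!])$ (the finite-height bound and well-definedness of the $v\tfrac{d}{dv}$ term), so $\Xi^{(j')}$ is integral, and the whole difficulty is that this integrality alone gives nothing about $p$-divisibility of derivatives at $\pi$ — only the monodromy condition together with the error-term estimate does. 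Without identifying and bounding $\mathrm{Err}$, the argument does not close.
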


\begin{proof} This is a straightforward generalization of the computation in \cite[Proposition 7.1.10]{MLM}, with the following changes: $h$ in \emph{loc.cit.} becomes $n-1$, occurrences of $p$ (outside any evaluation at $v=-p$) becomes $E(0)$, occurrences of $(v+p)^h(A^{(j')})^{-1}$ becomes $E_{j'}^{n-1}(A^{(j')})^{-1}$, occurrences of $|_{v=-p}$ becomes $|_{v=\pi}$. Note that $E_j(0)\in p\cO^\times$, and $E_{j'}=E_j$ depends only on $j$ mod $f$.
More specifically, the computation in \emph{loc.cit.} expresses the monodromy condition as 
\[\phz(\lambda)^{n-1}\left(e'v \frac{d}{dv} A^{(j')} + [A^{(j')},\mathrm{Diag}((s'_{\mathrm{or}, j'})^{-1}(\mathbf{a}^{\prime \, (j')}))] \right) (A^{(j')})^{-1}E_j^{n-1}+Err\]
has zeroes of order $n-2$ along the roots of $E_j$, for an appropriate error term $Err$. It follows that the operator $(\frac{d}{dv})^t|_{v=\pi}$ annihilates this expression, for $0\leq t<n-2$ and $\pi$ a root of $E_j$. 

The error term $Err$ has the form
\[\sum_{i=1}^\infty \phz^{i+1}(\lambda)^{n-1}Z_i^{(j')}\]
with $Z_i^{(j')}\in \frac{1}{p^{i(n-2)}}v^{1+m\frac{p^i-1}{p-1}}\mathrm{Mat}_n(R[\![v]\!])$. We conclude from the analysis of the effect of $(\frac{d}{dv})^t|_{v=\pi}$ on the error term $Err$ as in \emph{loc.cit.} (except that we use the differential operator $\frac{d}{dv}$ as opposed to $v\frac{d}{dv}$), noting that in our current situation
\begin{itemize}
\item $(\frac{d}{dv})^t|_{v=\pi}\phz^k(\lambda)\in p^{1+\frac{p-t}{e}}\cO+ p^{p-\frac{t}{e}}\cO$ for any $t,k\geq 1$.
\item If $F\in v^M \mathrm{Mat}_n(R[\![v]\!])$ then $(\frac{d}{dv})^t|_{v=\pi}F\in p^{\frac{M-t}{e}} \mathrm{Mat}_n(R[\![v]\!])$.
\end{itemize} 
\end{proof}

\begin{lemma}\label{lem:error_control} Let $R$ be a $p$-flat $\cO$-algebra. Let $N,k$ be non-negative integers and $F\in R[v]$. Assume that $N<p$ and 
 that $(\frac{d}{dv})^t|_{v=\pi}(F)\in p^kR$ for $0\leq t<N$ and $\pi$ is any root of $E_j$.
Write $F=E_j^Nq+r$ where $q,r\in R[v]$ such that $\deg r<Ne$ (this uniquely determines $q$, $r$)
Then $r\in p^{k-(2N-1)(1-\frac{1}{e})}[v]$
\end{lemma}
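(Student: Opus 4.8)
The statement is an elementary estimate relating the vanishing-to-order-$N$ condition of a polynomial $F$ along the roots of $E_j$ to the size of its remainder $r$ upon division by $E_j^N$. The plan is to reduce it to the case $N=1$ by an inductive argument on $N$, and then to resolve the base case by a direct Lagrange-interpolation/Vandermonde computation, carefully tracking denominators.

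First I would set up the base case $N=1$. Here $F = E_j q + r$ with $\deg r < e$, and the hypothesis is that $F(\pi) \in p^k R$ for every root $\pi$ of $E_j$; since $E_j(\pi)=0$, this says $r(\pi)\in p^k R$ for all $e$ roots $\pi$. Now $r$ is a polynomial of degree $<e$ over the $p$-flat $\cO$-algebra $R$, determined by its values at the $e$ roots of $E_j$ via the Lagrange interpolation formula. The obstruction to concluding $r\in p^k R[v]$ directly is that the interpolation coefficients involve denominators built from differences $\pi-\pi'$ of roots of $E_j$, equivalently from $\disc(E_j)$ and values of $E_j'$ at roots. The key point is that $E_j$ is (a twist by $\sigma_j$ of) an Eisenstein polynomial over $\cO$, so $\disc(E_j)$ has $p$-adic valuation bounded by something like $e-1$ relative to the normalization where $v_p(p)=1$; more precisely each factor $\frac{1}{\pi-\pi'}$ contributes at most $\frac{1}{e}$ to the valuation, and the interpolation formula for a degree-$<e$ polynomial through $e$ points uses $e-1$ such factors per term. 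This yields $r \in p^{k-(e-1)/e} R[v] = p^{k-(1-1/e)}R[v]$, which is exactly the claimed bound $p^{k-(2N-1)(1-1/e)}R[v]$ when $N=1$.

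Next I would run the induction. Suppose the result holds for $N-1$. Given $F$ with $(\frac{d}{dv})^t|_{v=\pi}(F)\in p^kR$ for $0\le t<N$, write $F = E_j^{N-1} q_1 + r_1$ with $\deg r_1 < (N-1)e$. Applying the inductive hypothesis to $F$ with the weaker vanishing order $N-1$ gives control on $r_1$; but more usefully, I would argue that $q_1$ itself inherits a first-order vanishing condition along the roots of $E_j$: differentiating the relation $F = E_j^{N-1}q_1 + r_1$ using the product rule and evaluating successive derivatives at a root $\pi$ of $E_j$ (where $E_j$ vanishes to order exactly $1$), one expresses $q_1(\pi)$ and the derivatives of $r_1$ at $\pi$ in terms of the derivatives of $F$ at $\pi$. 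The combinatorics here is the familiar fact that $(\frac{d}{dv})^t$ of $E_j^{N-1}q_1$ at $\pi$ vanishes for $t<N-1$ and, for $t=N-1$, equals $(N-1)!\,(E_j'(\pi))^{N-1}q_1(\pi)$. Dividing by $(E_j'(\pi))^{N-1}$ — which again costs at most $(N-1)\cdot\frac{1}{e}$ in valuation per root, but one must be careful because there is one such factor for each of the $e$ roots and these feed back into an interpolation of $q_1$ — one gets $q_1(\pi)\in p^{k - c_{N-1}}R$ for an appropriate constant, plus the new derivative conditions on $r_1$. Then one applies the $N=1$ case to extract $q_1$ modulo $E_j$ and reassembles $r = E_j^{N-1}\cdot(\text{remainder of }q_1) + r_1$, bookkeeping the valuations so that the total loss telescopes to $(2N-1)(1-\frac{1}{e})$. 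The arithmetic-progression shape of the exponent $2N-1 = 1 + 2(N-1) + \cdots$ is precisely what one gets from: a loss of $1-\frac1e$ from the final $N=1$ interpolation, plus a loss of $2(1-\frac1e)$ at each downward step (one factor of $1-\frac1e$ from dividing by $(E_j')^{N-1}$-type terms and one from re-interpolating $q_1$).

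\textbf{Main obstacle.} The crux is the valuation bookkeeping: showing that the denominators introduced at every stage — all of which are products of differences of roots of $E_j$ or, equivalently, powers of $E_j'$ evaluated at roots — contribute, in total, no more than $(2N-1)(1-\frac1e)$ to the $p$-adic denominator of $r$. Getting the constant exactly right (rather than something like $(2N-1)(1-\frac1e)$ up to an additive slack) requires being disciplined about the fact that $v_p(E_j'(\pi)) \le 1 - \frac1e$ for each root $\pi$ (the ``$-\frac1e$'' coming from the Eisenstein ramification), and about not double-counting when one interpolation feeds another. Everything else — the division algorithm, the Leibniz rule, the $p$-flatness of $R$ guaranteeing that integrality can be checked after inverting nothing — is routine. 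This is a direct generalization of \cite[Lemma 7.1.11]{MLM} (or the analogous error-control lemma there) from the case $e=1$, where the constant degenerates to $2N-1$ times $0$, i.e.\ no loss, consistent with the unramified picture; the ramified case is where the $(1-\frac1e)$ factors genuinely appear.
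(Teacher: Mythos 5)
Your proposal takes a genuinely different route from the paper, and the bookkeeping in that route does not close.

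The paper does not induct on $N$ by peeling off the top power of $E_j$ from the \emph{divisor}. It instead writes the remainder $r$ in its $E_j$-adic expansion $r = \sum_{t=0}^{N-1} E_j^t r_t$ with $\deg r_t < e$, and proves $r_t \in p^{k-(2t+1)(1-1/e)}R[v]$ by induction on $t$ going \emph{upward} from $t=0$. The base case is the Vandermonde/Lagrange interpolation step you describe. At step $t$, one uses the Leibniz-type identity that $t!\,(E_j'(\pi))^t\,r_t(\pi)$ equals $(\tfrac{d}{dv})^t r|_{v=\pi}$ minus contributions $(\tfrac{d}{dv})^t(E_j^s r_s)|_{v=\pi}$ with $s<t$. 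The crucial point is that each such lower contribution carries a factor whose $v$-adic order forces a $p$-adic gain of roughly $s(1-\tfrac1e)$ (coming from at least $s$ derivatives landing on $E_j^s$), which \emph{exactly offsets} the worse inductive bound $(2s+1)(1-\tfrac1e)$ on $r_s$. Only after this cancellation does the bound telescope to $(2t+1)(1-\tfrac1e)$.

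Your top-down induction on $N$ loses this cancellation. You bound $r_1$ (the $E_j^{N-1}$-remainder) by the crude inductive conclusion $r_1 \in p^{k-(2N-3)(1-1/e)}R[v]$, and then feed that into the estimate of $(\tfrac{d}{dv})^{N-1}r_1|_{v=\pi}$ when extracting $q_1(\pi)$. But that crude bound throws away the fact that the digit $r_s$ of $r_1$ appears in $(\tfrac{d}{dv})^{N-1}r_1|_{v=\pi}$ only multiplied by something of $p$-valuation $\approx s(1-\tfrac1e)$. If you run your scheme with the bounds you actually have, extracting $q_1(\pi)$ costs $k - (2N-3)(1-\tfrac1e) - (N-1)(1-\tfrac1e)$, and one more interpolation gives $q_1\bmod E_j \in p^{k-(3N-3)(1-1/e)}R[v]$, so the final exponent is $(3N-3)(1-\tfrac1e)$, not $(2N-1)(1-\tfrac1e)$. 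Relatedly, your per-step accounting is internally inconsistent: you first say dividing by $(E_j')^{N-1}$ costs ``at most $(N-1)\cdot\tfrac1e$'' and later that it contributes only ``one factor of $1-\tfrac1e$'' per step; the correct cost of that single division is $(N-1)(1-\tfrac1e)$, which is exactly what makes the naive top-down scheme overshoot. To repair this you would have to strengthen your inductive hypothesis to carry, at each stage, a bound on every $E_j$-adic digit of the remainder rather than on the remainder as a whole --- at which point you have reconstructed the paper's bottom-up argument.

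One smaller point: you also need the hypothesis $N<p$ somewhere; in the paper it enters precisely because $t!$ for $t<N$ must be a $p$-adic unit when it is divided off. This should appear explicitly in whichever version of the argument you write.
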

\begin{proof} Our hypothesis implies $(\frac{d}{dv})^t|_{v=\pi}(r)\in p^kR$ for $t<N$ and $E_j(\pi)=0$. We decompose $r=\sum_{t=0}^{N-1}E_j^t r_t$ with $\deg r_i<e$. Then the reduction to $R/p^k$ of the coefficients of $r_0$ form an element in the kernel space of the Vandermonde matrix on the roots of $E_j$. It follows that $r_0\in p^{k-\frac{e-1}{e}}R[v]$.
For $t\geq 0$, Then $t!(E'_j)^t(\pi)r_t(\pi)$ differs from $(\frac{d}{dv})^t|_{v=\pi}(r)$ by a polynomial in the coefficients of $r_{t'}$ for $t'<t$. This implies $r_t\in p^{k-(2t+1)(1-\frac{1}{e})}R[v]$ by induction on $t$.
\end{proof}
The following Lemma studies the effect of the approximation of the monodromy condition under the presence of a suitable parabolic structure: 
\begin{lemma}\label{lem:parabolic_monodromy} Let $R$ be a Noetherian $\cO$-algebra, $N$,$r,s$ non-negative integers such that $r+s=n$. Let $\kappa=\begin{pmatrix} \kappa_{t} & 0 \\ 0 & \kappa_{b} \end{pmatrix}\in X^*(T)\otimes \cO$ viewed as a constant diagonal matrix, $w\in W^{\vee}$, and $\tld{z}=zt_{\nu}=\begin{pmatrix} \tld{z}_{t} & 0 \\ 0 & \tld{z}_{b}\end{pmatrix}$ (with block sizes $r, s$). %
Suppose we are also given $P=\begin{pmatrix} A & 0 \\ C & D \end{pmatrix}\in \mathrm{Mat}_n(R[v])$, a block lower triangular matrix corresponding to the partition $r+s=n$ satisfying
\begin{equation} \label{eqn:parabolic_monodromy}
(v\frac{d}{dv} P - [P,\kappa]) P^{-1}\in \frac{1}{E_j} \mathrm{Mat}_n(R[v])
\end{equation}
Assume the following
\begin{enumerate}
\item $\begin{pmatrix} A & 0 \\ 0 & D \end{pmatrix}\in wU(w^{-1}\tld{z}w) w^{-1}$.
\item For $\beta\in \Phi$, the $\beta$-th entry of $CA^{-1}$ (inserted inside $\mathrm{Mat}_n$ at the same position as $C$) belong to $v^{\delta_{w^{-1}(\beta)<0}}R[v,E_j^{-1}]_{<0}$.
\item $E_j^NP^{-1}\in \mathrm{Mat}_n(R[v])$.
\item $\langle \tld{z}(0)-z(\kappa), \beta^{\vee}\rangle+k\in \cO^{\times}$ for all $k\in \{0,\cdots,-Ne\}$. 
\end{enumerate}
Let $\cO_P$ be the $\cO$-algebra generated by the coefficients of the entries of $P$, and $\cO_{A,D}$ be the $\cO$-algebra generated by the coefficients of the entries of $A, D$. Then $\cO_{P}$ is generated over $\cO_{A,D}$ by at most $ers$ elements.
\end{lemma}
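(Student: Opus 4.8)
The plan is to analyze the monodromy equation \eqref{eqn:parabolic_monodromy} block by block, exploiting the parabolic structure of $P$. Write $P = \begin{pmatrix} A & 0 \\ C & D \end{pmatrix}$, so $P^{-1} = \begin{pmatrix} A^{-1} & 0 \\ -D^{-1}CA^{-1} & D^{-1}\end{pmatrix}$. The top-left and bottom-right blocks of \eqref{eqn:parabolic_monodromy} only involve $A$ and $D$ respectively, so they carry no new information about $C$; the top-right block is automatically zero. The content is entirely in the bottom-left block, which reads
\[
v\frac{d}{dv}C\cdot A^{-1} - C A^{-1}\cdot v\frac{d}{dv}A\cdot A^{-1} - D\cdot D^{-1}CA^{-1}\cdot\kappa_t + \kappa_b\cdot CA^{-1} \in \tfrac{1}{E_j}\mathrm{Mat}(R[v]),
\]
wait—more carefully, setting $X \defeq CA^{-1}$, the bottom-left block of $(v\frac{d}{dv}P - [P,\kappa])P^{-1}$ works out to an expression of the form $v\frac{d}{dv}X + X\cdot(\text{something involving }A, \kappa_t) - (\text{something involving }D,\kappa_b)\cdot X$ plus terms that are already in $\frac{1}{E_j}\mathrm{Mat}(R[v])$ by the top-left and bottom-right blocks of the hypothesis. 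The upshot is a first-order linear ODE for $X$ modulo $\frac{1}{E_j}\mathrm{Mat}_n(R[v])$, with coefficients built from $A$, $D$, and $\kappa$.

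Next I would pass to the entrywise description. By hypothesis (2), each entry $X_\beta$ of $X$ lies in $v^{\delta_{w^{-1}(\beta)<0}}R[v,E_j^{-1}]_{<0}$, so after clearing the common denominator $E_j^N$ (using hypothesis (3), which controls the pole order) we may write $X_\beta = v^{\delta_{w^{-1}(\beta)<0}} P_\beta/E_j^N$ with $P_\beta \in R[v]$ of degree $< Ne$. The coefficients of these $P_\beta$, ranging over the $\beta$ indexing the $C$-block (i.e. $r\cdot s$ roots, each contributing at most $e$ coefficients once the $\frac{1}{E_j}$-ambiguity and degree bounds are accounted for), are exactly the extra generators of $\cO_P$ over $\cO_{A,D}$: indeed $\cO_P$ is generated over $\cO_{A,D}$ by the coefficients of the entries of $C = XA$, hence by the coefficients of the entries of $X$. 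This already gives a bound; the point of the lemma is to see the ODE forces no further generators are needed beyond these $ers$ coefficients, i.e. the count is exactly $ers$. The genericity hypothesis (4) is what guarantees the leading-order part of the ODE (the operator $v\frac{d}{dv} + \langle \tld{z}(0) - z(\kappa),\beta^\vee\rangle$ acting on each graded piece, the eigenvalue coming from the diagonal terms $s^{-1}v^{\mu}$ inside $A$, $D$ via $\tld{z}$) is invertible on the relevant range of $v$-degrees $\{0,\dots,-Ne\}$, so the equation recursively solves the coefficients of the "lower" part of $P_\beta$ in terms of the "top" $e$ coefficients and of $\cO_{A,D}$; this is the same mechanism as in the proof of Proposition \ref{prop:monodromySchubert} and \cite[Theorem 4.2.4]{MLM}. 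Carrying out the degree-count gives $e$ free coefficients per $\beta$-root in the $C$-block, hence $ers$ total.

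The main obstacle I anticipate is bookkeeping the precise bottom-left block of the matrix identity \eqref{eqn:parabolic_monodromy} and extracting the correct linear recursion: one must carefully track how the $v\frac{d}{dv}$ term interacts with the $E_j^N$-denominator (differentiating $E_j^{-N}$ produces lower-order poles, which must be reabsorbed) and verify that the "diagonal" contribution to the recursion eigenvalue is exactly $\langle \tld{z}(0)-z(\kappa),\beta^\vee\rangle + k$ for $v$-degree $k$, matching hypothesis (4). A secondary subtlety is that the conjugation by $w$ in hypothesis (1) twists which entries are "upper" versus "lower", so the sign $\delta_{w^{-1}(\beta)<0}$ in hypothesis (2) must be threaded consistently through the degree bounds; this is routine but error-prone. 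Once the recursion is set up, the conclusion that $ers$ generators suffice is immediate from counting: $r\cdot s$ matrix positions in the $C$-block, each $P_\beta$ having $Ne$ coefficients but with $(N-1)e$ of them (for the appropriate range) determined by the ODE from the top $e$, leaving $e$ per position—except one must be slightly careful that it is the $\frac{1}{E_j}$-slack (one factor of $E_j$, i.e. $e$ coefficients) that is genuinely free, which is exactly the statement. I would present this as: reduce to the $C$-block equation, clear denominators, read off the recursion, invoke hypothesis (4) for invertibility, and count.
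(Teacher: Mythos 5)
Your proposal follows essentially the same route as the paper's proof: pass to the bottom-left block, set $B=CA^{-1}$, clear the $E_j^N$ denominator to get a downward recursion in the $v$-degree, invoke the genericity hypothesis (4) for the invertibility of the recursion coefficient, and count the $e$ free coefficients at each of the $rs$ positions.

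One point you flag as ``routine but error-prone'' is actually where the real content lies. The recursion coefficient is not a scalar: it is the Sylvester operator $X\mapsto iX+(\kappa_b-D_{\mathrm{lead}})X-X(\kappa_t-A_{\mathrm{lead}})$, where $A_{\mathrm{lead}}, D_{\mathrm{lead}}$ are the leading coefficients of $E_j\nabla A, E_j\nabla D$ and are only $w$-conjugate to lower triangular matrices whose diagonals give $\tld z(0)-z(\kappa)$. Inverting this requires a matrix-triangular analogue of your scalar invertibility claim (this is the paper's Lemma \ref{lem:invertible_operator}). Moreover, because the degree bounds in hypothesis (2) are offset by $\delta_{w^{-1}(\beta)<0}$, each graded piece $B_i$ must be split into the two sub-matrices $B_i^{\pm}$ indexed by the sign of $w^{-1}(\beta)$ and the recursion run on each separately (with eigenvalues $i$ and $i-1$ respectively). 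Your plan correctly identifies the skeleton, and the generator count $ers$ drops out exactly as you describe, but a complete write-up would need to make the Sylvester-operator inversion and the $B_i^{\pm}$ split explicit rather than absorbing them into bookkeeping.
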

\begin{proof}
In this proof only, we abbreviate $\delta_{\beta}=\delta_{w^{-1}(\beta)<0}$, to avoid cluttering notation.

Our hypothesis on $\begin{pmatrix} A & 0 \\ 0 & D \end{pmatrix}$ implies
\[w^{-1}\begin{pmatrix} (v\frac{d}{dv} A - [A,\kappa_{t}]) A^{-1} & 0 \\ 0 &(v\frac{d}{dv} D - [D,\kappa_{b}]) D^{-1}  \end{pmatrix} w\in \mathrm{Mat}_n(R[v,E_j^{-1}]_{\leq 0}),\]
and whose entries above the diagonal are in $R[v,E_j^{-1}]_{<0}$ and whose entries on and below the diagonal are in $vR[v,E_j^{-1}]_{<0}$. Furthermore, modulo $R[v,E_j^{-1}]_{<0}$, the diagonal part is exactly $\frac{vE'_j}{E_j}\Ad(z)\frac{\nu}{e} +(1-\Ad(z))(\kappa)$.

Now
\[(v\frac{d}{dv} P - [P,\kappa]) P^{-1}= \begin{pmatrix} v\frac{d}{dv} A - [A,\kappa_{t}] & 0 \\ v\frac{d}{dv} C - C\kappa_{t}+\kappa_{b}C& v\frac{d}{dv} D - [D,\kappa_{b}] \end{pmatrix}\begin{pmatrix} A^{-1} & 0 \\ -D^{-1}CA^{-1}& D^{-1}\end{pmatrix}\]
Set $B=CA^{-1}$, then the bottom left block of the above expression is 
\begin{align*}
&(v\frac{d}{dv} (BA) - BA\kappa_{t}+\kappa_{b}BA)A^{-1} -(v\frac{d}{dv} D - [D,\kappa_{b}])D^{-1}B \\
=&  v\frac{d}{dv} B - B\kappa_{t}+\kappa_{b}B+B(v\frac{d}{dv} A - [A,\kappa_t])A^{-1} -(v\frac{d}{dv} D - [D,\kappa_{b}])D^{-1}B 
\end{align*}
We abbreviate $\nabla B= v\frac{d}{dv} B - B\kappa_{t}+\kappa_{b}B$, $\nabla A= (v\frac{d}{dv} A - [A,\kappa])A^{-1} $ and $\nabla D=(v\frac{d}{dv} D - [D,\kappa_{b}])D^{-1}$.

In what follows, we label the entries of various matrices of size smaller than $n\times n$ using roots/indices of the $n\times n$ matrix $P$, by interpreting such matrices as one of the non-trivial block of $P$ corresponding to its size.    
We observe:
\begin{itemize}
\item
$E_j\nabla(A)=A_{e-1}+\cdots A_0$, where $A_{i,\beta}=v^{\delta_{\beta}+i}a_{i,\beta}$, $A_{i,ll}=v^{1+i}a_{i,ll}$ with $a_{i,\beta},a_{i,ll}\in R$, for all $\beta$ and $l$ such that the relevant entry exists in $A$.
\item
$E_j\nabla(D)=D_{e-1}+\cdots D_0$, where $D_{i,\beta}=v^{\delta_{\beta}+i}d_{i,\beta}$, $D_{i,ll}=v^{1+i}d_{i,ll}$ with $d_{i,\beta},a_{d,ll}\in R$, for all $\beta$ and $l$ such that the relevant entry exists in $D$.
\item The matrices $A_{lead}$, $D_{lead}$ obtained by extracting the degree $e$ coefficients of $E_j\nabla(A)$, $E_j\nabla(D)$ satisfy 
\[w^{-1} \begin{pmatrix} A_{lead} &0 \\ 0& D_{lead}\end{pmatrix}w \]
is lower triangular, with diagonal entries $\Ad(w^{-1}) (z(\nu)+\kappa-z(\kappa))$. %
\item $B=\frac{1}{E_j^N}(B_0+B_{-1}+\cdots)$ where $B_{i,\beta}=b_{i,\beta}v^{\delta_\beta+Ne-1+i}$ with $b_{i,\beta}\in R$, and $b_{i,\beta}=0$ if $i<-Ne$.
\end{itemize}

Condition (\ref{eqn:parabolic_monodromy}) means
\[\nabla B +B(\nabla A)-(\nabla D )B\in \frac{1}{E_j} \mathrm{Mat}_n(R[v]).\]
Using $v\frac{d}{dv}(\frac{F}{E_j^N})=-N\frac{vE'_j}{E_j^{N+1}}F+\frac{v}{E_j^N}\frac{dF}{dv}$, clearing denominators in the above expression yields
\begin{equation}\label{eqn:key}
-NvE'_j(E_j^NB)+vE_j\frac{d}{dv}(E_j^NB)-E_j(E_j^NB)\kappa_{t}+E_j\kappa_{b}(E_j^NB)+(E_j^NB)(E_j\nabla A)-(E_j\nabla D)(E_j^NB)=E_j^{N}X
\end{equation}
for some $X\in \mathrm{Mat}_n(R[v])$.

 The observations on the degree ranges of $A_i,D_i,B_i$ show that for each relevant $\beta\in \Phi$, $X_{\beta}=v^{\delta_\beta}\sum_{i\geq 0} x_{i,\beta}v^i$ (recall that an element of $R[v][\frac{1}{E_j}]$ is divisible by $v$ if and only if its evaluation at $v=0$ is $0$, a condition that makes sense because $R\subset R[\frac{1}{p}]$).

The degree $Ne+e-1+i+\delta_{\beta}$ part of the $\beta$-th entry of equation (\ref{eqn:key}) reads
\begin{align}\label{eqn:key:1}
&-Neb_{i,\beta}+(\delta_{\beta}+Ne-1+i)b_{i,\beta}+\langle \kappa, \beta^\vee \rangle b_{i,\beta}+O(>i,\beta)+\\
&
\qquad+\sum_{\beta=\beta'+\beta''}\sum_{k,l}b_{k,\beta'}a_{l,\beta''}+\sum_{\beta=\gamma'+\gamma''}\sum_{k',l'}d_{l',\gamma'}b_{k',\gamma''}
=X_{e-1+i,\beta}+O(>e-1+i,\beta)\nonumber
\end{align}
where
\begin{itemize}
\item
The symbol $O(>i,\beta)$ (resp. $O(>e-1+i,\beta)$) stands for a polynomial with $\cO$-coefficients in $b_{i',\beta}$ (resp. $X_{e-1+i',\beta}$) for $i'>i$. 
\item
The decompositions $\beta=\beta'+\beta''$ runs over decompositions in $\Phi$, with the added possibility that $\beta''=0$, in which case $a_{l,\beta''}$ is interpreted as the unique diagonal term $a_{l,tt}$ that contributes to the $\beta$-entry of the matrix product. A similar remark applies to $\beta=\gamma'+\gamma''$.
\item The pairs $k,l$ and $k',l'$ are constrained by 
\[Ne-1+k+l+\delta_{\beta'}+\delta_{\beta''}=Ne+e-1+i+\delta_{\beta}\]
\[Ne-1+k'+l'+\delta_{\gamma'}+\delta_{\gamma''}=Ne+e-1+i+\delta_{\beta}.\]
In particular, we learn that $k\geq i$ (resp. $k'\geq i$), with equality if and only if $l=e-1$ and $\delta_{\beta}+1=\delta_{\beta'}+\delta_{\beta''}$ (resp. $l'=e-1$ and $\delta_{\beta}+1=\delta_{\gamma'}+\delta_{\gamma''}$).
Also observe that when $k=i$ the product $b_{k,\beta'}a_{l,\beta''}$ (resp.~$d_{l',\gamma''}b_{k',\gamma'}$) is zero as soon as $\delta_{\beta''}=0$ (resp.~$\delta_{\gamma''}=0$). 
\end{itemize}
Let $\cO_{A,D,B-top}$ be the $\cO$-algebra generated by the coefficients of $A,D$ and $B_i$ for $i\geq 1-e$. 
The above observation implies that $X_\beta v^{-\delta_\beta}$ has degree $\leq e-1$, and each of its coefficients belong to $\cO_{A,D,B-top}$.

We now show that the coefficient of each entry of $B_i$ belongs to $\cO_{A,D,B-top}$ by downward induction on $i$. 
The claim clearly holds for $i\geq 1-e$. 
Suppose it holds up to $i+1$. 
Let $B^+_i$ be the matrix given by $B^+_{i,\beta}=\delta_\beta B_{i,\beta}$. 
It follows (using $\delta_{\beta'}+\delta_{\beta''}=1+\delta_\beta=2$ if and only if $\delta_{\beta'}=\delta_{\beta''}=1$) from the above facts that
\[iB^+_i+(\kappa_{b}-D_{lead} )B^+_i-B^+_i(\kappa_{t}-A_{lead})\in M_{s\times r}(\cO_{A,D,B-top}[v])\]

As in Proposition \ref{prop:parabolic_general}, the element $w\in W(\GL_n)$ induces an element $(w_{t},w_{b})\in W(\GL_r)\times W(\GL_s)$. We then have
$\Ad(w_{t}^{-1})(A_{lead})$, $\Ad(w_{b}^{-1})(D_{lead})$ are lower triangular. Thus Lemma \ref{lem:invertible_operator} below applies, and shows $B^{+}_i\in M_{s\times r}(\cO_{A,D,B-top}[v])$.
Now set $B^-_i=B_i-B^+_i$. 
Using what we just proved, we also get
\[(i-1)B^+_i+(\kappa_{b}-D_{lead} )B^+_i-B^+_i(\kappa_{t}-A_{lead})\in M_{s\times r}(\cO_{A,D,B-top}[v])\]
and the same argument shows $B^-_i\in M_{s\times r}(\cO_{A,D,B-top}[v])$. This finishes the inductive step.

Finally, since $C=BA$, $\cO_P$ also belongs to $\cO_{A,D,B-top}$.
\end{proof}
\begin{lemma}\label{lem:invertible_operator} Let $R$ be a ring with a subring $S$, $r+s=n$, $w_1\in W(\GL_r), w_2\in W(\GL_s)$. Suppose we are given $A_1\in M_{r}(S)$, $A_2\in M_r(S)$, $B\in M_{s\times r}(R)$ such that 
\begin{itemize}
\item $\Ad(w_i^{-1})(A_i)$ is lower triangular for $i=1,2$.
\item If $s_1$ is a diagonal entry of $A_1$ and $s_2$ is a diagonal entry of $A_2$, then $s_1-s_2\in S^{\times}$.
\item $BA_1-A_2B \in M_{s\times r}(S)$.
\end{itemize}
Then $B\in M_{s\times r}(S)$.
\end{lemma}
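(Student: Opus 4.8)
The plan is to reduce the statement to a triangular system of linear equations over $S$ whose coefficient matrix is invertible. First I would conjugate: replace $A_1$ by $\Ad(w_1^{-1})(A_1)$, $A_2$ by $\Ad(w_2^{-1})(A_2)$, and $B$ by $w_2^{-1} B w_1$. This transformation sends $S$-matrices to $S$-matrices and is compatible with the hypothesis $BA_1 - A_2 B \in M_{s\times r}(S)$, so without loss of generality I may assume that $A_1$ and $A_2$ are themselves lower triangular, with the same diagonal entries as before (so the difference-of-eigenvalues hypothesis is preserved). Write $C \defeq BA_1 - A_2 B \in M_{s\times r}(S)$, so $B$ satisfies the Sylvester-type equation $A_2 B - B A_1 = -C$.

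\textbf{Main step.} Now I would solve for the entries of $B$ one at a time, in an order dictated by the triangularity of $A_1$ and $A_2$. Let $(A_1)_{kk} = s^{(1)}_k$ and $(A_2)_{ll} = s^{(2)}_l$ denote the diagonal entries. Expanding the $(l,k)$-entry of $A_2 B - B A_1 = -C$, and using that $A_1$ is lower triangular (so $(A_1)_{k'k} = 0$ unless $k' \geq k$) and $A_2$ is lower triangular (so $(A_2)_{ll'} = 0$ unless $l' \leq l$), one gets
\[
(s^{(2)}_l - s^{(1)}_k)\, B_{lk} \;=\; -C_{lk} \;-\; \sum_{l' < l} (A_2)_{ll'} B_{l'k} \;+\; \sum_{k' > k} B_{lk'} (A_1)_{k'k}.
\]
The right-hand side involves only $C_{lk} \in S$, entries of $A_1, A_2$ lying in $S$, and entries $B_{l'k}$ with $l' < l$ together with entries $B_{lk'}$ with $k' > k$. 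Since $s^{(2)}_l - s^{(1)}_k \in S^\times$ by hypothesis, I can solve for $B_{lk}$ in terms of these. Proceeding by induction — say, by increasing $l$ and, for fixed $l$, by decreasing $k$ (or any linear extension of the partial order "$l' < l$ or ($l' = l$ and $k' > k$)") — every $B_{lk}$ is expressed as an $S$-linear combination of elements of $S$, hence lies in $S$. Therefore $B \in M_{s\times r}(S)$, and undoing the conjugation (which preserves membership in $M_{s\times r}(S)$) gives the claim for the original $B$.

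\textbf{Anticipated obstacle.} There is no serious obstacle here; the only point requiring a little care is bookkeeping the order of elimination so that the recursion is genuinely well-founded — i.e. checking that when solving for $B_{lk}$ the entries appearing on the right-hand side have strictly earlier position in the chosen order. This follows from the two triangularity hypotheses combined, but one must be careful that "$A_2$ lower triangular" and "$A_1$ lower triangular" push the dependencies in compatible directions (rows of $B$ toward smaller index, columns of $B$ toward larger index), which they do. A degenerate subtlety is the possibility of repeated eigenvalues $s^{(1)}_k = s^{(1)}_{k'}$ within $A_1$ (or within $A_2$): this does not cause trouble because the invertibility hypothesis is only needed for the "diagonal" coefficient $s^{(2)}_l - s^{(1)}_k$ of the unknown being solved for, and cross terms among equal eigenvalues of a single $A_i$ have already been moved to the right-hand side via triangularity. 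Hence the elimination goes through verbatim.
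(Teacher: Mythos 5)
Your proof is correct and is essentially the paper's argument: conjugate to reduce to the case $w_1=w_2=1$ where both $A_i$ are lower triangular, then observe that the $(l,k)$-entry of the Sylvester equation expresses $(s^{(2)}_l-s^{(1)}_k)B_{lk}$ in terms of $S$ and entries $B_{l'k'}$ earlier in a suitable well-founded order, and invert. The paper phrases the induction more compactly as induction on the single integer $l-k$ (both $l'<l$ with $k'=k$ and $k'>k$ with $l'=l$ strictly decrease this difference), which is marginally cleaner than your lexicographic order but otherwise identical.
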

\begin{proof}
Replacing $B$ by $ w_2Bw_1^{-1}$, we may assume $w_1=1$, $w_2=1$. In this case, looking at the $(k,l)$-th entry of $BA_1-A_2B$ shows that 
$(s_1-s_2)B_{kl}$ belong to the subalgebra generated by $S$ and $B_{k'l'}$ with $k'-l'< k-l$, where $s_1, s_2$ are suitable diagonal entries of $A_1,A_2$. We conclude induction on $k-l$ that $B_{kl}\in S$. 
\end{proof}
\begin{rmk}\label{rmk:error_term} 
Suppose that in the setting of Lemma \ref{lem:parabolic_monodromy}, we don't have equation (\ref{eqn:parabolic_monodromy}) exactly but only an approximately: for $0\leq t<N$ and $\pi$ a root of $E_j$, the operator $(\frac{d}{dv})^t(E_j^{N+1}\cdot)|_{v=\pi}$ hitting on the matrix in (\ref{eqn:parabolic_monodromy}) belongs to $p^k\mathrm{Mat}_n(R)$. Then the proof shows that the conclusion of Lemma \ref{lem:parabolic_monodromy} also holds approximately:
there is an $\cO$-subalgebra $S$ of $R$ generated over $\cO_{A,D}$ by at most $ers$ elements such that $\cO_P\subset S+p^{k-(2N-1)(1-\frac{1}{e})}R$. This follows from Lemma \ref{lem:error_control}.

\end{rmk}
\subsection{Proof of Theorem \ref{thm:FSM}}
\begin{proof}

We recall the setting of Theorem \ref{thm:FSM}. 
We are given $\tau$, a tame inertial type over $E$, together with a fixed lowest alcove presentation $(s,\mu)$ for it, such that $\mu$ is $\max\{(3n-7)(e-1)+2n-6,(2n-3)e\}$-deep. 
Furthermore, $\tld{w}(\rhobar,\tau)=(\tld{w}_j^{-1}t_{e\eta}\tld{w}_{\alpha_j}\tld{w}_j)_j$ for some simple root $\alpha_j$ for each $j\in \cJ$.

We assume $R^{\leq \eta,\tau}_\rhobar \neq 0$, otherwise there is nothing to prove. In particular we obtain $\ovl{\fM}\in Y^{\leq \eta,\tau}(\F)$ such that $T^*_{dd}(\ovl{\fM})\cong \rhobar|_{{G_{K_\infty}}}$. Then $\ovl{\fM}$ has shape $w^{-1}\tld{z}w=\tld{w}(\rhobar,\tau)^*$, {where $w\in W^{\cJ}$, $\tld{z}\in \tld{W}^{\cJ}$ are as in the statement of Proposition \ref{prop:parabolic_general}.}

We need to analyze $R^{\tau,\ovl{\beta},\nabla}_{\ovl{\fM}}$ in the context of diagram (\ref{diag:main}).

 We first observe that for each $j$, $\tld{z}_j$ has a block diagonal structure
\[\tld{z}_j=\begin{pmatrix} \tld{z}_{j,t} & 0 & 0 \\ 0 & \tld{z}_{\alpha_j} & 0\\ 0 & 0 & \tld{z}_{j,b}\end{pmatrix}\]
with sizes $r$, $2$, $s$ where 
\begin{itemize}
\item $\tld{z}_{j,t}=t_{e(n-1,\cdots, s+2)}$
\item $\tld{z}_{j,b}=t_{e(s-1,\cdots 0)}$.
\item $v^{-es}\tld{z}_{\alpha_j}$ has elementary divisors bounded by $v^{(e,0)}$.
\end{itemize}
In particular, we are in a position to repeatedly apply Proposition \ref{prop:parabolic_general} to $R^{\tau,\ovl{\beta}}_{\ovl{\fM}}$ and each $A^{(j)}_{\fM}$ for the universal Breuil--Kisin module $\fM$. 
This gives
\[A^{(j)}_{\fM}=D^{(j)}w_j^{-1} \begin{pmatrix}P^{(j)} \end{pmatrix} w_j\]
where $P^{(j)}$ is block lower triangular, whose Levi blocks from top to bottom are $E_j^{n-1},\cdots E_j^{s+2},M_{\alpha_j},E_j^{s-1},\cdots, 1$ {(in particular, this defines $M_{\alpha_j}$ as the $2$ by $2$ block of $P^{(j)}$)}. 
Furthermore, the entries of $D^{(j)}$, $P^{(j)}$ over all $j$ topologically generate $R^{\tau,\ovl{\beta}}_{\ovl{\fM}}$. 

Set $x_j=\ovl{M}_{\alpha_j}$ be the reduction of $M_{\alpha_j}$ modulo the maximal ideal.
Then $x_j$ can be naturally interpreted as an element of $M_j(t_{(1,0)})(\F)$.
By Proposition \ref{prop:Levi_reduction}, we get an $\cO$-algebra map $\cO^{\wedge}_{M_j(1,0),x_j}\ra R^{\tau,\ovl{\beta}}_{\ovl{\fM}}$ sending generators of $\cO^{\wedge}_{M_j(1,0),x_j}$ to the corresponding coefficients of the entries of $\ovl{M}_{\alpha_j}$.
Set $x=(x_j)\in M_{\cJ}(t_{(1,0)})=\prod_j M_j(t_{(1,0)})$.
Thus $R^{\tau,\ovl{\beta}}_{\ovl{\fM}}$ acquires an $\widehat{\bigotimes}_{\cO}\cO^{\wedge}_{M_j(1,0),x_j}=\cO^{\wedge}_{M_\cJ(t_{(1,0)}),x}$-algebra structure, and the image of $\cO^{\wedge}_{M_\cJ(t_{(1,0)}),x}$ coincides with the topological subalgebra generated by the coefficients of the entries of $(M_{\alpha_j})$ for all possible $j$.

Repeated applications of the approximate version of Lemma \ref{lem:parabolic_monodromy} to $R^{\tau,\ovl{\beta},\nabla}$ as in Remark \ref{rmk:error_term} (with the control of the monodromy condition obtained by combining Proposition \ref{prop:monodromy_control} and Lemma \ref{lem:error_control}) show that $R^{\tau,\ovl{\beta},\nabla}_{\ovl{\fM}}$ is topologically generated over $\cO^{\wedge}_{M_{\cJ}(1,0),x}$ by $fn+e\sum_j \dim N_{-\alpha_j} = fn+ (\frac{n(n-1)}{2}-1)[K:\Qp]$ elements. But since we assumed $R^{\leq \eta,\tau}\neq 0$, $\dim R^{\tau,\ovl{\beta},\nabla}_{\ovl{\fM}}=1+fn+\frac{n(n-1)}{2}[K:\Qp]= \dim \cO^{\wedge}_{M_{\cJ}(1,0),x}+fn+ (\frac{n(n-1)}{2}-1)[K:\Qp]$. Since $\cO^{\wedge}_{M_{\cJ}(1,0),x}$ is an integral domain (being the completion of an excellent normal scheme), the equality of dimension can only happen if $R^{\tau,\ovl{\beta},\nabla}_{\ovl{\fM}}$ is a power series ring over $\cO^{\wedge}_{M_{\cJ}(1,0),x}$ in the correct number of variables. 
All the assertions of Theorem \ref{thm:FSM} now follows from properties of the $M_{\cJ}(t_{(1,0)})$ which follows form Corollary \ref{cor:GL_2model}.
\end{proof}\clearpage{}%
\clearpage{}%
\section{The main results}
\label{sec:main:mod}

In this section, we prove our main results on the weight part of Serre's conjecture. 
We start with an axiomatic setup before defining the relevant spaces of automorphic forms in \S \ref{sec:TW}. 

Recall from \S \ref{subsub:Lp} that given an $\F$-valued $L$-homomorphism $\rhobar: G_{\Q_p} \ra{}^L \un{G}(\F)$ (resp.~a tame inertial $L$-parameter $\tau: I_{\Q_p} \ra\un{G}^\vee(E)$) we have a corresponding collection $(\rhobar_v)_{v\in S_p}$ of continuous Galois representations $\rhobar_v:G_{F^+_v}\ra\GL_n(\F)$ (resp.~a corresponding collection $(\tau_v)_{v\in S_p}$ of tame inertial types $\tau_v:I_{F^+_v}\ra\GL_n(E)$).

\subsection{Weight elimination}

\begin{thm}\label{thm:WE}
Let $\rhobar: G_{\Q_p} \ra{}^L \un{G}(\F)$ be a $3e(n-1)$-generic $\F$-valued $L$-homomorphism.
Let $\rhobar^\speci$ be a specialization of $\rhobar$ with a compatible $\max\{2,e\}(n-1)$-generic lowest alcove presentation.
Assume that we have a set $W_{\textnormal{elim}}(\rhobar)$ of $3(n-1)$-deep Serre weights satisfying the following local-global compatibility axiom:
\begin{enumerate}[(i)]
\item
\label{eq:elim} 
for any tame inertial $L$-parameter $\tau$, $\JH(\ovl{\sigma}(\tau)) \cap W_{\textnormal{elim}}(\rhobar) \neq \emptyset$ implies that $\rhobar$ has a potentially crystalline lift of type $(\tau,\eta)$.
\end{enumerate}
Then $W_{\textnormal{elim}}(\rhobar) \subset W^?(\rhobar^\speci)$.
\end{thm}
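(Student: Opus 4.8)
The strategy is to run the weight elimination argument for tame Galois representations from \cite{LLL}, but applied to the specialization $\rhobar^\speci$ rather than to $\rhobar$ itself, using the semicontinuity of shapes (Theorem \ref{thm:semicont}) to transfer information from $\rhobar$ to $\rhobar^\speci$. Fix a compatible lowest alcove presentation of $\rhobar^\speci$ and let $\sigma \in W_{\textnormal{elim}}(\rhobar)$. The goal is to produce a tame inertial type $\tau$ of Hodge--Tate weights $\eta$ with $\sigma \in \JH(\ovl{\sigma}(\tau))$ and $\tld{w}(\rhobar^\speci,\tau) \in \Adm(e\eta_0)$; then Proposition \ref{prop:combWE}(2) will give $\sigma \in W^?(\rhobar^\speci)$, which is exactly what we want.

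\textbf{Step 1: producing the type.} First I would apply Proposition \ref{prop:combWE} to $\sigma$ (which is $3(n-1)$-generic, hence certainly $3h_{\eta_0}$-deep as needed, after checking the genericity bookkeeping): write a compatible lowest alcove presentation $(\tld{w}_\lambda,\omega)$ for $\sigma$, obtain the unique $\tld{w}\in \tld{W}^+$ with $\omega = \tld{w}(\rhobar^\speci)\tld{w}^{-1}(0)$, and let $\tau$ be the tame inertial type over $E$ with $\tld{w}(\rhobar^\speci,\tau) = (\tld{w}_h\tld{w}_\lambda)^{-1}w_0\tld{w}$. By part (1) of Proposition \ref{prop:combWE} we get $\sigma \in \JH(\ovl{\sigma}(\tau))$ for free. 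What remains is precisely the admissibility $\tld{w}(\rhobar^\speci,\tau)\in \Adm(e\eta_0)$.

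\textbf{Step 2: using the local-global axiom and semicontinuity to force admissibility.} Since $\sigma \in \JH(\ovl{\sigma}(\tau))\cap W_{\textnormal{elim}}(\rhobar)$, axiom (\ref{eq:elim}) gives a potentially crystalline lift of $\rhobar$ of type $(\tau,\eta)$, so $R_{\rhobar}^{\eta,\tau}\neq 0$; in particular $\rhobar$ is $\tau$-admissible and $\tld{w}(\rhobar,\tau)\in \Adm(e\eta_0)$ by Definition \ref{defn:shaperhobar}. By Theorem \ref{thm:semicont} (applicable since $\rhobar$ is $3e(n-1)$-generic and specializes to $\rhobar^\speci$), for each $j\in\cJ$ we have $\tld{w}(\rhobar^\speci,\tau)_j \leq \tld{w}(\rhobar,\tau)_j$ in the Bruhat order. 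Now $\tld{w}(\rhobar,\tau)\in\Adm(e\eta_0)$ means $\tld{w}(\rhobar,\tau)\leq t_{w'(e\eta_0)}$ for some $w'\in W^\cJ$, and since $\Adm(e\eta_0)$ is a lower set in the Bruhat order (it is defined as such in Definition \ref{defn:adm}), the inequality $\tld{w}(\rhobar^\speci,\tau)\leq \tld{w}(\rhobar,\tau)$ forces $\tld{w}(\rhobar^\speci,\tau)\in\Adm(e\eta_0)$. With admissibility in hand, Proposition \ref{prop:combWE}(2) concludes $\sigma\in W^?(\rhobar^\speci)$, and since $\sigma$ was arbitrary, $W_{\textnormal{elim}}(\rhobar)\subset W^?(\rhobar^\speci)$.

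\textbf{Main obstacle.} The substantive point is not any single step in isolation but the genericity bookkeeping threaded through all of them: one must check that the chosen lowest alcove presentation of $\rhobar^\speci$ is simultaneously generic enough to apply Proposition \ref{prop:combWE} (which wants $(\max\{2,e\}h_{\eta_0})$-generic), to make the resulting $\tau$ genuinely $(e(n-1)+1)$-generic so that $\tld{w}(\rhobar,\tau)$ and $\tld{w}(\rhobar^\speci,\tau)$ are both defined and Theorem \ref{thm:semicont} applies (which wants $3e(n-1)$-generic), and that the depth hypotheses on $\sigma$ (being $3(n-1)$-generic in $W_{\textnormal{elim}}(\rhobar)$) match the $3h_{\eta_0}$-deepness demanded by Proposition \ref{prop:combWE}. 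I expect these constraints to be exactly what pins down the genericity constants in the statement, and verifying they are mutually consistent — in particular that the type $\tau$ produced in Step 1 inherits enough genericity from $\rhobar^\speci$ and $\sigma$ — is the part requiring the most care. Everything else is a direct citation of Theorem \ref{thm:semicont}, Definition \ref{defn:shaperhobar}, and Proposition \ref{prop:combWE}.
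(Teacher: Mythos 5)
Your proof is correct and follows the same route as the paper: apply Proposition~\ref{prop:combWE} to produce $\tau$, use the local-global axiom to deduce $\tau$-admissibility of $\rhobar$, invoke Theorem~\ref{thm:semicont} to get the Bruhat inequality $\tld{w}(\rhobar^\speci,\tau)_j \leq \tld{w}(\rhobar,\tau)_j$, and conclude $\tld{w}(\rhobar^\speci,\tau)\in\Adm(e\eta_0)$ since the admissible set is downward-closed. You have merely spelled out the implicit steps that the paper compresses into its one-line citation of Theorem~\ref{thm:semicont}, including the use of axiom~(i) to establish $\tau$-admissibility of $\rhobar$.
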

\begin{proof}
Suppose that $F(\lambda) \in W_{\textrm{elim}}(\rhobar)$.
Choose the tame inertial $L$-parameter $\tau$ with $F(\lambda) \in \JH(\ovl{\sigma}(\tau))$ constructed in Proposition \ref{prop:combWE}.
By Proposition \ref{prop:CL:specialfiber} and Theorem \ref{thm:semicont}, $\tld{w}(\rhobar^{\speci},\tau)\in \Adm(e\eta_0)$, and we conclude by Proposition \ref{prop:combWE}.
\end{proof}

\begin{rmk}
If $e\geq 2$ the hypothesis on $\rhobar^{\speci}$ follows from the hypothesis on $\rhobar$.
\end{rmk}

\subsection{Patching functors}\label{sec:patchfunc}

We recall weak patching functors.
Let
\[
R_{\rhobar} \defeq \widehat{\bigotimes}_{v\in S_p,\cO} R_{\rhobar_v}^\square,
\]
and let $R^p$ be a nonzero complete local Noetherian equidimensional flat $\cO$-algebra with residue field $\F$ such that each irreducible component of $\Spec R^p$ and of $\Spec \overline{R}^p$ is geometrically irreducible. 
(The latter hypothesis can be guaranteed after passing to a finite extension of the coefficient field $E$.) 
We let $R_\infty\defeq R_{\rhobar} \widehat{\otimes}_{\cO} R^p$ and suppress the dependence on $R^p$ below.
We let $R_{\rhobar}^{\eta,\tau}$ be
\[
 \widehat{\bigotimes}_{v\in S_p,\cO} R_{\rhobar_{v}}^{\eta_{v},\tau_{v}}%
\] and define $R_\infty(\tau)\defeq R_\infty \otimes_{R_{\rhobar}} R_{\rhobar}^{\eta,\tau}$.
We write $X_\infty$, $X_\infty(\tau)$, and $\ovl{X}_\infty(\tau)$
for $\Spec R_\infty$, $\Spec R_\infty(\tau)$, and $\Spec \ovl{R}_\infty(\tau)$ respectively, denote by $\Mod(X_\infty)$ the category of coherent sheaves over $X_\infty$, and let $\Rep_{\cO}(\GL_n(\cO_p))$ be the category of topological $\cO[\GL_n(\cO_p)]$-modules which are finitely generated over $\cO$. 
We say that an $\ovl{E}$-point of $\Spec R_{\rhobar}$ is \emph{potentially diagonalizable} if for each $v \in S_p$, the corresponding Galois representation $G_{F^+_{v}} \ra \GL_n(\ovl{E})$ is potentially diagonalizable in the sense of \cite[\S 1.4]{BLGGT}.
We say that an $\ovl{E}$-point of $X_\infty$ is potentially diagonalizable if its image in $\Spec R_{\rhobar}$ is.

\begin{defn}\label{minimalpatching}
A \emph{weak patching functor} for an $L$-homomorphism $\rhobar: G_{\Q_p} \ra{}^L \un{G}(\F)$ is a nonzero covariant exact functor $M_\infty:\Rep_{\cO}(\GL_n(\cO_p))\ra \Coh(X_{\infty})$ satisfying the following: if $\tau$ is an inertial $L$-parameter and $\sigma^\circ(\tau)$ is an $\cO$-lattice in $\sigma(\tau)$ then
\begin{enumerate}
\item 
\label{support}
$M_\infty(\sigma^\circ(\tau))$ is a maximal Cohen--Macaulay sheaf on $X_\infty(\tau)$; 
\item 
\label{dimd}
for all $\sigma \in \JH(\ovl{\sigma}^\circ(\tau))$, $M_\infty(\sigma)$ is a maximal Cohen--Macaulay sheaf on $\ovl{X}_\infty(\tau)$ (or is $0$); and
\item
\label{item:pd} if there is an inertial $L$-parameter $\tau_{0}$ such that $\supp M_\infty(\sigma(\tau_{0})^\circ)$ contains a potentially diagonalizable $\overline{E}$-point, then for any inertial $L$-parameter $\tau$, $\supp M_\infty(\sigma(\tau)^\circ)$ contains all potentially diagonalizable $\overline{E}$-points. 
\end{enumerate}
We say that a weak patching functor $M_\infty$ is \emph{minimal} if $R^p$ is formally smooth over $\cO$ and whenever $\tau$ is an inertial $L$-parameter, $M_\infty(\sigma^\circ(\tau))[p^{-1}]$, which is locally free over (the regular scheme) $\Spec R_\infty(\tau)[p^{-1}]$, has rank at most one on each connected component.
\end{defn}

\begin{defn}
We say that a weak patching functor $M_\infty$ is \emph{potentially diagonalizable} if there exists $\tau_0$ as in Definition \ref{minimalpatching}(\ref{item:pd}). 
\end{defn}

\subsection{Cycles from patching functors}\label{sec:cycles}

We recall some notation from \cite[\S 2.2]{EG}.
Let $\mathcal{X}$ be an equidimensional Noetherian scheme of dimension $d$. 
Let $\mathcal{Z}(\mathcal{X})$ be the free abelian group generated by integral subschemes of $\mathcal{X}$ of \emph{maximal dimension} $d$.
If $\mathcal{M}$ is a coherent sheaf on $\mathcal{X}$ with finite-dimensional support, then we can define $Z(\mathcal{M}) \in \mathcal{Z}(\mathcal{X})$ to be $Z_d(\mathcal{M})$ which is defined as in \emph{loc.~cit.}

Now suppose that $\mathcal{X}$ is a $p$-flat equidimensional Noetherian scheme over $\cO$.
Then $\mathcal{X}[p^{-1}] \defeq \mathcal{X} \otimes_{\cO} E$ and $\ovl{\mathcal{X}} \defeq \mathcal{X} \otimes_{\cO} \F$ are equidimensional Noetherian schemes, and there is a natural reduction map $\red: \cZ(\mathcal{X}[p^{-1}]) \ra \cZ(\ovl{\mathcal{X}})$.
Moreover, if we let $\mathcal{M}[p^{-1}] \defeq \mathcal{M} \otimes_{\cO} E$ and $\ovl{\mathcal{M}} \defeq \mathcal{M} \otimes_{\cO} \F$ be the corresponding sheaves on $\mathcal{X}[p^{-1}]$ and $\ovl{\mathcal{X}}$, respectively, we have the following fact.

\begin{prop}
If $\mathcal{M}$ is an $\cO$-flat coherent sheaf over $\mathcal{X}$ with finite-dimensional support, then $\red(Z(\mathcal{M}[p^{-1}])) = Z(\ovl{\mathcal{M}})$.
\end{prop}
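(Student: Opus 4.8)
The statement is a compatibility of cycles under reduction modulo $p$: for an $\cO$-flat coherent sheaf $\mathcal{M}$ on a $p$-flat equidimensional Noetherian $\cO$-scheme $\mathcal{X}$ with finite-dimensional support, one has $\red(Z(\mathcal{M}[p^{-1}])) = Z(\ovl{\mathcal{M}})$. The plan is to reduce immediately to the affine case $\mathcal{X} = \Spec A$ and to note that both sides are additive in $\mathcal{M}$ under short exact sequences of $\cO$-flat sheaves (the flatness ensures that $-\otimes_\cO E$ and $-\otimes_\cO \F$ stay exact, so both $Z(\mathcal{M}[p^{-1}])$ and $Z(\ovl{\mathcal{M}})$ are additive, and $\red$ is linear by construction). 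Using a filtration argument, it then suffices to treat the case where $\mathcal{M} = \mathcal{O}_{\mathcal{Y}}$ for $\mathcal{Y} \subset \mathcal{X}$ an integral closed $\cO$-flat subscheme whose support has dimension equal to the relevant top dimension $d$; here $d = \dim \mathcal{X}[p^{-1}] + 1 = \dim \ovl{\mathcal{X}} + 1$ since $\mathcal{X}$ is $p$-flat and equidimensional.

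First I would set up the bookkeeping on dimensions: if $\mathcal{X}$ is $p$-flat equidimensional of dimension $d+1$ over $\cO$ with $\cO$ a DVR, then every irreducible component of $\mathcal{X}[p^{-1}]$ has dimension $d$ and every irreducible component of $\ovl{\mathcal{X}}$ has dimension $d$, because $p$ is a non-zero-divisor, hence a parameter, so cutting by $p$ drops dimension by exactly one on each component. This justifies that $\cZ(\mathcal{X}[p^{-1}])$ and $\cZ(\ovl{\mathcal{X}})$ are the groups of $d$-cycles, and the reduction map $\red$ sends a prime $\mathfrak{P}$ of $\mathcal{X}[p^{-1}]$ (i.e.\ a prime of $A[1/p]$) to the $d$-cycle $Z_d(A/(\mathfrak{P} \cap A + pA))$ of $\ovl{\mathcal{X}}$, i.e.\ to the cycle of the scheme-theoretic closure of $V(\mathfrak{P})$ intersected with the special fiber, counted with multiplicities.

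Next, for the key case $\mathcal{M} = \mathcal{O}_\mathcal{Y}$ with $\mathcal{Y} = \Spec A/\mathfrak{q}$ integral, $\cO$-flat, of dimension $d+1$: here $\mathcal{M}[p^{-1}] = \mathcal{O}_{\mathcal{Y}[p^{-1}]}$ and $Z(\mathcal{M}[p^{-1}])$ is the cycle of $\mathcal{Y}[p^{-1}]$, while $\red$ of this cycle is by definition $Z_d(\mathcal{O}_{\overline{\mathcal{Y}}})$ — and $\overline{\mathcal{M}} = \mathcal{O}_{\overline{\mathcal{Y}}}$ by $\cO$-flatness of $A/\mathfrak{q}$ (so $\mathfrak{q}$ is not contained in $pA$ and $A/\mathfrak{q}$ has no $p$-torsion, making $(A/\mathfrak{q})\otimes_\cO \F = A/(\mathfrak{q}+pA)$ the correct special fiber). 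Thus $\red(Z(\mathcal{M}[p^{-1}])) = Z_d(\overline{\mathcal{M}}) = Z(\overline{\mathcal{M}})$, which is the desired equality in this case. For a general $\cO$-flat $\mathcal{M}$ with finite-dimensional support, take a filtration of $\mathcal{M}$ with graded pieces of the form $\mathcal{O}_{\mathcal{Y}_i}$ for integral $\mathcal{Y}_i$; after discarding the $\mathcal{Y}_i$ of dimension $< d+1$ (which contribute zero to all three terms, using that $\cO$-flat sheaves supported in dimension $\le d$ over $\cO$ have generic fiber and special fiber of dimension $\le d-1$ and $\le d$ respectively — the latter is fine, only the former matters for the $d$-cycle on $\mathcal{X}[p^{-1}]$, but symmetry of the dimension count handles both) one reduces to the pieces with $\dim \mathcal{Y}_i = d+1$, which are automatically $\cO$-flat, and applies the previous case together with additivity. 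A small subtlety is that a filtration with $\cO$-flat $\mathcal{M}$ need not have $\cO$-flat subquotients a priori; I would handle this by instead filtering by associated primes and observing that the non-$\cO$-flat graded pieces are supported on $\overline{\mathcal{X}}$ hence in dimension $\le d$, so they are killed when passing to the generic fiber and, after the dimension count, contribute correctly (or zero) on the special fiber — this is exactly the argument in \cite[\S 2.2]{EG}.

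\textbf{Main obstacle.} The technical heart is the careful dimension accounting that makes the generic-fiber $d$-cycle and the special-fiber $d$-cycle ``line up'': one must check that passing to the special fiber of a $(d+1)$-dimensional $\cO$-flat integral subscheme really does produce a pure $d$-dimensional cycle with the multiplicities prescribed by the definition of $\red$, and that lower-dimensional junk contributes zero on the side where it matters. This is where I expect to spend the most care; everything else (additivity, reduction to the integral case, compatibility of $\otimes_\cO \F$ with flatness) is formal. I would cite \cite[\S 2.2]{EG} for the construction of $Z_d$, $\red$, and their basic properties, and present the proof as a short verification modeled on that reference.
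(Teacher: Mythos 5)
The paper states this proposition without proof, citing \cite[\S 2.2]{EG} for the relevant definitions and machinery, so there is no in-paper argument to compare against. Evaluating your plan on its own terms: the overall structure (reduce to the affine case, localize at a generic point of $\ovl{\mathcal{X}}$ so that the ambient ring becomes $1$-dimensional local, then try to devissage to cyclic modules $A/\mathfrak{p}$ using additivity of $Z_d$) is the right one, and your identification of the key base case as a near-tautology is correct.

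However, there is a genuine gap in the step you yourself flag as a ``small subtlety,'' and the fix you sketch does not close it. If $0 \to M' \to M \to M'' \to 0$ is a short exact sequence of finitely generated $A$-modules, then after $\otimes_{\cO}\F$ one gets the $\mathrm{Tor}$ exact sequence ending $M''[p] \to \ovl{M'} \to \ovl{M} \to \ovl{M''} \to 0$, so $Z_d(\ovl{M'}) + Z_d(\ovl{M''}) = Z_d(\ovl{M}) + Z_d(M''[p])$. The error term $Z_d(M''[p])$ lives on $\ovl{\mathcal{X}}$, which is $d$-dimensional, so ``supported in dimension $\leq d$'' does not kill it; a $p$-torsion graded piece $A/\mathfrak{p}_i$ with $p \in \mathfrak{p}_i$ and $\dim A/\mathfrak{p}_i = d$ contributes a nonzero $d$-cycle on the special fiber and spoils the naive additivity of $M \mapsto Z_d(\ovl{M})$ along a composition series of $M$. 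Filtering by associated primes does not avoid this, because the associated primes of $M$ only control the bottom of the filtration, not the higher subquotients $M/M_i$.

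The standard repair is one step past what you wrote: instead of devissaging the assignment $M \mapsto Z_d(\ovl{M})$, devissage the Herbrand-quotient-type invariant $e_p(M) := Z_d(M/pM) - Z_d(M[p])$. By the snake lemma applied to multiplication by $p$, this is additive in \emph{all} short exact sequences of finitely generated $A$-modules, with no flatness hypothesis on the subquotients. On cyclic modules $A/\mathfrak{p}$ one checks directly that $e_p(A/\mathfrak{p}) = \red\bigl(Z_d((A/\mathfrak{p})[p^{-1}])\bigr)$: if $p \notin \mathfrak{p}$ both sides are $Z_d(A/(\mathfrak{p}+pA))$, and if $p \in \mathfrak{p}$ both sides vanish (the left because $A/\mathfrak{p}$ is $p$-torsion, so $M/pM = M[p]$; the right because the generic fiber is zero). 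By devissage, $e_p(M) = \red(Z_d(M[p^{-1}]))$ for all finitely generated $M$; and finally, for $\cO$-flat $M$ one has $M[p] = 0$, so $e_p(M) = Z_d(\ovl{M})$, giving the claimed identity. So: your skeleton is sound and the localization/devissage strategy is correct, but the argument needs the Herbrand quotient to make the devissage legitimate; the dimension-count hand-wave does not substitute for it.
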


We introduce notation for completed products of cycles. 
Suppose that $R$ and $S$ are equidimensional complete local Noetherian flat $\cO$-algebras. 
If $\ovl{Z}_1$ and $\ovl{Z}_2$ are geometrically integral subschemes of $\Spec R\otimes_{\cO} \F$ and $\Spec S \otimes_{\cO}\F$ corresponding to prime ideals $\ovl{\mathfrak{p}}$ and $\ovl{\mathfrak{q}}$, respectively, then we denote by $\ovl{Z}_1\times \ovl{Z}_2$ the subscheme 
\[
\Spec (R\otimes_{\cO} \F)/\ovl{\mathfrak{p}}\widehat{\otimes}_{\F} (S\otimes_{\cO} \F)/\ovl{\mathfrak{q}} \subset \Spec (R\otimes_{\cO} \F) \widehat{\otimes}_{\F} (S \otimes_{\cO}\F) 
\]
which is geometrically integral by \cite[Lemma 3.3(4)]{BLGHT2}. 
Similarly, if ${Z}_1$ and ${Z}_2$ are geometrically integral subschemes of $\Spec R[p^{-1}]$ and $\Spec S[p^{-1}]$ corresponding to prime ideals $\mathfrak{p}$ and $\mathfrak{q}$, then we denote by $Z_1\times Z_2$ the subscheme 
\[
\Spec (R/(\mathfrak{p}\cap R) \widehat{\otimes}_{\cO} S/(\mathfrak{q}\cap S))[p^{-1}] \subset  \Spec R \widehat{\otimes}_{\cO} S[p^{-1}] 
\]
which is geometrically integral by \cite[Lemma 3.3(3)]{BLGHT2}. 

We now specialize to some schemes in our patching axioms. 
Let $\rhobar$ be an $L$-homomorphism over $\F$. 
Fix a finite set $\mathcal{T}$ of inertial $L$-parameters such that:
\begin{enumerate}[(i)]
\setcounter{enumi}{1}
\item
\label{cond:1}
for all $\tau\in \mathcal{T}$ the irreducible components of $\Spec R_{\rhobar}^\tau$ and $\Spec \ovl{R}_{\rhobar}^\tau$ are geometrically integral. 
\end{enumerate}
Let $\Spec R_{\rhobar}^\mathcal{T}$ be the reduced union $\cup_{\tau\in \mathcal{T}}\Spec R_{\rhobar}^\tau$. 
Let $M_\infty$ be a weak patching functor for $\rhobar$. 
We write $R_\infty(\mathcal{T})$ for $R_\infty \widehat{\otimes}_{R_{\rhobar}} R_{\rhobar}^\mathcal{T} \cong R^p \widehat{\otimes}_{\cO} R_{\rhobar}^\mathcal{T}$ and $X_\infty(\mathcal{T})$ for $\Spec R_\infty(\mathcal{T})$. 
Recall that by assumption, the irreducible components of $\Spec R^p[p^{-1}]$ and $\Spec \ovl{R}^p$ are geometrically irreducible. 
Every irreducible cycle $Z \in \cZ(\cX_\infty(\mathcal{T})[p^{-1}])$ is of the form $Z^p \times Z_p$ for geometrically irreducible cycles $Z^p \in \cZ(\Spec R^p[p^{-1}])$ and $Z_p \in \cZ(\Spec R_{\rhobar}^\mathcal{T})$ by \cite[Lemma 3.3(5)]{BLGHT2}. 
Similarly, every irreducible cycle $\ovl{Z} \in \cZ(\ovl{\cX}_\infty(\mathcal{T}))$ is of the form $\ovl{Z}^p \times \ovl{Z}_p$ for geometrically irreducible cycles $\ovl{Z}^p \in \cZ(\Spec \ovl{R}^p)$ and $\ovl{Z}_p \in \cZ(\Spec \ovl{R}_{\rhobar}^\mathcal{T})$ by \cite[Lemma 3.3(6)]{BLGHT2}.

Let $e: \cZ(\Spec \ovl{R}^p) \ra \Z$ be the homomorphism that sends the cycle of an integral subscheme to $1$.
We define the maps 
\begin{align*}
\pr: \cZ(X_\infty(\mathcal{T})[p^{-1}]) &\ra \cZ(\Spec R_{\rhobar}^\mathcal{T}[p^{-1}])\\
Z^p \times Z_p &\mapsto e(\red(Z^p)) Z_p
\end{align*}
and
\begin{align*}
\ovl{\pr}: \cZ(\ovl{X}_\infty(T)) &\ra \cZ(\Spec \ovl{R}_{\rhobar}^\mathcal{T})\\
\ovl{Z}^p \times \ovl{Z}_p &\mapsto \ovl{Z}_p.
\end{align*}
We have that $\red \circ \pr = \ovl{\pr} \circ \red: \cZ(X_\infty(\mathcal{T})[p^{-1}]) \ra \cZ(\Spec \ovl{R}_{\rhobar}^\mathcal{T})$ (using that $\red(Z^p \times Z_p) = \red(Z_p) \times \red(Z^p)$), from which we immediately obtain the following corollary.

\begin{cor}\label{cor:pcomp}
If $\tau \in T$, then the image of the composition 
\[
\cZ(X_\infty(\tau)[p^{-1}]) \overset{\red}{\ra} \cZ(\ovl{X}_\infty(\tau)) \overset{\ovl{\pr}}{\ra} \cZ(\Spec \ovl{R}_{\rhobar}^{\tau})
\]
is contained in $\red(\cZ(\Spec R_{\rhobar}^{\tau}[p^{-1}]))$.
\end{cor}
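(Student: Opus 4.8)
\textbf{Proof plan for Corollary \ref{cor:pcomp}.}

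The plan is to trace an arbitrary irreducible cycle through the two maps $\red$ and $\ovl{\pr}$ and compare with the decomposition results already established above. First I would take $\tau \in \mathcal{T}$ and an irreducible cycle $Z \in \cZ(X_\infty(\tau)[p^{-1}])$; since the irreducible components of $\Spec R^p[p^{-1}]$ and of $\Spec R_{\rhobar}^\tau[p^{-1}]$ are geometrically irreducible by the standing hypotheses (the latter by condition (\ref{cond:1})), \cite[Lemma 3.3(5)]{BLGHT2} lets me write $Z = Z^p \times Z_p$ with $Z^p \in \cZ(\Spec R^p[p^{-1}])$ and $Z_p \in \cZ(\Spec R_{\rhobar}^\tau[p^{-1}])$ geometrically irreducible. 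The key compatibility is that reduction commutes with these completed products: $\red(Z^p \times Z_p) = \red(Z_p) \times \red(Z^p)$, where on the right the product is the completed product of cycles on $\Spec \ovl{R}_{\rhobar}^\tau$ and $\Spec \ovl{R}^p$ introduced just above (note $\red(Z^p)$ is a nonnegative integer combination of geometrically integral cycles by \cite[Lemma 3.3(6)]{BLGHT2}, using again geometric irreducibility of the components of $\Spec \ovl{R}^p$).

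Next I would apply $\ovl{\pr}$ to $\red(Z) = \red(Z_p) \times \red(Z^p)$. Writing $\red(Z^p) = \sum_i n_i \ovl{Z}^p_i$ and $\red(Z_p) = \sum_j m_j \ovl{Z}_{p,j}$ as sums of geometrically integral cycles with nonnegative integer coefficients, bilinearity of the completed product gives $\red(Z) = \sum_{i,j} n_i m_j\, (\ovl{Z}_{p,j} \times \ovl{Z}^p_i)$, and by definition $\ovl{\pr}$ sends each $\ovl{Z}_{p,j} \times \ovl{Z}^p_i$ to $\ovl{Z}_{p,j}$. Hence $\ovl{\pr}(\red(Z)) = \bigl(\sum_i n_i\bigr)\bigl(\sum_j m_j \ovl{Z}_{p,j}\bigr) = e(\red(Z^p))\, \red(Z_p)$. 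This exhibits $\ovl{\pr}(\red(Z))$ as $e(\red(Z^p))$ times $\red$ of the cycle $Z_p \in \cZ(\Spec R_{\rhobar}^\tau[p^{-1}])$, which therefore lies in $\red(\cZ(\Spec R_{\rhobar}^\tau[p^{-1}]))$. Since an arbitrary element of $\cZ(X_\infty(\tau)[p^{-1}])$ is a $\Z$-linear combination of such irreducible $Z$ and both $\red$ and $\ovl{\pr}$ are group homomorphisms, the image of the composition is contained in $\red(\cZ(\Spec R_{\rhobar}^\tau[p^{-1}]))$, as desired.

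Alternatively, and perhaps more cleanly, one observes directly from the definitions that $\red \circ \pr = \ovl{\pr} \circ \red$ as maps $\cZ(X_\infty(\mathcal{T})[p^{-1}]) \to \cZ(\Spec \ovl{R}_{\rhobar}^\mathcal{T})$ — this is exactly the displayed identity stated just before the corollary, whose verification is the computation of the previous paragraph on generators. Granting that identity, the corollary is immediate: $\ovl{\pr}(\red(\cZ(X_\infty(\tau)[p^{-1}]))) = \red(\pr(\cZ(X_\infty(\tau)[p^{-1}]))) \subset \red(\cZ(\Spec R_{\rhobar}^\tau[p^{-1}]))$ because $\pr$ lands in $\cZ(\Spec R_{\rhobar}^\mathcal{T}[p^{-1}])$ and, restricting attention to the single type $\tau$, in $\cZ(\Spec R_{\rhobar}^\tau[p^{-1}])$. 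The only genuinely substantive point — and thus the main thing to be careful about — is the commutation of $\red$ with the completed product of cycles, i.e.\ that $\red(Z^p \times Z_p) = \red(Z_p) \times \red(Z^p)$; this rests on the geometric integrality statements of \cite[Lemma 3.3]{BLGHT2} and the compatibility of special fibers with completed tensor products, and everything else is bookkeeping with the explicit formulas for $\pr$ and $\ovl{\pr}$.
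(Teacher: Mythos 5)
Your proposal is correct and matches the paper's argument: the paper proves the corollary by observing the identity $\red \circ \pr = \ovl{\pr} \circ \red$ (using $\red(Z^p\times Z_p)=\red(Z_p)\times\red(Z^p)$) and noting the corollary follows immediately, which is precisely your second, "cleaner" route. Your first paragraph simply spells out the generator-by-generator verification of that commutation identity, so there is no substantive difference.
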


Depending on context, we denote either $\pr \circ Z$ or $\ovl{\pr} \circ Z$ by $Z_\mathfrak{p}$.

\subsection{Weight elimination and modularity of extremal weights}
\label{subsec:WE:MOD}

Let $\rhobar$ be a $1$-generic $L$-homomorphism, with a lowest alcove presentation for it.
Fix a weak patching functor $M_\infty$ for $\rhobar$.
Let $W_{M_\infty}(\rhobar)$ be the set of $3(n-1)$-deep Serre weights $\sigma$ such that $M_\infty(\sigma)$ is nonzero.

\begin{prop}\label{prop:PWE}
The set $W_{M_\infty}(\rhobar)$ satisfies the condition (\ref{eq:elim}) for $\rhobar$.
\end{prop}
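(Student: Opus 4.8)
The statement to prove is that $W_{M_\infty}(\rhobar)$ satisfies axiom (\ref{eq:elim}) of Theorem \ref{thm:WE}: whenever $\tau$ is a tame inertial $L$-parameter with $\JH(\ovl{\sigma}(\tau)) \cap W_{M_\infty}(\rhobar) \neq \emptyset$, then $\rhobar$ admits a potentially crystalline lift of type $(\tau,\eta)$. The approach is the standard patching argument: use the support of the patched module to witness the existence of a lift. First I would pick $\sigma \in \JH(\ovl{\sigma}(\tau)) \cap W_{M_\infty}(\rhobar)$, so that $M_\infty(\sigma) \neq 0$. Choose an $\cO$-lattice $\sigma^\circ(\tau)$ in $\sigma(\tau)$; by exactness of $M_\infty$ and the fact that $\sigma$ is a Jordan--H\"older factor of $\ovl{\sigma}^\circ(\tau)$, the nonvanishing of $M_\infty(\sigma)$ propagates (via a filtration of $\sigma^\circ(\tau)/\varpi$ with graded pieces the Jordan--H\"older factors, together with the exactness of $M_\infty$) to show $M_\infty(\sigma^\circ(\tau))$ is nonzero.

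\textbf{Key steps.} The main point is then: by Definition \ref{minimalpatching}(\ref{support}), $M_\infty(\sigma^\circ(\tau))$ is a maximal Cohen--Macaulay sheaf on $X_\infty(\tau) = \Spec R_\infty(\tau)$. Since it is nonzero, its scheme-theoretic support is a nonempty (union of) irreducible component(s) of $X_\infty(\tau)$, so in particular $R_\infty(\tau) = R_\infty \otimes_{R_{\rhobar}} R_{\rhobar}^{\eta,\tau}$ is nonzero. Because $R_\infty = R_{\rhobar} \widehat{\otimes}_{\cO} R^p$ with $R^p$ a nonzero flat $\cO$-algebra, nonvanishing of $R_\infty(\tau)$ forces $R_{\rhobar}^{\eta,\tau} = \widehat{\bigotimes}_{v\in S_p,\cO} R_{\rhobar_v}^{\eta_v,\tau_v}$ to be nonzero, hence each local framed potentially crystalline deformation ring $R_{\rhobar_v}^{\eta_v,\tau_v}$ is nonzero. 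By flatness of these deformation rings over $\cO$ (they are $\cO$-flat, as recalled in \S \ref{sub:analysis:MC}), a nonzero such ring has an $\ovl{E}$-point, which produces for each $v \in S_p$ a potentially crystalline lift of $\rhobar_v$ of type $(\tau_v,\eta_v)$; assembling these over $v \in S_p$ gives a potentially crystalline lift of $\rhobar$ of type $(\tau,\eta)$ in the sense of the collection $(\rhobar_v)_{v\in S_p}$, which is exactly what axiom (\ref{eq:elim}) requires.

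\textbf{Main obstacle.} There is essentially no deep obstacle here; this proposition is a bookkeeping consequence of the patching axioms. The one point requiring minor care is the propagation of nonvanishing from $M_\infty(\sigma)$ to $M_\infty(\sigma^\circ(\tau))$: one needs that $\sigma$ actually occurs in $\JH(\ovl{\sigma}^\circ(\tau))$ (guaranteed by hypothesis and independence of the lattice up to semisimplification), and then a dévissage using the exactness of $M_\infty$ along a $\GL_n(\cO_p)$-stable filtration of the reduction $\ovl{\sigma}^\circ(\tau)$. I should also note that the genericity hypotheses are in force so that the weights and types involved are generic enough for the patching axioms and the deformation-ring statements to apply, but since $W_{M_\infty}(\rhobar)$ was defined using $3(n-1)$-generic Serre weights, this is automatic.
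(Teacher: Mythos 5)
Your proof is correct and follows essentially the same route as the paper's: pick $\sigma$ in the intersection, propagate nonvanishing to $M_\infty(\sigma^\circ(\tau))$ by exactness, and invoke Definition \ref{minimalpatching}(\ref{support}) to conclude $R_\infty(\tau)\neq 0$ and hence $R_{\rhobar}^{\tau}\neq 0$. You spell out two steps the paper leaves implicit (the d\'evissage along a Jordan--H\"older filtration, and the passage from a nonzero $\cO$-flat deformation ring to an actual $\ovl{E}$-point giving the lift), but the argument is the same.
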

\begin{proof}
Suppose that $\sigma \in \JH(\ovl{\sigma}(\tau)) \cap W_{M_\infty}(\rhobar)$ for a generic tame inertial $L$-parameter $\tau$.
Then $M_\infty(\sigma)$ is nonzero so that $M_\infty(\sigma(\tau)^\circ)$ is nonzero for any lattice $\sigma(\tau)^\circ \subset \sigma(\tau)$ by exactness.
Definition \ref{minimalpatching}(\ref{support}) implies that $R_\infty(\tau)$, and so $R_{\rhobar}^{\tau}$, is nonzero.
\end{proof}

\begin{defn}
We say that a weak patching functor $M_\infty$ for $\rhobar$ is \emph{extremal} if $W_\obv(\rhobar) \cap W_{M_\infty}(\rhobar)$ is nonempty.
\end{defn}

\begin{thm}\label{thm:obv}
Let $\rhobar$ be $6e(n-1)$-generic.
If a weak patching functor $M_\infty$ is extremal, then $W_\obv(\rhobar) \subset W_{M_\infty}(\rhobar)$, and moreover, the map $\theta_{\rhobar}: SP(\rhobar) \ra W^{\cJ}$ is a bijection.
\end{thm}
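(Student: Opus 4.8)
\textbf{Proof proposal for Theorem \ref{thm:obv}.}

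The plan is to combine the combinatorial Weyl-group symmetry of specialization pairs with the deformation ring computation of Theorem \ref{thm:FSM} to propagate modularity of one extremal weight to all of them. First I would fix the extremal specialization $\rhobar^\speci$ for which some extremal weight $\sigma_0 \in W_\obv(\rhobar^\speci) \cap W_{M_\infty}(\rhobar)$ exists; by Proposition \ref{prop:PWE}, the set $W_{M_\infty}(\rhobar)$ satisfies the local-global compatibility axiom \eqref{eq:elim}, so weight elimination (Theorem \ref{thm:WE}) gives $W_{M_\infty}(\rhobar) \subset W^?(\rhobar^\speci)$; in particular every modular extremal weight is an extremal weight \emph{of this specific $\rhobar^\speci$}, and by Proposition \ref{prop:inj} the map $\theta^\zeta_\rhobar$ is injective, so it suffices to show that for each simple root $\alpha \in \Delta^\cJ$ and each extremal weight $\sigma$ of $\rhobar^\speci$ corresponding to $w$, modularity of $\sigma$ implies modularity of the extremal weight $\sigma'$ corresponding to $s_\alpha w$. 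Iterating over the generators $s_\alpha$ of $W^\cJ$ then yields $W_\obv(\rhobar^\speci) = W_\obv(\rhobar) \subset W_{M_\infty}(\rhobar)$ and the surjectivity (hence bijectivity, by Proposition \ref{prop:inj}) of $\theta_\rhobar$.

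The heart of the argument is the single-simple-reflection step. Given $\sigma$ (corresponding to $w$) modular, let $\tau$ be the tame inertial $L$-parameter exhibiting the pair $(\sigma,\rhobar^\speci)$ with $\tld w(\rhobar,\tau) = t_{w^{-1}(e\eta_0)}$, as in Lemma \ref{lemma:import}, and consider the chain of tame types $\tau_0,\tau_1,\dots,\tau_{2e}$ of Proposition \ref{prop:indcomb} connecting $\sigma = \sigma_0$ (through the intermediate Serre weights $\sigma_m$) to $\sigma' = \sigma_{2e}$, with $W^?(\rhobar^\speci,\tau_m) = \{\sigma_{m-1},\sigma_m\}$ and each $\tld w(\rhobar^\speci,\tau_m) \in w^{-1}W_{a,\alpha}t_{e\eta_0}w \cap \Adm(e\eta_0)$. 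For each such $\tau_m$, Theorem \ref{thm:FSM} (applicable because the shape $\tld w(\rhobar,\tau_m)^*$ is of the parabolic form $\tld w^{-1}t_{e\eta_0}\tld w_\alpha \tld w$ reducing to $\GL_2 \times \GL_1^{n-2}$, using Corollary \ref{cor:isom}/Theorem \ref{thm:semicont} to pin down $\tld w(\rhobar,\tau_m)$ from $\tld w(\rhobar^\speci,\tau_m)$) shows $R_\rhobar^{\eta,\tau_m}$ is either a normal domain or formally smooth over $\cO$, and in particular $\Spec \ovl R_\rhobar^{\le\eta,\tau_m}$ is irreducible or has exactly two irreducible components of the same dimension. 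Now run the standard patching argument: $M_\infty(\sigma^\circ(\tau_m))$ is maximal Cohen--Macaulay supported on $X_\infty(\tau_m)$, whose special fiber $\ovl X_\infty(\tau_m)$ has (at most two) irreducible components corresponding to $\sigma_{m-1}$ and $\sigma_m$ via the Jordan--Hölder filtration of $\ovl\sigma(\tau_m)$ and the cycle map $Z(-)$ from \S\ref{sec:cycles}; because $M_\infty$ is exact and the components have equal dimension, $M_\infty(\sigma_{m-1})$ nonzero forces $M_\infty(\sigma(\tau_m)^\circ)$ nonzero, hence $M_\infty(\sigma_m)$ nonzero (the cycle $Z_\mathfrak{p}(M_\infty(\sigma(\tau_m)^\circ))$ cannot be supported on only one of the two equidimensional components while the other receives zero multiplicity — this is the key rigidity, exactly the mechanism of \cite{LLLM}). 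Induction on $m$ from $0$ to $2e$ then gives $M_\infty(\sigma') \ne 0$.

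The main obstacle I expect is the bookkeeping at the two ends of the chain and the multiplicity-one inputs. Concretely: (a) one must verify that the tame types $\tau_m$ of Proposition \ref{prop:indcomb} are sufficiently generic (the hypothesis $\rhobar$ is $6e(n-1)$-generic is calibrated for this, since $\tld w(\rhobar^\speci,\tau_m)$ is $e$-small so $\tau_m$ loses at most $e(n-1)$ in depth relative to $\rhobar^\speci$, which itself is $\ge (e+2)h_{\eta_0}$-generic by the $6e(n-1)$-genericity), so that Theorem \ref{thm:FSM}, Proposition \ref{prop:indcomb}, and the patching axioms all apply; (b) at $m=0$ and $m=2e$ the set $W^?(\rhobar^\speci,\tau_m)$ has only one element ($\sigma_0$ resp.\ $\sigma_{2e}$) because $\tld w(\rhobar^\speci,\tau_m)$ equals $t_{w^{-1}(e\eta_0)}$ resp.\ $t_{(s_\alpha w)^{-1}(e\eta_0)}$ (the endpoint cases of Proposition \ref{prop:indint}), so $R_\rhobar^{\eta,\tau_0}$ is formally smooth and one needs the mod $p$ monodromy condition (built into the definition of extremal specialization, Definition \ref{defn:spec}(2)) to ensure $\tld w(\rhobar,\tau_0) = t_{w^{-1}(e\eta_0)}$ exactly, as in the proof of Lemma \ref{lemma:import}; and (c) the delicate point that the support of $M_\infty(\sigma(\tau_m)^\circ)$ genuinely meets \emph{both} components — this uses that $\sigma_{m-1}$ and $\sigma_m$ each appear with multiplicity one in $\ovl\sigma(\tau_m)$ (Proposition \ref{prop:indcomb}) together with the equidimensionality from Theorem \ref{thm:FSM}, so that the cycle $Z_\mathfrak{p}(M_\infty(\sigma(\tau_m)^\circ))$, being the reduction of the generic-fiber cycle and nonzero, must hit the component labeled by $\sigma_m$ once $M_\infty(\sigma_{m-1}) \ne 0$ pins down the other. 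Finally, having shown $W_\obv(\rhobar) \subset W_{M_\infty}(\rhobar)$, surjectivity of $\theta_\rhobar$ is immediate since each $w \in W^\cJ$ now indexes a genuine specialization pair realized by a nonzero $M_\infty(\sigma)$, and injectivity is Proposition \ref{prop:inj}.
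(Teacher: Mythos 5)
Your overall architecture — fix an extremal specialization, propagate modularity through the chain of types $\tau_0,\dots,\tau_{2e}$ of Proposition \ref{prop:indcomb}, use the cycle formalism together with the two-component structure from Theorem \ref{thm:FSM} — is the right mechanism, and your observations (a), (b), (c) about genericity, the smooth endpoints, and multiplicity-one are all relevant. However, there is a genuine gap, and it is not a technical one: your argument, if it worked as stated, would prove something false.

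You set up the induction so that modularity of $\sigma = \sigma_0$ (the extremal weight of $\rhobar^\speci$ for $w$) propagates all the way to $\sigma_{2e-1}$, the extremal weight \emph{of the same} $\rhobar^\speci$ for $s_\alpha w$. If that worked, then iterating over simple reflections would show $W_\obv(\rhobar^\speci) \subset W_{M_\infty}(\rhobar)$, which has cardinality $\#W^\cJ$. By Theorem \ref{thm:WE}, $W_{M_\infty}(\rhobar) \subset W^?(\rhobar^{\prime,\speci})$ for every specialization $\rhobar^{\prime,\speci}$, and one would be forced to conclude that $\rhobar$ has $\#W^\cJ$ distinct extremal weights. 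But Proposition \ref{prop:tamecrit} says that is equivalent to $\rhobar$ being semisimple. For wildly ramified $\rhobar$, the chain must stop early. Concretely, the failure is in your step ``$M_\infty(\sigma_{m-1})$ nonzero forces $M_\infty(\sigma(\tau_m)^\circ)$ nonzero, hence $M_\infty(\sigma_m)$ nonzero.'' The second implication requires $\Spec \ovl{R}_\rhobar^{\tau_m}$ to have \emph{two} components, but the shape $\tld{w}(\rhobar,\tau_m)$ can be one of the two translation shapes $t_{w^{-1}(e\eta_0)}$ or $t_{(s_\alpha w)^{-1}(e\eta_0)}$ even when $\tld{w}(\rhobar^\speci,\tau_m)$ is not — and then $R_\rhobar^{\tau_m}$ is formally smooth, the cycle of $M_\infty(\sigma(\tau_m)^\circ)$ is supported on a single component, and $M_\infty(\sigma_m)$ can vanish. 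You also tacitly assume (``every modular extremal weight is an extremal weight of this specific $\rhobar^\speci$'') that $W_\obv(\rhobar) = W_\obv(\rhobar^\speci)$, which is false unless $\rhobar$ is semisimple: different extremal weights of $\rhobar$ correspond to different extremal specializations in general.

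What the paper's Lemma \ref{lemma:walk} adds is precisely the machinery to detect where the chain terminates. One distinguishes ``balanced'' vs.\ ``unbalanced'' cycles and observes that $Z_\fp(M_\infty(\sigma_0))$ is unbalanced because $R_\rhobar^{\tau_0}$ is formally smooth; the cycle identity $Z_\fp(M_\infty(\ovl\sigma(\tau_{m+1})^\circ)) = Z_\fp(M_\infty(\sigma_m)) + Z_\fp(M_\infty(\sigma_{m+1}))$ then shows balancedness propagates, so each $Z_\fp(M_\infty(\sigma_m))$ is unbalanced as long as the chain continues. When the chain stops at $k$, the cycle of $M_\infty(\ovl\sigma(\tau_{k+1})^\circ)$ equals the unbalanced $Z_\fp(M_\infty(\sigma_k))$, and since by Theorem \ref{thm:FSM} the only way for this cycle to be unbalanced is for $R_\rhobar^{\tau_{k+1}}$ to be formally smooth, the shape $\tld{w}(\rhobar,\tau_{k+1})$ must be an extremal translation. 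One then checks this makes $\tau_{k+1}$ exhibit $\rhobar$'s specialization to a \emph{new} specialization pair $(\sigma_k, \rhobar^{\prime,\speci})$, and a direct computation (Corollary \ref{cor:reflect}) shows $\theta_\rhobar(\sigma_k,\rhobar^{\prime,\speci}) = \theta_\rhobar(\sigma,\rhobar^\speci)s_\alpha$, regardless of the parity of $k$. It is this change of specialization, not a full traversal of the chain, that realizes the Weyl group symmetry. Also note that your claim that $\tld{w}(\rhobar,\tau_m)$ lies in the corridor $w^{-1}W_{a,\alpha}t_{e\eta_0}w \cap \Adm(e\eta_0)$ cannot be read off from Theorem \ref{thm:semicont} alone (which gives only a lower bound); the paper establishes it via Proposition \ref{prop:upset}, using that weight elimination applied to the new specialization exhibited by $\tau_m$ yields an \emph{upper} bound $\tld{w}(\rhobar,\tau_m) \le t_{s^{-1}(e\eta_0)}$ as well.
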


The proof of Theorem \ref{thm:obv} requires the following two results.
\begin{lemma}\label{lemma:walk}
Assume that $\rhobar$ is $(2e+\max\{2,e\})(n-1)$-generic.
Suppose that $M_\infty$ is a weak patching functor for $\rhobar$, $(\sigma,\rhobar^\speci) \in SP(\rhobar)$, and $\sigma \in W_{M_\infty}(\rhobar)$.
Assume that $\rhobar^\speci$ has a compatible $4e(n-1)$-generic lowest alcove presentation.
Suppose that $\sigma$ is the extremal weight of $\rhobar^\speci$ corresponding to $w\in W$.
Let $\alpha$ be a simple root.

Using $\rhobar^\speci$, $w$, and $\alpha$, we define as in Proposition \ref{prop:indcomb} (using the above $4e(n-1)$-generic lowest alcove presentation) $\sigma_m$ and $\tau_m$ for $0\leq m \leq 2e-1$ and $0 \leq m \leq 2e$, respectively, so that $\sigma_0 = \sigma$.
Then there exists $0 \leq k \leq 2e-1$ such that $\sigma_m \in W_{M_\infty}(\rhobar)$ if and only if $m \leq k$.
Moreover, $\tau_{k+1}$ exhibits a specialization of $\rhobar$ to $(\sigma_k,\rhobar^{\prime,\speci})$ for some $\F$-valued tame inertial $L$-parameter $\rhobar^{\prime,\speci}$.
\end{lemma}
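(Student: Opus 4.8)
The plan is to combine the combinatorial description of Serre weight sets in Proposition~\ref{prop:indcomb} with the deformation-theoretic input of Theorem~\ref{thm:FSM} via the patching axioms. First I would set up the situation: using the $4e(n-1)$-generic lowest alcove presentation of $\rhobar^\speci$, the simple root $\alpha$, and the element $w$, Proposition~\ref{prop:indcomb} produces the types $\tau_m$ ($0\le m\le 2e$) and weights $\sigma_m$ ($0\le m\le 2e-1$) with $W^?(\rhobar^\speci,\tau_m)=\{\sigma_{m-1},\sigma_m\}$, each appearing in $\JH(\ovl{\sigma}(\tau_m))$ with multiplicity one, and $\sigma_0=\sigma$. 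The key observation is that the shapes $\tld{w}(\rhobar^\speci,\tau_m)$ lie in $w^{-1}W_{a,\alpha}t_{e\eta_0}w\cap \Adm(e\eta_0)$ (by Proposition~\ref{prop:listshapes}), so by Theorem~\ref{thm:semicont} (semicontinuity) $\tld{w}(\rhobar,\tau_m)$ is also of this form whenever $R^{\eta,\tau_m}_\rhobar\ne 0$; in particular the local deformation ring $R^{\eta,\tau_m}_{\rhobar_v}$ falls under the hypotheses of Theorem~\ref{thm:FSM}, hence is zero, formally smooth, or has reduced special fiber with exactly two components (noting the genericity on $\rhobar$ is enough to feed Theorem~\ref{thm:FSM} since only one simple root is involved at each place).

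Next I would run the ``walking'' argument. The hypothesis $\sigma=\sigma_0\in W_{M_\infty}(\rhobar)$ means $M_\infty(\sigma_0)\ne 0$, hence $M_\infty(\sigma(\tau_1)^\circ)\ne 0$ by exactness, hence $R_\infty(\tau_1)\ne 0$ and $R^{\eta,\tau_1}_\rhobar\ne 0$. Since $\JH(\ovl{\sigma}(\tau_1))\cap W_{M_\infty}(\rhobar)\subset W^?(\rhobar^\speci,\tau_1)=\{\sigma_0,\sigma_1\}$ (using weight elimination: $W_{M_\infty}(\rhobar)$ satisfies~(\ref{eq:elim}) by Proposition~\ref{prop:PWE}, so it is contained in $W^?(\rhobar^\speci)$ by Theorem~\ref{thm:WE}), the patched module $M_\infty(\sigma(\tau_1)^\circ)$ is supported on $\ovl{X}_\infty(\tau_1)$ with support built from the two components $\ovl{X}_\infty^{\sigma_0}, \ovl{X}_\infty^{\sigma_1}$ corresponding to $\sigma_0$ and $\sigma_1$ (matching the two irreducible components of the special fiber from Theorem~\ref{thm:FSM}). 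Using Definition~\ref{minimalpatching}(\ref{support}), $M_\infty(\sigma(\tau_1)^\circ)$ is maximal Cohen--Macaulay on $X_\infty(\tau_1)$, whose special fiber is connected (the two components meet), so by a standard connectedness/support-propagation argument (the cycle-theoretic formalism of \S\ref{sec:cycles}, specifically that $Z(M_\infty(\sigma(\tau_1)^\circ))$ meets every component its support touches through the intersection) the nonvanishing of $M_\infty(\sigma_0)$ forces $M_\infty$ to be supported on \emph{all} of $\ovl{X}_\infty(\tau_1)$; in particular $M_\infty(\sigma_1)\ne 0$ unless $R^{\eta,\tau_1}_\rhobar$ is formally smooth, in which case $\ovl{X}_\infty(\tau_1)$ is irreducible and $\sigma_1$ simply does not lie in $W^?(\rhobar^\speci,\tau_1)\cap W_{M_\infty}(\rhobar)$, i.e.\ $W^?(\rhobar^\speci,\tau_1)\cap W_{M_\infty}(\rhobar)=\{\sigma_0\}$ and the walk stops at $k=0$. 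Iterating this from $\tau_1$ to $\tau_2$ to $\ldots$ to $\tau_{2e}$, each step either propagates support to the next weight $\sigma_m$ or stops; monotonicity (once $M_\infty(\sigma_m)=0$ the module $M_\infty(\sigma(\tau_{m+1})^\circ)$ is supported only on the $\sigma_m$-component, which is $0$, so $\sigma_{m+1}\notin W_{M_\infty}(\rhobar)$ as well) gives the existence of a threshold $k$ with $\sigma_m\in W_{M_\infty}(\rhobar)\iff m\le k$.

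Finally, for the ``moreover'' clause, I would identify $\tau_{k+1}$ as exhibiting a new specialization. By construction $\tld{w}(\rhobar^\speci,\tau_{k+1})$ is one of the explicit elements of $w^{-1}W_{a,\alpha}t_{e\eta_0}w\cap\Adm(e\eta_0)$ from Proposition~\ref{prop:listshapes}, and the analysis above shows $R^{\eta,\tau_{k+1}}_\rhobar\ne 0$ with $\sigma_k\in W_{M_\infty}(\rhobar)\cap W^?(\rhobar^\speci,\tau_{k+1})$ but $\sigma_{k+1}\notin W_{M_\infty}(\rhobar)$. Applying semicontinuity (Theorem~\ref{thm:semicont}) pins down $\tld{w}(\rhobar,\tau_{k+1})$ as the endpoint of the corridor, namely $t_{(w')^{-1}(e\eta_0)}$ for $w'\in\{w,s_\alpha w\}$ (this is exactly where Propositions~\ref{prop:ends} and~\ref{prop:indint} apply: the surviving weight $\sigma_k$ forces the shape to be one of the two translation endpoints). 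Since $\rhobar$ admits a potentially crystalline lift of type $(\tau_{k+1},\eta)$ (from $R^{\eta,\tau_{k+1}}_\rhobar\ne 0$), Remark~\ref{rmk:liftsandmodpmono} gives the mod $p$ monodromy condition, so $\tau_{k+1}$ exhibits an extremal specialization $\rhobar^{\prime,\speci}$ with $\tld{w}(\rhobar^{\prime,\speci},\tau_{k+1})=t_{(w')^{-1}(e\eta_0)}$, and $\sigma_k$ is precisely its extremal weight corresponding to $w'$ by Definition~\ref{defn:obv}, so $(\sigma_k,\rhobar^{\prime,\speci})\in SP(\rhobar)$.

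The main obstacle will be the support-propagation step: making rigorous that the non-vanishing of $M_\infty$ on one irreducible component of the connected special fiber $\ovl{X}_\infty(\tau_m)$ forces non-vanishing on the adjacent component. This needs the Cohen--Macaulayness from Definition~\ref{minimalpatching}(\ref{support}) together with the precise geometry of Theorem~\ref{thm:FSM} (the two components meet in codimension one, and the patched module has full support in the sense that its cycle is a sum of the two component cycles with positive multiplicities), plus the cycle-theoretic bookkeeping of \S\ref{sec:cycles} to separate the auxiliary $R^p$ factor. The monotonicity of the walk and the case distinction ``formally smooth versus two components'' are the other places requiring care, but those are straightforward once the support statement is in hand.
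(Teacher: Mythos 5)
Your overall plan is right (walk along the types $\tau_m$, use the deformation-ring geometry of Theorem~\ref{thm:FSM} and the cycle formalism to identify where the walk terminates), and you correctly spot that Remark~\ref{rmk:liftsandmodpmono} is needed at the end to upgrade the specialization to a pair in $SP(\rhobar)$, a point the paper leaves implicit. But several key steps are not argued correctly.

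First, your claim that ``by Theorem~\ref{thm:semicont} $\tld{w}(\rhobar,\tau_m)$ is also of this form'' (i.e.\ lies in the corridor $w^{-1}W_{a,\alpha}t_{e\eta_0}w\cap\Adm(e\eta_0)$) is a genuine gap. Semicontinuity only gives the lower bound $\tld{w}(\rhobar^\speci,\tau_m)\leq\tld{w}(\rhobar,\tau_m)$; an element of $\Adm(e\eta_0)$ that dominates a corridor element need not itself lie in the corridor. The paper supplements this with an upper bound: using that $\tau_{m+1}$ exhibits some specialization $\rhobar^{\prime,\speci}$, weight elimination (Theorem~\ref{thm:WE} plus Proposition~\ref{prop:PWE}) gives $\sigma_m\in W^?(\rhobar^{\prime,\speci})$, and Proposition~\ref{prop:intersect} then yields $\tld{w}(\rhobar,\tau_{m+1})\leq t_{s^{-1}(e\eta_0)}$ for $s\in\{w,s_\alpha w\}$. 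Only with both bounds does Proposition~\ref{prop:upset} sandwich $\tld{w}(\rhobar,\tau_{m+1})$ into the corridor.

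Second, your ``support propagation'' step as stated is false. Maximal Cohen--Macaulayness together with connectedness of the special fiber does \emph{not} force full support: over $\F[[x,y]]/(xy)$ the module $\F[[x]]$ is MCM but supported on one component. The correct mechanism, which the paper uses via Corollary~\ref{cor:pcomp}, is that $R_\rhobar^{\tau_m}$ is a normal domain (Theorem~\ref{thm:FSM}) so the generic-fiber cycle of the patched module is a multiple of the whole thing; reducing, $Z_\fp(M_\infty(\ovl\sigma(\tau_m)^\circ))$ is then a multiple of $Z(\Spec\ovl{R}_\rhobar^{\tau_m})$, which is balanced when the special fiber has two components. Relatedly, semicontinuity does \emph{not} ``pin down $\tld{w}(\rhobar,\tau_{k+1})$ as the endpoint''; what forces the endpoint is that $Z_\fp(M_\infty(\sigma_k))=Z_\fp(M_\infty(\ovl\sigma(\tau_{k+1})^\circ))$ inherits the unbalanced status of $Z_\fp(M_\infty(\sigma_0))$ (unbalanced because $R_\rhobar^{\tau_0}$ is formally smooth) through the chain of cycle identities $Z_\fp(M_\infty(\ovl\sigma(\tau_{m+1})^\circ))=Z_\fp(M_\infty(\sigma_m))+Z_\fp(M_\infty(\sigma_{m+1}))$, and an unbalanced cycle is incompatible with a two-component reduced special fiber.

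Third, your ``monotonicity'' argument for $m>k$ does not hold as written: if $M_\infty(\sigma_m)=0$, the cycle equation only tells you $Z_\fp(M_\infty(\ovl\sigma(\tau_{m+1})^\circ))=Z_\fp(M_\infty(\sigma_{m+1}))$, which does not force $M_\infty(\sigma_{m+1})=0$, and you would in any case first need to control $\tld{w}(\rhobar,\tau_{m+1})$ for $m>k$ to invoke Theorem~\ref{thm:FSM}. The paper sidesteps this entirely: once the new specialization $\rhobar^{\prime,\speci}$ is identified at $\tau_{k+1}$, it explicitly computes $\tld{w}(\rhobar^{\prime,\speci})$, shows $\tld{w}(\rhobar^{\prime,\speci},\tau_m)\notin\Adm(e\eta_0)$ for $m>k+1$ (via Proposition~\ref{prop:listshapes}), concludes $W^?(\rhobar^{\prime,\speci},\tau_m)=\emptyset$ by Corollary~\ref{cor:adm}, and hence $\sigma_m\notin W^?(\rhobar^{\prime,\speci})\supset W_{M_\infty}(\rhobar)$ for $m>k$. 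This combinatorial route is essential and is missing from your proposal.
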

\begin{proof}
Fix $\cO$-lattices $\sigma(\tau_m)^\circ\subset \sigma(\tau_m)$ for $0 \leq m \leq 2e$ (the choices will not affect the argument below).
Let $0 \leq k\leq 2e-1$ be such that $\sigma_m \in W_{M_\infty}(\rhobar)$ for $0\leq m\leq k$ and either $\sigma_{k+1} \notin W_{M_\infty}(\rhobar)$ or $k = 2e-1$.
That $\sigma_m \in W_{M_\infty}(\rhobar)$ implies that $M_\infty(\sigma(\tau_{m+1})^\circ)$ is nonzero. 
Therefore $R_{\rhobar}^{\tau_{m+1}}$ is nonzero for $0\leq m\leq k$. 
We will first show that $\tld{w}(\rhobar,\tau_{m+1}) \in w^{-1}t_{e\eta_0}W_{a,\alpha} w \cap \Adm^\vee(e\eta_0)$ for $0\leq m\leq k$.  

Fix $m$ with $0\leq m\leq k$. 
Suppose that $\tau_{m+1}$ exhibits the specialization to the $\F$-valued tame inertial $L$-parameter $\rhobar^{\prime,\speci}$ i.e.~that $\tld{w}(\rhobar,\tau_{m+1}) = \tld{w}(\rhobar^{\prime,\speci},\tau_{m+1})$. 
Since $\sigma_m \in W_{M_\infty}(\rhobar) \subset W^?(\rhobar^{\prime,\speci})$ by Theorem \ref{thm:WE} and Proposition \ref{prop:PWE}, Proposition \ref{prop:intersect} implies that $\tld{w}_h \tld{s} \tld{w}(\rhobar^{\prime,\speci},\tau_{m+1}) \leq w_0 t_{(e-1)\eta_0} \tld{s}$ for $s \in \{w,s_\alpha w\}$.
(Note that Theorem \ref{thm:WE} applies to $\rhobar$ and $\rhobar^{\prime,\speci}$, by the genericity assumption on $\rhobar$.)
This implies that $\tld{w}(\rhobar,\tau_{m+1}) = \tld{w}(\rhobar^{\prime,\speci},\tau_{m+1}) \leq t_{s^{-1}(e\eta_0)}$. 
Combining this with the fact that $\tld{w}(\rhobar^\speci,\tau_{m+1})\leq \tld{w}(\rhobar,\tau_{m+1})$ by Theorem \ref{thm:semicont}, we have that $\tld{w}(\rhobar,\tau_{m+1}) \in w^{-1}t_{e\eta_0}W_{a,\alpha} w \cap \Adm^\vee(e\eta_0)$ by Proposition \ref{prop:upset}.

Now Proposition \ref{prop:indcomb} applied to $\rhobar^\speci$, Theorem \ref{thm:WE}, and Proposition \ref{prop:PWE} imply that 
\[
W_{M_\infty}(\rhobar) \cap \JH(\ovl{\sigma}(\tau_{m+1})^\circ) \subset W^?(\rhobar^\speci) \cap \JH(\ovl{\sigma}(\tau_{m+1})^\circ) = \{\sigma_m,\sigma_{m+1}\} 
\]
(or $\{\sigma_m\}$ if $m = 2e-1$). 
We now use notation from \S \ref{sec:cycles} with $\mathcal{T} = \{\tau_m\mid 0 \leq m \leq 2e-1\}$. 
The set of types $\mathcal{T}$ satisfies condition \eqref{cond:1} by the genericity assumption on $\rhobar^{\speci}$ and Theorem \ref{thm:FSM}. 
We continue to fix $m$ with $0\leq m \leq k$. 
Since $\sigma_m$ and $\sigma_{m+1}$ appear as Jordan--H\"older factors of $\ovl{\sigma}(\tau_{m+1})^\circ$ with multiplicity one, exactness of $M_\infty$ gives  
\begin{equation}\label{eqn:cyclesum}
Z_\fp(M_\infty(\ovl{\sigma}(\tau_{m+1})^\circ)) = Z_\fp(M_\infty(\sigma_{m}))+Z_\fp(M_\infty(\sigma_{m+1})), 
\end{equation}
for $0 \leq m \leq k-1$ and $Z_\fp(M_\infty(\ovl{\sigma}(\tau_{k+1})^\circ)) = Z_\fp(M_\infty(\sigma_{k}))$. 
We will use \eqref{eqn:cyclesum} and the previous paragraph to show that $\tld{w}(\rhobar,\tau_{k+1})$ is $t_{s^{-1}(\eta_0)}$ for some $s\in \{w,s_\alpha w\}$.

Let us call a cycle \emph{balanced} if it is a multiple of the sum of two distinct integral subschemes and \emph{unbalanced} if it is supported on at most two integral subschemes with distinct multiplicities. 
In particular, an unbalanced cycle is nonzero. 
For $0 \leq m \leq k-1$, $\tld{w}(\rhobar,\tau_{m+1}) \notin \{t_{w^{-1}(\eta_0)},t_{(s_\alpha w)^{-1}(\eta_0)}\}$ since otherwise 
\[
2 = \# W_{M_\infty}(\rhobar) \cap \JH(\ovl{\sigma}(\tau_{m+1})^\circ) \leq \# W^?(\rhobar^{\prime,\speci}) \cap \JH(\ovl{\sigma}(\tau_{m+1})^\circ) = 1
\] 
by Proposition \ref{prop:PWE}.
Then $Z_\fp(M_\infty(\ovl{\sigma}(\tau_{m+1})^\circ))$ is balanced by Corollary \ref{cor:pcomp} since $R_{\rhobar}^{\tau_{m+1}}[p^{-1}]$ is geometrically irreducible and $Z(\Spec \ovl{R}_{\rhobar}^{\tau_{m+1}})$ is balanced (see Theorem \ref{thm:FSM}).
By \eqref{eqn:cyclesum}, $Z_\fp(M_\infty(\sigma_{m}))$ is balanced (resp.~unbalanced) if and only if $Z_\fp(M_\infty(\sigma_{m+1}))$ is balanced (resp.~unbalanced) for $0 \leq m \leq k-1$.
Since $Z_\fp(M_\infty(\sigma_0)) = Z_\fp(M_\infty(\ovl{\sigma}(\tau_0)^\circ))$ is unbalanced as $R_{\rhobar}^{\tau_0}$ is formally smooth over $\cO$, we conclude that $Z_\fp(M_\infty(\sigma_k)) = Z_\fp(M_\infty(\ovl{\sigma}(\tau_{k+1})^\circ))$ is unbalanced. 
We conclude from the argument above that $\tld{w}(\rhobar,\tau_{k+1})$ is $t_{s^{-1}(\eta_0)}$ for some $s\in \{w,s_\alpha w\}$.
In particular, $\tau_{k+1}$ exhibits a specialization of $\rhobar$ to $(\sigma_k,\rhobar^{\prime,\speci})$ (not necessarily the same $\rhobar^{\prime,\speci}$ from the first paragraph).

By the definition of $\sigma_k$ and using that $\sigma_k$ is an extremal weight of $\rhobar^{\prime,\speci}$, we see that 
\[
\tld{w}(\rhobar,\tau_{k+1}) = \tld{w}(\rhobar^{\prime,\speci},\tau_{k+1}) =
\begin{cases}
t_{w^{-1}(e\eta_0)} & \mbox{ if } k \mbox{ is even}\\
t_{(s_\alpha w)^{-1}(e\eta_0)} & \mbox{ if } k \mbox{ is odd}.
\end{cases}
\]
Then a computation shows that
\[
\tld{w}(\rhobar^{\prime,\speci}) = 
\begin{cases}
\tld{w}(\rhobar^\speci)\tld{w}^{-1} t_{(\frac{k}{2}-e)\alpha}s_\alpha \tld{w} & \mbox{ if } k \mbox{ is even}\\
\tld{w}(\rhobar^\speci)\tld{w}^{-1} t_{(\frac{k+1}{2}-e)\alpha} \tld{w} & \mbox{ if } k \mbox{ is odd}.
\end{cases}
\]
Note that $\tld{w}(\rhobar^{\prime,\speci},\tau_m) \in w^{-1} W_{a,\alpha} t_{e\eta_0} w$ for all $0 \leq m \leq 2e$.
Another computation shows that if $m > k+1$, then $\tld{w}(\rhobar^{\prime,\speci},\tau_m)$ is not listed in Proposition \ref{prop:listshapes}.
This implies that $\tld{w}(\rhobar^{\prime,\speci},\tau_m) \notin \Adm(e\eta_0)$ for $m> k+1$.
Corollary \ref{cor:adm} implies that $W^?(\rhobar^{\prime,\speci},\tau_m) = \emptyset$ for $m>k+1$.
In particular, $\sigma_m \notin W^?(\rhobar^{\prime,\speci})$ for $m>k$.
Theorem \ref{thm:WE} and Proposition \ref{prop:PWE} imply that $\sigma_m \notin W_{M_\infty}(\rhobar)$ for $m>k$.
\end{proof}

\begin{cor}\label{cor:reflect}
Let $(\sigma,\rhobar^\speci) \in SP(\rhobar)$ and $\sigma \in W_{M_\infty}(\rhobar)$ be as in Lemma \ref{lemma:walk}.
Let $\alpha$ be a simple root.
Then there exists $(\sigma',\rhobar^{\prime,\speci})$ such that $\theta_{\rhobar}(\sigma',\rhobar^{\prime,\speci}) = \theta_\rhobar(\sigma,\rhobar^\speci)s_\alpha$.
Moreover, if $\sigma\in W_{M_\infty}(\rhobar)$, then $\sigma' \in W_{M_\infty}(\rhobar)$ as well.
\end{cor}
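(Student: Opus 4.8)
The plan is to deduce Corollary \ref{cor:reflect} directly from Lemma \ref{lemma:walk}. First I would unwind the hypotheses: we are given $(\sigma, \rhobar^\speci) \in SP(\rhobar)$ with $\sigma \in W_{M_\infty}(\rhobar)$, and by definition of $SP(\rhobar)$ there is a tame inertial $L$-parameter $\tau$ exhibiting a specialization of $\rhobar$ to $(\sigma, \rhobar^\speci)$ with $\tld{w}(\rhobar,\tau) = t_{w^{-1}(e\eta_0)}$ for some $w \in W^{\cJ}$, and $\sigma$ is the extremal weight of $\rhobar^\speci$ corresponding to $w$. After possibly enlarging $E$ we may assume $\rhobar^\speci$ has a compatible $4e(n-1)$-generic lowest alcove presentation (the genericity hypotheses on $\rhobar$ propagate to $\rhobar^\speci$ via Theorem \ref{thm:semicont} and Corollary \ref{prop:weak:sc}, as in Remark \ref{lem:Sfinite}), so Lemma \ref{lemma:walk} applies.

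The key step is to apply Lemma \ref{lemma:walk} with this simple root $\alpha$ (working componentwise: $\alpha$ is a simple root of one factor $\GL_n$ in the product indexed by $\cJ$, trivial in the others). The lemma produces $\sigma_m$ and $\tau_m$ for $0 \le m \le 2e-1$ and $0 \le m \le 2e$ respectively with $\sigma_0 = \sigma$, an index $0 \le k \le 2e-1$ with $\sigma_m \in W_{M_\infty}(\rhobar) \iff m \le k$, and the crucial output: $\tau_{k+1}$ exhibits a specialization of $\rhobar$ to $(\sigma_k, \rhobar^{\prime,\speci})$ for some $\F$-valued tame inertial $L$-parameter $\rhobar^{\prime,\speci}$, with $\tld{w}(\rhobar, \tau_{k+1}) = t_{w^{-1}(e\eta_0)}$ if $k$ is even and $t_{(s_\alpha w)^{-1}(e\eta_0)}$ if $k$ is odd. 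I would set $\sigma' \defeq \sigma_k$ and take $\rhobar^{\prime,\speci}$ from the lemma. Then $(\sigma', \rhobar^{\prime,\speci}) \in SP(\rhobar)$ by construction, and since $k \le k$ we have $\sigma' = \sigma_k \in W_{M_\infty}(\rhobar)$, giving the last assertion immediately.

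It remains to compute $\theta_\rhobar(\sigma', \rhobar^{\prime,\speci})$. Using Definition \ref{defn:thetarhobar}: $\sigma_k$ is the extremal weight of $\rhobar^{\prime,\speci}$ corresponding to $w$ if $k$ is even and to $s_\alpha w$ if $k$ is odd (this is exactly the content of the casewise description of $\tld{w}(\rhobar, \tau_{k+1})$ in the lemma, matched against the definition of specialization pair). The formula in Lemma \ref{lemma:walk} for $\tld{w}(\rhobar^{\prime,\speci})$ in terms of $\tld{w}(\rhobar^\speci)$ — namely $\tld{w}(\rhobar^\speci)\tld{w}^{-1} t_{(\frac{k}{2}-e)\alpha}s_\alpha \tld{w}$ when $k$ is even and $\tld{w}(\rhobar^\speci)\tld{w}^{-1} t_{(\frac{k+1}{2}-e)\alpha} \tld{w}$ when $k$ is odd — lets me read off a $\zeta$-compatible lowest alcove presentation $(w_{\rhobar^{\prime,\speci}}, \mu_{\rhobar^{\prime,\speci}})$ of $\rhobar^{\prime,\speci}$. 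Since $t_{(\cdot)\alpha}$ and $t_{(\cdot)\alpha}s_\alpha$ lie in $W_{a,\alpha}$ with linear parts $\id$ and $s_\alpha$ respectively, the image $w_{\rhobar^{\prime,\speci}}$ of $\tld{w}(\rhobar^{\prime,\speci})$ in $W$ is $w_{\rhobar^\speci} s_\alpha$ when $k$ is even (extremal weight index $w$) and $w_{\rhobar^\speci}$ when $k$ is odd (extremal weight index $s_\alpha w$). In the even case, $\theta_\rhobar(\sigma',\rhobar^{\prime,\speci}) = w_{\rhobar^{\prime,\speci}} w^{-1} = w_{\rhobar^\speci} s_\alpha w^{-1} = (w_{\rhobar^\speci} w^{-1})(w s_\alpha w^{-1})$ — but one must be careful that the map $\theta_\rhobar$ uses a right multiplication by $w^{-1}$ not a conjugation; in fact the cleanest route is to observe that $\sigma$ and $\sigma_k$ differ by exactly the reflection $s_\alpha$ under the Weyl symmetry of $W_\obv(\rhobar^{\prime,\speci})$-versus-$W_\obv(\rhobar^\speci)$ built into Proposition \ref{prop:indcomb}, so that $\theta_\rhobar(\sigma_k, \rhobar^{\prime,\speci}) = \theta_\rhobar(\sigma, \rhobar^\speci) s_\alpha$ follows by direct substitution. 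The main obstacle is precisely this bookkeeping: tracking which of $w$, $s_\alpha w$ indexes the extremal weight $\sigma_k$ and whether the resulting product lands on $\theta_\rhobar(\sigma,\rhobar^\speci)s_\alpha$ rather than, say, $s_\alpha\theta_\rhobar(\sigma,\rhobar^\speci)$ or $\theta_\rhobar(\sigma,\rhobar^\speci)s_{w(\alpha)}$; I would handle it by carefully expanding both the even and odd cases against Definition \ref{defn:thetarhobar} and the explicit shapes in Propositions \ref{prop:listshapes} and \ref{prop:indcomb}, checking the two cases separately.
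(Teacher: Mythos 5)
Your overall strategy is exactly the paper's: apply Lemma~\ref{lemma:walk}, set $\sigma' \defeq \sigma_k$, take $\rhobar^{\prime,\speci}$ as in the lemma, and compute $\theta_\rhobar(\sigma',\rhobar^{\prime,\speci})$ from the casewise formula for $\tld{w}(\rhobar^{\prime,\speci})$. The final step, however, contains a computational slip that you noticed but did not resolve. You read off the linear part of $\tld{w}^{-1} t_{(\frac{k}{2}-e)\alpha}s_\alpha \tld{w}$ as $s_\alpha$ and concluded $w_{\rhobar^{\prime,\speci}} = w_{\rhobar^\speci}s_\alpha$ in the even case, but you dropped the conjugation by $\tld{w}$: the linear part of $\tld{w}^{-1} t_{(\frac{k}{2}-e)\alpha}s_\alpha \tld{w}$ is $w^{-1}s_\alpha w$, so the correct formula is $w_{\rhobar^{\prime,\speci}} = w_{\rhobar^\speci}\,w^{-1}s_\alpha w$. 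With your (incorrect) expression, $\theta_\rhobar(\sigma',\rhobar^{\prime,\speci}) = w_{\rhobar^\speci}s_\alpha w^{-1}$, which indeed is not $\theta_\rhobar(\sigma,\rhobar^\speci)s_\alpha = w_{\rhobar^\speci}w^{-1}s_\alpha$; that is why your computation ``didn't land'' and you had to appeal to an unverified Weyl-symmetry heuristic and an appeal to ``checking the two cases carefully.''

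With the correct formula the verification closes immediately, exactly as in the paper: in the even case $\sigma_k$ corresponds to $w$ and
\[
\theta_\rhobar(\sigma',\rhobar^{\prime,\speci}) = w_{\rhobar^{\prime,\speci}}\, w^{-1} = (w_{\rhobar^\speci}\,w^{-1}s_\alpha w)\, w^{-1} = w_{\rhobar^\speci}\,w^{-1}s_\alpha = \theta_\rhobar(\sigma,\rhobar^\speci)\,s_\alpha;
\]
in the odd case $w_{\rhobar^{\prime,\speci}} = w_{\rhobar^\speci}$ and $\sigma_k$ corresponds to $s_\alpha w$, so
\[
\theta_\rhobar(\sigma',\rhobar^{\prime,\speci}) = w_{\rhobar^{\prime,\speci}}\,(s_\alpha w)^{-1} = w_{\rhobar^\speci}\,w^{-1}s_\alpha = \theta_\rhobar(\sigma,\rhobar^\speci)\,s_\alpha.
\]
So the route you chose is the same as the paper's; the gap is precisely that the term $\tld{w}^{-1}(\cdot)\tld{w}$ conjugates the $W_{a,\alpha}$-factor, which must be reflected in the linear part. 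Once fixed, both cases give the asserted identity and no appeal to additional symmetry arguments is needed.
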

\begin{proof}
Let $\sigma$, $\sigma_k$, $\rhobar^\speci$, and $\rhobar^{\prime,\speci}$ be as in Lemma \ref{lemma:walk}.
Let $\sigma'$ be $\sigma_k$.
Then $\sigma' \in W_{M_\infty}(\rhobar)$.
It suffices to show that $\theta_{\rhobar}(\sigma',\rhobar^{\prime,\speci}) = \theta_{\rhobar}(\sigma,\rhobar^\speci)s_\alpha$.
We have that
\[
w(\rhobar^{\prime,\speci}) = 
\begin{cases}
w(\rhobar^\speci) w^{-1}s_\alpha w & \mbox{ if } k \mbox{ is even}\\
w(\rhobar^\speci) & \mbox{ if } k \mbox{ is odd}
\end{cases}
\]
and $\sigma_k$ is the extremal weight of $\rhobar^{\prime,\speci}$ corresponding to 
\[
\begin{cases}
w & \mbox{ if } k \mbox{ is even.}\\
s_\alpha w & \mbox{ if } k \mbox{ is odd.}
\end{cases}
\]
We conclude that $\theta_{\rhobar}(\sigma',\rhobar^{\prime,\speci}) = w(\rhobar) w^{-1}s_\alpha = \theta_\rhobar(\sigma,\rhobar^\speci)s_\alpha$.
\end{proof}

\begin{proof}[Proof of Theorem \ref{thm:obv}]
Suppose that $\sigma \in W_{M_\infty}(\rhobar)$ and that $(\sigma,\rhobar^\speci) \in SP(\rhobar)$.
Then using Corollary \ref{cor:reflect} and the fact that simple reflections generate $W$, we see that for each $w\in W$, there is $(\sigma_w,\rhobar_{w}^\speci)\in SP(\rhobar)$ such that $\theta_\rhobar(\sigma_w,\rhobar_{w}^\speci) = w$ and $\sigma_w \in W_{M_\infty}(\rhobar)$.
This first implies that the map $\theta_{\rhobar}$ is surjective and hence an isomorphism by Proposition \ref{prop:inj}.
It also implies that $W_\obv(\rhobar) \subset W_{M_\infty}(\rhobar)$.
\end{proof}

\begin{thm}\label{thm:obvpd}
Let $\rhobar$ be $6e(n-1)$-generic and let $M_\infty$ be a weak patching functor for $\rhobar$. 
The following are equivalent.
\begin{enumerate}
\item \label{item:obv} $M_\infty$ is extremal.
\item \label{item:allobv} $W_\obv(\rhobar)\subset W_{M_\infty}(\rhobar)$.
\item \label{item:mpd} $M_\infty$ is potentially diagonalizable.
\end{enumerate}
\end{thm}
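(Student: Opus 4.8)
\textbf{Proof proposal for Theorem \ref{thm:obvpd}.}

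The plan is to prove the chain of implications $(\ref{item:mpd}) \Rightarrow (\ref{item:obv}) \Rightarrow (\ref{item:allobv}) \Rightarrow (\ref{item:obv})$, with the last two being cheap and the first being the substantive point. The implication $(\ref{item:allobv}) \Rightarrow (\ref{item:obv})$ is immediate: if $W_\obv(\rhobar) \subset W_{M_\infty}(\rhobar)$ then $W_\obv(\rhobar) \cap W_{M_\infty}(\rhobar) = W_\obv(\rhobar)$, which is nonempty by Proposition \ref{prop:obvord} (combined with the fact that $W_{\mord}(\rhobar)$ is nonempty, established just before that proposition). The implication $(\ref{item:obv}) \Rightarrow (\ref{item:allobv})$ is exactly Theorem \ref{thm:obv}, applied using the $6e(n-1)$-genericity hypothesis on $\rhobar$. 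So the content is in producing either $(\ref{item:obv})$ or $(\ref{item:allobv})$ from potential diagonalizability.

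For $(\ref{item:mpd}) \Rightarrow (\ref{item:obv})$, I would argue as follows. Suppose $M_\infty$ is potentially diagonalizable, so there is an inertial $L$-parameter $\tau_0$ with $\supp M_\infty(\sigma(\tau_0)^\circ)$ containing a potentially diagonalizable $\overline{E}$-point. By Theorem \ref{thm:extension}, applied to $\rhobar$ with a choice of parabolic $P^\vee$ (e.g. a minimal one relative to the filtration structure of $\rhobar$) and to the semisimple pieces $\rhobar_i$, we obtain: for a suitable choice of extremal shapes $\tld{w}(\rhobar_i(-N_i),\tau_i)$, a potentially diagonalizable crystalline lift $\rho$ of $\rhobar$ of type $(\tau, \eta)$ with $\tau = \oplus_i \tau_i$, exhibiting a maximally ordinary extremal weight $\sigma \in W_{\mord}(\rhobar) \subset W_\obv(\rhobar)$ (via Proposition \ref{prop:obvord}). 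Now I use Definition \ref{minimalpatching}(\ref{item:pd}): since $\supp M_\infty(\sigma(\tau_0)^\circ)$ contains a potentially diagonalizable point, $\supp M_\infty(\sigma(\tau)^\circ)$ contains \emph{all} potentially diagonalizable $\overline{E}$-points — in particular the one corresponding to $\rho$. Therefore $M_\infty(\sigma(\tau)^\circ) \neq 0$, hence $M_\infty(\ovl{\sigma}(\tau)^\circ) \neq 0$ by exactness and the maximal Cohen--Macaulay property (Definition \ref{minimalpatching}(\ref{support}),(\ref{dimd})), so some Jordan--H\"older factor of $\ovl{\sigma}(\tau)^\circ$ lies in $W_{M_\infty}(\rhobar)$. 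Finally, by the construction of $\tau$ and the extremality of the shape, $W^?(\rhobar^\semis, \tau) = \{\sigma\}$ (as in the proof of Proposition \ref{prop:obvord}, using Proposition \ref{prop:indint}), and one checks via Theorem \ref{thm:WE} and Proposition \ref{prop:PWE} that the only possible weight in $W_{M_\infty}(\rhobar) \cap \JH(\ovl{\sigma}(\tau)^\circ)$ that can be extremal for a specialization is $\sigma$ itself; hence $\sigma \in W_{M_\infty}(\rhobar) \cap W_\obv(\rhobar)$, i.e. $M_\infty$ is extremal.

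The main obstacle I anticipate is the last step: pinning down that the nonzero $M_\infty$-value forces $\sigma \in W_{M_\infty}(\rhobar)$ rather than merely some other Jordan--H\"older factor of $\ovl{\sigma}(\tau)^\circ$ lying in $W_{M_\infty}(\rhobar)$. The clean way around this is to arrange $\tau$ so that the Kisin module / deformation ring situation is the formally smooth case of Theorem \ref{thm:FSM} (shape $t_{w^{-1}(e\eta_0)}$, i.e. $\tld{w}_{\alpha_j} \in \{\id, t_{-e\alpha_j}\}$ for all $j$), so that $R_{\rhobar}^{\eta,\tau}$ is formally smooth over $\cO$ with $\ovl{\sigma}(\tau)^\circ$ having a single relevant Jordan--H\"older factor in $W^?(\rhobar^\semis)$; then by Proposition \ref{prop:indint} (or directly Proposition \ref{prop:combWE} together with Corollary \ref{cor:adm}) the intersection $W_{M_\infty}(\rhobar) \cap \JH(\ovl{\sigma}(\tau)^\circ) \cap W^?(\rhobar^\semis)$ is forced to be $\{\sigma\}$. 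A secondary technical point is to check that the genericity bounds line up: Theorem \ref{thm:extension} needs $(2e(n-1)+1)$-genericity of $\rhobar$, Theorem \ref{thm:FSM} needs its own bound on $\tau$, and Theorem \ref{thm:obv} needs $6e(n-1)$-genericity — all of which are subsumed under the stated hypothesis, but this should be remarked on explicitly. Once $(\ref{item:mpd}) \Rightarrow (\ref{item:obv})$ is in hand, the cycle closes and the theorem follows.
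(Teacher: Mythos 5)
The proposed chain $(\ref{item:mpd}) \Rightarrow (\ref{item:obv}) \Rightarrow (\ref{item:allobv}) \Rightarrow (\ref{item:obv})$ does not close: it establishes $(\ref{item:mpd}) \Rightarrow (\ref{item:obv}) \Leftrightarrow (\ref{item:allobv})$, but there is no arrow from $\{(\ref{item:obv}), (\ref{item:allobv})\}$ back to $(\ref{item:mpd})$, so the asserted equivalence is not proven. You still need to show that extremality implies potential diagonalizability. The paper supplies this as $(\ref{item:allobv}) \Rightarrow (\ref{item:mpd})$: take $\sigma \in W_\mord(\rhobar)$ and the tame type $\tau$ exhibiting the specialization pair $(\sigma, \rhobar^\semis)$ as in the proof of Proposition \ref{prop:obvord}. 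Under $(\ref{item:allobv})$ we have $\sigma \in W_{M_\infty}(\rhobar)$, hence $M_\infty(\sigma(\tau)^\circ) \neq 0$. Because $R_{\rhobar}^{\eta,\tau}$ is a domain (indeed formally smooth over $\cO$ in this extremal shape, by Theorem \ref{thm:FSM}) and $\rhobar$ has a potentially diagonalizable lift of type $(\tau,\eta)$ by Theorem \ref{thm:extension}, the nonzero maximal Cohen--Macaulay sheaf $M_\infty(\sigma(\tau)^\circ)$ is supported at that potentially diagonalizable point; so $M_\infty$ is potentially diagonalizable with $\tau_0 = \tau$. Without this step the theorem is not proven.

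Your substantive argument for $(\ref{item:mpd}) \Rightarrow (\ref{item:obv})$ is in essence the paper's: start from $\sigma \in W_\mord(\rhobar) \subset W_\obv(\rhobar)$ (Proposition \ref{prop:obvord}), use Theorem \ref{thm:extension} to produce a potentially diagonalizable lift of type $(\tau,\eta)$, invoke Definition \ref{minimalpatching}(\ref{item:pd}) to get $M_\infty(\sigma(\tau)^\circ) \neq 0$, and pin down the nonzero Jordan--H\"older factor using $W_{M_\infty}(\rhobar) \subset W^?(\rhobar^\semis)$ (weight elimination, Theorem \ref{thm:WE} and Proposition \ref{prop:PWE}) together with $W^?(\rhobar^\semis,\tau) = \{\sigma\}$. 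The only difference is that you hedge the last step and suggest routing through Theorem \ref{thm:FSM} to isolate $\sigma$; that is not needed here --- the combinatorial identity $W^?(\rhobar^\semis,\tau) = \{\sigma\}$ already forces the unique surviving factor to be $\sigma$, with no reference to the structure of the deformation ring. The steps $(\ref{item:obv}) \Rightarrow (\ref{item:allobv})$ (Theorem \ref{thm:obv}) and $(\ref{item:allobv}) \Rightarrow (\ref{item:obv})$ match the paper.
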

\begin{proof}
(\ref{item:obv}) implies (\ref{item:allobv}) by Theorem \ref{thm:obv}.
We next show that (\ref{item:allobv}) implies (\ref{item:mpd}).
Let $\sigma$ be in $W_\mord(\rhobar)$ so that $\tau$ exhibits the specialization pair $(\sigma,\rhobar^\semis) \in SP(\rhobar)$ as in the proof of Proposition \ref{prop:obvord}.
Then $M_\infty(\sigma(\tau)^\circ)$ is nonzero since $M_\infty(\sigma)$ is.
Since $R_{\rhobar}^{\tau}$ is a domain and $\rhobar$ has a potentially diagonalizable lift of type $(\tau,\eta)$ by Theorem \ref{thm:extension}, $M_\infty$ is potentially diagonalizable.

Finally, we show that (\ref{item:mpd}) implies (\ref{item:obv}).
Again, let $\sigma$ be in $W_\mord(\rhobar)$ so that $\tau$ exhibits the specialization pair $(\sigma,\rhobar^\semis) \in SP(\rhobar)$ as in the proof of Proposition \ref{prop:obvord}.
Then since $M_\infty$ is potentially diagonalizable and $\rhobar$ has a potentially diagonalizable lift of type $(\tau,\eta)$ as before, $M_\infty(\sigma(\tau)^\circ)$ is nonzero.
Since $W_{M_\infty}(\rhobar) \subset W^?(\rhobar^\semis)$ by Theorem \ref{thm:WE} and $W^?(\rhobar^\semis,\tau) = \{\sigma\}$ as in the proof of Proposition \ref{prop:obvord}, $M_\infty(\sigma)$ is nonzero.
Thus $W_\mord(\rhobar) \cap W_{M_\infty}(\rhobar)$ is nonempty.
The result now follows from Proposition \ref{prop:obvord}.
\end{proof}

\begin{rmk}
Theorem \ref{thm:obvpd} generalizes \cite[Theorem 4.3.8]{LLL} to the nonsemisimple case in an abstract setting.
Moreover, the above proof (and \S \ref{sec:TW}) gives a different proof of this theorem. 
(Specifically, the order of implications proved is reversed.) 
Indeed, we do not know whether every extremal lift is potentially diagonalizable when $\rhobar$ is wildly ramified. 
\end{rmk}

\begin{cor}
Suppose that $\cO_p$ is \'etale over $\Z_p$, i.e., $F^+_p$ is a product of unramified extensions of $\Q_p$. 
Let $\rhobar$ be an $L$-homomorphism over $\F$. 
Suppose that $M_\infty$ is a weak patching functor for $\rhobar$ satisfying the equivalent conditions of Theorem \ref{thm:obvpd}.
(In particular, $\rhobar$ is $7(n-1)$-generic.) 
If $\tau$ is an $n$-generic tame inertial $L$-parameter, then $R_{\rhobar}^\tau$ is nonzero if and only if $M_\infty(\sigma^\circ(\tau))$ is nonzero for any $\cO$-lattice $\sigma^\circ(\tau) \subset \sigma(\tau)$. 
\end{cor}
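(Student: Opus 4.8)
The statement to prove is: with $\cO_p$ \'etale over $\Z_p$ (so $K/\Q_p$ unramified at each place, i.e. $e=1$), $\rhobar$ and $M_\infty$ as in Theorem \ref{thm:obvpd}, and $\tau$ an $n$-generic tame inertial $L$-parameter, then $R_{\rhobar}^\tau \neq 0$ if and only if $M_\infty(\sigma^\circ(\tau)) \neq 0$. The plan is to deduce this from the geometric characterization of non-vanishing of $R_\rhobar^\tau$ in terms of extremal weights (Proposition \ref{prop:obvintersect}) together with the already-established inclusion $W_\obv(\rhobar) \subset W_{M_\infty}(\rhobar)$ (Theorem \ref{thm:obvpd}(\ref{item:allobv}), which holds under the hypotheses) and the weight elimination $W_{M_\infty}(\rhobar) \subset W^?(\rhobar^{\speci})$ for suitable specializations $\rhobar^{\speci}$ (Theorem \ref{thm:WE} combined with Proposition \ref{prop:PWE}).

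The reverse implication is immediate: if $M_\infty(\sigma^\circ(\tau)) \neq 0$, then by Definition \ref{minimalpatching}(\ref{support}) $M_\infty(\sigma^\circ(\tau))$ is supported on $X_\infty(\tau) = \Spec R_\infty \otimes_{R_\rhobar} R_\rhobar^{\eta,\tau}$, which forces $R_\rhobar^{\eta,\tau}$, hence $R_\rhobar^\tau = R_\rhobar^{\eta,\tau}$, to be nonzero. For the forward implication, suppose $R_\rhobar^\tau \neq 0$. First I would record that the genericity bookkeeping works: $\rhobar$ being $7(n-1)$-generic and $\tau$ being $n$-generic (hence, as needed, $(5n-1)$-generic after possibly sharpening constants; here one uses $e=1$ so the various $e(n-1)$-type bounds collapse to linear-in-$n$ bounds) put us in the range where Proposition \ref{prop:obvintersect} applies. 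That proposition then gives $W_\obv(\rhobar) \cap \JH(\ovl\sigma(\tau)) \neq \emptyset$: pick $\sigma \in W_\obv(\rhobar) \cap \JH(\ovl\sigma(\tau))$. By Theorem \ref{thm:obvpd}, since $M_\infty$ satisfies the equivalent conditions, $W_\obv(\rhobar) \subset W_{M_\infty}(\rhobar)$, so $M_\infty(\sigma) \neq 0$. Since $\sigma \in \JH(\ovl\sigma(\tau)) = \JH(\ovl\sigma^\circ(\tau))$ for any lattice $\sigma^\circ(\tau) \subset \sigma(\tau)$, exactness of $M_\infty$ (it is a covariant exact functor, and the mod-$p$ reduction $\ovl\sigma^\circ(\tau)$ has a filtration with graded pieces the Jordan--H\"older factors) yields a surjection or injection relating $M_\infty(\ovl\sigma^\circ(\tau))$ to its constituents; concretely, $M_\infty(\sigma)$ being a subquotient-image of $M_\infty(\ovl\sigma^\circ(\tau))$ forces $M_\infty(\ovl\sigma^\circ(\tau)) \neq 0$, and then from the exact sequence $0 \to \sigma^\circ(\tau) \xrightarrow{\varpi} \sigma^\circ(\tau) \to \ovl\sigma^\circ(\tau) \to 0$ and exactness we get $M_\infty(\sigma^\circ(\tau)) \neq 0$ (a finitely generated module over the complete local ring which surjects onto something nonzero mod $\varpi$ is nonzero by Nakayama).

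The only real subtlety — and the step I expect to be the main obstacle — is matching the genericity hypotheses of Proposition \ref{prop:obvintersect} (stated for $(5n-1)$-generic tame inertial $L$-parameters and concluding $4n$-genericity of $\rhobar$) with the hypotheses we are handed ($\tau$ being $n$-generic, $\rhobar$ being $7(n-1)$-generic via Theorem \ref{thm:obvpd}). One resolution is simply to state the corollary with $\tau$ assumed $(5n-1)$-generic, matching \emph{loc.~cit.}; alternatively, note that in the unramified case the constants can be taken as written and that the parenthetical "$n$-generic" in the corollary statement should be read in the generous sense used throughout \S\ref{sec:main:mod}, so that in fact it suffices to invoke Proposition \ref{prop:obvintersect} directly with its own hypotheses, all of which are implied once $\rhobar$ is $7(n-1)$-generic and $\tau$ is sufficiently generic. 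I would spell out this compatibility in a sentence, then the two implications above complete the argument. No new ideas beyond the already-assembled machinery (Propositions \ref{prop:obvintersect}, \ref{prop:PWE}, Theorems \ref{thm:WE}, \ref{thm:obv}, \ref{thm:obvpd}, and exactness of $M_\infty$) are needed.
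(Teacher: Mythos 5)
Your approach is essentially the same as the paper's: the reverse implication is immediate from the support of $M_\infty(\sigma^\circ(\tau))$ on $X_\infty(\tau)$, and the forward implication chains $R_{\rhobar}^\tau \neq 0 \Rightarrow W_\obv(\rhobar) \cap \JH(\ovl{\sigma}(\tau)) \neq \emptyset$ (Proposition \ref{prop:obvintersect}) $\Rightarrow M_\infty(\sigma) \neq 0$ for some such $\sigma$ (Theorem \ref{thm:obvpd}\eqref{item:allobv}) $\Rightarrow M_\infty(\sigma^\circ(\tau)) \neq 0$ (exactness plus Nakayama). The paper's own proof is exactly this, stated in three sentences.

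On the genericity matching you flag as the main obstacle: you are right that there is a constant gap between the hypothesis ``$\tau$ is $n$-generic'' and the ``$(5n-1)$-generic'' required in Proposition \ref{prop:obvintersect}, and the mechanism you propose (``$e=1$ so the bounds collapse'') does not actually close it. The intended resolution is the bootstrapping that the paper makes explicit in the proof of Corollary \ref{cor:changetype}: once $R_\rhobar^\tau$ is known to be nonzero, \cite[Proposition 7]{enns} forces $\tau$ to be $(5n-4)$-generic (using the high genericity of $\rhobar$), and then \cite[Theorem 3.2.1]{LLL} upgrades this to $(5n-1)$-generic. So ``$n$-generic'' in the statement should be read as a minimal regularity hypothesis (enough to make $\sigma(\tau)$ well-behaved), with the stronger genericity of $\tau$ \emph{deduced} from $R_\rhobar^\tau \neq 0$ at the start of the forward direction, not assumed. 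With that one sentence inserted, your argument matches the paper's proof exactly.
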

\begin{proof}
If $M_\infty(\sigma^\circ(\tau))$ is nonzero, then $R_\infty(\tau)$, and thus $R_{\rhobar}^\tau$, is nonzero. 
Conversely, if $R_{\rhobar}^\tau$ is nonzero, then $W_{\obv}(\rhobar) \cap \JH(\ovl{\sigma}(\tau)) \neq \emptyset$ by Proposition \ref{prop:obvintersect}. 
Theorem \ref{thm:obvpd}\eqref{item:allobv} and exactness of $M_\infty$ imply that $M_\infty(\sigma^\circ(\tau))$ is nonzero. 
\end{proof}

The following freeness result follows from our previous results and the Diamond--Fujiwara trick. 

\begin{thm}\label{thm:patchcyclic}
Let $M_\infty$ be a minimal weak patching functor for $\rhobar$. 
Suppose that the equivalent conditions of Theorem \ref{thm:obvpd} hold for $M_\infty$ and that $\sigma \in W_\obv(\rhobar)$.
Then $M_\infty(\sigma)$ is free of rank $1$ over its support $($which is formally smooth over $\F)$.
\end{thm}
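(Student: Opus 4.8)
\textbf{Proof strategy for Theorem \ref{thm:patchcyclic}.}

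The plan is to combine the structural results on the deformation rings (Theorem \ref{thm:FSM}) with the cycle-theoretic bookkeeping of \S\ref{sec:cycles} and a Diamond--Fujiwara (patching multiplicity) argument. First I would reduce to understanding $M_\infty(\sigma)$ as a module over $R_\infty^{\loc}(\tau) \defeq R_\infty(\tau)$ for a well-chosen tame type $\tau$: since $\sigma \in W_\obv(\rhobar)$, pick $\tau$ exhibiting a specialization pair $(\sigma, \rhobar^\speci)$ with $\tld w(\rhobar,\tau) = t_{w^{-1}(e\eta_0)}$, so that by Theorem \ref{thm:FSM} each local factor $R^{\eta,\tau_v}_{\rhobar_v}$ is formally smooth over $\cO$. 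Hence $R_\infty(\tau)$ is formally smooth over $\cO$ (it is a completed tensor product of $R^p$, which is formally smooth by minimality, with the formally smooth $R^{\eta,\tau}_{\rhobar}$), and in particular regular. By the weak patching axioms, $M_\infty(\sigma(\tau)^\circ)$ is maximal Cohen--Macaulay over this regular ring, hence locally free, and minimality forces it to be of rank at most one on each connected component; since $M_\infty(\sigma(\tau)^\circ)\ne 0$ (as $\sigma\in W_{M_\infty}(\rhobar)$ by Theorem \ref{thm:obv} and exactness), it is free of rank exactly one, so $M_\infty(\sigma(\tau)^\circ) \cong R_\infty(\tau)$.

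Next I would descend from $\sigma(\tau)^\circ$ to $\sigma$. By Proposition \ref{prop:indint} (applied with the data of $\tau$ and $w$), when $\tld w(\rhobar^\speci,\tau) = t_{w^{-1}(e\eta_0)}$ the set $W^?(\rhobar^\speci,\tau)$ is the singleton $\{\sigma\}$, and $\sigma$ appears in $\JH(\ovl\sigma(\tau))$ with multiplicity one. Combined with weight elimination (Theorem \ref{thm:WE} via Proposition \ref{prop:PWE}: $W_{M_\infty}(\rhobar)\subset W^?(\rhobar^\speci)$), every Jordan--Hölder factor of $\ovl\sigma(\tau)^\circ$ other than $\sigma$ is killed by $M_\infty$. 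Exactness of $M_\infty$ then gives $M_\infty(\ovl\sigma(\tau)^\circ) = M_\infty(\sigma)$ (the other factors contribute zero, and multiplicity one means $\sigma$ contributes exactly once). On the other hand $M_\infty(\ovl\sigma(\tau)^\circ) = M_\infty(\sigma(\tau)^\circ)/\varpi \cong R_\infty(\tau)/\varpi = \ovl R_\infty(\tau)$, which is formally smooth over $\F$. Therefore $M_\infty(\sigma) \cong \ovl R_\infty(\tau)$ as a module over itself, i.e. free of rank one over a ring that is formally smooth over $\F$.

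The final point is to identify the support of $M_\infty(\sigma)$ with (a formally smooth quotient isomorphic to) $\ovl R_\infty(\tau)$ independently of the auxiliary choice, but this is automatic: the support of $M_\infty(\sigma)$ is by definition the support of the cyclic module $\ovl R_\infty(\tau)$, namely $\Spec \ovl R_\infty(\tau)$, which we have just shown is formally smooth over $\F$. I expect the main obstacle to be the careful verification that $M_\infty(\ovl\sigma(\tau)^\circ)$ really equals $M_\infty(\sigma)$ on the nose rather than merely having the same cycle: one must check that the contributions of the non-$\sigma$ Jordan--Hölder factors vanish not just cyclically but as submodules/quotients, which uses both the multiplicity-one statement in Proposition \ref{prop:indint}/\ref{prop:JH} and the fact that $M_\infty$ is exact so that a cofiltration of $\ovl\sigma(\tau)^\circ$ with the non-$\sigma$ graded pieces mapped to zero yields an honest isomorphism $M_\infty(\ovl\sigma(\tau)^\circ)\xrightarrow{\sim} M_\infty(\sigma)$. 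A secondary technical point is confirming the genericity hypotheses (the $6e(n-1)$-genericity of $\rhobar$ propagates the required genericity to $\tau$, $\rhobar^\speci$ so that Theorem \ref{thm:FSM}, Proposition \ref{prop:indint} and Theorem \ref{thm:WE} all apply), which is routine bookkeeping with the deepness bounds.
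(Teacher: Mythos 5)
Your proof follows essentially the same route as the paper: pick $\tau$ with $\tld{w}(\rhobar,\tau) = t_{w^{-1}(e\eta_0)}$ via Lemma \ref{lemma:import}, use Theorem \ref{thm:FSM} to get formal smoothness of $R_\infty(\tau)$, and then combine Auslander--Buchsbaum with minimality to get rank one. Your argument is in fact slightly more careful than the paper's terse proof, because you explicitly justify the passage from $M_\infty(\sigma^\circ(\tau))$ to $M_\infty(\sigma)$ via the singleton identity $W^?(\rhobar^\speci,\tau) = \{\sigma\}$ from Proposition \ref{prop:indint}, weight elimination, and multiplicity one — a step the paper leaves implicit.
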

\begin{proof}
There exists a generic tame inertial $L$-parameter $\tau$ which exhibits the specialization $(\sigma,\rhobar^\speci) \in SP(\rhobar)$ for some $\F$-valued inertial $L$-parameter $\rhobar^\speci$.
By Lemma \ref{lemma:import}, we can assume without loss of generality that $\tld{w}(\rhobar,\tau) = t_{w^{-1}(e\eta_0)}$ for some $w\in W$.
By Theorem \ref{thm:FSM}, $R_{\rhobar}^{\tau}$ is formally smooth over $\cO$, so that $R_\infty(\tau)$ is as well.
Since for any $\cO$-lattice $\sigma^\circ(\tau) \subset \sigma(\tau)$, $M_\infty(\sigma^\circ(\tau))$ is nonzero, finitely generated, and maximally Cohen--Macaulay over $R_\infty(\tau)$, it must be free over $\ovl{R}_\infty(\tau)$ by Serre's theorem on finiteness of projective dimension and the Auslander--Buchsbaum formula. 
Since the generic rank is at most $1$, its rank must be $1$. 
\end{proof}

\subsection{Global results}\label{sec:TW}

In this section, we discuss algebraic automorphic forms on certain definite unitary groups to which the Taylor--Wiles patching construction can be applied to obtain patching functors as in \S \ref{sec:patchfunc}.
This gives a context to which results in the previous section can be applied.

\subsubsection{Algebraic automorphic forms on some definite unitary groups}\label{sec:autform}

Let $F^+/\Q$ be a totally real field not equal to $\Q$, and let $F \subset \ovl{F}^+$ be a CM extension of $F^+$.
We say that a finite place of $F^+$ is \emph{split} (resp.~ramified or inert) if it splits (resp.~ramifies or is inert) in $F$. 
We say that a place of $F$ is \emph{split} (resp.~ramified or inert) if its restriction to $F^+$ is split (resp.~ramified or inert) in $F$. 

Let $G_{/F^+}$ be a reductive group which is an outer form of $\GL_n$ such that 
\begin{itemize}
\item $G_{/F}$ is an inner form of $\GL_n$; 
\item $G_{/F^+}(F^+_v) \cong U_n(\R)$ for all $v|\infty$; and 
\item $G_{/F^+}$ is quasisplit at all inert and ramified finite places. 
\end{itemize}
By \cite[\S 7.1]{EGH}, $G$ admits a reductive model $\cG$ over $\cO_{F^+}[1/N]$, for some $N\in \N$, and an isomorphism
\begin{equation}
\label{iso integral}
\iota:\,\cG_{/\cO_{F}[1/N]} \stackrel{\iota}{\rightarrow}{\GL_n}_{/\cO_{F}[1/N]}
\end{equation}
which specializes to
$
\iota_w:\,\cG(\cO_{F^+_v})\stackrel{\sim}{\rightarrow}\cG(\cO_{F_w})\stackrel{\iota}{\rightarrow}\GL_n(\cO_{F_w})
$
for all split finite places $w$ in $F$ prime to $N$ where $v$ is $w|_{F^+}$ here.
For each split place $v$ of $F^+$, we choose a place $\tld{v}$ of $F$ dividing $v$. 
For a split $v$ prime to $N$, let $\iota_v$ be the composition of $\iota_{\tld{v}}$ and the canonical isomorphism $\GL_n(\cO_{F_{\tld{v}}}) \cong \GL_n(\cO_{F^+_v})$ (suppressing the dependence on the choice of $\tld{v}$).

Let $S_p$ be the set of all places in $F^+$ dividing $p$.
Suppose from now on that all places in $S_p$ are split.
If $U = U_p U^{\infty,p} \leq G(\A_{F^+,p}^{\infty}) \times G(\A_{F^+}^{\infty,p})$ is a compact open subgroup and $W$ is a finite $\cO$-module endowed with a continuous action of $U_\Sigma$ for some finite set $\Sigma$ of finite places of $F^+$, then we define the space of algebraic automorphic forms on $G$ of level $U$ and coefficients in $W$ to be the (finite) $\cO$-module 
\begin{equation}
S(U,W) \defeq \left\{f:\,G(F^{+})\backslash G(\A^{\infty}_{F^{+}})\rightarrow W\,|\, f(gu)=u_{\Sigma}^{-1}f(g)\,\,\forall\,\,g\in G(\A^{\infty}_{F^{+}}), u\in U\right\}.
\end{equation}
We recall that the level $U$ is said to be \emph{sufficiently small} if for all $t \in G(\bA^{\infty}_{F^+})$, the order of the finite group $t^{-1} G(F^+) t \cap U$ is prime to $p$.
If $U$ is sufficiently small, then $S(U,-)$ defines an exact functor from finite $\cO$-modules with a continuous $U_p$-action to finite $\cO$-modules.
From now on we assume that $U$ is sufficiently small.

For a finite place $v$ of $F^+$ prime to $N$, we say that $U$ is \emph{unramified} at $v$ if one has a decomposition $U=\cG(\cO_{F^+_v})U^{v}$. 
Let $S$ be a finite set of finite places in $F^+$ containing all places dividing $pN$, $\Sigma$, and all places at which $U$ is \emph{not} unramified.

Let $\cP_S$ be the set of split finite places $w$ of $F$ such that $w|_{F^+} \notin S$. 
For any subset $\cP\subseteq \cP_S$ of finite complement that is closed under complex conjugation, we write $\bT_{\cP}\defeq\cO[T^{(i)}_w,\,\,w\in\cP,\, 0 \leq i \leq n]$ for the universal Hecke algebra on $\cP$.
The space of algebraic automorphic forms $S(U,W)$ is endowed with an action of $\bT_{\cP}$, where $T_w^{(i)}$ acts by the usual double coset operator
\[
\iota_w^{-1}\left[ \GL_n(\cO_{F_w}) \left(\begin{matrix}
      \varpi_{w}\mathrm{Id}_i & 0 \cr 0 & \mathrm{Id}_{n-i} \end{matrix} \right)
\GL_n(\cO_{F_w}) \right].
\]
Let $\bT_{\cP}(U,W)$ be the image of $\bT_{\cP}$ in $\End_{\cO}(S(U,W))$---it is a finite flat $\cO$-algebra and in particular a complete semilocal ring. 
Enlarging $E$ if necessary, we assume that the residue fields are identified with $\F$. 
If $Q$ is the (finite) set $\{w|_{F^+}:w\in \cP_S \setminus \cP\}$, then we also denote $\bT_{\cP}(U,W)$ by $\bT^Q(U,W)$.
For a maximal ideal $\fm \subset \bT^Q(U,W)$, there is a semisimple Galois representation $\rbar \defeq \rbar_{\fm}: G_{F^+,S} \ra \cG_n(\F)$, where $\cG_n$ is the group scheme over $\Z$ defined in \cite[\S 2.1]{CHT}, uniquely determined by the equation 
\begin{equation}\label{eqn:heckefrob}
\det\left(1-\rbar_{\fm}|_{G_F}(\mathrm{Frob}_w)X\right)=\sum_{j=0}^n (-1)^j(\mathbf{N}_{F/\Q}(w))^{\binom{j}{2}}(T_w^{(j)} \hspace{-2mm}\mod \fm)X^j.
\end{equation}
\begin{defn}
We say that such a Galois representation $\rbar: G_{F^+,S} \ra \cG_n(\F)$ is \emph{automorphic} of level $U$ and coefficients $W$ if $\rbar$ satisfies \eqref{eqn:heckefrob} for a finite subset $Q \subset \cP_S$ closed under complex conjugation and a maximal ideal $\fm\subset \bT^Q(U,W)$. 
In this case, we say that $\fm$ is the maximal ideal (of $\bT^Q(U,W)$ or $\bT_{\cP}$) corresponding to $\rbar$. 

We say that $\rbar$ is \emph{automorphic} if $\rbar$ is automorphic of some level $U$ and some coefficients $W$.
\end{defn}

We now suppose that $\rbar_{\fm}$ is absolutely irreducible. 
Let $\alpha: \bT_{\cP} \onto \bT^Q(U,W)_{\fm}$ be the natural quotient map.
Then there is a Galois representation $r_{\fm} \defeq r(U,W)_{\fm}: G_{F^+,S} \ra \cG_n(\bT^Q(U,W)_{\fm})$ determined by the equations
\[
\det\left(1-r(U,W)_{\fm}|_{G_F}(\mathrm{Frob}_w)X\right)=\sum_{j=0}^n (-1)^j(\mathbf{N}_{F/\Q}(w))^{\binom{j}{2}}\alpha(T_w^{(j)})X^j
\]
for all $w\in \cP$.

For each $v\in S_p$, there is an isomorphism $\iota_v: G_{/F^+_v}\cong G_{/F_{\tld{v}}} \cong \GL_{d_v}(D_{\tld{v}/F_{\tld{v}}})$ for some $d_v \in \N$ and some central division algebra $D_{\tld{v}}$ over $F_{\tld{v}}$ where $\GL_{d_v}(D_{\tld{v}/F_{\tld{v}}})(R) \defeq \GL_{d_v}(D_{\tld{v}} \otimes_{F_{\tld{v}}} R)$. 
We now let $U_v$ be $\iota_v^{-1}(\GL_{d_v}(\cO_{D_{\tld{v}}}))$ and $U_p$ be $\prod_{v\in S_p} U_v = \iota_p^{-1} (\prod_{v\in S_p}\GL_{d_v}(\cO_{D_{\tld{v}}}))$ where $\iota_p$ denotes $\prod_{v\in S_p} \iota_v$. 

\begin{defn}
\label{defn:mod:wght}
Suppose that $U^{\infty,p}$ is such that $U = U_p U^{\infty,p}$ is a sufficiently small compact open subgroup of $G(\bA_{F^+}^\infty)$ and let $\sigma$ be an irreducible representation of $\prod_{v\in S_p}\GL_{d_v}(\cO_{D_{\tld{v}}})$ over $\F$. 

We say that $\rbar$ is \emph{automorphic of weight} $\sigma$ and level $U$ if $\rbar$ is automorphic of level $U$ and coefficients $\sigma^\vee \circ \iota_p$, where $\sigma^\vee$ denotes the $\F$-dual of $\sigma$. 
We say that $\rbar$ is automorphic of weight $\sigma$ or $\sigma$ is a \emph{modular} (\emph{Serre}) \emph{weight} for $\rbar$ if $\rbar$ is automorphic of weight $\sigma$ and some level $U$. 

Let $W(\rbar)$ be the set of modular Serre weights of $\rbar$.
\end{defn}

For each $v$, we fix an embedding $\ovl{F}^+ \into \ovl{F}^+_v$ such that the restriction $F \into \ovl{F}^+_v$ induces the place $\tld{v}$. 
Let $\rbar_v$ be the restriction of $\rbar$ to $G_{F^+_v} \cong G_{F_{\tld{v}}}$, and let $\rbar_p$ be the $L$-homomorphism over $\F$ corresponding to the collection $(\rbar_v)_{v\in S_p}$. 
One expects that $W(\rbar)$ depends only on $\rbar_p$. 

\subsubsection{Minimal level}\label{sec:minlevel}

We now introduce a space of modular forms at \emph{minimal level}. 
Suppose that $F/F^+$, $G$, and $\rbar$ are as before. 
Assume moreover that $F/F^+$ is unramified at all finite places and that $\rbar$ is ramified only at split places. 

We begin with some notation and terminology. 
If $v$ is a split place of $F^+$, then we define the minimally ramified type $\tau_v$ at $v$ (with respect to $\rbar$) to be the inertial type obtained from the restriction to inertia of any minimally ramified lift of $\rbar|_{G_{F^+_v}}$ in the sense of \cite[Definition 2.4.14]{CHT}). 

Let $v_1$ be a split place of $F^+$ away from $p$ such that 
\begin{itemize}
\item $v_1$ does not split completely in $F(\zeta_p)$; and
\item $\rbar|_{G_{F^+_{v_1}}}$ is unramified and $\rbar(\Frob_{F^+_{v_1}})$ has distinct eigenvalues, no two of which have ratio equal to $(\mathbf{N}v_1)^{\pm 1}$. 
\end{itemize}
(It is possible to find such a $v_1$ if $\rbar(G_F)$ contains $\GL_n(\F')$ with $\# \F' > 3n$, see \cite[\S 2.3]{CEGGPS}.) 

Let $U \subset G(\A_{F^+}^\infty)$ be the compact open subgroup $\prod_v U_v$ where $U_v$ is 
\begin{itemize}
\item $\iota_{\tld{v}}^{-1}(\GL_n(\cO_{F_{\tld{v}}}))$ if $v$ is a split place of $F^+$ not equal to $v_1$;
\item the preimage of the upper triangular matrices under the composition 
\[
G(\cO_{F^+_{v_1}}) \overset{\iota_{\tld{v}_1}}{\ra} \GL_n(\cO_{F_{\tld{v}_1}}) \ra \GL_n(k_{\tld{v}_1}) 
\]
if $v = v_1$; and 
\item hyperspecial if $v$ is an inert place. 
\end{itemize}
Then the compact open subgroup $U$ is sufficiently small. 

Let $\Sigma$ be the set of places of $F^+$ away from $p$ where $\rbar$ ramifies. 
Recall that $S$ is a finite set of places of $F^+$ containing all places dividing $pN$, $\Sigma$, and $v_1$. 
For any subset $\cP\subseteq \cP_S$ of finite complement that is closed under complex conjugation, we write $\bT_{\cP}'\defeq\bT_{\cP}[T^{(i)}_{\tld{v}_1},\, 0 \leq i \leq n]$ where $\bT_{\cP}$ is the universal Hecke algebra on $\cP$ as before. 
For a $U_p$-module $V$, $\bT_{\cP}'$ acts on the space 
\[
S(U, (\otimes_{v\in \Sigma} \sigma(\tau_v^\vee)^\circ \circ \iota_v) \otimes V) 
\] 
where the action of $T^{(i)}_{\tld{v}_1}$ is by the double coset operator $U_{v_1} \iota_{\tld{v}_1}^{-1}\left(\begin{matrix}
      \varpi_{w}\mathrm{Id}_i & 0 \cr 0 & \mathrm{Id}_{n-i} \end{matrix} \right) U_{v_1}$. 

Choose an ordering $\delta_1,\ldots,\delta_n$ of the distinct eigenvalues of $\rbar(\Frob_{\tld{v}_1})$ and let $\fm'$ be the maximal ideal of $\bT_{\cP}'$ generated by $\fm \subset \bT_{\cP}$ and the elements $T^{(i)}_{\tld{v}_1} - (\mathbf{N}v_1)^{i(1-i)/2}(\delta_1\cdots\delta_i)$. 
Then the space $S(U, (\otimes_{v\in \Sigma} \sigma(\tau_v^\vee)^\circ \circ \iota_v) \otimes V)_{\fm'}$ is nonzero. 

\subsubsection{$G$ quasisplit at $p$} 

With $G$ as in \S \ref{sec:autform}, we furthermore suppose in this section that $G_{/F^+_v}$ is quasisplit for all $v\in S_p$, i.e., $G_{/F^+_v} \cong \GL_{n/F^+_v}$. 

\begin{defn}
We say that $\rbar$ is \emph{potentially diagonalizably automorphic} if there is a $U$, $W$, $Q$, and a homomorphism $\lambda: \bT^Q(U,W)_{\fm} \ra \ovl{\Q}_p$ such that if $r_\lambda: G_{F^+} \ra \cG(\ovl{\Q}_p)$ is the attached semisimple Galois representation characterized by the equation 
\begin{equation}\label{eqn:rlambda}
\det\left(1-r_\lambda|_{G_F}(\mathrm{Frob}_w)X\right)=\sum_{j=0}^n (-1)^j(\mathbf{N}_{F/\Q}(w))^{\binom{j}{2}}\lambda(T_w^{(j)})X^j,
\end{equation} 
then $r_{\lambda,v}$ is potentially diagonalizable for all $v \in S_p$.
\end{defn}

\begin{lemma}\label{lemma:globalpatch}
Let $U_p$ be as in \S \ref{sec:autform} and suppose that $U = U_pU^{\infty,p} \subset G(\bA_{F^+}^\infty)$ is a sufficiently small compact open subgroup. 
Let $\Sigma$ be a finite set of finite places of $F^+$ away from $p$. 
Let $W$ be a finite $\cO[U_\Sigma]$-module. 

Then there is a patching functor $M_\infty$ such that for any finite $\F$-module $V$ with a continuous $\prod_{v\in S_p} \GL_n(\cO_{F^+_v})$-action, 
\begin{equation}\label{eqn:mtor}
M_\infty(V)/\fm_\infty \cong S(U, W \otimes_{\cO} V^\vee \circ \iota_p)[\fm]^\vee, 
\end{equation}
where $\fm_\infty\subset R_\infty$ denotes the maximal ideal. 
In particular, $M_\infty(V)$ is nonzero if and only if $S(U, W \otimes_{\cO} V^\vee \circ \iota_p)_\fm)$ is nonzero. 

If $\rbar$ is potentially diagonalizably automorphic, then there is an $M_\infty$ as above which is moreover potentially diagonalizable. 

Suppose now that $F/F^+$ is unramified at all finite places and that $\rbar$ is ramified only at split places. 
Let $U$ and $\fm'$ be as in \S \ref{sec:minlevel}. 
If $W$ is $\otimes_{v\in \Sigma} \sigma(\tau_v^\vee)^\circ \circ \iota_v$ where $\tau_v$ is the minimally ramified type with respect to $\rbar$ and $\sigma(\tau_v^\vee)^\circ \subset \sigma(\tau_v^\vee)$ is an $\cO$-lattice, then there is a minimal patching functor $M_\infty$ such that for any finite $\F$-module $V$ as before, 
\begin{equation}\label{eqn:minmtor}
M_\infty(V)/\fm_\infty \cong S(U, W \otimes_{\cO} V^\vee \circ \iota_p)[\fm']^\vee. 
\end{equation}
If $\rbar$ is potentially diagonalizably automorphic, then this minimal $M_\infty$ can be taken to be potentially diagonalizable. 
\end{lemma}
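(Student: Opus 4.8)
\textbf{Proof plan for Lemma \ref{lemma:globalpatch}.} The construction of the patching functor $M_\infty$ is by now a standard Taylor--Wiles--Kisin argument, and I would closely follow the axiomatization in \cite[\S 2]{EGS}, \cite[\S 6]{CEGGPS}, or equivalently the setup in \cite[\S 5]{MLM} adapted to several places above $p$; the only new features are (a) the places in $S_p$ may be ramified over $\Q_p$, which affects only the local deformation rings $R_{\rhobar_v}^{\square}$ (whose basic properties we have recorded) and not the patching machinery, and (b) the bookkeeping with the auxiliary set $\Sigma$ of ramified-away-from-$p$ places and the minimal level at $v_1$. First I would set up the global deformation problem: fix $U = U^p U_p$ sufficiently small, a finite set $S$ of places as in \S \ref{sec:autform} (containing $S_p$, $\Sigma$, $v_1$, and all bad places), and consider the universal deformation ring $R_{\rbar}^{\mathrm{univ}}$ for the global deformation problem with fixed determinant, crystalline-or-type conditions only imposed implicitly through the patched modules. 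Choosing Taylor--Wiles--Kisin systems of auxiliary primes $Q_N$ (possible by the usual arguments once $\rbar(G_{F(\zeta_p)})$ is adequate, which follows from the genericity and bigness hypotheses encoded in ``standard Taylor--Wiles conditions''), one patches the modules $S(U^p U_1(Q_N)_p\cdots, W \otimes V^\vee\circ\iota_p)_{\fm_{Q_N}}$ over the tower to obtain a finitely generated module $M_\infty(V)$ over $R_\infty \defeq R_{\rhobar}\widehat\otimes_\cO R^p$ where $R^p$ is the (formally smooth, once $\Sigma = \emptyset$, or a product of minimally ramified local rings in general) ``framing-plus-auxiliary'' ring, and $M_\infty$ is exact and covariant in $V$ because $S(U,-)$ is.

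Next I would verify the patching axioms (Definition \ref{minimalpatching}): axiom \eqref{support}, that $M_\infty(\sigma^\circ(\tau))$ is maximal Cohen--Macaulay over $X_\infty(\tau)$, follows from the usual dimension count --- the patched module has depth equal to $\dim R_\infty$ by the Taylor--Wiles argument (number of generators $=$ number of relations $+$ dimension), and $\dim R_\infty(\tau) = \dim R_\infty$ because each local potentially crystalline deformation ring $R_{\rhobar_v}^{\eta_v,\tau_v}$ is equidimensional of the expected dimension $n^2 + \tfrac{n(n-1)}{2}[F^+_v:\Q_p]$ (which in the ramified case is part of the input from \S \ref{sec:PCDR}, specifically the flatness/dimension statement recalled around diagram \eqref{diag:main}). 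Axiom \eqref{dimd} follows from \eqref{support} by exactness and reduction mod $\varpi$. Axiom \eqref{item:pd} is the potential-diagonalizability transfer: if some $\supp M_\infty(\sigma(\tau_0)^\circ)$ meets a potentially diagonalizable point, then by Taylor's Ihara-avoidance / the Barnet-Lamb--Gee--Geraghty--Taylor potential-diagonalizability patching (\cite[\S 1.4, \S 1.5]{BLGGT}) the support is a union of irreducible components each of which contains a potentially diagonalizable point, and hence meets every component of every $\Spec R_\infty(\tau)$; concretely one invokes that $\rbar$ potentially diagonalizably automorphic produces, via \eqref{eqn:rlambda}, an automorphic point whose local components at $p$ are potentially diagonalizable, and this point lies in $\supp M_\infty(\sigma(\tau_0)^\circ)$ for the corresponding $\tau_0$. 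Then I would establish the congruence \eqref{eqn:mtor}: by construction $M_\infty(V)/\fm_\infty$ recovers the original (un-patched) space $S(U, W\otimes V^\vee\circ\iota_p)_\fm$, and the ``$[\fm]^\vee$'' appears because patching is done with the Pontryagin/$\cO$-dual and one takes $\fm$-torsion; exactness plus Nakayama give the ``nonzero iff nonzero'' statement. For the potentially diagonalizable refinement one simply runs the Ihara-avoidance patching with the extra potentially diagonalizable point built in, which is permitted precisely by the hypothesis.

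For the minimal case: here $\Sigma \neq \emptyset$, $W = \otimes_{v\in\Sigma}\sigma(\tau_v^\vee)^\circ\circ\iota_v$ with $\tau_v$ minimally ramified, and $\fm'$ is the maximal ideal at the auxiliary place $v_1$ with a chosen ordering of Frobenius eigenvalues. Minimality (in the sense of Definition \ref{minimalpatching}) requires two things. First, $R^p$ must be formally smooth over $\cO$: this is where we use that the local deformation rings at the places in $\Sigma$ are, for minimally ramified types, formally smooth --- this is \cite[\S 2.4]{CHT}, specifically the smoothness of the minimally ramified deformation problem --- and that the framing variables and the deformation ring at $v_1$ (a Steinberg-type / ordinary-at-$v_1$ ring with the eigenvalue ordering imposed) are also formally smooth, the latter by the choice of $v_1$ (Frobenius regular, no eigenvalue ratios $(\mathbf{N}v_1)^{\pm1}$, which makes the relevant ordered-eigenvalue deformation ring smooth, cf.~\cite[\S 2.4.4]{CHT} or the ``Taylor--Wiles primes'' discussion). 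Second, $M_\infty(\sigma^\circ(\tau))[1/p]$ must have rank $\le 1$ on each component: this is the multiplicity-one statement, which follows from classical multiplicity one for $\GL_n$ (via Jacquet--Langlands, $G$ being an inner form, and the fact that at $v_1$ we have pinned down an Iwahori-level Hecke eigenvalue ordering, so the corresponding automorphic representations contribute a one-dimensional space of old/new vectors) --- precisely the argument of \cite[\S 4]{EGS} or \cite[Lemma 4.4.2]{MLM}. The congruence \eqref{eqn:minmtor} is then proved as before, now with $\fm'$ in place of $\fm$. I expect the main obstacle --- or rather the main place where care is needed --- to be confirming that the local deformation-ring input at the ramified-over-$\Q_p$ places $v\in S_p$ (equidimensionality, $\cO$-flatness, and the potential-diagonalizability of the relevant closed points) genuinely holds in the ramified setting; but all of this is exactly what \S \ref{sec:PCDR} and the recollections around diagram \eqref{diag:main} have set up, so no essentially new work is required, and everything else is a transcription of the standard patching formalism.
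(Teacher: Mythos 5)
Your overall plan matches the paper's: the paper also handles the construction and the axioms other than Definition \ref{minimalpatching}\eqref{item:pd} and minimality by citing \cite[Lemma A.1.1]{MLM}, and it handles the minimal-level construction by citing \cite[\S 4]{le}. Your discussion of minimality (formal smoothness of $R^p$ from minimally ramified deformation theory at $\Sigma$ and from the choice of $v_1$; generic rank $\le 1$ from automorphic multiplicity one) is consistent with what goes into the cited reference.

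However, your argument for the potential-diagonalizability axiom \eqref{item:pd} has a genuine gap. You assert that ``the support is a union of irreducible components each of which contains a potentially diagonalizable point, and hence meets every component of every $\Spec R_\infty(\tau)$.'' The axiom does not concern meeting every component; it asks that $\supp M_\infty(\sigma(\tau)^\circ)$ contain \emph{every} potentially diagonalizable $\ovl{E}$-point of $\Spec R_\infty(\tau)$, for \emph{every} $\tau$. Your stated inference does not deliver this, and the sentence that follows (producing, via \eqref{eqn:rlambda}, an automorphic point lying in $\supp M_\infty(\sigma(\tau_0)^\circ)$) is really the converse direction, recording a consequence of potential diagonalizable automorphy rather than proving the transfer across types. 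The paper's actual argument is as follows: from the hypothesis one deduces that $\rbar$ is potentially diagonalizably automorphic; then, given an arbitrary potentially diagonalizable $\ovl{E}$-point $x$ of $\Spec R_\infty(\tau)$, one uses \cite[Lemma 3.9]{paskunas-2adic} to produce an $\ovl{E}$-point $y$ of $\Spec R_\infty(\tau)/\mathfrak{a}_\infty$ lying on the \emph{same} irreducible component as $x$, applies the potentially diagonalizable automorphy lifting theorem \cite[Theorem 4.3.1]{LLL} to conclude that $M_\infty(\sigma(\tau)^\circ)/\mathfrak{a}_\infty$ (hence $M_\infty(\sigma(\tau)^\circ)$) is supported at $y$, and then uses that $M_\infty(\sigma(\tau)^\circ)$ is maximal Cohen--Macaulay over the equidimensional ring $R_\infty(\tau)$ to conclude its support is a union of irreducible components and therefore contains $x$. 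Without the Paskunas step (producing an automorphic point on the same component as the given potentially diagonalizable point) the Cohen--Macaulay argument has nothing to anchor to, and your argument does not close.
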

\begin{proof}
Except for Definition \ref{minimalpatching}\eqref{item:pd} and the minimality, this follows from the proof of \cite[Lemma A.1.1]{MLM} using that $\fm_\infty$ is the preimage of $\fm$ in \emph{loc.~cit.}~under the map $R_\infty \onto R_\infty/\mathfrak{a}_\infty$. 

Suppose the existence of $\tau_0$ as in Definition \ref{minimalpatching}\eqref{item:pd}. 
Then by the above, $\rbar$ is potentially diagonalizably automorphic. 
Let $\tau$ be an inertial $L$-parameter and $x$ be a potentially diagonalizable $\ovl{E}$-point of $\Spec R_\infty(\tau)$. 
There is an $\ovl{E}$-point $y$ of $\Spec R_\infty(\tau)/\mathfrak{a}_\infty$ which is on the same irreducible component of $\Spec R_\infty(\tau)$ as $x$ by \cite[Lemma 3.9]{paskunas-2adic}. 
For any $\cO$-lattice $\sigma(\tau)^\circ \subset \sigma(\tau)$, $M_\infty(\sigma(\tau)^\circ)/\mathfrak{a}_\infty$, and thus $M_\infty(\sigma(\tau)^\circ)$, is supported at $y$ by \cite[Theorem 4.3.1]{LLL} and the properties of $\sigma(\tau)$ (see \S \ref{subsub:ILL}). 
Since $M_\infty(\sigma(\tau)^\circ)$ is a maximal Cohen--Macaulay $R_\infty(\tau)$-module, it is supported at $x$ as well. 

The construction of $M_\infty$ in the minimal level case is as in \cite[\S 4]{le} ($n=3$ and $p$ is assumed to be split, but the modifications are simple). 
\end{proof}

\begin{thm}[Modularity of extremal weights] \label{thm:globalobv}
Let $\rbar: G_{F^+} \ra \cG(\F)$ be an automorphic representation such that 
\begin{itemize}
\item $\rbar|_{G_{F(\zeta_p)}}$ is adequate; and
\item $\rbar_p$ is $6e(n-1)$-generic (in particular $p\nmid 2n$). 
\end{itemize}
Then the following are equivalent: 
\begin{enumerate}
\item $W_{\obv}(\rbar_p) \cap W(\rbar_p) \neq \emptyset$; 
\item $W_{\obv}(\rbar_p) \subset W(\rbar_p)$; and 
\item $\rbar$ is potentially diagonalizably automorphic. 
\end{enumerate}
\end{thm}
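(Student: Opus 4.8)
This is essentially a combination of the local results of the previous section with the patching formalism of \S\ref{sec:patchfunc}--\ref{subsec:WE:MOD} and the global inputs of \S\ref{sec:autform}--\ref{sec:minlevel}. The first step is to produce a weak patching functor $M_\infty$ for $\rbar_p$: by Lemma \ref{lemma:globalpatch}, an automorphic $\rbar$ with $\rbar|_{G_{F(\zeta_p)}}$ adequate gives rise to a (minimal) weak patching functor with $M_\infty(V) \ne 0$ if and only if $S(U, W\otimes V^\vee\circ\iota_p)_\fm \ne 0$, and this $M_\infty$ is potentially diagonalizable when $\rbar$ is potentially diagonalizably automorphic. With this functor in hand, the set $W_{M_\infty}(\rbar_p)$ of $3(n-1)$-generic weights $\sigma$ with $M_\infty(\sigma)\ne 0$ coincides with the intersection of $W(\rbar)$ with the $3(n-1)$-generic weights (using exactness of $S(U,-)$ and \eqref{eqn:mtor}, together with the fact that all weights in sight are sufficiently generic by the genericity hypothesis on $\rbar_p$).

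\textbf{Carrying out the equivalences.} I would argue in the cycle $(3)\Rightarrow(2)\Rightarrow(1)\Rightarrow(3)$. For $(3)\Rightarrow(2)$: if $\rbar$ is potentially diagonalizably automorphic then by Lemma \ref{lemma:globalpatch} we may choose $M_\infty$ potentially diagonalizable, hence extremal by Theorem \ref{thm:obvpd} ((3)$\Rightarrow$(1) of \emph{loc.\ cit.}), and then Theorem \ref{thm:obv} gives $W_\obv(\rbar_p)\subset W_{M_\infty}(\rbar_p)\subset W(\rbar)$; the genericity hypothesis $6e(n-1)$ is exactly what Theorems \ref{thm:obv}, \ref{thm:obvpd} require. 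The implication $(2)\Rightarrow(1)$ is trivial since $W_\obv(\rbar_p)$ is nonempty (Proposition \ref{prop:tamecrit}, or \S\ref{sec:mord}: $W_\mord(\rbar_p)\ne\emptyset$ and $W_\mord\subset W_\obv$ by Proposition \ref{prop:obvord}). The implication $(1)\Rightarrow(3)$ is the substantive one: if $W_\obv(\rbar_p)\cap W(\rbar_p)\ne\emptyset$ then $M_\infty$ (taken minimal via the minimal-level construction of \S\ref{sec:minlevel} and Lemma \ref{lemma:globalpatch}) is extremal, so by Theorem \ref{thm:obvpd} ((1)$\Rightarrow$(3)) $M_\infty$ is potentially diagonalizable, meaning there is a $\tau_0$ as in Definition \ref{minimalpatching}\eqref{item:pd} whose support contains a potentially diagonalizable $\ovl E$-point; unwinding \eqref{eqn:mtor} and the characterization \eqref{eqn:rlambda} of $r_\lambda$, this produces an automorphic $\lambda$ with $r_{\lambda,v}$ potentially diagonalizable for all $v\in S_p$, i.e.\ $\rbar$ is potentially diagonalizably automorphic.

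\textbf{The main obstacle.} The delicate point is bookkeeping at the bad places away from $p$: the patching functor of Lemma \ref{lemma:globalpatch} is set up with fixed coefficients $W$ at the ramified places $\Sigma$, and one must check that the minimal-level choice (minimally ramified types $\tau_v$, the auxiliary prime $v_1$ of \S\ref{sec:minlevel}) makes $M_\infty$ genuinely \emph{minimal} in the sense of Definition \ref{minimalpatching}, so that Theorem \ref{thm:obvpd} applies and, more importantly for $(1)\Rightarrow(3)$, so that the potentially diagonalizable point produced at $p$ can be promoted to an honest automorphic form (this is where \cite[Lemma 3.9]{paskunas-2adic} and \cite[Theorem 4.3.1]{LLL} enter, exactly as in the proof of Lemma \ref{lemma:globalpatch}). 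One also needs the adequacy hypothesis to guarantee $\rbar_\fm$ is absolutely irreducible and to run the Taylor--Wiles patching; I would simply invoke this as in \S\ref{sec:autform}. Everything else is a formal consequence of the already-established Theorems \ref{thm:WE}, \ref{thm:obv}, \ref{thm:obvpd}, \ref{thm:extension} and the structure of $SP(\rbar_p)$, so no new local or combinatorial input is needed here.
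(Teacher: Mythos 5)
Your proposal is correct and follows essentially the same route as the paper's (extremely terse, one-sentence) proof: construct a weak patching functor $M_\infty$ via Lemma \ref{lemma:globalpatch}, identify $W_{M_\infty}(\rbar_p)$ with the generic part of $W(\rbar)$ via \eqref{eqn:mtor}, note that the two notions of potential diagonalizability (of $M_\infty$ and of $\rbar$) match by the construction in Lemma \ref{lemma:globalpatch}, and then apply Theorem \ref{thm:obvpd} verbatim. One small point of note: in your $(1)\Rightarrow(3)$ step you invoke the \emph{minimal} patching functor from \S\ref{sec:minlevel}, but the hypotheses of Theorem \ref{thm:globalobv} do not guarantee the additional conditions (that $F/F^+$ is unramified at all finite places and $\rbar$ is ramified only at split places) required for the minimal-level construction in Lemma \ref{lemma:globalpatch}. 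This is harmless — Theorem \ref{thm:obvpd} makes no use of minimality, so the general weak patching functor from the first part of Lemma \ref{lemma:globalpatch} suffices, and that is all the paper uses — but you should drop the parenthetical ``(taken minimal ...)'' since minimality is neither needed nor necessarily available here.
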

\begin{proof}
Using Lemma \ref{lemma:globalpatch} with $U' = U$, the result follows from Theorem \ref{thm:obvpd}. 
\end{proof}

\begin{thm}[Automorphic tameness criterion]
\label{thm:aut:tameness}
Let $\sigma_w,\sigma_{w_0w}\in W_{\obv}(\rbar_p^{\semis})$ be the extremal weights of $\rbar_p^{\semis}$ corresponding to $w$ and $w_0w\in W$, respectively.
Suppose that $\sigma_w\in W(\rbar_p)$.
Then the following are equivalent:
\begin{enumerate}
\item $\sigma_{w_0w}\in W(\rbar_p)$; and
\item $\rbar_p=\rbar_p^{\semis}$.
\end{enumerate}
\end{thm}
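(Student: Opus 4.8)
The plan is to deduce Theorem \ref{thm:aut:tameness} from the already-established Theorems \ref{thm:globalobv}, \ref{thm:obv}, \ref{thm:obvpd}, the combinatorial tameness criterion Proposition \ref{prop:tamecrit}, and the injectivity of $\theta_\rhobar$ (Proposition \ref{prop:inj}). First I would invoke Lemma \ref{lemma:globalpatch} to produce a weak patching functor $M_\infty$ for $\rbar_p$ with $W_{M_\infty}(\rbar_p) = W(\rbar_p)$ (via the isomorphism \eqref{eqn:mtor}); this translates the entire statement into one about $W_{M_\infty}(\rbar_p)$. Since $\sigma_w\in W(\rbar_p)=W_{M_\infty}(\rbar_p)$ and $\sigma_w$ is an extremal weight of the tame type $\rbar_p^{\semis}$, the functor $M_\infty$ is extremal in the sense of \S \ref{subsec:WE:MOD}; Theorem \ref{thm:obv} then gives $W_\obv(\rbar_p)\subset W_{M_\infty}(\rbar_p) = W(\rbar_p)$ and, crucially, that $\theta_{\rbar_p}\colon SP(\rbar_p)\to W^{\cJ}$ is a bijection.

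For the implication (2)$\Rightarrow$(1): if $\rbar_p=\rbar_p^{\semis}$ is semisimple, then by Proposition \ref{prop:obvforss} $W_\obv(\rbar_p) = W_\obv(\rbar_p^{\semis})$ contains all the extremal weights $\sigma_{w'}$ of $\rbar_p^{\semis}$ for $w'\in W$, and in particular $\sigma_{w_0w}$; combined with the inclusion $W_\obv(\rbar_p)\subset W(\rbar_p)$ from the previous paragraph, this gives $\sigma_{w_0w}\in W(\rbar_p)$. The harder direction is (1)$\Rightarrow$(2). Here I would assume $\sigma_{w_0w}\in W(\rbar_p)$ as well, so that both $\sigma_w$ and $\sigma_{w_0w}$ lie in $W_{M_\infty}(\rbar_p)$. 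The point is that both are extremal weights of the \emph{same} tame type $\rbar_p^{\semis}$, but a priori $\rbar_p$ might specialize to each of them via a pair involving a \emph{different} (non-tame) specialization. So I must show that $\rbar_p$ specializes to the pairs $(\sigma_w,\rbar_p^{\semis})$ and $(\sigma_{w_0w},\rbar_p^{\semis})$; this is exactly where one invokes the argument in the proof of Proposition \ref{prop:tamecrit} showing that if $\rhobar$ specializes to both $(\sigma,\rhobar^{\speci})$ and $(\sigma',\rhobar^{\speci})$ with $\sigma,\sigma'$ corresponding to $w$ and $ww_0$ under the Weyl symmetry, then $\rhobar$ is semisimple with $\rhobar|_{I_K}\cong\rhobar^{\speci}$.

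The main obstacle, and the step requiring the most care, is verifying that $(\sigma_w,\rbar_p^{\semis})$ and $(\sigma_{w_0w},\rbar_p^{\semis})\in SP(\rbar_p)$ — i.e.~that the extremal specialization attached to each of these weights is genuinely $\rbar_p^{\semis}$ and not some other element of $S_{\mathrm{ext}}(\rbar_p)$. The clean way to arrange this is to use the bijectivity of $\theta_{\rbar_p}$: since $\theta_{\rbar_p}$ is a bijection onto $W^{\cJ}$, for each $w'\in W^{\cJ}$ there is a \emph{unique} pair $(\sigma_{w'},\rbar_p^{\prime,\speci})\in SP(\rbar_p)$ with $\theta_{\rbar_p}(\sigma_{w'},\rbar_p^{\prime,\speci}) = w_{\rbar_p^{\prime,\speci}}(w')^{-1}$; one then checks, using the corridor/walk argument of Lemma \ref{lemma:walk} and Corollary \ref{cor:reflect} starting from the pair containing $\sigma_w$, that the specialization can be propagated along a reduced word to reach the pair containing $\sigma_{w_0w}$ while keeping the specialization equal to $\rbar_p^{\semis}$ precisely when $\rbar_p^{\semis}\in S_{\mathrm{ext}}(\rbar_p)$ is hit — and $\rbar_p^{\semis}|_{I_K}$ is always an extremal specialization by Theorem \ref{thm:extension} (Remark \ref{lem:Sfinite}). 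Once both pairs are known to involve $\rbar_p^{\semis}$, the argument of Proposition \ref{prop:tamecrit} (analysis of gauge bases, \cite[Corollary 4.2.15]{MLM}, and \cite[Proposition 5.5.2]{MLM}) forces $U_j = U'_j = \mathrm{id}$ and hence $T^*_{dd}(\fM)$, i.e.~$\rbar_p$, is semisimple, giving $\rbar_p=\rbar_p^{\semis}$. I expect the genericity bookkeeping — checking that $6e(n-1)$-genericity of $\rbar_p$ suffices to apply all of Theorem \ref{thm:FSM}, Theorem \ref{thm:WE}, Theorem \ref{thm:semicont}, and the Lemma \ref{lemma:walk}/\ref{lemma:import} machinery along the walk — to be the only routine but lengthy part.
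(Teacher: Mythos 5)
Your overall architecture is right and tracks the paper's terse hint (``Use Theorem \ref{thm:globalobv} and Proposition \ref{prop:tamecrit}''): translate to $W_{M_\infty}(\rbar_p)$ via Lemma \ref{lemma:globalpatch}, handle $(2)\Rightarrow(1)$ via Proposition \ref{prop:obvforss} and Theorem \ref{thm:globalobv}, and for $(1)\Rightarrow(2)$ reduce to showing that $\rbar_p$ specializes to both pairs $(\sigma_w,\rbar_p^{\semis}|_{I_K})$ and $(\sigma_{w_0w},\rbar_p^{\semis}|_{I_K})$ and then apply the gauge-basis argument in the middle paragraph of the proof of Proposition \ref{prop:tamecrit}. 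Your $(2)\Rightarrow(1)$ direction is correct as written.

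The gap is in how you propose to establish $(\sigma_w,\rbar_p^{\semis}|_{I_K})\in SP(\rbar_p)$ (and likewise for $\sigma_{w_0w}$). The corridor/walk machinery of Lemma \ref{lemma:walk} and Corollary \ref{cor:reflect} does \emph{not} keep the extremal specialization fixed: as the explicit formula for $\tld{w}(\rhobar^{\prime,\speci})$ in the proof of Corollary \ref{cor:reflect} shows, the specialization changes along the walk as a function of the stopping index $k$, and it is equal to the starting specialization precisely when $k=2e-1$, i.e.~when $\rhobar$ has the ``full'' behaviour one is trying to prove. So the walk argument is circular here. Relatedly, the assertion that $M_\infty$ is extremal is precisely the statement $\sigma_w\in W_\obv(\rbar_p)$ (not just $W_\obv(\rbar_p^{\semis})$), which is not automatic from the stated hypotheses and requires the same missing input; and the bijectivity of $\theta_{\rbar_p}$ is in fact never needed in this direction.

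The correct step is more direct and does not use the walk at all. Let $\tau_w$ be the tame type with $\tld{w}(\rbar_p^{\semis}|_{I_K},\tau_w)=t_{w^{-1}(e\eta_0)}$, so that $W^?(\rbar_p^{\semis},\tau_w)=\{\sigma_w\}$ by Proposition \ref{prop:indint}. Since $\sigma_w\in W_{M_\infty}(\rbar_p)$ and $\sigma_w\in \JH(\ovl{\sigma}(\tau_w))$, exactness gives $M_\infty(\sigma(\tau_w)^\circ)\neq 0$, hence $R_{\rbar_p}^{\eta,\tau_w}\neq 0$. Then (i) Remark \ref{rmk:liftsandmodpmono} gives the mod $p$ monodromy condition for the unique $\fM\in Y^{\leq\eta,\tau_w}(\F)$ attached to $\rbar_p$, and (ii) Corollary \ref{prop:weak:sc} gives $t_{w^{-1}(e\eta_0)}=\tld{w}(\rbar_p^{\semis},\tau_w)\leq\tld{w}(\rbar_p,\tau_w)$, which forces equality because $t_{w^{-1}(e\eta_0)}$ is a maximal element of $\Adm(e\eta_0)$. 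Thus $\tau_w$ exhibits $\rbar_p^{\semis}|_{I_K}$ as an extremal specialization of $\rbar_p$, and $(\sigma_w,\rbar_p^{\semis}|_{I_K})\in SP(\rbar_p)$. The identical argument with $\tau_{w_0w}$ gives the other pair, and then the analysis of gauge bases via \cite[Corollary 4.2.15]{MLM} in the proof of Proposition \ref{prop:tamecrit} yields that $\rbar_p$ is semisimple.
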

\begin{proof}
Use Theorem \ref{thm:globalobv} and Proposition \ref{prop:tamecrit}.
\end{proof}
\begin{cor}
Suppose that $F(\lambda)\in W(\rbar_p)$ for $\lambda\in \un{C}_0$ \emph{(}in particular, $F(\lambda)\in W_{\obv}(\rbar_p)$\emph{)}.
Let $(F(\lambda),\rbar_p^{\speci})\in SP(\rbar_p)$ any lift of $F(\lambda)\in W_{\obv}(\rbar_p)$.
Then $\rbar_p$ is semisimple if and only if 
\[
F\bigg(\Big(t_\eta w_0\,w\,\theta_{\rbar_p}^{\zeta}\big((F(\lambda),\rbar_p^{\speci})\big) \,w_0^{-1}\Big)\cdot(e(w_0(\eta)-\eta))+w_0\cdot(\lambda-\eta)\bigg)\in W(\rbar_p)
\] 
where $w\in\un{W}$ is such that $F(\lambda)$ is the obvious weight of $\rbar_p^{\speci}$ corresponding to $w$.
\end{cor}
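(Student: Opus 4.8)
The statement is a concrete, explicit reformulation of the tameness criterion in Theorem \ref{thm:aut:tameness}, specialized to the case where the modular weight $F(\lambda)$ lies in the lowest alcove $\un{C}_0$. The strategy is to unwind the combinatorics: I will identify the two extremal weights $\sigma_w$ and $\sigma_{w_0 w}$ of $\rbar_p^{\semis}$ (in the notation of Theorem \ref{thm:aut:tameness}) in terms of the given data $F(\lambda)$, $\rbar_p^{\speci}$, and the element $\theta_{\rbar_p}^\zeta\big((F(\lambda),\rbar_p^{\speci})\big)\in \un{W}^{\cJ}$, and then apply Theorem \ref{thm:aut:tameness} directly. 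First I would invoke the hypothesis $F(\lambda)\in W(\rbar_p)$ with $\lambda\in \un{C}_0$ to produce a specialization pair $(F(\lambda),\rbar_p^{\speci})\in SP(\rbar_p)$; by Theorem \ref{thm:obv} (applicable since $\rbar_p$ is $6e(n-1)$-generic and the relevant patching functor is extremal by Lemma \ref{lemma:globalpatch} together with the modularity of $F(\lambda)$), the map $\theta_{\rbar_p}^\zeta$ is a bijection, so such a lift exists and $\theta_{\rbar_p}^\zeta\big((F(\lambda),\rbar_p^{\speci})\big)$ is well-defined.

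The core computation is the following. Write $g = \theta_{\rbar_p}^\zeta\big((F(\lambda),\rbar_p^{\speci})\big) = w(\rbar_p^{\speci})\,w^{-1}$ where $w\in \un{W}^{\cJ}$ is the element such that $F(\lambda)$ is the extremal weight of $\rbar_p^{\speci}$ corresponding to $w$ (this is exactly the defining formula of Definition \ref{defn:thetarhobar}, and pinning down $w$ from $F(\lambda)$ uses that $\lambda\in \un{C}_0$, so the lowest alcove presentation $(\tld{w},\omega)$ of $F(\lambda)$ has $\tld{w}=\tld{\id}$ up to $X^0(\un{T})$ — more precisely $\tld{w}_1 \in \tld{W}^+_1$ restricted, but the projection to $W$ is constrained). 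Now I want to compute the extremal weight of $\rbar_p^{\semis}$ corresponding to $w$ and to $w_0 w$. By Definition \ref{defn:obv}, the extremal weight of $\rbar_p^{\semis}$ corresponding to an element $u\in \un{W}^{\cJ}$ is $F_{(\tld{u},\, \tld{w}(\rbar_p^{\semis})\tld{u}^{-1}(-(e-1)\eta_0))}$ where $\tld{u}\in \tld{W}_1^{+,\cJ}$ lifts $u$. Using $\tld{w}(\rbar_p^{\semis}) = \tld{w}(\rbar_p^{\speci})$ composed with the change-of-presentation relating $\rbar_p^{\semis}$ and $\rbar_p^{\speci}$ (recall both have $\zeta$-compatible presentations by the remark preceding Definition \ref{defn:thetarhobar}), and the relation $\tld{w}(\rbar_p^{\speci}) = \tld{s}w_0 t_{(e-1)\eta_0}\tld{w}$ from the proof of Proposition \ref{prop:inj}, I would trace through the identifications in Proposition \ref{prop:W?} and Definition \ref{defn:obv} to match $\sigma_w = F(\lambda)$ (consistency check) and then to read off $\sigma_{w_0 w}$. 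The expression $F\bigg(\Big(t_\eta w_0\,w\,g\,w_0^{-1}\Big)\cdot(e(w_0(\eta)-\eta))+w_0\cdot(\lambda-\eta)\bigg)$ should emerge as the image of $\sigma_{w_0 w}$ under the $p$-dot-action bookkeeping: the term $w_0\cdot(\lambda-\eta)$ records applying $w_0$ to the lowest-alcove data of $F(\lambda)$, the factor $e(w_0(\eta)-\eta)$ records the translation by the difference of the $\eta_0$-shifts built into the extremal weight construction when passing from $w$ to $w_0 w$, and conjugating by $t_\eta w_0 \cdot w g \cdot w_0^{-1}$ records the Weyl-group symmetry element $g$ acting on $\rbar_p^{\semis}$'s presentation. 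I expect this to be a direct but notation-heavy manipulation using Propositions \ref{prop:W?}, \ref{prop:indint} and Definition \ref{defn:obv}, plus the compatibility of lowest alcove presentations.

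Once $\sigma_{w_0 w}$ is identified with the displayed weight, the corollary is immediate from Theorem \ref{thm:aut:tameness}: the theorem says $\sigma_{w_0 w}\in W(\rbar_p)$ if and only if $\rbar_p = \rbar_p^{\semis}$, i.e.\ $\rbar_p$ is semisimple, which is exactly the claimed equivalence. I would also need to note that the hypothesis $\sigma_w = F(\lambda)\in W(\rbar_p)$ of Theorem \ref{thm:aut:tameness} is satisfied by assumption, and that the genericity hypothesis of that theorem ($\rbar_p$ being $3e(n-1)$-generic, which follows from $6e(n-1)$-genericity) holds.

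\textbf{Main obstacle.} The hard part will be the bookkeeping in the middle step: correctly tracking the interplay between (i) the two different but $\zeta$-compatible lowest alcove presentations of $\rbar_p^{\semis}$ and $\rbar_p^{\speci}$, (ii) the $\pi$-twisted structure on $\tld{\un{W}}$ and the $X^0(\un{T})$-ambiguities in the lifts $\tld{w}$, $\tld{u}\in \tld{W}_1^{+,\cJ}$, and (iii) the precise normalization conventions in Definition \ref{defn:obv} versus Proposition \ref{prop:W?} (which differ by $\tld{w}_h$, $w_0$, and $(e-1)\eta_0$ shifts). Getting the displayed formula to come out exactly right — rather than off by a Weyl element, a translation, or a $w_0$ — will require care, but it is purely a matter of carefully composing the explicitly given combinatorial dictionaries, with no new ideas needed beyond what is already established in the paper.
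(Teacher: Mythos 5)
Your proposal is correct and is essentially the paper's intended argument: the paper prints no proof of this corollary, so it is meant to be read as an immediate consequence of Theorem~\ref{thm:aut:tameness} obtained by unwinding Definitions~\ref{defn:obv} and~\ref{defn:thetarhobar} and the $p$-dot-action bookkeeping, which is exactly what you propose. You correctly identify the key structural facts: that $\lambda\in\un{C}_0$ forces $\tld{w}_1\in\un\Omega$ in the lowest alcove presentation of $F(\lambda)$, so that $F(\lambda)$ lies in $W^?(\rbar_p^{\semis})$ only as an extremal weight; that the element $w$ indexing $F(\lambda)$ as an extremal weight is determined by the lowest alcove presentation of $F(\lambda)$ itself and is therefore the same for $\rbar_p^{\speci}$ and for $\rbar_p^{\semis}$; and that $\theta_{\rbar_p}^\zeta$ records the discrepancy $w_{\rbar_p^{\speci}}w^{-1}$, which is what makes the displayed weight computable from the given data (and manifestly independent of the choice of lift $(F(\lambda),\rbar_p^{\speci})$ once one uses injectivity of $\theta_{\rbar_p}^\zeta$ from Proposition~\ref{prop:inj}). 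One small caution on your aside: for $\lambda\in\un{C}_0$ the constraint is $\tld{w}_1\in\un\Omega$, whose image in $\un W$ is a power of the $n$-cycle rather than the identity, so ``$\tld{w}=\tld{\id}$ up to $X^0(\un{T})$'' is too strong; but this does not affect the argument since you only use that $w$ is pinned down by $F(\lambda)$, not that it is trivial.
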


\begin{thm}[mod $p$ multiplicity one]\label{thm:multone}
Suppose that $F/F^+$ is unramified at all finite places, $G$ is quasisplit at all finite places, and that if $\rbar|_{G_{F^+_v}}$ is ramified for a finite place $v$ of $F^+$, then $v$ splits in $F$. 
Let $U$ be as in \S \ref{sec:minlevel}. 

Let $\Sigma$ be the set of finite places of $F^+$ away from $p$ at which $\rbar$ is ramified. 
For each $v\in \Sigma$, let $\tau_v$ be the minimally ramified inertial type corresponding to $\rbar|_{G_{F^+_v}}: G_{F^+_v} \ra \GL_n(\F)$. 
If $\rbar$ satisfies the equivalent conditions of Theorem \ref{thm:globalobv}, then for each $\sigma \in W_{\obv}(\rbar_p)$, 
\begin{equation} \label{eqn:multone}
S(U,\otimes_{v\in \Sigma}\sigma^\circ(\tau_v^\vee) \circ \iota_\Sigma \otimes_{\cO} \sigma \circ \iota_p)[\fm]
\end{equation}
is one-dimensional over $\F$. 
\end{thm}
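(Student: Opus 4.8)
The plan is to deduce Theorem \ref{thm:multone} from Theorem \ref{thm:patchcyclic} together with the Diamond--Fujiwara patching construction at minimal level. First I would invoke Lemma \ref{lemma:globalpatch} in its minimal-level incarnation: since $F/F^+$ is unramified everywhere, $G$ is quasisplit at all finite places, and $\rbar$ is ramified only at split places, with $U$ and $\fm'$ as in \S \ref{sec:minlevel} and $W = \otimes_{v\in \Sigma}\sigma^\circ(\tau_v^\vee)\circ \iota_v$, we obtain a \emph{minimal} weak patching functor $M_\infty$ for $\rhobar = \rbar_p$ satisfying the isomorphism \eqref{eqn:minmtor}, namely
\[
M_\infty(V)/\fm_\infty \cong S\bigl(U, W\otimes_{\cO}V^\vee\circ\iota_p\bigr)[\fm']^\vee
\]
for any finite $\F$-module $V$ with continuous $\prod_{v\in S_p}\GL_n(\cO_{F^+_v})$-action. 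Because $\rbar$ satisfies the equivalent conditions of Theorem \ref{thm:globalobv}, in particular it is potentially diagonalizably automorphic, so by the last assertion of Lemma \ref{lemma:globalpatch} this minimal $M_\infty$ can be taken potentially diagonalizable; hence it satisfies the equivalent conditions of Theorem \ref{thm:obvpd}.

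Next I would apply Theorem \ref{thm:patchcyclic} to $M_\infty$: for $\sigma \in W_\obv(\rbar_p)$, it gives that $M_\infty(\sigma)$ is free of rank one over its support, which is formally smooth over $\F$. In particular, $M_\infty(\sigma)/\fm_\infty$ is at most one-dimensional over $\F$ (it is the quotient of a cyclic module), and it is nonzero because $\sigma \in W_\obv(\rbar_p) \subset W_{M_\infty}(\rbar_p)$ by Theorem \ref{thm:obvpd}\eqref{item:allobv}, so $M_\infty(\sigma)$ is nonzero, and freeness of rank one over a (nonzero, hence local) formally smooth quotient forces $M_\infty(\sigma)\otimes_{R_\infty}\F$ to be exactly one-dimensional. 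Combining with \eqref{eqn:minmtor} applied to $V = \sigma$ (using $W\otimes_\cO \sigma^\vee\circ\iota_p = \otimes_{v\in\Sigma}\sigma^\circ(\tau_v^\vee)\circ\iota_v \otimes_\cO \sigma^\vee\circ\iota_p$, and that $(-)^\vee$ is exact on finite-dimensional $\F$-vector spaces so dimensions are preserved), we conclude that
\[
S\bigl(U,\otimes_{v\in\Sigma}\sigma^\circ(\tau_v^\vee)\circ\iota_\Sigma \otimes_{\cO}\sigma\circ\iota_p\bigr)[\fm']
\]
is one-dimensional over $\F$. Finally, since $\fm'$ is generated over $\fm$ by the Hecke operators $T^{(i)}_{\tld v_1}$ at the auxiliary place $v_1$, whose eigenvalues on the (unipotent-at-$v_1$) automorphic forms are pinned by the choice of ordering $\delta_1,\dots,\delta_n$ of the distinct eigenvalues of $\rbar(\Frob_{\tld v_1})$, the $\fm'$-torsion and the $\fm$-torsion in \eqref{eqn:multone} agree (the $v_1$-level-lowering argument of \cite[\S 2.3]{CEGGPS}, which is standard in this setup), yielding the stated one-dimensionality of \eqref{eqn:multone}.

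The main obstacle I anticipate is bookkeeping rather than a genuine new difficulty: one must check carefully that the genericity hypotheses propagate correctly (Theorem \ref{thm:patchcyclic} and Theorem \ref{thm:obvpd} require $\rbar_p$ to be $6e(n-1)$-generic, which is part of the hypothesis of Theorem \ref{thm:globalobv} that we are assuming), that the minimal patching functor produced by Lemma \ref{lemma:globalpatch} in the minimal-level case genuinely satisfies the minimality axiom (formal smoothness of $R^p$ and generic rank $\le 1$, which is exactly why the auxiliary place $v_1$ and the minimally ramified types at $\Sigma$ were chosen), and that the duality and the passage between $\fm$ and $\fm'$ introduce no multiplicity. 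One should also confirm that $\sigma \in W_\obv(\rbar_p)$ is sufficiently deep for all the cited local results to apply, which again follows from the global genericity hypothesis; once these checks are in place the argument is essentially an assembly of Theorems \ref{thm:globalobv}, \ref{thm:obvpd}, \ref{thm:patchcyclic}, and Lemma \ref{lemma:globalpatch}.
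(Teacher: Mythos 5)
Your proposal follows the same route as the paper's proof, which is simply the combination of the minimal-level patching construction of Lemma \ref{lemma:globalpatch} (equation \eqref{eqn:minmtor}) with the freeness result Theorem \ref{thm:patchcyclic}. Your steps (1)--(4) --- invoking the minimal weak patching functor, upgrading it to potentially diagonalizable via Theorem \ref{thm:globalobv}, applying Theorem \ref{thm:patchcyclic} to get $M_\infty(\sigma)$ free of rank one over a formally smooth quotient, and deducing that $M_\infty(\sigma)/\fm_\infty$ is one-dimensional --- are all correct and are exactly what the paper intends.

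The issue is your final step, where you claim that ``the $\fm'$-torsion and the $\fm$-torsion agree.'' This is backwards as an inclusion and, as a claimed equality, false. Since $\fm'$ is generated over $\fm$ by the $T^{(i)}_{\tld{v}_1}$ minus their eigenvalues for a \emph{chosen} ordering $\delta_1,\dots,\delta_n$, one has $\fm'\supset \fm$ and hence $S(\cdots)[\fm']\subset S(\cdots)[\fm]$. With Iwahori level at $v_1$ and $n$ distinct Frobenius eigenvalues whose ratios avoid $(\mathbf{N}v_1)^{\pm 1}$, the $\fm$-torsion decomposes over the $n!$ orderings (each giving a distinct maximal ideal $\fm'$), so one expects $\dim_\F S(\cdots)[\fm]$ to be $n!$ times $\dim_\F S(\cdots)[\fm']$, not equal. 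The resolution is not a level-lowering argument: it is that the $\fm$ appearing in \eqref{eqn:multone} is, with the notational abuse standard in this literature, the ideal $\fm'$ of \S \ref{sec:minlevel} (the paper's own one-line proof cites \eqref{eqn:minmtor}, which is stated with $\fm'$; the theorem statement is simply written loosely). You should drop the final paragraph entirely and instead observe that $[\fm]$ in \eqref{eqn:multone} is the maximal ideal cut out at the auxiliary place $v_1$, i.e.~$\fm'$, which is exactly what \eqref{eqn:minmtor} produces.

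Two smaller bookkeeping remarks: you apply \eqref{eqn:minmtor} with $V=\sigma$, which yields $S(U,W\otimes\sigma^\vee\circ\iota_p)$ rather than the $\sigma\circ\iota_p$ of the statement --- the passage should use $V=\sigma^\vee$ so that $V^\vee=\sigma$, at the cost of tracking whether the conventions place $M_\infty$ on $\sigma$ or $\sigma^\vee$ (the paper's own formulation is imprecise here and one should just match the conventions of Definition \ref{defn:mod:wght}). And the justification ``quotient of a cyclic module'' for the upper bound is unnecessary: freeness of rank one over a local ring with residue field $\F$ gives the fiber dimension exactly one directly, which you do ultimately say.
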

\begin{proof}
This follows from \eqref{eqn:minmtor} and Theorem \ref{thm:patchcyclic}. 
\end{proof}

\begin{rmk}
Using Theorem \ref{thm:multone}, one can recover the main results of \cite{enns-ord} (with stronger genericity assumptions) which assert a multiplicity one statement for the \emph{ordinary part} of \eqref{eqn:multone}. 
\end{rmk}

We require the following ``change of type" result. 

\begin{thm}\label{thm:changetype}
Let $F^+$ be a totally real field and $F\subset\ovl{F}^+$ a CM extension where every place of $F^+$ dividing $p$ splits in $F$. 
Suppose further that $\zeta_p \notin F$. 
For each place $v$ of $F^+$ dividing $p$, choose an embedding $\ovl{F}^+ \into \ovl{F}^+_v$. 

Let $\rbar: G_{F^+} \ra \cG(\F)$ be a Galois representation such that $\rbar(G_{F(\zeta_p)})$ is adequate and there is a RACSDC automorphic representation $\Pi$ of $\GL_n(\A_F)$ such that 
\begin{itemize}
\item $\rbar|_{G_F} \cong \rbar_{p,\iota}(\Pi)$; and
\item for each $v|p$, $r_{p,\iota}(\Pi)|_{G_{F^+_v}}$ is potentially diagonalizable. 
\end{itemize}

Let $\Delta$ be a finite set of places in $F$ away from $p$ which split in $F$ such that if $w\in \Delta$, then $\Pi_w$ is supercuspidal. 
For each place $v$ of $F^+$ dividing $p$, suppose that $\rbar_v$ admits a potentially diagonalizable lift which is potentially crystalline of type $(\lambda_v+\eta_v,\tau_v)$. 

Then there exists a RACSDC automorphic representation $\pi$ of $\GL_n(\A_F)$ such that 
\begin{itemize}
\item $\rbar|_{G_F} \cong \rbar_{p,\iota}(\pi)$; 
\item for each $v|p$, $r_{p,\iota}(\pi)|_{G_{F^+_v}}$ is potentially diagonalizable and potentially crystalline of type $(\lambda_v+\eta_v,\tau_v)$; and 
\item for each $w\in \Delta$, $\pi_w$ is supercuspidal. 
\end{itemize}
\end{thm}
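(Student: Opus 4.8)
\textbf{Proof sketch of Theorem \ref{thm:changetype}.} The strategy is the standard ``change of type via Taylor--Wiles patching'': we transfer $\Pi$ to the definite unitary group $G$, produce a patching functor $M_\infty$ as in \S \ref{sec:patchfunc}, observe that $M_\infty$ is potentially diagonalizable because $\Pi$ gives a potentially diagonalizable automorphic lift, and then conclude that $\supp M_\infty(\sigma(\tau)^\circ)$ contains the potentially diagonalizable $\ovl{E}$-point at each $v|p$ coming from the prescribed local lifts; finally we descend back to $\GL_n(\A_F)$. Concretely, first I would fix a CM extension over which everything is unramified (enlarging $F$ if necessary by a standard solvable base change, which is harmless for adequacy since $\zeta_p\notin F$ and for supercuspidality at $\Delta$ since we may take places that stay prime) and choose a definite unitary group $G$ as in \S \ref{sec:autform} which is quasisplit at all places dividing $p$ and at $\Delta$, together with the auxiliary place $v_1$ as in \S \ref{sec:minlevel} to rigidify the level. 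By the classification of automorphic representations of unitary groups (Labesse, or the references in \cite{CHT}, \cite{BLGGT}), $\Pi$ descends to an automorphic representation $\pi_G$ of $G$ with $\rbar_{\pi_G}\cong\rbar$, and the hypothesis that $\Pi_w$ is supercuspidal for $w\in\Delta$ is preserved under Jacquet--Langlands. This shows $\rbar$ is automorphic in the sense of \S \ref{sec:autform}, and moreover potentially diagonalizably automorphic, by the hypothesis on $r_{p,\iota}(\Pi)|_{G_{F^+_v}}$.

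Next I would invoke Lemma \ref{lemma:globalpatch} with $U$ the level of \S \ref{sec:minlevel} modified at the places in $\Delta$ to impose the supercuspidal types, and with $W=\otimes_{w\in\Delta}\sigma(\tau_w^\vee)^\circ\circ\iota_w$ (together with the minimally ramified types at the remaining ramified places away from $p$ and $\Delta$). Since $\rbar$ is potentially diagonalizably automorphic, the lemma provides a potentially diagonalizable (minimal, if one wants multiplicity one) weak patching functor $M_\infty$ for $\rhobar=\rbar_p$. The global deformation ring $R_\infty$ factors as $R^p\,\widehat\otimes_\cO\,R_{\rhobar}$, where $R_{\rhobar}=\widehat\bigotimes_{v\in S_p}R^{\square}_{\rhobar_v}$; the prescribed local lifts give, for each $v|p$, a potentially diagonalizable $\ovl{E}$-point on $R^{\lambda_v+\eta_v,\tau_v}_{\rhobar_v}$, hence a potentially diagonalizable $\ovl{E}$-point $x$ on $\Spec R_\infty(\tau)$ where $\tau=(\tau_v)_{v\in S_p}$ and the Hodge--Tate weights are $(\lambda_v+\eta_v)_v$. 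By Definition \ref{minimalpatching}\eqref{item:pd}, applied with $\tau_0$ the type exhibiting the potential diagonalizability of $\Pi$ at $p$, $\supp M_\infty(\sigma(\tau)^\circ)$ contains all potentially diagonalizable $\ovl{E}$-points, in particular $x$; hence $M_\infty(\sigma(\tau)^\circ)\neq 0$ and, unwinding \eqref{eqn:mtor}, the relevant space of automorphic forms on $G$ with the prescribed level and weight (now of regular weight $\lambda$ at $p$ after the usual untwisting) is nonzero at $\fm$. This produces an automorphic representation $\pi_G$ of $G$ whose associated Galois representation is $\rbar$ and whose component at each $v|p$ has the prescribed inertial type $\tau_v$ and Hodge--Tate weights $\lambda_v+\eta_v$; moreover the $\ovl{E}$-point of $R_\infty$ it cuts out lies on the same irreducible component as $x$, so $r_{p,\iota}(\pi_G)|_{G_{F^+_v}}$ is potentially diagonalizable for each $v|p$ by \cite[Lemma 1.4.3]{BLGGT} (potential diagonalizability being constant on irreducible components of the framed deformation ring). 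Transferring back via Jacquet--Langlands and descent gives the desired RACSDC $\pi$ of $\GL_n(\A_F)$, with $\pi_w$ supercuspidal for $w\in\Delta$ by construction of $W$.

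The step I expect to be the main obstacle is matching up the $\ovl{E}$-point produced by patching with one that is \emph{genuinely} on the same component as $x$, i.e.\ controlling which component of $R_\infty(\tau)$ the new automorphic point lands on. Definition \ref{minimalpatching}\eqref{item:pd} gives that $\supp M_\infty(\sigma(\tau)^\circ)$ meets every potentially diagonalizable point, but one must argue that the support of $M_\infty(\sigma(\tau)^\circ)$ is in fact a union of irreducible components of $\Spec R_\infty(\tau)$ (using that $M_\infty(\sigma(\tau)^\circ)$ is maximal Cohen--Macaulay by Definition \ref{minimalpatching}\eqref{support} and that $R^{\lambda_v+\eta_v,\tau_v}_{\rhobar_v}$ is equidimensional with no embedded components), and that the component through $x$ is among them; then any automorphic point in that component is potentially diagonalizable. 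This is where the genericity hypotheses and the structure theory of potentially crystalline deformation rings (e.g.\ connectedness of the potentially diagonalizable locus, or irreducibility of components after passing to a suitable type) enter, and care is needed when $\lambda_v+\eta_v$ is a general regular weight rather than $\eta_v$. The remaining ingredients---solvable base change, Jacquet--Langlands, the descent of RACSDC forms, preservation of adequacy and of supercuspidality at $\Delta$, and the purely local construction of potentially diagonalizable lifts of the requested types (which is an input, not something to be proved here)---are standard and I would only cite them.
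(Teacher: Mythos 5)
Your proposal takes a genuinely different route from the paper, and the difference matters. The paper's proof of Theorem~\ref{thm:changetype} is a citation-based argument: it invokes \cite[Theorem 4.3.1]{LLL}, itself resting on \cite[Theorem 3.1.3]{BLGG} (the Barnet-Lamb--Gee--Geraghty change-of-weight-and-type result in the potentially-diagonalizable framework), to produce $\pi$ directly as a RACSDC automorphic representation of $\GL_n(\A_F)$ with the prescribed types at $p$. The only additional work is to notice that the same theorem can be applied with $S \supset S_p \cup \Delta^+$, so that $r_{p,\iota}(\pi)|_{G_{F_w}}$ connects to $r_{p,\iota}(\Pi)|_{G_{F_w}}$ for $w\in\Delta$, and that connecting preserves the Frobenius-semisimplified Weil--Deligne representation on $W_{F_w}$, hence its irreducibility, hence the supercuspidality of $\pi_w$. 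No unitary group, no explicit patching, and no restriction to weight $\eta$ is needed.

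Your proof instead tries to reconstruct the underlying patching machinery using \S\ref{sec:patchfunc} and Lemma~\ref{lemma:globalpatch}. There are two serious problems with this. First, the patching framework in \S\ref{sec:patchfunc} is set up exclusively for Hodge--Tate weight $\eta$: $M_\infty$ is a functor on $\Rep_\cO(\GL_n(\cO_p))$, the axioms concern $M_\infty(\sigma(\tau)^\circ)$ for tame inertial $L$-parameters $\tau$, and $R_\infty(\tau)$ is built from $R^{\eta_v,\tau_v}_{\rhobar_v}$ — not $R^{\lambda_v+\eta_v,\tau_v}_{\rhobar_v}$. Theorem~\ref{thm:changetype} crucially allows arbitrary regular $\lambda$, so the existing axioms and Lemma~\ref{lemma:globalpatch} simply do not cover the case you need; extending them (e.g.~by evaluating at $\sigma(\tau)^\circ\otimes W(\lambda)$ and verifying the Cohen--Macaulay and potentially-diagonalizable axioms over the bigger rings $R_\infty(\lambda+\eta,\tau)$) is a genuine additional step that you acknowledge but do not supply. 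Second, your argument is partly circular: the proof that the patching functor of Lemma~\ref{lemma:globalpatch} satisfies Definition~\ref{minimalpatching}(\ref{item:pd}) already goes through \cite[Theorem 4.3.1]{LLL} (plus \cite[Lemma 3.9]{paskunas-2adic}), which is precisely the black box the paper cites for Theorem~\ref{thm:changetype}. So one cannot treat Lemma~\ref{lemma:globalpatch} as independent input to a ``different'' proof of this theorem; you are, at best, re-deriving a consequence of the same theorem you need. The paper's direct citation is both shorter and cleanly avoids the weight-$\lambda$ issue, while your route would require re-founding parts of \S\ref{sec:patchfunc} before it could be made to work.
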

\begin{proof}
This follows from \cite[Theorem 4.3.1]{LLL}, which is based on \cite[Theorem 3.1.3]{BLGG}, except for the assertion of supercuspidality. 
However, \cite[Theorem 3.1.3]{BLGG} with $S$ chosen to contain $S_p$ and $\Delta^+ \defeq \{w|_{F^+} \mid w \in \Delta\}$ guarantees that one can choose $\pi$ so that $r_{p,\iota}(\Pi)|_{G_{F_w}} \sim r_{p,\iota}(\pi)|_{G_{F_w}}$ for each $w\in \Delta$. 
In particular, the irreducibility of $\mathrm{WD}(r_{p,\iota}(\Pi)|_{G_{F_w}})|_{W_{F_w}}$ implies the irreducibility of $\mathrm{WD}(r_{p,\iota}(\pi)|_{G_{F_w}})|_{W_{F_w}}$, which implies the desired assertion. 
\end{proof}

\begin{cor}\label{cor:changetype}
Let $F^+$ be a totally real field and $F\subset\ovl{F}^+$ a CM extension where every finite place of $F^+$ is unramified in $F$ and every place dividing $p$ splits. 
Suppose further that $p$ is unramified in $F^+$ and that $\zeta_p \notin F$. 
For each place $v$ of $F^+$ dividing $p$, choose an embedding $\ovl{F}^+ \into \ovl{F}^+_v$. 
Fix a set $\Delta$ of split places in $F$ away from $p$. 

Let $\rbar: G_{F^+} \ra \cG(\F)$ be a Galois representation such that $\rbar(G_{F(\zeta_p)})$ is adequate, $\rbar|_{G_{F^+_v}}$ is $(6n-2)$-generic for all $v|p$, and there is a RACSDC automorphic representation $\Pi$ of $\GL_n(\A_F)$ such that 
\begin{itemize}
\item $\rbar|_{G_F} \cong \rbar_{p,\iota}(\Pi)$; 
\item for each $v|p$, $r_{p,\iota}(\Pi)|_{G_{F^+_v}}$ is potentially diagonalizable; and 
\item for each $w\in \Delta$, $\Pi_w$ is supercuspidal. 
\end{itemize}
For each place $v$ of $F^+$ dividing $p$, let $\tau_v$ be a tame inertial type. 
Then the following are equivalent: 
\begin{enumerate}
\item \label{item:locallift} $R_{\rbar_v}^{\tau_v}$ is nonzero for all places $v$ of $F^+$ dividing $p$; 
\item \label{item:globallift} there is a RACSDC automorphic representation $\pi$ of $\GL_n(\A_F)$ such that 
\begin{itemize}
\item $\rbar|_{G_F} \cong \rbar_{p,\iota}(\pi)$; 
\item $r_{p,\iota}(\pi)|_{G_{F^+_v}}$ is potentially crystalline of type $(\eta_v,\tau_v)$ for all $v|p$; and 
\item for each $w\in \Delta$, $\pi_w$ is supercuspidal. 
\end{itemize}
\end{enumerate}
\end{cor}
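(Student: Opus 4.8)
\textbf{Proof plan for Corollary \ref{cor:changetype}.}

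The plan is to deduce this from Theorem \ref{thm:changetype} after massaging the hypotheses, using our results on extremal weights (Theorem \ref{thm:globalobv}) together with the local geometry of tamely potentially crystalline deformation rings and Proposition \ref{prop:obvintersect} to bridge between the vanishing/nonvanishing of $R_{\rbar_v}^{\tau_v}$ and the existence of a potentially diagonalizable local lift.

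First, observe that \eqref{item:globallift} implies \eqref{item:locallift} is essentially formal: if such a $\pi$ exists, then for each $v|p$ the representation $r_{p,\iota}(\pi)|_{G_{F^+_v}}$ is a point of $\Spec R_{\rbar_v}^{\tau_v}$ in characteristic zero (using that $\eta_v$ is the relevant Hodge--Tate cocharacter and local-global compatibility at $p$, together with the fact that $p$ is unramified in $F^+$ so $F^+_v/\Q_p$ is unramified), hence this ring is nonzero. The substance is the reverse implication. So assume \eqref{item:locallift}: $R_{\rbar_v}^{\tau_v} \neq 0$ for every $v|p$. By Proposition \ref{prop:obvintersect} (applicable since $\rbar_v$ is $(6n-2)$-generic, which comfortably exceeds the $5n-1$-genericity and $4n$-genericity thresholds there once we also enlarge $E$ so the relevant components are geometrically irreducible), the nonvanishing of $R_{\rbar_v}^{\tau_v}$ is equivalent to $W_{\obv}(\rbar_v)\cap \JH(\ovl{\sigma}(\tau_v))\neq\emptyset$; pick $\sigma_v$ in this intersection. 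The key point is then to produce, for each $v|p$, a \emph{potentially diagonalizable} lift of $\rbar_v$ of type $(\eta_v,\tau_v)$. For this I would use that $\sigma_v \in W_{\obv}(\rbar_v)$ so there is a specialization pair $(\sigma_v,\rbar_v^{\speci})\in SP(\rbar_v)$; by Lemma \ref{lemma:import} we may choose the exhibiting tame type $\tau_v'$ (with $\tld{w}(\rbar_v,\tau_v') = t_{w^{-1}(e\eta_0)}$) and then invoke Theorem \ref{thm:extension} — or rather its input, via the maximally ordinary weight construction in \S \ref{sec:mord} together with Proposition \ref{prop:obvord} — to exhibit a potentially diagonalizable lift of type $(\tau_v'',\eta_v)$ for an appropriate type $\tau_v''$ with $\sigma_v \in W^?(\rbar_v^\semis,\tau_v'')$. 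Finally, since $R_{\rbar_v}^{\tau_v}$ is itself irreducible once $E$ is large enough (use the formal smoothness / normal domain statements of the relevant potentially crystalline deformation rings, or more simply that $R_{\rbar_v}^{\eta_v,\tau_v}$ has geometrically irreducible components so that a potentially diagonalizable point on it can be found by connecting components as in \cite[Lemma 1.4.3]{BLGGT} and the genericity), we conclude $\rbar_v$ admits a potentially diagonalizable lift of type $(\eta_v,\tau_v)$ precisely. Strictly, to carry this out cleanly I would instead apply Theorem \ref{thm:globalobv} globally: the existence of $\Pi$ with $r_{p,\iota}(\Pi)|_{G_{F^+_v}}$ potentially diagonalizable shows $\rbar$ is potentially diagonalizably automorphic, hence by Theorem \ref{thm:globalobv} we have $W_{\obv}(\rbar_p)\subset W(\rbar_p)$; since $\sigma_p := \otimes_v \sigma_v \in W_{\obv}(\rbar_p)$ (extremal weights of an $L$-homomorphism are the product of the local ones), $\sigma_p \in W(\rbar_p)$, which via the patching functor of Lemma \ref{lemma:globalpatch} and local-global compatibility yields an automorphic $\pi$ whose local components at $p$ are potentially crystalline of the right Hodge--Tate weight and whose type lies in $\JH(\ovl\sigma(\tau_v))$.

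The remaining issue — matching the inertial type exactly to $\tau_v$ rather than merely having $\sigma_v\in \JH(\ovl\sigma(\tau_v))$ — is handled by Theorem \ref{thm:changetype}: we now have a RACSDC $\pi'$ (corresponding to the $\sigma_p$-modularity) with $r_{p,\iota}(\pi')|_{G_{F^+_v}}$ potentially diagonalizable for all $v|p$ and $\pi'_w$ still supercuspidal at $w\in\Delta$ (the supercuspidality is preserved by the minimal-level construction at $\Delta$, as in the proof of Theorem \ref{thm:changetype}), and we feed into Theorem \ref{thm:changetype} the potentially diagonalizable local lifts of type $(\eta_v,\tau_v)$ produced in the previous paragraph (with $\lambda_v = 0$). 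The output is exactly the desired $\pi$ of type $(\eta_v,\tau_v)$ at all $v|p$ and supercuspidal at $\Delta$. One subtlety to check is that the adequacy hypothesis on $\rbar(G_{F(\zeta_p)})$ and the $\zeta_p\notin F$ hypothesis propagate correctly through both Theorem \ref{thm:globalobv} and Theorem \ref{thm:changetype}, but these are literally among the standing hypotheses.

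The main obstacle is the production of the \emph{potentially diagonalizable} local lift of type $(\eta_v, \tau_v)$ from the mere nonvanishing of $R_{\rbar_v}^{\tau_v}$: a priori $R_{\rbar_v}^{\eta_v,\tau_v}$ being nonzero only gives \emph{some} crystalline-ish lift, not a potentially diagonalizable one, so the real content is to argue that the potentially diagonalizable locus is nonempty on each irreducible component of $\Spec R_{\rbar_v}^{\eta_v,\tau_v}[1/p]$ — this is where the extremal-weight machinery (Theorems \ref{thm:obv}, \ref{thm:extension}, and the structural results of Theorem \ref{thm:FSM} giving that these rings are normal domains) does the work, reducing us to the case where the shape is a translation $t_{w^{-1}(e\eta_0)}$ for which formal smoothness over $\cO$ is available and the potentially diagonalizable point is transparent.
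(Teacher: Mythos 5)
Your proposal identifies several of the right ingredients (Proposition \ref{prop:obvintersect}, Theorem \ref{thm:globalobv}, patching, Theorem \ref{thm:changetype}), and your ``strictly'' paragraph is close to the paper's actual route. However, both of your proposed paths ultimately lean on a step that is a genuine gap: producing a \emph{potentially diagonalizable} local lift of $\rbar_v$ of type exactly $(\eta_v,\tau_v)$, for the \emph{given} $\tau_v$, from the mere nonvanishing of $R_{\rbar_v}^{\eta_v,\tau_v}$. You acknowledge this as ``the main obstacle'' and propose to resolve it by showing the potentially diagonalizable locus meets every irreducible component of $\Spec R_{\rbar_v}^{\eta_v,\tau_v}[1/p]$, citing Theorem \ref{thm:FSM} and ``reducing to'' the translation-shape case. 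This does not work: Theorem \ref{thm:FSM} only controls deformation rings whose shape $\tld{w}(\rbar_v,\tau_v)$ is of the special form $\tld{w}^{-1}t_{e\eta_0}\tld{w}_\alpha\tld{w}$, but in Corollary \ref{cor:changetype} the type $\tau_v$ is arbitrary (subject to genericity), so the shape is a general element of the admissible set and there is no such reduction. Nothing in the paper establishes potentially diagonalizable points on every component of $\Spec R_{\rbar_v}^{\eta_v,\tau_v}$ for a general tame type.

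The paper's proof circumvents this entirely: it never produces a potentially diagonalizable lift of type $\tau_v$. Instead it recalls (from the proof of Theorem \ref{thm:globalobv}, via the maximally ordinary weight construction of Theorem \ref{thm:extension}) a potentially diagonalizable lift of type $(\eta_v,\tau_v')$ for \emph{some other} tame type $\tau_v'$, feeds \emph{that} into Theorem \ref{thm:changetype} to obtain a RACSDC $\pi$ of type $\tau_v'$, transfers to a definite unitary group $G$ via Labesse's descent (\cite[Theorem 5.4]{labesse}, which also handles the supercuspidal types at $\Delta^+$), builds the potentially diagonalizable patching functor $M_\infty$ from Lemma \ref{lemma:globalpatch}, applies Theorem \ref{thm:obvpd} to deduce $W_\obv(\rbar_p)\subset W_{M_\infty}(\rbar_p)$, and then uses Proposition \ref{prop:obvintersect} together with exactness of $M_\infty$ to conclude $M_\infty(\sigma^\circ(\tau))\neq 0$, hence nonvanishing of $S(U,\sigma^\circ(\tau^\vee)\circ\iota_p\otimes W)_\fm$; Labesse's reverse transfer (\cite[Corollaire 5.3]{labesse}) then produces the desired $\pi$. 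Your ``strictly'' paragraph correctly reaches the $\sigma_p$-modularity step via patching, but then invents a ``remaining issue'' (matching the inertial type exactly) and proposes to fill it with Theorem \ref{thm:changetype} fed with the problematic local lift, reintroducing the gap. In fact, once $M_\infty(\sigma^\circ(\tau))\neq 0$, the inertial type is automatically $\tau_v$ by local-global compatibility and the defining property of $\sigma(\tau_v)$ — no second application of Theorem \ref{thm:changetype} is needed, and the ``remaining issue'' does not exist. You also do not address the Labesse transfer between $\GL_n(\A_F)$ and the unitary group, which is essential both for being able to apply patching at all and for controlling the behaviour at $\Delta$.
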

\begin{proof}
\eqref{item:globallift} immediately implies \eqref{item:locallift}. 
We now assume \eqref{item:locallift} and show the converse. 
\eqref{item:locallift} in particular implies that $\tau$ is $(5n-1)$-generic, so that Proposition \ref{prop:obvintersect} applies. 
Indeed, $\tau$ is $(5n-4)$-generic by \cite[Proposition 7]{enns}. 
Then $\tau$ is in fact $(5n-1)$-generic by \cite[Theorem 3.2.1]{LLL}. 
Let $\Delta^+$ be the set $\{w|_{F^+} \mid w\in \Delta\}$. 
Recall from the proof of Theorem \ref{thm:globalobv} that for each $v\in S_p$, $\rbar_v$ admits a potentially diagonalizable lift of type $(\eta_v,\tau'_v)$ for some tame inertial type $\tau'_v$. 
Let $\pi$ be the RACSDC automorphic representation of $\GL_n(\A_F)$ guaranteed by Theorem \ref{thm:changetype}. 
\cite[Theorem 5.4]{labesse} 
 implies that for some supercuspidal inertial types $(\tau'_v)_{v\in \Delta^+}$, 
\[
S(U,\underset{v\in S_p}{\otimes}\sigma^\circ(\tau^{\prime\vee}_v) \circ \iota_p \otimes \underset{v\in \Delta^+}{\otimes} \sigma^\circ(\tau^{\prime\vee}_v) \circ \iota_{\Delta^+})_{\fm} \neq 0
\] 
where $\sigma^\circ(\tau^{\prime\vee}_v) \subset \sigma(\tau^{\prime\vee}_v)$ is a $\GL_n(\cO_{F^+_v})$-stable $\cO$-lattice for each $v\in S_p \cup \Delta^+$. 
Let $M_\infty$ be the potentially diagonalizable patching functor guaranteed by Lemma \ref{lemma:globalpatch} with $W \defeq \otimes_{v\in \Delta^+} \sigma^\circ(\tau_v^{\prime\vee}) \circ \iota_{\Delta^+}$. 

Theorem \ref{thm:obvpd} implies that $W_{\obv}(\rbar_p) \subset W_{M_\infty}(\rbar_p)$. 
Properties of $M_\infty$ from Lemma \ref{lemma:globalpatch} imply that 
\[
S(U,\underset{v\in S_p}{\otimes}(\sigma^\vee \circ \iota_p) \otimes \underset{v\in \Delta^+}{\otimes} (\sigma^\circ(\tau^{\prime\vee}_v) \circ \iota_{\Delta^+}))_{\fm} \neq 0
\] 
for any $\sigma \in W_{\obv}(\rbar_p)$. 
Exactness of $S(U,-)_{\fm}$ and Proposition \ref{prop:obvintersect} imply that 
\[
S(U,\underset{v\in S_p}{\otimes}(\sigma^\circ(\tau^{\vee}_v) \circ \iota_p) \otimes \underset{v\in \Delta^+}{\otimes} (\sigma^\circ(\tau^{\prime\vee}_v) \circ \iota_{\Delta^+}))_{\fm} \neq 0. 
\] 
We conclude with an application of \cite[Corollaire 5.3]{labesse}. 
\end{proof}

\subsubsection{G anisotropic mod center at $p$}

With $G$ as in \S \ref{sec:autform}, we furthermore suppose in this section that for all $v\in S_p$, $G_{/F^+_v}$ is anisotropic modulo center, i.e., we have an isomorphism $\iota_v: G_{/F^+_v}\risom D_{\tld{v}/F_{\tld{v}}}^\times$. 
We first recall the set of irreducible $\cO_{D_{\tld{v}}}^\times$-representations over $\F$ (or \emph{Serre weights}). 

Let $\fm_{D_{\tld{v}}}\subset \cO_{D_{\tld{v}}}$ denote the maximal ideal. 
Then $k_{D_{\tld{v}}} \defeq \cO_{D_{\tld{v}}}/\fm_{D_{\tld{v}}}$ is a degree $n$ field extension of the residue field $k_{\tld{v}}$ of $F_{\tld{v}}$. 
We say that a character of $\cO_{D_{\tld{v}}}^\times$ is \emph{tame} if it factors through $k_{D_{\tld{v}}}^\times$. 

Since $1+\fm_{D_{\tld{v}}}$ is a pro-$p$ group (under multiplication), it acts trivially on any irreducible $\cO_{D_{\tld{v}}}^\times$-representation over $\F$. 
Thus any irreducible $\cO_{D_{\tld{v}}}^\times$-representation over $\F$ is a tame $\F$-character. 
Moreover, the $\cO$-Teichm\"uller lift gives a bijection between irreducible $\cO_{D_{\tld{v}}}^\times$-representations over $\F$ and tame $\cO$-valued characters of $\cO_{D_{\tld{v}}}^\times$. 

Given a tame character $\chi_v: \cO_{D_{\tld{v}}}^\times \ra k_{D_{\tld{v}}}^\times \ra \cO^\times$, we define a tame inertial type $\tau(\chi_v)$ as follows. 
Let $K_{\tld{v}}$ be $W(k_{D_{\tld{v}}})[p^{-1}] \otimes_{W_{k_{\tld{v}}}[p^{-1}]} F_{\tld{v}}$ and choose an $F_{\tld{v}}$-linear embedding of $K_{\tld{v}} \into \ovl{F}^+_v$. 
We also denote by $\chi_v$ the character $\cO_{K_{\tld{v}}}^\times \ra k_{D_{\tld{v}}}^\times \overset{\chi_v}{\ra} \cO^\times$. 
Then we let $\tau(\chi_v)$ be $\Ind_{W_{K_{\tld{v}}}}^{W_{F^+_v}} (\tld{\chi}_v \circ \mathrm{Art}_{K_{\tld{v}}}^{-1})|_{I_{F^+_v}}$ for an extension $\tld{\chi}_v: K_{\tld{v}}^\times \ra \cO^\times$ of $\chi_v|_{\cO_{K_{\tld{v}}}^\times}$. 
The tame inertial type $\tau(\chi_v)$ does not depend on the choice of embedding $K_{\tld{v}} \into \ovl{F}^+_v$ or extension $\tld{\chi}_v$. 

\begin{lemma}\label{lemma:LGC}
Let $\lambda: \bT^Q(U,W) \ra \ovl{\Q}_p$ be a homomorphism and $r_\lambda: G_{F^+} \ra \cG(\ovl{\Q}_p)$ be the attached semisimple Galois representation characterized by \eqref{eqn:rlambda}. 

Let $\chi = \otimes_{v\in S_p} \chi_v: \prod_{v\in S_p} \cO_{D_{\tld{v}}}^\times \ra \cO^\times$ be a tame character. 
If $\tau(\chi_v)$ is a regular tame inertial type for all $v\in S_p$, then the following are equivalent: 
\begin{enumerate}
\item \label{item:potcrys} for each $v\in S_p$, $r_\lambda|_{G_{F^+_v}}$ is potentially crystalline of type $(\eta_v,\tau(\chi_v))$; and
\item \label{item:chi} $S(U,\chi^\vee \circ \iota_p)_{\ker(\lambda)} \neq 0$. 
\end{enumerate}
If $\tau(\chi_v)$ is not regular and $S(U,\chi^\vee \circ \iota_p)_{\ker(\lambda)} \neq 0$, then $r_\lambda|_{G_{F^+_v}}$ is potentially semistable of type $(\eta_v,\tau_v)$ with $\tau_v$ not regular. 
\end{lemma}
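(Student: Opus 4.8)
\textbf{Proof plan for Lemma \ref{lemma:LGC}.} The statement is a local-global compatibility result at the places dividing $p$, relating automorphy at "cuspidal weight $\chi$" to the existence of a potentially crystalline lift of the prescribed type. The strategy is standard and goes through the Jacquet--Langlands correspondence combined with classical local-global compatibility for $\GL_n$. First I would recall that, since $G_{/F^+_v}\cong D_{\tld v/F_{\tld v}}^\times$ is anisotropic mod center, a nonzero class in $S(U,\chi^\vee\circ\iota_p)_{\ker(\lambda)}$ corresponds, via strong approximation and the theory of algebraic automorphic forms, to an automorphic representation $\pi'$ of $G(\A_{F^+})$ whose component at each $v\in S_p$ is a smooth irreducible representation of $D_{\tld v}^\times$ on which $\cO_{D_{\tld v}}^\times$ acts by $\chi_v$ — i.e. a twist of an unramified (niveau-$n$) character of $D_{\tld v}^\times$. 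Transferring $\pi'$ to $\GL_n(\A_F)$ via Jacquet--Langlands produces a RACSDC (or at least cohomological, conjugate self-dual) automorphic representation $\Pi$ of $\GL_n(\A_F)$ with $r_{p,\iota}(\Pi)\cong r_\lambda|_{G_F}$, and whose local component $\Pi_{\tld v}$ is the supercuspidal representation $\JL^{-1}$ of the character of $D_{\tld v}^\times$.

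The key local input is then: the Weil--Deligne representation $\mathrm{rec}(\Pi_{\tld v})$ is (by the explicit description of the local Jacquet--Langlands correspondence for characters of $D_{\tld v}^\times$, together with the local Langlands correspondence for $\GL_n$) precisely the induction $\Ind_{W_{K_{\tld v}}}^{W_{F^+_v}}(\tld\chi_v\circ\mathrm{Art}_{K_{\tld v}}^{-1})$ up to the relevant normalization, which restricted to inertia is $\tau(\chi_v)$. By classical local-global compatibility at $p$ (Caraiani, and the potentially crystalline refinement), $\mathrm{WD}(r_{p,\iota}(\Pi)|_{G_{F^+_v}})^{\mathrm{F}\text{-ss}}\cong\mathrm{rec}(\Pi_{\tld v}\otimes|\det|^{(1-n)/2})$, and $r_\lambda|_{G_{F^+_v}}$ is potentially semistable; the Hodge--Tate weights are $\eta_v$ because the coefficient system is trivial (weight-$0$ forms, $W=\cO$ up to the auxiliary factor at $v_1$, which does not affect places above $p$). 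Since $\Pi_{\tld v}$ is a twist of a character of $D_{\tld v}^\times$, the monodromy operator $N$ on $\mathrm{WD}$ vanishes, so $r_\lambda|_{G_{F^+_v}}$ is potentially crystalline, giving \eqref{item:chi}$\Rightarrow$\eqref{item:potcrys}. For the converse \eqref{item:potcrys}$\Rightarrow$\eqref{item:chi}: if $r_\lambda|_{G_{F^+_v}}$ is potentially crystalline of type $(\eta_v,\tau(\chi_v))$ with $\tau(\chi_v)$ regular, then $\mathrm{WD}(r_\lambda|_{G_{F^+_v}})|_{I_{F^+_v}}\cong\tau(\chi_v)$, which since $\tau(\chi_v)$ is regular forces $\mathrm{WD}(r_\lambda|_{G_{F^+_v}})$ to be irreducible (a regular tame inertial type of dimension $n$ is induced from a character of the unramified degree-$n$ extension in general position), hence $\Pi_{\tld v}$ is supercuspidal with $\JL(\Pi_{\tld v})$ a character of $D_{\tld v}^\times$ restricting to $\chi_v$ on $\cO_{D_{\tld v}}^\times$; therefore $\pi'=\JL^{-1}(\Pi)$ contributes to $S(U,\chi^\vee\circ\iota_p)_{\ker(\lambda)}$, which is thus nonzero.

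For the final assertion (the non-regular case): if $\tau(\chi_v)$ is not regular and $S(U,\chi^\vee\circ\iota_p)_{\ker(\lambda)}\neq0$, the same transfer argument shows $r_\lambda|_{G_{F^+_v}}$ is potentially semistable with $\mathrm{WD}(r_\lambda|_{G_{F^+_v}})|_{I_{F^+_v}}\cong\tau(\chi_v)=\tau_v$; this inertial type is by hypothesis not regular. (One no longer controls the monodromy operator since $\Pi_{\tld v}$ need not be generic/supercuspidal, so only potential semistability is asserted.) The Hodge--Tate weights are again $\eta_v$ by the triviality of the coefficient system away from $v_1$.

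\textbf{Main obstacle.} The only genuinely delicate point is pinning down the explicit shape of the local Weil--Deligne representation attached to $\pi'_{\tld v}$ after Jacquet--Langlands transfer and confirming that its restriction to inertia is exactly $\tau(\chi_v)$ with the normalization used in \S\ref{subsub:ILL} — i.e. matching the Jacquet--Langlands recipe for characters of $D_{\tld v}^\times$ (which lands in the supercuspidals induced from $K_{\tld v}$) against the definition of $\tau(\chi_v)$ as $\Ind_{W_{K_{\tld v}}}^{W_{F^+_v}}(\tld\chi_v\circ\mathrm{Art}_{K_{\tld v}}^{-1})|_{I_{F^+_v}}$, keeping track of the $|\det|^{(1-n)/2}$ twist and the cyclotomic normalization in \eqref{eqn:heckefrob}. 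This is essentially the content of \cite{Gee-Savitt} in the case $n=2$ and is where I would be most careful; everything else is an assembly of Caraiani's local-global compatibility, the (proven) potentially crystalline refinement, and the standard dictionary between algebraic automorphic forms on $G$ and RACSDC representations of $\GL_n(\A_F)$.
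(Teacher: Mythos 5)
Your proposal follows essentially the same route as the paper: transfer to $\GL_n$ via Jacquet--Langlands, use local-global compatibility at $p$ to identify $\mathrm{WD}(r_\lambda|_{G_{F^+_v}})$ with the recipe from $\Pi_{\tld{v}}$, and invoke the explicit local Jacquet--Langlands for characters of $D_{\tld{v}}^\times$ to match $\tau(\chi_v)$. The paper organizes this slightly differently, showing both conditions equivalent to the single intermediate statement $\mathrm{rec}_{F_{\tld{v}}}(\mathrm{JL}(\pi_v))|_{I_{F_{\tld{v}}}}\cong\tau(\chi_v)$ rather than arguing the two implications separately, which avoids some of the care needed in your $\eqref{item:potcrys}\Rightarrow\eqref{item:chi}$ direction; the specific references used are the main result of \cite{BH11} for the local side and \cite[Lemma~6.2.2, Propositions~6.5.1, 6.2.3]{HKV} together with \cite[Theorem~2.1.1]{BLGGT} for the global side.

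One inaccuracy worth flagging: the parenthetical ``a regular tame inertial type of dimension $n$ is induced from a character of the unramified degree-$n$ extension in general position'' is false as a general statement (a direct sum of $n$ distinct niveau-$1$ characters is regular but decomposable). What actually forces irreducibility of $\mathrm{WD}(r_\lambda|_{G_{F^+_v}})$ is the specific construction of $\tau(\chi_v)$ as $\Ind_{W_{K_{\tld v}}}^{W_{F^+_v}}(\tld\chi_v\circ\mathrm{Art}_{K_{\tld v}}^{-1})|_I$: regularity for this particular type means the $n$ Frobenius-conjugates of $\chi_v|_I$ are distinct, so any $W_{F^+_v}$-subrepresentation restricted to inertia must be a union of Frobenius orbits, hence all or nothing. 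With that correction the argument is sound.
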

\begin{proof}
Let $\pi$ be the automorphic representation of $G(\A_{F^+})$ corresponding to $\lambda$. 
First suppose that $\tau(\chi_v)$ is regular for all $v\in S_p$. 
We will show that \eqref{item:potcrys} and \eqref{item:chi} are equivalent to 
\begin{equation} \label{eqn:localtype}
\mathrm{rec}_{F_{\tld{v}}}(\mathrm{JL}(\pi_v))|_{I_{F_{\tld{v}}}}\cong\tau(\chi_v)
\end{equation} 
for all $v\in S_p$.

Choosing a subring of $F_{\tld{v},n} \subset D_{\tld{v}}$ which is a degree $n$ unramified field extension of $F_{\tld{v}}$ for each $v\in S_p$, \eqref{item:chi} is equivalent to the fact that for each $v\in S_p$, $\pi_v$ is isomorphic to $\Ind_{F_{\tld{v},n}^\times(1+\fm_{D_{\tld{v}}})}^{D_{\tld{v}}^\times} \tld{\chi}_v$ for some character $\tld{\chi}_v: F_{\tld{v},n}^\times \ra E^\times$ extending $\chi_v$ (see \cite[\S 1.5]{BH11}). 
This is in turn equivalent to \eqref{eqn:localtype} for all $v\in S_p$ by the main result of \cite{BH11}. 

Let $\Pi$ be the automorphic representation of $\GL_n(\A_F)$ in \cite[Proposition 6.5.1]{HKV}. 
Fixing $v\in S_p$, $|\mathrm{LJ}_{G(F_{\tld{v}})}|\Pi_{\tld{v}} \cong \pi_v$ so that $\Pi_{\tld{v}} \cong \mathrm{JL}(\pi_v)$ (see \cite[\S 3]{Bad08}). 
Then 
\begin{equation}\label{eqn:LGC}
\mathrm{WD}(r_\lambda|_{G_{F^+_v}})^{F\textrm{-ss}}|_{W_{F^+_v}} \cong \mathrm{WD}(r_{p,\iota}(\Pi)|_{G_{F_{\tld{v}}}})^{F\textrm{-ss}}|_{W_{F_{\tld{v}}}} \cong \mathrm{rec}_{F_{\tld{v}}}(\mathrm{JL}(\pi_v)\otimes|\det|^{\frac{1-n}{2}})|_{W_{F_{\tld{v}}}}
\end{equation} 
by \cite[Lemma 6.2.2]{HKV}. 
Since $r_\lambda|_{G_{F^+_v}}$ is potentially semistable of weight $\eta_v$ by \cite[Theorem 2.1.1]{BLGGT}, we conclude that \eqref{item:potcrys} is also of equivalent to \eqref{eqn:localtype} for all $v\in S_p$. 

Now suppose that $\tau(\chi_v)$ is not regular for some $v\in S_p$ and that $S(U,\chi^\vee \circ \iota_p)_{\ker(\lambda)} \neq 0$. 
As before $r_\lambda|_{G_{F^+_v}}$ is potentially semistable of type $(\eta_v,\tau_v)$ for some inertial type $\tau_v$. 
We will show that $\tau_v$ is tame and is not regular. 
Let $\Pi$ be as above. 
Then as before, $\Pi_{\tld{v}} \cong \mathrm{JL}(\pi_v)$ so that \eqref{eqn:LGC} holds. 
Since $\mathrm{rec}_{F_{\tld{v}}}(\mathrm{JL}(\pi_v)\otimes|\det|^{\frac{1-n}{2}})|_{I_{F_{\tld{v}}}}$ is tame and is not regular by \cite[Proposition 6.2.3]{HKV}, we conclude that $\tau_v$ is tame and is not regular. 
\end{proof}

\begin{thm}\label{thm:divalg}
Let $F^+$ be a totally real field and $F\subset\ovl{F}^+$ a CM extension where every finite place of $F^+$ is unramified in $F$ and every place dividing $p$ splits. 
Suppose further that $p$ is unramified in $F^+$ and that $\zeta_p \notin F$. 

Let $\rbar: G_{F^+} \ra \cG(\F)$ be an automorphic Galois representation such that $\rbar(G_{F(\zeta_p)})$ is adequate, $\rbar|_{G_{F^+_v}}$ is $(6n-2)$-generic for all $v|p$, and there is a RACSDC automorphic representation $\Pi$ of $\GL_n(\A_F)$ such that 
\begin{itemize}
\item $\rbar|_{G_F} \cong \rbar_{p,\iota}(\Pi)$; 
\item for each $v|p$, $r_{p,\iota}(\Pi)|_{G_{F^+_v}}$ is potentially diagonalizable; and 
\item for each finite place of $w$ of $F$ for which $G_{/F_w}$ is not quasisplit, $\Pi_w$ is supercuspidal. 
\end{itemize}
Let $\chi:D_{\tld{v}}^\times\rightarrow E^\times$ be a character.
Then $\ovl{\chi} \in W(\rbar)$ if and only if $\rbar|_{G_{F^+_v}}$ has a potentially crystalline lift of type $\tau(\chi_v)$ for every $v\in S_p$. 
\end{thm}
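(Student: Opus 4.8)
The strategy is to transfer the problem to the quasisplit setting via Jacquet--Langlands and then apply the modularity of extremal weights (Theorem~\ref{thm:globalobv}) together with the change-of-type results. The ``only if'' direction is the easy one: if $\ovl{\chi} \in W(\rbar)$, then $S(U, \chi^\vee\circ\iota_p)_{\fm} \neq 0$ for a suitable level $U$, hence there is a homomorphism $\lambda$ from the corresponding Hecke algebra to $\ovl{\Q}_p$ with $S(U,\chi^\vee\circ\iota_p)_{\ker(\lambda)}\neq 0$, and Lemma~\ref{lemma:LGC}\eqref{item:chi}$\Rightarrow$\eqref{item:potcrys} produces a potentially crystalline lift of $\rbar|_{G_{F^+_v}}$ of type $(\eta_v,\tau(\chi_v))$ for each $v\in S_p$ (note that by the genericity hypothesis $\tau(\chi_v)$ is regular, so the regular case of the lemma applies; one should check that genericity of $\rbar|_{G_{F^+_v}}$ forces $\chi_v$, and hence $\tau(\chi_v)$, to be sufficiently generic---this uses that the central character / determinant is pinned down by $\rbar$ and weight-elimination-style combinatorics, or alternatively one simply notes that by local-global compatibility $\tau(\chi_v)$ admits a potentially crystalline lift with the right reduction so a genericity bound propagates as in \cite[Proposition 7]{enns} and \cite[Theorem 3.2.1]{LLL}).

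\textbf{The ``if'' direction.} Assume $\rbar|_{G_{F^+_v}}$ admits a potentially crystalline lift of type $(\eta_v,\tau(\chi_v))$ for every $v\in S_p$; equivalently $R_{\rbar_v}^{\tau(\chi_v)}\neq 0$ for all $v|p$. First I would choose an auxiliary definite unitary group $G'$ over $F^+$ which agrees with $G$ away from $p$ but is \emph{quasisplit} at all places dividing $p$ (this uses that $n$ and $[F^+:\Q]$ have the appropriate parity, which is arranged as in \cite[\S 7.1]{EGH}; since $F/F^+$ is unramified everywhere and split above $p$ this is no obstruction). By the Jacquet--Langlands transfer (the input automorphic representation $\Pi$ and the hypotheses on its local components at the non-quasisplit places of $G$ guarantee a matching RACSDC $\Pi$ that is relevant for $G'$), $\rbar$ is automorphic for $G'$ as well, and moreover it is potentially diagonalizably automorphic for $G'$ by the hypothesis that $r_{p,\iota}(\Pi)|_{G_{F^+_v}}$ is potentially diagonalizable. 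Thus $G'$ and $\rbar$ satisfy the hypotheses of Theorem~\ref{thm:globalobv}, so $W_{\obv}(\rbar_p)\subset W(\rbar_p)$ (the weight set computed on the quasisplit side).

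\textbf{Relating the division-algebra weight to extremal weights.} Now the key point: since $v$ is unramified in $F^+$ and $G_{/F^+_v}$ is anisotropic, a necessary condition for $\ovl\chi_v$ to be a modular weight (by local-global compatibility, cf.\ the discussion in \S\ref{sec:global:prelim} of the introduction) is exactly that $\rbar_v$ admit a potentially crystalline lift of type $(\eta_v, \tau(\chi_v))$, i.e.\ $R_{\rbar_v}^{\tau(\chi_v)}\neq 0$. By Proposition~\ref{prop:obvintersect} (applicable because $\tau(\chi_v)$ is $(5n-1)$-generic, which as above is deduced from the $6n$-genericity of $\rbar_v$ via \cite[Proposition 7]{enns} and \cite[Theorem 3.2.1]{LLL}), $R_{\rbar_v}^{\tau(\chi_v)}\neq 0$ implies $W_{\obv}(\rbar_v)\cap \JH(\ovl\sigma(\tau(\chi_v)))\neq\emptyset$. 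One then shows that $\JH(\ovl\sigma(\tau(\chi_v)))$ as a $\GL_n(k_{\tld v})$-representation has a distinguished Jordan--H\"older constituent corresponding under the (Jacquet--Langlands-type) correspondence for $\cO_{D_{\tld v}}^\times$ to the character $\ovl\chi_v$, and that this is precisely the relevant extremal weight---this is the analysis of local models alluded to in the introduction (``that the modularity of extremal weights is sufficient reduces to an analysis of the geometry of local models (\S\ref{sub:extr:loc})''). Concretely, one identifies $\tau(\chi_v)$ with $\tau(s,\mu+\eta_0)$ for an $n$-cycle $s$ and extracts, using Proposition~\ref{prop:W?} and Definition~\ref{defn:obv}, the unique $\sigma\in W_{\obv}(\rbar_v^{\semis})\cap\JH(\ovl\sigma(\tau(\chi_v)))$ corresponding to $w=\id$; this $\sigma$ is the one whose restriction to $\cO_{D_{\tld v}}^\times$ is $\ovl\chi_v$. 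Combining: $W_{\obv}(\rbar_p)\cap\JH(\ovl\sigma(\tau(\chi)))\neq\emptyset$ together with $W_{\obv}(\rbar_p)\subset W(\rbar_p)$ on the quasisplit side, and then transferring back via Jacquet--Langlands (Lemma~\ref{lemma:LGC}, now in the direction \eqref{item:potcrys}$\Rightarrow$\eqref{item:chi}) gives $S(U,\chi^\vee\circ\iota_p)_{\fm}\neq 0$, i.e.\ $\ovl\chi\in W(\rbar)$.

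\textbf{Main obstacle.} The essential difficulty is the last paragraph: pinning down \emph{which} extremal weight corresponds to the division-algebra character $\ovl\chi_v$, and showing that the nonvanishing $W_{\obv}(\rbar_v)\cap\JH(\ovl\sigma(\tau(\chi_v)))\neq\emptyset$ coming from Proposition~\ref{prop:obvintersect} is enough to conclude---a priori the intersection could consist of an extremal weight \emph{other} than the one attached to $\ovl\chi_v$. Resolving this requires the detailed local model analysis of \S\ref{sub:extr:loc} showing that $\JH(\ovl\sigma(\tau(\chi_v)))$ contains a unique extremal weight of $\rbar_v$ of the relevant ``cuspidal'' shape, together with the compatibility of the $\cO_{D_{\tld v}}^\times$-correspondence ($\chi_v\mapsto\tau(\chi_v)$, defined via induction from $K_{\tld v}^\times$) with the passage from Deligne--Lusztig representations to their Jordan--H\"older factors. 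The genericity bookkeeping---ensuring $\tau(\chi_v)$ inherits enough genericity from $\rbar_v$ to invoke Theorem~\ref{thm:FSM}, Proposition~\ref{prop:obvintersect}, and Theorem~\ref{thm:globalobv} simultaneously---is routine but must be done carefully, chasing the constants $6n-2$, $5n-1$, $(6n-2)$ through \cite{enns} and \cite[Theorem 3.2.1]{LLL} as in the proof of Corollary~\ref{cor:changetype}.
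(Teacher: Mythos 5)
Your plan has the right architecture for the ``if'' direction (transfer to a split-at-$p$ picture, apply Theorem~\ref{thm:globalobv}, transfer back), but the ``main obstacle'' you flag in the last paragraph is a phantom. You do not need to identify which extremal weight ``corresponds'' to $\ovl\chi_v$, because no such mod-$p$ correspondence between $\GL_n(k_{\tld v})$-weights and $\cO_{D_{\tld v}}^\times$-characters is invoked. The passage back to the anisotropic group $G$ happens entirely at the level of characteristic-zero automorphic representations: once you know that $W_{\obv}(\rbar_p)\cap\JH(\ovl\sigma(\tau(\chi)))\neq\emptyset$ and $W_{\obv}(\rbar_p)\subset W(\rbar_p)$ on the split side (indeed, by exactness of the space of forms, that \emph{any} $\sigma$ in this intersection is modular), you obtain a characteristic-zero eigenform of type $(\eta,\tau(\chi))$; the transfer to $G$ is then \cite[Proposition 6.5.2]{HKV} and Lemma~\ref{lemma:LGC} does the rest, with no extremal-weight bookkeeping required. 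This is exactly what the paper packages as Corollary~\ref{cor:changetype}: the change-of-type statement on $\GL_n/F$ together with the base change to $G$. Your re-derivation, including the auxiliary quasisplit group $G'$, essentially reconstructs Corollary~\ref{cor:changetype} less efficiently and then adds an unnecessary step; you could simply apply Corollary~\ref{cor:changetype} directly to the types $(\tau(\chi_v))_{v|p}$.

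There is, however, a genuine gap in your ``only if'' direction. You write ``by the genericity hypothesis $\tau(\chi_v)$ is regular,'' but that is not automatic from the genericity of $\rbar_v$: a priori $\ovl\chi$ could be modular for a character whose associated type $\tau(\chi_v)$ is non-regular. The paper rules this out explicitly using the \emph{last} clause of Lemma~\ref{lemma:LGC}: if $\tau(\chi_v)$ were non-regular and $S(U,\chi^\vee\circ\iota_p)_{\ker\lambda}\neq 0$, then $\rbar_v$ would have a potentially semistable lift of some tame \emph{non-regular} type, which contradicts \cite[Proposition 7]{enns} under the $(6n-2)$-genericity hypothesis. Your alternative route (``one simply notes that by local-global compatibility $\tau(\chi_v)$ admits a potentially crystalline lift with the right reduction'') is circular in the non-regular case, since in that case Lemma~\ref{lemma:LGC} only hands you a potentially \emph{semistable} lift, not a potentially crystalline one of type $\tau(\chi_v)$. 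Once regularity is established, the equivalence \eqref{item:chi}$\Leftrightarrow$\eqref{item:potcrys} of Lemma~\ref{lemma:LGC} applies cleanly, as you say.
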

\begin{proof}
Suppose that $\ovl{\chi} \in W(\rbar)$. 
Then $S(U,\chi^\vee \circ \iota_p)_\fm$ is a nonzero finite free $\cO$-module so that there exists $\lambda$ as in Lemma \ref{lemma:LGC} such that $S(U,\chi^\vee \circ \iota_p)_{\ker(\lambda)} \neq 0$.  
We first claim that $\tau(\chi_v)$ is regular for every $v\in S_p$. 
If $\tau(\chi_v)$ is not regular for some $v\in S_p$, then Lemma \ref{lemma:LGC} implies that $\rbar|_{G_{F^+_v}}$ has a potentially semistable lift of type $(\eta_v,\tau_v)$ for some tame inertial type $\tau_v$ which is not regular. 
This leads to a contradiction since $\rbar|_{G_{F^+_v}}$ has no such lift by \cite[Proposition 7]{enns}. 
Now since $\tau(\chi_v)$ is regular for every $v\in S_p$, the existence of desired local lifts follows from Lemma \ref{lemma:LGC}. 

Suppose now that $\rbar|_{G_{F^+_v}}$ has a potentially crystalline lift of type $\tau(\chi_v)$ for every $v\in S_p$. 
Then let $\pi$ be as in Corollary \ref{cor:changetype}. 
(As in the proof of Corollary \ref{cor:changetype}, $\tau(\chi_v)$ is $(5n-1)$-generic for all $v\in S_p$ and thus cuspidal.)
Let $\pi'$ be the base change cuspidal automorphic representation of $G(\A_F)$ guaranteed by \cite[Proposition 6.5.2]{HKV}. 
Then $r_{p,\iota}(\pi) \cong r_{p,\iota}(\pi')$ so that in particular $\rbar_{p,\iota}(\pi') \cong \rbar$ and $r_{p,\iota}(\pi')|_{G_{F^+_v}}$ is potentially crystalline of type $(\eta_v,\tau_v)$ for each $v\in S_p$. 
Taking $\lambda$ in Lemma \ref{lemma:LGC} corresponding to $\pi'$, we have that $S(U,\chi^\vee \circ \iota_p)_{\ker(\lambda)}$, and thus $S(U,\ovl{\chi}^\vee \circ \iota_p)_{\fm}$, is nonzero. 
\end{proof}

\begin{rmk}\label{rmk:BM}
In the setting of Theorem \ref{thm:divalg}, Proposition \ref{prop:obvintersect} implies that for $v|p$, $\rbar|_{G_{F^+_v}}$  has a potentially crystalline lift of type $\tau(\chi_v)$ if and only if $W^g(\rbar|_{G_{F^+_v}}) \cap \JH(\ovl{\sigma}(\tau(\chi_v)))$ is nonempty. 
Since $\ovl{\sigma}(\tau(\chi_v)) = \mathrm{JL}_p(\ovl{\chi}_v)$ with $\mathrm{JL}_p$ as defined in \cite[\S 5]{dottoBM}, Theorem \ref{thm:divalg} implies that $\chi \in W(\rbar_p)$ if and only if $\mathrm{JL}_p(\ovl{\chi}_v) \cap W^g(\rbar|_{G_{F^+_v}})$ is nonempty for all $v|p$. 
For general $G$ as in \S \ref{sec:autform}, we would expect that $\sigma \in W(\rbar_p)$ if and only if $\mathrm{JL}_p(\sigma_v) \cap W^{\mathrm{BM}}(\rbar|_{G_{F^+_v}})$ is nonempty for all $v|p$ when the set $W^{\mathrm{BM}}(\rbar|_{G_{F^+_v}})$ is defined (see \cite[\S 1.6, 8]{MLM}), but we use $W^g(\rbar|_{G_{F^+_v}})$ here because it has been defined in greater generality. 
\end{rmk}

\clearpage{}%

\newpage
\bibliography{Biblio}
\bibliographystyle{amsalpha}

\end{document}